\documentclass[12pt]{article}
\usepackage{ulem}
\usepackage{amssymb,amsmath,graphicx,amsthm,mathrsfs}
\usepackage{color}
\usepackage[colorlinks=true]{hyperref}
\usepackage{epstopdf}
\usepackage{subfigure}
\usepackage{galois}
\usepackage{amsfonts,graphicx,amssymb,amsmath,comment}
\usepackage{mathrsfs,galois,booktabs,ctable,threeparttable,tabularx,algorithm,algorithmic}
\usepackage{subfigure}
\usepackage{epstopdf}

\hypersetup{linkcolor=blue, urlcolor=green, citecolor=red}
\topmargin 0pt \textheight 20 true cm \textwidth 16  true cm
\oddsidemargin 0pt \evensidemargin 0pt

\newtheorem{Theorem}{Theorem}[section]
\newtheorem{Lemma}[Theorem]{Lemma}
\newtheorem{Proposition}[Theorem]{Proposition}

\newtheorem{Corollary}[Theorem]{Corollary}

\newtheorem{Remark}[Theorem]{Remark}
\newtheorem{Example}[Theorem]{Example}
\numberwithin{equation}{section}

\title{Decompositions and bang-bang properties}

\author{Gengsheng Wang\thanks{School of Mathematics and Statistics, Computational Science Hubei Key Laboratory, Wuhan University, Wuhan, 430072, China (wanggs62@yeah.net). The author was partially supported by the National Natural Science Foundation of China  under grant 11571264. }
\and Yubiao Zhang\thanks{School of
Mathematics and Statistics, Wuhan University,  Wuhan, 430072, China; Center for Applied Mathematics, Tianjin University, Tianjin, 300072,  China
(yubiao\b{ }zhang@whu.edu.cn). The author was partially supported by the National Natural Science Foundation of China  under grants 11571264 and 11371285.} }

\begin{document}

 \date{ }
\maketitle

\begin{abstract}
Two kinds of important optimal control problems for linear controlled systems  are
minimal time control problems and minimal norm control problems. A minimal time control problem is to ask for a control (which takes values in a  control constraint set in a control space) driving  the corresponding solution of
a controlled system from an initial state $y_0$ to a  target set in the shortest time, while a minimal norm control problem is to ask for a control which has the minimal norm among all  controls driving  the corresponding solutions of a controlled system  from an  initial state $y_0$ to a target set at  fixed ending time $T$. In this paper, we focus on the case that
 target sets are the origin of  state spaces, control constraint sets are closed balls $B(0,M)$ in  control spaces (centered at the origin and of radius $M>0$) and controls are $L^\infty$ functions. The bang-bang property for a minimal time control problem means that  any minimal time control, as a function of time,  point-wisely takes its value at the boundary of $B(0,M)$, while the bang-bang property for a minimal norm control problem means that  each minimal norm control, as a function of time, point-wisely takes  the  minimal norm.

 This paper studies the bang-bang properties for the above-mentioned two kinds of problems from  a new perspective.
The motivation of this study is as follows: For a given controlled system, a minimal time control problem  depends only on
 $(M,y_0)$, while a minimal norm control problem depends only on $(T,y_0)$.
When a controlled system reads: $y'(t)=Ay(t)+Bu(t)$, with  $(A,B)$ in $\mathbb{R}^{n\times n}\times (\mathbb{R}^{n\times m}\setminus\{0\})$, an interesting phenomenon related to the bang-bang properties is as follows: The product space consisting of all pairs $(M,y_0)$
can be divided into two disjoint parts so that when $(M,y_0)$ is in one part,
the corresponding minimal time control problem has the bang-bang property; when
  $(M,y_0)$ is in another part,
the corresponding minimal time control problem has no any solution. The same can be said about the product space consisting of all pairs $(T, y_0)$, as well as minimal norm control problems. We call such decompositions as the BBP decompositions for minimal time control problems and minimal norm control problems, respectively.
Though they   can be easily derived from
the Kalman controllability decomposition to the pair of matrices $(A,B)$,
it seems for us that they are new.
 A natural question is how to  extend  the above-mentioned BBP decompositions to the infinitely dimensional setting where  state and control spaces are two real Hilbert spaces,
 $A$ is a generator of a $C_0$-semigroup on the state space and $B$ is a linear operator from the control space to the state space.

  The  purpose of this paper is to build up the BBP decompositions
  in  the infinitely dimensional setting.   The main difficulty to get such extension is the lack of the Kalman controllability decomposition in the infinitely dimensional setting.
 Our first key to overcome this difficulty is to find two   properties held by
 any pair of matrices $(A,B)$ in $\mathbb{R}^{n\times n}\times (\mathbb{R}^{n\times m}\setminus\{0\})$
 so that they have the following functionalities: First, with the aid of these properties, we can prove the BBP decompositions in finitely dimensional setting, without using the Kalman controllability decomposition; Second, these properties can be easily stated  in the infinitely dimensional setting.

 By assuming the above-mentioned  two
 properties  in the infinitely dimensional setting,
 we divide the product space  of $(T,y_0)$ (or $(M,y_0)$) into four disjoint parts, with the aid of several functions and an affiliated minimal norm problem.
 For minimal norm control problems, the first part consists of those pairs $(T,y_0)$  so that the corresponding minimal norm problems  have no any solution.
 The second part consists of those pairs $(T,y_0)$  so that the corresponding minimal norm problems  hold the bang-bang property and  the null control is not their minimal norm control.  The third part consists of those pairs $(T,y_0)$ so that the corresponding minimal norm problems  hold the bang-bang property and the null control is their unique minimal norm control.   The last part is a curve segment $\gamma_1$ in the product space  of $(T,y_0)$.
 For minimal time control problems, the first part consists of those pairs $(M,y_0)$  so that the corresponding minimal time control problems  have no any solution.
  The second part consists of those pairs $(M,y_0)$  so that the corresponding minimal time control problems  hold the bang-bang property. The third part consists of those pairs $(M,y_0)$ so that the corresponding minimal time control problems  have infinitely many different minimal time controls and do not have the bang-bang property.   The last part is a curve segment $\gamma_2$ in the product space  of $(M,y_0)$. The aforementioned two curve segments $\gamma_1$ and $\gamma_2$  are  crucial parts for us in the following sense: First,  we are not sure if these curve segments are empty; Second, when  $(T,y_0)\in\gamma_1$ (or $(M,y_0)\in\gamma_2$), we  know  that the corresponding minimal norm (or minimal time) control problem  has a solution, but we are not sure if it has the bang-bang property.

\end{abstract}

\noindent\textbf{keywords.}   minimal time controls, minimal norm controls, bang-bang properties, decompositions,  evolution systems, reachable subspaces\\

\noindent\textbf{AMS subject classifications.} 93B03 93C35

\tableofcontents

\bigskip

\section{Introduction}

\subsection{Motivation}

Two kinds of important optimal control problems for linear controlled systems are
minimal time control problems and minimal norm control problems.
  A minimal time control problem is to ask for a control (taking values from a  control constraint set which is, in general, a closed and bounded subset in a control space) which drives  the corresponding solution of
a controlled system from an initial state  to a  target set in the shortest time, while a minimal norm control problem is to ask for a control which has the minimal norm among all  controls that drive  the corresponding solutions of a controlled systems  from an  initial state  to a target set at  fixed ending time.
 Several important issues on minimal time (or minimal norm) control problems are as follows: The Pontryagin maximum principle of minimal time (or minimal norm) controls (see, for instance, \cite{HOF2, KL2, LM, LWX, WZ}); The existence of minimal time ( or minimal norm) controls  (see, for instance, \cite{Barbu,LZ,PWZ}); Their connections  with  controllabilities (see, for instance \cite{carja,GL,Petrov}); Numerical analyses on minimal time (or minimal norm) controls (see, for instance, \cite{GY, ITO, MZ, C.S.E.T, WZheng}); And the bang-bang property of minimal time  (or minimal norm) controls (see, for instance, \cite{HOF,KL1, KL2, LM, QL, MRT, MS, PW, PWCZ, EJPGS, Wang, WX-2, WXZ, WCZ, Y, CZ-2}).

 In this paper, we concern the bang-bang properties  for  these two kinds of problems
in the case that both  state   and control spaces are real Hilbert spaces,
 controlled systems are linear and time-invariant,
 target sets are the origin of  state spaces, control constraint sets are closed balls  in  control spaces (centered at the origin) and controls are $L^\infty$ functions.  The bang-bang property for a minimal time control problem means that  any minimal time control, as a function of time,  point-wisely takes its value at the boundary of the control constraint set, while the bang-bang property for a minimal norm control problem means that  each minimal norm control, as a function of time, point-wisely takes  the  minimal norm.
  The significance of the bang-bang property for minimal time control problems can be explained from the following aspects:
 (i) Mathematically, the bang-bang property means that  each minimizer of a functional (from $[0,\infty)$ to a bounded and closed subset in a Hilbert space)
   point-wisely takes value
 on the boundary of this subset.   (ii) From application point of view,
  the bang-bang property means that each minimal time control takes the most advantage of possible control actions. For instance,
  controls always have bounds which are designed by peoples. The bigger  bounds are designed, the more costs  peoples pay. If the bang-bang property holds for a minimal time problem, then
  the  designed bound for controls will not be wasted at almost each time.
    (iii) The bang-bang property is  powerful in the studies of minimal time control problems. For instance,
    in many cases, the  uniqueness of minimal time controls follows from this property; in some cases, this property can help people to do more dedicate numerical analyses on minimal time controls (see, for instance,
    \cite{ITO,  C.S.E.T}).
    We can also explain the significance of the bang-bang property for minimal norm control problems from both mathematical and application points of view.
    In most literatures
    on  the  bang-bang property for the minimal time (or minimal norm) control problems, peoples mainly concern about: (i) For a given problem, whether the bang-bang property holds; (ii)  Applications of the bang-bang property (see, for instance,         \cite{HOF,KL1, KL2, LM, QL, MRT, MS, PW, PWCZ, EJPGS, Wang, WX-2, WXZ, WCZ, Y, CZ-2} and the references therein).

In this paper, we study the bang-bang properties of the minimal time control problems and the minimal norm control problems from a different perspective.
The motivation of this study is as follows:
 Two typical minimal time and minimal norm control problems in the finitely dimensional setting  are as follows: Let $\mathbb{R}^n$ and $\mathbb{R}^m$ (with $n,m\geq 1$) be the state space and the control  space. Let $(A,B)$ be a pair of matrices  in $\mathbb R^{n\times n}\times (\mathbb R^{n\times m}\setminus\{0\})$.
 Given $M>0$ and $y_0\in \mathbb R^n\setminus\{0\}$, consider the minimal time control problem:
  \begin{eqnarray}\label{0123-intro-TP}
  (\mathcal{TP})^{M,y_0} \;\;\;\;\;\; \mathcal{T}(M,y_0)  \triangleq  \{ \hat t>0~:~\exists\,u\in\mathbb{U}^M
 \;\;\mbox{s.t.}\;\;  y(\hat t;y_0,u)=0\},
 \end{eqnarray}
 where
 \begin{eqnarray}\label{0125-UM}
  \mathbb{U}^M\triangleq \{u:~\mathbb R^+\triangleq[0,\infty) \rightarrow \mathbb R^m \;\;\mbox{measurable} ~:~  \|u\|_{L^\infty(\mathbb R^+;\mathbb R^m)} \leq M\},
 \end{eqnarray}
 and $y(\cdot;y_0,u)$ is the solution to the equation:
   \begin{eqnarray}\label{0123-intro-1}
         y^\prime(t)=Ay(t)+Bu(t),~t>0;\;\;
         y(0)=y_0.
         \end{eqnarray}
 Given $y_0\in \mathbb R^n\setminus\{0\}$ and $T\in(0,\infty)$, consider the   minimal norm control problem:
 \begin{eqnarray}\label{0123-intro-NP}
 (\mathcal{NP})^{T,y_0}\;\;\;\;\;\; \mathcal{N}(T,y_0)  \triangleq  \inf\{ \|v\|_{L^\infty(0,T;\mathbb R^m)}~:~\hat y(T;y_0,v)=0\},
 \end{eqnarray}
 where $v\in L^\infty(0,T;\mathbb R^m)$
 and $\hat y(\cdot;y_0,v)$ is the solution to the equation:
  \begin{eqnarray}
 \label{0123-intro-2}
         y^\prime(t)=Ay(t)+Bv(t),~0<t\leq T;\;\;
         y(0)=y_0.
         \end{eqnarray}
   In the problem $(\mathcal{TP})^{M,y_0}$, $\mathcal{T}(M,y_0)$ is called the minimal time; $\hat u\in\mathbb{U}^M$  is called an admissible control  if
   $y(\hat t;y_0,\hat u)=0$ for some $\hat t \in(0,\infty)$;
     $u^*\in\mathbb{U}^M$ is called a minimal time control if
 $y(\mathcal{T}(M,y_0);y_0,u^*)=0$ and $u^*=0$ over $(0, \mathcal{T}(M,y_0))$.
   We say that the problem $(\mathcal{TP})^{M,y_0}$ has the bang-bang property if any minimal time control $u^*$ verifies that
   $   \|u^*(t)\|_{\mathbb R^m}=M$ for a.e. $t\in\big(0,\mathcal{T}(M,y_0)\big)$.
       When $(\mathcal{TP})^{M,y_0}$ has no any admissible control, we agree that   it does not hold the bang-bang property and $\mathcal{T}(M,y_0)=\infty$.
    In the  problem $(\mathcal{NP})^{T,y_0}$, $\mathcal{N}(T,y_0)$ is called the minimal norm; $ \hat v\in L^\infty(0,T;\mathbb R^m)$  is called an admissible control  if
    $\hat y(T;y_0,\hat v)=0$;
   $v^*\in L^\infty(0,T;\mathbb R^m)$ is called a minimal norm control  if
 $\|v^*\|_{L^\infty(0,T;\mathbb R^m)}=\mathcal{N}(T,y_0)$ and
 $\hat{y}(T;y_0,v^*)=0$.
    We say that the problem $(\mathcal{NP})^{T,y_0}$ has the bang-bang property if any minimal norm control $v^*$ verifies that
    $\|v^*(t)\|_{\mathbb R^m}=\mathcal{N}(T,y_0)$ for a.e. $t\in(0,T)$.
     When $(\mathcal{NP})^{T,y_0}$ has no any admissible control, we agree that it does not hold the bang-bang property and $\mathcal{N}(T,y_0)=\infty$.

       When $(A,B)$ is fixed  in $\mathbb R^{n\times n}\times (\mathbb R^{n\times m}\setminus\{0\})$, the problem    $(\mathcal{TP})^{M,y_0}$  depends only on the pair $(M,y_0)$ which belongs to the product space:
     \begin{eqnarray}\label{NEWYEAR1.6}
      \mathcal X_1  \triangleq
      \big\{(M,y_0)~:~ 0<M<\infty,~y_0\in \mathbb R^n\setminus\{0\}\big\};
     \end{eqnarray}
    and the problem    $(\mathcal{NP})^{T,y_0}$ depends only on the pair $(T,y_0)$ which belongs to the space:
     \begin{eqnarray}\label{NEWYEAR1.7}
      \mathcal X_2  \triangleq
      \big\{(T,y_0)~:~ 0<T<\infty,~y_0\in \mathbb R^n\setminus\{0\}\big\}.
     \end{eqnarray}
      By applying the Kalman controllability decomposition to the pair $(A,B)$
      (see, for instance, Lemma 3.3.3 and Lemma 3.3.4 in \cite{E.D.Sontag}),
      we can easily divide the space  $\mathcal X_1$ into two disjoint parts so that when
      $(M,y_0)$ is in one part, the corresponding $(\mathcal{TP})^{M,y_0}$ has the bang-bang property; when $(M,y_0)$ is in another part,  the corresponding $(\mathcal{TP})^{M,y_0}$ has no any admissible control (which implies that it does not hold the bang-bang property). The same can be said
      about the space   $\mathcal X_2$. We call such decompositions as
   the BBP decompositions for $(\mathcal{TP})^{M,y_0}$ and
   $(\mathcal{NP})^{T,y_0}$, respectively.
  The exact BBP decompositions for the above two problems are the following (P1) and (P2):
         \begin{eqnarray}\label{0127-intro-P1}
  \mbox{\textbf{(P1)}} \begin{array}{ll}
       &\bullet  ~\,\mbox{When } (M,y_0)\in \mathcal \mathcal D_{bbp}, \;\;(\mathcal{TP})^{M,y_0}
                  \mbox{ has the bang-bang property};  \\
       &\bullet ~\,\mbox{When } (M,y_0)\in \mathcal X_1 \setminus \mathcal D_{bbp}, \;\;(\mathcal{TP})^{M,y_0}
                  \mbox{ has } \mbox{ no }  \mbox{ admissible control. }
       \end{array}
 \end{eqnarray}
 Here,
 \begin{eqnarray}\label{wangyu1.8}
  \mathcal D_{bbp} \triangleq  \big\{(M,y_0)\in (0,\infty)\times (\mathcal R\setminus\{0\})
  ~:~  M> \lim_{T\rightarrow\infty} \mathcal{N}(T,y_0)\big\},
 \end{eqnarray}
 where
         \begin{eqnarray}\label{0125-intro-attain}
   \mathcal R  \triangleq  \mathcal B + A\mathcal B + \cdots + A^{n} \mathcal B,\;\;\mbox{with}\;\;
      \mathcal B  \triangleq  \{ Bx \in \mathbb R^n ~:~ x\in \mathbb R^m\}.
   \end{eqnarray}
     \begin{eqnarray}\label{0127-intro-P2}
  \mbox{\textbf{(P2)}} \begin{array}{ll}
       &\bullet  ~\,\mbox{When } (T,y_0)\in \mathcal{X}_{2,1}, \;\;(\mathcal{NP})^{T,y_0}
                  \mbox{ has the bang-bang property;}  \\
             &\bullet ~\,\mbox{When  } (T,y_0)\in \mathcal{X}_{2,2}, \;\;(\mathcal{NP})^{T,y_0}
                  \mbox{ has } \mbox{ no }  \mbox{ admissible control,}
       \end{array}
 \end{eqnarray}
 where $\mathcal{X}_{2,1}\triangleq (0,\infty) \times (\mathcal R\setminus\{0\})$
 and $\mathcal{X}_{2,2}\triangleq (0,\infty) \times (\mathbb R^n\setminus\mathcal R)$. (Notice that both $\mathcal D_{bbp}$ and $\mathcal R\setminus\{0\}$ are not empty. These are proved in Appendix A, see (\ref{Appendixwang8.17}) and (\ref{newyearyuan1.12}).)

     The proofs of (P1) and (P2), via the  Kalman controllability decomposition,
     are given in Appendix A. Though the proofs are quite simple,
      such BBP decompositions seem to be new. (At least we  do not find them in any published literature.)
 {\it  A natural question is how to extend the above-mentioned BBP decompositions to the infinitely dimensional setting where  state and control spaces are two real Hilbert spaces,
 $A$ is a generator of a $C_0$-semigroup on the state space and $B$ is a linear operator from the control space to the state space.}  The  purpose of this paper is to build up such BBP decompositions
  in  the infinitely dimensional setting.
   The main difficulty to get such extension is the lack of the Kalman controllability decomposition in the
 infinitely dimensional setting.

 Our first key to overcome this difficulty is to find two   properties held by
 any pair of matrices $(A,B)$ in $\mathbb{R}^{n\times n}\times (\mathbb{R}^{n\times m}\setminus\{0\})$
 so that they have the following functionalities: (i) With the aid of these properties, we can get the decompositions (P1) and (P2), without using the Kalman controllability decomposition; (ii) These properties can be easily stated  in the infinitely dimensional setting.
  The first one is a kind of unique continuation property from measurable set
  for functions: $B^*e^{A^*(T-\cdot)}z$, with $T>0$ and $z\in \mathbb{R}^n$.
  This property follows immediately  from the analyticity of the function $t\rightarrow B_1^*e^{A_1^*(T-t)}$, $t\in \mathbb{R}$, in the finitely dimensional setting.  In our infinitely dimensional setting, it is the assumption (H2) given in the next subsection. The second property is quite hidden: {\it For all $t$ and $T$, with $0<t<T<\infty$,  and  $u\in L^2(0,T;\mathbb R^m)$, with supp $u\subset(0,t)$, there is $v_u\in L^\infty(0,T;\mathbb R^m)$, with supp $v\subset(t,T)$, so that
  $\hat y(T;0,u) = \hat y(T;0,v_u)$,
  where   $\hat y(\cdot;0, u)$ and $\hat y(\cdot; 0,v_u)$  denote
  the  solutions of (\ref{0123-intro-2}) with the same initial datum $0$ and controls $u$ and $v_u$, respectively. }
  (Proposition~\ref{Lemma-8ex1-1} in Appendix B proves that each pair of matrices $(A,B)$  in $\mathbb R^{n\times n}\times (\mathbb R^{n\times m}\setminus\{0\})$ holds this property.)
  The assumption (H1) given in the next subsection is exactly the same version of the second property in our finitely dimensional setting.
About (H1), two facts are given in order: First,
for a pair $(A,B)$ in the finitely dimensional setting, it may happen that the
above-mentioned second property  holds but  $(A,B)$ is not controllable. Second,  even in the infinitely dimensional setting, the null controllability of $(A,B)$   implies that the above-mentioned second property (see Proposition~\ref{WGSWGSlemma2.15}).

 \subsection{Problems and assumptions}

Let us first introduce the minimal time and the minimal norm control problems studied in this paper. Let   $X$  be a real Hilbert space (which is our state space), with its  inner product $\langle\cdot,\cdot\rangle_X$ and its  norm $\|\cdot\|_X$. Let   $A:\,D(A)\subset X\rightarrow X$ be a state operator which generates a $C_0$-semigroup $\{S(t)\}_{t\in \mathbb{R}^+}$ on $X$.  Write  $U$ for another  real Hilbert space (which is our control space), with its inner product  $\langle\cdot,\cdot\rangle_U$ and its  norm $\|\cdot\|_U$. Let  $B\in \mathcal L(U,X_{-1})$ be a nontrivial control operator (i.e., $B\neq0$), where $X_{-1}\triangleq D(A^*)'$ is the dual of $D(A^*)$ with respect to the pivot space $X$.
Throughout this paper, we assume that $B$  is   an admissible control operator for $\{S(t)\}_{t\in \mathbb{R}^+}$ (see Section 4.2 in \cite{TW}), i.e.,
   for each $\hat t\in(0,\infty)$, there is a positive constant $C_1(\hat t)$,
    depending on $\hat t$, so that
\begin{eqnarray}\label{admissible-control}
 \big\|\int_0^{\hat t} S_{-1}(\hat t-\tau)B u(\tau) \,\mathrm d\tau \big\|_X \leq C_1(\hat t)\|u\|_{L^2(0,\hat t;U)}\;\;\mbox{for all}\;\;u\in L^2_{{loc}}(\mathbb{R}^+; U),
\end{eqnarray}
  where  $\{S_{-1}(t)\}_{t\in \mathbb{R}^+}$ denotes the extension of $\{S(t)\}_{t\in \mathbb{R}^+}$ on $X_{-1}$. In the finitely dimensional setting where $X=\mathbb{R}^n$, $U=\mathbb{R}^m$, $A\in \mathbb{R}^{n\times n}$ and $B\in \mathbb{R}^{n\times m}\setminus\{0\}$, (\ref{admissible-control}) holds automatically.

Two controlled equations studied in this paper are as follows:
\begin{eqnarray}\label{system-0}
  y^\prime(t) =Ay(t)+B u(t),~t>0;\;\;
  y(0)=y_0;
 \end{eqnarray}
 \begin{eqnarray}\label{system-1}
   y^\prime(t) =Ay(t)+B v(t),~0<t\leq T;\;\;
  y(0)=y_0.
  \end{eqnarray}
Here, $y_0\in X$, $T>0$, controls $u$ and $v$ are taken from  $L^\infty(\mathbb{R}^+;U)$ and  $L^\infty(0,T;U)$, respectively.
For each $T>0$, $y_0\in X$ and $v\in L^2(0,T;U)$, a solution of  the equation
(\ref{system-1}) is defined to be a function  $\hat y(\cdot; y_0,v)\in C([0,T];X)$
satisfying that when $z\in D(A^*)$,
 \begin{eqnarray}\label{NNNWWW2.1}
 \langle \hat y(t;y_0,v),z \rangle_X  -  \langle y_0,S^*(t)z \rangle_X
 = \int_0^t \langle v(s), B^*S^*(t-s)z \rangle_U  \,\mathrm ds,
 ~\forall\,t\in[0,T] .~~
\end{eqnarray}
One can easily see from Lemma \ref{ad-control-ob} that the definition of $\hat y(\cdot; y_0,v)$ is  the same as the definition of a solution to (\ref{system-1}) in \cite[Definition 2.36]{Coron}.
 Thus, it follows from \cite[Theorem 2.37]{Coron} and Lemma \ref{ad-control-ob} that the equation (\ref{system-1}) is well-posed. For each $y_0\in X$ and $u\in L^\infty(\mathbb{R}^+;U)$, a solution of the equation (\ref{system-0}) is defined to be a function $y(\cdot; y_0,u)\in C(\mathbb R^+;X)$ so that
for each $T>0$, $y(\cdot; y_0,u)|_{[0,T]}$ (the restriction of $y(\cdot; y_0,u)$ over $[0,T]$) is the solution to (\ref{system-1}) with $v=u|_{(0,T]}$.  Consequently, the system (\ref{system-0}) is well-posed. Besides, by
Proposition~\ref{huangwanghenproposition2.1}, one can check the following two facts: First,
 for each $y_0\in X$ and $u\in L^\infty(\mathbb{R}^+;U)$, the  solution $y(\cdot; y_0,u)$ to the system (\ref{system-0}) satisfies that
\begin{eqnarray}\label{Changshubianyi1.6}
 y(t;y_0,u)=S(t)y_0+\int_0^t S_{-1}(t-\tau)Bu(\tau) \,\mathrm d\tau,\;\; 0\leq t<\infty.
\end{eqnarray}
Second, if for some $y_0\in X$ and $u\in L^\infty(\mathbb{R}^+;U)$, a function $y(\cdot)\in C(\mathbb{R}^+;X)$ equals to  the
right hand side of  (\ref{Changshubianyi1.6}) point-wisely, then  $y(\cdot)=y(\cdot;y_0,u)$ over $\mathbb{R}^+$.

For each pair $(M,y_0)\in (0,\infty)\times(X\setminus\{0\})$, we define a minimal
time control problem:
 \begin{eqnarray}\label{TP-0}
 (TP)^{M,y_0}\;\;\;\;\;\; T(M,y_0)\triangleq \inf \big\{\hat t\in (0, \infty)\; :\; \exists\,u\in\mathcal U^M \mbox{ s.t. } y(\hat t;y_0,u)=0\big\},
 \end{eqnarray}
  where
 \begin{eqnarray*}
  \mathcal U^M \triangleq \big\{u:\,\mathbb R^+\rightarrow U \mbox{ strongly measurable}\; :\; \|u(t)\|_U\leq M \mbox{ a.e. }t\in\mathbb R^+\big\}.
 \end{eqnarray*}
   In the problem $(TP)^{M,y_0}$,  the minimal time, an admissible control and a minimal time control  can be defined in the same manners as in  $(\mathcal{TP})^{M,y_0}$ (see (\ref{0123-intro-TP})).
   We say that the problem $(TP)^{M,y_0}$ has the bang-bang property if any minimal time control $u^*$ verifies that
   $\|u^*(t)\|_U=M$  for a.e. $t\in\big(0,T(M,y_0)\big)$.
        When $(TP)^{M,y_0}$ has no any admissible control, we agree that   it does not hold the bang-bang property and   $T(M,y_0)=\infty$.

    For each pair $(T,y_0)\in (0,\infty)\times X\setminus\{0\}$, we define a  minimal  norm control problem:
     \begin{eqnarray}\label{NP-0}
 (NP)^{T,y_0}\;\;\;\;\;\; N(T,y_0)\triangleq \inf\big\{\|v\|_{L^\infty(0,T;U)}\; :\;  v\in L^\infty(0,T;U)\mbox{ s.t. }  \hat{y}(T;y_0,v)=0\big\}.
 \end{eqnarray}
   In the  problem $(NP)^{T,y_0}$,   the minimal norm, an admissible control and a minimal norm control can be defined in the same  ways as in $(\mathcal{NP})^{T,y_0}$ (see (\ref{0123-intro-NP})).
    We say that the problem $(NP)^{T,y_0}$ has the bang-bang property if any minimal norm control $v^*$ verifies that
    $\|v^*(t)\|_U=N(T,y_0)$ for a.e. $t\in(0,T)$.
     When $(NP)^{T,y_0}$ has no any admissible control, we agree that it does not hold the bang-bang property and $N(T,y_0)=\infty$.

We say that $(A,B)$ has the $L^\infty$-null controllability if for any $T>0$  and  $y_0\in X$, there is  $v\in L^\infty(0,T;U)$ so that
$\hat y(T;y_0,v)=0$. We say that the semigroup $\{S(t)\}_{t\in \mathbb{R}^+}$  has   the backward uniqueness property if $S(T)y_0=0\Rightarrow y_0=0$.
       In our infinitely setting, we assume neither  the $L^\infty$-null controllability nor the backward uniqueness property.
         To make up the lack of these properties, we define
  the following two functions $T^0(\cdot)$ and $T^1(\cdot)$ (which play important roles in our study):
   \begin{eqnarray}\label{y0-controllable}
  T^0(y_0)\triangleq\inf\left\{\hat t\in\mathbb R^+\; :\; \exists\,u\in L^\infty(\mathbb{R}^+; U) \mbox{ s.t. }  y(\hat t;y_0, u)=0\right\},\;\;y_0\in X;
 \end{eqnarray}
   \begin{eqnarray}\label{Ty0}
   T^1(y_0)\triangleq\inf\left\{\hat t\in\mathbb R^+\; :\; S(\hat t)y_0=0 \right\},\;\;y_0\in X.
  \end{eqnarray}
  When the set on the right hand side of (\ref{y0-controllable}) is empty for some $y_0$, we let  $T^0(y_0)\triangleq\infty$. The same can be said about  $T^1(y_0)$.
  \begin{Remark}\label{YUYONGYUremark1.2}
(i) The pair $(A,B)$ has the $L^\infty$-null controllability if and only if for each $y_0\in X$, $T^0(y_0)=0$.

\noindent (ii) Though  many controlled systems, such as internally
or boundary controlled heat equations, hold the $L^\infty$-null controllability, there are some controlled systems having no the $L^\infty$-null controllability. Among them, it may happen that
$T^0(y_0)\in (0,\infty)$ for some $y_0\in X$ (see Remark~\ref{Remark-minimal-time-infty}).

\noindent (iii) The semigroup $\{S(t)\}_{t\in \mathbb{R}^+}$ has the backward uniqueness property if and only if for each $y_0\in X\setminus\{0\}$, $T^1(y_0)=\infty$.

 \noindent (iv) Though  many semigroups governed by PDEs, such as heat equations and wave equations, hold the backward uniqueness property,  there are some semigroups governed by PDEs having no this property. Among them, it may happen that $T^1(y_0)<\infty$ for all $y_0\in X$. A transport equation over a finite interval
 is one of such examples.
\end{Remark}

 From (\ref{NP-0}), we see that for each $y_0\in X\setminus\{0\}$, $T\rightarrow N(T,y_0)$ defines a function over $(0,\infty)$. Since the quantities $N(T^0(y_0),y_0)$ and $N(T^1(y_0),y_0)$ will appear frequently,   $T^0(\cdot)$ may take values  $0$ and  $\infty$, and  $T^1(\cdot)$ may take value $\infty$, it is necessary for us to give  definitions for
  $N(\infty,y_0)$ and $ N(0,y_0)$. For this purpose,
we notice  that for each $y_0\in X\backslash\{0\}$,
           $T\rightarrow N(T, y_0)$ is a decreasing  function from
           $(0,\infty)$ to $[0,\infty]$.
           (This can be easily obtained from (\ref{NP-0}), see also (i) of Lemma \ref{Lemma-NP-decreasing} for the detailed proof.) Thus, we can extend this function over $[0,\infty]$ in the following  manner:
 \begin{eqnarray}\label{N-infty-y0}
  N(\infty,y_0)\triangleq \lim_{t\rightarrow\infty} N(t,y_0)\;\;\mbox{and}\;\;
   N(0,y_0)\triangleq \lim_{t\rightarrow 0^+} N(t,y_0),\;\;y_0\in X\setminus\{0\}.
\end{eqnarray}

As mentioned in
     Subsection 1.1,  we    impose two assumptions on  $(A,B)$
     as follows:

     \noindent {(H1)}   There is  $ p_0\in [2,\infty)$ so that $\mathcal{A}_{p_0}(T,\hat t)\subset \mathcal{A}_\infty(T,\hat t)$ for all $T$, $\hat t$, with $0<\hat t<T<\infty$, where
      \begin{equation*}\label{assumptionspace1}
  \mathcal{A}_{p_0}(T, \hat t)\triangleq \Big\{\hat{y}(T;0,u)\; :\; u\in L^{p_0}(0,T;U),\;\;\mbox{with}\;\;u|_{(\hat t,T)}=0\Big\};
 \end{equation*}
   \begin{equation*}\label{assumptionspace2}
  \mathcal{A}_\infty(T,\hat t)\triangleq \Big\{\hat{y}(T;0,v)\; :\;v\in L^{\infty}(0,T;U),\;\;\mbox{with}\;\;v|_{(0,\hat t)}=0\Big\}.
 \end{equation*}

     \noindent {(H2}) If  there is  $T\in(0,\infty)$, a subset
      $E\subset (0,T)$ of positive measure and a function
      $f\in Y_T$ so that  $f=0$ over $E$, then $f\equiv 0$ over $(0,T)$.
Here,
    \begin{equation}\label{ob-space}
   Y_T \triangleq \overline{X}_T^{\|\cdot\|_{L^1(0,T;U)}},\;\;\mbox{with the}\;\; L^1(0,T;U)\mbox{-norm},
 \end{equation}
    where
        \begin{equation}\label{assumptionspace3}
  X_T\triangleq \{B^*S^*(T-\cdot)z|_{(0,T)}\; :\; z\in D(A^*)\},\;\;\mbox{with the}\;\; L^1(0,T;U)\mbox{-norm}.
 \end{equation}

\begin{Remark}\label{newwangremark1.1}
(i) The assumption (H1)  says  roughly that the functionality of a control  supported on $(0,\hat t)$  can be replaced by that of a control  supported on $(\hat t,T)$. The assumption (H2) says, in plain language, that any function in $Y_T$ has some unique continuation property from measurable sets.

 \noindent (ii) We do not know if every  function in $Y_T$ can be  expressed as $B^*\varphi$ with $\varphi$ a solution of the adjoint equation over $(0,T)$, even in the case that $B\in \mathcal{L}(U,X)$.  However, if  $(A,B)$ has the  $L^\infty$-null controllability, then the above-mentioned expression holds (see Remark~\ref{YT-characterization}).

 \noindent (iii)   Each pair $(A,B)$ in $\mathbb R^{n\times n}\times (\mathbb R^{n \times m}\setminus\{0\})$ (with $n,m\geq 1$) satisfies both (H1) and (H2)
 (see Proposition~\ref{Lemma-8ex1-1} in Appendix B).
\end{Remark}

Our  studies on the BBP decompositions are based on the assumptions (H1) and (H2).
However, our main  results  can be improved, if  instead of (H1) and (H2), we  impose the following stronger assumptions (H3) and (H4):

 \noindent {(H3)} The pair $(A^*, B^*)$ is  $L^1$-observable over each interval (or simply $L^1$-observable), i.e.,  for each $T\in(0,\infty)$, there exists a
 positive constant $C_1(T)$ so that
 \begin{eqnarray}\label{ob-ineq}
  \|S^*(T)z\|_X\leq C_1(T) \int_0^T \|B^*S^*(T-t)z\|_U \,\mathrm dt\;\;\mbox{for all}\;\; z\in D(A^*).
 \end{eqnarray}

 \noindent {(H4}) If $z\in X$ satisfies that $\widetilde{B^*S^*}(T-\cdot)z= 0$ over  $E$ for some $T\in(0,\infty)$ and some subset $E\subset(0,T)$ of positive measure, then  $\widetilde{B^*S^*}(T-\cdot)z\equiv0$ over $(0,T)$. Here, $\widetilde{B^*S^*}(T-\cdot)$ is the natural extension of
 ${B^*S^*}(T-\cdot)$ over $X$. (It will be explained in the next remark.)

 \begin{Remark} (i) The function  $\widetilde{B^*S^*}(T-\cdot)$ in (H4)  is defined in the following manner:   Since $B\in\mathcal L(U,X_{-1})$ is an admissible control operator for $\{S(t)\}_{t\in\mathbb R^+}$, it follows from Lemma \ref{ad-control-ob}
   that $B^*$ is an admissible observation operator for $\{S^*(t)\}_{t\in\mathbb R^+}$, i.e.,
   for each $T\in(0,\infty)$, there exists a  $C(T)>0$ so that
 $$
   \int_0^{T} \|B^* S^*(T-\tau)z\|_U^2 \,\mathrm d\tau \leq C(T) \|z\|_X^2\;\;\mbox{for all}\;\; z\in D(A^*).
$$
 (Indeed,   \cite[Theorem 4.4.3]{TW} proves that  $B\in\mathcal L(U,X_{-1})$ is an admissible control operator for $\{S(t)\}_{t\in\mathbb R^+}$ if and only if
 $B^*$ is an admissible observation operator for $\{S^*(t)\}_{t\in\mathbb R^+}$ in  the case where $X$ and $U$ are complex Hilbert spaces.)
  Thus, for each $T\in(0,\infty)$,  the operator
 $ B^* S^*(T-\cdot): D(A^*) \rightarrow  L^2(0,T;U) $
   can be uniquely extended  to a linear bounded operator $\widetilde{B^*S^*}(T-\cdot)\;$ from $X$ to $L^2(0,T;U)$. More precisely, for each
 $z\in X$,
 \begin{equation}\label{wang1.15}
 \widetilde{B^*S^*}(T-\cdot)z \triangleq \lim_{n\rightarrow\infty}
 B^*S^*(T-\cdot)z_n\;\;\mbox{in}\;\;L^2(0,T; U),
 \end{equation}
where $\{z_n\}\subset D(A^*)$, with $\lim_{n\rightarrow\infty} z_n = z$ in $X$.

 \noindent (ii) The  condition (H3) is an $L^1$-observability estimate for the pair $(A^*,B^*)$, which is
  equivalent to the $L^\infty$-null controllability for the pair $(A,B)$.
(See  Proposition~\ref{newhuangproposition2.15}.)

 \noindent (iii) The condition (H4) is a kind of unique continuation property of the dual equation over  $(0,T)$ for each $T\in(0,\infty)$.

  \noindent (iv)  The condition (H1) can be implied by  (H3) (see Proposition~\ref{WGSWGSlemma2.15}). However, (H1) may hold when (H3) does not stand. For instance, when $X=\mathbb{R}^n$ and $U=\mathbb{R}^m$ (with $n,m\in \mathbb{N}^+$), any pair $(A,B)\in \mathbb{R}^{n\times n}\times (\mathbb{R}^{n\times m}\setminus\{0\})$ satisfies not only the condition (H1) but also the condition (H2) (see Proposition~\ref{Lemma-8ex1-1} in Appendix B). On the other hand, it is well known that  $(A,B)$ is $L^\infty$-null controllable if and only if it is controllable, and the later holds if and only if
  $(A,B)$ satisfies the Kalman rank condition. Thus any $(A,B)\in \mathbb{R}^{n\times n}\times (\mathbb{R}^{n\times m}\setminus\{0\})$ that does not satisfy the Kalman rank condition has the property (H1) but does not hold the property (H3).

    \noindent (v) The condition  (H2) can be derived from (H3) and (H4) (see Proposition \ref{Lemma-YT-bounded}).

 \end{Remark}

\subsection{Main results}

The main results of this paper concern with  the BBP decompositions for $(TP)^{M,y_0}$ and $(NP)^{T,y_0}$. To state them,
we  notice that  the domain $\mathcal{W}$ of the pairs $(T,y_0)$ for  $(NP)^{T,y_0}$ and the domain $\mathcal V$ of  the pairs $(M,y_0)$ for $(TP)^{M,y_0}$ are the following spaces:
  \begin{equation}\label{zhangjinchu1}
   \mathcal{W}=
   \big\{(T,y_0) ~:~ 0<T<\infty,~y_0\in X\setminus\{0\}\big\},
   \end{equation}
   and
    \begin{eqnarray}\label{TP-divide-domain}
 \mathcal V =
 \big\{(M,y_0) ~:~ 0<M<\infty,~y_0\in X\setminus\{0\}\big\}.
\end{eqnarray}
  In the domain $\mathcal{W}$, we define the following subsets:
  \begin{eqnarray}\label{zhangjinchu5}
\mathcal{W}_{1,1}&\triangleq&\{(T,y_0)\in \mathcal{W}_1 ~:~ T< T^0(y_0)\},\nonumber\\
\mathcal{W}_{1,2}&\triangleq&\{(T,y_0)\in \mathcal{W}_1 ~:~ T\geq T^0(y_0)\},
\end{eqnarray}
where
\begin{eqnarray}\label{zhangjinchu3-1}
\mathcal{W}_1\triangleq\{(T,y_0)\in \mathcal{W} ~:~ N(T^0(y_0),y_0)=0\};
\end{eqnarray}
\begin{eqnarray}\label{zhangjinchu7}
\mathcal{W}_{2,1}&\triangleq&\{(T,y_0)\in \mathcal{W}_2 ~:~ T< T^0(y_0)\},\nonumber\\
\mathcal{W}_{2,2}&\triangleq&\{(T,y_0)\in \mathcal{W}_2 ~:~ T= T^0(y_0)\},\\
\mathcal{W}_{2,3}&\triangleq&\{(T,y_0)\in \mathcal{W}_2 ~:~ T^0(y_0)<T<T^1(y_0)\},\nonumber\\
\mathcal{W}_{2,4}&\triangleq&\{(T,y_0)\in \mathcal{W}_2 ~:~ T^0(y_0)<T,\,T\geq T^1(y_0)\},\nonumber
\end{eqnarray}
where
\begin{eqnarray}\label{zhangjinchu3-2}
  \mathcal{W}_2 \triangleq\{(T,y_0)\in \mathcal{W} ~:~ 0<N(T^0(y_0),y_0)<\infty\};
\end{eqnarray}
\begin{eqnarray}\label{zhangjinchu9}
\mathcal{W}_{3,1}&\triangleq&\{(T,y_0)\in \mathcal{W}_3 ~:~ T^0(y_0)<\infty,\,T\leq T^0(y_0)\},\nonumber\\
\mathcal{W}_{3,2}&\triangleq&\{(T,y_0)\in \mathcal{W}_3 ~:~ T^0(y_0)<\infty,\,T^0(y_0)<T<T^1(y_0)\},\\
\mathcal{W}_{3,3}&\triangleq&\{(T,y_0)\in \mathcal{W}_3 ~:~ T^0(y_0)<\infty,\,T^0(y_0)<T,\,T\geq T^1(y_0)\},\nonumber\\
\mathcal{W}_{3,4}&\triangleq&\{(T,y_0)\in \mathcal{W}_3 ~:~ T^0(y_0)=\infty\},\nonumber
\end{eqnarray}
where
\begin{eqnarray}\label{zhangjinchu3}
  \mathcal{W}_3\triangleq\{(T,y_0)\in \mathcal{W} ~:~ N(T^0(y_0),y_0)=\infty\}.
\end{eqnarray}
 In the domain $\mathcal{V}$, we define the following subsets:
 \begin{eqnarray}\label{Lambda-di-0-1-1}
 \mathcal V_1  &\triangleq&  \{(M,y_0)\in \mathcal V  ~:~  N(T^0(y_0),y_0)=0\};
 \end{eqnarray}
 \begin{eqnarray}\label{Lambda-di-2-1}
 \mathcal V_{2,1}  &\triangleq&  \{(M,y_0)\in \mathcal V_2  ~:~  M\leq N(T^1(y_0),y_0)\},
      \nonumber\\
 \mathcal V_{2,2}  &\triangleq&  \{(M,y_0)\in \mathcal V_2  ~:~
 N(T^1(y_0),y_0) < M < N(T^0(y_0),y_0) \},
     \nonumber \\
 \mathcal V_{2,3}  &\triangleq&  \{(M,y_0)\in \mathcal V_2  ~:~  N(T^1(y_0),y_0) < M,\,M= N(T^0(y_0),y_0)\},
      \\
  \mathcal V_{2,4}  &\triangleq&  \{(M,y_0)\in \mathcal V_2  ~:~ N(T^1(y_0),y_0) < M,\, M> N(T^0(y_0),y_0)\},
      \nonumber
\end{eqnarray}
 where
 \begin{eqnarray}\label{Lambda-di-0-1-2}
 \mathcal V_2  \triangleq  \{(M,y_0)\in \mathcal V  ~:~  0<N(T^0(y_0),y_0)<\infty\};
 \end{eqnarray}
 \begin{eqnarray}\label{Lambda-di-3-1}
 \mathcal V_{3,1}  &\triangleq&  \{(M,y_0)\in \mathcal V_3  ~:~  T^0(y_0)<\infty,\,M\leq N(T^1(y_0),y_0)\},
      \nonumber\\
 \mathcal V_{3,2}  &\triangleq&  \{(M,y_0)\in \mathcal V_3  ~:~  T^0(y_0)<\infty,\,M> N(T^1(y_0),y_0)\},
      \\
 \mathcal V_{3,3}  &\triangleq&  \{(M,y_0)\in \mathcal V_3  ~:~  T^0(y_0) =\infty\},
     \nonumber
\end{eqnarray}
where
\begin{eqnarray}\label{Lambda-di-0-1}
  \mathcal V_3  \triangleq  \{(M,y_0)\in \mathcal V  ~:~  N(T^0(y_0),y_0)=\infty\}.
  \end{eqnarray}

The main results of this paper are presented in the following two theorems:

   \begin{Theorem}\label{Proposition-NTy0-partition}
Let $\mathcal{W}$ be given by (\ref{zhangjinchu1}). Let $\mathcal{W}_{1,j}$ ($j=1,2$), $\mathcal{W}_{2,j}$ ($j=1,2,3,4$), and $\mathcal{W}_{3,j}$ ($j=1,2,3,4$)
be given by (\ref{zhangjinchu5}), (\ref{zhangjinchu7}) and (\ref{zhangjinchu9}),
respectively.
Then the following conclusions  are true:

\noindent(i) The set $\mathcal{W}$ is the disjoint union of the above mentioned
subsets $\mathcal{W}_{i,j}$.


 \noindent(ii) For each $(T,y_0)\in \mathcal{W}_{1,2}\cup\mathcal{W}_{2,4}\cup\mathcal{W}_{3,3}$, $(NP)^{T,y_0}$
 has the bang-bang property and
 the null control is its unique minimal norm control.

\noindent(iii) Suppose that (H1) and (H2) hold. Then for each $(T,y_0)\in \mathcal{W}_{2,3}\cup\mathcal{W}_{3,2}$, $(NP)^{T,y_0}$ has  the bang-bang property and the null control is not a minimal norm control to this problem.

 \noindent(iv) For each $(T,y_0)\in \mathcal{W}_{1,1}\cup\mathcal{W}_{2,1}\cup\mathcal{W}_{3,1}\cup\mathcal{W}_{3,4}$, $(NP)^{T,y_0}$ has no any admissible control and does not hold the bang-bang property.

 \noindent (v) For each $(T,y_0)\in \mathcal{W}_{2,2}$,  $(NP)^{T,y_0}$ has at least one minimal norm control.


\end{Theorem}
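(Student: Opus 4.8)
\emph{Proof proposal.} Everything rests on a few elementary facts about $T\mapsto N(T,y_0)$ and the times $T^0(y_0),T^1(y_0)$, which I would record first. (a) $T\mapsto N(T,y_0)$ is nonincreasing on $(0,\infty)$ (Lemma~\ref{Lemma-NP-decreasing}), so by (\ref{N-infty-y0}) the value $N(T^0(y_0),y_0)$ is unambiguously defined in $[0,\infty]$ for every $y_0\in X\setminus\{0\}$; moreover a point of $\mathcal W_1\cup\mathcal W_2$ must have $T^0(y_0)<\infty$, since otherwise no admissible control exists at any finite time and $N(T^0(y_0),y_0)=N(\infty,y_0)=\infty$. (b) By the semigroup law and strong continuity of $t\mapsto S(t)y_0$, the set $\{t\geq0:S(t)y_0=0\}$ is empty or the closed half-line $[T^1(y_0),\infty)$; in particular $T\geq T^1(y_0)\Rightarrow S(T)y_0=0$. (c) If $T<T^0(y_0)$ then $(NP)^{T,y_0}$ has no admissible control, since extending one by zero would contradict (\ref{y0-controllable}); and if $T=T^0(y_0)\in(0,\infty)$ with $N(T^0(y_0),y_0)=\infty$, then $(NP)^{T,y_0}$ has no admissible control by convention. (d) If $N(T,y_0)=0$, choosing admissible $v_n$ with $\|v_n\|_{L^\infty(0,T;U)}\to0$ and using (\ref{Changshubianyi1.6}) and (\ref{admissible-control}) gives $S(T)y_0=-\int_0^{T}S_{-1}(T-\tau)Bv_n(\tau)\,\mathrm d\tau\to0$, so $S(T)y_0=0$ and the null control is admissible for $(NP)^{T,y_0}$. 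Granting (a)--(d), part (i) is bookkeeping: $\mathcal W_1,\mathcal W_2,\mathcal W_3$ split $\mathcal W$ according to whether $N(T^0(y_0),y_0)$ equals $0$, lies in $(0,\infty)$, or equals $\infty$; by (a) the relevant $T^0(y_0)$ are finite in $\mathcal W_1,\mathcal W_2$, so the refinements (\ref{zhangjinchu5})--(\ref{zhangjinchu9}) are the obvious exhaustive and pairwise disjoint case distinctions on the position of $T$ relative to $T^0(y_0)$ and, when $T>T^0(y_0)$, relative to $T^1(y_0)$.

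For part (ii) I would show $N(T,y_0)=0$ throughout $\mathcal W_{1,2}\cup\mathcal W_{2,4}\cup\mathcal W_{3,3}$: on $\mathcal W_{2,4}$ and $\mathcal W_{3,3}$ one has $T\geq T^1(y_0)$, hence $S(T)y_0=0$ by (b); on $\mathcal W_{1,2}$ one has $T\geq T^0(y_0)$ and $N(T^0(y_0),y_0)=0$, hence $N(T,y_0)=0$ by monotonicity (a). In each case (d) makes the null control an admissible control of norm $0=N(T,y_0)$, so it is a minimal norm control; every minimal norm control then has $L^\infty$-norm $0$, so it agrees a.e.\ with the null control, which gives uniqueness, and the identity $\|v^*(t)\|_U=N(T,y_0)=0$ holds trivially — the bang-bang property. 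Part (iv) is immediate from (c): on $\mathcal W_{1,1}$, $\mathcal W_{2,1}$, the subcase $T<T^0(y_0)$ of $\mathcal W_{3,1}$, and all of $\mathcal W_{3,4}$ (where $T^0(y_0)=\infty$) one has $T<T^0(y_0)$; on the remaining subcase $T=T^0(y_0)$ of $\mathcal W_{3,1}$ one has $N(T^0(y_0),y_0)=\infty$; either way $(NP)^{T,y_0}$ has no admissible control, so $N(T,y_0)=\infty$ and, by convention, the bang-bang property fails. For part (v): on $\mathcal W_{2,2}$ one has $0<N(T,y_0)<\infty$, so the set $\mathcal F\triangleq\{v\in L^\infty(0,T;U):\hat y(T;y_0,v)=0,\ \|v\|_{L^\infty(0,T;U)}\leq N(T,y_0)+1\}$ is nonempty and bounded, hence weak-$*$ relatively compact in $L^\infty(0,T;U)=(L^1(0,T;U))^*$; the affine map $v\mapsto\hat y(T;y_0,v)=S(T)y_0+\int_0^TS_{-1}(T-\tau)Bv(\tau)\,\mathrm d\tau$ is weak-$*$-to-weak continuous into $X$ (test against $D(A^*)$ by (\ref{NNNWWW2.1}), use (\ref{admissible-control}) for boundedness, then density of $D(A^*)$), so $\mathcal F$ is weak-$*$ closed, hence weak-$*$ compact; since $\|\cdot\|_{L^\infty(0,T;U)}$ is weak-$*$ lower semicontinuous it attains its infimum $N(T,y_0)$ over $\mathcal F$, which produces a minimal norm control.

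The core is part (iii). On $\mathcal W_{2,3}\cup\mathcal W_{3,2}$ one has $T^0(y_0)<T<T^1(y_0)$; first I would note $0<N(T,y_0)<\infty$, finiteness because $T>T^0(y_0)$ gives, via (\ref{y0-controllable}), some $t_1\in(T^0(y_0),T)$ and $u_1\in L^\infty(\mathbb R^+;U)$ with $y(t_1;y_0,u_1)=0$, so $u_1|_{(0,t_1)}$ extended by zero is admissible for $(NP)^{T,y_0}$; positivity because $T<T^1(y_0)$ gives $S(T)y_0\neq0$ by (b), hence $N(T,y_0)\neq0$ by (d). As in part (v) a minimal norm control $v^*$ exists, with $\|v^*\|_{L^\infty(0,T;U)}=N(T,y_0)=:M^*\in(0,\infty)$. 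For the bang-bang property I would run the classical separation/maximum-principle argument. Set $K\triangleq\{\int_0^TS_{-1}(T-\tau)Bv(\tau)\,\mathrm d\tau:\|v\|_{L^\infty(0,T;U)}\leq M^*\}$ and let $V$ be the closure in $X$ of the range of $v\mapsto\int_0^TS_{-1}(T-\tau)Bv(\tau)\,\mathrm d\tau$, $v\in L^2(0,T;U)$. Then $K$ is convex, symmetric, weakly compact, contains $-S(T)y_0$, and (by minimality of $M^*$ together with the scaling of $K$ by $M^*$) $-S(T)y_0$ does not lie in the relative interior of $K$ within $V$; supporting $K$ at $-S(T)y_0$ yields $z^*\in X$ with $z^*\notin V^\perp$ and $\langle z^*,w\rangle_X\leq\langle z^*,-S(T)y_0\rangle_X$ for all $w\in K$. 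Evaluating this with $w=\int_0^TS_{-1}(T-\tau)Bv(\tau)\,\mathrm d\tau$ and with $v=v^*$ gives
\[
\int_0^T\langle v^*(\tau),\widetilde{B^*S^*}(T-\tau)z^*\rangle_U\,\mathrm d\tau=\langle -S(T)y_0,z^*\rangle_X=M^*\int_0^T\|\widetilde{B^*S^*}(T-\tau)z^*\|_U\,\mathrm d\tau ,
\]
and Cauchy--Schwarz in $U$, with $\|v^*(\tau)\|_U\leq M^*$, forces $\|v^*(\tau)\|_U=M^*$ at a.e.\ $\tau$ where $\widetilde{B^*S^*}(T-\tau)z^*\neq0$. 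Since $\widetilde{B^*S^*}(T-\cdot)z^*\in Y_T$ — it is the $L^1(0,T;U)$-limit of $B^*S^*(T-\cdot)z_n$ for $z_n\in D(A^*)$, $z_n\to z^*$ in $X$ — and is not $\equiv0$ (because $z^*\notin V^\perp$), assumption (H2) gives $\widetilde{B^*S^*}(T-\tau)z^*\neq0$ for a.e.\ $\tau$, so $\|v^*(\tau)\|_U=M^*=N(T,y_0)$ a.e. — the bang-bang property. Finally $N(T,y_0)>0$ means the null control is not admissible, a fortiori not a minimal norm control. The main obstacle is the middle step: in infinite dimensions $K$ need not have nonempty interior inside $V$, so the supporting functional could a priori be trivial (lie in $V^\perp$) and the maximum principle degenerate. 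Assumption (H1) — that the effect on $(0,T)$ of a control supported on $(0,\hat t)$ is reproduced by an $L^\infty$ control supported on $(\hat t,T)$, for $T^0(y_0)<\hat t<T$ — is exactly what I would invoke here to force $z^*\notin V^\perp$; this is the place where, in the absence of a Kalman decomposition, (H1) replaces finite-dimensional linear algebra, and it is the crux of the argument.
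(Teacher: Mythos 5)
Your handling of (i), (ii), (iv) and (v) is correct and essentially matches (indeed, slightly streamlines) the paper's route: the elementary facts (a)--(d) you isolate are the content of Lemma~\ref{Lemma-NP-decreasing}, Lemma~\ref{Lemma-T0-T1}, Lemma~\ref{Lemma-NT0-T0-T1} and Lemma~\ref{wanglemma4.1}, and your compactness argument for (v) is the paper's existence argument. The problem is part (iii), and it sits exactly where the paper locates the main difficulty.

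For the bang-bang property you separate $-S(T)y_0$ from $K=B_{\mathcal R_T}(0,M^*)$ inside the $X$-closure $V$ of the reachable space and ask for a supporting vector $z^*\in X$ with $z^*\notin V^\perp$. This is the step that fails in general, and the paper says so explicitly in Remark~\ref{wang3.3}(ii). Since $K$ may have empty interior in $V$ (for internally controlled heat equations $\mathcal R_T$ is not closed in $X$ and $\|\cdot\|_{\mathcal R_T}$ is not equivalent to $\|\cdot\|_X$), the point $-S(T)y_0$ is a boundary point of $K$ only for the $\mathcal R_T$-norm; it need not be a support point of $K$ for $X$-continuous functionals, so Hahn--Banach applied in $X$ (or in $V$) either produces no separating functional at all or produces one that may lie in $V^\perp$, in which case the maximum identity is vacuous. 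You acknowledge this obstacle, but the claim that ``(H1) forces $z^*\notin V^\perp$'' is asserted without any mechanism, and I do not see one: (H1) is a statement about replacing controls supported on $(0,\hat t)$ by controls supported on $(\hat t,T)$, and it cannot repair a separation that may not exist. More structurally, the paper's own resolution shows that the correct multiplier should not be sought in $X$: the separating object $f^*$ in Theorem~\ref{Lemma-maximum-NP-yT} is an element of $Y_T\setminus\{0\}$, and the authors explicitly do not know whether such $f^*$ can be written as $\widetilde{B^*S^*}(T-\cdot)z^*$ for some $z^*\in X$, even when $B\in\mathcal L(U,X)$; insisting on a state-space vector $z^*$ may be asking for an object that does not exist. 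What replaces your missing step is the chain: Hahn--Banach separation carried out in the normed space $\mathcal R_T^0$, where the ball $B_{\mathcal R_T^0}(0,\|y_T\|_{\mathcal R_T})$ is genuinely non-degenerate (and one must first check $-S(T)y_0\in\mathcal R_T^0$, which uses $T>T^0(y_0)$); the identification $(\mathcal R_T^0)^*\cong Y_T$ under (H1) (Theorem~\ref{Theorem-the-second-representation-theorem}); transfer of the separation from $B_{\mathcal R_T^0}$ to $B_{\mathcal R_T}$ via the weak-star density of Proposition~\ref{Proposition-ball-RT0-RT}; and only then the pointwise maximum condition and (H2). Your final use of (H2), and the observation that $N(T,y_0)>0$ rules out the null control, are fine, but everything in the bang-bang argument before (H2) has to be rebuilt along the lines of Theorems~\ref{Lemma-maximum-NP-y0} and~\ref{yubiaotheorem5.6}.
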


\begin{Theorem}\label{Proposition-TMy0-partition}
Let $\mathcal V$ be given by (\ref{TP-divide-domain}). Let $\mathcal V_{1}$, $\mathcal V_{2,j}$ ($j=1,2,3,4$) and $\mathcal V_{3,j}$ ($j=1,2,3$) be given by (\ref{Lambda-di-0-1-1}), (\ref{Lambda-di-2-1}) and (\ref{Lambda-di-3-1}), respectively. Then the following conclusions are true:

\noindent (i) The set $\mathcal V$ is the disjoint union of $\mathcal V_1$ and the above mentioned subsets $\mathcal V_{i,j}$.


\noindent(ii) Suppose that (H1) and (H2) hold. Then for each $(M,y_0)\in \mathcal V_{2,2}\cup\mathcal V_{3,2}$, $(TP)^{M,y_0}$ has the bang-bang property.

\noindent(iii)  Suppose that (H1) holds. Then for each $(M,y_0)\in \mathcal V_{2,4}$, $(TP)^{M,y_0}$ has infinitely many different minimal time controls (not including the null control), and does not hold the bang-bang property.

\noindent(iv)  Suppose that (H1) holds. Then for each $(M,y_0)\in \mathcal V_{1}$, $(TP)^{M,y_0}$ has infinitely many different minimal time controls (including the null control), and does not hold the bang-bang property.

 \noindent(v) For each $(M,y_0)\in \mathcal V_{3,3}$, $(TP)^{M,y_0}$ has no any admissible control
 and does not hold the bang-bang property. If  assume that  (H1)  holds, then for each $(M,y_0)\in \mathcal V_{2,1}\cup\mathcal V_{3,1}$, $(TP)^{M,y_0}$ has no any admissible control
 and does not hold the bang-bang property.

 \noindent (vi) For each $(M,y_0)\in \mathcal{V}_{2,3}$,  $(TP)^{M,y_0}$ has at least one minimal time control.

\end{Theorem}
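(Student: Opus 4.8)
The plan is to deduce every assertion about $(TP)^{M,y_0}$ from the corresponding facts about the family of norm-optimal problems $\{(NP)^{t,y_0}\}_{t>0}$ recorded in Theorem~\ref{Proposition-NTy0-partition}, using the decreasing map $t\mapsto N(t,y_0)$ as the bridge. I would first establish, from the variation-of-constants formula (\ref{Changshubianyi1.6}) and the admissibility estimate (\ref{admissible-control}), the elementary facts that $N(t,y_0)=0$ if and only if $S(t)y_0=0$, that $T^0(y_0)=\inf\{t>0:N(t,y_0)<\infty\}$ and $T^1(y_0)=\inf\{t>0:N(t,y_0)=0\}\ge T^0(y_0)$, and the following \emph{duality dictionary}: $(TP)^{M,y_0}$ has an admissible control reaching the origin at time $\hat t$ if and only if either $N(\hat t,y_0)<M$, or $N(\hat t,y_0)=M$ and $(NP)^{\hat t,y_0}$ admits a minimal norm control; consequently, whenever $T(M,y_0)<\infty$, the minimal time controls of $(TP)^{M,y_0}$ are precisely the extensions by $0$ of the admissible controls $v\in L^\infty(0,T(M,y_0);U)$ of $(NP)^{T(M,y_0),y_0}$ with $\|v\|_{L^\infty(0,T(M,y_0);U)}\le M$, and in particular $N(T(M,y_0),y_0)\le M$. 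I would also establish (by a weak-$*$ compactness argument, invoking (H1) where necessary) that $t\mapsto N(t,y_0)$ is continuous on $(T^0(y_0),\infty)$, that $N(t,y_0)\to\infty$ as $t\downarrow T^0(y_0)$ when $N(T^0(y_0),y_0)=\infty$, and that $N(t,y_0)\to N(T^1(y_0),y_0)$ as $t\uparrow T^1(y_0)$ when $T^0(y_0)<T^1(y_0)<\infty$.

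With these preliminaries, part~(i) is bookkeeping: partition $\mathcal V$ according as $N(T^0(y_0),y_0)$ is $0$, lies in $(0,\infty)$, or is $\infty$ (giving $\mathcal V_1$, $\mathcal V_2$, $\mathcal V_3$); then refine $\mathcal V_2$ by comparing $M$ with $N(T^1(y_0),y_0)$ and, when $M>N(T^1(y_0),y_0)$, with $N(T^0(y_0),y_0)$; and refine $\mathcal V_3$ by whether $T^0(y_0)<\infty$ and, if so, whether $M\le N(T^1(y_0),y_0)$; disjointness and exhaustion follow at once from $N(T^1(y_0),y_0)\le N(T^0(y_0),y_0)$. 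For the nonexistence claims in part~(v): on $\mathcal V_{3,3}$ one has $T^0(y_0)=\infty$, hence $N(t,y_0)=\infty>M$ for every finite $t$, so no admissible control exists; on $\mathcal V_{2,1}\cup\mathcal V_{3,1}$, since $T^1(y_0)<\infty$ would force $N(T^1(y_0),y_0)=0\ge M>0$, one has $T^1(y_0)=\infty$ and thus $N(t,y_0)\ge N(\infty,y_0)=N(T^1(y_0),y_0)\ge M$ for all $t$; (H1) is then invoked (through an auxiliary lemma) to rule out the sole borderline case $N(t_0,y_0)=M=N(\infty,y_0)$ at some finite $t_0$ with $(NP)^{t_0,y_0}$ possessing a minimal norm control, so that $(TP)^{M,y_0}$ has no admissible control at all. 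Part~(vi) is the opposite endpoint situation: on $\mathcal V_{2,3}$ one has $0<N(T^0(y_0),y_0)=M<\infty$ and $T^0(y_0)<\infty$, so $(T^0(y_0),y_0)\in\mathcal W_{2,2}$; Theorem~\ref{Proposition-NTy0-partition}(v) then yields a minimal norm control $v^*$ of $(NP)^{T^0(y_0),y_0}$ with $\|v^*\|_{L^\infty}=M$, and since $N(t,y_0)=\infty$ for $t<T^0(y_0)$ we get $T(M,y_0)=T^0(y_0)$ and the extension by $0$ of $v^*$ is a minimal time control.

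Parts~(iii) and~(iv) rest on a single construction. In both, $N(T^0(y_0),y_0)<M$ (equal to $0$ in~(iv)) with $T^0(y_0)\in(0,\infty)$, so $N(t,y_0)=\infty$ for $t<T^0(y_0)$ gives $T(M,y_0)=T^0(y_0)$, attained by some admissible control $v$ with $\|v\|_{L^\infty}<M$ --- with $v=0$ in case~(iv), since $N(T^0(y_0),y_0)=0$ forces $S(T^0(y_0))y_0=0$. Fixing $\hat t\in(0,T^0(y_0))$ and a nonzero $w\in L^\infty(0,T^0(y_0);U)$ supported in $(0,\hat t)$, assumption (H1) produces $v_w\in L^\infty(0,T^0(y_0);U)$ supported in $(\hat t,T^0(y_0))$ with $\hat y(T^0(y_0);0,w-v_w)=0$, so $\delta:=w-v_w$ is a nonzero control steering $0$ to $0$ at time $T^0(y_0)$; then $v+c\delta$ is, for every sufficiently small $c\ge 0$, a minimal time control of $(TP)^{M,y_0}$, and letting $c$ vary gives infinitely many of them. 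In case~(iv) the choice $c=0$ returns the null control, whereas in case~(iii) none of these equals the null control --- otherwise $v=-c\delta$ would steer $y_0$ to $0$ at $T^0(y_0)$ while also steering $0$ to $0$ there, forcing $S(T^0(y_0))y_0=0$ and hence $N(T^0(y_0),y_0)=0$, contradicting $\mathcal V_2$; the bang-bang property fails in both cases because $\|v\|_{L^\infty}<M$. Finally, for part~(ii): on $\mathcal V_{2,2}\cup\mathcal V_{3,2}$ the strict inequalities $N(T^1(y_0),y_0)<M<N(T^0(y_0),y_0)$ (with $N(T^0(y_0),y_0)=\infty$ and $T^0(y_0)<\infty$ in the $\mathcal V_{3,2}$ case), together with the continuity of $N(\cdot,y_0)$ and its limits at the endpoints, give, via the intermediate value theorem, that $T(M,y_0)$ is finite, lies strictly between $T^0(y_0)$ and $T^1(y_0)$, and satisfies $N(T(M,y_0),y_0)=M$; hence $(T(M,y_0),y_0)\in\mathcal W_{2,3}\cup\mathcal W_{3,2}$. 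By Theorem~\ref{Proposition-NTy0-partition}(iii) the problem $(NP)^{T(M,y_0),y_0}$ has the bang-bang property, so every admissible control of it of norm $\le M=N(T(M,y_0),y_0)$ has norm identically $M$; by the dictionary these admissible controls are exactly the minimal time controls of $(TP)^{M,y_0}$, which therefore has the bang-bang property.

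The step I expect to be the main obstacle is pinning down the behavior of $t\mapsto N(t,y_0)$ near the endpoints $T^0(y_0)$ and $T^1(y_0)$: proving that it is continuous up to $T^0(y_0)$ when $N(T^0(y_0),y_0)<\infty$ and has the stated limit at $T^1(y_0)$ (so that in part~(ii) the minimal time $T(M,y_0)$ really falls strictly inside $(T^0(y_0),T^1(y_0))$, landing in the cell $\mathcal W_{2,3}\cup\mathcal W_{3,2}$ where the bang-bang property of the norm problem is available, rather than collapsing onto an endpoint where it is not), together with controlling the non-attainment of the infimum defining $N$ in the borderline regimes $M=N(T^0(y_0),y_0)$ and $M=N(\infty,y_0)$ entering parts~(v) and~(vi). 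These continuity and non-attainment statements are where assumption (H1) does its real work outside the explicit construction in parts~(iii)--(iv), and I would isolate them as separate lemmas before carrying out the case analysis above.
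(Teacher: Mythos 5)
Your overall route is the paper's: everything about $(TP)^{M,y_0}$ is pulled back to the norm problems through the decreasing map $t\mapsto N(t,y_0)$, its strict monotonicity and continuity on $(T^0(y_0),T^1(y_0))$ under (H1), and the time--norm equivalence (in the paper these are Proposition~\ref{wang-prop3.3}, Corollary~\ref{wangcorollary3.8} and Lemma~\ref{the-equivalence}); your treatment of (i), (v), (vi) matches the paper's Corollaries \ref{wangbuchoutheorem3.13}--\ref{wangbuchoutheorem3.13-3} and Theorems \ref{Theorem-ex-T0y0-infty}--\ref{Theorem-ex-op-TP}, including the observation that (H1) enters part (v) only to kill the borderline case $M=N(T^1(y_0),y_0)=N(\infty,y_0)$. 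In (ii) you bypass the explicit maximum principle for $(TP)^{M,y_0}$ (the paper's Theorem~\ref{Theorem-time-variant-maximum-principle}) and transfer the bang-bang property of $(NP)^{T(M,y_0),y_0}$ directly through the restriction correspondence; this is a mild streamlining of the same content, since the paper's Theorem~\ref{Theorem-time-variant-bangbang} does exactly ``restrict, apply the NP principle, apply (H2)''. The genuine divergence is in (iii)--(iv): where the paper proves that the space $\mathcal O_{T^0(y_0)}$ of null-steering $L^\infty$ controls is infinite dimensional (Lemma~\ref{infinite-T}, via a Hahn--Banach separation from the closed proper subspace $Y_{T,t_1,t_2}$, yielding Theorems \ref{Theorem-TP-infinite-op}--\ref{Theorem-TP-infinite-op-1} with linear independence), you extract a single nonzero $\delta=w-v_w\in\mathcal O_{T^0(y_0)}$ directly from (H1) and perturb $v+c\delta$. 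This is simpler and fully sufficient for the theorem as stated (which only asks for infinitely many \emph{different} minimal time controls), though it yields less than the paper's linear-independence statement.

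Two points to tighten. First, in (iii) the parenthetical ``not including the null control'' is, in the paper, the statement that the null control is \emph{not} a minimal time control (Corollary \ref{wangbuchoutheorem3.13}(iii)); you only show your family $\{v+c\delta\}$ avoids it. The fix is your own argument applied once more: if $0$ were a minimal time control, then $S(T^0(y_0))y_0=y(T(M,y_0);y_0,0)=0$, so $T^1(y_0)\le T^0(y_0)$ and hence $N(T^0(y_0),y_0)=0$, contradicting $(M,y_0)\in\mathcal V_2$. Second, be aware that the ``preliminary lemmas'' you defer (continuity and strict decrease of $N(\cdot,y_0)$ on $(T^0(y_0),T^1(y_0))$, the endpoint limits, and the identity $M=N(T(M,y_0),y_0)$ with $T^0(y_0)<T(M,y_0)<T^1(y_0)$) are not routine weak-$*$ compactness exercises: in the paper they occupy Proposition~\ref{wang-prop3.3} and Corollary~\ref{wangcorollary3.8} and rest on the representation theorems and the weak-star compactness of $B_{Y_T}$ (Corollary~\ref{Corollary-weakly-star-compact}), which is where (H1) does its hidden work; you correctly flag this as the main obstacle, but a complete write-up would need that machinery (or the paper's results) in place before your case analysis goes through.
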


\vskip 5 pt

\begin{Remark}\label{1217-Remark-critical-point}

\begin{figure}[!thb]\label{Fig.Process}
 \includegraphics[scale=.6]{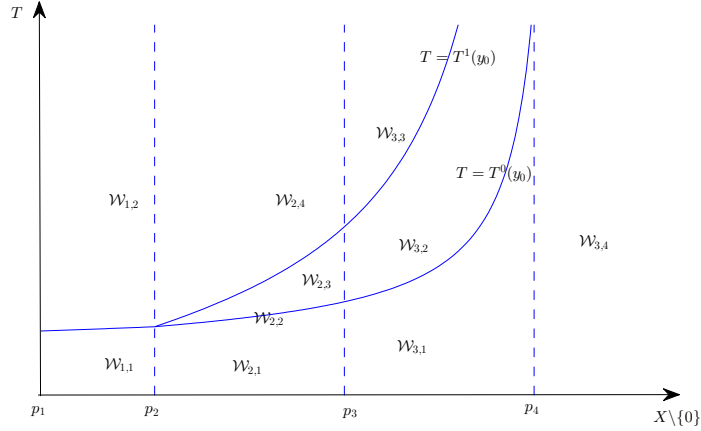}
\centering\caption{The BBP decomposition for $(NP)^{T,y_0}$}
\end{figure}

  To make the BBP decomposition for $(NP)^{T,y_0}$ (i.e., the decomposition of $\mathcal{W}$ given by Theorem~\ref{Proposition-NTy0-partition})  understood better,  a draft is given in  Figure 1. We  explain it as follows: The abscissa axis   denotes the set $X\setminus\{0\}$, while the
      ordinates axis  denotes  the set of time variables $T>0$. Each  $p_i$ (with $i=1,2,3,4$) on the abscissa axis is a  ``point" of the set $X\setminus\{0\}$.

In figure 1, some notations are explained as follows:
\begin{itemize}
  \item  $(p_1,p_2]$ denotes the set:
   $\{y_0\in X\setminus\{0\}:\,N(T^0(y_0),y_0)=0\}$.
     \item  $(p_2,p_3)$ denotes the  set:
   $\{y_0\in X\setminus\{0\}:\,0<N(T^0(y_0),y_0)<\infty\}$.
     \item  $[p_3,p_4)$ denotes the set:
  $\{y_0\in X\setminus\{0\}:\,N(T^0(y_0),y_0)=\infty,\,T^0(y_0)<\infty\}$.
    \item  $[p_4,\infty)$ denotes the  set:
$\{y_0\in X\setminus\{0\}:\,N(T^0(y_0),y_0)=\infty,\,T^0(y_0)=\infty\}$.
  \item The two curves above the abscissa axis (from the left to the right) respectively  denote the graph of the  functions:
      $y_0\rightarrow T^1(y_0),\;\;y_0\in X\setminus\{0\}$ and
      $y_0\rightarrow T^0(y_0),\;\;y_0\in X\setminus\{0\}$.
          These two curves coincide  over $(p_1,p_2]$.
\end{itemize}
Let $\mathcal{W}_{1,j}$ ($j=1,2$), $\mathcal{W}_{2,j}$ ($j=1,2,3,4$), and $\mathcal{W}_{3,j}$ ($j=1,2,3,4$)
be given by (\ref{zhangjinchu5}), (\ref{zhangjinchu7}) and (\ref{zhangjinchu9}),
respectively. Then we conclude that
\begin{itemize}
  \item The set $\mathcal W_{1,1}$ is  the region $\{(T,y_0)~:~y_0\in (p_1,p_2],~0<T<T^0(y_0) \}$;
  \item The set $\mathcal W_{1,2}$ is  the region $\{(T,y_0)~:~y_0\in (p_1,p_2],~T^0(y_0)\leq T<\infty \}$;
  \item The set $\mathcal W_{2,1}$ is  the region $\{(T,y_0)~:~y_0\in (p_2,p_3),~0<T<T^0(y_0) \}$;
  \item The set $\mathcal W_{2,2}$ is  the region $\{(T,y_0)~:~y_0\in (p_2,p_3),~T=T^0(y_0) \}$;
  \item The set $\mathcal W_{2,3}$ is  the region $\{(T,y_0)~:~y_0\in (p_2,p_3),~T^0(y_0)<T<T^1(y_0) \}$;
  \item The set $\mathcal W_{2,4}$ is  the region $\{(T,y_0)~:~y_0\in (p_2,p_3),~T^1(y_0)\leq T<\infty \}$;
  \item The set $\mathcal W_{3,1}$ is  the region $\{(T,y_0)~:~y_0\in [p_3,p_4),~0<T\leq T^0(y_0) \}$;
  \item The set $\mathcal W_{3,2}$ is  the region $\{(T,y_0)~:~y_0\in [p_3,p_4),~T^0(y_0)<T<T^1(y_0) \}$;
  \item The set $\mathcal W_{3,3}$ is  the region $\{(T,y_0)~:~y_0\in [p_3,p_4),~T^1(y_0)\leq T<\infty \}$;
  \item The set $\mathcal W_{3,4}$ is  the region $\{(T,y_0)~:~y_0\in [p_4,\infty),~0<T<\infty \}$;
  \item When $\{S(t)\}_{t\in\mathbb R^+}$ has the backward uniqueness property, we have that $T^1(y_0)=\infty$ for all $y_0\in X\setminus\{0\}$. In this case,  the curve:  $\{(y_0, T^1(y_0))\;:\; y_0\in X\setminus\{0\}\}$ will not appear in Figure 1;  $\mathcal W_{1,1} \cup \mathcal W_{1,2} \cup \mathcal W_{2,4} \cup \mathcal W_{3,3}=\emptyset$ (see (iv) of Lemma \ref{Lemma-NT0-T0-T1}).
\end{itemize}

\begin{figure}[!thb]
 \includegraphics[scale=.6]{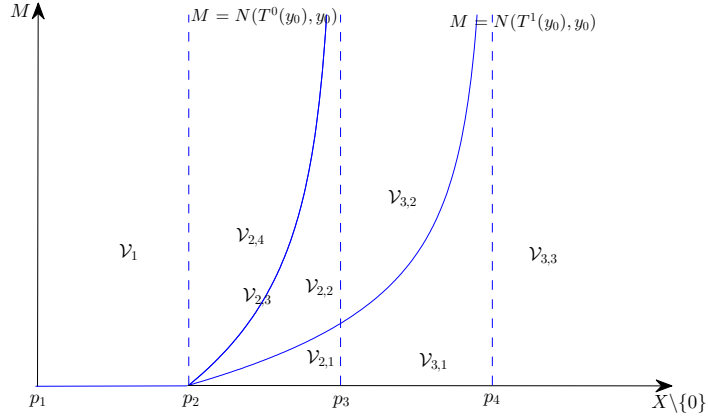}
\centering\caption{The BBP decomposition for  $(TP)^{M,y_0}$}
\end{figure}

 To make the BBP decomposition for $(TP)^{M,y_0}$ (i.e., the decomposition of $\mathcal{V}$ given by Theorem~\ref{Proposition-TMy0-partition})  understood better,  a draft is given  in Figure 2. We  explain this figure as follows: The abscissa axis   denotes the set $X\setminus\{0\}$, while the
      ordinates axis  denotes  the  variables $M>0$. Each  $p_i$, with $i=1,2,3,4$, on the abscissa axis is a ``point" of the set $X\setminus\{0\}$.

In figure 2, some notations are given in order.
\begin{itemize}
  \item  $(p_1,p_2]$ denotes  the set:
   $\{y_0\in X\setminus\{0\}:\,N(T^0(y_0),y_0)=0\}$.
     \item  $(p_2,p_3)$ denotes the set:
  $\{y_0\in X\setminus\{0\}:\,0<N(T^0(y_0),y_0)<\infty\}$.
   \item  $[p_3,p_4)$ denotes the set:
$\{y_0\in X\setminus\{0\}:\,N(T^0(y_0),y_0)=\infty,\,T^0(y_0)<\infty\}$.
  \item  $[p_4,\infty)$ denotes the set:
$\{y_0\in X\setminus\{0\}:\,N(T^0(y_0),y_0)=\infty,\,T^0(y_0)=\infty\}$.
  \item The two curves above the abscissa axis (denoted by $F_0$ and $F_1$ from the left to the right) respectively denote  the graphs of the functions:
       $y_0\rightarrow N(T^0(y_0),y_0),\;y_0\in X\setminus\{0\}$
       and $y_0\rightarrow N(T^1(y_0),y_0),\; y_0\in X\setminus\{0\}$.
      These two curves are identically zero  over $(p_1,p_2]$.
      \end{itemize}
Let $\mathcal{V}_{1}$, $\mathcal{V}_{2,j}$ ($j=1,2,3,4$), and $\mathcal{V}_{3,j}$ ($j=1,2,3$)
be given by (\ref{Lambda-di-0-1-1}), (\ref{Lambda-di-2-1}) and (\ref{Lambda-di-3-1}),
respectively. Then we have the following conclusions:
\begin{itemize}
  \item The set $\mathcal V_{1}$ is  the region $\{(M,y_0)~:~ y_0\in (p_1,p_2],~0<M<\infty\}$;
  \item The set $\mathcal V_{2,1}$ is  the region $\{(M,y_0)~:~ y_0\in (p_2,p_3),~0<M \leq F_1(y_0)\}$;
  \item The set $\mathcal V_{2,2}$ is  the region $\{(M,y_0)~:~ y_0\in (p_2,p_3),~F_1(y_0)<M<F_0(y_0)\}$;
  \item The set $\mathcal V_{2,3}$ is  the region $\{(M,y_0)~:~ y_0\in (p_2,p_3),~M=F_0(y_0),~M\neq F_1(y_0)\}$;
  \item The set $\mathcal V_{2,4}$ is  the region $\{(M,y_0)~:~ y_0\in (p_2,p_3),~F_0(y_0)<M<\infty\}$;
  \item The set $\mathcal V_{3,1}$ is  the region $\{(M,y_0)~:~ y_0\in [p_3,p_4),~0<M \leq F_1(y_0)\}$;
  \item The set $\mathcal V_{3,2}$ is  the region $\{(M,y_0)~:~ y_0\in [p_3,p_4),~F_1(y_0)<M<\infty\}$;
  \item The set $\mathcal V_{3,3}$ is  the region $\{(M,y_0)~:~ y_0\in [p_4,\infty),~0<M <\infty\}$.
\end{itemize}

 \end{Remark}

 \begin{Remark}
 (i) The decomposition given by  Theorem \ref{Proposition-NTy0-partition}
 is comparable with the decomposition (P2) in Subsection 1.1, except for the part
 $\mathcal{W}_{2,2}$, which is indeed the following ``curve" in the product space
  $\mathcal{W}$:
 \begin{equation}\label{zhangjinchu1-5}
    \gamma_1\triangleq \big\{(T^0(y_0), y_0)\in \mathcal{W}
    \; :\; 0<N(T^0(y_0),y_0)<\infty\big\}.
   \end{equation}
   It is a critical curve  in the following sense: First, we do not know if it is empty. Second, when $(T,y_0)\in\gamma_1$,  we know the corresponding $(NP)^{T,y_0}$ has
    at least one minimal norm control, but we are not sure if it has
    the bang-bang property. It deserves to mention that when $(A,B)$ is  $L^\infty$-null controllable, this curve is empty (see Theorem \ref{Proposition-Corollary1.7}).

 (ii) The decomposition given by  Theorem \ref{Proposition-TMy0-partition}
 is comparable with the decomposition (P1) in Subsection 1.1, except for the part
 $\mathcal{V}_{2,3}$, which is indeed the following ``curve" in the product space
  $\mathcal{V}$:
 \begin{eqnarray}\label{TP-divide-domain-left}
 \gamma_2  \triangleq
 \left\{\big(N(T^0(y_0),y_0),y_0\big)\in \mathcal{V} ~:~
 \begin{array}{l}
  0<N(T^0(y_0),y_0)<\infty,\\
  N(T^0(y_0),y_0)\neq N(T^1(y_0),y_0)
  \end{array}
  \right\}.
\end{eqnarray}
   It is a critical curve in the BBP decomposition for $(TP)^{M,y_0}$ in the following sense: First, we do not know if it is empty. Second, when $(M,y_0)\in\gamma_2$, we  know the corresponding $(TP)^{M,y_0}$ has a minimal time control, but we are not sure if it has  the bang-bang property. It deserves to mention that when $(A,B)$ is  $L^\infty$-null controllable, this curve is empty (see Theorem \ref{Proposition-Corollary1.7}).
 \end{Remark}

 \begin{Remark}
 In the finitely dimensional setting where $(A,B)$ is a pair in $\mathbb R^{n\times n}\times (\mathbb R^{n \times m}\setminus\{0\})$, (with $n,m\in\mathbb N^+$),  the BBP decompositions for $(TP)^{M,y_0}$ and $(NP)^{T,y_0}$, obtained in Theorem \ref{Proposition-NTy0-partition} and Theorem \ref{Proposition-TMy0-partition}, are exactly the same as (P1) and (P2) in Subsection 1.1. This is proved in Appendix C
(see Proposition~\ref{NEWYearprop8.4}).

 \end{Remark}

Under the assumptions (H3) and (H4), the main results obtained in Theorem \ref{Proposition-NTy0-partition} and Theorem \ref{Proposition-TMy0-partition} can be improved as follows:

%
%

\begin{Theorem}\label{Proposition-Corollary1.7}
 Let $\mathcal{W}$  and $\mathcal W_{3,j}$ ($j=2,3$) be given by (\ref{zhangjinchu1}) and (\ref{zhangjinchu9}), respectively. Let $\mathcal{V}$  and $\mathcal{V}_{3,j}$ ($j=1,2$) be given by    (\ref{TP-divide-domain})  and  (\ref{Lambda-di-3-1}), respectively. Then
 the following conclusions are true:

\noindent(i) Suppose that (H3) holds. Then
\begin{eqnarray}\label{no-critical-point-2}
 \mathcal W=\mathcal W_{3,2} \cup \mathcal{W}_{3,3}   \;\;\mbox{and}\;\;
 \mathcal V=\mathcal V_{3,1} \cup \mathcal V_{3,2}.
\end{eqnarray}
In particular,
\begin{eqnarray}\label{no-critical-point-3}
 \gamma_1=\mathcal{W}_{2,2}=\emptyset
 \;\;\mbox{and}\;\;   \gamma_2=\mathcal{V}_{2,3}=\emptyset.
\end{eqnarray}
where $\gamma_1$, $\gamma_2$, $\mathcal{W}_{2,2}$ and $\mathcal{V}_{2,3}$ are given respectively by (\ref{zhangjinchu1-5}), (\ref{TP-divide-domain-left}), (\ref{zhangjinchu7}) and (\ref{Lambda-di-2-1}).

\noindent(ii) Suppose that (H3) holds.
Then for each $(M,y_0)\in  \mathcal V_{3,1}$,   $(TP)^{M,y_0}$ has no any admissible control and does not hold the bang-bang property. If further assume that (H4) holds, then
 for each $(M,y_0)\in\mathcal V_{3,2}$, $(TP)^{M,y_0}$ has the bang-bang property.

\noindent(iii) For each $(T,y_0)\in\mathcal W_{3,3}$, the null control is the unique minimal norm control to $(NP)^{T,y_0}$ and this problem has the bang-bang property. If further assume  that (H3) and (H4) hold,  then for each $(T,y_0)\in\mathcal W_{3,2}$, $(NP)^{T,y_0}$ has the bang-bang property and the null control is not a minimal norm control to this problem.

\end{Theorem}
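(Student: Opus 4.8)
The plan is to deduce Theorem~\ref{Proposition-Corollary1.7} almost entirely from Theorem~\ref{Proposition-NTy0-partition} and Theorem~\ref{Proposition-TMy0-partition}, by exploiting two structural facts: first, (H3) is equivalent to the $L^\infty$-null controllability of $(A,B)$ (Proposition~\ref{newhuangproposition2.15}), which by (i) of Remark~\ref{YUYONGYUremark1.2} forces $T^0(y_0)=0$ for every $y_0\in X$; second, (H3) implies (H1) (Proposition~\ref{WGSWGSlemma2.15}), and (H3) together with (H4) implies (H2) (Proposition~\ref{Lemma-YT-bounded}). Consequently, under (H3) — and additionally (H4) where needed — all hypotheses occurring in Theorems~\ref{Proposition-NTy0-partition}--\ref{Proposition-TMy0-partition} are available, and the theorem reduces to a careful bookkeeping of which index sets survive once $T^0\equiv0$.

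For part (i), the one genuinely analytic ingredient I would establish is that $N(0,y_0)=\infty$ for every $y_0\in X\setminus\{0\}$. I would argue from the admissibility estimate (\ref{admissible-control}): if $v\in L^\infty(0,t;U)$ steers $y_0$ to $0$ in time $t$, then $S(t)y_0=-\int_0^t S_{-1}(t-\tau)Bv(\tau)\,\mathrm d\tau$, so $\|S(t)y_0\|_X\le C_1(t)\|v\|_{L^2(0,t;U)}\le C_1(t)\sqrt{t}\,\|v\|_{L^\infty(0,t;U)}$; since the admissibility constant can be taken nondecreasing (so $C_1(t)\le C_1(1)$ for $t\le 1$) and $\|S(t)y_0\|_X\to\|y_0\|_X>0$ by strong continuity, one gets $N(t,y_0)\ge\|S(t)y_0\|_X/(C_1(1)\sqrt{t})\to\infty$ as $t\to 0^+$, i.e. $N(0,y_0)=\infty$ by (\ref{N-infty-y0}). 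Then, under (H3), $N(T^0(y_0),y_0)=N(0,y_0)=\infty$ for all $y_0\ne 0$, so $\mathcal W=\mathcal W_3$, $\mathcal V=\mathcal V_3$, and $\mathcal W_1=\mathcal W_2=\mathcal V_1=\mathcal V_2=\emptyset$; in particular $\gamma_1=\mathcal W_{2,2}=\emptyset$ and $\gamma_2=\mathcal V_{2,3}=\emptyset$. Finally I would substitute $T^0(y_0)=0$ into (\ref{zhangjinchu9}) and (\ref{Lambda-di-3-1}): every $(T,y_0)\in\mathcal W$ has $T>0=T^0(y_0)$, so the condition $T\le T^0(y_0)$ defining $\mathcal W_{3,1}$ is vacuous, giving $\mathcal W_{3,1}=\emptyset$; also $T^0(y_0)=\infty$ never holds, so $\mathcal W_{3,4}=\emptyset$ and $\mathcal V_{3,3}=\emptyset$. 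This yields $\mathcal W=\mathcal W_{3,2}\cup\mathcal W_{3,3}$ and $\mathcal V=\mathcal V_{3,1}\cup\mathcal V_{3,2}$ (the latter two need not be empty, since $N(T^1(y_0),y_0)$ may be positive), which is exactly (\ref{no-critical-point-2})--(\ref{no-critical-point-3}).

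For part (ii): (H3) implies (H1) by Proposition~\ref{WGSWGSlemma2.15}; since under (H3) one has $\mathcal V_{2,1}=\emptyset$, part (v) of Theorem~\ref{Proposition-TMy0-partition} applied on $\mathcal V_{2,1}\cup\mathcal V_{3,1}$ shows $(TP)^{M,y_0}$ has no admissible control, hence no bang-bang property, for $(M,y_0)\in\mathcal V_{3,1}$. Assuming also (H4), Proposition~\ref{Lemma-YT-bounded} supplies (H2), so part (ii) of Theorem~\ref{Proposition-TMy0-partition} applied on $\mathcal V_{2,2}\cup\mathcal V_{3,2}=\mathcal V_{3,2}$ gives the bang-bang property on $\mathcal V_{3,2}$. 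For part (iii): part (ii) of Theorem~\ref{Proposition-NTy0-partition}, which needs no extra hypothesis, applied on $\mathcal W_{1,2}\cup\mathcal W_{2,4}\cup\mathcal W_{3,3}$, shows the null control is the unique minimal norm control of $(NP)^{T,y_0}$ and the problem has the bang-bang property, for $(T,y_0)\in\mathcal W_{3,3}$. Assuming (H3) and (H4), hence (H1) and (H2), part (iii) of Theorem~\ref{Proposition-NTy0-partition} applied on $\mathcal W_{2,3}\cup\mathcal W_{3,2}=\mathcal W_{3,2}$ gives the bang-bang property together with the fact that the null control is not a minimal norm control, for $(T,y_0)\in\mathcal W_{3,2}$.

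The hard part here will be comparatively minor. Modulo the already-proved Theorems~\ref{Proposition-NTy0-partition}--\ref{Proposition-TMy0-partition} and the implications among (H1)--(H4), the argument is essentially a reorganization of index sets, so the one point that deserves real care is the claim $N(0,y_0)=\infty$, which rests on keeping the admissibility constant $C_1(t)$ bounded as $t\to 0^+$; if one prefers not to invoke that $C_1(\cdot)$ can be chosen nondecreasing, one would replace this with a direct argument that, on a fixed interval $[0,1]$, no family of inhomogeneous terms $\int_0^t S_{-1}(t-\tau)Bv(\tau)\,\mathrm d\tau$ with $\|v\|_{L^\infty}$ bounded can remain bounded away from $0$ as $t\to 0^+$ while matching $-S(t)y_0\to-y_0$. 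The remaining, purely cosmetic, care is to verify that the substitution $T^0\equiv0$ empties exactly $\mathcal W_{3,1},\mathcal W_{3,4},\mathcal V_{3,3}$ and no other piece, which is what makes the clean identities (\ref{no-critical-point-2})--(\ref{no-critical-point-3}) hold.
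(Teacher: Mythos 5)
Your proposal is correct and follows essentially the same route as the paper: under (H3) one gets $T^0\equiv 0$ via Proposition~\ref{newhuangproposition2.15} and $N(T^0(y_0),y_0)=N(0,y_0)=\infty$, which empties $\mathcal W_1,\mathcal W_2,\mathcal W_{3,1},\mathcal W_{3,4},\mathcal V_1,\mathcal V_2,\mathcal V_{3,3}$, and parts (ii)--(iii) are then read off from Theorems~\ref{Proposition-NTy0-partition} and \ref{Proposition-TMy0-partition} after converting (H3) into (H1) (Proposition~\ref{WGSWGSlemma2.15}) and (H3)+(H4) into (H2) (Proposition~\ref{Lemma-YT-bounded}). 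The only divergence is your direct admissibility-constant argument for $N(0,y_0)=\infty$, which is valid but unnecessary since this is already (iv) of Lemma~\ref{Lemma-T0-T1}, which the paper simply cites.
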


\subsection{The  ideas to get the main results}

The main difficulty to get the BBP decompositions of $(TP)^{M,y_0}$ and $(NP)^{T,y_0}$ is  the lack of the Kalman controllability decomposition. The first key to overcome this difficulty is to find assumptions (H1) and (H2). Then with the aid of functions $T^0(\cdot)$, $T^1(\cdot)$ and $N(\cdot, y_0)$, we
get the conclusions (i) in both Theorem~\ref{Proposition-NTy0-partition} and
Theorem \ref{Proposition-TMy0-partition}. In the decomposition of $\mathcal{W}$, the part
 $\mathcal W_{2,2}=\gamma_1$ is a critical curve for us; the studies for the problem $(NP)^{T,y_0}$, with $(T,y_0)\in\mathcal W_{2,3}\cup\mathcal W_{3,2}$, are not easy for us; when $(T,y_0)$ is in  the rest parts,
 it is easy to prove the corresponding conclusions in Theorem \ref{Proposition-NTy0-partition} for  $(NP)^{T,y_0}$, through using properties of  functions $T^0(\cdot)$, $T^1(\cdot)$ and $N(\cdot, y_0)$. The proof of the  corresponding conclusion in Theorem \ref{Proposition-NTy0-partition}  for  $(NP)^{T,y_0}$, with $(T,y_0)\in \mathcal W_{2,3}\cup \mathcal W_{3,2}$,
 is mainly based on a maximum principle for $(NP)^{T,y_0}$, as well as (H2). To get the maximum principle, we  build up the   following affiliated minimal norm problems:
 \begin{eqnarray}\label{attianable-space-norm}
  (NP)^{y_T}\;\;\;\;\;\; \|y_T\|_{\mathcal R_T} \triangleq
  \inf\big\{\|v\|_{L^\infty(0,T;U)}\; :\; \hat{y}(T;0,v)=y_T\big\},
 \end{eqnarray}
where $T\in(0,\infty)$ and $y_T$ is in the   reachable  subspace
 \begin{eqnarray}\label{attainable-space}
   \mathcal R_T \triangleq \big\{\hat{y}(T;0,v)\; :\; v\in L^\infty(0,T;U) \big\}.
 \end{eqnarray}
 (In the problem $(NP)^{y_T}$, we can define the minimal norm,  an admissible control, a  minimal norm control and the bang-bang property in the  similar manner as in $({NP})^{T,y_0}$ (see  (\ref{NP-0})).
  By the connection between  $(NP)^{y_T}$ and $(NP)^{T,y_0}$ built up in Proposition~\ref{Lemma-NP-yT-y0-eq}, we realized that the maximum principle for
  $(NP)^{T,y_0}$ can be derived from a maximum principle for $(NP)^{y_T}$.
  Though we are not able to get a maximum principle of $(NP)^{y_T}$ for all  $y_T\in \mathcal R_T$, we  get a maximum principle for $(NP)^{y_T}$, with $y_T$ in the subspace:
\begin{eqnarray}\label{R0T}
  \mathcal R^0_T\triangleq
  \big\{\hat{y}(T;0,v)\; :\; v\in L^\infty(0,T;U),\,\lim_{s\rightarrow T}\|v\|_{L^\infty(s,T;U)}=0 \big\},
  \;\;T\in(0,\infty).
 \end{eqnarray}
More precisely, we obtain that if (H1) holds, then for each $y_T\in \mathcal{R}_T^0\setminus\{0\}$, there exists a vector $ f^*\in Y_T\setminus\{0\}$ so that  each minimal norm control $v^*$ to $(NP)^{y_T}$ verifies that
  \begin{eqnarray}\label{wang1.17}
   \langle v^*(t),f^*(t) \rangle_U=\max_{\|w\|_U\leq \|y_T\|_{\mathcal R_T}} \langle w,f^*(t) \rangle_U~\mbox{ a.e. }t\in (0,T).
  \end{eqnarray}
(This is exactly Theorem~\ref{Lemma-maximum-NP-yT}.) About (\ref{wang1.17}), we would like to mention two facts: First, it is not the standard Pontryagin maximum principle, since we are not sure if the function $f^*$ in (\ref{wang1.17})  can   be expressed as $B^*\varphi$ with $\varphi$ a solution of the adjoint equation, even in the case that $B\in \mathcal{L}(U,X)$. Second, the proof of (\ref{wang1.17}) is the most difficult part in this paper. It is based on two  representation theorems (Theorem~\ref{Theorem-the-representation-theorem} and Theorem~\ref{Theorem-the-second-representation-theorem}). From  (\ref{wang1.17}) and the connection between $(NP)^{y_T}$ and $(NP)^{T,y_0}$ built up in Proposition~\ref{Lemma-NP-yT-y0-eq}, we get  the  maximum principle for $(NP)^{T,y_0}$, with
 $(T,y_0)\in \mathcal W_{2,3}\cup \mathcal W_{3,2}$, which along with (H2), yields
 that when $(T,y_0)\in \mathcal W_{2,3}\cup \mathcal W_{3,2}$, $(NP)^{T,y_0}$ has the bang-bang property.

 Regarding the  decomposition of $\mathcal{V}$, the part
 $\mathcal V_{2,3}=\gamma_2$ is a critical curve for us; the studies for the problem $(TP)^{M,y_0}$, with $(M,y_0)\in\mathcal V_{2,2}\cup \mathcal V_{3,2}$, are not easy for us; when $(M,y_0)$ is in  the rest parts,
 it is easy to prove the corresponding conclusions in Theorem \ref{Proposition-TMy0-partition} for  $(TP)^{M,y_0}$, through using properties of  functions $T^0(\cdot)$, $T^1(\cdot)$ and $N(\cdot, y_0)$, as well as the assumption (H1). The proof of the  corresponding conclusion in Theorem \ref{Proposition-TMy0-partition}  for  $(TP)^{M,y_0}$, with $(M,y_0)\in \mathcal V_{2,2}\cup \mathcal V_{3,2}$,
 is mainly based on a maximum principle for $(TP)^{M,y_0}$, as well as (H2). This maximum principle  follows from the above-mentioned maximum principle for $(NP)^{T,y_0}$, with
 $(T,y_0)\in \mathcal W_{2,3}\cup \mathcal W_{3,2}$, as well as the connection between
 $(TP)^{M,y_0}$ and $(NP)^{T,y_0}$ built up in Lemma~\ref{the-equivalence}.

 \begin{Remark}\label{yubiaoremark1.10}
 The reason to cause curves $\gamma_1$ and $\gamma_2$ to be critical is that
 in general,
 we do not know if $(NP)^{y_T}$, with $y_T\in \mathcal{R}_T\setminus \mathcal{R}_T^0$, has the maximum principle (\ref{wang1.17}), under the assumption (H1).
   \end{Remark}

\subsection{More about the bang-bang properties}

To the best of our knowledge,  there are two ways to derive the bang-bang property for  minimal time control problems governed by linear evolution systems, in general.    The first one is the use of the $L^\infty$-null controllability from measurable sets.
      In \cite[Section 2.1]{HOF}, H. O. Fattorini
   studied the minimal time control problem for the  abstract system:
    \begin{equation}\label{HF1.15}
    y^\prime(t)=Ay(t)+u(t),~t> 0,
    \end{equation}
   with $A$ generating a $C_0$-semigroup in a Banach space.
   This corresponds to (\ref{system-0}) with $U=X$ and $B=Id_X$ (the identity operator on a Banach space $X$).
   By a constructive method, he proved that the reachable sets of (\ref{HF1.15}) have the following property: For any subset $E\subset(0,\infty)$ of positive measure,
  $\mathcal R_{T,E}=\mathcal R_T$ for a.e. $T\in E$,
   where
   $ \mathcal R_{T,E} \triangleq \{ y(T;0,\chi_E u) \,:\,  u\in L^\infty(\mathbb R^+;U)\}$.    From this property, he proved the bang-bang property by a contradiction argument.  In \cite{MS}, V. Mizel and T. Seidman pointed out  that the bang-bang property of minimal time control problems for linear time-invariant evolution systems can be derived by the $L^\infty$-null controllability from measurable sets. Indeed, by this controllability and by a translation invariance which holds only for  time invariant systems, one can  use a contradiction argument to prove the bang-bang property. However, it seems for us that this way does not work for the case where controlled systems are time-varying.  In \cite{WXZ}, the authors proved the bang-bang property of minimal time control problems for some very special time-varying heat equations.  To our best knowledge, how to study the bang-bang property of minimal time control problems for general time-varying systems is still a quite open problem.
    For studies on the $L^\infty$-null controllability from measurable sets, we  would like to mention the literatures \cite{AEWZ,MS,PW,PW1,PWCZ,Wang,WCZ,CZ-1} and the references therein.

    The second way is the use of the Pontryagin maximum principle and the unique  continuation property  from measurable sets in time. The key  is to derive the Pontryagin maximum principle. We would like to mention that the Pontryagin maximum principle may not hold for  some cases (see Example 1.4 on Page 132 in \cite{LY}). In \cite[Chapter 2]{HOF},  H. O. Fattorini studied the Pontryagin maximum principle for both minimal time and minimal norm control problems, with an initial state $\zeta$ and a target state $\bar y$, for the system (\ref{HF1.15}). He first proved the property that for each $T>0$,
     $ D(A)$ is continuously embedded into $\mathcal R_T$.
     Then, with the aid of this property, he divided the dual space of $\mathcal R_T$ into  {\it the regular part} and {\it the singular part}. After that, he proved that if
     $\bar y-S(T^*)\zeta\in \overline{D(A)}$,
       then
       $\bar y-S(T^*)\zeta$  and $B_{\mathcal{R}_{T^*}}(0,1)$
                can be separated by a hyperplane (in $\mathcal R_{T^*}$), with a regular normal vector. (Here,  $T^*$ is the minimal time,  $B_{\mathcal{R}_{T^*}}(0,1)$ is the closed unit ball in $\mathcal R_{T^*}$ and the controls for the minimal time control problem are within $L^\infty$-norm not larger than 1.) Finally, with the help of the aforementioned separating property, he obtained the Pontryagin maximum principle.
         By the second way, one might get the bang-bang property of minimal time control problems for the linear time-varying evolution systems which hold some unique continuation property.

         For the minimal norm control problems governed by linear time-varying evolution systems,
         the $L^\infty$-null controllability from measurable sets implies the bang-bang property. Though the paper \cite{PW} proves this only for heat equations with time-varying lower terms, the method in \cite{PW} works for general linear time-varying evolution systems.

   About studies  on
minimal time and minimal norm control problems, we would like to mention the literatures
\cite{ Arada-Raymond,   Barbu, carja,  HOF, HOF1, HOF2, HOF3, HOF4, HOF5, GL, ITO, KL1, KL2,  LW, LM,  LZ, LWX, QL, MRT, MZ,  MS,  Petrov, PW, PWCZ, PWZ, EJPGS, C.S.E.T, Wang, WX-1, WXZ, WCZ, WZheng, WZ, Y,
CZ-1, CZ-2, Guo-Zh} and the references therein.

    The rest part of this paper is organized as follows: Section 2
     studies some properties on the subspaces $\mathcal{R}_T$ and $\mathcal{R}_T^0$.
      Section 3 shows some properties of functions $N(\cdot,y_0)$, $T^0(\cdot)$ and $T^1(\cdot)$. Section 4 studies the existence of minimal time and minimal norm controls. Section 5 studies maximum principles and  bang-bang properties.
       Section 6 proves the main results.  Section 7  gives some applications.
Section 8 provides several appendixes.

 \bigskip
 \section{Properties on attainable subspaces}

 In this section, we  mainly  study the properties on the subspaces
 $\mathcal R_T$ and $\mathcal R_T^0$ given by (\ref{attainable-space}) and (\ref{R0T}), respectively. These properties mainly help us to get a maximum principle for   the affiliated minimal norm problem $(NP)^{y_T}$, with $y_T\in \mathcal R_T^0$.
   The later is the base in the proofs of (iii) of Theorem~\ref{Proposition-NTy0-partition} and (ii) of Theorem~\ref{Proposition-TMy0-partition}.

 \subsection{The first representation theorem}

 In this subsection, we will present a  representation theorem  on the space $Y_T^*$ which is the dual space of $Y_T$ (defined by (\ref{ob-space})). This theorem was built up for heat equations in  \cite[(i) of Theorem 1.4]{WXZ}.
 To prove it, we need the following two results: Proposition~\ref{huangwanghenproposition2.1} and Lemma~\ref{ad-control-ob}.
 Very similar versions of these two results are given in \cite[Section 2.3.1]{Coron}.
 For the sake of the completeness of the paper, we give their  proofs in Appendix D.

\begin{Proposition}\label{huangwanghenproposition2.1}
The following  equality is valid:
\begin{eqnarray}\label{NNNWWW2.2}
\big \langle \int_0^T S_{-1}(T-t)Bv(t) \,\mathrm dt, z \big\rangle _X
=  \int_0^T \langle v(t), B^*S^*(T-t)z \rangle_U  \,\mathrm dt
\end{eqnarray}
 for all $T\in(0,\infty)$,  $v\in L^\infty(0,T;U)$ and $z\in D(A^*)$.

\end{Proposition}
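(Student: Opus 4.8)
The plan is to deduce \eqref{NNNWWW2.2} from the elementary duality of the rigged triple $D(A^*)\hookrightarrow X\hookrightarrow X_{-1}$, together with the interchange of a continuous linear functional and a Bochner integral. First I would record the duality facts I need (all standard for admissible control operators, and collected in Lemma~\ref{ad-control-ob}). Write $\langle\cdot,\cdot\rangle$ for the duality pairing between $X_{-1}=D(A^*)'$ and $D(A^*)$; under the pivot-space identification it extends $\langle\cdot,\cdot\rangle_X$, i.e.\ $\langle x,\zeta\rangle=\langle x,\zeta\rangle_X$ whenever $x\in X$ and $\zeta\in D(A^*)$. Since $B\in\mathcal L(U,X_{-1})$, its adjoint $B^*\in\mathcal L(D(A^*),U)$ satisfies $\langle Bu,\zeta\rangle=\langle u,B^*\zeta\rangle_U$ for all $u\in U$ and $\zeta\in D(A^*)$. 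Finally, $\{S^*(t)\}$ leaves $D(A^*)$ invariant, and $\{S_{-1}(t)\}$, being the extension of $\{S(t)\}$ to $X_{-1}$, intertwines with $\{S^*(t)\}$ in the pairing: for $s\ge0$, $\xi\in X_{-1}$ and $\zeta\in D(A^*)$,
\[
\langle S_{-1}(s)\xi,\zeta\rangle=\langle\xi,S^*(s)\zeta\rangle .
\]
(For $\xi\in X$ this is just $\langle S(s)\xi,\zeta\rangle_X=\langle\xi,S^*(s)\zeta\rangle_X$; the general case follows by density of $X$ in $X_{-1}$, both sides being continuous in $\xi\in X_{-1}$.)

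Next, fix $T\in(0,\infty)$, $v\in L^\infty(0,T;U)$ and $z\in D(A^*)$. Since $S^*(T-t)z\in D(A^*)$, the three facts above yield, for a.e.\ $t\in(0,T)$,
\[
\langle S_{-1}(T-t)Bv(t),z\rangle=\langle Bv(t),S^*(T-t)z\rangle=\langle v(t),B^*S^*(T-t)z\rangle_U .
\]
The right-hand side is integrable on $(0,T)$: $s\mapsto S^*(s)z$ is continuous from $[0,T]$ into $D(A^*)$ in the graph norm, so $t\mapsto B^*S^*(T-t)z$ is continuous into $U$, and $v\in L^\infty(0,T;U)$. On the other hand, $t\mapsto S_{-1}(T-t)Bv(t)$ is strongly measurable and, using $\sup_{0\le s\le T}\|S_{-1}(s)\|_{\mathcal L(X_{-1})}<\infty$ together with $\|Bv(t)\|_{X_{-1}}\le\|B\|_{\mathcal L(U,X_{-1})}\|v\|_{L^\infty(0,T;U)}$, it is essentially bounded, hence Bochner integrable as an $X_{-1}$-valued map on $(0,T)$. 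Therefore the continuous linear functional $\langle\cdot,z\rangle$ on $X_{-1}$ commutes with the integral, so that
\[
\Big\langle\int_0^T S_{-1}(T-t)Bv(t)\,\mathrm dt,\ z\Big\rangle=\int_0^T\langle v(t),B^*S^*(T-t)z\rangle_U\,\mathrm dt .
\]
Finally, by the admissibility assumption \eqref{admissible-control} the integral $\int_0^T S_{-1}(T-t)Bv(t)\,\mathrm dt$ belongs to $X$, so its pairing with $z$ coincides with the inner product $\langle\int_0^T S_{-1}(T-t)Bv(t)\,\mathrm dt, z\rangle_X$. This is exactly \eqref{NNNWWW2.2}.

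I do not anticipate a genuine obstacle: everything reduces to the bookkeeping of the rigged triple $D(A^*)\hookrightarrow X\hookrightarrow X_{-1}$, the definition of $B^*$ as the adjoint of $B\in\mathcal L(U,X_{-1})$, and the routine interchange of a bounded functional with a Bochner integral; the hypothesis \eqref{admissible-control} is used only to guarantee that the left-hand integral lands in $X$, so that the $X_{-1}$-pairing may be rewritten as an $X$-inner product. A slightly more self-contained alternative is to prove \eqref{NNNWWW2.2} first for $U$-valued step functions $v$ — where it is a finite-sum identity obtained from $\langle S_{-1}(s)Bu,z\rangle=\langle u,B^*S^*(s)z\rangle_U$ — and then to pass to general $v\in L^\infty(0,T;U)$ by density, controlling the left-hand side via \eqref{admissible-control} and the right-hand side via the continuity of $t\mapsto B^*S^*(T-t)z$; this replaces the Bochner-interchange step by a short approximation argument.
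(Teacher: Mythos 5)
Your proposal is correct and follows essentially the same route as the paper's proof in Appendix D: rewrite the $X$-inner product as the $X_{-1}$--$D(A^*)$ duality pairing, pull the pairing through the $X_{-1}$-valued Bochner integral, and use the density-of-$X$ argument to identify $(S_{-1})^*(s)z$ with $S^*(s)z$ before applying $B^*\in\mathcal L(D(A^*),U)$. Your explicit remark that \eqref{admissible-control} places the integral in $X$ (so the pairing is the $X$-inner product) is the same point the paper uses implicitly in its first display.
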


\begin{Lemma}\label{ad-control-ob}
  For each $T\in(0,\infty)$, there exists a positive constant $C(T)$ so that
  \begin{eqnarray}\label{admissible-observable}
   \int_0^{T} \|B^* S^*(T-\tau)z\|_U^2 \,\mathrm d\tau \leq C(T) \|z\|_X^2\;\;\mbox{for all}\;\; z\in D(A^*).
 \end{eqnarray}
 \end{Lemma}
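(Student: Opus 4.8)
The plan is to deduce the estimate directly from the admissibility hypothesis (\ref{admissible-control}) on $B$ together with the duality identity in Proposition~\ref{huangwanghenproposition2.1}, thereby avoiding any appeal to the complex–Hilbert–space result \cite[Theorem 4.4.3]{TW}. Fix $T\in(0,\infty)$ and $z\in D(A^*)$. The first step is to check that the map $\tau\mapsto B^*S^*(T-\tau)z$ is already a well-behaved $U$-valued function on $[0,T]$: since $B\in\mathcal L(U,X_{-1})$ with $X_{-1}=D(A^*)'$, its adjoint $B^*$ is bounded from $D(A^*)$ (with the graph norm) into $U$; moreover $\{S^*(t)\}$ is a $C_0$-semigroup on $X$ which leaves $D(A^*)$ invariant and whose restriction to $D(A^*)$ is strongly continuous for the graph norm, so $\tau\mapsto S^*(T-\tau)z$ is continuous from $[0,T]$ into $D(A^*)$. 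Hence $g_z(\cdot)\triangleq B^*S^*(T-\cdot)z$ is continuous on $[0,T]$, in particular bounded, so $g_z\in L^\infty(0,T;U)\subset L^2(0,T;U)$. This is the only point that needs care; once it is in hand the rest is a short duality argument.

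Next, for an arbitrary $v\in L^\infty(0,T;U)$, I would combine Proposition~\ref{huangwanghenproposition2.1}, the Cauchy--Schwarz inequality in $X$, and the admissibility estimate (\ref{admissible-control}) to get
\[
\Big|\int_0^T\langle v(\tau),g_z(\tau)\rangle_U\,\mathrm d\tau\Big|
=\Big|\big\langle\textstyle\int_0^T S_{-1}(T-\tau)Bv(\tau)\,\mathrm d\tau,\,z\big\rangle_X\Big|
\le C_1(T)\,\|v\|_{L^2(0,T;U)}\,\|z\|_X .
\]

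Finally, I would take $v=g_z$, which is a legitimate choice by the first step, to obtain $\|g_z\|_{L^2(0,T;U)}^2\le C_1(T)\,\|g_z\|_{L^2(0,T;U)}\,\|z\|_X$. If $\|g_z\|_{L^2(0,T;U)}=0$ the claimed inequality is trivial; otherwise, dividing by $\|g_z\|_{L^2(0,T;U)}$ and squaring yields $\int_0^T\|B^*S^*(T-\tau)z\|_U^2\,\mathrm d\tau\le C_1(T)^2\|z\|_X^2$, so the lemma holds with $C(T)\triangleq C_1(T)^2$. The main (mild) obstacle is the first step — one must know that $g_z$ is a genuine $L^2(0,T;U)$ function before it can be used as a test control in the duality bound — and this is precisely where the inclusion $B\in\mathcal L(U,X_{-1})$ and the invariance of $D(A^*)$ under $\{S^*(t)\}$ enter.
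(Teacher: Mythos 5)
Your proof is correct and follows essentially the same route as the paper's: both rest on the duality identity of Proposition~\ref{huangwanghenproposition2.1} combined with the admissibility estimate (\ref{admissible-control}) and Cauchy--Schwarz, the only difference being that the paper writes $\|B^*S^*(T-\cdot)z\|_{L^2(0,T;U)}$ as a supremum over the $L^2$ unit ball while you test directly against $g_z$ itself. Your preliminary observation that $g_z=B^*S^*(T-\cdot)z$ is continuous, hence in $L^\infty(0,T;U)$, is a welcome extra bit of care, since Proposition~\ref{huangwanghenproposition2.1} is stated only for $L^\infty$ controls.
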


 \begin{Theorem}\label{Theorem-the-representation-theorem}
  ~For each $T\in(0,\infty)$, there is a linear isomorphism $\Phi_T$ from $\mathcal R_T$ to $Y_T^*$ so that for all $y_T\in\mathcal R_T$ and $f\in Y_T$,
  \begin{eqnarray}\label{ob-attain-0}
   \langle y_T,f \rangle_{\mathcal R_T,Y_T}\triangleq  \langle \Phi_T(y_T),f\rangle_{Y_T^*,Y_T}=\int_0^T \langle v(t),f(t) \rangle_U \,\mathrm dt,
  \end{eqnarray}
  where $v$ is any  admissible control to $(NP)^{y_T}$.
 \end{Theorem}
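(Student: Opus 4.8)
The plan is to build $\Phi_T$ by hand on the dense subspace $X_T\subset Y_T$ and then extend it by continuity, the pairing $\langle y_T,f\rangle$ being defined through \emph{any} admissible control of $(NP)^{y_T}$. First I would fix $T\in(0,\infty)$ and recall from (\ref{Changshubianyi1.6}) (with $y_0=0$) that $\hat y(T;0,v)=\int_0^T S_{-1}(T-t)Bv(t)\,\mathrm dt$ for every $v\in L^\infty(0,T;U)$, so that $\mathcal R_T$ is precisely the image of $L^\infty(0,T;U)$ under the map $v\mapsto\hat y(T;0,v)$. Given $y_T\in\mathcal R_T$ and an admissible control $v$ of $(NP)^{y_T}$, I would consider the functional $f\mapsto\int_0^T\langle v(t),f(t)\rangle_U\,\mathrm dt$ on $X_T$. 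By Proposition~\ref{huangwanghenproposition2.1}, when $f=B^*S^*(T-\cdot)z$ with $z\in D(A^*)$ this integral equals $\bigl\langle\int_0^T S_{-1}(T-t)Bv(t)\,\mathrm dt,\,z\bigr\rangle_X=\langle y_T,z\rangle_X$; in particular it does not depend on the choice of admissible control $v$. The bound $\bigl|\int_0^T\langle v,f\rangle_U\,\mathrm dt\bigr|\le\|v\|_{L^\infty(0,T;U)}\,\|f\|_{L^1(0,T;U)}$, combined with the definition of $\|y_T\|_{\mathcal R_T}$ as an infimum, shows this functional is bounded on $X_T$ for the $L^1(0,T;U)$-norm, with norm at most $\|y_T\|_{\mathcal R_T}$; since $X_T$ is dense in $Y_T$, it extends uniquely to an element $\Phi_T(y_T)\in Y_T^*$. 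Then (\ref{ob-attain-0}) for an arbitrary $f\in Y_T$ and an arbitrary admissible control $v$ follows, because $f\mapsto\int_0^T\langle v,f\rangle_U\,\mathrm dt$ is a bounded functional on all of $L^1(0,T;U)$ that agrees with $\Phi_T(y_T)$ on the dense subspace $X_T$.

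Next I would check that $\Phi_T$ is linear and injective. Linearity is immediate: if $v_i$ is admissible for $(NP)^{y_T^i}$ ($i=1,2$), then $\alpha v_1+\beta v_2$ is admissible for $(NP)^{\alpha y_T^1+\beta y_T^2}$, and the pairing is linear in the control. For injectivity, if $\Phi_T(y_T)=0$ then, choosing $f=B^*S^*(T-\cdot)z$ in (\ref{ob-attain-0}), we get $\langle y_T,z\rangle_X=0$ for every $z\in D(A^*)$; since $D(A^*)$ is dense in $X$, this forces $y_T=0$.

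The main obstacle is surjectivity of $\Phi_T$. Here I would use that $Y_T$, being the $L^1(0,T;U)$-closure of $X_T$, is a closed subspace of $L^1(0,T;U)$, so any $\phi\in Y_T^*$ extends by the Hahn--Banach theorem to a bounded functional on $L^1(0,T;U)$; since $U$ is a Hilbert space and hence has the Radon--Nikod\'ym property, this extension is represented by some $g\in L^\infty(0,T;U)$, i.e.\ $\phi(f)=\int_0^T\langle g(t),f(t)\rangle_U\,\mathrm dt$ for all $f\in Y_T$. Setting $y_T:=\hat y(T;0,g)\in\mathcal R_T$, the function $g$ is by construction an admissible control of $(NP)^{y_T}$, so (\ref{ob-attain-0}) with $v=g$ gives $\langle\Phi_T(y_T),f\rangle_{Y_T^*,Y_T}=\int_0^T\langle g,f\rangle_U\,\mathrm dt=\phi(f)$ first for $f\in X_T$ and then, by density, for all $f\in Y_T$; hence $\Phi_T(y_T)=\phi$.

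Finally I would record that $\Phi_T$ is a topological isomorphism. From the first paragraph $\|\Phi_T(y_T)\|_{Y_T^*}\le\|y_T\|_{\mathcal R_T}$, so $\Phi_T$ is continuous. Moreover $\mathcal R_T$ is a Banach space, being the quotient of $L^\infty(0,T;U)$ by the kernel of $v\mapsto\hat y(T;0,v)$, which is closed thanks to the admissibility estimate (\ref{admissible-control}); since $\Phi_T$ is a continuous linear bijection between Banach spaces, the open mapping theorem yields that $\Phi_T^{-1}$ is bounded as well. (Alternatively, one may verify directly that $\Phi_T$ is an isometry, using the dual characterization of $\|y_T\|_{\mathcal R_T}$ as the supremum of $|\langle y_T,z\rangle_X|/\|B^*S^*(T-\cdot)z\|_{L^1(0,T;U)}$ over $z\in D(A^*)$ with $B^*S^*(T-\cdot)z\neq 0$.) I expect the delicate points to be the representation of $Y_T^*$ through $L^\infty(0,T;U)$ and the observation that the Hahn--Banach representative $g$ is itself an admissible control; everything else reduces to Proposition~\ref{huangwanghenproposition2.1} and routine density arguments.
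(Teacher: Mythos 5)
Your construction is correct and, in its core, identical to the paper's: you define the functional on the dense subspace $X_T$ through an arbitrary admissible control, use Proposition~\ref{huangwanghenproposition2.1} to see that its values equal $\langle y_T,z\rangle_X$ and hence are independent of the control, extend by density with the bound $\|\Phi_T(y_T)\|_{Y_T^*}\le\|y_T\|_{\mathcal R_T}$, and obtain surjectivity exactly as the paper does, via a Hahn--Banach extension to $L^1(0,T;U)$ and the $L^1$--$L^\infty$ duality, noting that the representing $g\in L^\infty(0,T;U)$ is itself an admissible control for $\hat y(T;0,g)$. The only place you diverge is the last step: the paper shows that $\Phi_T$ \emph{preserves norms} (using that the Hahn--Banach/Riesz representative can be chosen with $\|\hat v\|_{L^\infty(0,T;U)}=\|g\|_{Y_T^*}$, which gives $\|\hat y_T\|_{\mathcal R_T}\le\|\widetilde{\mathcal F}_{\hat y_T}\|_{Y_T^*}$), and then deduces completeness of $(\mathcal R_T,\|\cdot\|_{\mathcal R_T})$ as a remark; you instead prove completeness of $\mathcal R_T$ directly (identifying it isometrically with the quotient of $L^\infty(0,T;U)$ by the closed kernel of $v\mapsto\hat y(T;0,v)$, the kernel being closed by (\ref{admissible-control})) and invoke the open mapping theorem. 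Your route is valid and yields a topological isomorphism, which suffices for the statement as worded; be aware, though, that the paper's proof records the stronger isometric identity $\|y_T\|_{\mathcal R_T}=\|\Phi_T(y_T)\|_{Y_T^*}$, which is used later (e.g.\ in Proposition~\ref{proposition2.3}), and that in your argument this identity is in fact already available if you keep track of the norm-preserving choice of the Hahn--Banach extension in your surjectivity step rather than citing the dual characterization of $\|\cdot\|_{\mathcal R_T}$ (which in the paper is itself a consequence of this theorem, so using it would be circular).
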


 \begin{proof}
 Arbitrarily fix a $T\in (0,\infty)$.  It follows from (\ref{admissible-observable}) that
 \begin{equation}\label{huairenhuaishi2.8}
 B^*S^*(T-\cdot)z\in L^1(0,T;U)\;\;\mbox{for each}\;\;z\in D(A^*).
  \end{equation}
 For each $y_T\in \mathcal{R}_T$, define the following set of admissible controls to $(NP)^{y_T}$:
 \begin{equation}\label{HuangHuangHuang2.9}
 \mathcal U_{ad}^{y_T}\triangleq\{v\in L^\infty(0,T;U)\; :\; \hat y(T;0,v)=y_T\}.
 \end{equation}
 Observe from (\ref{attainable-space}) and (\ref{HuangHuangHuang2.9}) that
  $  \mathcal U_{ad}^{y_T}\neq\emptyset$ for each $y_T\in \mathcal R_T$,
  and that
  $  y_T=\hat y(T;0,v)$ for each $y_T\in \mathcal R_T$ and each $v\in\mathcal U_{ad}^{y_T}$.
   These, along with   (\ref{NNNWWW2.1})  and (\ref{huairenhuaishi2.8}), yields  that for each $y_T\in \mathcal R_T$, $z\in D(A^*)$ and each
  $v\in\mathcal U_{ad}^{y_T}$,
   \begin{eqnarray}\label{ob-attain-11}
  \langle y_T,z \rangle_X&=&\int_0^T \langle v(t),B^*S^*(T-t)z \rangle_U \,\mathrm dt \nonumber\\
     &\leq& \|v(\cdot)\|_{L^\infty(0,T;U)} \|B^*S^*(T-\cdot)z\|_{L^1(0,T;U)}.
 \end{eqnarray}

 Arbitrarily fix a $y_T\in \mathcal R_T$ and then fix a $v_1\in\mathcal U_{ad}^{y_T}$.
 Then we define a map $\mathcal{F}_{y_T}: X_T\rightarrow \mathbb{R}$ in the following manner:
  \begin{eqnarray}\label{ob-attain-12}
  \mathcal F_{y_T}\big(B^*S^*(T-\cdot)z|_{(0,T)}\big) &\triangleq&\int_0^T \langle v_1(t),B^*S^*(T-t)z \rangle_U \,\mathrm dt\;\;\mbox{for each}\;\;z\in D(A^*),~~~
 \end{eqnarray}
 where $X_T$ is given by  (\ref{assumptionspace3}).
  Because of the first equality in (\ref{ob-attain-11}), we see from (\ref{ob-attain-12})  that the definition of $\mathcal{F}_{y_T}$ is
  independent of the choice of $v_1\in\mathcal U_{ad}^{y_T}$. Thus it is well-defined.
   From (\ref{ob-attain-12}), the  inequality in (\ref{ob-attain-11}) and (\ref{ob-space}), we find that $\mathcal F_{y_T}$ can be uniquely extended to be an element  $\widetilde{\mathcal F}_{y_T}\in Y_T^*$. Furthermore, we have that
   $\|\widetilde{\mathcal F}_{y_T}\|_{Y_T^*} \leq \|v\|_{L^\infty(0,T;U)}$ for all $v\in\mathcal U_{ad}^{y_T}$.
  Since $y_T\in \mathcal R_T$ was arbitrarily fixed, the above estimate, along with (\ref{attianable-space-norm}), yields that
 \begin{eqnarray}\label{ob-attain-13}
  \|\widetilde{\mathcal F}_{y_T}\|_{Y_T^*} \leq \inf\{\|v\|_{L^\infty(0,T;U)}\; :\; v\in\mathcal U_{ad}^{y_T}\} =
  \|y_T\|_{\mathcal R_T}\;\;\mbox{for all}\;\;y_T\in\mathcal R_T.
 \end{eqnarray}

 We now define a map $\Phi_T:~\mathcal R_T \longrightarrow Y_T^*$  in the following manner:
 \begin{eqnarray}\label{ob-attain-14}
      \Phi_T(y_T)= \widetilde{\mathcal F}_{y_T}
      \;\;\mbox{for each}\;\; y_T\in\mathcal R_T.
 \end{eqnarray}
It is clear that $\Phi_T$ is well defined and linear.  We claim that
 $\Phi_T$ is surjective. Arbitrarily take
   $g\in Y_T^*$. Since $Y_T\subset L^1(0,T;U)$ (see (\ref{ob-space})),  according to  the Hahn-Banach theorem, there exists a $\widetilde g\in \big(L^1(0,T;U)\big)^*$ so that
$\widetilde g(\psi)=g(\psi)$ for all $\psi\in Y_T$; and so that $\|\widetilde g\|_{\mathcal L(L^1(0,T;U);\mathbb R)}=\|g\|_{Y_T^*}$.
 Then by the Riesz representation theorem, there is $\widehat v$ in $L^\infty(0,T;U)$ so that
 \begin{eqnarray}\label{ob-attain-15}
  \int_0^T \langle \widehat v(t),B^*S^*(T-t)z\rangle_U \,\mathrm dt= g\big(B^*S^*(T-\cdot)z|_{(0,T)}\big)
  \;\;\mbox{for all}\;\;z\in D(A^*)~~~
 \end{eqnarray}
 and so that
 \begin{eqnarray}\label{ob-attain-16}
  \|\widehat v\|_{L^\infty(0,T;U)}= \|g\|_{Y_T^*}.
 \end{eqnarray}
 Write $\hat y_T\triangleq \hat{y}(T;0,\widehat v)$.
 Then  $\hat v\in \mathcal{U}_{ad}^{\hat y_T}$ (see (\ref{HuangHuangHuang2.9})). This, together with  (\ref{ob-attain-15}), (\ref{ob-attain-12}) and (\ref{ob-space}), indicates that
 $  g=\widetilde{\mathcal{F}}_{\hat y_T}$  in $Y_T^*$,
  which, along with (\ref{ob-attain-14}), shows  that $\Phi_T$ is surjective.

  We now show that $\Phi_T$ is    injective. Let $y_T\in \mathcal R_T$ satisfy that
    $  \widetilde{\mathcal{F}}_{y_T}=0$ in $Y_T^*$.
     Then by (\ref{ob-attain-12}) and (\ref{ob-attain-11}), we find that
     $ \langle y_T,z\rangle_X=0$ for all $z\in D(A^*)$.
      Since $D(A^*)$ is dense in $X$, the above yields that $y_T=0$, which implies that  $\Phi_T$ is injective.

      We next show that $\Phi_T$ preserves norms. Let $g\in Y_T^*$. Then we have that
      $ g=\widetilde{\mathcal{F}}_{\hat y_T}$  in $Y_T^*$,
        where $\hat y_T=\hat y(T; 0,\hat v)$, with $\hat v\in L^\infty(0,T;U)$ satisfying (\ref{ob-attain-15}) and
      (\ref{ob-attain-16}). This, along with  (\ref{attianable-space-norm}) yields that
 $\|\hat y_T\|_{\mathcal R_T}\leq \|\widetilde{\mathcal{F}}_{\hat y_T}\|_{Y_T^*}$.
  From this and (\ref{ob-attain-13}), we see that $\Phi_T$ preserves norms.

 Finally, (\ref{ob-attain-0}) follows from  (\ref{ob-attain-14}), (\ref{ob-attain-12}) and (\ref{ob-space}). This ends the proof of this theorem.

 \end{proof}

 \begin{Remark}
   Since $Y_T^*$ is complete, it follows from  Theorem~\ref{Theorem-the-representation-theorem} that  the normed space  $(\mathcal R_T,\|\cdot\|_{\mathcal R_T})$ is complete.
   \end{Remark}

 \subsection{The second representation theorem }

  This subsection mainly presents a representation theorem on $(\mathcal{R}^0_T)^*$, the dual space of the space $\mathcal{R}^0_T$ (defined  by (\ref{R0T})).
 This theorem  gives an important property of $Y_T$ (which is defined by (\ref{ob-space})). For this purpose, we need  three lemmas.

  \begin{Lemma}\label{Lemma-H3-eq}
      The following propositions are equivalent:

 \noindent   $(i)$  The condition (H1) holds.

 \noindent  $(ii)$ There is a $p_1\in [2,\infty)$ so that for each $T\in(0,\infty)$,  each $u\in L^{p_1}(0,T;U)$ and each  $t\in (0,T)$,
    there exists a control   $v\in L^{\infty}(0,T;U)$ satisfying that
      $$
      \hat{y}(T;0,\chi_{(t,T)}v)=\hat{y}(T;0,\chi_{(0,t)}u)\;\;\mbox{and}\;\;
      \|v\|_{L^{\infty}(0,T;U)}\leq C_1\|u\|_{L^{p_1}(0,T;U)}
      $$
   for some $C_1\triangleq C_1(T,t)>0$ (independent of $u$).

  \noindent $(iii)$  There is a $p_2\in (1,2]$ so that when $0<t<T<\infty$,
 $$
 \|g\|_{L^{p_2}(0,t;U)} \leq C_2 \|g\|_{L^{1}(t,T;U)}\;\;\mbox{for all}\;\;g\in Y_T
 $$
 for some $C_2\triangleq C_2(T,t)>0$ (independent of $g$).

 Furthermore, when one of the above three propositions is valid,  the  constants $p_0$ (given in (H1)), $p_1$ and $p_2$ (given in  (ii) and (iii), respectively)  can be chosen so that $p_0=p_1=p_2/(p_2-1)$.

  \end{Lemma}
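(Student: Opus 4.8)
The plan is to establish the chain of implications $(i)\Rightarrow(ii)\Rightarrow(i)$ and then $(ii)\Leftrightarrow(iii)$ via duality, keeping track of the exponents so that the final relation $p_0=p_1=p_2/(p_2-1)$ falls out automatically. First, for $(i)\Rightarrow(ii)$ I would fix $T$ and $t$ with $0<t<T<\infty$ and consider the linear map $\Lambda: u\mapsto \hat y(T;0,\chi_{(0,t)}u)$ from $L^{p_0}(0,t;U)$ into $\mathcal R_T$ (endowed with the $\mathcal R_T$-norm, which by Theorem~\ref{Theorem-the-representation-theorem} is the affiliated minimal-norm norm). By (H1), the range of $\Lambda$ is contained in the set of states reachable by $L^\infty$ controls supported on $(t,T)$; I would then need that the latter set, with the "minimal $L^\infty$-norm of a control supported on $(t,T)$" norm, is a Banach space and that $\Lambda$ is bounded from $L^{p_0}$ into it. Boundedness of $\Lambda$ in the $\mathcal R_T$-norm is immediate since the $L^{p_0}$ control is in particular an admissible control and $\|\cdot\|_{\mathcal R_T}$ is dominated by the $L^\infty$-norm of any admissible control (and $L^{p_0}\hookrightarrow$ is controlled by the admissibility estimate~(\ref{admissible-control})); completeness of the "restricted reachable space" follows by the same representation-theorem argument applied with controls supported on $(t,T)$, identifying its dual with $Y_{T-t}$ (after a time-shift). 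An application of the closed graph theorem (or the open mapping theorem to the surjection from the restricted control space) then yields the constant $C_1=C_1(T,t)$ in $(ii)$, with $p_1=p_0$.

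For $(ii)\Rightarrow(i)$ the implication is essentially tautological: given $u\in L^{p_1}(0,T;U)$ with $u|_{(t,T)}=0$, statement $(ii)$ produces $v\in L^\infty(0,T;U)$ with $v|_{(0,t)}=0$ and the same final state, which is exactly the inclusion $\mathcal A_{p_1}(T,t)\subset\mathcal A_\infty(T,t)$ demanded by (H1) (with $p_0=p_1$). The substantive equivalence is $(ii)\Leftrightarrow(iii)$, which I would obtain by duality. Fix $0<t<T<\infty$ and $p_2\in(1,2]$ with $p_2'=p_2/(p_2-1)=p_1$. Using Proposition~\ref{huangwanghenproposition2.1} and the density of $X_T$ in $Y_T$, for $g\in Y_T$ and $z\in D(A^*)$ close to it, the pairing $\int_0^T\langle v(s),g(s)\rangle_U\,ds$ depends on the control $v$ only through $\hat y(T;0,v)$. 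Thus the identity $\hat y(T;0,\chi_{(t,T)}v)=\hat y(T;0,\chi_{(0,t)}u)$ in $(ii)$ translates into $\int_t^T\langle v,g\rangle=\int_0^t\langle u,g\rangle$ for every $g\in Y_T$; taking the supremum over $\|u\|_{L^{p_1}(0,t;U)}\le 1$ on the right gives $\|g\|_{L^{p_1'}(0,t;U)}=\|g\|_{L^{p_2}(0,t;U)}$, while the left side is bounded by $\|v\|_{L^\infty(0,T;U)}\|g\|_{L^1(t,T;U)}\le C_1\|u\|_{L^{p_1}}\|g\|_{L^1(t,T;U)}$. Hence $\|g\|_{L^{p_2}(0,t;U)}\le C_1\|g\|_{L^1(t,T;U)}$, which is $(iii)$ with $C_2=C_1$. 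Conversely, from $(iii)$ I would run a Hahn–Banach/Riesz argument in the spirit of the proof of Theorem~\ref{Theorem-the-representation-theorem}: the functional $B^*S^*(T-\cdot)z\mapsto \int_0^t\langle u(s),B^*S^*(T-s)z\rangle_U\,ds$ is, by $(iii)$, bounded by $C_2\|u\|_{L^{p_1}}\,\|B^*S^*(T-\cdot)z\|_{L^1(t,T;U)}$ on $X_T$ with the $L^1(t,T;U)$-seminorm, extends to $Y_T$, and is represented by some $v\in L^\infty(t,T;U)$ with $\|v\|_{L^\infty}\le C_2\|u\|_{L^{p_1}}$ satisfying $\int_t^T\langle v,g\rangle=\int_0^t\langle u,g\rangle$ for all $g\in Y_T$; testing against $g=B^*S^*(T-\cdot)z$ and using Proposition~\ref{huangwanghenproposition2.1} plus density of $D(A^*)$ gives $\hat y(T;0,\chi_{(t,T)}v)=\hat y(T;0,\chi_{(0,t)}u)$, which is $(ii)$.

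The main obstacle, I expect, is not any single estimate but the careful bookkeeping of the three function spaces and their duals: one must verify that the "restricted reachable space" (states reached by $L^\infty$ controls supported on a subinterval) is genuinely a Banach space isometric to the dual of an appropriate $Y$-type space, so that the closed graph / open mapping theorem applies and yields a finite constant $C_1(T,t)$; and one must be careful that all pairings $\int\langle v,g\rangle$ are well defined, which is where the admissibility estimate (Lemma~\ref{ad-control-ob}) guaranteeing $B^*S^*(T-\cdot)z\in L^1(0,T;U)$ and the definition~(\ref{ob-space}) of $Y_T$ as an $L^1$-closure are used. Once the duality dictionary between "control supported on $(0,t)$ / control supported on $(t,T)$ / elements of $Y_T$ restricted to the two subintervals" is set up cleanly, the exponent relation $p_0=p_1=p_2'$ is forced at every step, giving the final assertion.
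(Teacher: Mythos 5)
Your proposal is correct and, at bottom, runs on the same three engines as the paper's proof: a closed graph argument for (i)$\Rightarrow$(ii), the H\"older/duality computation based on (\ref{NNNWWW2.1}) for (ii)$\Rightarrow$(iii), and a Hahn--Banach plus Riesz representation argument for the converse direction (your (iii)$\Rightarrow$(ii) is literally the paper's Step 3, which the paper phrases as (iii)$\Rightarrow$(i)); the exponent bookkeeping $p_0=p_1=p_2/(p_2-1)$ is identical. The one genuine variation is the Banach space on which you run the closed graph theorem: the paper quotients the control space, working with $L^\infty(0,T;U)/\mathrm{Ker}\,L_2$ (automatically complete) and the induced injective map $\widehat L_2$, whereas you work on the state side with the restricted reachable set normed by the minimal $L^\infty$-norm of controls supported in $(t,T)$, whose completeness you get from the first representation theorem after a time shift (it is isometric to $(\mathcal R_{T-t},\|\cdot\|_{\mathcal R_{T-t}})$, so Theorem \ref{Theorem-the-representation-theorem} and the remark following it apply; note it is identified with the \emph{dual of} $Y_{T-t}$, not the other way round, and no appeal to the second representation theorem is needed). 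Both routes are equally valid. One sentence in your write-up is wrong and should be deleted: the claim that ``boundedness of $\Lambda$ in the $\mathcal R_T$-norm is immediate'' because an $L^{p_0}$ control is an admissible control. An $L^{p_0}$ control with $p_0<\infty$ need not be in $L^\infty$, and the estimate $\|\hat y(T;0,\chi_{(0,t)}u)\|_{\mathcal R_T}\le C\|u\|_{L^{p_0}}$ is precisely the \emph{conclusion} of the closed-graph step, not an a priori fact; what is immediate from (\ref{admissible-control}) is only boundedness into $X$. Your subsequent invocation of the closed graph theorem is what actually carries the argument, so there is no unfixable gap, but you should verify graph closedness explicitly (convergence in the $\mathcal R$-norm dominates convergence in $X$ by the admissibility estimate, and $u_n\to u$ in $L^{p_0}\subset L^2$ forces convergence of the states in $X$), exactly as the paper does for its quotient-space map $\mathcal T$.
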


  \begin{proof}
  Our proof is organized by  several steps as follows:

\noindent   \textit{Step 1. To show that (i) $\Rightarrow$ (ii)}

  Suppose that (H1) holds for some  $p_0\in [2,\infty)$. Let $T$ and $t$ satisfy that  $0<t<T<\infty$. Define two maps as follows:
  $$
  L_1:\, Y\triangleq L^{p_0}(0,T;U) \rightarrow X,\;\;\;\;L_1(u) = \hat{y}(T;0,\chi_{(0,t)}u),\;\; u\in Y;
  $$
     $$
 L_2:\, Z\triangleq L^{\infty}(0,T;U) \rightarrow X,\;\;\;\;  L_2(v) = \hat y(T;0,\chi_{(t,T)}v),\;\;v\in Z.
 $$
 By (\ref{Changshubianyi1.6}), we have  that
 $$
 \hat y(T; 0, \chi_{(0,t)}u)=\int_0^T S_{-1}(T-\tau)B\chi_{(0,t)}(\tau)u(\tau) \,\mathrm d\tau;
 $$
$$
 \hat y(T; 0, \chi_{(t,T)}v)=\int_0^T S_{-1}(T-\tau)B\chi_{(t,T)}(\tau)v(\tau) \,\mathrm d\tau.
$$
 These, together with  (\ref{admissible-control}), indicate that both $L_1$ and $L_2$ are bounded. Moreover, by (H1),  we find that
    \begin{equation}\label{wangyuan2.10}
    \mbox{Range}\, L_1\subset \mbox{Range}\, L_2.
    \end{equation}
      Let   $\pi:~Z\rightarrow\widehat Z\triangleq Z/\mbox{Ker}\,L_2$ be the  quotient map. Then  $\pi$ is surjective and it stands that
  \begin{equation}\label{wang2.10-0}
   \|\pi(v)\|_{\widehat Z}=\inf\big\{\|w\|_Z\; :\; \,w\in v+\mbox{Ker}\,L_2\big\}
   \;\;\mbox{for each}\;\; v\in Z.
  \end{equation}
           Define a map $ \widehat L_2:$ $\widehat Z\rightarrow X$ in the following manner:
  \begin{eqnarray}\label{L2-widehatL2-1207}
  \widehat L_2(\pi(v))=L_2(v),\;\;\;\; \pi(v)\in \widehat Z.
  \end{eqnarray}
 One can easily check that $\widehat L_2$ is well defined, linear and bounded.  By (\ref{wangyuan2.10}) and (\ref{L2-widehatL2-1207}),   we see that $\mbox{Range}\,L_1   \subset   \mbox{Range}\,\widehat L_2$.
    Thus, given $u\in Y$, there is a unique $\pi(v_u)\in \widehat Z$ so that
 \begin{equation}\label{wang2.10}
 L_1 (u)=\widehat L_2 (\pi(v_u)).
 \end{equation}
 We now define another map $\mathcal T:~Y \rightarrow \widehat Z$ by
  \begin{eqnarray}\label{T-def}
     \mathcal T(u) = \pi(v_u)
     \;\;\mbox{for each}\;\; u\in Y.
  \end{eqnarray}
 One can easily check that  $\mathcal T$ is well defined and linear. We next use the closed graph theorem to show that $\mathcal T$ is bounded. For this purpose, we let
  $\{u_n\}\subset Y$ satisfy  that
   $$
   u_n\rightarrow u_0\;\;\mbox{ in}\;\; Y\;\;\mbox{and}\;\; \mathcal T(u_n)\rightarrow h_0\;\;\mbox{ in}\;\;\widehat Z,   \;\;\mbox{as}\;\; n\rightarrow\infty.
    $$
    Because $L_1$ and $\widehat L_2$ are bounded, we find from (\ref{T-def})
     and (\ref{wang2.10})  that
  \begin{eqnarray*}
   \widehat L_2 h_0=\lim_{n\rightarrow \infty}\widehat L_2(\mathcal T(u_n))
   =\lim_{n\rightarrow \infty}\widehat L_2(\pi(v_{u_n}))
   =\lim_{n\rightarrow \infty} L_1(u_n)=L_1 u_0.
  \end{eqnarray*}
  This, together with  (\ref{T-def}),  indicates that
  $h_0=\mathcal T(u_0)$. Then by  the closed graph theorem, we see that $\mathcal T$ is bounded. Thus, by (\ref{T-def}), there exists a $C\triangleq C(T,t)>0$ so that
   \begin{eqnarray}\label{i-ii-zy-y}
    \|\pi(v_u)\|_{\widehat Z}=\|\mathcal T(u)\|_{\widehat Z}\leq C\|u\|_Y
    \;\;\mbox{for each}\;\;    u\in Y.
   \end{eqnarray}
  Meanwhile, it follows from  (\ref{wang2.10-0}) that
     for each $v\in Z$, there is a $v^\prime\in v+\mbox{Ker\,}L_2$ so that
     $\|v^\prime\|_Z\leq 2 \|\pi(v)\|_{\widehat Z}$.
     Thus, by   (\ref{wang2.10}), (\ref{L2-widehatL2-1207}) and
   (\ref{i-ii-zy-y}), we find that for each $u\in Y$, there is a $v_u^\prime\in Z$
   so that
   $L_1(u)=L_2(v_u^\prime)$ and $\|v_u^\prime\|_Z\leq 2 C \|u\|_{Y}$.
  Hence, by the definitions of $L_1$ and $L_2$,  we obtain (ii), with $C_1=2C$ and $p_1=p_0$.

   \vskip 5pt
 \noindent \textit{Step 2. To show that $(ii)$ $\Rightarrow$ $(iii)$}

  Suppose that (ii) holds for some $p_1\in[2,\infty)$.  Arbitrarily fix $T$ and $t$,
  with  $0<t<T<\infty$.  Then for each $u\in L^{p_1}(0,T;U)$,
  there is a control $v_u\in L^{\infty}(0,T;U)$ so that
  \begin{eqnarray*}
   \hat{y}(T;0,\chi_{(0,t)}u)=\hat{y}(T;0,\chi_{(t,T)}v_u)
   \;\;\mbox{and}\;\;
   \|v_u\|_{L^{\infty}(0,T;U)}\leq C_1 \|u\|_{L^{p_1}(0,T;U)},
  \end{eqnarray*}
  where   $C_1\triangleq C_1(T,t)$ is given by (ii).
  These, along with  (\ref{NNNWWW2.1}), yield that  for  each $z\in D(A^*)$,
  \begin{eqnarray*}
   & & \int_0^t \langle B^*S^*(T-\eta)z,u(\eta) \rangle_U   \,\mathrm d\eta
   =\int_0^T  \big\langle  B^*S^*(T-\eta)z,\chi_{(0,t)}(\eta)u(\eta)  \big\rangle_U   \,\mathrm d\eta
   \nonumber\\
   &=& \big\langle  z,\hat{y}(T;0,\chi_{(0,t)}u)  \big\rangle_X
   = \big\langle  z,\hat{y}(T;0,\chi_{(t,T)}v_u)  \big\rangle_X
   \nonumber\\
   &=& \int_0^T \big\langle  B^*S^*(T-\eta)z,\chi_{(t,T)}(\eta)v_u(\eta)  \big\rangle_U \,\mathrm d\eta
   =\int_t^T  \big\langle  B^*S^*(T-\eta)z,v_u(\eta)  \big\rangle_U   \,\mathrm d\eta
    \nonumber\\
   &\leq& \|B^*S^*(T-\cdot)z\|_{L^{1}(t,T;U)} \|v_u(\cdot)\|_{L^{\infty}(t,T;U)}
   \leq  C_1 \|B^*S^*(T-\cdot)z\|_{L^{1}(t,T;U)}  \|u(\cdot)\|_{L^{p_1}(0,t;U)}.
  \end{eqnarray*}
  Let  $p_1^\prime$ be the  conjugate index of $p_1$, i.e.,  $1/p_1+1/p_1^\prime=1$.
    Then we find that
  \begin{eqnarray*}
   \|B^*S^*(T-\cdot)z\|_{L^{p_1^\prime}(0,t;U)} \leq C_1 \|B^*S^*(T-\cdot)z\|_{L^{1}(t,T;U)}
   \;\;\mbox{for all}\;\;  z\in D(A^*).
  \end{eqnarray*}
   The above, as well as (\ref{ob-space}), leads to (iii), with $p_2=p_1^\prime$ and $C_2=C_1$.

   \vskip 5pt
  \noindent \textit{Step 3. (iii) $\Rightarrow$ $(i)$}

    Suppose that (iii) holds for some $p_2\in(1,2]$. Let $p_2^\prime$ be the  conjugate index of $p_2$, i.e.,
     $ 1/p_2 + 1/p_2^\prime  =1$.
      Arbitrarily fix  $T$ and $t$, with  $0<t<T<\infty$.
   Define the following subspace of $L^1(t,T;U)$:
  \begin{eqnarray*}
   \mathcal O\triangleq \big\{B^*S^*(T-\cdot)z|_{(t,T)}\in L^1(t,T;U)~:\;\;z\in D(A^*)\big\}.
  \end{eqnarray*}
  Let  $u\in L^{p_2^\prime}(0,T;U)$.  We define  a linear map $L_3:\,\mathcal O\rightarrow\mathbb R$ by
  \begin{eqnarray}\label{H3-eq-1}
   L_3\big(B^*S^*(T-\cdot)z|_{(t,T)}\big)=\int_0^t \langle B^*S^*(T-s)z,u(s) \rangle_U   \,\mathrm ds,
   \;\;z\in D(A^*).
  \end{eqnarray}
  Since
  $ B^*S^*(T-\cdot)z |_{(0,T)} \in Y_T$ for all $z\in D(A^*)$,   it follows from (iii) that $L_3$ is well defined. Then by  (\ref{H3-eq-1}) and (iii), we find  that for each $z\in D(A^*)$,
  \begin{eqnarray*}
   \big|L_3\big(B^*S^*(T-\cdot)z|_{(t,T)}\big)\big| &\leq&  \|B^*S^*(T-\cdot)z\|_{L^{p_2}(0,t;U)}  \|u(\cdot)\|_{L^{p_2^\prime}(0,t;U)}   \nonumber\\
   &\leq&   C_2 \|B^*S^*(T-\cdot)z\|_{L^{1}(t,T;U)}  \|u(\cdot)\|_{L^{p_2^\prime}(0,t;U)},
  \end{eqnarray*}
  where $C_2\triangleq C_2(T,t)$ is given by (iii).
  This implies that $L_3$ is bounded from $\mathcal O$ to $\mathbb R$. Thus, by the Hahn-Banach theorem, $L_3$ can be extended from $L^1(t,T;U)$ to $\mathbb R$ and there exists   $g\in \big(L^1(t,T;U)\big)^*$ so that
$$
L_3(\psi)=g(\psi)\;\;\mbox{ for all}\;\;\psi\in \mathcal O; \;\;
\mbox{and}\;\;
 \|g\|_{\mathcal L(L^1(t,T;U);\mathbb R)}\leq C_2\|u\|_{L^{p_2^\prime}(0,t;U)}.
$$
Then by the Riesz representation theorem and (\ref{H3-eq-1}), there is  $v_u\in L^\infty(t,T;U)$ so that
 \begin{eqnarray}\label{H3-eq-2}
  \int_t^T \langle v_u(s),\psi(s)\rangle_U   \,\mathrm ds
  = g(\psi)=\int_0^t \langle \psi(s),u(s) \rangle_U   \,\mathrm ds
  \;\;\mbox{for all}\;\;  \psi\in \mathcal O
 \end{eqnarray}
 and so that
 \begin{eqnarray}\label{H3-eq-3}
  \|v_u\|_{L^\infty(t,T;U)}=\|g\|_{\mathcal L(L^1(t,T;U);\mathbb R)}\leq C_2\|u\|_{L^{p_2^\prime}(0,t;U)}.
 \end{eqnarray}
 Write $\widetilde v_u$ for the zero extension of $v_u$ over $(0,T)$. Then we see from (\ref{NNNWWW2.1}) and (\ref{H3-eq-2})  that for all $z\in D(A^*)$,
  \begin{eqnarray*}
   & & \langle z,\hat{y}(T;0,\chi_{(0,t)}u)\rangle_X
   = \int_0^T \big\langle B^*S^*(T-s)z,\chi_{(0,t)}(s)u(s) \big\rangle_U   \,\mathrm ds
    \nonumber\\
   &=&  \int_0^t \langle B^*S^*(T-s)z,u(s) \rangle_U   \,\mathrm ds
    =\int_t^T \langle v_u(s),B^*S^*(T-s)z\rangle_U   \,\mathrm ds
    \nonumber\\
   &=& \int_0^T \langle \widetilde v_u(s),B^*S^*(T-s)z\rangle_U   \,\mathrm ds
    = \langle z,\hat{y}(T;0,\widetilde v_u)\rangle_X.
  \end{eqnarray*}
  Since $D(A^*)$ is dense in $X$, the above leads to (H1), with $p_0=p_2^\prime$.

  \vskip 3pt
  \noindent \textit{Step 4. About the constants  $p_0$, $p_1$ and $p_2$}

  From  the proofs in Step 1-Step 3, we find that $p_0$, $p_1$ and $p_2$ can be chosen so that $p_0=p_1=p_2/(p_2-1)$, provided that one of the propositions (i)-(iii) holds.

  \vskip 3pt
  In summary, we finish the  proof of this lemma.

  \end{proof}

  \begin{Lemma}\label{Lemma-left-continuity-YT}
  Let $T\in(0,\infty)$. The following conclusions are true:

  \noindent (i) If $f\in Y_T$, then $f|_{(0,S)}\in Y_S$ for each $S\in (0,T)$.

 \noindent  (ii) Suppose that (H1) holds. If  $f\in L^1(0,T;U)$ and $f|_{(0,S)}\in Y_S$ for each $S\in (0,T)$, then $f\in Y_T$.

  \end{Lemma}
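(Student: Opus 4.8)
This should be immediate from the cocycle identity. For $z\in D(A^*)$ and $0<S<T$, the identity $S^*(T-t)=S^*(S-t)S^*(T-S)$ for $t\in(0,S)$, together with the fact that $S^*(T-S)$ maps $D(A^*)$ into $D(A^*)$, gives $B^*S^*(T-\cdot)z|_{(0,S)}=B^*S^*(S-\cdot)\tilde z|_{(0,S)}$ with $\tilde z:=S^*(T-S)z\in D(A^*)$; hence $X_T|_{(0,S)}\subset X_S$. Since the restriction operator $L^1(0,T;U)\to L^1(0,S;U)$ is bounded and $Y_S$ is closed, passing to limits yields $f|_{(0,S)}\in Y_S$ for every $f\in Y_T$.

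\textbf{Part (ii).} I would argue by duality. Since $Y_T$ is a closed subspace of $L^1(0,T;U)$, it suffices to show that $f$ is annihilated by every $g\in L^\infty(0,T;U)$ annihilating $Y_T$. Using Proposition~\ref{huangwanghenproposition2.1}, the representation (\ref{Changshubianyi1.6}) with initial datum $0$, and the density of $D(A^*)$ in $X$, the annihilator of $Y_T$ in $L^\infty(0,T;U)$ is exactly $\mathcal N_T:=\{g\in L^\infty(0,T;U):\hat y(T;0,g)=0\}$, and likewise the annihilator of $Y_S$ is $\mathcal N_S$. So fix $g\in\mathcal N_T$; the goal becomes $\int_0^T\langle g(t),f(t)\rangle_U\,\mathrm dt=0$. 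Put $w_S:=\hat y(S;0,g)=\hat y(S;0,g|_{(0,S)})$; since $\hat y(\cdot;0,g)\in C([0,T];X)$ and $\hat y(T;0,g)=0$, we have $w_S\to 0$ in $X$ as $S\to T$. For every $S\in(0,T)$ and every $v\in L^\infty(0,S;U)$ with $\hat y(S;0,v)=w_S$, linearity gives $g|_{(0,S)}-v\in\mathcal N_S$, so $f|_{(0,S)}\in Y_S$ forces $\int_0^S\langle g,f\rangle=\int_0^S\langle v,f\rangle$; optimizing over such $v$ and recalling (\ref{attianable-space-norm}) gives $\bigl|\int_0^S\langle g,f\rangle\bigr|\le\|w_S\|_{\mathcal R_S}\,\|f\|_{L^1(0,T;U)}$. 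As $g\in L^\infty$ and $f\in L^1$ give $\int_S^T\langle g,f\rangle\to 0$, the whole thing reduces to proving $\liminf_{S\to T}\|w_S\|_{\mathcal R_S}=0$.

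\textbf{Main obstacle and role of (H1).} The crux is exactly this last reduction. The quantity $\|w_S\|_{\mathcal R_S}$ is the minimal $L^\infty$-cost of reaching $w_S$ in time $S$; although $\|w_S\|_X\to 0$, state-norm smallness does not by itself control this cost, since we assume neither the $L^\infty$-null controllability of $(A,B)$ nor the backward uniqueness of $\{S(t)\}$. Here I would invoke (H1) in the equivalent quantitative form of Lemma~\ref{Lemma-H3-eq}: a control supported on an initial subinterval $(0,S-\eta)$ can be replaced, reaching the same state at time $S$ and with an explicit norm bound, by a control supported on $(S-\eta,S)$. Decomposing $g|_{(0,S)}=\chi_{(0,S-\eta)}g+\chi_{(S-\eta,S)}g$ and pushing the first piece produces controls $v$ supported on $(S-\eta,S)$ with $\hat y(S;0,v)=w_S$; combining the admissibility estimate (\ref{admissible-control}) on $(S-\eta,S)$ with the $X$-decay of the partial trajectory $w_{S-\eta}$, and with the freedom to let $\eta=\eta(S)\to 0$ as $S\to T$, one aims to conclude $\|w_S\|_{\mathcal R_S}\to0$ along a suitable sequence $S\to T$. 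The delicate point is that the constant in the (H1)-type estimate degenerates as the auxiliary interval $(S-\eta,S)$ shrinks, so the argument must coordinate this blow-up against the decay of $\|f\|_{L^1}$ near $T$ (equivalently, of $w_{S-\eta}$), which I expect requires a careful choice of $\eta(S)$ or an iteration of the (H1) estimate with a summable error — this is where essentially all the work lies.
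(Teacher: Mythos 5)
Part (i) of your proposal is correct and is essentially the paper's argument (cocycle identity, invariance of $D(A^*)$ under $S^*(T-S)$, closedness of $Y_S$). For part (ii), your duality reduction is sound as far as it goes: identifying the annihilator of $Y_S$ in $L^\infty(0,S;U)$ with $\{g:\hat y(S;0,g)=0\}$, the identity $\int_0^S\langle g,f\rangle_U\,\mathrm dt=\int_0^S\langle v,f\rangle_U\,\mathrm dt$ for any $v$ steering $0$ to $w_S\triangleq\hat y(S;0,g)$ in time $S$, and the bound $\big|\int_0^S\langle g,f\rangle_U\,\mathrm dt\big|\le\|w_S\|_{\mathcal R_S}\|f\|_{L^1(0,T;U)}$ are all legitimate (and the facts you need follow from (\ref{NNNWWW2.1}), Theorem~\ref{Theorem-the-representation-theorem} and the Hahn--Banach argument of Lemma~\ref{Lemma-Linfty-kernal}, none of which depends on the present lemma). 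But the whole proof then hangs on the claim $\liminf_{S\to T}\|\hat y(S;0,g)\|_{\mathcal R_S}=0$ for every $g\in L^\infty(0,T;U)$ with $\hat y(T;0,g)=0$, and this claim is never proved; you yourself defer ``essentially all the work'' to it. The mechanism you sketch does not produce the required smallness: writing $w_S=\hat y(S;0,\chi_{(0,S-\eta)}g)+\hat y(S;0,\chi_{(S-\eta,S)}g)$ and replacing the first summand, via (ii) of Lemma~\ref{Lemma-H3-eq}, by a control supported on $(S-\eta,S)$ gives only the bound $C_1(S,S-\eta)\|g\|_{L^{p_1}(0,S;U)}$, which is not small for fixed $\eta$ and blows up as $\eta\to0$, while the second summand is only bounded by $\|g\|_{L^\infty}$; what is small near $T$ is the length $T-S$, not $\|g\|_{L^\infty(S,T;U)}$, so no choice of $\eta(S)$ in this scheme makes $\|w_S\|_{\mathcal R_S}$ decay. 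Smallness of $\|w_S\|_X$ (which you do have) is precisely the kind of information that does not control $\|\cdot\|_{\mathcal R_S}$, as you note.

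By Proposition~\ref{proposition2.3}/Theorem~\ref{Theorem-the-representation-theorem}, your claim amounts to $\sup_{h\in B_{Y_S}}\int_0^S\langle g,h\rangle_U\,\mathrm dt\to0$ along some sequence $S\to T$, i.e. a uniform statement that unit-ball elements of $Y_S$ admit extensions toward the final time $T$ with uniformly small $L^1(S,T)$-tail; this is at least as delicate as the lemma you are trying to prove, and the results of the paper that would naturally yield such compactness/extension statements (Theorem~\ref{Theorem-the-second-representation-theorem} and Corollary~\ref{Corollary-weakly-star-compact}) are themselves proved using part (ii) of the present lemma, so they cannot be invoked without circularity. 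The paper's actual proof avoids any smallness near $T$ altogether: using (H1) through (iii) of Lemma~\ref{Lemma-H3-eq} and, crucially, the time-invariance of the system, it builds for each $S<T$ a unique bounded ``$Y$-extension'' of $f|_{(0,S)}$ to the interval $(-1,S)$ (extension away from the final time, where the semigroup can be run forward), checks these extensions are consistent in $S$, and then observes that the time-translates $\hat f(\cdot+T_k-T)|_{(0,T)}$ lie in $Y_T$ and converge to $f$ in $L^1(0,T;U)$, so closedness of $Y_T$ concludes. To complete your route you would have to supply a genuinely new proof of the reachable-norm claim; otherwise an argument of the paper's translation/extension type is needed.
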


  \begin{proof}
  (i) Let $f\in Y_T$. Then by (\ref{ob-space}), there exists a subsequence $\{w_n\}\subset D(A^*)$ so that
  \begin{eqnarray}\label{YT-left-cont-1}
   B^*S^*(T-\cdot)w_n \rightarrow f(\cdot)
   \;\;\mbox{in}\;\;  L^1(0,T;U).
  \end{eqnarray}
 Arbitrarily fix an $S\in (0,T)$. Since
 $ S^*(T-S)w_n\in D(A^*)$ for  all $ n$,
    by making use of (\ref{ob-space}) again, we find that
   \begin{eqnarray*}
    B^*S^*(T-\cdot)w_n|_{(0,S)} = B^*S^*(S-\cdot) \big(S^*(T-S)w_n\big)|_{(0,S)} \in Y_S.
  \end{eqnarray*}
  Since $Y_S$ is closed in $L^1(0,S;U)$, the above, as well as
         (\ref{YT-left-cont-1}), yields that $f|_{(0,S)} \in Y_S$.

\vskip 5pt
  (ii) Suppose that (H1) holds. We  organize the  proof by the following  steps:

  \vskip 5pt
  \noindent  \textit{Step 1. To show  that for each $s\in(0,\infty)$ and $g^s\in Y_s$, there is a  unique  function $\widetilde g^s$  over $(-1,s)$ so that
  \begin{eqnarray}\label{YT-left-cont-2}
   \widetilde g^s(\tau)=g^s(\tau)\;\;\mbox{for all}\;\; \tau\in (0,s), \;\;\mbox{ and }\;\; \widetilde g^s(\cdot-1)\in Y_{s+1}
  \end{eqnarray}}
 ~~~\,Let $0<s<\infty$ and  $g^s\in Y_s$.  We first show the existence of such $\widetilde g^s$.
   For this purpose,  we define the following subspace:
  $$
     X_s\triangleq \big\{ g_z(\cdot)\in L^1(0,s; U) ~:~ z\in D(A^*)\big\},
      $$
      where
      $ g_z(\cdot)\triangleq B^*S^*(s-\cdot)z$ over $(0,s)$.
  Then define a map   $\mathcal F_s:\, X_s\rightarrow Y_{s+1}$ in the following manner: For each $z\in D(A^*)$,
  \begin{eqnarray}\label{def-F}
   \left(\mathcal F_s g_z\right)(\tau)\triangleq B^*S^*(s+1-\tau)z,\;\;\tau\in (0,s+1).
  \end{eqnarray}
   From  (\ref{def-F}), we find  that for each $z\in D(A^*)$,
 \begin{eqnarray}\label{wangyuan2.21}
  (\mathcal F_s g_z)(\tau+1) =g_z(\tau),\; \tau\in (0,s).
 \end{eqnarray}
  Meanwhile, by (H1) and Lemma \ref{Lemma-H3-eq},  we have the assertion  (iii) of Lemma \ref{Lemma-H3-eq}, which, together with  (\ref{def-F}), yields that  when $z\in D(A^*)$,
  \begin{eqnarray*}
   & & \big\| \mathcal F_s\big(B^*S^*(s-\cdot)z|_{(0,s)}\big)\big\|_{L^1(0,s+1;U)}  \nonumber\\
   &=& \int_1^{s+1} \|B^*S^*(s+1-\tau)z\|_U \,\mathrm d\tau
   + \int_0^1 \|B^*S^*(s+1-\tau)z\|_U \,\mathrm d\tau \nonumber\\
   &\leq& (1+C_2)\int_1^{s+1} \|B^*S^*(s+1-\tau)z\|_U \,\mathrm d\tau=(1+C_2)\|B^*S^*(s-\cdot)z\|_{L^1(0,s;U)}
  \end{eqnarray*}
 for some $C_2>0$ independent of $z$. (Here we used the time-invariance of  the controlled system). Hence, $\mathcal F_s$ is linear and bounded from $X_s$ to $Y_{s+1}$. Since $X_s$ is dense in $Y_s$ (see (\ref{ob-space})), $\mathcal F_s$ can be uniquely extended to be  a linear and bounded operator $\widetilde {\mathcal F_s} $ from $Y_s$ to $Y_{s+1}$.
This, along with (\ref{wangyuan2.21}),
  yields that
   \begin{equation}\label{wyuan2.20}
   (\widetilde{\mathcal F_s} g^s)(\tau+1) =g^s(\tau),\; \tau\in (0,s).
   \end{equation}
   We now define
 \begin{equation}\label{wyuan2.21}
 \widetilde g^s(\tau) \triangleq (\widetilde{\mathcal F_s}g^s)(\tau+1),\;\; \tau\in (-1,s).
 \end{equation}
 It follows from (\ref{wyuan2.21}) and (\ref{wyuan2.20}) that
  $\widetilde g^s$ satisfies (\ref{YT-left-cont-2}).

  We next show the  uniqueness of   such $\widetilde g^s$. Let $\widehat g^s$ be another extension of $g^s$ (over $(-1,s)$) satisfying (\ref{YT-left-cont-2}).   Then we see from  (\ref{YT-left-cont-2}) that
 \begin{eqnarray}\label{YT-left-cont-unique-1}
  (\widetilde g^s-\widehat g^s)(\tau)=0
  \;\;\mbox{for all}\;\;   \tau\in (0,s)
 \end{eqnarray}
 and
 \begin{eqnarray}\label{YT-left-cont-unique-2}
  (\widetilde g^s-\widehat g^s)(\cdot-1)\in Y_{s+1}.
 \end{eqnarray}
 From (\ref{YT-left-cont-unique-1}), we see that
 \begin{eqnarray}\label{YT-left-cont-unique-3}
  (\widetilde g^s-\widehat g^s)(\tau-1)=0
  \;\;\mbox{for all}\;\; \tau\in (1,s+1).
 \end{eqnarray}
 By (H1) and Lemma \ref{Lemma-H3-eq},  we have   (iii) of Lemma \ref{Lemma-H3-eq}. This, along with (\ref{YT-left-cont-unique-2}), yields that
 \begin{eqnarray*}
  \|(\widetilde g^s-\widehat g^s)(\cdot-1)\|_{L^{p_2}(0,1;U)} \leq C_2 \|(\widetilde g^s-\widehat g^s)(\cdot-1)\|_{L^{1}(1,s+1;U)},
 \end{eqnarray*}
 where $p_2$ and $C_2$ are given by (iii) of Lemma \ref{Lemma-H3-eq}. This, together with (\ref{YT-left-cont-unique-3}), implies that
 $(\widetilde g^s-\widehat g^s)(\cdot-1)=0$ over $(0,s+1)$.
  Hence, we have that
  $\widetilde g^s(\cdot)=\widehat g^s(\cdot)$ over $(-1,s)$.
   This shows the  uniqueness of such $\widetilde g^s(\cdot)$ that satisfies (\ref{YT-left-cont-2}).
 We call the above $\widetilde g^s(\cdot)$ the $Y$-extension of $g^s(\cdot)$.

\vskip 5pt
\noindent  \textit{Step 2. To show that $f\in Y_T$, when  $f\in L^1(0,T;U)$ and $f|_{(0,S)}\in Y_S$ for each $S\in (0,T)$}

\vskip 5pt

    Let $f\in L^1(0,T;U)$ satisfy that
    $   f|_{(0,S)}\in Y_S $ for each $S\in (0,T)$. Given
   $S\in (0,T)$, we write $f_S$ for the $Y$-extension of $f|_{(0,S)}$ over $(-1,S)$ (see the conclusion of Step 1).
   We claim that
   $$
   f_{S_1}=f_{S_2}\;\;\mbox{over}\;\; (-1,0),\;\;\mbox{when}\;\;0<S_1<S_2<T.
   $$
      Here is the argument: on one hand, we let
     \begin{equation}\label{yuan2.23}
     \bar f(\tau)\triangleq f_{S_2}(\tau-1),\; \tau\in (0, S_2+1).
     \end{equation}
     By (\ref{yuan2.23}) and the definition of $f_{S_2}$ (see (\ref{YT-left-cont-2})), we find that
     $  \bar f\in Y_{S_2+1}$.
    This, as well as  (i) in this lemma, yields that
\begin{equation}\label{yuan2.25}
\bar f|_{(0,S_1+1)}\in Y_{S_1+1}.
\end{equation}
By making use of (\ref{yuan2.23}) again, we see that
$\bar f|_{(0, S_1+1)}(\tau)=f_{S_2}|_{(-1,S_1)}(\tau-1)\;\;\mbox{for each}\;\; \tau\in (0,S_1+1)$.
This, along with (\ref{yuan2.25}), indicates that
\begin{equation}\label{yuan2.26}
f_{S_2}|_{(-1,S_1)}(\cdot-1)\in Y_{S_1+1}.
\end{equation}
Meanwhile, since $f_{S_2}=f$ over $(0,S_2)$, we have that
$f_{S_2}|_{(-1,S_1)}(\tau)=f|_{(0,S_1)}(\tau)$ for all $\tau\in (0,S_1)$.
This, along with (\ref{yuan2.26}), indicates that  $f_{S_2}|_{(-1,S_1)}(\cdot)$ is the $Y$-extension of $f|_{(0,S_1)}(\cdot)$ over $(-1,S_1)$.

On the other hand, $f_{S_1}$ is also the $Y$-extension of $f|_{(0,S_1)}(\cdot)$ over $(-1,S_1)$. By the uniqueness of the $Y$-extension, we see that
$f_{S_1}=f_{S_2}|_{(-1,S_1)}$ over $(-1,S_1)$,
  which leads to that
 $ f_{S_1}=f_{S_2}$ over $(-1,0)$.
  This ends the proof of the above claim.

  Now we arbitrarily fix  an $S_0\in (0,T)$. Define a function   $\hat f: (-1, T)\rightarrow U$ by setting
     \begin{eqnarray}\label{g-ex-3}
  \hat f(\cdot)= f(\cdot)\;\;\mbox{over}\;\; (0,T); \;\;
  \hat f(\cdot)=f_{S_0}(\cdot)\;\; \mbox{ over }\;\; (-1,0].
  \end{eqnarray}
   Because of the above-mentioned claim, we find that
   \begin{eqnarray}\label{f-extension-independent}
     \hat f \;\;\mbox{is independent of the choice of}\;\; S_0.
   \end{eqnarray}
    It is clear that
 $   \hat f\in L^1(-1,T;U)$. Take a sequence $\{T_k\}\subset(0,T)$ so that $T_k\nearrow T$.
   Then  we see from the first equality in (\ref{g-ex-3}) that
  \begin{eqnarray}\label{g-ex-5}
   \hat f(\cdot+T_k-T)|_{(0,T)} \rightarrow \hat f(\cdot)\mid_{(0,T)}=f(\cdot) \;\;\mbox{ in }\;\; L^1(0,T;U),\;\; \mbox{ as }\;\; k\rightarrow\infty.
  \end{eqnarray}
  Meanwhile,  for each $k$, since $f_{T_k}(\cdot-1)\in Y_{T_k+1}$ (see (\ref{YT-left-cont-2})), by (\ref{ob-space}), there exists a sequence $\{w_{k,n}\}\subset D(A^*)$ so that
  \begin{eqnarray*}
  \int_0^{T_k+1} \|B^*S^*(T_k+1-t)w_{k,n}  -  f_{T_k}(t-1)\|_U   \,\mathrm dt \rightarrow 0,
  \;\;\mbox{as}\;\; n\rightarrow\infty.
  \end{eqnarray*}
  Since $f_{T_k}=\hat f$ over $(-1,T_k)$  for each $k$ (see (\ref{f-extension-independent}) and (\ref{g-ex-3})), the above yields that for all $k$, with $T_k+1\geq T$,
  \begin{eqnarray*}
   \int_{T_k+1-T}^{T_k+1} \|B^*S^*(T_k+1-t)w_{k,n}  -  \hat f(t-1)\|_U   \,\mathrm dt \rightarrow 0,
   \;\;\mbox{as}\;\; n\rightarrow\infty,
  \end{eqnarray*}
  which implies that for all $k$, with $T_k+1\geq T$,
  \begin{eqnarray*}
   \int_{0}^{T} \|B^*S^*(T-t)w_{k,n}  -  \hat f(t+T_k-T)\|_U  \,\mathrm dt \rightarrow 0,
   \;\;\mbox{as}\;\; n\rightarrow\infty.
  \end{eqnarray*}
  This, along with (\ref{ob-space}), indicates that
   \begin{eqnarray*}
   \hat f(\cdot+T_k-T)|_{(0,T)}\in Y_{T}
   \;\;\mbox{for all}\;\; k \;\;\mbox{with}\;\; T_k+1\geq T.
   \end{eqnarray*}
    Since $Y_T$ is closed in $L^1(0,T;U)$, the above, together with  (\ref{g-ex-5}), implies that $f\in Y_{T}$.

    \vskip 3pt

    In summary, we complete the proof of this lemma.

  \end{proof}

  \begin{Lemma}\label{Lemma-Linfty-kernal}
  Let $T\in(0,\infty)$. If $f\in L^1(0,T;U)$ satisfies that
  \begin{eqnarray}\label{kernal-YT}
  \int_0^T \langle f(t),u(t) \rangle_U  \,\mathrm dt=0
  \;\;\mbox{for all}\;\;u\in \big\{v\in L^\infty(0,T;U)\;:\; \hat y(T;0,v)=0 \big\},
  \end{eqnarray}
   then  $f\in Y_T$.

  \end{Lemma}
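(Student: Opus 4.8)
The plan is to argue by contradiction, using the Hahn--Banach separation theorem together with the duality identity (\ref{NNNWWW2.1}). Suppose $f\notin Y_T$. Since $Y_T$ is, by definition, a closed subspace of $L^1(0,T;U)$, Hahn--Banach provides a continuous linear functional on $L^1(0,T;U)$ vanishing on $Y_T$ but not at $f$. Because $U$ is a real Hilbert space (so $U^*\cong U$ has the Radon--Nikodym property), $\big(L^1(0,T;U)\big)^*$ is identified with $L^\infty(0,T;U)$; hence there is $g\in L^\infty(0,T;U)$ with
\begin{eqnarray*}
 \int_0^T \langle g(t),h(t)\rangle_U\,\mathrm dt = 0\;\;\mbox{for all}\;\;h\in Y_T,
 \qquad \int_0^T \langle g(t),f(t)\rangle_U\,\mathrm dt \neq 0 .
\end{eqnarray*}

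Next I would identify $g$ as an admissible null control for $(NP)^{0}$ at time $T$. Since $X_T\subset Y_T$ (see (\ref{ob-space})--(\ref{assumptionspace3})), the first display above gives in particular
\begin{eqnarray*}
 \int_0^T \langle g(t),B^*S^*(T-t)z\rangle_U\,\mathrm dt = 0\;\;\mbox{for all}\;\;z\in D(A^*).
\end{eqnarray*}
Applying (\ref{NNNWWW2.1}) with initial datum $0$ and $t=T$, the left-hand side equals $\langle \hat y(T;0,g),z\rangle_X$, so $\langle \hat y(T;0,g),z\rangle_X=0$ for every $z\in D(A^*)$. Since $D(A^*)$ is dense in $X$, this forces $\hat y(T;0,g)=0$; that is, $g$ belongs to the set $\{v\in L^\infty(0,T;U):\hat y(T;0,v)=0\}$ appearing in (\ref{kernal-YT}).

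Finally, feeding the admissible null control $u=g$ into the hypothesis (\ref{kernal-YT}) gives $\int_0^T \langle f(t),g(t)\rangle_U\,\mathrm dt=0$, which (by symmetry of the inner product) contradicts the second property of $g$. Hence $f\in Y_T$. As for difficulty: there is essentially no serious obstacle here — the argument is a standard annihilator/separation duality. The only points requiring a little care are the identification $\big(L^1(0,T;U)\big)^*\cong L^\infty(0,T;U)$ (legitimate precisely because $U$ is Hilbert) and the correct bookkeeping in the pairing (\ref{NNNWWW2.1}), namely that it is the density of $D(A^*)$ in $X$ that upgrades ``$\langle\hat y(T;0,g),z\rangle_X=0$ for all $z\in D(A^*)$'' to ``$\hat y(T;0,g)=0$''.
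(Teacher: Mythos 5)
Your proof is correct and follows essentially the same route as the paper: separate $f$ from the closed subspace $Y_T$ by Hahn--Banach, realize the separating functional as some $g\in L^\infty(0,T;U)$, show $\hat y(T;0,g)=0$, and contradict the hypothesis (\ref{kernal-YT}). The only cosmetic difference is that where the paper cites the injectivity part of Theorem~\ref{Theorem-the-representation-theorem} to conclude $\hat y(T;0,g)=0$, you re-derive that step directly from (\ref{NNNWWW2.1}) and the density of $D(A^*)$ in $X$ — which is exactly how that injectivity is proved there anyway.
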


  \begin{proof}
  By contradiction, we suppose that for some $T\in (0,\infty)$, there were a function  $f\in L^1(0,T;U)$, with the property  (\ref{kernal-YT}), so that  $f\not\in Y_T$. Then, by the Hahn-Banach theorem, we could find a function $\widehat u \in L^\infty(0,T;U)$ so that
  \begin{eqnarray}\label{wang2.26}
   0=\int_0^T \langle g(t),\widehat u(t) \rangle_U \,\mathrm  dt
   < \int_0^T \langle f(t),\widehat u(t) \rangle_U \,\mathrm dt
   \;\;\mbox{for each}\;\;   g\in Y_T.
  \end{eqnarray}
  (Here, we noticed that   $Y_T$ is a closed subspace of $L^1(0,T;U)$.) From
  Theorem~\ref{Theorem-the-representation-theorem} and
  the first assertion in (\ref{wang2.26}), we find that $\hat y(T;0,\widehat u)=0$,
  which, along with (\ref{kernal-YT}) and the second assertion in  (\ref{wang2.26}), leads to a contradiction.  This ends  the proof.

  \end{proof}

The following result is a representation theorem on $\mathcal (\mathcal R_T^0)^*$, which plays an important role in our study.
  \begin{Theorem}\label{Theorem-the-second-representation-theorem}
   Suppose that (H1) holds. Then for each $T\in(0,\infty)$, there is a linear isomorphism $\Psi_T$ from $Y_T$ to $(\mathcal R_T^0)^*$ so that for all $g\in Y_T$ and $y_T\in\mathcal R_T^0$,
  \begin{eqnarray}\label{ob-attain-R0}
   \langle g,y_T \rangle_{Y_T,\mathcal R_T^0}
   \triangleq\langle \Psi_T(g),y_T \rangle_{(\mathcal R_T^0)^*,\mathcal R_T^0}
   =\int_0^T \langle g(t),v(t) \rangle_U \, \mathrm dt,
  \end{eqnarray}
  where $v$ is any admissible control to $(NP)^{y_T}$.
 \end{Theorem}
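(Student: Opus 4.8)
The plan is to follow the blueprint of the proof of Theorem~\ref{Theorem-the-representation-theorem}: construct $\Psi_T$ by hand from formula (\ref{ob-attain-R0}) and then verify, in order, that it is well defined and bounded, injective, surjective, and norm preserving; throughout, $\mathcal R_T^0$ carries the norm $\|\cdot\|_{\mathcal R_T}$ inherited from $\mathcal R_T$. Fix $T\in(0,\infty)$. For $g\in Y_T$ and $y_T\in\mathcal R_T^0$, set $\langle\Psi_T(g),y_T\rangle\triangleq\int_0^T\langle g(t),v(t)\rangle_U\,\mathrm dt$ for an admissible control $v$ to $(NP)^{y_T}$. This is independent of the choice of $v$: if $v_1,v_2$ are admissible then $\hat y(T;0,v_1-v_2)=0$, so by (\ref{NNNWWW2.1}) we get $\int_0^T\langle B^*S^*(T-t)z,(v_1-v_2)(t)\rangle_U\,\mathrm dt=0$ for every $z\in D(A^*)$; approximating $g$ by functions $B^*S^*(T-\cdot)z_n$ in $L^1(0,T;U)$ and using $v_1-v_2\in L^\infty(0,T;U)$ yields $\int_0^T\langle g,v_1-v_2\rangle_U=0$. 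Hölder's inequality together with $\|g\|_{Y_T}=\|g\|_{L^1(0,T;U)}$ and the definition of $\|y_T\|_{\mathcal R_T}$ (infimum over admissible $v$) then gives $|\langle\Psi_T(g),y_T\rangle|\le\|g\|_{Y_T}\|y_T\|_{\mathcal R_T}$, so $\Psi_T(g)\in(\mathcal R_T^0)^*$ with $\|\Psi_T(g)\|\le\|g\|_{Y_T}$; linearity of $\Psi_T$ is immediate.

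For injectivity and the reverse norm estimate I test against states reachable by controls supported away from $T$. If $s\in(0,T)$ and $u\in L^\infty(0,s;U)$, then its zero extension $\widetilde u$ lies in $L^\infty(0,T;U)$, so $\hat y(T;0,\widetilde u)\in\mathcal R_T^0$ with $\|\hat y(T;0,\widetilde u)\|_{\mathcal R_T}\le\|u\|_{L^\infty(0,s;U)}$ (since $\widetilde u$ is itself an admissible control), and by construction $\langle\Psi_T(g),\hat y(T;0,\widetilde u)\rangle=\int_0^s\langle g,u\rangle_U$. Taking the supremum over $\|u\|_{L^\infty(0,s;U)}\le1$ and letting $s\uparrow T$ gives $\|\Psi_T(g)\|\ge\sup_{s<T}\|g\|_{L^1(0,s;U)}=\|g\|_{Y_T}$; in particular $\Psi_T(g)=0$ forces $g=0$ a.e.\ on every $(0,s)$, hence $g\equiv0$.

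The substantial part is surjectivity, where (H1) enters. Let $\ell\in(\mathcal R_T^0)^*$, let $p_0$ be as in (H1), and write $p_0'=p_0/(p_0-1)\in(1,2]$. By (H1) and Lemma~\ref{Lemma-H3-eq}(ii), for each $s\in(0,T)$ the map $\Lambda_s:L^{p_0}(0,s;U)\to\mathcal R_T^0$, $\Lambda_s(u)=\hat y(T;0,\widetilde u)$, is bounded, so $\ell\circ\Lambda_s\in\big(L^{p_0}(0,s;U)\big)^*$ and the Riesz representation theorem furnishes $g_s\in L^{p_0'}(0,s;U)$ with $\ell(\hat y(T;0,\widetilde u))=\int_0^s\langle g_s,u\rangle_U$ for all $u\in L^{p_0}(0,s;U)$. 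Testing against $u$ supported on $(0,s_1)$ shows $g_{s_2}|_{(0,s_1)}=g_{s_1}$ when $s_1<s_2<T$, so the $g_s$ patch into a single measurable $g$ on $(0,T)$ with $g|_{(0,s)}=g_s$. It remains to check $g\in Y_T$ and $\Psi_T(g)=\ell$. First, $g\in L^1(0,T;U)$: since $\Lambda_s$ also maps $L^\infty(0,s;U)$ into $\mathcal R_T^0$ with norm $\le1$, the functional $u\mapsto\int_0^s\langle g_s,u\rangle_U$ on $L^\infty(0,s;U)$ has norm $\le\|\ell\|$, i.e.\ $\|g\|_{L^1(0,s;U)}=\|g_s\|_{L^1(0,s;U)}\le\|\ell\|$ uniformly in $s$, whence $\|g\|_{L^1(0,T;U)}\le\|\ell\|$. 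Next, $g|_{(0,s)}\in Y_s$ for each $s<T$: if $u\in L^\infty(0,s;U)$ satisfies $\hat y(s;0,u)=0$, then $\hat y(T;0,\widetilde u)=S(T-s)\hat y(s;0,u)=0$, hence $\int_0^s\langle g_s,u\rangle_U=\ell(0)=0$, so Lemma~\ref{Lemma-Linfty-kernal} gives $g_s\in Y_s$. Combining these two facts with Lemma~\ref{Lemma-left-continuity-YT}(ii) (which requires (H1)) yields $g\in Y_T$. Finally, $\Psi_T(g)$ and $\ell$ are bounded functionals on $\mathcal R_T^0$ that coincide on $\{\hat y(T;0,\widetilde u):s<T,\ u\in L^\infty(0,s;U)\}$, which is dense in $\mathcal R_T^0$ because, for $y_T=\hat y(T;0,v)$ with $v$ as in (\ref{R0T}), $\|\hat y(T;0,v)-\hat y(T;0,\chi_{(0,s)}v)\|_{\mathcal R_T}\le\|v\|_{L^\infty(s,T;U)}\to0$ as $s\uparrow T$; hence $\Psi_T(g)=\ell$. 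The two norm inequalities show $\Psi_T$ is an isometric isomorphism, and (\ref{ob-attain-R0}) is its defining identity.

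I expect the surjectivity step to be the crux, for two intertwined reasons: one must invoke (H1) in two genuinely different ways — via Lemma~\ref{Lemma-H3-eq}(ii) to make $\Lambda_s$ bounded on $L^{p_0}$ so that the Riesz theorem produces honest $L^{p_0'}$ pieces, and via Lemma~\ref{Lemma-left-continuity-YT}(ii) to promote the patched function from ``locally in $Y_s$ and globally in $L^1$'' to ``globally in $Y_T$'' — and the uniform bound $\|g_s\|_{L^1(0,s;U)}\le\|\ell\|$ (extracted from the $L^\infty$ mapping property of $\Lambda_s$, rather than from the $s$-dependent constant of Lemma~\ref{Lemma-H3-eq}(ii)) is the small observation that makes both the gluing and the $L^1$-membership go through.
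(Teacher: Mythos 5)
Your proposal is correct and follows essentially the same route as the paper's proof: the upper bound and the reverse norm estimate by testing against states $\hat y(T;0,\widetilde u)$ with $u$ supported in $(0,s)$ and letting $s\uparrow T$, and surjectivity by localizing the given functional to $(0,s)$, representing it via the $L^{p}$ bound coming from (H1) through Lemma~\ref{Lemma-H3-eq}(ii), identifying each piece $g_s$ as an element of $Y_s$ through Lemma~\ref{Lemma-Linfty-kernal}, gluing with the uniform $L^1$ bound and Lemma~\ref{Lemma-left-continuity-YT}(ii), and concluding by norm-density of truncated states in $\mathcal R_T^0$. The only real difference is cosmetic: you define $\Psi_T$ directly from the pairing instead of restricting the functional $\widehat{\mathcal F}_g$ of Theorem~\ref{Theorem-the-representation-theorem}, and you feed $L^{p_0}$ controls straight into $\ell\circ\Lambda_s$ — where membership of $\Lambda_s(u)$ in $\mathcal R_T^0$ for unbounded $u$ deserves one extra line (apply Lemma~\ref{Lemma-H3-eq}(ii) on an intermediate interval $(0,s')$ with $s<s'<T$ and zero-extend, or, as the paper does, define the functional on $L^\infty(0,s;U)$ and extend by density before invoking Riesz); since your later steps only ever evaluate the representation on $L^\infty$ controls, this does not affect the argument.
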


 \begin{proof}
 Let $0<T<\infty$. Recall that $\mathcal{R}^0_T$, with the norm $\|\cdot\|_{\mathcal R_T}$, is a subspace of $\mathcal{R}_T$ (see (\ref{attainable-space}) and (\ref{R0T})). According to Theorem \ref{Theorem-the-representation-theorem}, each  $g\in Y_T$ defines  a linear bounded functional $\widehat{\mathcal F}_g$ over $\mathcal R_T^0$ (i.e.,
 $\widehat{\mathcal F}_g\in (\mathcal R_T^0)^*$), via
 \begin{equation}\label{wang2.28}
 \widehat{\mathcal F}_g(y_T)\triangleq  \langle g, y_T \rangle_{Y_T, \mathcal{R}_T},\;\; y_T\in \mathcal{R}^0_T,
 \end{equation}
 where  $\langle \cdot, \cdot \rangle_{Y_T, \mathcal{R}_T}$ is given by (\ref{ob-attain-0}). Then we define a map $\Psi_T$ from  $Y_T$ to $(\mathcal{R}^0_T)^*$
 by
  \begin{eqnarray}\label{wang2.29}
     \Psi_T(g)\triangleq \widehat{\mathcal F}_g,\;\; g\in Y_T.
 \end{eqnarray}
 One can easily check that  $\Psi_T$ is linear.
 The rest of the proof is organized by three steps.

 \vskip 5pt

 \noindent {\it Step 1. To show that $\|g\|_{Y_T}=\|\Psi_T(g)\|_{\mathcal{R}_T} $ for all $g\in \mathcal{R}^0_T$ }

 \vskip 5 pt

 Let $g\in Y_T$ be given.  On one hand, from (\ref{wang2.28}), we see that
  \begin{eqnarray}\label{second-dual-2}
   \|\widehat{\mathcal F}_g\|_{(\mathcal R_T^0)^*}    = \sup_{y_T\in B_{\mathcal{R}^0_T}(0,1)} \langle g,y_T \rangle_{Y_T,\mathcal R_T}     \leq  \|g\|_{Y_T},
  \end{eqnarray}
  where $B_{\mathcal{R}^0_T}(0,1)$ is the closed unit ball in $\mathcal{R}^0_T$.
 On the other hand, we arbitrarily fix $S\in (0,T)$. Then according to  the Hahn-Banach theorem,
 there is a control $\hat u_S\in L^\infty(0,S;U)$
 so that
 \begin{equation}\label{wang2.31}
  \|g\|_{L^1(0,S;U)}= \langle g, \hat u_S \rangle_{L^1(0,S;U),L^\infty(0,S;U)}
  \;\;\mbox{and}\;\;\|\hat u_S\|_{L^\infty(0,S;U)}=1.
 \end{equation}
 Write $\widetilde{u}_S$ for the zero extension of $\hat u_S$ over $(0,T)$. Then it follows from (\ref{R0T}) that $\hat y(T;0,\widetilde{u}_S) \in \mathcal R_T^0$.
Now, by (\ref{wang2.31}),
 (\ref{ob-attain-0}), (\ref{wang2.28}) and (\ref{attianable-space-norm}),
 one can directly  check that
  \begin{eqnarray*}
  \|g\|_{L^1(0,S;U)}
  &=& \langle g, \widetilde u_S \rangle_{L^1(0,T;U),L^\infty(0,T;U)}
  = \langle g, \hat{y}(T;0,\widetilde u_S)\rangle_{Y_T,\mathcal R_T}
  \nonumber\\
  &=&  \widehat{\mathcal F}_g \big(\hat{y}(T;0,\widetilde u_S)\big)
  \leq \|\widehat{\mathcal F}_g\|_{(\mathcal R_T^0)^*} \|\hat{y}(T;0,\widetilde u_S)\|_{\mathcal R_T} \nonumber\\
  &\leq& \|\widehat{\mathcal F}_g\|_{(\mathcal R_T^0)^*} \|\widetilde u_S\|_{L^\infty(0,S;U)}=\|\widehat{\mathcal F}_g\|_{(\mathcal R_T^0)^*},
 \end{eqnarray*}
which yields that $ \|g\|_{Y_T}=\|g\|_{L^1(0,T;U)} \leq \|\widehat{\mathcal F}_g\|_{(\mathcal R_T^0)^*}$ (since $S$ was arbitrarily taken from $(0,T)$).
 This, along with   (\ref{second-dual-2}), leads to that $\|g\|_{Y_T}=\|\Psi_T(g)\|_{\mathcal{R}_T} $.

 \vskip 5pt

 \noindent {\it Step 2. To show that $\Psi_T$ is surjective}

  \vskip 5pt
Let $\hat f\in (\mathcal{R}^0_T)^*$. We aim to find a $\hat g\in Y_T$ so that
\begin{equation}\label{wang2.32}
\hat f= \Psi_T(\hat g)
\;\;\mbox{in}\;\; (\mathcal{R}^0_T)^*.
\end{equation}
In what follows, for each $u\in L^\infty(0,S;U)$, with $S\in (0,T)$, we denote by
$\widetilde{u}$ the zero extension of $u$ over $(0,T)$. Then it follows from (\ref{R0T}) that $\hat y(T;0,\widetilde{u}) \in \mathcal R_T^0$.
 We define, for each $S\in (0,T)$, a map $G_{\hat f, S}$ from
$L^\infty(0,S;U)$ to $\mathbb{R}$ by setting
 \begin{eqnarray}\label{second-dual-3}
  \mathcal G_{\hat f, S}(u)\triangleq \langle \hat f, \hat{y}(T;0,\widetilde u)\rangle_{(\mathcal{R}^0_T)^*, \mathcal{R}^0_T}
  \;\;\mbox{for each}\;\; u\in L^\infty(0,S;U).
 \end{eqnarray}
 From (\ref{second-dual-3}), we see that for each $S\in (0,T)$,
 \begin{equation}\label{second-dual-4}
 |\mathcal G_{\hat f, S}(u)|\leq \|\hat f\|_{(\mathcal{R}^0_T)^*}\|\hat y(T;0,\widetilde{u})\|_{\mathcal{R}_T}\;\;\mbox{for each}\;\; u\in L^\infty(0,S;U).
 \end{equation}
 Arbitrarily fix an $S\in (0,T)$. By (H1) and Lemma \ref{Lemma-H3-eq},  we have  (ii) of Lemma \ref{Lemma-H3-eq}.  Thus,  there exists a $C_1(T,S)>0$ so that for each $u\in
   L^\infty(0,S;U)$, there is a control $\hat v_u \in L^\infty(0,T;U)$ verifying that
 \begin{equation}\label{wang2.35}
 \hat y(T;0, \widetilde{u})=\hat y(T; 0,\chi_{(S,T)}\hat v_u)
 \;\;\mbox{and}\;\; \|\hat v_u\|_{L^\infty(0,T;U)}\leq C_1(T,S)\|\widetilde{u}\|_{L^{p_1}(0,T;U)}
 \end{equation}
 for some   $p_1\in[2,\infty)$.
 From the first assertion in (\ref{wang2.35}) and (\ref{attianable-space-norm}), we find that
 $$
  \|\hat{y}(T;0,\widetilde u)\|_{\mathcal R_T} \leq \|\hat v_u\|_{L^{\infty}(0,T;U)},
  $$
 which, together with  the second assertion in (\ref{wang2.35}), indicates that
 $$
 \|\hat{y}(T;0,\widetilde u)\|_{\mathcal R_T}\leq C_1(T,S)\|\widetilde{u}\|_{L^{p_1}(0,S;U)}\;\;\mbox{for all}\;\; u\in L^\infty(0,S;U).
 $$
 This, as well as (\ref{second-dual-4}), yields that for each $S\in (0,T)$,
 \begin{eqnarray}\label{wang3.36}
   |\mathcal G_{\hat f,S}(u) | \leq C_1(T,S)\|\hat f\|_{(\mathcal R_T^0)^*} \|u\|_{L^{p_1}(0,S;U)}\;\;\mbox{for all}\;\; u\in L^\infty(0,S;U).
 \end{eqnarray}
 By (\ref{wang3.36}) and  the Hahn-Banach theorem, we can uniquely extend $\mathcal G_{\hat f,S}$ to be an element in $\big(L^{p_1}(0,S;U)\big)^*$, denoted in the same manner, so that
  \begin{eqnarray}\label{wang3.37}
   |\mathcal G_{\hat f,S}(u) | \leq C_1(T,S)\|\hat f\|_{(\mathcal R_T^0)^*} \|u\|_{L^{p_1}(0,S;U)}\;\;\mbox{for all}\;\; u\in L^{p_1}(0,S;U).
 \end{eqnarray}
  From (\ref{wang3.37}), using  the Riesz representation theorem, we find that for each $S\in (0,T)$, there exists a $g_S\in L^{p_1^\prime}(0,S;U)$, with $1/p_1+1/p_1^\prime=1$,  so that
 \begin{eqnarray}\label{second-dual-6}
    \mathcal G_{\hat f,S}(u)=\int_0^S \langle g_S(t),u(t) \rangle_U \, \mathrm dt
    \;\;    \mbox{for all}\;\;   u\in L^{p_1}(0,S;U).
 \end{eqnarray}

 Next, arbitrarily fix an $S\in (0,T)$. Then take $v\in L^\infty(0,S;U)$ so that
 $\hat y(T; 0,\widetilde{v})=0$.
 (Here, $\widetilde{v}$ is the zero extension of $v$ over
 $(0,T)$.)
  By (\ref{second-dual-6}) and
 (\ref{second-dual-3}), we see that
  \begin{eqnarray*}
 \int_0^S \langle g_S(t),v(t) \rangle_U \, \mathrm dt=\mathcal G_{\hat f,S}(v)=0.
 \end{eqnarray*}
 This, along with Lemma \ref{Lemma-Linfty-kernal}, yields that
 \begin{eqnarray}\label{second-dual-7}
  g_S \in Y_S \;\;\mbox{ for each }\;\;S\in (0,T).
 \end{eqnarray}
 Meanwhile, from (\ref{second-dual-6}), (\ref{second-dual-4}) and (\ref{attianable-space-norm}), one can easily check  that for each $u\in L^\infty(0,S;U)$,
  \begin{eqnarray*}
  \int_0^S \langle g_S(t),u(t) \rangle_U  \,\mathrm dt
  \leq   \|\hat f\|_{(\mathcal R_T^0)^*}\|\hat y(T;0,\widetilde u)\|_{\mathcal{R}_T}
  \leq \|\hat f\|_{(\mathcal R_T^0)^*}\|u\|_{L^\infty(0,S;U)}.
 \end{eqnarray*}
 This, together with (\ref{second-dual-7}), implies that
 \begin{eqnarray}\label{second-dual-6-1}
    \|g_S\|_{Y_S}=\|g_S\|_{L^1(0,S;U)} \leq \|\hat f\|_{(\mathcal R_T^0)^*}\;\;\mbox{for all}\;\; S\in (0,T).
 \end{eqnarray}
 We now define a function $\hat g: (0,T)\rightarrow U$ in the following manner: For each $S\in (0,T)$,
 \begin{eqnarray}\label{second-dual-8}
  \hat g(t)\triangleq g_S(t)\;\;\mbox{for all}\;\;t\in (0,S).
 \end{eqnarray}
 The map $\hat g$ is well defined on $(0,T)$. In fact, when
   $0<S_1<S_2<T$, it follows from  (\ref{second-dual-6}) and (\ref{second-dual-3}) that
   for each $u\in L^\infty(0,S_1;U)$,
   \begin{eqnarray*}
    \int_0^{S_1}  \langle g_{S_1}(t),u(t) \rangle_U \,\mathrm dt
    &=& \mathcal G_{\hat f,S_1} (u)
    =  \langle \hat f, \hat y(T;0,\widetilde u) \rangle_{(\mathcal R_T^0)^*,\mathcal R_T^0}
    =  \mathcal G_{\hat f,S_2} (\widetilde u|_{(0,S_2)})
    \nonumber\\
    &=& \int_0^{S_2}  \langle g_{S_2}(t), \widetilde u(t) \rangle_U \,\mathrm dt
    = \int_0^{S_1}  \langle g_{S_2}(t),u(t) \rangle_U \,\mathrm dt,
   \end{eqnarray*}
   which implies that
 $ g_{S_1}(\cdot)=g_{S_2}(\cdot)$ over $(0,S_1)$.
  So one can check from (\ref{second-dual-8}) that $\hat g$ is well defined. By  (\ref{second-dual-8}) and (\ref{second-dual-6-1}), we see that
 \begin{equation}\label{wang2.42}
 \|\hat g\|_{L^1(0,T;U)} \leq \|\hat f\|_{(\mathcal R_T^0)^*}.
 \end{equation}
   Since (H1) was assumed, from  (\ref{wang2.42}), (\ref{second-dual-8}), (\ref{second-dual-7}) and (ii) of  Lemma \ref{Lemma-left-continuity-YT}, we find that
 \begin{eqnarray}\label{second-dual-8-1}
    \hat g\in Y_T\;\;\mbox{and}\;\; \|\hat g\|_{Y_T}\leq \|\hat f\|_{(\mathcal R_T^0)^*}.
 \end{eqnarray}
  By (\ref{second-dual-3}),  (\ref{second-dual-6}) and (\ref{second-dual-8}), we deduce that for each $S\in (0,T)$,
  \begin{eqnarray}\label{second-dual-9}
 \langle \hat f, \hat{y}(T;0,\widetilde u)\rangle_{(\mathcal{R}^0_T)^*, \mathcal{R}^0_T}=\int_0^T \langle \hat g(t),\widetilde u(t) \rangle_U \mathrm dt\;\;\mbox{for all}\;\; u\in L^\infty(0,S;U).
 \end{eqnarray}

 Now, for each $y_T\in \mathcal{R}^0_T$, it follows by (\ref{R0T}) that there is an $u_{y_T}\in L^\infty(0,T;U)$ so that $$
 y_T=\hat y(T;0,u_{y_T})
 \;\;\mbox{and}\;\;   \lim_{S\rightarrow T} \|u_{y_T}\|_{L^\infty(S,T;U)}=0.
 $$
 From these and (\ref{attianable-space-norm}), one can check that
 \begin{eqnarray*}
  \big\| \hat y(T;0,\chi_{(0,S)}u_{y_T})  - y_T  \big\|_{\mathcal R_T}
  =\| \hat y(T;0,\chi_{(S,T)}u_{y_T}) \|_{\mathcal R_T}
  \leq \|u_{y_T}\|_{L^\infty(S,T;U)}
  \rightarrow 0,
  \;\;\mbox{as}\;\; S\rightarrow T,
 \end{eqnarray*}
 which implies that
 \begin{equation}\label{wang2.45}
 \hat y(T;0,\chi_{(0,S)}u_{y_T})\rightarrow y_T\;\;\mbox{in}\;\;\mathcal R_T,
 \;\;\mbox{as}\;\; S\rightarrow T.
 \end{equation}
 Notice that  $\hat y(T; 0,\chi_{(0,S)}u_{y_T})\in \mathcal{R}^0_T$ and $\hat g\in Y_T$. Thus,  from (\ref{wang2.45}), (\ref{second-dual-9}) and  (\ref{ob-attain-0}), using the dominated convergence theorem, we find that for each $y_T\in \mathcal{R}^0_T$,
 \begin{eqnarray*}
  \langle \hat f, y_T\rangle_{(\mathcal{R}^0_T)^*, \mathcal{R}^0_T}
  &=& \lim_{S\rightarrow T}
  \big\langle \hat f, \hat{y}(T;0,\chi_{(0,S)}u_{y_T}) \big\rangle
  _{(\mathcal{R}^0_T)^*, \mathcal{R}^0_T}
  \nonumber\\
  &=&  \lim_{S\rightarrow T}  \int_0^T  \big\langle \hat g(t),\chi_{(0,S)}(t)u_{y_T}(t) \big\rangle_U \,\mathrm dt \nonumber\\
  &=& \int_0^T \langle \hat g(t),u_{y_T}(t) \rangle_U  \,\mathrm dt= \langle \hat g,y_T\rangle_{Y_T,\mathcal R_T}.
 \end{eqnarray*}
 This, along with (\ref{wang2.28}), yields that
   $$
   \langle \hat f, y_T\rangle_{(\mathcal{R}^0_T)^*, \mathcal{R}^0_T}
   =\widehat{\mathcal{F}}_{\hat g}(y_T)\;\;\mbox{for all}\;\; y_T\in \mathcal{R}^0_T,\;\;\mbox{i.e.,}\;\;\hat f= \widehat{\mathcal{F}}_{\hat g}
   \;\;\mbox{in}\;\;  (\mathcal R_T^0)^*,
   $$
    which, together with (\ref{wang2.29}), leads to  (\ref{wang2.32}). So $\Psi_T$ is surjective.

    \vskip 5pt

    \noindent {\it Step 3. To show the second equality in (\ref{ob-attain-R0})}

The second equality in (\ref{ob-attain-R0}) follows from (\ref{wang2.29}), (\ref{wang2.28}) and (\ref{ob-attain-0}) (in Theorem \ref{Theorem-the-representation-theorem}).

\vskip 5pt

In summary, we finish the proof of this theorem.
 \end{proof}

 \begin{Remark}
  We do not know whether  $\mathcal R_T^0$ is a  closed subspace of $\mathcal R_T$
  in general.

 \end{Remark}

 \vskip 5pt

 \begin{Corollary}\label{Corollary-weakly-star-compact}
 Suppose that (H1) holds. Then for each $T\in(0,\infty)$, $B_{Y_T}$ (the closed unit ball in  $ Y_T$)
 is compact in the topology $\sigma(Y_T,\mathcal R_T^0)$  (i.e., weak star compact). \end{Corollary}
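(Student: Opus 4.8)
The plan is to deduce this directly from the second representation theorem (Theorem~\ref{Theorem-the-second-representation-theorem}) together with the Banach--Alaoglu theorem, so the corollary is essentially a transport of compactness along an isometric isomorphism. Recall that $\mathcal R_T^0$, equipped with the norm $\|\cdot\|_{\mathcal R_T}$, is a normed space; hence its dual $(\mathcal R_T^0)^*$ carries the weak star topology $\sigma\big((\mathcal R_T^0)^*,\mathcal R_T^0\big)$, and by Banach--Alaoglu the closed unit ball $B_{(\mathcal R_T^0)^*}$ of $(\mathcal R_T^0)^*$ is compact in this topology.

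First I would record that, under (H1), Theorem~\ref{Theorem-the-second-representation-theorem} furnishes a linear isomorphism $\Psi_T:\,Y_T\to (\mathcal R_T^0)^*$, and that Step~1 in the proof of that theorem shows $\|\Psi_T(g)\|_{(\mathcal R_T^0)^*}=\|g\|_{Y_T}$ for every $g\in Y_T$. Thus $\Psi_T$ is an isometric isomorphism, and since it is surjective it maps $B_{Y_T}$ exactly onto $B_{(\mathcal R_T^0)^*}$, i.e. $\Psi_T^{-1}\big(B_{(\mathcal R_T^0)^*}\big)=B_{Y_T}$.

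Next I would check that $\Psi_T$ is a homeomorphism when $Y_T$ carries $\sigma(Y_T,\mathcal R_T^0)$ and $(\mathcal R_T^0)^*$ carries $\sigma\big((\mathcal R_T^0)^*,\mathcal R_T^0\big)$. This is immediate from the pairing identity (\ref{ob-attain-R0}): for each fixed $y_T\in\mathcal R_T^0$ the linear functional $g\mapsto\langle g,y_T\rangle_{Y_T,\mathcal R_T^0}$ on $Y_T$ coincides with $g\mapsto\langle\Psi_T(g),y_T\rangle_{(\mathcal R_T^0)^*,\mathcal R_T^0}$, so the families of seminorms defining the two topologies correspond to one another under $\Psi_T$. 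Consequently both $\Psi_T$ and $\Psi_T^{-1}$ are continuous for these topologies.

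Finally I would conclude: since $B_{(\mathcal R_T^0)^*}$ is $\sigma\big((\mathcal R_T^0)^*,\mathcal R_T^0\big)$-compact by Banach--Alaoglu and $\Psi_T^{-1}$ is continuous from $\big((\mathcal R_T^0)^*,\sigma((\mathcal R_T^0)^*,\mathcal R_T^0)\big)$ to $\big(Y_T,\sigma(Y_T,\mathcal R_T^0)\big)$, the image $\Psi_T^{-1}\big(B_{(\mathcal R_T^0)^*}\big)=B_{Y_T}$ is compact in $\sigma(Y_T,\mathcal R_T^0)$, which is the assertion. There is no real obstacle here; the only point deserving care is the bookkeeping that the weak-type topology $\sigma(Y_T,\mathcal R_T^0)$ is genuinely the one pulled back from the canonical duality $\langle(\mathcal R_T^0)^*,\mathcal R_T^0\rangle$ by $\Psi_T$ — and this is exactly what the representation formula (\ref{ob-attain-R0}) records.
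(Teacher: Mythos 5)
Your proposal is correct and follows essentially the same route as the paper: the paper likewise identifies $Y_T$ with $(\mathcal R_T^0)^*$ via Theorem~\ref{Theorem-the-second-representation-theorem} and then invokes the Banach--Alaoglu theorem, with your write-up merely making explicit the isometry $\|\Psi_T(g)\|_{(\mathcal R_T^0)^*}=\|g\|_{Y_T}$ and the fact that (\ref{ob-attain-R0}) identifies $\sigma(Y_T,\mathcal R_T^0)$ with the weak star topology pulled back through $\Psi_T$.
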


 \begin{proof}

 By Theorem \ref{Theorem-the-second-representation-theorem}, we have that $Y_T=(\mathcal R_T^0)^*$. Then by the Banach-Alaoglu theorem, $B_{Y_T}$ is compact in the topology $\sigma(Y_T,\mathcal R_T^0)$. This ends the proof.\\
 \end{proof}

 \vskip 5pt
 \subsection{Further studies on attainable subspaces}

  The following Lemma presents the non-triviality of the  subspaces $Y_T$ and $\mathcal R_T^0$, with $T\in(0,\infty)$. (Consequently, $\mathcal R_T$ is also non trivial.)  Here, we will use the assumption
  that the control operator $B$ is non-trivial.

 \begin{Lemma}\label{Lemma-non-trivial}
 Let $0<T<\infty$. Then the sets $Y_T\setminus\{0\}$ and $\mathcal R_T^0\setminus\{0\}$   are nonempty.

 \end{Lemma}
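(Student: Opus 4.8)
The plan is to use only the standing hypothesis $B\neq0$; no instance of (H1)--(H4) is needed. I would first establish that $X_T\neq\{0\}$, which immediately gives $Y_T\setminus\{0\}\neq\emptyset$ since $X_T\subset Y_T$ by (\ref{ob-space}), and then refine the argument to produce a nonzero element of $\mathcal R_T^0$ (which in particular shows $\mathcal R_T\neq\{0\}$).

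\textbf{Step 1: $X_T\neq\{0\}$.} Recall that $B^*\in\mathcal L(D(A^*),U)$, and that for $z\in D(A^*)$ the curve $s\mapsto S^*(s)z$ is continuous from $[0,\infty)$ into $D(A^*)$ equipped with its graph norm (because $S^*(s)z\in D(A^*)$ and $A^*S^*(s)z=S^*(s)A^*z$ depends continuously on $s$); hence $s\mapsto B^*S^*(s)z$ is continuous into $U$. Suppose, for contradiction, that $B^*S^*(T-\cdot)z\equiv0$ in $L^1(0,T;U)$ for every $z\in D(A^*)$. Replacing $z$ by $S^*(s)z\in D(A^*)$ for an arbitrary $s>0$ and using the semigroup law, we get $B^*S^*(\tau)z=0$ for a.e. $\tau\in(s,T+s)$; letting $s$ range over the positive rationals, whose associated intervals cover $(0,\infty)$, yields $B^*S^*(\tau)z=0$ for a.e. $\tau>0$, and the continuity noted above upgrades this to $B^*S^*(\tau)z=0$ for all $\tau\geq0$. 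Taking $\tau=0$ gives $B^*z=0$ for all $z\in D(A^*)$, hence $\langle Bu,z\rangle_{X_{-1},D(A^*)}=\langle u,B^*z\rangle_U=0$ for all $u\in U$ and $z\in D(A^*)$; since $X_{-1}=D(A^*)'$, the duality pairing is nondegenerate, so $Bu=0$ for all $u$, i.e. $B=0$ — a contradiction. Therefore there is $z_0\in D(A^*)$ with $f_0\triangleq B^*S^*(T-\cdot)z_0\not\equiv0$ in $L^1(0,T;U)$ (the membership in $L^1$ follows from Lemma \ref{ad-control-ob}), and $f_0\in X_T\setminus\{0\}\subset Y_T\setminus\{0\}$.

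\textbf{Step 2: $\mathcal R_T^0\neq\{0\}$.} Since $f_0$ is nonzero in $L^1(0,T;U)$ and $(0,T)=\bigcup_n(0,T-1/n)$, we may fix $t_0\in(0,T)$ with $f_0\not\equiv0$ in $L^1(0,t_0;U)$. Then $v\mapsto\int_0^{t_0}\langle v(s),f_0(s)\rangle_U\,\mathrm ds$ is a nonzero functional on $L^\infty(0,t_0;U)$, so there is $v_0\in L^\infty(0,t_0;U)$ with $\int_0^{t_0}\langle v_0(s),f_0(s)\rangle_U\,\mathrm ds\neq0$. Let $\widetilde v_0\in L^\infty(0,T;U)$ be the extension of $v_0$ by zero. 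Applying (\ref{NNNWWW2.1}) with $y_0=0$, $t=T$ and $z=z_0$ gives
\begin{equation*}
\langle\hat y(T;0,\widetilde v_0),z_0\rangle_X=\int_0^T\langle\widetilde v_0(s),B^*S^*(T-s)z_0\rangle_U\,\mathrm ds=\int_0^{t_0}\langle v_0(s),f_0(s)\rangle_U\,\mathrm ds\neq0,
\end{equation*}
so $\hat y(T;0,\widetilde v_0)\neq0$. Finally, because $\widetilde v_0=0$ on $(t_0,T)$, we have $\|\widetilde v_0\|_{L^\infty(s,T;U)}=0$ for all $s\in[t_0,T)$, hence $\lim_{s\to T}\|\widetilde v_0\|_{L^\infty(s,T;U)}=0$; by (\ref{R0T}) this shows $\hat y(T;0,\widetilde v_0)\in\mathcal R_T^0\setminus\{0\}$, and a fortiori $\mathcal R_T\setminus\{0\}\neq\emptyset$.

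The only genuinely delicate ingredients are the implication ``$B^*z=0$ on $D(A^*)$ $\Rightarrow$ $B=0$'' together with the continuity of $s\mapsto B^*S^*(s)z$ on $D(A^*)$; both are soft, standard facts from the admissible control/observation framework (cf. \cite[Section 4.2]{TW} and Lemma \ref{ad-control-ob}). Consequently the whole argument is elementary and invokes no controllability or unique-continuation property — which is exactly why the conclusion holds without any of (H1)--(H4).
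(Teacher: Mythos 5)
Your proof is correct and follows essentially the same route as the paper's: nontriviality of $Y_T$ is obtained from $B\neq 0$ via continuity of $t\mapsto B^*S^*(t)z$ for $z\in D(A^*)$ and the implication $B^*=0$ on $D(A^*)$ $\Rightarrow$ $B=0$, and the link to $\mathcal R_T^0$ is made through the duality identity (\ref{NNNWWW2.1}) applied to controls supported away from $T$. The only cosmetic differences are that your Step 2 is a direct construction where the paper argues by contradiction (reducing $\mathcal R_T^0=\{0\}$ back to $Y_T=\{0\}$), and your semigroup-law covering argument in Step 1 is redundant, since continuity of $t\mapsto B^*S^*(T-t)z$ on $[0,T]$ already upgrades the a.e. vanishing to $t=T$.
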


 \begin{proof}
 Arbitrarily fix a $T\in(0,\infty)$. We first show that  $Y_T\setminus\{0\}\neq\emptyset$. Seeking for a  contradiction, we suppose that  $Y_T\setminus\{0\}=\emptyset$. Since $X_T\subset Y_T$ (see (\ref{ob-space})), we could derive from (\ref{assumptionspace3}) that for each $z\in D(A^*)$,
 $ B^*S^*(T-\cdot)z=0 $ over $(0,T)$.
  Since $\{S^*(t)|_{D(A^*)}\}_{t\in\mathbb R^+}$ is a $C_0$-semigroup on $D(A^*)$ and $B^*\in \mathcal L(D(A^*),U)$, the above yields that for each $t\in[0,T]$
  and each $z\in D(A^*)$, $ B^*S^*(T-t)z=0$.
Taking $t=T$ in above equality leads to that
$ B^*z=0 $ for all $z\in D(A^*)$, i.e., $ B^*=0$,
  which contradicts  the assumption that $B\neq 0$. Thus we have proved that $Y_T\setminus\{0\} \neq \emptyset$.

 We next verify that the set $\mathcal R_T^0\setminus\{0\}$ is nonempty.
  By contradiction, suppose that it was not true. Then we would have that
 \begin{eqnarray}\label{1230-first-one}
 \mathcal R_T^0\setminus\{0\}=\emptyset, \;\;\mbox{i.e.,}\;\;  \mathcal R_T^0 = \{0\}.
 \end{eqnarray}
 Arbitrarily fix an $\varepsilon\in(0,T)$. We find from (\ref{R0T}) that
 $\hat y(T;0,\widetilde v) \in\mathcal R_T^0$ for all $ v\in L^\infty(0,\varepsilon;U)$,
  where $\widetilde v$ denotes the zero extension of $v$ over $(0,T)$.
 This, together with  (\ref{1230-first-one}) and (\ref{NNNWWW2.1}), yields  that for all $z\in D(A^*)$ and $v\in L^\infty(0,\varepsilon;U)$,
$$
  \int_0^T \langle B^*S^*(T-t)z, \widetilde v(t) \rangle_U  \,\mathrm dt
  =\langle z, \hat y(T;0,\widetilde v) \rangle_X=0.
$$
 From the above, we find  that for each $z\in D(A^*)$,
 $B^*S^*(T-\cdot)z=0$  over $(0,\varepsilon)$.
    Since $\varepsilon$ was arbitrarily taken from $(0,T)$, the above indicates that $B^*S^*(T-\cdot)z=0$ over $(0,T)$,
    for each $z\in D(A^*)$.
 From this and  (\ref{assumptionspace3}), we find that $X_T=\{0\}$, which, along with  (\ref{ob-space}), indicates that $Y_T=\{0\}$. This  leads to  a contradiction, since we have proved that $Y_T\setminus\{0\}\neq \emptyset$. Therefore, $\mathcal R_T^0\setminus\{0\} \neq \emptyset$. Thus, we end the proof of this lemma.

 \end{proof}

 \vskip 5pt
  The next  result presents  an expression on the norm $\|\cdot\|_{\mathcal{R}_T}$.

 \begin{Proposition}\label{proposition2.3}
  Let $0<T<\infty$. Write
  \begin{equation*}
  \hat Z_T\triangleq \big\{z\in D(A^*)\; :\; B^*S^*(T-\cdot)z\neq 0\;\;\mbox{in}\;\;L^1(0,T;U) \big\}.
  \end{equation*}
  Then it stands that
  \begin{eqnarray}\label{norm-dual}
   \|y_T\|_{\mathcal R_T}=
   \sup_{z\in \hat Z_T} \frac{\langle y_T,z\rangle_X}{\|B^*S^*(T-\cdot)z\|_{L^1(0,T;U)}}
   \;\;\mbox{for all}\;\; y_T\in \mathcal R_T.
  \end{eqnarray}
   \end{Proposition}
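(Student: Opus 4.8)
The plan is to deduce this from the first representation theorem (Theorem~\ref{Theorem-the-representation-theorem}), which identifies $(\mathcal R_T,\|\cdot\|_{\mathcal R_T})$ with the dual $Y_T^*$ of $(Y_T,\|\cdot\|_{L^1(0,T;U)})$ via the isomorphism $\Phi_T$. By the definition of the dual norm,
\begin{equation*}
\|y_T\|_{\mathcal R_T}=\|\Phi_T(y_T)\|_{Y_T^*}
=\sup\Big\{\langle y_T,f\rangle_{\mathcal R_T,Y_T}\;:\;f\in Y_T,\ \|f\|_{L^1(0,T;U)}\le 1\Big\},
\end{equation*}
and since $Y_T=\overline{X_T}^{\,\|\cdot\|_{L^1}}$ with $X_T=\{B^*S^*(T-\cdot)z|_{(0,T)}:z\in D(A^*)\}$ dense in $Y_T$, the supremum may be taken over the dense subset $X_T$ instead. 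Then I would rewrite the pairing: for $y_T\in\mathcal R_T$ pick any admissible control $v$ to $(NP)^{y_T}$; by (\ref{ob-attain-0}) and (\ref{NNNWWW2.1}) (equivalently Proposition~\ref{huangwanghenproposition2.1}), $\langle y_T,B^*S^*(T-\cdot)z\rangle_{\mathcal R_T,Y_T}=\int_0^T\langle v(t),B^*S^*(T-t)z\rangle_U\,\mathrm dt=\langle y_T,z\rangle_X$. Hence the supremum becomes $\sup\{\langle y_T,z\rangle_X : z\in D(A^*),\ \|B^*S^*(T-\cdot)z\|_{L^1(0,T;U)}\le 1\}$, and a standard homogeneity (scaling) argument converts this normalized supremum into the quotient form $\sup_{z\in\hat Z_T}\langle y_T,z\rangle_X/\|B^*S^*(T-\cdot)z\|_{L^1(0,T;U)}$, which is exactly (\ref{norm-dual}).

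The details to check are mostly bookkeeping. First, one must justify replacing the supremum over $B_{Y_T}$ by the supremum over the unit ball of $X_T$: since $X_T$ is dense in $Y_T$ and the functional $f\mapsto\langle y_T,f\rangle$ is continuous on $Y_T$, the two suprema coincide (a dense subset of the unit ball, obtained by scaling elements of $X_T$ of norm $\le 1$, still approximates; more carefully, for any $f\in B_{Y_T}$ and $\varepsilon>0$ there is $g\in X_T$ with $\|f-g\|_{L^1}<\varepsilon$, so $\|g\|_{L^1}<1+\varepsilon$ and $\langle y_T,g/(1+\varepsilon)\rangle>\langle y_T,f\rangle-C\varepsilon$). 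Second, the homogeneity step: elements $z$ with $B^*S^*(T-\cdot)z=0$ in $L^1$ contribute $\langle y_T,z\rangle_X=0$ to the normalized supremum (because then $\langle y_T,z\rangle_X=\int_0^T\langle v(t),B^*S^*(T-t)z\rangle_U\,\mathrm dt=0$), so they may be discarded, leaving exactly $\hat Z_T$; for $z\in\hat Z_T$, replacing $z$ by $z/\|B^*S^*(T-\cdot)z\|_{L^1}$ shows the normalized supremum equals the quotient supremum.

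I do not anticipate a genuine obstacle here — the proposition is essentially an unpacking of Theorem~\ref{Theorem-the-representation-theorem} together with the definition of the $L^1$-$L^\infty$ duality pairing. The one point requiring a little care is making sure the pairing $\langle y_T,B^*S^*(T-\cdot)z\rangle_{\mathcal R_T,Y_T}$ really equals $\langle y_T,z\rangle_X$: this is where one invokes (\ref{ob-attain-0}) with an admissible control $v$ together with Proposition~\ref{huangwanghenproposition2.1} (or directly the defining identity (\ref{NNNWWW2.1}) for the mild solution with initial datum $0$), and one should note the answer is independent of the choice of $v$, which is already guaranteed by the well-definedness established in the proof of Theorem~\ref{Theorem-the-representation-theorem}. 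Everything else is the routine passage from a dual-norm supremum over a unit ball to a quotient supremum over a spanning set.
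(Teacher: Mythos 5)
Your proof is correct and follows essentially the same route as the paper: both rest on Theorem~\ref{Theorem-the-representation-theorem}, the identity $\langle y_T,B^*S^*(T-\cdot)z\rangle_{\mathcal R_T,Y_T}=\langle y_T,z\rangle_X$ obtained from (\ref{ob-attain-0}) together with (\ref{NNNWWW2.1}), and the density of $X_T$ in $Y_T$ (the paper phrases the density step as approximating each $f\in Y_T\setminus\{0\}$ by a sequence $B^*S^*(T-\cdot)z_n$ rather than passing to unit balls, which is only a cosmetic difference). The one point you leave tacit is that $\hat Z_T\neq\emptyset$, so that the right-hand supremum is over a nonempty set and is nonnegative --- a fact your discarding/scaling step implicitly uses; the paper records this at the outset, and it follows from $B\neq 0$ via Lemma~\ref{Lemma-non-trivial}.
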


 \begin{proof}
 Arbitrarily fix $T\in (0,\infty)$.
 First of all, we notice that $\hat Z_T\neq \emptyset$.
 Indeed, if it was not true, then by (\ref{assumptionspace3}),
  we would have that $X_T=\{0\}$, which, along with  (\ref{ob-space}), yields that $Y_T=\{0\}$. This contradicts  Lemma \ref{Lemma-non-trivial}. So we have proved that $\hat Z_T\neq \emptyset$.

 Recall (\ref{ob-attain-14}) for the linear isomorphism $\Phi_T$
 from $\mathcal R_T$ to $Y_T^*$. It is clear that
  \begin{eqnarray}\label{gengsheng2.9}
    \|y_T\|_{\mathcal R_T}=\|\Phi_T(y_T)\|_{Y_T^*}=\|\widetilde{\mathcal{F}}_{y_T}\|_{Y_T^*}
    =\sup_{f\in Y_T\setminus\{0\}}
    \frac{\langle \widetilde{\mathcal{F}}_{y_T},f\rangle_{Y_T^*,Y_T}}{\|f\|_{Y_T}}
    \;\;\mbox{for each}\;\; y_T\in \mathcal R_T.
   \end{eqnarray}
 We claim that for each $y_T\in \mathcal R_T$,
 \begin{eqnarray}\label{gengsheng2.10}
 \sup_{f\in Y_T\setminus\{0\}}
    \frac{\langle \widetilde{\mathcal{F}}_{y_T},f\rangle_{Y_T^*,Y_T}}{\|f\|_{Y_T}}
    =\sup_{z\in \hat Z_T} \frac{\langle y_T,z\rangle_X}{\|B^*S^*(T-\cdot)z\|_{L^1(0,T;U)}}.
     \end{eqnarray}
 To this end, we arbitrarily take $y_T\in \mathcal{R}_T$ and then fix $v\in \mathcal{U}_{ad}^{y_T}$. (Since $y_T\in \mathcal{R}_T$, it follows by (\ref{HuangHuangHuang2.9}) that
  $\mathcal{U}_{ad}^{y_T}\neq
 \emptyset$.)
 On one hand, given $f\in Y_T\setminus\{0\}$, it follows by (\ref{ob-space}) that
 there is a sequence $\{z_n\}$ in $D(A^*)$ so that
 \begin{equation}\label{gengsheng2.11}
 B^*S^*(T-\cdot)z_n\rightarrow f(\cdot)\;\;\mbox{in}\;\; L^1(0,T;U).
 \end{equation}
  Since $f\neq 0$, we see from (\ref{gengsheng2.11}) that when $n$ is large enough,
  \begin{equation}\label{gengsheng2.12}
  B^*S^*(T-\cdot)z_n\neq 0\;\;\mbox{in}\;\; L^1(0,T;U),\;\;\mbox{i.e.,}\;\;z_n\in \hat Z_T.
  \end{equation}
  From (\ref{gengsheng2.11}), the definition of $\widetilde{\mathcal{F}}_{y_T}$ (see (\ref{ob-attain-12}))
    and the first equality in (\ref{ob-attain-11}), we find that
  \begin{eqnarray*}
  \langle \widetilde{\mathcal{F}}_{y_T}, f    \rangle_{Y_T^*,Y_T}
   =\lim_{n\rightarrow\infty}  \int_0^T
  \langle v(t), B^*S^*(T-t)z_n\rangle_U \,\mathrm dt
  =\lim_{n\rightarrow\infty}
  \langle y_T, z_n\rangle_X.
  \end{eqnarray*}
 This, together with (\ref{gengsheng2.11}) and (\ref{gengsheng2.12}), yields that
 \begin{eqnarray*}
 \frac{\langle \widetilde{\mathcal{F}}_{y_T},f\rangle_{Y_T^*,Y_T}}{\|f\|_{Y_T}}
 =\lim_{n\rightarrow\infty}\frac{\langle y_T,z_n\rangle_X}{\|B^*S^*(T-\cdot)z_n\|_{L^1(0,T;U)}}
 \leq \sup_{z\in \hat Z_T} \frac{\langle y_T,z\rangle_X}{\|B^*S^*(T-\cdot)z\|_{L^1(0,T;U)}}.
  \end{eqnarray*}
 Since $f$ was arbitrarily taken from $Y_T\setminus\{0\}$, the above leads to
 \begin{eqnarray}\label{gengsheng2.13}
 \sup_{f\in Y_T\setminus\{0\}}
    \frac{\langle \widetilde{\mathcal{F}}_{y_T},f\rangle_{Y_T^*,Y_T}}{\|f\|_{Y_T}}
    \leq\sup_{z\in \hat Z_T} \frac{\langle y_T,z\rangle_X}{\|B^*S^*(T-\cdot)z\|_{L^1(0,T;U)}}.
  \end{eqnarray}
 On the other hand, let $z\in \hat Z_T$ be arbitrarily fixed. It is clear that
 \begin{eqnarray}\label{gengsheng2.14}
 B^*S^*(T-\cdot)z\in Y_T\setminus\{0\}.
  \end{eqnarray}
 Moreover, it follows from the first equality in (\ref{ob-attain-11}) and
 (\ref{ob-attain-12}) that
 \begin{eqnarray}\label{gengsheng2.15}
 \langle y_T,z\rangle_X=\int_0^T\langle v(t), B^*S^*(T-t)z\rangle_U \,\mathrm dt
 =\widetilde{\mathcal{F}}_{y_T} \big(B^*S^*(T-\cdot)z|_{(0,T)}\big).
 \end{eqnarray}
 By (\ref{gengsheng2.15}) and (\ref{gengsheng2.14}). we find that
 \begin{eqnarray*}
 \frac{\langle y_T,z\rangle_X}{\|B^*S^*(T-\cdot)z\|_{L^1(0,T;U)}}
 = \frac{\widetilde{\mathcal{F}}_{y_T}\left(B^*S^*(T-\cdot)z|_{(0,T)}\right)}
 {\|B^*S^*(T-\cdot)z\|_{L^1(0,T;U)}}\leq
 \sup_{f\in Y_T\setminus\{0\}}
    \frac{\langle \widetilde{\mathcal{F}}_{y_T},f\rangle_{Y_T^*,Y_T}}{\|f\|_{Y_T}}.
 \end{eqnarray*}
 Since $z$ was arbitrarily taken from $\hat Z_T$, the above leads to that
 \begin{eqnarray}\label{gengsheng2.16}
     \sup_{z\in \hat Z_T} \frac{\langle y_T,z\rangle_X}{\|B^*S^*(T-\cdot)z\|_{L^1(0,T;U)}}
     \leq
     \sup_{f\in Y_T\setminus\{0\}}
    \frac{\langle \widetilde{\mathcal{F}}_{y_T},f\rangle_{Y_T^*,Y_T}}{\|f\|_{Y_T}}.
  \end{eqnarray}

  Finally, (\ref{gengsheng2.10}) follows from (\ref{gengsheng2.13}) and (\ref{gengsheng2.16}). This, along with (\ref{gengsheng2.9}), proves    (\ref{norm-dual}).
   We end the proof of this proposition.
 \end{proof}

 The following proposition is about the relation between $(NP)^{y_T}$ and $(NP)^{T,y_0}$ with $y_T=-S(T)y_0$.
 \begin{Proposition}\label{Lemma-NP-yT-y0-eq}
Let $y_0\in X$ and $T\in(0,\infty)$ satisfy  that $-S(T)y_0\in\mathcal R_T$. Then
the following conclusions are valid:

\noindent (i) Any admissible control to  $(NP)^{y_T}$ (with $y_T\triangleq-S(T)y_0$) is an admissible control to  $(NP)^{T,y_0}$. And the reverse is also true.

\noindent (ii) $\|-S(T)y_0\|_{\mathcal R_T}=N(T,y_0)$.

\noindent (iii) Any minimal norm  control to  $(NP)^{y_T}$ (with $y_T=-S(T)y_0$) is a minimal norm  control to  $(NP)^{T,y_0}$. And the reverse is also true.
 \end{Proposition}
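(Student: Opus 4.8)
The statement asserts an exact equivalence between the affiliated problem $(NP)^{y_T}$ with target $y_T=-S(T)y_0$ and the original problem $(NP)^{T,y_0}$, under the sole hypothesis $-S(T)y_0\in\mathcal R_T$. The plan is to reduce everything to the defining solution formula \eqref{NNNWWW2.1}, together with the (trivial) linearity of the map $v\mapsto\hat y(T;0,v)$ and the superposition identity $\hat y(T;y_0,v)=S(T)y_0+\hat y(T;0,v)$, which is immediate from \eqref{NNNWWW2.1} (or from \eqref{Changshubianyi1.6}). All three conclusions follow from this single identity.

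For (i), I would argue: a control $v\in L^\infty(0,T;U)$ is admissible to $(NP)^{T,y_0}$ iff $\hat y(T;y_0,v)=0$, i.e.\ iff $S(T)y_0+\hat y(T;0,v)=0$, i.e.\ iff $\hat y(T;0,v)=-S(T)y_0=y_T$, which is exactly the condition for $v$ to be admissible to $(NP)^{y_T}$. (The hypothesis $y_T\in\mathcal R_T$ guarantees that this set of admissible controls is nonempty, so the statement is not vacuous; compare \eqref{attainable-space} and the set $\mathcal U^{y_T}_{ad}$ from \eqref{HuangHuangHuang2.9}.) The proof of the superposition identity itself is the one routine point: subtract the $v$-version of \eqref{NNNWWW2.1} with initial datum $y_0$ from the $v$-version with initial datum $0$, note the integral terms are identical, and use density of $D(A^*)$ in $X$ to conclude $\hat y(t;y_0,v)-\hat y(t;0,v)=S(t)y_0$ for all $t$.

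For (ii), once (i) is established the two infima in \eqref{NP-0} and \eqref{attianable-space-norm} are taken over the same set of controls $\{v\in L^\infty(0,T;U):\hat y(T;0,v)=y_T\}$ with the same objective $\|v\|_{L^\infty(0,T;U)}$, so $N(T,y_0)=\|y_T\|_{\mathcal R_T}=\|-S(T)y_0\|_{\mathcal R_T}$. For (iii), a minimal norm control to either problem is by definition an admissible control attaining the corresponding infimum; since the admissible sets coincide (by (i)) and the infima coincide (by (ii)), a control attains one infimum iff it attains the other, which gives the equivalence of minimal norm controls. I do not anticipate a genuine obstacle here: the only substantive ingredient is the affine decomposition $\hat y(T;y_0,v)=S(T)y_0+\hat y(T;0,v)$, and that is already packaged in Proposition~\ref{huangwanghenproposition2.1}/\eqref{Changshubianyi1.6}. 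The remaining work is bookkeeping to make sure the normalizations in the definitions of $(NP)^{y_T}$ and $(NP)^{T,y_0}$ match up, in particular that "admissible control'' and "minimal norm control'' for $(NP)^{y_T}$ are defined exactly parallel to \eqref{NP-0}, which the text has already stipulated.
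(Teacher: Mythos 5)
Your proposal is correct and follows essentially the same route as the paper: both arguments rest on the superposition identity $\hat y(T;y_0,v)=S(T)y_0+\hat y(T;0,v)$ to identify the admissible sets of $(NP)^{y_T}$ and $(NP)^{T,y_0}$, then deduce (ii) as equality of infima over the same set and (iii) by combining (i) and (ii). Your explicit justification of the superposition identity via \eqref{NNNWWW2.1} and density of $D(A^*)$ is just a slightly more detailed rendering of a step the paper treats as immediate.
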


 \begin{proof}
  (i) Let  $\hat v$ be an admissible control to $(NP)^{y_T}$, with $y_T \triangleq -S(T)y_0$. Then it follows from (\ref{attianable-space-norm}) that
  $ \hat y(T;0,\hat v)=-S(T)y_0$,
    which yields that
   $\hat{y}(T;y_0,\hat v)=0$.
      This, along with  (\ref{NP-0}), implies that $\hat v$ is an admissible control to $(NP)^{T,y_0}$.

   Conversely, if $\widetilde{v}$ is an admissible control to $(NP)^{T,y_0}$, then by (\ref{NP-0}), we see that
   $   \hat y(T;y_0, \widetilde{v})=0$,
      which yields that
   $   \hat y(T;0,\widetilde{v})=-S(T)y_0$.
       This, along with (\ref{attianable-space-norm}), indicates that $\widetilde{v}$
    is an  admissible control to $(NP)^{y_T}$, with $y_T = -S(T)y_0$.

  (ii) By   (\ref{NP-0}) and (\ref{attianable-space-norm}), one can directly check that $N(T,y_0)=\|-S(T)y_0\|_{\mathcal R_T}$.

   (iii) Let $v^*$ be a minimal norm control to $(NP)^{y_T}$, with $y_T = -S(T)y_0$.
  Then by (i) of this proposition, $v^*$ is an admissible control to $(NP)^{T,y_0}$,
    i.e.,
  \begin{equation}\label{wuji2.17}
  \hat y(T;y_0,v^*)=0.
  \end{equation}
  Meanwhile, by the optimality of $v^*$, we have that
  $  \|v^*\|_{L^\infty(0,T;U)}=\|-S(T)y_0\|_{\mathcal{R}_T}$,
    which, along with (ii) of this proposition, shows that
  \begin{equation}\label{wuji2.18}
   \|v^*\|_{L^\infty(0,T;U)}=N(T,y_0).
  \end{equation}
  By (\ref{wuji2.17}) and (\ref{wuji2.18}), we see that $v^*$ is a minimal norm control to $(NP)^{T,y_0}$.

  Similarly, we can show the reverse. Thus, we finish the proof of this proposition.

 \end{proof}

 \begin{Corollary}\label{Co-NP-duality}
  Let $y_0\in X\setminus\{0\}$ satisfy that $T^0(y_0)<\infty$. Write
  \begin{equation*}
  \hat Z_T\triangleq \big\{z\in D(A^*)\; :\; B^*S^*(T-\cdot)z\neq 0\;\;\mbox{in}\;\;L^1(0,T;U) \big\},~0<T<\infty.
  \end{equation*}
      Then for each $T\in \big(T^0(y_0),\infty\big)$,
  \begin{eqnarray}\label{norm-dual-y0}
   N(T,y_0)=\sup_{z\in \hat Z_T} \frac{\langle S(T)y_0,z\rangle_X}{\|B^*S^*(T-\cdot)z\|_{L^1(0,T;U)}}<\infty.
  \end{eqnarray}
   \end{Corollary}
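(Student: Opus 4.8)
The plan is to reduce the statement to Proposition~\ref{proposition2.3} and Proposition~\ref{Lemma-NP-yT-y0-eq}, once we have checked that $-S(T)y_0$ belongs to the reachable subspace $\mathcal R_T$.

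\textbf{Step 1: $-S(T)y_0\in\mathcal R_T$ for every $T\in(T^0(y_0),\infty)$.} Since $T>T^0(y_0)$, the definition (\ref{y0-controllable}) of $T^0(y_0)$ furnishes some $\hat t\in[T^0(y_0),T)$ and some $u\in L^\infty(\mathbb R^+;U)$ with $y(\hat t;y_0,u)=0$. Put $\bar u\triangleq\chi_{(0,\hat t)}u|_{(0,T]}\in L^\infty(0,T;U)$. Using the solution formula (\ref{Changshubianyi1.6}) and the semigroup property, $y(\hat t;y_0,\bar u)=y(\hat t;y_0,u)=0$, and hence for $T>\hat t$ one gets $\hat y(T;y_0,\bar u)=S(T-\hat t)\,y(\hat t;y_0,\bar u)=0$; equivalently $\hat y(T;0,\bar u)=-S(T)y_0$. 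By (\ref{attainable-space}) this gives $-S(T)y_0\in\mathcal R_T$. In particular the infimum in (\ref{attianable-space-norm}) defining $\|-S(T)y_0\|_{\mathcal R_T}$ is over a nonempty set of $L^\infty$-controls, so $\|-S(T)y_0\|_{\mathcal R_T}<\infty$.

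\textbf{Step 2: identification via the duality/representation results.} With $y_T\triangleq-S(T)y_0\in\mathcal R_T$, Proposition~\ref{Lemma-NP-yT-y0-eq}(ii) gives $N(T,y_0)=\|-S(T)y_0\|_{\mathcal R_T}$, which is finite by Step~1. Then Proposition~\ref{proposition2.3}, applied to $y_T=-S(T)y_0$, yields
$$
N(T,y_0)=\|-S(T)y_0\|_{\mathcal R_T}=\sup_{z\in\hat Z_T}\frac{\langle -S(T)y_0,z\rangle_X}{\|B^*S^*(T-\cdot)z\|_{L^1(0,T;U)}} .
$$
Finally, $\hat Z_T$ is symmetric under $z\mapsto-z$ and $\|B^*S^*(T-\cdot)(-z)\|_{L^1(0,T;U)}=\|B^*S^*(T-\cdot)z\|_{L^1(0,T;U)}$, so substituting $-z$ for $z$ in the supremum replaces $\langle -S(T)y_0,z\rangle_X$ by $\langle S(T)y_0,z\rangle_X$, which is exactly (\ref{norm-dual-y0}). (Note $\hat Z_T\ne\emptyset$ by Lemma~\ref{Lemma-non-trivial}, as already used in the proof of Proposition~\ref{proposition2.3}.)

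The only point that genuinely requires care is Step~1 — converting the strict inequality $T>T^0(y_0)$ into an \emph{actual} $L^\infty(0,T;U)$ control steering $y_0$ to $0$ at time $T$ (the padding-by-zero argument); everything after that is a direct application of the representation/duality statements already established, and no hypothesis beyond $T^0(y_0)<\infty$ (in particular no positivity of $N$, no backward uniqueness, and none of (H1)--(H4)) is needed.
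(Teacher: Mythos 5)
Your proof is correct and follows essentially the same route as the paper's: show $-S(T)y_0\in\mathcal R_T$ (the paper invokes (\ref{y0-controllable}) directly, while you spell out the zero-padding of a control acting on $(0,\hat t)$ and the semigroup step), then combine (ii) of Proposition~\ref{Lemma-NP-yT-y0-eq} with Proposition~\ref{proposition2.3}. The only additions are details the paper leaves implicit — the $z\mapsto -z$ sign symmetry of the supremum and the finiteness remark — both of which are fine.
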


 \begin{proof}
Arbitrarily fix $T\in \big(T^0(y_0),\infty\big)$. At the start  of the proof of Proposition~\ref{proposition2.3}, we already proved that   $\hat Z_{s}\neq \emptyset$ for each $s\in (0,\infty)$.
    Since $T>T^0(y_0)$, by (\ref{y0-controllable}),  there exists a control $u\in L^\infty(0,T;U)$ so that $\hat y(T;y_0,u)=0$. This, along with (\ref{attainable-space}),  yields that
   $   -S(T)y_0= \hat y(T;0,u)  \in\mathcal R_T$,
      which, together with (ii) of Proposition~\ref{Lemma-NP-yT-y0-eq} and Proposition~\ref{proposition2.3}, leads to (\ref{norm-dual-y0}). We end the proof.

 \end{proof}

 The  property on $\mathcal{R}_T^0$  presented in the following Proposition~\ref{Proposition-ball-RT0-RT} plays another important role in the studies of a
  maximum principle for $(NP)^{y_T}$, with $y_T\in\mathcal{R}_T^0$. In what follows, we denote by $B_{\mathcal R_T^0}$ and  $B_{\mathcal R_T}$ the closed unit balls  in $\mathcal R_T^0$ and $\mathcal R_T$, respectively.
 \begin{Proposition}\label{Proposition-ball-RT0-RT}
 For each $T\in(0,\infty)$, it holds that $B_{\mathcal R_T}=\overline{B}_{\mathcal R_T^0}^{\sigma(\mathcal R_T,Y_T)}$. Here, the set
 $\overline{B}_{\mathcal R_T^0}^{\sigma(\mathcal R_T,Y_T)}$
 is  the closure of $B_{\mathcal R_T^0}$ in the space $\mathcal R_T$, under the topology
 $\sigma(\mathcal R_T,Y_T)$.
  \end{Proposition}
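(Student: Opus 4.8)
The plan is to read this as a Goldstine-type statement. By the first representation theorem (Theorem~\ref{Theorem-the-representation-theorem}), $\Phi_T$ identifies $\mathcal R_T$ isometrically with $Y_T^*$, so $\sigma(\mathcal R_T,Y_T)$ is precisely the weak-$*$ topology on $Y_T^*$ and $B_{\mathcal R_T}$ is the closed unit ball of a dual space; by the Banach--Alaoglu theorem it is $\sigma(\mathcal R_T,Y_T)$-compact, in particular $\sigma(\mathcal R_T,Y_T)$-closed. Since $\mathcal R_T^0$ carries the restriction of $\|\cdot\|_{\mathcal R_T}$, we have $B_{\mathcal R_T^0}\subset B_{\mathcal R_T}$, hence $\overline{B}_{\mathcal R_T^0}^{\sigma(\mathcal R_T,Y_T)}\subset B_{\mathcal R_T}$ at once. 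The whole content lies in the reverse inclusion.

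The engine is the identity $\sup_{z\in B_{\mathcal R_T^0}}\langle z,g\rangle_{\mathcal R_T,Y_T}=\|g\|_{Y_T}$ for every $g\in Y_T$. The inequality ``$\le$'' is immediate from Theorem~\ref{Theorem-the-representation-theorem}: for $z\in B_{\mathcal R_T^0}\subset B_{\mathcal R_T}$ one has $\langle z,g\rangle_{\mathcal R_T,Y_T}=\langle\Phi_T(z),g\rangle_{Y_T^*,Y_T}\le\|z\|_{\mathcal R_T}\|g\|_{Y_T}\le\|g\|_{Y_T}$. For ``$\ge$'', I would fix $S\in(0,T)$ and, for $u\in L^\infty(0,S;U)$ with $\|u\|_{L^\infty(0,S;U)}\le1$, write $\widetilde u$ for its extension by $0$ to $(0,T)$; then $z_u:=\hat y(T;0,\widetilde u)$ lies in $\mathcal R_T^0$ by (\ref{R0T}), satisfies $\|z_u\|_{\mathcal R_T}\le\|\widetilde u\|_{L^\infty(0,T;U)}\le1$ by (\ref{attianable-space-norm}), and by (\ref{ob-attain-0}) $\langle z_u,g\rangle_{\mathcal R_T,Y_T}=\int_0^S\langle u(t),g(t)\rangle_U\,\mathrm dt$. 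Taking the supremum over such $u$ gives $\sup_{z\in B_{\mathcal R_T^0}}\langle z,g\rangle_{\mathcal R_T,Y_T}\ge\|g\|_{L^1(0,S;U)}$; letting $S\uparrow T$ and recalling $Y_T\subset L^1(0,T;U)$ with $\|g\|_{Y_T}=\|g\|_{L^1(0,T;U)}$ (see (\ref{ob-space})) finishes ``$\ge$''.

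With this identity in hand I would conclude by Hahn--Banach separation. Put $K:=\overline{B}_{\mathcal R_T^0}^{\sigma(\mathcal R_T,Y_T)}$; it is nonempty ($0\in K$, and in fact $K$ is sizeable by Lemma~\ref{Lemma-non-trivial}), convex, and $\sigma(\mathcal R_T,Y_T)$-closed. Suppose, for contradiction, $y^*\in B_{\mathcal R_T}\setminus K$. Since $\langle\cdot,\cdot\rangle_{\mathcal R_T,Y_T}$ separates the points of $\mathcal R_T$ (injectivity of $\Phi_T$) and of $Y_T$ ($\mathcal R_T=Y_T^*$), the topological dual of $(\mathcal R_T,\sigma(\mathcal R_T,Y_T))$ is $Y_T$; separating the point $y^*$ from the closed convex set $K$ then furnishes $g\in Y_T\setminus\{0\}$ and $\alpha\in\mathbb R$ with $\langle z,g\rangle_{\mathcal R_T,Y_T}\le\alpha<\langle y^*,g\rangle_{\mathcal R_T,Y_T}$ for all $z\in K$. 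Hence $\|g\|_{Y_T}=\sup_{z\in B_{\mathcal R_T^0}}\langle z,g\rangle_{\mathcal R_T,Y_T}\le\alpha<\langle y^*,g\rangle_{\mathcal R_T,Y_T}\le\|y^*\|_{\mathcal R_T}\|g\|_{Y_T}\le\|g\|_{Y_T}$, a contradiction. Thus $B_{\mathcal R_T}\subset K$, and together with the first inclusion $B_{\mathcal R_T}=K$. (Equivalently, the displayed identity says $(B_{\mathcal R_T^0})^{\circ}=B_{Y_T}$ in the dual pair $\langle\mathcal R_T,Y_T\rangle$, while $(B_{Y_T})^{\circ}=B_{\mathcal R_T}$ by the norm formula for $\Phi_T$, so the bipolar theorem gives $B_{\mathcal R_T}=(B_{\mathcal R_T^0})^{\circ\circ}=\overline{B}_{\mathcal R_T^0}^{\sigma(\mathcal R_T,Y_T)}$.)

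I expect the one genuinely nontrivial point to be the ``$\ge$'' half of the sup-identity --- producing enough members of $\mathcal R_T^0$, namely the images $\hat y(T;0,\widetilde u)$ of $L^\infty$-controls supported away from $t=T$, and recovering the full $L^1(0,T;U)$-norm of $g$ by letting the support exhaust $(0,T)$; everything else is standard functional analysis (Banach--Alaoglu, Hahn--Banach, and the identification of the weak dual). Finally, note that, in contrast with Theorem~\ref{Theorem-the-second-representation-theorem} and Corollary~\ref{Corollary-weakly-star-compact}, this argument uses only the first representation theorem and does not invoke the hypothesis (H1).
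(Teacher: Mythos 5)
Your proposal is correct, but it proves the hard inclusion by a genuinely different route than the paper. The paper's proof is constructive: given $y_T\in B_{\mathcal R_T}$ it picks nearly optimal controls $v_k$ for $(NP)^{y_T}$, rescales by $\lambda_k=\|y_T\|_{\mathcal R_T}/(\|y_T\|_{\mathcal R_T}+1/k)$ and truncates to $(0,T-1/k)$, so that the states $\hat y(T;0,u_k)$ lie in $B_{\mathcal R_T^0}$ and, via (\ref{ob-attain-0}), converge to $y_T$ in $\sigma(\mathcal R_T,Y_T)$; the reverse inclusion is handled by weak-star lower semicontinuity of the norm through Theorem~\ref{Theorem-the-representation-theorem}. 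You instead get the easy inclusion from Banach--Alaoglu and the hard one abstractly: the norming identity $\sup_{z\in B_{\mathcal R_T^0}}\langle z,g\rangle_{\mathcal R_T,Y_T}=\|g\|_{Y_T}$ (which is essentially Step 1 of the proof of Theorem~\ref{Theorem-the-second-representation-theorem}, and, as you note, needs no (H1)) combined with Hahn--Banach separation in $(\mathcal R_T,\sigma(\mathcal R_T,Y_T))$, or equivalently the bipolar theorem, since $(B_{\mathcal R_T^0})^{\circ}=B_{Y_T}$ and $(B_{Y_T})^{\circ}=B_{\mathcal R_T}$. What each approach buys: the paper's argument exhibits explicit approximants --- indeed a \emph{sequence} of truncated-control states, so it yields sequential density, which is slightly stronger than closure --- at the cost of the concrete limit computation; your argument is conceptually cleaner (a Goldstine-type statement), isolates the only real work in the duality identity $\sup_{B_{\mathcal R_T^0}}\langle\cdot,g\rangle=\|g\|_{Y_T}$, and your use of Alaoglu for the inclusion $\overline{B}_{\mathcal R_T^0}^{\sigma(\mathcal R_T,Y_T)}\subset B_{\mathcal R_T}$ sidesteps the fact that the paper's lower-semicontinuity argument is phrased with sequences although topological closure formally calls for nets. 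Both routes correctly avoid the hypothesis (H1).
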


 \begin{proof}
 Let $0<T<\infty$. We first prove that
 \begin{eqnarray}\label{RT0-RT-4-2}
   B_{\mathcal R_T}\subset\overline{B}_{\mathcal R_T^0}^{\sigma(\mathcal R_T,Y_T)}.
 \end{eqnarray}
  Let $y_T\in B_{\mathcal R_T}$. From (\ref{attianable-space-norm}), there exists a sequence $\{v_k\}$ so that for all $k\in \mathbb N^+$,
  \begin{eqnarray}\label{1021-1}
   y_T=\hat y(T;0,v_k)\;\;\mbox{and}\;\;\|y_T\|_{\mathcal R_T}\leq\|v_k\|_{L^\infty(0,T;U)}\leq \|y_T\|_{\mathcal R_T}+1/k.
  \end{eqnarray}
  For each $k\in \mathbb{N}^+$, we set
  \begin{eqnarray}\label{RT0-RT-2}
  \lambda_k \triangleq \frac{\|y_T\|_{\mathcal R_T}}{\|y_T\|_{\mathcal R_T}+1/k}  \;\;\mbox{and}\;\;  u_k\triangleq\chi_{(0,T-1/k)} \lambda_k v_k.
  \end{eqnarray}
  It is clear that
  \begin{eqnarray}\label{RT0-RT-3}
  \|u_k\|_{L^\infty(0,T;U)}\leq \|y_T\|_{\mathcal R_T}\leq 1\;\;\mbox{for all}\;\; k\in\mathbb N^+.
  \end{eqnarray}
  From (\ref{R0T}), (\ref{RT0-RT-2}), (\ref{attianable-space-norm})  and (\ref{RT0-RT-3}), we can easily check that
  \begin{eqnarray}\label{RT0-RT-4}
  \hat y(T;0,u_k)\in B_{\mathcal R_T^0}\;\;\mbox{for all}\;\; k\in\mathbb N^+.
  \end{eqnarray}
  Meanwhile, from  (\ref{1021-1}), (\ref{ob-attain-0}) and (\ref{RT0-RT-2}),  we find that for each
  $f\in Y_T$,
  \begin{eqnarray*}
    \big\langle \hat y(T;0,u_k)-y_T,f \big\rangle_{\mathcal R_T,Y_T}
    &=& \big\langle \hat y(T;0,u_k-v_k),f \big\rangle_{\mathcal R_T,Y_T}
      \nonumber\\
    &=& \int_0^T  \big\langle u_k(t)-v_k(t),f(t) \big\rangle_U \,\mathrm dt
  \rightarrow  0,   \;\;\mbox{as}\;\;     k\rightarrow\infty.
  \end{eqnarray*}
  This, along with (\ref{RT0-RT-4}), yields that
   $   y_T\in  \overline B_{\mathcal R_T^0}^{\sigma(\mathcal R_T,Y_T)}$.
       Since $y_T$ was arbitrarily taken from $B_{\mathcal R_T}$, the above leads (\ref{RT0-RT-4-2}).

  We next show that
  \begin{eqnarray}\label{RT0-RT-4-1}
   B_{\mathcal R_T}\supseteq\overline{B}_{\mathcal R_T^0}^{\sigma(\mathcal R_T,Y_T)}.
  \end{eqnarray}
  For this purpose, we let $y_T\in\mathcal R_T$ and $\{y_n\}\subset B_{\mathcal R_T^0}$ so that
  $$
  y_n\rightarrow y_T\;\;\mbox{in the topology}\;\;\sigma(\mathcal R_T,Y_T),\;\;\mbox{as}\;\;n\rightarrow\infty.
   $$
   Since $\mathcal R_T=Y_T^*$ (see Theorem \ref{Theorem-the-representation-theorem}), we find that
   $$
   y_n\rightarrow y_T\;\;\mbox{in the weak star topology},\;\;\mbox{as}\;\;n\rightarrow\infty.
   $$
    Hence,
   $$
   \|y_T\|_{\mathcal R_T}\leq\liminf_{n\rightarrow\infty} \|y_n\|_{\mathcal R_T}\leq 1,
   $$
    which yields that  $y_T\in B_{\mathcal R_T}$. This proves (\ref{RT0-RT-4-1}).

    Finally, it follows from (\ref{RT0-RT-4-2}) and (\ref{RT0-RT-4-1}) that
    $B_{\mathcal R_T}=\overline{B}_{\mathcal R_T^0}^{\sigma(\mathcal R_T,Y_T)}$.
    This ends the proof.

 \end{proof}

 The following lemma mainly shows  that  the reachable subspaces $\mathcal R_{T}$ and $\mathcal R_{T}^0$ are independent of  $T\in(0,\infty)$, provided that the condition (H1) holds.

 \begin{Proposition}\label{attinable-invariant}
  Suppose that (H1) holds. Let $0<T_1<T_2<\infty$. Then the following conclusions are valid:

  \noindent (i) The spaces  $\mathcal R_{T_1}$ and $\mathcal R_{T_2}$ are same, and the norms $\|\cdot\|_{\mathcal R_{T_1}}$ and $\|\cdot\|_{\mathcal R_{T_2}}$ are equivalent.

  \noindent (ii) The spaces  $\mathcal R_{T_1}^0$ and $\mathcal R_{T_2}^0$ are same.

 \end{Proposition}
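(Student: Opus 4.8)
The plan is a decomposition-of-controls argument resting on (H1) and on time-invariance, with the norm equivalence then deduced abstractly from completeness. Write $\delta\triangleq T_2-T_1>0$. First I will dispose of the easy inclusions $\mathcal R_{T_1}\subset\mathcal R_{T_2}$, $\mathcal R_{T_1}^0\subset\mathcal R_{T_2}^0$ and the one-sided bound $\|y\|_{\mathcal R_{T_2}}\leq\|y\|_{\mathcal R_{T_1}}$: for $v\in L^\infty(0,T_1;U)$ let $\widetilde v\in L^\infty(0,T_2;U)$ be the extension of $v$ by zero on $(0,\delta)$ followed by $v(\cdot-\delta)$ on $(\delta,T_2)$; using (\ref{Changshubianyi1.6}) and the change of variables $s=\tau-\delta$ one gets $\hat y(T_2;0,\widetilde v)=\hat y(T_1;0,v)$, and plainly $\lim_{s\to T_2}\|\widetilde v\|_{L^\infty(s,T_2;U)}=\lim_{s\to T_1}\|v\|_{L^\infty(s,T_1;U)}$, whence the two inclusions; passing to the infimum over admissible controls gives the norm bound.

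The substance of (i) is $\mathcal R_{T_2}\subset\mathcal R_{T_1}$. Given $y_{T_2}=\hat y(T_2;0,v)$ with $v\in L^\infty(0,T_2;U)$, split $v=\chi_{(0,\delta)}v+\chi_{(\delta,T_2)}v$, so $y_{T_2}=\hat y(T_2;0,\chi_{(0,\delta)}v)+\hat y(T_2;0,\chi_{(\delta,T_2)}v)$. The second summand equals $\hat y(T_1;0,v(\cdot+\delta))$ by the change of variables $s=\tau-\delta$, hence lies in $\mathcal R_{T_1}$. For the first, since $\chi_{(0,\delta)}v\in L^{p_0}(0,T_2;U)$ vanishes on $(\delta,T_2)$, (H1) provides $w\in L^\infty(0,T_2;U)$ vanishing on $(0,\delta)$ with $\hat y(T_2;0,w)=\hat y(T_2;0,\chi_{(0,\delta)}v)$; as $w$ is supported in $(\delta,T_2)$, the same change of variables puts this state in $\mathcal R_{T_1}$. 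Since $\mathcal R_{T_1}$ is a linear subspace, $y_{T_2}\in\mathcal R_{T_1}$, so $\mathcal R_{T_1}=\mathcal R_{T_2}$ as sets. For the norm equivalence I will invoke the Remark after Theorem~\ref{Theorem-the-representation-theorem}: both $(\mathcal R_{T_1},\|\cdot\|_{\mathcal R_{T_1}})$ and $(\mathcal R_{T_2},\|\cdot\|_{\mathcal R_{T_2}})$ are Banach spaces, and the identity map between them is a continuous linear bijection of norm at most $1$; the bounded inverse theorem then yields $\|y\|_{\mathcal R_{T_1}}\leq C\|y\|_{\mathcal R_{T_2}}$ for some $C>0$.

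For (ii) the delicate inclusion is $\mathcal R_{T_2}^0\subset\mathcal R_{T_1}^0$. Fix $y_{T_2}=\hat y(T_2;0,v)$ with $\lim_{s\to T_2}\|v\|_{L^\infty(s,T_2;U)}=0$ and split $v$ as above. The part $\chi_{(\delta,T_2)}v$ gives $\hat y(T_2;0,\chi_{(\delta,T_2)}v)=\hat y(T_1;0,v(\cdot+\delta))$, and the tail of $v(\cdot+\delta)$ near $T_1$ equals the tail of $v$ near $T_2$, so this state lies in $\mathcal R_{T_1}^0$. The problem is $\chi_{(0,\delta)}v$: applying (H1) on $(0,T_2)$ only lands it in $\mathcal R_{T_1}$. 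To remedy this I will fix any $\eta\in(0,T_1)$ (so $\delta<T_2-\eta$) and apply (H1) on the shorter horizon $(0,T_2-\eta)$ to $\chi_{(0,\delta)}v$, obtaining $w\in L^\infty(0,T_2-\eta;U)$ vanishing on $(0,\delta)$ with $\hat y(T_2-\eta;0,w)=\hat y(T_2-\eta;0,\chi_{(0,\delta)}v)$. Then, composing with the free evolution and using causality (via (\ref{Changshubianyi1.6})), $\hat y(T_2;0,\chi_{(0,\delta)}v)=S(\eta)\hat y(T_2-\eta;0,\chi_{(0,\delta)}v)=S(\eta)\hat y(T_2-\eta;0,w)=\hat y(T_2;0,\widetilde w)$, where $\widetilde w$ extends $w$ by zero on $(T_2-\eta,T_2)$; this state is reached by a control supported in $(\delta,T_2-\eta)$, which under $s=\tau-\delta$ becomes a control on $(0,T_1)$ supported in $(0,T_1-\eta)$ and hence vanishing near $T_1$. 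Therefore $\hat y(T_2;0,\chi_{(0,\delta)}v)\in\mathcal R_{T_1}^0$, and since $\mathcal R_{T_1}^0$ is a subspace, $y_{T_2}\in\mathcal R_{T_1}^0$, giving (ii) together with the easy inclusion.

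I expect the main obstacle to be exactly this last step: getting the contribution of the early part $\chi_{(0,\delta)}v$ of the control into $\mathcal R_{T_1}^0$ rather than merely $\mathcal R_{T_1}$, because a naive application of (H1) produces a replacing control that may be large near the terminal time. The fix is to run (H1) on the slightly shorter interval $(0,T_2-\eta)$ and then absorb the remaining time by the semigroup $S(\eta)$, which pushes the replacing control strictly away from the end of $(0,T_1)$; everything else is bookkeeping with time-invariance and the linearity of $v\mapsto\hat y(T;0,v)$.
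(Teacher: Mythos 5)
Your proposal is correct, and its set-theoretic core coincides with the paper's: split a control on $(0,T_2)$ into its early part on $(0,T_2-T_1)$ and its late part, replace the early part via (H1) by an $L^\infty$ control supported in $(T_2-T_1,T_2)$, and shift by $T_2-T_1$ using time-invariance and (\ref{Changshubianyi1.6}). Where you diverge is in how the norm equivalence and part (ii) are handled. The paper works quantitatively: it invokes the reformulation (ii) of Lemma \ref{Lemma-H3-eq}, which supplies the bound $\|\hat v\|_{L^\infty}\leq C_1(T_2,T_2-T_1)T_2^{1/p_1}\|u\|_{L^\infty}$ for the replacing control, and adds the two shifted controls to get the explicit estimate $\|y\|_{\mathcal R_{T_1}}\leq (1+C_1 T_2^{1/p_1})\|y\|_{\mathcal R_{T_2}}$ directly; you instead prove only the set equality from (H1) and then obtain the equivalence of norms abstractly from the completeness of $(\mathcal R_T,\|\cdot\|_{\mathcal R_T})$ (the Remark following Theorem \ref{Theorem-the-representation-theorem}) and the bounded inverse theorem. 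Both are valid; the paper's route buys an explicit constant, yours is softer and shorter. For (ii) the paper only says the argument is "very similar" to (i), and your write-up correctly flags the genuine subtlety a verbatim repetition would meet: the replacing control produced by (H1) on the full horizon need not vanish near the terminal time, so the shifted control need not have a vanishing tail at $T_1$. Your fix — apply (H1) on the shortened horizon $(0,T_2-\eta)$ with $\eta\in(0,T_1)$ and absorb the remaining time by $S(\eta)$, so that after shifting the replacing control is supported in $(0,T_1-\eta)$ — is a clean and correct way to land in $\mathcal R_{T_1}^0$, and together with the linearity of $\mathcal R_{T_1}^0$ it completes (ii).
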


 \begin{proof}
  Suppose that (H1) holds. Arbitrarily fix $0<T_1<T_2<\infty$. We will prove the conclusions (i)-(ii) one by one.

  (i)  Arbitrarily fix $y_{T_1}\in \mathcal R_{T_1}$ and $k\in\mathbb N^+$. Then by (\ref{attainable-space}) and (\ref{attianable-space-norm}), there exists a control $u_{y_{T_1}}\in L^\infty(0,T_1;U)$ so that
  \begin{eqnarray}\label{0102-sec2.3-2}
   y_{T_1}=\hat y(T_1;0,u_{y_{T_1}}) \;\;\mbox{and}\;\;  \|u_{y_{T_1}}\|_{L^\infty(0,T_1;U)}\leq \|y_{T_1}\|_{\mathcal R_{T_1}} + 1/k.
  \end{eqnarray}
  Define another control $\hat u_{y_{T_1}}$  by setting
  \begin{eqnarray}\label{0102-sec2.3-3}
    \hat u_{y_{T_1}}(t)=\begin{cases}
                     0,  ~&t\in(0,T_2-T_1],\\
                     u_{y_{T_1}}(t-T_2+T_1), ~&t\in(T_2-T_1,T_2).
                    \end{cases}
  \end{eqnarray}
  Then  from (\ref{Changshubianyi1.6}), the first equality in (\ref{0102-sec2.3-2}) and (\ref{0102-sec2.3-3}), one can easily check that
  $y_{T_1}=\hat y(T_2;0, \hat u_{y_{T_1}})$,
    which, along with  (\ref{attainable-space}), (\ref{attianable-space-norm}), (\ref{0102-sec2.3-3}) and the second inequality in (\ref{0102-sec2.3-2}), yields that
 $y_{T_1} \in \mathcal R_{T_2}$ and $\|y_{T_1}\|_{\mathcal R_{T_2}} \leq \|y_{T_1}\|_{\mathcal R_{T_1}}+1/k$.
    Since $k$ was arbitrarily taken from $\mathbb N^+$, the above implies that  for each $y_{T_1}\in \mathcal R_{T_1}$,
  \begin{eqnarray}\label{0102-last-two-1}
    y_{T_1} \in \mathcal R_{T_2}
   \;\;\mbox{and}\;\;  \|y_{T_1}\|_{\mathcal R_{T_2}} \leq \|y_{T_1}\|_{\mathcal R_{T_1}}.
  \end{eqnarray}

  Conversely,  arbitrarily fix $y_{T_2}\in \mathcal R_{T_2}$ and $k\in\mathbb N^+$. Then by (\ref{attainable-space}) and (\ref{attianable-space-norm}), there exists a control $u_{y_{T_2}}\in L^\infty(0,T_2;U)$ so that
  \begin{eqnarray}\label{0102-sec2.3-4}
   y_{T_2}=\hat y(T_2;0,u_{y_{T_2}})
   \;\;\mbox{and}\;\;  \|u_{y_{T_2}}\|_{L^\infty(0,T_2;U)}\leq \|y_{T_2}\|_{\mathcal R_{T_2}} + 1/k.
  \end{eqnarray}
  By (H1), we can apply Lemma \ref{Lemma-H3-eq} to get  the conclusion (ii) of Lemma \ref{Lemma-H3-eq} with some $p_1\in [2,\infty)$.
  Because  $\chi_{(0,T_2-T_1)}u_{y_{T_2}}\in L^{p_1}(0,T_2;U)$, it follows from
   (ii) of Lemma \ref{Lemma-H3-eq} (where $T=T_2$ and $t=T_2-T_1$) that there exists a control $\hat v \in L^\infty(0,T_2;U)$ so that
  \begin{eqnarray}\label{0102-sec2.3-5}
   \hat y(T_2;0,\chi_{(0,T_2-T_1)} u_{y_{T_2}}) = \hat y(T_2;0,\chi_{(T_2-T_1,T_2)} \hat v)
  \end{eqnarray}
  and
  \begin{eqnarray}\label{0102-sec2.3-5-1}
   \|\hat v\|_{L^\infty(0,T_2;U)} \leq C_1 \| u_{y_{T_2}} \|_{L^{p_1}(0,T_2;U)}
   \leq C_1 (T_2)^{1/p_1} \| u_{y_{T_2}} \|_{L^\infty(0,T_2;U)},
  \end{eqnarray}
  where $C_1 \triangleq C_1(T_2,T_2-T_1)$ is given by (ii) of Lemma \ref{Lemma-H3-eq}.
  Define a control $\widetilde{v}(\cdot)$ by setting
  \begin{eqnarray*}
   \widetilde{v}(t) \triangleq u_{y_{T_2}}  (t+T_2-T_1)  +  \hat v (t+T_2-T_1),~t\in(0,T_1).
  \end{eqnarray*}
  Then, by the first assertion in (\ref{0102-sec2.3-4}) and (\ref{0102-sec2.3-5}), one can directly check that $ y_{T_2}=\hat y(T_1;0,\widetilde{v})$,
    which, together with (\ref{attainable-space}), (\ref{attianable-space-norm}), (\ref{0102-sec2.3-5-1}) and the  inequality in (\ref{0102-sec2.3-4}), indicates that
  \begin{eqnarray*}
   y_{T_2} \in \mathcal R_{T_1}
   \;\;\mbox{and}\;\;
    \|y_{T_2}\|_{\mathcal R_{T_1}} \leq  (1+C_1(T_2)^{1/p_1}) \big(\|y_{T_1}\|_{\mathcal R_{T_2}}+1/k \big).
  \end{eqnarray*}
  Since $k$ was arbitrarily taken from $\mathbb N^+$, the above implies that  for each $y_{T_2}\in \mathcal R_{T_2}$,
  \begin{eqnarray}\label{0102-last-two-2}
    y_{T_2} \in \mathcal R_{T_1}
   \;\;\mbox{and}\;\;  \|y_{T_2}\|_{\mathcal R_{T_1}} \leq (1+C_1(T_2)^{1/p_1}) \|y_{T_2}\|_{\mathcal R_{T_2}}.
  \end{eqnarray}
   Now,  the conclusion (i) follows from (\ref{0102-last-two-1}) and (\ref{0102-last-two-2}).

  \vskip 3pt
  (ii) By a very  similar way as that used in the proof of the conclusion (i), we can show that $\mathcal R_{T_1}^0 =  \mathcal R_{T_2}^0$.

%

  \vskip 3pt

  In summary, we end the proof of this proposition.

 \end{proof}

\section{Properties of several  functions}

This section presents some properties on functions $N(\cdot,y_0)$ (with $y_0\in X\setminus\{0\}$), $T^0(\cdot)$ and $T^1(\cdot)$, which are defined by
(\ref{NP-0}), (\ref{y0-controllable}) and (\ref{Ty0}), respectively.
 The decompositions of $\mathcal{W}$ and $\mathcal{V}$ (given in (i) of Theorem~\ref{Proposition-NTy0-partition} and (i) of Theorem~\ref{Proposition-TMy0-partition}, respectively) are based on these properties.
  We begin with the following Lemma~\ref{Lemma-t-u-converge}.
    Since  the exactly same result as that in this lemma was not found by us in literatures  (but the proof for the similar result to Lemma~\ref{Lemma-t-u-converge} can be found in, for instance, \cite[Lemma 1.1]{HOF1}), we give its proof in Appendix E, for the sake of the completeness of the paper.

\begin{Lemma}\label{Lemma-t-u-converge}
 Let $\{T_n\}_{n=1}^\infty\subset[0,\infty)$ and $\{u_n\}_{n=1}^\infty\subset L^2(\mathbb R^+;U)$ satisfy that
 \begin{eqnarray}\label{wanggengsheng3.1}
 T_n\rightarrow \widehat T\;\;\mbox{and}\;\;u_n\rightarrow \hat u\;\;\mbox{weakly in}\;\;
 L^2(\mathbb R^+;U),
 \;\;\mbox{as}\;\;   n\rightarrow \infty
 \end{eqnarray}
 for some $\widehat T\in [0, \infty)$ and $\hat u \in L^2(\mathbb R^+;U)$.
   Then for each $y_0\in X$,
 \begin{eqnarray}\label{time-control-converge}
  y(T_n;y_0,u_n) \rightarrow y(\widehat T;y_0,\hat u) \mbox{ weakly in } X,\;\;\mbox{as}\;\;
  n\rightarrow\infty.
 \end{eqnarray}
 \end{Lemma}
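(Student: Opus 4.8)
The plan is to use the explicit representation \eqref{Changshubianyi1.6} for the solution, namely
\[
 y(t;y_0,u)=S(t)y_0+\int_0^t S_{-1}(t-\tau)Bu(\tau)\,\mathrm d\tau,
\]
and to show separately that each of the two terms converges weakly in $X$. Fix $y_0\in X$; the first term $S(T_n)y_0$ converges strongly (hence weakly) to $S(\widehat T)y_0$ because $t\mapsto S(t)y_0$ is continuous on $\mathbb R^+$ (strong continuity of the $C_0$-semigroup) and $T_n\to\widehat T$. So the whole matter reduces to the convolution term, i.e.\ to proving that $\int_0^{T_n}S_{-1}(T_n-\tau)Bu_n(\tau)\,\mathrm d\tau \rightharpoonup \int_0^{\widehat T}S_{-1}(\widehat T-\tau)B\hat u(\tau)\,\mathrm d\tau$ weakly in $X$.

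To test weak convergence it suffices to pair with vectors $z$ from a dense subset of $X$; since $D(A^*)$ is dense, fix $z\in D(A^*)$ and use the duality identity from Proposition~\ref{huangwanghenproposition2.1}, which gives
\[
 \Big\langle \int_0^{T_n} S_{-1}(T_n-\tau)Bu_n(\tau)\,\mathrm d\tau,\ z\Big\rangle_X
 = \int_0^{T_n} \langle u_n(\tau),\ B^*S^*(T_n-\tau)z\rangle_U\,\mathrm d\tau,
\]
and similarly for the limit. Extend $u_n$ by $0$ off $[0,T_n]$ when convenient and write $g_n(\tau)\triangleq \chi_{(0,T_n)}(\tau)B^*S^*(T_n-\tau)z$, $g(\tau)\triangleq \chi_{(0,\widehat T)}(\tau)B^*S^*(\widehat T-\tau)z$ as elements of $L^2(\mathbb R^+;U)$ (finite $L^2$-norm by the admissibility estimate of Lemma~\ref{ad-control-ob}). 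Then the left side is $\langle u_n,g_n\rangle_{L^2(\mathbb R^+;U)}$. Since $u_n\rightharpoonup\hat u$ weakly in $L^2$, the desired convergence $\langle u_n,g_n\rangle\to\langle\hat u,g\rangle$ follows from the standard fact that a weakly convergent sequence paired with a strongly convergent sequence converges, provided I establish $g_n\to g$ \emph{strongly} in $L^2(\mathbb R^+;U)$. Indeed, $\langle u_n,g_n\rangle-\langle\hat u,g\rangle = \langle u_n,g_n-g\rangle+\langle u_n-\hat u,g\rangle$; the second term $\to 0$ by weak convergence, and the first is bounded by $\|u_n\|_{L^2}\|g_n-g\|_{L^2}$, with $\sup_n\|u_n\|_{L^2}<\infty$ by the uniform boundedness principle.

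The main obstacle is therefore the claim that $g_n\to g$ strongly in $L^2(\mathbb R^+;U)$. Here I would argue as follows. First, for $z\in D(A^*)$ the map $t\mapsto B^*S^*(t)z$ is continuous from $\mathbb R^+$ into $U$ (because $S^*(\cdot)z$ is continuous into $D(A^*)$ in the graph norm and $B^*\in\mathcal L(D(A^*),U)$), so $B^*S^*(T_n-\tau)z\to B^*S^*(\widehat T-\tau)z$ pointwise in $\tau$ and the convergence is in fact uniform on compact $\tau$-intervals; combined with $\chi_{(0,T_n)}\to\chi_{(0,\widehat T)}$ a.e., one gets $g_n\to g$ pointwise a.e.\ on $\mathbb R^+$. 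To upgrade to $L^2$-convergence I need an integrable/equi-integrable domination: the admissibility estimate of Lemma~\ref{ad-control-ob}, applied on a fixed interval $[0,T_*]$ with $T_*>\sup_n T_n$ (the $T_n$ are bounded since $T_n\to\widehat T<\infty$), gives a uniform bound $\int_0^{T_*}\|B^*S^*(T_*-\tau)z\|_U^2\,\mathrm d\tau\le C(T_*)\|z\|_X^2$; using the semigroup relation $B^*S^*(T_n-\tau)z=B^*S^*(T_*-\tau)\big(S^*(T_n-T_*)\,\text{-type shift}\big)$ — more carefully, comparing $g_n$ against the function $\tau\mapsto\chi_{(0,T_*)}(\tau)B^*S^*(T_*-\tau)z$ via a change of variables — one bounds the $g_n$ uniformly in $L^2(\mathbb R^+;U)$ and controls tails, so that dominated/Vitali convergence applies and yields $g_n\to g$ in $L^2$. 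Assembling the pieces: $\langle y(T_n;y_0,u_n),z\rangle_X\to\langle y(\widehat T;y_0,\hat u),z\rangle_X$ for all $z$ in the dense set $D(A^*)$, and since $\{y(T_n;y_0,u_n)\}$ is bounded in $X$ (from the triangle inequality, the admissibility estimate, and boundedness of $\|u_n\|_{L^2}$ and of $\{T_n\}$), this pointwise-on-a-dense-set convergence together with uniform boundedness upgrades to weak convergence in $X$, which is \eqref{time-control-converge}.
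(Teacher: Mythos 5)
Your proposal is correct and follows essentially the same route as the paper's Appendix E proof: test against $z$ in the dense set $D(A^*)$, use the duality identity to rewrite the pairing as $\langle u_n, g_n\rangle_{L^2(\mathbb R^+;U)}$, combine weak convergence of $u_n$ with strong $L^2$-convergence of the observation functions $g_n$, and use a uniform bound on $\|y(T_n;y_0,u_n)\|_X$ to upgrade dense-set convergence to weak convergence. The only (inessential) difference is your domination step: the detour through Lemma~\ref{ad-control-ob}, a change of variables and Vitali's theorem is not needed, since for fixed $z\in D(A^*)$ one has the constant pointwise bound $\|B^*S^*(T_n-\tau)z\|_U\le \|B^*\|_{\mathcal L(D(A^*),U)}\sup_{0\le s\le T_*}\|S^*(s)\|_{\mathcal L(X,X)}\|z\|_{D(A^*)}$ on the bounded interval $[0,T_*]$, so ordinary dominated convergence gives $g_n\to g$ in $L^2$ — which is exactly how the paper argues.
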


The next lemma concerns the monotonicity of the function $N(\cdot,y_0)$.
 \begin{Lemma}\label{Lemma-NP-decreasing}
  Let $y_0\in X\setminus\{0\}$. Then the following conclusions are true:

  \noindent (i) The function $N(\cdot,y_0)$ is decreasing from $(0,\infty)$ to $[0,\infty]$.

   \noindent (ii) The function $N(\cdot,y_0)$, when extended over $[0,\infty]$ via (\ref{N-infty-y0}), is decreasing from $[0,\infty]$ to $[0,\infty]$.

 \end{Lemma}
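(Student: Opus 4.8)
The plan is to derive both parts straight from the definition (\ref{NP-0}) of $N(\cdot,y_0)$, using only that the target is the origin, that $\{S_{-1}(t)\}_{t\in\mathbb R^+}$ is a $C_0$-semigroup, and that $S_{-1}(t)0=0$. For (i), I would fix $0<T_1<T_2<\infty$. If $N(T_1,y_0)=\infty$ there is nothing to prove, so I assume $N(T_1,y_0)<\infty$; then $(NP)^{T_1,y_0}$ admits at least one admissible control, and I take an arbitrary one, $v\in L^\infty(0,T_1;U)$ with $\hat y(T_1;y_0,v)=0$. The key construction is the control $\widetilde v\in L^\infty(0,T_2;U)$ defined by $\widetilde v=v$ on $(0,T_1)$ and $\widetilde v=0$ on $(T_1,T_2)$; note that $\|\widetilde v\|_{L^\infty(0,T_2;U)}=\|v\|_{L^\infty(0,T_1;U)}$.

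Next I would verify that $\widetilde v$ is admissible for $(NP)^{T_2,y_0}$, i.e. $\hat y(T_2;y_0,\widetilde v)=0$. Since $\widetilde v\equiv 0$ on $(T_1,T_2)$, the representation of mild solutions (the analogue of (\ref{Changshubianyi1.6}) for (\ref{system-1})) gives $\hat y(T_2;y_0,\widetilde v)=S(T_2-T_1)\,\hat y(T_1;y_0,v)=S(T_2-T_1)\cdot 0=0$, where the first equality rests on the semigroup identity $S_{-1}(T_2-\tau)=S(T_2-T_1)S_{-1}(T_1-\tau)$ for $\tau\in(0,T_1)$ (valid because $T_2-\tau=(T_2-T_1)+(T_1-\tau)$ with both summands in $\mathbb R^+$). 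One may instead argue purely from the weak formulation (\ref{NNNWWW2.1}): for $z\in D(A^*)$ rewrite $B^*S^*(T_2-s)z=B^*S^*(T_1-s)\bigl(S^*(T_2-T_1)z\bigr)$, use $\hat y(T_1;y_0,v)=0$, and then invoke density of $D(A^*)$ in $X$. In either case $N(T_2,y_0)\leq\|\widetilde v\|_{L^\infty(0,T_2;U)}=\|v\|_{L^\infty(0,T_1;U)}$, and taking the infimum over all admissible $v$ for $(NP)^{T_1,y_0}$ yields $N(T_2,y_0)\leq N(T_1,y_0)$; this proves (i).

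For (ii), part (i) already gives that $N(\cdot,y_0)$ is nonincreasing on $(0,\infty)$, so the two one-sided limits in (\ref{N-infty-y0}) exist in $[0,\infty]$ and equal $N(0,y_0)=\sup_{t\in(0,\infty)}N(t,y_0)$ and $N(\infty,y_0)=\inf_{t\in(0,\infty)}N(t,y_0)$ respectively. Hence $N(0,y_0)\geq N(T,y_0)\geq N(\infty,y_0)$ for every $T\in(0,\infty)$, which, together with the monotonicity on the open interval from (i), shows that the extended function is decreasing on all of $[0,\infty]$.

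I do not expect a genuine obstacle here: the only point that needs a moment of care is the assertion that appending the zero control after the trajectory has reached the origin keeps it at the origin, and this, as sketched above, reduces to the semigroup property of $\{S_{-1}(t)\}_{t\in\mathbb R^+}$ together with $S_{-1}(t)0=0$; everything else is routine bookkeeping with infima.
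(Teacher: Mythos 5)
Your proposal is correct and follows essentially the same route as the paper: extend a (near-)optimal control for $(NP)^{T_1,y_0}$ by zero over $(T_1,T_2)$, observe that the extended control remains admissible since the zero control keeps the state at the origin, and conclude $N(T_2,y_0)\leq N(T_1,y_0)$, with (ii) then an immediate consequence of (i) and (\ref{N-infty-y0}). The only cosmetic difference is that the paper works with an $\varepsilon$-suboptimal control and lets $\varepsilon\to 0$, whereas you take the infimum over all admissible controls directly; these are equivalent.
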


 \begin{proof}
 (i) We first show that $N(\cdot,y_0)$ is decreasing over $(0,\infty)$. For this purpose, let $T_1$ and $T_2$ satisfy that $0<T_1<T_2<\infty$.
   There are only two possibilities on  $N(T_1,y_0)$: either $N(T_1,y_0)=\infty$ or $N(T_1,y_0)<\infty$.

  In the case that $N(T_1,y_0)=\infty$, it is obvious that
  $  N(T_1,y_0)\geq N(T_2,y_0)$.
        In the case that $N(T_1,y_0)<\infty$, we arbitrarily fix a $\varepsilon>0$.
      It follows from (\ref{NP-0}) that there exists a
      control $v_\varepsilon$ so that
 $\hat y(T_1;y_0,v_\varepsilon)=0$ and $\|v_\varepsilon\|_{L^\infty(0,T_1;U)}\leq N(T_1,y_0)+\varepsilon$.
 Write $\widetilde v_\varepsilon$ for the zero extension of  $v_\varepsilon$   over $(0,T_2)$. Then from the above, we find that
\begin{eqnarray}\label{NP-decrease-2}
  \hat y(T_2;y_0, \widetilde v_\varepsilon)=0 \;\;\mbox{and}\;\; \|\widetilde v_\varepsilon\|_{L^\infty(0,T_2;U)}=\| v_\varepsilon\|_{L^\infty(0,T_1;U)}
  \leq N(T_1,y_0)+\varepsilon.
 \end{eqnarray}
From   the first equality in (\ref{NP-decrease-2}), it follows  that $\widetilde v_\varepsilon$ is an admissible control to $(NP)^{T_2,y_0}$.
This, along with the optimality of $N(T_2,y_0)$ and the second assertion in (\ref{NP-decrease-2}), yields that
$$
  N(T_2,y_0)\leq \|\widetilde v_\varepsilon\|_{L^\infty(0,T_2;U)} \leq N(T_1,y_0)+\varepsilon.
$$
 Since $\varepsilon$ was arbitrarily taken, the above leads to the following inequality in this case:
 $ N(T_1,y_0)\geq N(T_2,y_0)$.
  Hence, the function $N(\cdot,y_0)$ is decreasing over $(0,\infty)$.

  Next,  by (\ref{NP-0}), we see that
  $0\leq N(T,y_0) \leq \infty $ for all $T\in(0,\infty)$.
 Thus, the conclusion (i) of this lemma has been proved.

 (ii) The conclusion (ii)    follows from the conclusion (i) of this lemma and (\ref{N-infty-y0}).

 In summary, we end the proof of this lemma.

 \end{proof}

 The following two lemmas concern with some relations among  $N(\cdot,y_0)$,  $T^0(\cdot)$ and $T^1(\cdot)$.

\begin{Lemma}\label{Lemma-T0-T1}
  Let   $y_0\in X\backslash\{0\}$. Then the following conclusions are true:

  \noindent (i)   $T^0(y_0)\leq T^1(y_0)$. (ii) $T^1(y_0)>0$. (iii)  $N(T,y_0)>0$ for all $T\in \big(0, T^1(y_0)\big)$. (iv) $N(0,y_0)=\infty$.
    (v) If   $T^1(y_0)<\infty$, then $N(T,y_0)=0$ for all $T\in \big[T^1(y_0),\infty\big]$.
       (vi)
       $N(T^1(y_0),y_0)=N(\infty,y_0)$.

  \end{Lemma}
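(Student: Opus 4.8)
The plan is to establish the six assertions more or less in the listed order, each being a short consequence of the definitions \eqref{NP-0}, \eqref{y0-controllable}, \eqref{Ty0}, \eqref{N-infty-y0}, together with the monotonicity of $N(\cdot,y_0)$ from Lemma~\ref{Lemma-NP-decreasing} and the well-posedness facts \eqref{Changshubianyi1.6}. For (i), I would argue that if $S(\hat t)y_0=0$ for some $\hat t$, then taking $u\equiv 0$ on $(0,\hat t)$ gives $y(\hat t;y_0,0)=S(\hat t)y_0=0$, so $\hat t$ is admissible for the infimum defining $T^0(y_0)$; hence $T^0(y_0)\le T^1(y_0)$ (the inequality also being trivially true when $T^1(y_0)=\infty$). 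For (ii), suppose $T^1(y_0)=0$; then there is a sequence $\hat t_n\searrow 0$ with $S(\hat t_n)y_0=0$, and by strong continuity of the semigroup $S(\hat t_n)y_0\to S(0)y_0=y_0$, forcing $y_0=0$, a contradiction; so $T^1(y_0)>0$.

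For (iii), fix $T\in(0,T^1(y_0))$ and suppose toward a contradiction that $N(T,y_0)=0$. Then there is a sequence of admissible controls $v_n\in L^\infty(0,T;U)$ with $\hat y(T;y_0,v_n)=0$ and $\|v_n\|_{L^\infty(0,T;U)}\to 0$. Extending each $v_n$ by zero to $L^2(\mathbb R^+;U)$, we have $v_n\to 0$ strongly, hence weakly, in $L^2(\mathbb R^+;U)$; by Lemma~\ref{Lemma-t-u-converge} (with $T_n\equiv T$), $0=y(T;y_0,v_n)\to y(T;y_0,0)=S(T)y_0$ weakly in $X$, so $S(T)y_0=0$, contradicting $T<T^1(y_0)$. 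For (iv), note $N(0,y_0)=\lim_{t\to 0^+}N(t,y_0)$ by \eqref{N-infty-y0}, and this limit is $\sup_{t>0}N(t,y_0)$ since $N(\cdot,y_0)$ is decreasing; so it suffices to show $N(t,y_0)$ is unbounded as $t\to 0^+$. If instead $N(t_n,y_0)\le C$ for some $t_n\searrow 0$, pick admissible $v_n$ on $(0,t_n)$ with $\|v_n\|_{L^\infty}\le C+1$; extend by zero; then $v_n\to 0$ strongly in $L^2(\mathbb R^+;U)$ (their $L^2$-norms are bounded by $(C+1)\sqrt{t_n}\to 0$), and $t_n\to 0$, so Lemma~\ref{Lemma-t-u-converge} gives $0=y(t_n;y_0,v_n)\to y(0;y_0,0)=y_0$ weakly in $X$, whence $y_0=0$, a contradiction; therefore $N(0,y_0)=\infty$.

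For (v), assume $T^1(y_0)<\infty$ and let $T\ge T^1(y_0)$ be finite. Using $u\equiv 0$ on $(0,T^1(y_0))$ gives $\hat y(T^1(y_0);y_0,0)=S(T^1(y_0))y_0=0$; then the zero control on $(0,T)$ drives $y_0$ to $0$ at time $T^1(y_0)$ and keeps it there, i.e. $\hat y(T;y_0,0)=0$, so $0$ is admissible for $(NP)^{T,y_0}$ and $N(T,y_0)=0$; the cases $T=\infty$ follows from \eqref{N-infty-y0} since $N(t,y_0)=0$ for all $t\ge T^1(y_0)$. Finally (vi): if $T^1(y_0)<\infty$, then by (v), $N(T^1(y_0),y_0)=0=N(\infty,y_0)$; if $T^1(y_0)=\infty$, then $N(T^1(y_0),y_0)$ means $N(\infty,y_0)$ by the convention after \eqref{N-infty-y0}, so the equality is immediate. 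The main obstacle is the limiting argument in (iii) and (iv): one must be careful to pass from the $L^\infty$-norm bound on the optimizing controls to a weak (indeed strong) $L^2$-convergence of their zero extensions over the shrinking intervals, so that Lemma~\ref{Lemma-t-u-converge} applies and yields $S(T)y_0=0$ resp. $y_0=0$; everything else is bookkeeping with the definitions and the monotonicity already proved.
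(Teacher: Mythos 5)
Your proposal is correct and, for parts (i)--(iv) and (vi), follows essentially the same route as the paper: (i) via the zero control, (ii) via strong continuity at $t=0$, and (iii)--(iv) via exactly the paper's device of taking minimizing/admissible controls with vanishing (or bounded) norms, extending by zero, and invoking Lemma~\ref{Lemma-t-u-converge} to conclude $S(T)y_0=0$ resp.\ $y_0=0$. The only real divergence is in (v): you argue directly that $S(T^1(y_0))y_0=0$ and then that the null control is admissible for every $T\geq T^1(y_0)$, whereas the paper argues by contradiction, using continuity of $t\mapsto S(t)y_0$ at a point $T_1\geq T^1(y_0)$ together with the semigroup property, and never explicitly invokes attainment of the infimum in (\ref{Ty0}). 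Your step asserting $S(T^1(y_0))y_0=0$ is true but deserves its one-line justification: the set $\{t\geq 0:\ S(t)y_0=0\}$ is closed (by strong continuity) and stable under forward translation (by the semigroup property), hence equals $[T^1(y_0),\infty)$ when $T^1(y_0)<\infty$, so the infimum is attained. With that sentence added, your (v) is a slightly more direct argument than the paper's; both rest on the same underlying facts.
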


 \begin{proof} (i)  There are only two possibilities on $T^0(y_0)$: either $T^0(y_0)=0$ or $T^0(y_0)>0$.
 In the case that
 $T^0(y_0)=0$, it is clear that $T^0(y_0)\leq T^1(y_0)$. In the case when  $T^0(y_0)>0$, we assume, by contradiction, that $T^0(y_0)>T^1(y_0)$. Fix a $T\in \big(T^1(y_0),T^0(y_0)\big)$. Then by (\ref{y0-controllable}), we would have  that
for all $u\in L^\infty(0,T; U)$, $\hat y(T;y_0,u)\neq 0$; and  by (\ref{Ty0}), we would have that
 $\hat y(T; y_0,0)=S(T)y_0=0$.
 These lead to a contradiction. Hence, $T^0(y_0)\leq T^1(y_0)$.

   (ii)  By contradiction, suppose   that $T^1(y_0)=0$. Then by (\ref{Ty0}), we could have  that for each $\hat t>0$, $S(\hat t)y_0=0$,
     which yields that
    $y_0=\lim_{t\rightarrow 0^+} S(t)y_0=0$.
          This leads to a contradiction, since we assumed that $y_0\in X\setminus\{0\}$. Hence,  $T^1(y_0)>0$.

(iii)
By contradiction, suppose that
$N(T_0,y_0)=0$ for some $T_0\in \big(0, T^1(y_0)\big)$.
Then by (\ref{NP-0}), there would be a sequence $\{v_n\}$ in $L^\infty(0,T_0;U)$ so that
$$
\hat y(T_0; y_0,v_n)=0   \;\;\mbox{for all}\;\; n\in\mathbb N^+;
\;\;\mbox{and}\;\;
\|v_n\|_{L^\infty(0,T_0;U)}\rightarrow 0,
\;\;\mbox{as}\;\;  n\rightarrow\infty.
$$
From these and Lemma \ref{Lemma-t-u-converge}, we find that
$S(T_0)y_0=\hat y(T_0;y_0,0)=0$.
  From the above and (\ref{Ty0}), we see that  $T^1(y_0)\leq T_0$, which leads to a contradiction, since $T_0\in \big(0, T^1(y_0)\big)$. Hence, $N(T,y_0)> 0$ for all $ T\in \big(0, T^1(y_0)\big)$.

  (iv)
   By contradiction,  suppose that $N(0,y_0)<\infty$.  Then by  (ii) of Lemma \ref{Lemma-NP-decreasing},
       we could find  a sequence $\{T_n\} \subset \mathbb R^+$ so that
       \begin{equation}\label{jiangwen1}
       T_n\searrow 0,\;\;\mbox{as}\;\;n\rightarrow\infty
       \end{equation}
       and
   \begin{equation}\label{jiangwen2}
 N(T_n,y_0)\leq N(0,y_0)<\infty\;\;\mbox{for all}\;\; n\in \mathbb{N}^+.
 \end{equation}
  By (\ref{jiangwen2}) and (\ref{NP-0}), we see that  for each $n\in\mathbb N^+$, $(NP)^{T_n,y_0}$ has
 an admissible control  $u_n$ so that
 $\|u_n\|_{L^\infty(0,T_n;U)}\leq N(0,y_0)+1$.
     Write $\widetilde{u}_n$ for the zero extension of $u_n$ over $\mathbb{R}^+$, $n\in\mathbb N^+$. Then we have that
   \begin{equation}\label{jiangwen3}
y(T_n;y_0,\widetilde{u}_n)=0\;\;\mbox{for all}\;\; n\in \mathbb{N}^+
 \end{equation}
    and
     \begin{equation}\label{jiangwen4}
\|\widetilde{u}_n\|_{L^\infty(\mathbb{R}^+;U)}=\|u_n\|_{L^\infty(0,T_n;U)}\leq N(0,y_0)+1\;\;\mbox{for all}\;\; n\in \mathbb{N}^+.
 \end{equation}
 From (\ref{jiangwen1}) and (\ref{jiangwen4}), we see that
    \begin{eqnarray}\label{jiangwen5}
 \chi_{(0,T_n)}\widetilde{u}_n\rightarrow  0 \mbox{ strongly in } L^2(\mathbb R^+;U) \mbox{ as } n\rightarrow\infty.
 \end{eqnarray}
    From (\ref{jiangwen1}), (\ref{jiangwen5}) and Lemma \ref{Lemma-t-u-converge}, we find that
 \begin{equation*}
 y(T_n;y_0,\chi_{(0,T_n)}\widetilde{u}_n)\rightarrow y(0;y_0,0)=y_0\;\; \mbox{ weakly in }\;\; X,\;\; \mbox{ as }\;\; n\rightarrow \infty.
 \end{equation*}
  This, along with (\ref{jiangwen3}), yields that
  $y_0=0$, which leads to
    a contradiction, since it was assumed that   $y_0\in X\setminus\{0\}$. So we have proved that $N(0,y_0)=\infty$.

(v)   Assume that  $T^1(y_0)<\infty$. We first claim that
 \begin{equation}\label{wang3.18}
 N(T, y_0)=0\;\;\mbox{for each}\;\; T\in \big[T^1(y_0), \infty\big).
 \end{equation}
 By contradiction, we suppose that
 $ N(T_1,y_0)\neq 0$ for some $T_1\in[ T^1(y_0),\infty)$.
    Then we would have that
 $ \hat y(T_1; y_0,0)\neq 0$, i.e., $S(T_1)y_0\neq 0$.
 By the continuity of the function $t\rightarrow S(t)y_0$ at  $T_1$, there is a $\delta>0$ so that
$S(T_1+\delta)y_0\neq 0$,
which implies that for each $ t\in [0,T_1+\delta]$,
$ S(t)y_0\neq 0$.
This, together with  (\ref{Ty0}), implies that
\begin{equation}\label{jiangwen4.23}
T_1+\delta\leq T^1(y_0).
\end{equation}
However, we had that
$T_1\geq T^1(y_0)$ and $\delta>0$.
These contradict (\ref{jiangwen4.23}).
 So (\ref{wang3.18}) is proved.

Next,  we see from the first equality in (\ref{N-infty-y0}) and (\ref{wang3.18})  that $N(\infty, y_0)=0$. This, together with (\ref{wang3.18}), proves the conclusion (v).

 (vi) There are only two possibilities on $T^1(y_0)$: either $T^1(y_0)=\infty$ or $T^1(y_0)<\infty$.
 In the case when $T^1(y_0)=\infty$, it is clear  that
 $  N(T^1(y_0),y_0)=N(\infty,y_0)$.
   In the case that   $T^1(y_0)<\infty$, we see   from (v)  in this lemma that
  \begin{equation}\label{huangwang3.5}
   N(T^1(y_0),y_0)=0=N(\infty,y_0).
   \end{equation}
   This implies that   $  N(T^1(y_0),y_0)=N(\infty,y_0)$ in this case.

   In summary, we end the proof of this lemma.

 \end{proof}

 \begin{Lemma}\label{Lemma-NT0-T0-T1}
  Let $y_0\in X\setminus\{0\}$. Then the following conclusions are true:

  \noindent (i) If $N(T^0(y_0),y_0)=\infty$, then either $T^0(y_0)<T^1(y_0)$ or $T^0(y_0)=T^1(y_0)=\infty$.

  \noindent (ii) If $T^0(y_0)=\infty$, then  $N(T^0(y_0),y_0)=\infty$.

  \noindent(iii) If $0<N(T^0(y_0),y_0)<\infty$, then $T^0(y_0)<T^1(y_0)$.

  \noindent(iv) $N(T^0(y_0),y_0)=0$ if and only if $T^0(y_0)=T^1(y_0)<\infty$.

\noindent (v)  If $T^0(y_0)<\infty$, then $N(T^1(y_0),y_0)<\infty$.

 \end{Lemma}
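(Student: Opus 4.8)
The plan is to deduce all five conclusions from the results already established in Lemma~\ref{Lemma-T0-T1} and Lemma~\ref{Lemma-NP-decreasing}, together with one elementary observation that I would record at the outset: if $N(T,y_0)<\infty$ for some $T\in(0,\infty)$, then $T^0(y_0)<\infty$. Indeed, $N(T,y_0)<\infty$ forces $(NP)^{T,y_0}$ to possess an admissible control $v\in L^\infty(0,T;U)$ with $\hat y(T;y_0,v)=0$; extending $v$ by zero to $\mathbb R^+$ shows that $T<\infty$ belongs to the set defining $T^0(y_0)$ in (\ref{y0-controllable}), hence $T^0(y_0)\le T<\infty$. Since $N(\infty,y_0)=\lim_{t\to\infty}N(t,y_0)$, this also yields: if $N(\infty,y_0)<\infty$ then $T^0(y_0)<\infty$.

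I would prove (v) first, as it is nearly immediate. Assume $T^0(y_0)<\infty$. Then the set in (\ref{y0-controllable}) is nonempty, so there is $\hat t_0\in(0,\infty)$ and $u_0\in L^\infty(\mathbb R^+;U)$ with $y(\hat t_0;y_0,u_0)=0$ (note $\hat t_0\neq 0$ because $y_0\neq 0$); restricting $u_0$ to $(0,\hat t_0)$ gives $N(\hat t_0,y_0)\le\|u_0\|_{L^\infty(\mathbb R^+;U)}<\infty$. By monotonicity of $N(\cdot,y_0)$ on $[0,\infty]$ (Lemma~\ref{Lemma-NP-decreasing}(ii)) we get $N(\infty,y_0)\le N(\hat t_0,y_0)<\infty$, and Lemma~\ref{Lemma-T0-T1}(vi) identifies this with $N(T^1(y_0),y_0)<\infty$. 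Next I would prove (ii) by contradiction: if $T^0(y_0)=\infty$ but $N(T^0(y_0),y_0)=N(\infty,y_0)<\infty$ (using (\ref{N-infty-y0})), then the opening observation gives $T^0(y_0)<\infty$, a contradiction; hence $N(T^0(y_0),y_0)=\infty$.

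I would then establish (iv). The ``if'' direction is immediate: if $T^0(y_0)=T^1(y_0)<\infty$, Lemma~\ref{Lemma-T0-T1}(v) gives $N(T^1(y_0),y_0)=0$, i.e. $N(T^0(y_0),y_0)=0$. For the ``only if'' direction, assume $N(T^0(y_0),y_0)=0$. By (ii) (contrapositive) we have $T^0(y_0)<\infty$; by Lemma~\ref{Lemma-T0-T1}(iv), $N(0,y_0)=\infty$, so $T^0(y_0)\neq 0$; and if $0<T^0(y_0)<T^1(y_0)$ then Lemma~\ref{Lemma-T0-T1}(iii) would force $N(T^0(y_0),y_0)>0$, contradicting the assumption. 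Combining this with $T^0(y_0)\le T^1(y_0)$ (Lemma~\ref{Lemma-T0-T1}(i)) yields $T^0(y_0)=T^1(y_0)<\infty$.

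Finally, (i) and (iii) follow by pairing $T^0(y_0)\le T^1(y_0)$ with the ``if'' direction of (iv). For (iii): $N(T^0(y_0),y_0)<\infty$ gives $T^0(y_0)<\infty$ by (ii); were $T^0(y_0)=T^1(y_0)$, then (iv) would force $N(T^0(y_0),y_0)=0$, contradicting $N(T^0(y_0),y_0)>0$; hence $T^0(y_0)<T^1(y_0)$. For (i): either $T^0(y_0)<T^1(y_0)$ and we are done, or $T^0(y_0)=T^1(y_0)$, in which case this common value cannot be finite, since otherwise (iv) would give $N(T^0(y_0),y_0)=0\neq\infty$; hence $T^0(y_0)=T^1(y_0)=\infty$. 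I do not expect any serious obstacle: the argument is bookkeeping about the monotone function $N(\cdot,y_0)$ and the two critical times $T^0(y_0)$, $T^1(y_0)$. The only points requiring care are the handling of the endpoint conventions at $T=0$ and $T=\infty$ through (\ref{N-infty-y0}), and the genuinely new ingredient beyond Lemma~\ref{Lemma-T0-T1}, namely the reduction ``$N(\cdot,y_0)$ finite at some time $\Rightarrow T^0(y_0)<\infty$''.
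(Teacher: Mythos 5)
Your proposal is correct and follows essentially the same route as the paper: everything is reduced to the elementary facts of Lemma~\ref{Lemma-T0-T1} and the monotonicity in Lemma~\ref{Lemma-NP-decreasing}, with case analysis on $T^0(y_0)$ versus $T^1(y_0)$. The only cosmetic differences are that you prove (v) without splitting on whether $T^1(y_0)$ is finite (by invoking $N(T^1(y_0),y_0)=N(\infty,y_0)$ directly), you phrase (ii) as a contrapositive of your opening observation rather than the paper's direct argument, and you are slightly more explicit than the paper about why $N(T^0(y_0),y_0)=0$ forces $T^0(y_0)<\infty$ in (iv).
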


 \begin{proof}
 (i) By contradiction, we suppose that the conclusion (i) was not true. Then, by (i) of Lemma~\ref{Lemma-T0-T1}, we would have that
 \begin{equation}\label{wanghuang3.4}
 N(T^0(y_0),y_0)=\infty\;\;\mbox{and}\;\;T^0(y_0)=T^1(y_0)<\infty.
 \end{equation}
 The second conclusion in (\ref{wanghuang3.4}), along with  (v) of Lemma \ref{Lemma-T0-T1}, yields  that
$N(T^0(y_0),y_0)=N(T^1(y_0),y_0)=0$.
This contradicts the first equality in  (\ref{wanghuang3.4}). So the conclusion (i) is true.

(ii) Assume that $T^0(y_0)=\infty$. Then we find from (\ref{y0-controllable}) that when $T \in(0,\infty)$,
$\hat y(T;y_0,u)\neq 0$ for all $u\in L^\infty(0,T; U)$.
 Thus, for each $T\in(0,\infty)$,  $(NP)^{T,y_0}$ has no any admissible control. So
 we have that $N(T,y_0)=\infty$ for all $T\in(0,\infty)$.
   Since $T^0(y_0)=\infty$, the above, as well as the first equality in (\ref{N-infty-y0}), indicates that
  $$
  N(T^0(y_0),y_0)=N(\infty, y_0)=\lim_{T\rightarrow\infty}N(T, y_0)  =\infty.
  $$
   This ends the proof of the conclusion (ii).

(iii) Assume that $0<N(T^0(y_0),y_0)<\infty$. Suppose, by contradiction, that the conclusion (iii) was not true. Then, by (i) of Lemma \ref{Lemma-T0-T1}, we would have that
 \begin{equation}\label{wanghuang3.5}
 0<N(T^0(y_0),y_0)<\infty\;\;\mbox{and}\;\;T^0(y_0)=T^1(y_0).
  \end{equation}
 These, along with  (ii) of this lemma, yield that
 $ T^1(y_0)=T^0(y_0)<\infty$.
  Then by (v) of
 Lemma \ref{Lemma-T0-T1}, we see that
 $ N(T^0(y_0),y_0)=N(T^1(y_0),y_0)=0$, which contradicts the first conclusion in (\ref{wanghuang3.5}). Hence, the conclusion (iii) is true.

 (iv) We first show that
 \begin{equation}\label{wanghuang3.6}
 T^0(y_0)=T^1(y_0)<\infty\Rightarrow N(T^0(y_0),y_0)=0.
 \end{equation}
 Suppose that the assertion on left  side of (\ref{wanghuang3.6}) holds. Then by (v) of
 Lemma~\ref{Lemma-T0-T1}, we see that
 $ N(T^0(y_0),y_0)=N(T^1(y_0),y_0)=0$,
  which leads to the equality on the right side of (\ref{wanghuang3.6}).

 We next show that
 \begin{equation}\label{wanghuang3.7}
 N(T^0(y_0),y_0)=0\Rightarrow  T^0(y_0)=T^1(y_0).
 \end{equation}
By contradiction, we suppose  that  (\ref{wanghuang3.7}) did not hold. Then by (i) of  Lemma \ref{Lemma-T0-T1},
we would have that
\begin{equation}\label{wanghuang3.8}
N(T^0(y_0),y_0)=0\;\;\mbox{and }\;\;T^0(y_0)<T^1(y_0).
\end{equation}
In the case that $T^0(y_0)=0$, we find from  (iv) of Lemma \ref{Lemma-T0-T1}
that  $N(T^0(y_0),y_0)=\infty$, which contradicts the first equality in (\ref{wanghuang3.8}).
In the case that $T^0(y_0)>0$,  we see from the second inequality of
(\ref{wanghuang3.8}) and  (iii) of Lemma \ref{Lemma-T0-T1}
 that $N(T^0(y_0),y_0)>0$,
which  contradicts the first equality in (\ref{wanghuang3.8}).
Hence, (\ref{wanghuang3.7}) is true.

Finally, the conclusion (iv) follows from (\ref{wanghuang3.6}) and (\ref{wanghuang3.7}).

(v)  Assume that $T^0(y_0)<\infty$. There are
 only two possibilities on
 $T^1(y_0)$: either $T^1(y_0)<\infty$ or $T^1(y_0)=\infty$.
  In the first  the case that
$T^1(y_0)<\infty$,
  we can apply the conclusion
  (v) of Lemma \ref{Lemma-T0-T1} to find that
$ N(T^1(y_0),y_0)=0<\infty$. Hence, the conclusion (v) holds in the first case.
We now consider the second  case that
  $T^1(y_0)=\infty$.
  Because $T^0(y_0)<\infty$, we can take  $\hat t\in\big(T^0(y_0),\infty\big)$. Then by  (\ref{y0-controllable}), we find that
 $  \hat y(\hat t;y_0,\hat u)=0$ for some $\hat u\in L^\infty(0,\hat t;U)$.
  This shows that $\hat u$ is an admissible control to $(NP)^{\hat t,y_0}$,
 from which,  we see that
 \begin{eqnarray}\label{new3.22}
  N(\hat t,y_0)<\infty.
 \end{eqnarray}
Because $T^1(y_0)=\infty$, it follows from
  (ii) of Lemma \ref{Lemma-NP-decreasing} and (\ref{new3.22}) that
  \begin{eqnarray*}
   N(T^1(y_0),y_0)=N(\infty,y_0)\leq N(\hat t,y_0)<\infty.
  \end{eqnarray*}
 Hence,  the conclusion (v) of this Lemma holds in the second case.

 In summary, we finish the proof of this lemma.

 \end{proof}

 \begin{Remark}\label{Remark-0131-intro-1}
 (i) Let $y_0\in X\setminus\{0\}$. From the above lemma, we have the following two observations: (a) $T^0(y_0)<T^1(y_0)$ if and only if either $ 0<N(T^0(y_0),y_0)<\infty$ or $N(T^0(y_0),y_0)=\infty$ and $T^0(y_0)<\infty$; (b) $T^0(y_0)=T^1(y_0)$ if and only if either $ N(T^0(y_0),y_0)=0$ or $N(T^0(y_0),y_0)=\infty$ and $T^0(y_0)=\infty$.

 (ii) From the above two observations and the definitions of $\mathcal W_{2,3}$, $\mathcal W_{3,2}$, $\mathcal V_{2,2}$ and $\mathcal V_{3,2}$ (see (\ref{zhangjinchu7}), (\ref{zhangjinchu9}), (\ref{Lambda-di-2-1}) and (\ref{Lambda-di-3-1}), respectively), one can easily find that
 \begin{eqnarray*}
   \mathcal W_{2,3} \cup \mathcal W_{3,2}  =  \{(T,y_0)\in\mathcal W ~:~  T^0(y_0)<T<T^1(y_0)\}
 \end{eqnarray*}
 and
 \begin{eqnarray*}
   \mathcal V_{2,2} \cup \mathcal V_{3,2}  =  \{(M,y_0)\in\mathcal V ~:~  N(T^1(y_0),y_0)<M<N(T^0(y_0),y_0)\}.
 \end{eqnarray*}

 \end{Remark}

  The next Proposition~\ref{wang-prop3.3} presents the strict monotonicity and the
  continuity for the function $N(\cdot,y_0)$ over $\big(T^0(y_0),T^1(y_0)\big)$. These properties will help us to build up a connection between  minimal time control problems and  minimal norm control problems. This connection
  plays an important role in the studies of the maximum principle for $(TP)^{M, y_0}$. We would like to mention what follows: The properties in Proposition~\ref{wang-prop3.3}
  was proved in \cite{WZ} for the internally controlled heat equation, with the aid of the bang-bang property and the $L^\infty$-null controllability. Here, we have neither the bang-bang property nor the $L^\infty$-null controllability. We prove it under a weaker condition (H1).

 \begin{Proposition}\label{wang-prop3.3}
  Suppose that (H1) holds.  Let $y_0\in X\setminus\{0\}$ satisfy that $T^0(y_0)<T^1(y_0)$.  Then the following conclusions are true:\\
  \noindent(i) The function $N(\cdot,y_0)$ is continuous and strictly decreasing from $\big(T^0(y_0),T^1(y_0)\big)$ onto $\big(N(T^1(y_0),y_0),N(T^0(y_0),y_0)\big)$.

  \noindent(ii) When $T\in\big(T^0(y_0),T^1(y_0)\big)$,
  \begin{eqnarray}\label{NP-strict-decreasing-infty}
   N(t_1,y_0)>N(T,y_0)>N(t_2,y_0) \;\;\mbox{for all}\;\; t_1,\,t_2 \;\;\mbox{with}\;\; 0\leq t_1<T<t_2\leq\infty.
  \end{eqnarray}
 \end{Proposition}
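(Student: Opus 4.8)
The plan is to prove, in order, four facts about $N(\cdot,y_0)$ on the open interval $I\triangleq\big(T^0(y_0),T^1(y_0)\big)$: (a) it is finite and strictly positive there; (b) it is strictly decreasing there; (c) it is continuous there, with the correct one‑sided limits at the endpoints; and (d) hence it maps $I$ onto $\big(N(T^1(y_0),y_0),N(T^0(y_0),y_0)\big)$. Conclusion (ii) will then drop out. Fact (a) is easy: for $T\in I$ pick $\hat t\in\big(T^0(y_0),T\big)$ and a control $u$ steering $y_0$ to $0$ at $\hat t$; its zero extension to $(0,T)$ still reaches $0$ at $T$ (the null control keeps the origin at the origin), so $N(T,y_0)\le\|u\|_{L^\infty}<\infty$, while positivity is (iii) of Lemma~\ref{Lemma-T0-T1}. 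Throughout I will use Proposition~\ref{Lemma-NP-yT-y0-eq}, which gives $N(T,y_0)=\|-S(T)y_0\|_{\mathcal R_T}$ whenever $-S(T)y_0\in\mathcal R_T$ (true for $T>T^0(y_0)$), and Proposition~\ref{attinable-invariant}, which under (H1) makes $\mathcal R_T$ independent of $T\in(0,\infty)$ with comparable norms; I also record the elementary estimate $\|S(\tau)w\|_{\mathcal R_{s+\tau}}\le\|w\|_{\mathcal R_s}$ (zero–extend a control reaching $w$ at time $s$), which is built into the proof of Proposition~\ref{attinable-invariant}.

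For strict monotonicity (b), fix $T^0(y_0)<T_1<T_2<T^1(y_0)$, set $M_1\triangleq N(T_1,y_0)\in(0,\infty)$, and for $n\in\mathbb N^+$ take $v_n$ on $(0,T_1)$ with $\hat y(T_1;y_0,v_n)=0$ and $\|v_n\|_{L^\infty(0,T_1;U)}\le M_1+1/n$. For $\lambda\in(0,1)$ the control $\lambda v_n$ steers $y_0$ to $(1-\lambda)S(T_1)y_0$ at time $T_1$; since $-S(T_2)y_0\in\mathcal R_{T_2}=\mathcal R_{T_2-T_1}$ by Proposition~\ref{attinable-invariant}, the state $(1-\lambda)S(T_1)y_0$ can be steered to $0$ over $(T_1,T_2)$ by a control whose $L^\infty$–norm is at most $(1-\lambda)\big\|S(T_2-T_1)\big(-S(T_1)y_0\big)\big\|_{\mathcal R_{T_2-T_1}}+1/n\le(1-\lambda)KM_1+1/n$, where $K<\infty$ is the norm–comparison constant between $\mathcal R_{T_2-T_1}$ and $\mathcal R_{T_2}$ and we used the elementary estimate above. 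Concatenating the two pieces yields an admissible control for $(NP)^{T_2,y_0}$ of norm at most $\max\{\lambda(M_1+1/n),(1-\lambda)KM_1+1/n\}$; choosing $\lambda=K/(1+K)$ and letting $n\to\infty$ gives $N(T_2,y_0)\le\frac{K}{1+K}M_1<M_1$. The only thing needed is $K<\infty$, so that $K/(1+K)<1$; no smallness of $K$ is required.

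For continuity (c), right‑continuity at any $T\ge0$ comes from a weak‑compactness argument: given $t_n\searrow T$, zero‑extend near‑optimal controls for $(NP)^{t_n,y_0}$ to $\mathbb R^+$, pass to a subsequence converging weakly in $L^2$ to some $\hat u$ supported in $(0,T)$, and use Lemma~\ref{Lemma-t-u-converge} to identify $\hat u$ as an admissible control for $(NP)^{T,y_0}$ with $\|\hat u\|_{L^\infty}\le\liminf_n N(t_n,y_0)$; with the monotonicity of Lemma~\ref{Lemma-NP-decreasing} this gives $\lim_{t\to T^+}N(t,y_0)=N(T,y_0)$. \emph{The main obstacle is left‑continuity} (equivalently, upper semicontinuity) of $N(\cdot,y_0)$: the compactness argument only yields $\lim_{t\to T^-}N(t,y_0)\ge N(T,y_0)$, already contained in monotonicity, and the dual formula of Corollary~\ref{Co-NP-duality} only yields lower semicontinuity, so this is exactly where (H1) must be used essentially. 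I expect to obtain it by starting from a near‑optimal control $\bar v$ for $(NP)^{T,y_0}$ and, for $t<T$ close to $T$, transplanting its action — via (H1) in the form of Lemma~\ref{Lemma-H3-eq} together with the stability estimates of Proposition~\ref{attinable-invariant} — into a control realizing $-S(t)y_0$ at time $t$ whose norm exceeds $N(T,y_0)$ by a quantity that tends to $0$ as $t\to T^-$, the decay coming from applying the admissibility estimate on the shrinking interval $(t,T)$; controlling that overhead so it genuinely vanishes, rather than blowing up with the transplantation constants, is the delicate point.

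Granting (b) and (c), $N(\cdot,y_0)$ is a strictly decreasing homeomorphism of $I$ onto $\big(\lim_{t\to T^1(y_0)^-}N(t,y_0),\ \lim_{t\to T^0(y_0)^+}N(t,y_0)\big)$; right‑continuity at $T^0(y_0)$ identifies the upper limit with $N(T^0(y_0),y_0)$, and the lower limit equals $N(T^1(y_0),y_0)$ — by the definition in (\ref{N-infty-y0}) if $T^1(y_0)=\infty$, and by (v)–(vi) of Lemma~\ref{Lemma-T0-T1} with left‑continuity at $T^1(y_0)$ (both sides then being $0$) if $T^1(y_0)<\infty$ — which is (i). Finally (ii) follows from (i): for $0\le t_1<T<t_2\le\infty$ with $T\in I$, the subcase $t_1,t_2\in I$ is strict monotonicity; the subcase $t_1\le T^0(y_0)$ uses $N(t_1,y_0)\ge N(T^0(y_0),y_0)=\lim_{s\to T^0(y_0)^+}N(s,y_0)>N(T,y_0)$ together with $N(0,y_0)=\infty$ from (iv) of Lemma~\ref{Lemma-T0-T1}; and the subcase $t_2\ge T^1(y_0)$ uses (v)–(vi) of Lemma~\ref{Lemma-T0-T1} and (iii) of Lemma~\ref{Lemma-T0-T1} to get $N(t_2,y_0)=N(T^1(y_0),y_0)<N(T,y_0)$.
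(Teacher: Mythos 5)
Your strict-monotonicity argument is fine — scaling the near-optimal control by $\lambda$ and steering the leftover state $(1-\lambda)S(T_1)y_0$ to zero on $(T_1,T_2)$ via Proposition~\ref{attinable-invariant} is a legitimate repackaging of the paper's use of (H1) through Lemma~\ref{Lemma-H3-eq}(ii), and your lower-semicontinuity/right-continuity argument is essentially the paper's Step 2. But the proposal stops exactly at the technical heart of part (i): the left (upper semi-)continuity of $N(\cdot,y_0)$ on $\big(T^0(y_0),T^1(y_0)\big]$, including at $T^1(y_0)$, which you also need in order to identify the lower endpoint of the range with $N(T^1(y_0),y_0)$ when $T^1(y_0)<\infty$. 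You only sketch a hoped-for transplantation and yourself concede that controlling the overhead "is the delicate point", so this step is simply missing. Worse, the sketch points in the wrong direction: (H1), in the form of Lemma~\ref{Lemma-H3-eq}(ii), replaces the action of a control supported on $(0,t)$ by one supported on $(t,T)$ — it moves control action \emph{forward} in time — whereas to bound $N(t,y_0)$ for $t<T$ you must realize, by time $t$, the effect of a control acting on all of $(0,T)$, i.e.\ compress its action backward into $(0,t)$. Neither (H1) nor Proposition~\ref{attinable-invariant} supplies that; the constants $C_1(T,t)$ there are not claimed (and cannot be expected) to stay bounded as the interval $(t,T)$ shrinks, and in the absence of $L^\infty$-null controllability and backward uniqueness there is no way to "correct" the restriction of a time-$T$ control on the shrinking horizon.

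The paper proves exactly this missing statement (its Step 3) by a quite different, duality-based route: it expresses $N(T_n,y_0)$ through the dual formula of Corollary~\ref{Co-NP-duality} with normalized data $\|B^*S^*(T_n-\cdot)z_n\|_{L^1(0,T_n;U)}=1$, extracts a limit $g\in B_{Y_{T_0}}$ by combining Corollary~\ref{Corollary-weakly-star-compact} (weak-star compactness of the unit ball of $Y_{t_k}$, which is where (H1) really enters, via Theorem~\ref{Theorem-the-second-representation-theorem}) with a diagonal argument and the $Y$-extension Lemma~\ref{Lemma-left-continuity-YT}, and then shows
\[
\lim_{l\rightarrow\infty}\langle S(T_{n_l})y_0,z_{n_l}\rangle_X=\langle S(T_0)y_0,g\rangle_{\mathcal R_{T_0},Y_{T_0}}\leq N(T_0,y_0)\|g\|_{Y_{T_0}}\leq N(T_0,y_0).
\]
Until you supply an argument of this strength (or make your transplantation idea quantitative with constants that do not degenerate as $t\rightarrow T^-$), the continuity, the "onto" statement in (i), and hence (ii) are not established; strict monotonicity and right-continuity alone do not give them.
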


 \begin{proof}
 (i) Arbitrarily fix a  $y_0\in X\setminus\{0\}$ so that
 $ T^0(y_0)<T^1(y_0)$. From (iii) of Lemma \ref{Lemma-T0-T1} and Corollary \ref{Co-NP-duality}, we see that
 \begin{eqnarray}\label{NP-cont-de-key}
  0<N(T,y_0)<\infty \mbox{ for all } T\in \big(T^0(y_0),T^1(y_0)\big).
 \end{eqnarray}
 We organize the rest of the  proof by the following three steps:

 \vskip 3pt
 \noindent \textit{Step 1. To show that the function $N(\cdot,y_0)$ is strictly decreasing over $\big(T^0(y_0),T^1(y_0)\big)$}

  Arbitrarily fix two numbers $T_1$ and $T_2$ so that
  $ T^0(y_0)<T_1<T_2<T^1(y_0)$.
  Because (H1) holds, we can  apply  Lemma \ref{Lemma-H3-eq} to get  the conclusion (ii) of
  Lemma \ref{Lemma-H3-eq}.
  Let  $p_1\in [2,\infty)$
  and $C_1\triangleq C_1(T_2,T_1)$  be given by (ii) of Lemma \ref{Lemma-H3-eq}.
Then by (\ref{NP-cont-de-key}),
  there is a $\delta>0$ so that
     \begin{eqnarray}\label{NP-strict-de-11}
     \lambda\triangleq\frac{2\delta}{N(T_1,y_0)+\delta}\in (0,1)\;\;\mbox{and}\;\;  C_1 \lambda T_1^{1/p_1}\leq \frac{N(T_1,y_0)-\delta}{N(T_1,y_0)+\delta}.
   \end{eqnarray}
    Meanwhile, by  (\ref{NP-cont-de-key}), we have that $N(T_1,y_0)<\infty$. This, along with
      (\ref{NP-0}), yields that there exists an admissible control $v_1$ to $(NP)^{T_1,y_0}$ so that
   \begin{eqnarray}\label{NP-strict-de-12}
    \hat y(T_1;y_0,v_1)=0  \mbox{ and } \|v_1\|_{L^\infty(0,T_1;U)}\leq N(T_1,y_0)+\delta.
   \end{eqnarray}
   Write $\widetilde v_1$ for the zero extension of  $v_1$ over $(0,T_2)$.
   According to       (ii) of Lemma \ref{Lemma-H3-eq},  there is a control  $v_2\in L^\infty(0,T_2;U)$ so that
   \begin{eqnarray}\label{NP-strict-de-13}
     \hat{y}(T_2;0,\chi_{(0,T_1)}\lambda\widetilde v_1)=\hat{y}(T_2;0,\chi_{(T_1,T_2)}v_2)
   \end{eqnarray}
   and so that
   \begin{eqnarray}\label{NP-strict-de-14}
    \|v_2\|_{L^{\infty}(0,T_2;U)}\leq C_1\|\lambda\widetilde v_1\|_{L^{p_1}(0,T_2;U)}\leq C_1\lambda T_1^{1/p_1}\|v_1\|_{L^{\infty}(0,T_1;U)}.
   \end{eqnarray}
   We now define another control:
   \begin{eqnarray}\label{NP-strict-de-14-1}
   v_3(t)\triangleq\chi_{(0,T_1)}(t)(1-\lambda)\widetilde v_1(t)+\chi_{(T_1,T_2)}(t)v_2(t),~t\in (0,T_2).
   \end{eqnarray}
   From (\ref{NP-strict-de-14-1}), (\ref{NP-strict-de-13}) and the first equality in (\ref{NP-strict-de-12}), one can easily check that
   $ \hat y(T_2;y_0,v_3)=S(T_2-T_1)\hat y(T_1;y_0,v_1)=0$,
      which implies that $v_3$ is  an admissible control to $(NP)^{T_2,y_0}$. This, together with  the definition  of $N(T_2,y_0)$ (see  (\ref{NP-0})) and (\ref{NP-strict-de-14-1}), implies that
   $$
    N(T_2,y_0)\leq \|v_3\|_{L^\infty(0,T_2;U)}\leq \max \big\{(1-\lambda)\|v_1\|_{L^\infty(0,T_1;U)},\|v_2\|_{L^{\infty}(0,T_2;U)} \big\}.
   $$
   From this, (\ref{NP-strict-de-12}), (\ref{NP-strict-de-14}) and (\ref{NP-strict-de-11}), after some simple computations,  we deduce that
   \begin{eqnarray*}
    N(T_2,y_0) &\leq& \max \Big\{(1-\lambda)\big(N(T_1,y_0)+\delta\big),C_1 \lambda T_1^{1/p_1}\big(N(T_1,y_0)+\delta\big)\Big\} \nonumber\\
    &=& N(T_1,y_0)-\delta<N(T_1,y_0).
   \end{eqnarray*}
    So   $N(\cdot,y_0)$ is strictly decreasing  over $\big(T^0(y_0),T^1(y_0)\big)$.

 \noindent \textit{Step 2. To show that
 \begin{eqnarray}\label{1210-lowconti-0}
  N(T,y_0) \leq \liminf_{t\in\mathcal A,\,t\rightarrow T} N(T,y_0)
  \;\;\mbox{for all}\;\;   T\in \mathcal A\triangleq \big[T^0(y_0),T^1(y_0)\big)
 \end{eqnarray}}
  Arbitrarily fix a $T_0\in \big[T^0(y_0),T^1(y_0)\big)$. Then arbitrarily
   take a sequence:
   \begin{eqnarray}\label{1210-lowconti-01}
   \{T_n\}_{n=1}^\infty \subset \big(T^0(y_0),T^1(y_0)\big),
   \;\;\mbox{with}\;\; \lim_{n\rightarrow\infty} T_n= T_0.
   \end{eqnarray}
   To show (\ref{1210-lowconti-0}), it suffices to prove that
   \begin{eqnarray}\label{1210-lowconti-1}
    N(T_0,y_0)  \leq  \liminf_{n\rightarrow\infty} N(T_n,y_0).
   \end{eqnarray}
   By contradiction, we suppose that
  $\liminf_{n\rightarrow\infty} N(T_n,y_0)  <  N(T_0,y_0)$.
       Then there would be a subsequence $\{T_{n_k}\}_{k=1}^\infty$ of $\{T_n\}_{n=1}^\infty$ so that
   \begin{eqnarray}\label{1210-lowconti-2}
    \lim_{k\rightarrow\infty} N(T_{n_k},y_0) = \liminf_{n\rightarrow\infty} N(T_n,y_0)  <  N(T_0,y_0).
   \end{eqnarray}
   Thus there is a positive constant $C$ so that
   \begin{eqnarray}\label{1210-lowconti-3}
    N(T_{n_k},y_0)<C<\infty\;\;\mbox{ for all}\;\; k\geq 1.
   \end{eqnarray}
    It is clear that $0<T_{n_k}<\infty$ (see (\ref{1210-lowconti-01})) for each $k\in\mathbb N^+$.    This, along with
    (\ref{NP-0}) and (\ref{1210-lowconti-3}), yields that for each $k\in\mathbb N^+$,  there is a control $u_{n_k}\in L^\infty(0,T_{n_k};U)$ so that
 \begin{eqnarray}\label{NP-cont-de-3}
  \hat{y}(T_{n_k};y_0,u_{n_k})=0\;\;\mbox{and}\;\;\|u_{n_k}\|_{L^\infty(0,T_{n_k};U)}
  <N(T_{n_k},y_0)+1/k.
 \end{eqnarray}
For each $k\in\mathbb N^+$, we let  $\widetilde u_{n_k}$ be the zero extension of $u_{n_k}$ over $\mathbb R^+$.  From (\ref{NP-cont-de-3}) and (\ref{1210-lowconti-3}), it follows  that  $\{\widetilde u_{n_k}\}_{k=1}^\infty$ is bounded in $L^\infty(\mathbb R^+;U)$. Then there is a subsequence $\{\widetilde u_{n_{k_l}}\}_{l=1}^\infty$ of $\{\widetilde u_{n_k}\}_{k=1}^\infty$ and a control $v_0\in L^\infty(\mathbb R^+;U)$ so that
 \begin{eqnarray}\label{wgs3.28}
  \widetilde u_{n_{k_l}} \rightarrow v_0 \;\;\mbox{weakly star in}\;\; L^\infty(\mathbb R^+;U), \;\;\mbox{as}\;\; l\rightarrow\infty,
 \end{eqnarray}
 which implies that
 \begin{eqnarray*}
  \widetilde u_{n_{k_l}} \rightarrow v_0 \;\;\mbox{weakly in}\;\; L^2(\mathbb R^+;U), \;\;\mbox{as}\;\; l\rightarrow\infty.
 \end{eqnarray*}
 Because $\lim_{l\rightarrow\infty} T_{n_{k_l}}=T_0$, the above convergence, together with Lemma \ref{Lemma-t-u-converge}, yields that
 \begin{eqnarray*}
  y(T_{n_{k_l}};y_0,\widetilde u_{n_{k_l}}) \rightarrow y(T_0;y_0,v_0) \;\;\mbox{weakly in}\;\; X, \;\;\mbox{as}\;\; l\rightarrow\infty,
 \end{eqnarray*}
 which, along with the first equality in (\ref{NP-cont-de-3}), implies that
 \begin{eqnarray}\label{1210-lowconti-4}
   y(T_0;y_0,v_0)=0.
 \end{eqnarray}
 Since $y_0\in X\setminus\{0\}$ and $T_0<T^1(y_0)$, the equality (\ref{1210-lowconti-4}) indicates that
 $  0<T_0<\infty$.
    Therefore, the problem $(NP)^{T_0,y_0}$ makes sense. From (\ref{1210-lowconti-4}), we know that
   $v_0|_{(0,T_0)}$ is an admissible control to $(NP)^{T_0,y_0}$. This, along with (\ref{NP-0}), (\ref{wgs3.28})  and the second inequality in (\ref{NP-cont-de-3}),
   yields  that
 \begin{eqnarray*}
  N(T_0,y_0) \leq  \|v_0\|_{L^\infty(0,T_0;U)}
  \leq \liminf_{l\rightarrow\infty}    \|\widetilde u_{n_{k_l}}\|_{L^\infty(\mathbb R^+;U)}
  \leq \liminf_{l\rightarrow\infty} N(T_{n_{k_l}},y_0),
 \end{eqnarray*}
 which contradicts (\ref{1210-lowconti-2}). Thus,  (\ref{1210-lowconti-1}) is true. This ends the proof of
  (\ref{1210-lowconti-0}).

\vskip 3pt
 \noindent \textit{Step 3. To show that
 \begin{eqnarray}\label{1210-supconti-0}
  N(T,y_0) \geq \limsup_{t\in\mathcal B,\,t\rightarrow T} N(T,y_0)
  \;\;\mbox{for all}\;\;   T\in \mathcal B \triangleq \big(T^0(y_0),T^1(y_0)\big]
 \end{eqnarray}}
   Arbitrarily fix a $T_0\in \big(T^0(y_0),T^1(y_0)\big]$. We aim to show that (\ref{1210-supconti-0}) holds for $T=T_0$. There are only two possibilities on $T_0$: either $T_0=\infty$ or $T_0<\infty$.
   In the case that $T_0=\infty$, (\ref{1210-supconti-0}),   with $T=T_0$, follows directly  from the first equality in (\ref{N-infty-y0}).

  The key of  this step is to prove that
    \begin{eqnarray}\label{1210-supconti-1}
  N(T_0,y_0) \geq \limsup_{t\rightarrow T_0} N(t,y_0), \;\;\mbox{when}\;\;T_0<\infty.
 \end{eqnarray}
  To this end, we arbitrarily take
  $\{T_n\}_{n=1}^\infty$ in $\big(T^0(y_0),T^1(y_0)\big)$
  so that $ \lim_{n\rightarrow\infty} T_n = T_0<\infty$.
    According to Corollary~\ref{Co-NP-duality}, there is a sequence $\{z_n\}_{n=1}^\infty\subset D(A^*)$ so that for each $n\geq 1$,
 \begin{eqnarray}\label{NP-cont-de-5}
   \|B^*S^*(T_n-\cdot)z_n\|_{L^1(0,T_n;U)}=1
 \end{eqnarray}
 and
 \begin{eqnarray}\label{NP-cont-de-6}
  N(T_n,y_0)-1/n\leq \langle S(T_n)y_0,z_n\rangle_X\leq N(T_n,y_0).
 \end{eqnarray}
 Arbitrarily fix a sequence:
 \begin{eqnarray}\label{1207-sequence-tk}
 \{t_k\}_{k=1}^\infty\subset \big(T^0(y_0),T_0\big)\;\;\mbox{with}\;\;t_k\nearrow T_0.
 \end{eqnarray}
  The rest of the proof of this step is divided into three parts as follows:

 \noindent \textit{Part 3.1. To prove that  there is  a subsequence  $\{n_l\}_{l=1}^\infty$ in $\mathbb N^+$ and a function $g\in B_{Y_{T_0}}$ so that for each $ k\in\mathbb N^+$,
 \begin{eqnarray}\label{WGS3.32}
  B^*S^*(T_{n_l}-\cdot)z_{n_l} \rightarrow g\; \;\mbox{weakly in }\;  L^1(0,t_k;U),\;\;\mbox{as}\;\;l\rightarrow\infty
 \end{eqnarray}}
   For each $n$, we define a function $\psi_{n}$ over $(0,T_0)$ in the following manner:
   \begin{eqnarray*}
  \psi_n(t)=\left\{\begin{array}{ll}
              B^*S^*(T_n-t)z_n, \;\; &t\in \big(0,\min\{T_n,T_0\}\big),\\
              0, \;\; &t\in \big[\min\{T_n,T_0\},T_0\big).
             \end{array}
      \right.
 \end{eqnarray*}
      For each $k\in\mathbb N^+$,  since $t_k<T_0$ (see (\ref{1207-sequence-tk})) and $\lim_{n\rightarrow\infty} T_n = T_0$, we see that
    there is $N(k)\in\mathbb N^+$ so that
   $t_k <  \min\{T_n,T_0\}$, when $n\geq N(k)$.
       Since $z_n\in D(A^*)$ for all $n$, we have that for each $k\in\mathbb N^+$,
    $S^*(T_n-t_k)z_n\in D(A^*)$, when $n\geq N(k)$.
        Then by (\ref{ob-space}),  we find that when $k\in\mathbb N^+$ and $n\geq N(k)$,
 \begin{eqnarray}
  \psi_n\mid_{(0,t_k)}=B^*S^*(T_n-\cdot)z_n\mid_{(0,t_k)}
  =B^*S^*(t_k-\cdot)\big(S^*(T_n-t_k)z_n\big)\mid_{(0,t_k)}
  \in Y_{t_k}.
 \end{eqnarray}
 This, along with  (\ref{NP-cont-de-5}), yields that for each $k\in\mathbb N^+$,
 $\psi_n\mid_{(0,t_k)}\in B_{Y_{t_k}}$, when $n\geq N(k)$.
  From this, (H1) and   Corollary \ref{Corollary-weakly-star-compact} (with $T=t_k$), we see that  for each $k\in\mathbb N^+$,  there is a function  $g_k\in B_{Y_{t_k}}$ and a subsequence $\{\psi_{k_n}\}_{n=1}^\infty$   so that
  \begin{eqnarray*}
  \{\psi_{k_n}\}_{n=1}^\infty\subset \{\psi_{(k-1)_n}\}_{n=1}^\infty\subset \{\psi_n\}_{n=1}^\infty,\;\;
  \mbox{with}\;\; \{\psi_{0_n}\}_{n=1}^\infty\triangleq \{\psi_n\}_{n=1}^\infty,
  \end{eqnarray*}
  and so that
 \begin{eqnarray*}
  \psi_{k_n}\mid_{(0,t_k)} \rightarrow g_k \mbox{ in the topology } \sigma(Y_{t_k},\mathcal R^0_{t_k}), \mbox{ as } n\rightarrow\infty.
 \end{eqnarray*}
 From these and the diagonal law, the subsequence $\{\psi_{n_n}\}_{n=1}^\infty$ of $\{\psi_n\}_{n=1}^\infty$ satisfies that for each $k\in \mathbb N^+$,
 \begin{eqnarray}\label{NP-cont-de-100}
  \psi_{n_n}\mid_{(0,t_k)} \rightarrow g_k \mbox{ in the topology } \sigma(Y_{t_k},\mathcal R^0_{t_k}),\;\;\mbox{as}\;\; n\rightarrow\infty.
 \end{eqnarray}
 Arbitrarily fix a $k\in\mathbb N^+$ and then arbitrarily take $u_k\in \mathcal{U}_k$ where
 \begin{eqnarray}\label{1207-set-Vk}
  \mathcal{U}_k\triangleq \Big\{u\in L^\infty(0,t_k;U) ~:~ \lim_{s\rightarrow t_k}\|u\|_{L^\infty(s,t_k;U)}=0\Big\}.
 \end{eqnarray}
  By (\ref{R0T}), we have that
  $\hat y(t_k;0,u_k)\in\mathcal R_{t_k}^0$.
      This, along with (\ref{NP-cont-de-100}), yields that for each $k\in\mathbb N^+$,
 \begin{eqnarray}\label{NP-cont-de-100-1}
  \langle \psi_{n_n},\hat y(t_k;0,u_k)\rangle_{Y_{t_k},\mathcal R_{t_k}^0} \rightarrow \langle g_k,\hat y(t_k;0,u_k)\rangle_{Y_{t_k},\mathcal R_{t_k}^0}, \mbox{ as } n\rightarrow\infty.
 \end{eqnarray}
 Since $u_k$ is an admissible control to $(NP)^{y_{t_k}}$, with $y_{t_k}\triangleq\hat y(t_k;0,u_k)$, we can use Theorem \ref{Theorem-the-second-representation-theorem} and (\ref{NP-cont-de-100-1}) to get that for each $k$ and each $u_k\in \mathcal{U}_k$,
 \begin{eqnarray}\label{wgs3.34}
 \int_0^{t_k} \langle \psi_{n_n}(t),u_k(t)\rangle_U \,\mathrm dt
 \rightarrow \int_0^{t_k} \langle g_k(t),u_k(t)\rangle_U \,\mathrm dt, \mbox{ as } n\rightarrow \infty.
 \end{eqnarray}
  We next claim  that
 \begin{eqnarray}\label{NP-cont-de-101}
  g_j = g_{j^\prime}
  \;\;\mbox{over}\;\; [0,t_j]  \;\;\mbox{for all}\;\;  j,\, j^\prime\in\mathbb N^+
  \;\;\mbox{with}\;\;  j< j^\prime.
 \end{eqnarray}
 For this purpose, we arbitrarily fix  $j, j^\prime\in\mathbb N^+$ so that $ j< j^\prime$.
 Let $u_j\in \mathcal{U}_j$. Write $\widetilde u_j$ for the zero extension of $u_j$ over $(0,t_{j'})$.
  It follows from (\ref{1207-set-Vk}) that
  $  \widetilde u_j\in \mathcal{U}_{j^\prime}$.
   This, along with  (\ref{wgs3.34}), indicates that
  \begin{eqnarray*}
   \int_0^{t_j} \langle g_j(t),u_j(t)\rangle_U \,\mathrm dt
   &=&\lim_{n\rightarrow\infty} \int_0^{t_j} \langle \psi_{n_n}(t),u_j(t)\rangle_U \,\mathrm dt
   =\lim_{n\rightarrow\infty} \int_0^{t_{j^\prime}} \langle \psi_{n_n}(t),\widetilde u_j(t)\rangle_U
      \,\mathrm dt
   \nonumber\\
   &=&\int_0^{t_{j^\prime}} \langle g_{j^\prime}(t),\widetilde u_j(t)\rangle_U \,\mathrm dt
   =\int_0^{t_{j}} \langle g_{j^\prime}(t),u_j(t)\rangle_U  \,\mathrm dt.
  \end{eqnarray*}
  Since $u_j$ was arbitrarily taken from $\mathcal{U}_j$ (see (\ref{1207-set-Vk})), the above leads to (\ref{NP-cont-de-101}).

Now, define $g(\cdot): (0,T_0)\rightarrow U$ by
 \begin{eqnarray}\label{NP-cont-de-102}
  g(t) \triangleq g_k(t),~t\in (0,t_k],  \;\;\mbox{for each}\;\; k\in\mathbb N^+.
 \end{eqnarray}
 From (\ref{NP-cont-de-101}), we see that $g$ is well defined.
 By (\ref{wgs3.34}) and (\ref{NP-cont-de-102}), we find that for each $k\in\mathbb N^+$ and each $u_{k+1}\in \mathcal{U}_{k+1}$,
 \begin{eqnarray}\label{wanghuang3.30}
 \int_0^{t_{k+1}} \langle \psi_{n_n}(t),u_{k+1}(t)\rangle_U \,\mathrm dt
  \rightarrow \int_0^{t_{k+1}} \langle g(t),u_{k+1}(t)\rangle_U \,\mathrm dt,
   \mbox{ as } n\rightarrow \infty.
 \end{eqnarray}
Given a $v_k\in L^\infty(0,t_k;U)$, let $\widetilde v_k$ be the zero extension of $v_k$ over $(0,t_{k+1})$. Then
$\widetilde v_k\in \mathcal{U}_{k+1}$.
Replacing $u_{k+1}$ by  $\widetilde v_k$ in (\ref{wanghuang3.30}), we obtain that
for each $k\in\mathbb N^+$ and  each $v_k\in L^\infty(0,t_k;U)$,
\begin{eqnarray*}
 \int_0^{t_{k}} \langle \psi_{n_n}(t),v_{k}(t)\rangle_U \,\mathrm dt
  \rightarrow \int_0^{t_{k}} \langle g(t),v_{k}(t)\rangle_U \,\mathrm dt,
  \mbox{ as } n\rightarrow \infty,
 \end{eqnarray*}
from which, it follows that for each $k\in\mathbb N^+$,
\begin{equation}\label{wanghuang3.31}
\psi_{n_n}\rightarrow g\;\;\mbox{weakly in}\;\; L^1(0, t_{k}; U), \mbox{ as } n\rightarrow\infty\color{black}.
\end{equation}
We now prove that  $g\in B_{Y_{T_0}}$. Indeed, since $g_k\in B_{Y_{t_k}}$ for each $k\in\mathbb N^+$, by (\ref{NP-cont-de-102}) and (i) of Lemma \ref{Lemma-left-continuity-YT}, we deduce that
 $ g|_{(0,s)}\in Y_s $ for all $ s\in (0,T_0)$ and that $\|g\|_{ L^1(0,T_0;U)}\leq 1$.
From these, as well as (H1) and (ii) of Lemma \ref{Lemma-left-continuity-YT}, we see that $g\in B_{Y_{T_0}}$. This, together with (\ref{wanghuang3.31}), leads to   the conclusion of Part 3.1.

\vskip 5pt
 \noindent  \textit{Part 3.2. To show that the subsequence $\{n_l\}_{l=1}^\infty$, obtained in Part 3.1, satisfies that
 \begin{equation}\label{WgS3.38}
 \langle S(T_{n_l})y_0,z_{n_l}\rangle_X\rightarrow \langle S(T_0)y_0,g\rangle_{\mathcal R_{T_0},Y_{T_0}},\;\;\mbox{as}\;\;l\rightarrow\infty
 \end{equation}}

  \noindent Recall (\ref{1207-sequence-tk}) for $\{t_k\}_{k=1}^\infty$.
      Since $t_1>T^0(y_0)$, we see from (\ref{y0-controllable}) that  there is an $u_1\in L^\infty (\mathbb{R}^+;U)$ so that
$ 0={y}(t_1;y_0,\chi_{(0,t_1)}u_1)$,
from which, it follows from (\ref{Changshubianyi1.6}) that for each $T\geq t_1$,
 \begin{eqnarray}\label{NP-cont-de-104}
 0 =  \hat{y}(T;y_0, \chi_{(0,t_1)}u_1|_{(0,T)})
 =S(T)y_0+\int_0^{T} S_{-1}(T-\tau)B \chi_{(0,t_1)}(\tau)u_1(\tau) \,\mathrm d\tau.
 \end{eqnarray}
 Because
 $ \lim_{l\rightarrow\infty} T_{n_l}=T_0>t_1$,
  there exists an $N_0>0$ so that
   $T_{n_l}\geq t_1$ for all $l\geq N_0$.
       This, along with (\ref{NP-cont-de-104}) (with $T=T_{n_l}$) and (\ref{NNNWWW2.1}), yields   that
  for each $l\geq N_0$,
 $$
   \langle S(T_{n_l})y_0,z_{n_l}\rangle_X
      = -\int_0^{t_1} \big\langle \chi_{(0,t_1)}(\tau)u_1(\tau),B^*S^*(T_{n_l}-\tau)z_{n_l} \big\rangle_U \,\mathrm d\tau,
 $$
 which, together with (\ref{WGS3.32}) (where $k=2$),  implies that
 \begin{eqnarray}\label{zero-t1-1}
   \lim_{l\rightarrow\infty} \langle S(T_{n_l})y_0,z_{n_l}\rangle_X
   =  -\int_0^{t_1}  \big\langle \chi_{(0,t_1)}(\tau)u_1(\tau),g(\tau) \big\rangle_U  \,\mathrm d\tau.
 \end{eqnarray}
 Meanwhile, since $T_0>t_1$, it follows by (\ref{attainable-space}) and (\ref{NP-cont-de-104}) (where $T=T_0$) that
 \begin{eqnarray}\label{T0y0-RT0}
 S(T_0)y_0\in\mathcal R_{T_0}.
 \end{eqnarray}
 By (\ref{NP-cont-de-104}), we know that $-\chi_{(0,t_1)}u_1|_{(0,T_0)}$ is an admissible control to $(NP)^{y_{T_0}}$, with $y_{T_0}\triangleq S(T_0)y_0$. Thus, it follows from (\ref{T0y0-RT0}) and (\ref{ob-attain-0}) that
 \begin{eqnarray*}
 \langle S(T_0)y_0,g\rangle_{\mathcal R_{T_0},Y_{T_0}}
    = -\int_0^{t_1} \big\langle \chi_{(0,t_1)}(\tau)u_1(\tau),g(\tau) \big\rangle_U \,\mathrm d\tau .
 \end{eqnarray*}
 This, along with (\ref{zero-t1-1}), yields (\ref{WgS3.38}).

 \vskip 5pt

  \noindent\textit{Part 3.3. To show  (\ref{1210-supconti-1}) }

  It is clear that $T^0(y_0)<T_0<\infty$. Then by (\ref{T0y0-RT0}) and (ii) of Proposition \ref{Lemma-NP-yT-y0-eq}, we see that
  \begin{eqnarray}\label{1209-eq-1}
   N(T_0,y_0)=\|-S(T_0)y_0\|_{\mathcal R_{T_0}}.
  \end{eqnarray}
 From (\ref{T0y0-RT0}) and (\ref{ob-attain-0}), we find that
  \begin{eqnarray*}
    \|-S(T_0)y_0\|_{\mathcal R_{T_0}}\|g\|_{Y_{T_0}}\geq \langle S(T_0)y_0,g\rangle_{\mathcal R_{T_0},Y_{T_0}}.
  \end{eqnarray*}
 This, along with (\ref{1209-eq-1}), implies that
 \begin{equation}\label{part3.3-1}
    N(T_0,y_0)\|g\|_{Y_{T_0}}\geq \langle S(T_0)y_0,g\rangle_{\mathcal R_{T_0},Y_{T_0}}.
 \end{equation}
 Since $g\in B_{Y_{T_0}}$  (see Part 3.1), we have that $\|g\|_{Y_{T_0}}\leq 1$. This, as well as (\ref{part3.3-1}) and (\ref{WgS3.38}), yields that
 \begin{eqnarray}\label{wanghuang3.34}
   N(T_0,y_0)\geq N(T_0,y_0)\|g\|_{Y_{T_0}} \geq \lim_{l\rightarrow\infty} \langle S(T_{n_l})y_0,z_{n_l}\rangle_X.
 \end{eqnarray}
 From (\ref{wanghuang3.34}) and (\ref{NP-cont-de-6}), we obtain that
$ N(T_0,y_0)\geq \lim_{l\rightarrow\infty} N(T_{n_l},y_0)$.
   Since the function $N(\cdot,y_0)$ is decreasing (see (ii) of  Lemma \ref{Lemma-NP-decreasing}), the above leads to  (\ref{1210-supconti-1}) (in the case that $T_0<\infty$).
  \vskip 3pt
  In summary, we conclude that (\ref{1210-supconti-0}) holds. This ends the proof of Step 3.

  \vskip 5pt
 Now, from Lemma \ref{Lemma-NP-decreasing} and the conclusions in Step 2 and Step 3, we see that the function $N(\cdot,y_0)$ is continuous from $\big(T^0(y_0),T^1(y_0)\big)$ onto $\big(N(T^1(y_0),y_0),N(T^0(y_0),y_0)\big)$. This, along with the conclusion in Step 1, proves the conclusion (i)
 of  Proposition~\ref{wang-prop3.3}.

 \vskip 5pt
 (ii) Fix a $y_0\in X\setminus\{0\}$ so that $T^0(y_0)<T^1(y_0)$. Let
  $ T\in \big(T^0(y_0),T^1(y_0)\big)$ and $0\leq s_1<T<s_2\leq\infty$.
   Choose two numbers $s_1^\prime$ and $s_2^\prime$ so that
  \begin{equation}\label{WANG3.53}
  s_1^\prime,\, s_2^\prime\in \big(T^0(y_0),T^1(y_0)\big)\;\;\mbox{and}\;\;
  s_1<s_1^\prime<T<s_2^\prime<s_2.
  \end{equation}
     Because $N(\cdot,y_0)$ is strictly decreasing over $\big(T^0(y_0),T^1(y_0)\big)$ (see the conclusion (i) in  this proposition), it follows from (\ref{WANG3.53}) that
  \begin{eqnarray}\label{WANG3.54}
    N(s_1^\prime,y_0)>N(T,y_0)>N(s_2^\prime,y_0).
  \end{eqnarray}
  Since $N(\cdot,y_0)$ is decreasing over $[0,\infty]$ (see (ii) of Lemma \ref{Lemma-NP-decreasing}), it follows by (\ref{WANG3.53}) and (\ref{WANG3.54}) that
  \begin{eqnarray*}
   N(s_1,y_0)\geq N(s_1^\prime,y_0)>N(T,y_0)>N(s_2^\prime,y_0)\geq N(s_2,y_0),
  \end{eqnarray*}
  which leads to  (\ref{NP-strict-decreasing-infty}). The conclusion (ii) is proved.

  \vskip 5pt

  In summary, we finish the proof of Proposition~\ref{wang-prop3.3}.

 \end{proof}

\begin{Corollary}\label{wangcorollary3.8}
  Suppose that (H1) holds.  Let $y_0\in X\setminus\{0\}$ satisfy that $T^0(y_0)<T^1(y_0)$. Then the following conclusions are valid:

 \noindent (i) When $M\in \big(N(T^1(y_0), y_0), N(T^0(y_0),y_0)\big)$,
   \begin{equation}\label{ggssww3.40}
  T^0(y_0)<T(M,y_0)<T^1(y_0)\;\;\mbox{and}\;\; M=N(T(M,y_0),y_0).
  \end{equation}

  \noindent (ii) When $T\in \big(T^0(y_0), T^1(y_0)\big)$,
   \begin{equation}\label{ggssww3.41}
  N(T^1(y_0),y_0)<N(T,y_0)<N(T^0(y_0),y_0)\;\;\mbox{and}\;\; T=T(N(T,y_0),y_0). \end{equation}
 \end{Corollary}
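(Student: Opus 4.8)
The plan is to derive Corollary~\ref{wangcorollary3.8} from Proposition~\ref{wang-prop3.3} together with an elementary description of $T(M,y_0)$ through the level sets of $N(\cdot,y_0)$. The first step is to record, for every $M>0$ and every $\hat t>0$, two facts. If $u\in\mathcal U^M$ satisfies $y(\hat t;y_0,u)=0$, then the restriction $u|_{(0,\hat t]}$ is an admissible control for $(NP)^{\hat t,y_0}$ of $L^\infty(0,\hat t;U)$-norm at most $M$, so $N(\hat t,y_0)\le M$. Conversely, if $N(\hat t,y_0)<M$, then $(NP)^{\hat t,y_0}$ has an admissible control $v$ with $\|v\|_{L^\infty(0,\hat t;U)}\le M$, and its zero extension to $\mathbb R^+$ belongs to $\mathcal U^M$ and drives $y_0$ to $0$ at time $\hat t$. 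Taking infima over $\hat t>0$ in (\ref{TP-0}) then yields the two-sided bound
\[
 \inf\{\hat t>0 \,:\, N(\hat t,y_0)\le M\}\ \le\ T(M,y_0)\ \le\ \inf\{\hat t>0 \,:\, N(\hat t,y_0)<M\}.
\]

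To establish (i), I would fix $M\in\big(N(T^1(y_0),y_0),N(T^0(y_0),y_0)\big)$. By Proposition~\ref{wang-prop3.3}(i) there is a unique $T^\ast\in\big(T^0(y_0),T^1(y_0)\big)$ with $N(T^\ast,y_0)=M$. By Proposition~\ref{wang-prop3.3}(ii) applied with $T=T^\ast$, one has $N(\hat t,y_0)>M$ for all $\hat t\in(0,T^\ast)$ and $N(\hat t,y_0)<M$ for all $\hat t\in(T^\ast,\infty)$; hence
\[
 \{\hat t>0 \,:\, N(\hat t,y_0)\le M\}=[T^\ast,\infty),\qquad \{\hat t>0 \,:\, N(\hat t,y_0)<M\}=(T^\ast,\infty),
\]
so both infima in the displayed bound equal $T^\ast$. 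Therefore $T(M,y_0)=T^\ast\in\big(T^0(y_0),T^1(y_0)\big)$ and $N(T(M,y_0),y_0)=M$, which is precisely (\ref{ggssww3.40}).

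For (ii), I would fix $T\in\big(T^0(y_0),T^1(y_0)\big)$ and set $M\triangleq N(T,y_0)$. Since Proposition~\ref{wang-prop3.3}(i) says $N(\cdot,y_0)$ maps $\big(T^0(y_0),T^1(y_0)\big)$ onto $\big(N(T^1(y_0),y_0),N(T^0(y_0),y_0)\big)$, we get $N(T^1(y_0),y_0)<M<N(T^0(y_0),y_0)$, so (i) applies to this $M$ and gives $T(M,y_0)\in\big(T^0(y_0),T^1(y_0)\big)$ with $N(T(M,y_0),y_0)=M=N(T,y_0)$. As $N(\cdot,y_0)$ is strictly decreasing, hence injective, on $\big(T^0(y_0),T^1(y_0)\big)$ (again Proposition~\ref{wang-prop3.3}(i)), this forces $T(M,y_0)=T$, i.e. $T=T(N(T,y_0),y_0)$, which is (\ref{ggssww3.41}).

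Everything of substance is already contained in Proposition~\ref{wang-prop3.3}, so the only delicate points are: the gap between the ``$\le M$'' and ``$<M$'' level sets in the two-sided bound, which collapses precisely because of the strict monotonicity in Proposition~\ref{wang-prop3.3}(ii); and the degenerate case $N(T^0(y_0),y_0)=\infty$, where the interval in (i) reads $\big(N(T^1(y_0),y_0),\infty\big)$ and is still a genuine nonempty interval since the hypothesis $T^0(y_0)<T^1(y_0)$ forces $T^0(y_0)<\infty$ and hence $N(T^1(y_0),y_0)<\infty$ by Lemma~\ref{Lemma-NT0-T0-T1}.
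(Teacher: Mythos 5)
Your proposal is correct. The substance of both parts is, as you say, already in Proposition~\ref{wang-prop3.3}, and your verification of the two elementary facts (restriction of an $M$-bounded steering control gives $N(\hat t,y_0)\le M$; zero extension of a norm-$<M$ admissible control for $(NP)^{\hat t,y_0}$ gives an admissible control for $(TP)^{M,y_0}$) is sound, as is the handling of the degenerate case $N(T^0(y_0),y_0)=\infty$ via (v) of Lemma~\ref{Lemma-NT0-T0-T1}.

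Where you genuinely diverge from the paper is in part (i). The paper fixes $\hat T$ with $N(\hat T,y_0)=M$ and argues by contradiction in two cases: to rule out $\hat T<T(M,y_0)$ it takes a minimizing sequence $v_n$ for $(NP)^{\hat T,y_0}$, extracts a weak-star limit, and invokes Lemma~\ref{Lemma-t-u-converge} to produce an admissible control for $(TP)^{M,y_0}$ at time $\hat T$ with norm exactly $\le M$; to rule out $\hat T>T(M,y_0)$ it uses near-optimal time controls and (\ref{NP-strict-decreasing-infty}). You instead sandwich $T(M,y_0)$ between $\inf\{\hat t:N(\hat t,y_0)\le M\}$ and $\inf\{\hat t:N(\hat t,y_0)<M\}$ and show via Proposition~\ref{wang-prop3.3}(ii) that both infima equal $T^\ast$. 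The lower bound is essentially the paper's second case repackaged, but your upper bound replaces the weak-star compactness argument entirely: because $N(\hat t,y_0)<M$ strictly for every $\hat t>T^\ast$, you get admissible controls for $(TP)^{M,y_0}$ at all times slightly larger than $T^\ast$ and conclude by taking the infimum, never needing an exact minimizer at $T^\ast$ nor Lemma~\ref{Lemma-t-u-converge}. This is a cleaner and slightly more economical route (the paper's approach has the minor side benefit of exhibiting an admissible control at the exact time $\hat T$, but that is not needed for the corollary and is recovered elsewhere, e.g.\ in Theorem~\ref{Theorem-ex-op-TP}). Part (ii) of your argument coincides with the paper's: apply (i) with $M=N(T,y_0)$ and use strict monotonicity to identify $T$ with $T(N(T,y_0),y_0)$.
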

 \begin{proof}
  (i) Let $y_0\in X\setminus\{0\}$, with $T^0(y_0)<T^1(y_0)$.
 Then by (H1), we can apply (i) of Proposition~\ref{wang-prop3.3} to see that
 $ N(T^1(y_0),y_0)< N(T^0(y_0),y_0)$.
 Let
 \begin{equation}\label{Wanghuang3.41}
  M\in\big(N(T^1(y_0),y_0), N(T^0(y_0),y_0)\big).
  \end{equation}
    According to (i) of Proposition~\ref{wang-prop3.3},  there is  $\hat T$ so that
  \begin{eqnarray}\label{TP-NP-1-1}
   T^0(y_0)<\hat T<T^1(y_0)\;\;\mbox{and}\;\; M=N(\hat T,y_0).
  \end{eqnarray}

  To prove  (\ref{ggssww3.40}), it suffices to show   that
  \begin{eqnarray}\label{TP-NP-1-2}
   \hat T=T(M,y_0).
  \end{eqnarray}
  By contradiction, suppose that (\ref{TP-NP-1-2}) were not true. Then we would have that either $\hat T<T(M,y_0)$ or $\hat T>T(M,y_0)$.
    In the case that $\hat T<T(M,y_0)$, we first observe from (\ref{TP-NP-1-1}) and (\ref{Wanghuang3.41}) that
  $N(\hat T,y_0)=M<N(T^0(y_0),y_0)\leq\infty$.
   Thus, it follows from (\ref{NP-0}) that for each $n\geq 1$, there is a control $v_n$ so that
  \begin{equation}\label{WAng3.44}
  \|v_n\|_{L^\infty(0,\hat T;U)} \leq   N(\hat T,y_0) + 1/n<\infty
  \end{equation}
  and
  \begin{equation}\label{WAng3.45}
  \hat y(\hat T;y_0,v_n)=0.
  \end{equation}
  Write $\widetilde v_n$ for the zero extension of $v_n$ over $\mathbb R^+$, $n\in\mathbb N^+$. From (\ref{WAng3.44}), we see that on a subsequence of $\{\widetilde v_n\}_{n=1}^\infty$, still denoted in the same manner,
    \begin{equation}\label{WAng3.46}
   \widetilde  v_n\rightarrow v_0\;\;\mbox{weakly star in}\;\; L^\infty(\mathbb R^+;U),
   \;\;\mbox{as}\;\;  n\rightarrow\infty.
  \end{equation}
  It is clear that $\widetilde v_n$ converges to $v_0$ weakly in $L^2(\mathbb R^+;U)$. Then by  Lemma \ref{Lemma-t-u-converge} and (\ref{WAng3.45}), we find that
  \begin{equation}\label{WAng3.47}
  y(\hat T; y_0,v_0)=0.
  \end{equation}
  Meanwhile, from (\ref{WAng3.46}), (\ref{WAng3.44}) and (\ref{TP-NP-1-1}),
  we have that
  \begin{equation}\label{WAng3.48}
  \|v_0\|_{L^\infty(\mathbb R^+;U)}
  \leq \liminf_{n\rightarrow\infty}  \|\widetilde v_n\|_{L^\infty(\mathbb R^+;U)}
  \leq N(\hat T, y_0)
  =M.
  \end{equation}
   From (\ref{WAng3.47}) and (\ref{WAng3.48}), we see that $v_0$ is an admissible control to $(TP)^{M,y_0}$.
  Then by (\ref{TP-0}), we see that
  $\hat T\geq  T(M,y_0)$, which leads to a contradiction, since we are in the case that $\hat T<T(M,y_0)$.

  In the case when $\hat T>T(M,y_0)$, we have that $T(M,y_0)<\infty$. This, along with (\ref{TP-0}), yields that for each $n\geq 1$, there is a control $u_n\in \mathcal{U}^M$
  and a number $T_n$ so that
  \begin{equation}\label{WGS3.47}
  T(M,y_0) \leq T_n  \leq T(M,y_0)+1/n<\infty;
  \end{equation}
  \begin{equation}\label{WGS3.48}
  \|u_n\|_{L^\infty(\mathbb{R}^+;U)}\leq M\;\;\mbox{and}\;\; y(T_n;y_0,u_n)=0.
  \end{equation}
  Since $y_0\in X\setminus\{0\}$, these imply that
 $ 0<T_n<\infty$  for all $n\geq 1$.
    From this and the second equality in (\ref{WGS3.48}), it follows that for each $n$, $u_n|_{(0,T_n)}$ is an admissible control to $(NP)^{T_n,y_0}$. This, along with the first inequality in (\ref{WGS3.48}) and the definition of $N(T_n,y_0)$ (see (\ref{NP-0})), yields that for each $n$,
  $M\geq \|u_n\|_{L^\infty(\mathbb{R}^+;U)}\geq N(T_n,y_0)$,
    which, together with the second equality in (\ref{TP-NP-1-1}), implies that
  \begin{eqnarray}\label{N-Tn-M-0}
   N(\hat T,y_0)\geq  N(T_n,y_0)  \;\;\mbox{for each}\;\; n.
  \end{eqnarray}
  Since (H1) holds and $\hat T\in \big(T^0(y_0),T^1(y_0)\big)$, we see from  (\ref{NP-strict-decreasing-infty}) and (\ref{N-Tn-M-0})  that for each $n\in \mathbb{N}^+$,
  $T_n \geq \hat T$
    which, together with (\ref{WGS3.47}), indicates that $T(M,y_0)\geq\hat T$. This leads to a contradiction, because we are in the case   that $\hat T>T(M,y_0)$.
  Thus, the conclusion (i) of this corollary is true.

  (ii) Let $y_0\in X\setminus\{0\}$, with $T^0(y_0)<T^1(y_0)$.
  Arbitrarily fix $T\in (T^0(y_0), T^1(y_0)$.
   Since (H1) holds, we
  can use the conclusion (i) of  Proposition~\ref{wang-prop3.3} to see
  the $T$ satisfies the first inequality in (\ref{ggssww3.41}).
  Then by  this and  (\ref{ggssww3.40}) (where $M=N(T,y_0)$), we find that
  \begin{eqnarray}\label{WANG3.55}
   T^0(y_0)<T(N(T,y_0),y_0)< T^1(y_0)\;\;\mbox{and}\;\;N(T, y_0)=N\big(T(N(T,y_0),y_0),y_0\big).
  \end{eqnarray}
  Since $N(\cdot,y_0)$ is strictly decreasing over $\big(T^0(y_0), T^1(y_0)\big)$ (see (i) of Proposition~\ref{wang-prop3.3}) and because $T\in \big(T^0(y_0), T^1(y_0)\big)$\color{black}, it follows from (\ref{WANG3.55}) that $T$ satisfies the second equality in
(\ref{ggssww3.41}).

 In summary, we finish the proof of this corollary.
 \end{proof}

  We can have the  following property on  $T(M,y_0)$, without assuming (H1). (Compare it with the conclusion (i) of Corollary~\ref{wangcorollary3.8}.)

  \begin{Proposition}\label{Proposition-Range-T}
 Let $y_0\in X\setminus\{0\}$. Then
 $T^0(y_0)\leq T(M,y_0)\leq T^1(y_0)$ for each $M\in(0,\infty)$.
  \end{Proposition}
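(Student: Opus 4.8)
The plan is to establish the two inequalities $T^0(y_0)\le T(M,y_0)$ and $T(M,y_0)\le T^1(y_0)$ separately, using only the definitions \eqref{y0-controllable}, \eqref{Ty0}, \eqref{TP-0} and the elementary monotonicity properties already recorded. Fix $y_0\in X\setminus\{0\}$ and $M\in(0,\infty)$.

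For the lower bound $T^0(y_0)\le T(M,y_0)$: if $(TP)^{M,y_0}$ has no admissible control then $T(M,y_0)=\infty$ by our convention and the inequality is trivial. Otherwise, take any admissible control $u\in\mathcal U^M$, so $y(\hat t;y_0,u)=0$ for some $\hat t\in(0,\infty)$. Since $\mathcal U^M\subset L^\infty(\mathbb R^+;U)$, the number $\hat t$ lies in the set defining $T^0(y_0)$ in \eqref{y0-controllable}, hence $T^0(y_0)\le\hat t$. Taking the infimum over all such $\hat t$ (equivalently, over all admissible controls) gives $T^0(y_0)\le T(M,y_0)$.

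For the upper bound $T(M,y_0)\le T^1(y_0)$: if $T^1(y_0)=\infty$ there is nothing to prove, so assume $T^1(y_0)<\infty$. For any $\hat t$ with $S(\hat t)y_0=0$ we have, using \eqref{Changshubianyi1.6}, that the null control $u\equiv 0$ (which lies in $\mathcal U^M$ since $M>0$) drives $y_0$ to $0$ at time $\hat t$: indeed $y(\hat t;y_0,0)=S(\hat t)y_0=0$. Hence every such $\hat t$ is admissible in the definition \eqref{TP-0} of $T(M,y_0)$, so $T(M,y_0)\le\hat t$; taking the infimum over all $\hat t$ with $S(\hat t)y_0=0$ yields $T(M,y_0)\le T^1(y_0)$. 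Combining the two bounds finishes the proof.

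I do not anticipate any real obstacle here: the statement is a direct comparison of infima of nested sets of admissible times, and the only two points requiring a word of justification are that the null control belongs to $\mathcal U^M$ (true because $M>0$) and that the null control solution is $S(\cdot)y_0$ (which is \eqref{Changshubianyi1.6} with $u=0$). One should be mildly careful with the degenerate cases $T^0(y_0)=0$ or $T(M,y_0)=0$ (the latter cannot actually occur since $y_0\neq 0$ and $t\mapsto y(t;y_0,u)$ is continuous with $y(0)=y_0$, though this is not even needed for the stated inequalities), and with the convention $T(M,y_0)=\infty$ when there is no admissible control, but these are handled by the case splits above.
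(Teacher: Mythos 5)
Your proof is correct. The first inequality is the same argument as the paper's, just phrased as a set inclusion (admissible times for $(TP)^{M,y_0}$ with controls in $\mathcal U^M\subset L^\infty(\mathbb R^+;U)$ form a subset of the set defining $T^0(y_0)$) rather than as the paper's contradiction; the content is identical. For the second inequality you take a genuinely shorter route: the paper assumes $T^1(y_0)<T(M,y_0)$, invokes (ii) and (v) of Lemma \ref{Lemma-T0-T1} to get $0<T^1(y_0)<\infty$ and $N(T^1(y_0),y_0)=0$, and then extracts an admissible control of norm $<M$ for $(NP)^{T^1(y_0),y_0}$ whose zero extension is admissible for $(TP)^{M,y_0}$; you instead observe that at \emph{any} $\hat t$ with $S(\hat t)y_0=0$ the null control already lies in $\mathcal U^M$ and, by (\ref{Changshubianyi1.6}), steers $y_0$ to $0$, so $T(M,y_0)\le\hat t$, and then you pass to the infimum defining $T^1(y_0)$. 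This bypasses the minimal norm problem and Lemma \ref{Lemma-T0-T1}(v) entirely, needing only $M>0$, $y_0\neq 0$ (to ensure $\hat t>0$, so that $\hat t$ is eligible in (\ref{TP-0})), and the representation of the free solution; it is the more elementary and self-contained of the two arguments, while the paper's version simply reuses machinery it has already established.
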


 \begin{proof}

  Let $y_0\in X\setminus\{0\}$ and  $M\in(0,\infty)$.
  We first show that
  \begin{equation}\label{WANGGENG3.69}
  T(M,y_0)\geq T^0(y_0).
  \end{equation}
   By contradiction,  suppose that
   $
   T(M,y_0)<T^0(y_0)$.
    Then by (\ref{TP-0}), there would be
  $  \hat t\in \big[T(M,y_0),T^0(y_0)\big)$ and $u_1\in \mathcal U^M$
     so that
  $  y(\hat t;y_0,u_1)=0$.
    This contradicts the definition of $T^0(y_0)$ (see (\ref{y0-controllable})). So we have proved (\ref{WANGGENG3.69}).

  We next show that
  \begin{equation}\label{WANGGENG3.70}
  T(M,y_0)\leq T^1(y_0).
  \end{equation}
   By contradiction,  suppose that
 $T^1(y_0)<T(M,y_0)$.
   Then by   (ii) of Lemma \ref{Lemma-T0-T1}, we would have that
  $0<T^1(y_0)<\infty$.
   By this and (\ref{NP-0}), we find that the problem $(NP)^{T^1(y_0),y_0}$ makes sense.
   Since $T^1(y_0)<\infty$,  it follows from (v) of Lemma \ref{Lemma-T0-T1}  that
   $   N(T^1(y_0),y_0)=0$.
         From this and (\ref{NP-0}), we see that there exists a control $v_1$ to $(NP)^{T^1(y_0),y_0}$ so that
  \begin{eqnarray}\label{op-TP-infty-3}
   \hat y(T^1(y_0);y_0,v_1)=0
   \;\;\mbox{and}\;\; \|v_1\|_{L^\infty(0,T^1(y_0);U)}<M.
  \end{eqnarray}
  Let  $\widetilde{v}_1$ be the zero extension of $v_1$ over $\mathbb R^+$.
  Then from (\ref{op-TP-infty-3}), it follows that
  \begin{eqnarray}\label{wanggengsheng3.71}
    y(T^1(y_0);y_0,\widetilde{v}_1)=0
    \;\;\mbox{and}\;\;  \|\widetilde{v}_1\|_{L^\infty(\mathbb{R}^+;U)}<M.
  \end{eqnarray}
  From  (\ref{wanggengsheng3.71}), we see that
      $\widetilde{v}_1$ is an admissible control to $(TP)^{M,y_0}$.
      Then, from the first equation in (\ref{wanggengsheng3.71}) and (\ref{TP-0}), we see  that
      $ T(M,y_0)\leq T^1(y_0)$,
       which leads to a contradiction. Hence,  (\ref{WANGGENG3.70}) is true.

       Finally, by (\ref{WANGGENG3.69}) and (\ref{WANGGENG3.70}), we end
        the proof of Proposition~\ref{Proposition-Range-T}.
 \end{proof}

\bigskip
\section{Existence of minimal time and minimal norm controls}

In this section, we present the existence of minimal time and minimal norm controls
for  $(TP)^{M,y_0}$  and $(NP)^{T,y_0}$, and  the non-existence
of admissible controls for $(TP)^{M,y_0}$  and $(NP)^{T,y_0}$ for all possible cases. These properties play import roles in the proofs of Theorem~\ref{Proposition-NTy0-partition} and Theorem~\ref{Proposition-TMy0-partition}.
We also study the existence of  minimal norm controls
for affiliated minimal norm problems
     $(NP)^{y_T}$, with $y_T\in \mathcal{R}_T$ (given by (\ref{attianable-space-norm}) and  (\ref{attainable-space})). Such existence will be used in the studies of a maximum principle for $(NP)^{y_T}$, with $y_0\in \mathcal{R}_T^0$ (given by (\ref{R0T})). The later is the base of the studies of maximum principles, as well as the bang-bang properties for $(TP)^{M,y_0}$  and $(NP)^{T,y_0}$.
               The first theorem in this section concerns with the existence of
  minimal norm controls to the problem $(NP)^{y_T}$.
\begin{Theorem}\label{wangtheorem4.1}
  Let $T\in (0,\infty)$. The following conclusions are true:

  \noindent (i) For each $y_T\in\mathcal R_T$, $(NP)^{y_T}$ has at least one minimal norm  control.

  \noindent (ii) The null control is the unique minimal norm control to $(NP)^{y_T}$, with $y_T=0$ in $\mathcal R_T$.
\end{Theorem}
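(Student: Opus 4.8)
The plan is to establish (i) by the direct method of the calculus of variations and to dispose of (ii) by a one-line observation. Throughout, $T\in(0,\infty)$ is fixed.

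For (i), fix $y_T\in\mathcal R_T$. By the definition of $\mathcal R_T$ in (\ref{attainable-space}) the set of admissible controls is nonempty, so $\|y_T\|_{\mathcal R_T}=\inf\{\|v\|_{L^\infty(0,T;U)}\;:\;\hat y(T;0,v)=y_T\}<\infty$. First I would choose a minimizing sequence $\{v_n\}\subset L^\infty(0,T;U)$ with $\hat y(T;0,v_n)=y_T$ for every $n$ and $\|v_n\|_{L^\infty(0,T;U)}\to\|y_T\|_{\mathcal R_T}$. This sequence is bounded in $L^\infty(0,T;U)$; letting $\widetilde v_n$ denote the zero extension of $v_n$ to $\mathbb R^+$, the family $\{\widetilde v_n\}$ is bounded in $L^\infty(\mathbb R^+;U)$, so after passing to a subsequence (not relabeled) we get $\widetilde v_n\to v^*$ weakly star in $L^\infty(\mathbb R^+;U)$ for some $v^*\in L^\infty(\mathbb R^+;U)$, and in particular $\widetilde v_n\to v^*$ weakly in $L^2(\mathbb R^+;U)$.

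Next I would apply Lemma \ref{Lemma-t-u-converge} with the constant time sequence $T_n\equiv T$ and $u_n=\widetilde v_n$: it yields $y(T;0,\widetilde v_n)\to y(T;0,v^*)$ weakly in $X$. Since $y(T;0,\widetilde v_n)=\hat y(T;0,v_n)=y_T$ for every $n$ (recall that, by the definition of the solution to (\ref{system-0}), $y(T;0,\widetilde v_n)=\hat y(T;0,\widetilde v_n|_{(0,T]})$), it follows that $\hat y(T;0,v^*|_{(0,T]})=y_T$, so $v^*|_{(0,T]}$ is an admissible control for $(NP)^{y_T}$. By the weak-star lower semicontinuity of the $L^\infty$ norm, $\|v^*\|_{L^\infty(0,T;U)}\le\liminf_{n}\|v_n\|_{L^\infty(0,T;U)}=\|y_T\|_{\mathcal R_T}$; together with the fact that $\|y_T\|_{\mathcal R_T}$ is the infimum over admissible controls (see (\ref{attianable-space-norm})), this forces $\|v^*\|_{L^\infty(0,T;U)}=\|y_T\|_{\mathcal R_T}$. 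Hence $v^*|_{(0,T]}$ is a minimal norm control to $(NP)^{y_T}$, which proves (i).

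For (ii), take $y_T=0$ in $\mathcal R_T$. The null control is admissible for $(NP)^{0}$, since $\hat y(T;0,0)=S(T)0=0$, and therefore $\|0\|_{\mathcal R_T}=0$. Consequently any minimal norm control $v^*$ to $(NP)^{0}$ satisfies $\|v^*\|_{L^\infty(0,T;U)}=0$, i.e.\ $v^*=0$ a.e.\ on $(0,T)$, so the null control is the unique minimal norm control. I expect the only point requiring genuine care to be the passage to the limit in the state constraint in part (i); this is precisely what Lemma \ref{Lemma-t-u-converge} provides, once one extends the controls by zero so that the lemma—stated on $\mathbb R^+$—applies with $T_n\equiv T$. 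Note that neither (H1) nor (H2) is used.
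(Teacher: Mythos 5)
Your proof is correct and follows essentially the same route as the paper: a minimizing sequence, weak-star compactness of bounded sets in $L^\infty$, Lemma \ref{Lemma-t-u-converge} to pass to the limit in the terminal constraint, and weak-star lower semicontinuity of the norm, with (ii) handled by the same one-line observation that $\hat y(T;0,0)=0$ forces the minimal norm to be $0$. The only difference is that you extend the controls by zero to $\mathbb{R}^+$ before invoking Lemma \ref{Lemma-t-u-converge} (with $T_n\equiv T$), which is a harmless technical variant of what the paper does.
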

\begin{proof} Arbitrarily fix a $T\in(0,\infty)$. We are going to show the conclusions (i)-(ii) one by one.

(i) Let $y_T\in\mathcal R_T$ be arbitrarily given.
According to the definitions of the problem $(NP)^{y_T}$ and the subspace $\mathcal R_T$
(see (\ref{attianable-space-norm}) and (\ref{attainable-space})),
 $(NP)^{y_T}$ has at least one admissible control.  Thus
 there is a minimization sequence  $\{v_n\}_{n=1}^\infty\subset L^\infty(0,T;U)$  for $(NP)^{y_T}$ so that
\begin{equation}\label{wanggeng4.1}
\hat y(T;0,v_n)=y_T\;\;\mbox{for all}\;\; n\in \mathbb N^+
\end{equation}
and
\begin{equation}\label{wanggeng4.2}
 \|v_n\|_{L^\infty(0,T; U)}
 \leq \|y_T\|_{\mathcal{R}_T} + 1/n\;\;\mbox{for all}\;\; n\in \mathbb N^+.
\end{equation}
From (\ref{wanggeng4.2}), we find that  there is a subsequence of $\{v_n\}_{n=1}^\infty$, denoted in the same manner, and a control $v_0\in L^\infty(0,T;U)$ so that
\begin{equation}\label{wanggeng4.3}
v_n\rightarrow v_0\;\;\mbox{weakly star in}\;\; L^\infty(0,T;U),\;\;\mbox{as}\;\;n\rightarrow\infty.
\end{equation}
From  (\ref{wanggeng4.3}), Lemma \ref{Lemma-t-u-converge} and (\ref{wanggeng4.1}), we see that
\begin{equation}\label{wanggeng4.4}
\hat y(T;0,v_0)=y_T.
\end{equation}
 This, along with  (\ref{attianable-space-norm}), (\ref{wanggeng4.3}) and
(\ref{wanggeng4.2}), yields  that
\begin{equation*}
\|y_T\|_{\mathcal{R}_T}\leq \|v_0\|_{L^\infty(0,T; U)}
\leq \liminf_{n\rightarrow\infty}
\|v_n\|_{L^\infty(0,T; U)} \leq \|y_T\|_{\mathcal{R}_T},
\end{equation*}
from which, it follows that
\begin{equation}\label{wanggeng4.5}
\|y_T\|_{\mathcal{R}_T}= \|v_0\|_{L^\infty(0,T; U)}.
\end{equation}
By (\ref{wanggeng4.4}) and (\ref{wanggeng4.5}), we find that
$v_0$ is a minimal norm control to $(NP)^{y_T}$. This ends the proof of the conclusion (i).

 (ii) By (\ref{Changshubianyi1.6}), we see that
$\hat y(T;0,0)=0$.
Meanwhile, since $\|\cdot\|_{\mathcal{R}_T}$ is a norm (see (\ref{attianable-space-norm})), we find that
$\|0\|_{\mathcal{R}_T}=0$.
Therefore, we see that when $y_T= 0$, the null control is a minimal norm control to $(NP)^{y_T}$ and that the minimal norm of $(NP)^{y_T}$ is $0$. The latter shows that $(NP)^{y_T}$, with $y_T=0$, has no non-zero minimal norm control. Thus, the null control is the unique minimal norm control to $(NP)^{y_T}$, with $y_T=0$.

In summary, we  complete the proof of this theorem.
\end{proof}

We now present the following lemma which will be used in the studies on the existence
of minimal norm controls to  $(NP)^{T,y_0}$ and minimal time controls   to  $(TP)^{M,y_0}$.

\begin{Lemma}\label{wanglemma4.1}
Let $y_0\in X\setminus\{0\}$, $T\in (0,\infty)$ and $M\in (0,\infty)$.
Then the following conclusions are true:

\noindent (i) If $(NP)^{T,y_0}$ has an admissible control, then it has at least one minimal norm control.

\noindent (ii) If $(TP)^{M,y_0}$ has an admissible control, then it has at least one minimal time control.

\noindent (iii) If $N(T,y_0)<\infty$, then $(NP)^{T,y_0}$ has at least one minimal norm control.

\noindent (iv) If  $N(T,y_0)=0$, then the null control is the unique minimal norm control to $(NP)^{T,y_0}$.

\noindent (v) If $N(T,y_0)=\infty$, then $(NP)^{T,y_0}$ has no any admissible control.

\end{Lemma}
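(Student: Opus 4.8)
The five assertions split naturally: items (i) and (ii) are the substantive ones, and I would establish both by the direct method (weak-star compactness), with Lemma~\ref{Lemma-t-u-converge} as the only nontrivial input; items (iii)--(v) I would then read off from (i) and from the convention that an infimum over the empty set equals $+\infty$.

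For (i), the plan is: if $(NP)^{T,y_0}$ has an admissible control then $N(T,y_0)<\infty$, so pick a minimizing sequence $\{v_n\}\subset L^\infty(0,T;U)$ with $\hat y(T;y_0,v_n)=0$ and $\|v_n\|_{L^\infty(0,T;U)}\le N(T,y_0)+1/n$. Zero-extending each $v_n$ to $\mathbb R^+$ produces a sequence that is bounded in $L^\infty(\mathbb R^+;U)$ and supported in the fixed interval $(0,T)$, hence bounded in $L^2(\mathbb R^+;U)$; along a subsequence it converges weakly star in $L^\infty(\mathbb R^+;U)$, hence weakly in $L^2(\mathbb R^+;U)$, to some $\widetilde v_0$. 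Lemma~\ref{Lemma-t-u-converge} with $T_n\equiv T$ then gives $\hat y(T;y_0,v_0)=0$ for $v_0\triangleq\widetilde v_0|_{(0,T)}$, and weak-star lower semicontinuity of the $L^\infty$-norm gives $\|v_0\|_{L^\infty(0,T;U)}\le\liminf_n\|v_n\|_{L^\infty(0,T;U)}\le N(T,y_0)$, so $v_0$ is optimal. A cleaner alternative, which I might prefer, is to note that an admissible control forces $-S(T)y_0=\hat y(T;0,v)\in\mathcal R_T$ and then invoke Theorem~\ref{wangtheorem4.1}(i) together with Proposition~\ref{Lemma-NP-yT-y0-eq}(iii).

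For (ii), I would run the same scheme with time as the minimized quantity: choose numbers $T_n$ with $T(M,y_0)\le T_n\le T(M,y_0)+1/n$ and controls $u_n\in\mathcal U^M$ with $y(T_n;y_0,u_n)=0$. Here one extra step is needed first: since $T_n\le T(M,y_0)+1$ for $n$ large, I replace $u_n$ by $\chi_{(0,T(M,y_0)+1)}u_n$, which keeps $u_n\in\mathcal U^M$ and leaves $y(T_n;y_0,u_n)$ unchanged, but now makes $\{u_n\}$ bounded in $L^2(\mathbb R^+;U)$ (it is bounded in $L^\infty$ but a priori not in $L^2$ over all of $\mathbb R^+$). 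A weak-star subsequential limit $u_0$ then lies in $\mathcal U^M$, and Lemma~\ref{Lemma-t-u-converge} (using $T_n\to T(M,y_0)$ and weak $L^2$ convergence) gives $y(T(M,y_0);y_0,u_0)=0$; after zeroing $u_0$ on $(T(M,y_0),\infty)$ this is a minimal time control. The same compactness also shows in passing that $T(M,y_0)>0$, since $T(M,y_0)=0$ would force $y_0=y(0;y_0,u_0)=0$, contradicting $y_0\neq0$.

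For the remaining items: (iii) is just (i), because $N(T,y_0)<\infty$ exactly says the admissible set of $(NP)^{T,y_0}$ is nonempty; (iv) then follows since (iii) yields a minimal norm control of zero $L^\infty$-norm, i.e.\ the null control, which is therefore admissible (so in particular $S(T)y_0=0$ via (\ref{Changshubianyi1.6})), optimal, and --- any other minimal norm control also having zero norm --- the unique minimizer; and (v) is immediate from (\ref{NP-0}), since an admissible control $v$ would give $N(T,y_0)\le\|v\|_{L^\infty(0,T;U)}<\infty$. I do not expect any real obstacle; the only point that needs attention is the bookkeeping with zero extensions and the truncation in (ii), so that Lemma~\ref{Lemma-t-u-converge} is applied legitimately on $\mathbb R^+$.
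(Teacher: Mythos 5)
Your proposal is correct and takes essentially the same route as the paper's proof: minimizing sequences plus weak-star compactness in $L^\infty$, with Lemma~\ref{Lemma-t-u-converge} providing $\hat y(T;y_0,v_0)=0$ (resp. $y(T(M,y_0);y_0,u_0)=0$) for (i)--(ii), and (iii)--(v) read off directly from the definition of $N(T,y_0)$ and the empty-admissible-set convention. The only deviation is bookkeeping: your truncation of $u_n$ to $(0,T(M,y_0)+1)$ in (ii), so that Lemma~\ref{Lemma-t-u-converge} is applied legitimately with weak $L^2(\mathbb R^+;U)$ convergence, is a harmless (indeed slightly more careful) variant of the paper's argument, which passes to the weak-star limit in $L^\infty(\mathbb R^+;U)$ without this step.
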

\begin{proof}
(i) Suppose that $(NP)^{T,y_0}$ has an admissible control. Then it has a minimization sequence $\{v_n\}_{n=1}^\infty \subset L^\infty(0,T;U)$ so that
\begin{equation}\label{Wang4.1}
\hat y(T;y_0,v_n)=0\;\;\mbox{for all}\;\; n\in \mathbb{N}^+
\end{equation}
and
\begin{equation}\label{Wang4.2}
  \|v_n\|_{L^\infty(0,T;U)}\leq N(T,y_0)+1/n\;\;\mbox{for all}\;\; n\in \mathbb{N}^+.
\end{equation}
By (\ref{Wang4.2}), we see that there is a subsequence of $\{v_n\}_{n=1}^\infty$,
denoted in the same manner,  and a control $v_0\in L^\infty(0,T;U)$ so that
\begin{equation}\label{Wang4.3}
v_n\rightarrow v_0\;\;\mbox{weakly star in}\;\; L^\infty(0,T;U),
\;\;\mbox{as}\;\;  n\rightarrow\infty.
\end{equation}
From  (\ref{Wang4.3}), Lemma~\ref{Lemma-t-u-converge} and (\ref{Wang4.1}), we find that \begin{equation}\label{Wang4.4}
\hat y(T;y_0,v_0)=0.
\end{equation}
This, together with (\ref{NP-0}), (\ref{Wang4.3}) and (\ref{Wang4.2}), yields that
\begin{equation*}
N(T,y_0)\leq\|v_0\|_{L^\infty(0,T;U)}\leq \liminf_{n\rightarrow\infty}\|v_n\|_{L^\infty(0,T;U)}\leq N(T,y_0).
\end{equation*}
Hence, we have that
\begin{equation}\label{Wang4.5}
N(T,y_0)=\|v_0\|_{L^\infty(0,T;U)}.
\end{equation}
By (\ref{Wang4.4}) and (\ref{Wang4.5}), we find that $v_0$ is a minimal norm control to $(NP)^{T,y_0}$.

(ii) Suppose that $(TP)^{M,y_0}$ has an admissible control. Then there are two sequences $\{u_n\}_{n=1}^\infty \subset L^\infty(\mathbb{R}^+;U)$ and $\{T_n\}_{n=1}^\infty \subset \mathbb{R}^+$ so that
\begin{equation}\label{Wang4.6}
y(T_n;y_0,u_n)=0\;\;\mbox{for all}\;\; n\in \mathbb{N}^+,
\end{equation}
\begin{equation}\label{Wang4.7}
T_n\searrow T(M,y_0),\;\;\mbox{as}\;\; n\rightarrow\infty
\end{equation}
and
\begin{equation}\label{Wang4.8}
\|u_n\|_{L^\infty(\mathbb{R}^+;U)}\leq M\;\;\mbox{for all}\;\; n\in \mathbb{N}^+.
\end{equation}
By (\ref{Wang4.8}), we see that there are a subsequence of  $\{u_n\}_{n=1}^\infty$, still denoted in the same manner, and an $u_0\in L^\infty(\mathbb{R}^+;U)$ so that
\begin{equation}\label{Wang4.9}
u_n\rightarrow u_0\;\;\mbox{weakly star in}\;\; L^\infty(\mathbb{R}^+;U),
\;\;\mbox{as}\;\;   n\rightarrow\infty.
\end{equation}
From  (\ref{Wang4.7}), (\ref{Wang4.9}), Lemma~\ref{Lemma-t-u-converge} and (\ref{Wang4.6}), it follows  that
\begin{equation}\label{Wang4.10}
y\big(T(M,y_0);y_0,u_0\big)=0.
\end{equation}
Meanwhile, it follows from  (\ref{Wang4.9}) and (\ref{Wang4.8})  that
\begin{equation}\label{Wang4.11}
\|u_0\|_{L^\infty(\mathbb{R}^+;U)}\leq \liminf_{n\rightarrow\infty}\|u_n\|_{L^\infty(\mathbb{R}^+;U)}\leq M.
\end{equation}
By (\ref{Wang4.10}) and (\ref{Wang4.11}), we see that $u_0$ is a minimal time control to $(TP)^{M,y_0}$.

(iii) Suppose that $N(T,y_0)<\infty$. Then it follows by (\ref{NP-0}) that
$(NP)^{T,y_0}$ has an admissible control. Thus, by (i) of this lemma, we find that
$(NP)^{T,y_0}$ has at least one minimal norm control.

(iv) Suppose that $N(T,y_0)=0$. On one hand, by the conclusion (iii) in this lemma, we see that $(NP)^{T,y_0}$ has at least one minimal norm control. On the other hand, if $v^*$ is a minimal norm control to $(NP)^{T,y_0}$, then we have that
$$
\|v^*\|_{L^\infty(0,T;U)}=N(T,y_0)=0,
$$
which yields that $v^*=0$. Hence, the null control is the unique minimal norm control to $(NP)^{T,y_0}$.

\noindent (v) Assume that $N(T,y_0)=\infty$. By contradiction, suppose that  $(NP)^{T,y_0}$ had an admissible control $v^*\in L^\infty(0,T;U)$. Then, by (\ref{NP-0}), we would have that
$$
\infty =N(T,y_0)\leq \|v^*\|_{L^\infty(0,T;U)}<\infty.
$$
This leads to a contradiction. Hence, $(NP)^{T,y_0}$ has no any admissible control.

In summary, we finish the proof of this lemma.
\end{proof}

The next theorem concerns with the existence of   minimal norm controls to the problem $(NP)^{T,y_0}$, in the case when   $T^0(y_0)<\infty$.

 \begin{Theorem}\label{Lemma-existence-optimal-control}
 Let  $y_0\in X\setminus\{0\}$ satisfy that $T^0(y_0)<\infty$.
  Then the following conclusions are true:

 \noindent(i)  If $T^0(y_0)<T<\infty$,
  then $(NP)^{T,y_0}$  has at least one minimal norm control.

 \noindent(ii)  If $T^0(y_0)>0$ and $0<T<T^0(y_0)$,
  then
  $(NP)^{T,y_0}$ has no any admissible control.

 \noindent(iii) If $0<N(T^0(y_0),y_0)<\infty$,
  then
 \begin{equation}\label{huangling4.20}
T^0(y_0)>0
 \end{equation}
 and  $(NP)^{T^0(y_0),y_0}$ has at least one minimal norm control.

 \noindent (iv) If $N(T^0(y_0),y_0)=0$,
  then (\ref{huangling4.20}) holds and
   the null control is the unique minimal norm control to the problem $(NP)^{T^0(y_0),y_0}$.

 \noindent(v) If $N(T^0(y_0),y_0)=\infty$ and $T^0(y_0)>0$,
  then
  $(NP)^{T^0(y_0),y_0}$ has no any admissible control.

  \noindent (vi) If
 $T^0(y_0)=0$, then the problem  $(NP)^{T^0(y_0),y_0}$ does not make sense.

 \end{Theorem}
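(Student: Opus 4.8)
The plan is to reduce all six assertions to the general existence and non-existence results assembled in Lemma~\ref{wanglemma4.1}, after first recording two elementary facts about $T^0(y_0)$. The first is that every time $\hat t$ belonging to the set on the right hand side of (\ref{y0-controllable}) is strictly positive: since $y(0;y_0,u)=y_0\neq 0$ for every control $u$ by (\ref{Changshubianyi1.6}), one cannot have $y(\hat t;y_0,u)=0$ at $\hat t=0$. The second is that $N(T^0(y_0),y_0)<\infty$ forces $T^0(y_0)>0$: by (iv) of Lemma~\ref{Lemma-T0-T1} we have $N(0,y_0)=\infty$, so $T^0(y_0)=0$ would give $N(T^0(y_0),y_0)=N(0,y_0)=\infty$. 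Since $T^0(y_0)<\infty$ is standing, the set in (\ref{y0-controllable}) is nonempty throughout.

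For (i), I would use that $T^0(y_0)<T$ means the infimum in (\ref{y0-controllable}) lies strictly below $T$, so there is a time $\hat t$ in that set with $\hat t<T$, and by the first fact $\hat t\in(0,T)$. The corresponding control, restricted to $(0,\hat t)$, is admissible for $(NP)^{\hat t,y_0}$, hence $N(\hat t,y_0)<\infty$; then the monotonicity of $N(\cdot,y_0)$ in (i) of Lemma~\ref{Lemma-NP-decreasing} gives $N(T,y_0)\leq N(\hat t,y_0)<\infty$, and (iii) of Lemma~\ref{wanglemma4.1} produces a minimal norm control. (Alternatively one extends the control by $0$ past $\hat t$ and checks directly, via (\ref{Changshubianyi1.6}) and the cocycle identity for solutions, that the extension is admissible for $(NP)^{T,y_0}$.) For (ii), an admissible control for $(NP)^{T,y_0}$ with $0<T<T^0(y_0)$, extended by zero over $\mathbb R^+$, would steer $y_0$ to $0$ at time $T$ by (\ref{Changshubianyi1.6}), placing $T$ in the set of (\ref{y0-controllable}) and forcing $T^0(y_0)\leq T$, a contradiction; so no such control exists.

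For (iii)--(v) the argument is uniform: the hypotheses of (iii) and (iv) give $N(T^0(y_0),y_0)<\infty$, hence $T^0(y_0)>0$ by the second fact above, which together with $T^0(y_0)<\infty$ yields $T^0(y_0)\in(0,\infty)$, so $(NP)^{T^0(y_0),y_0}$ is a legitimate problem; then (iii) is (iii) of Lemma~\ref{wanglemma4.1} at $T=T^0(y_0)$, and (iv) is (iv) of Lemma~\ref{wanglemma4.1} at $T=T^0(y_0)$. For (v) the hypothesis already contains $T^0(y_0)>0$, so $T^0(y_0)\in(0,\infty)$, and (v) of Lemma~\ref{wanglemma4.1} at $T=T^0(y_0)$ gives the non-existence of admissible controls. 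Finally (vi) is immediate, since $(NP)^{T,y_0}$ is defined only for $T\in(0,\infty)$ by (\ref{NP-0}), so the expression $(NP)^{T^0(y_0),y_0}$ is meaningless when $T^0(y_0)=0$. The only step needing genuine attention is (i): $T^0(y_0)$ is an infimum, not an attained minimum, so I must secure a witnessing time strictly inside $(0,T)$, but beyond that bookkeeping I anticipate no real obstacle.
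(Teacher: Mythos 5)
Your proposal is correct and follows essentially the same route as the paper: parts (ii)--(vi) are handled exactly as in the text (reduction to Lemma~\ref{wanglemma4.1} at $T=T^0(y_0)$, with $T^0(y_0)>0$ deduced from $N(0,y_0)=\infty$ in (iv) of Lemma~\ref{Lemma-T0-T1}), and for (i) your main route via monotonicity of $N(\cdot,y_0)$ plus (iii) of Lemma~\ref{wanglemma4.1} is only a trivial variant of the paper's argument, which is precisely your parenthetical alternative (extend the witnessing control by zero and invoke (i) of Lemma~\ref{wanglemma4.1}). No gaps.
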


 \begin{proof} Arbitrarily fix a $y_0\in X\setminus\{0\}$ so that
 \begin{equation}\label{huangling4.23}
T^0(y_0)<\infty.
 \end{equation}

     (i) Suppose that
     \begin{equation}\label{huangling4.17}
 T^0(y_0)<T<\infty.
 \end{equation}
               Then by (\ref{y0-controllable}) and (\ref{huangling4.17}),   there are a  $\hat t\in\big(T^0(y_0),T\big)$ and an $\hat u\in L^\infty(0,\hat t;U)$ so that
 \begin{eqnarray}\label{1009-1}
  \hat y(\hat t;y_0,\hat u)=0.
 \end{eqnarray}
 Extend $\hat u$ over $(0,T)$ by setting it to be zero over $[\hat t,T)$. Denote the extension in the same manner. Then we  see from (\ref{1009-1}) that
  $\hat y(T;y_0,\hat u)=0$,
       from which, it follows that   $\hat u$ is an admissible control to $(NP)^{T,y_0}$.
   This, along with  (i) of Lemma~\ref{wanglemma4.1}, yields  that
   $(NP)^{T,y_0}$ has at least one minimal norm control.

 (ii)  Suppose that
 \begin{equation}\label{huangling4.18}
 T^0(y_0)>0\;\;\mbox{and}\;\; 0<T<T^0(y_0).
 \end{equation}
  Then  it follows from (\ref{y0-controllable}) and (\ref{huangling4.18}) that $(NP)^{T,y_0}$ has no any admissible control.

  (iii) Assume that
  \begin{equation}\label{huangling4.19}
0<N(T^0(y_0),y_0)<\infty.
 \end{equation}
     We first show (\ref{huangling4.20}).
 By contradiction, suppose that (\ref{huangling4.20}) were not true. Then we would have that $T^0(y_0)=0$. This, along with (iv) in Lemma~\ref{Lemma-T0-T1}, yields that
 $$
 N(T^0(y_0),y_0)=N(0,y_0)=\infty,
 $$
  which contradicts
(\ref{huangling4.19}). Hence, we have proved (\ref{huangling4.20}).
 Next, it follows from  (\ref{huangling4.20}) and (\ref{huangling4.23}) that
 $0<T^0(y_0)<\infty$.
   This shows that  the problem $(NP)^{T^0(y_0),y_0}$ makes sense.
  (Notice that in the definition of $(NP)^{T,y_0}$, it is required that  $0<T<\infty$, see (\ref{NP-0}).)
  Finally, by (\ref{huangling4.19}), we can apply
    (iii) of Lemma~\ref{wanglemma4.1} to find that $(NP)^{T^0(y_0),y_0}$ has  at least one minimal norm control.

  (iv) Suppose that
  \begin{equation}\label{huangling4.21}
N(T^0(y_0),y_0)=0.
 \end{equation}
     We first show that (\ref{huangling4.20}) stands in this case. By contradiction, suppose that
  (\ref{huangling4.20}) were not true.
  Then we would have that $T^0(y_0)=0$. This, together with
  (iv) in Lemma~\ref{Lemma-T0-T1}, indicates that
 $$
 N(T^0(y_0),y_0)=N(0,y_0)=\infty,
 $$
 which contradicts (\ref{huangling4.21}). So
   (\ref{huangling4.20}) in this case.
Next, by (\ref{huangling4.20}) and (\ref{huangling4.23}), we see that
 $ 0<T^0(y_0)<\infty$.
  Hence, the problem  $(NP)^{T^0(y_0),y_0}$ makes sense.
Finally, by (\ref{huangling4.21}), we can apply
  (iv) of Lemma~\ref{wanglemma4.1} to find that  the null control is the unique minimal norm control to
 $(NP)^{T^0(y_0),y_0}$.

 (v) Suppose that
 \begin{equation}\label{huangling4.22}
N(T^0(y_0),y_0)=\infty\;\;\mbox{and}\;\;T^0(y_0)>0.
 \end{equation}
  Then it follows from the second inequality in (\ref{huangling4.22}) and (\ref{huangling4.23}) that
  $ 0<T^0(y_0)<\infty$.
         Hence, the problem $(NP)^{T^0(y_0),y_0}$ makes sense.
   Finally, by the first equality in   (\ref{huangling4.22}),
   we can apply (v) of Lemma~\ref{wanglemma4.1} to find that
              $(NP)^{T^0(y_0),y_0}$ has no any admissible
control.

 (vi) Suppose that  $T^0(y_0)=0$. Then
the problem  $(NP)^{T^0(y_0),y_0}$ does not make sense, since in the definition of
$(NP)^{T,y_0}$, it is required that $T\in (0,\infty)$ (see (\ref{NP-0})).

 In summary, we finish the proof of Theorem~\ref{Lemma-existence-optimal-control}.
  \end{proof}

 The following theorem concerns with the  existence of minimal time controls to $(TP)^{M,y_0}$ and   minimal norm controls  to $(NP)^{T,y_0}$, in the case that  $T^0(y_0)=\infty$.
 \begin{Theorem}\label{Theorem-ex-T0y0-infty}
  Let $y_0\in X\setminus\{0\}$ satisfy that $T^0(y_0)=\infty$.
  Then the following conclusions are true:

  \noindent (i) For each $M\in(0,\infty)$, $(TP)^{M,y_0}$
  does not have any admissible control.

  \noindent (ii) For each $T\in(0,\infty)$,
 $(NP)^{T,y_0}$ does not have any admissible control.
 \end{Theorem}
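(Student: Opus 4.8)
The plan is to reduce both statements directly to the defining property of $T^0(y_0)=\infty$. First I would record the elementary observation that, since the set appearing on the right-hand side of (\ref{y0-controllable}) is a subset of $[0,\infty)$, its infimum equals $\infty$ precisely when that set is empty; hence the hypothesis $T^0(y_0)=\infty$ is equivalent to
\begin{eqnarray}\label{ex-T0-star}
 y(\hat t;y_0,u)\neq 0\quad\mbox{for all}\;\;\hat t\in[0,\infty)\;\;\mbox{and all}\;\;u\in L^\infty(\mathbb R^+;U).
\end{eqnarray}

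For conclusion (i) I would argue by contradiction: suppose that for some $M\in(0,\infty)$ the problem $(TP)^{M,y_0}$ had an admissible control $\hat u$. By the definition of admissible controls for $(TP)^{M,y_0}$ (the same as in $(\mathcal{TP})^{M,y_0}$), we have $\hat u\in\mathcal U^M$ and $y(\hat t;y_0,\hat u)=0$ for some $\hat t\in(0,\infty)$. Since $\mathcal U^M\subset L^\infty(\mathbb R^+;U)$, this contradicts (\ref{ex-T0-star}). Hence $(TP)^{M,y_0}$ has no admissible control, and in particular no minimal time control, so by the convention adopted after (\ref{TP-0}) it does not hold the bang-bang property.

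For conclusion (ii) I would likewise argue by contradiction. Suppose that for some $T\in(0,\infty)$ the problem $(NP)^{T,y_0}$ had an admissible control $\hat v\in L^\infty(0,T;U)$, so that $\hat y(T;y_0,\hat v)=0$. Let $\widetilde v\in L^\infty(\mathbb R^+;U)$ be the extension of $\hat v$ by zero to $\mathbb R^+$. By the definition of the solution of (\ref{system-0}) — namely that $y(\cdot;y_0,\widetilde v)|_{[0,T]}$ is the solution of (\ref{system-1}) with control $\widetilde v|_{(0,T]}=\hat v$ — we get $y(T;y_0,\widetilde v)=\hat y(T;y_0,\hat v)=0$, contradicting (\ref{ex-T0-star}). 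Alternatively, one may invoke Lemma~\ref{Lemma-NT0-T0-T1}(ii) to obtain $N(T^0(y_0),y_0)=N(\infty,y_0)=\infty$; since $N(\cdot,y_0)$ is decreasing on $[0,\infty]$ by Lemma~\ref{Lemma-NP-decreasing}, this forces $N(T,y_0)=\infty$ for every $T\in(0,\infty)$, and then Lemma~\ref{wanglemma4.1}(v) yields that $(NP)^{T,y_0}$ has no admissible control, hence no minimal norm control and, by the convention after (\ref{NP-0}), no bang-bang property.

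There is no serious obstacle here; the only points requiring care are matching the convention ``$T^0(y_0)=\infty$ when the set in (\ref{y0-controllable}) is empty'' with the reformulation (\ref{ex-T0-star}), and the routine identification of the solution of (\ref{system-0}) restricted to $[0,T]$ with the solution of (\ref{system-1}) under zero-extension of the control.
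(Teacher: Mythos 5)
Your proposal is correct and follows essentially the same route as the paper: the paper likewise translates $T^0(y_0)=\infty$ into the statement that no admissible control can steer $y_0$ to zero at any finite time (its (\ref{Th3.7-1}), phrased with finite-horizon controls rather than your infinite-horizon version plus zero extension, which is a cosmetic difference) and then derives both (i) and (ii) by the same contradictions. Your alternative argument for (ii) via Lemma~\ref{Lemma-NT0-T0-T1}(ii), Lemma~\ref{Lemma-NP-decreasing} and Lemma~\ref{wanglemma4.1}(v) is also valid but unnecessary.
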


 \begin{proof}
 Arbitrarily fix a $y_0\in X\setminus\{0\}$ so that  $T^0(y_0)=\infty$. First of all, since $T^0(y_0)=\infty$, it follows from (\ref{y0-controllable}) that for each $T\in(0,\infty)$,
 \begin{eqnarray}\label{Th3.7-1}
 \hat y(T;y_0,u)\neq 0 \;\;\mbox{for all}\;\;  u\in L^\infty(0,T; U).
 \end{eqnarray}
We next show the conclusions (i)-(ii) one by one.

   (i)    By contradiction, suppose  that for some $\hat M\in(0,\infty)$,  $(TP)^{\hat M,y_0}$ had an admissible control $\hat u$. Then we would have that
  $y(\hat t;y_0,\hat u)=0$ for some $\hat t\in(0,\infty)$,
  which contradicts (\ref{Th3.7-1}). So for each $M\in(0,\infty)$, $(TP)^{M,y_0}$ has no any admissible control.

     (ii)  By contradiction, we suppose that for some $\hat T\in(0,\infty)$, $(NP)^{\hat T,y_0}$ had an admissible control $\hat v$. Then we would have that  $\hat y(\hat T;y_0,\hat v)=0$, which contradicts (\ref{Th3.7-1}). So  for each $T\in(0,\infty)$, $(NP)^{T,y_0}$ has no any admissible control.

  Thus we complete the proof of this theorem.
 \end{proof}

 The following theorem concerns with  the existence of minimal time controls to $(TP)^{M,y_0}$, in the case when $T^0(y_0)<\infty$.

 \begin{Theorem}\label{Theorem-ex-op-TP}
 Let $y_0\in X\setminus\{0\}$ satisfy that
  $T^0(y_0)<\infty$. Then it holds that
  \begin{eqnarray}\label{jiajinting4.26}
  N(T^1(y_0),y_0)<\infty.
 \end{eqnarray}
 Furthermore,
 the following conclusions are true:
 \vskip 2pt
 \noindent (i) If $N(T^1(y_0),y_0)<M<\infty$,
  then
 $(TP)^{M,y_0}$ has at least one minimal time control.
 \vskip 2pt
  \noindent (ii) If $ N(T^1(y_0),y_0)>0$ and  $0<M<N(T^1(y_0),y_0)$,
    then
  $(TP)^{M,y_0}$ has no any admissible control.
 \vskip 2pt
 \noindent (iii) Suppose that (H1) holds. If $M_0\triangleq N(T^1(y_0),y_0)>0$,
     then $(TP)^{M_0,y_0}$ has no any admissible control.

 \end{Theorem}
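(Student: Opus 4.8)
The first assertion $N(T^1(y_0),y_0)<\infty$ is nothing but conclusion (v) of Lemma~\ref{Lemma-NT0-T0-T1}, so there is nothing to do for it. For the three numbered conclusions, the guiding idea is to trade everything against the monotonicity of $N(\cdot,y_0)$ (Lemma~\ref{Lemma-NP-decreasing}), the identity $N(T^1(y_0),y_0)=N(\infty,y_0)$ of (vi) of Lemma~\ref{Lemma-T0-T1}, and the existence statements of Lemma~\ref{wanglemma4.1}.

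For (i) I would argue as follows. Since $N(\infty,y_0)=N(T^1(y_0),y_0)<M$ and $N(\infty,y_0)=\lim_{t\to\infty}N(t,y_0)$ by (\ref{N-infty-y0}), there is a finite $T$ with $N(T,y_0)<M$. By the definition of $N(T,y_0)$ as an infimum (see (\ref{NP-0})), pick an admissible control $v$ to $(NP)^{T,y_0}$ with $\|v\|_{L^\infty(0,T;U)}<M$; its zero extension $\widetilde v$ belongs to $\mathcal U^M$ and satisfies $y(T;y_0,\widetilde v)=0$, so $(TP)^{M,y_0}$ has an admissible control. Then (ii) of Lemma~\ref{wanglemma4.1} produces a minimal time control.

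For (ii), first note that the hypothesis $N(T^1(y_0),y_0)>0$ forces $T^1(y_0)=\infty$, for otherwise (v) of Lemma~\ref{Lemma-T0-T1} would give $N(T^1(y_0),y_0)=0$; hence $N(T^1(y_0),y_0)=N(\infty,y_0)$. If $(TP)^{M,y_0}$ had an admissible control $u$, say $y(\hat t;y_0,u)=0$ with $\hat t\in(0,\infty)$ and $\|u\|_{L^\infty(\mathbb R^+;U)}\le M$, then $u|_{(0,\hat t)}$ would be admissible to $(NP)^{\hat t,y_0}$, so $N(\hat t,y_0)\le M<N(\infty,y_0)$; but the monotonicity of $N(\cdot,y_0)$ gives $N(\hat t,y_0)\ge N(\infty,y_0)$, a contradiction.

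For (iii) --- the subtle borderline case --- again $M_0>0$ forces $T^1(y_0)=\infty$, so $M_0=N(\infty,y_0)$ and $T^0(y_0)<T^1(y_0)=\infty$. Suppose toward a contradiction that $(TP)^{M_0,y_0}$ has an admissible control; by (ii) of Lemma~\ref{wanglemma4.1} it then has a minimal time control $u^*$ with $y(T(M_0,y_0);y_0,u^*)=0$, and $T(M_0,y_0)\in[T^0(y_0),\infty)$ is finite by Proposition~\ref{Proposition-Range-T}. As in (ii), $N(T(M_0,y_0),y_0)\le M_0$, while monotonicity forces $N(T(M_0,y_0),y_0)\ge N(\infty,y_0)=M_0$; hence $N(\cdot,y_0)\equiv M_0$ on $[T(M_0,y_0),\infty)$. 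But (H1) together with $T^0(y_0)<T^1(y_0)$ lets us invoke (i) of Proposition~\ref{wang-prop3.3}, according to which $N(\cdot,y_0)$ is strictly decreasing on $(T^0(y_0),\infty)$ --- which contains a nontrivial subinterval of $[T(M_0,y_0),\infty)$ --- and this is the contradiction. I expect this last step to be the main obstacle: the value $M_0$ sits exactly at the endpoint of the range of $N(\cdot,y_0)$ over $(T^0(y_0),T^1(y_0))$, so Corollary~\ref{wangcorollary3.8} cannot be applied directly, and the strict monotonicity of $N(\cdot,y_0)$ granted by (H1) via Proposition~\ref{wang-prop3.3} is precisely what is needed to exclude an admissible control at the critical norm.
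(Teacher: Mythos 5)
Your proof is correct and follows essentially the same route as the paper's: the same ingredients --- (v)/(vi) of Lemma \ref{Lemma-T0-T1}, the monotonicity of $N(\cdot,y_0)$, (ii) of Lemma \ref{wanglemma4.1}, and, for (iii), the strict monotonicity from (i) of Proposition \ref{wang-prop3.3} under (H1) --- are combined in the same way, with only cosmetic reorganizations (in (i) you use the infimum definition instead of Theorem \ref{Lemma-existence-optimal-control}, in (ii) you argue from an arbitrary admissible control rather than the minimal time control, and in (iii) you deduce $T^1(y_0)=\infty$ up front instead of inside a nested contradiction). One tiny slip: in (iii) the finiteness of $T(M_0,y_0)$ comes from the mere existence of an admissible control, not from Proposition \ref{Proposition-Range-T} (which only gives $T(M_0,y_0)\le T^1(y_0)=\infty$ here), but this is immediate and does not affect the argument.
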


 \begin{proof}
 Arbitrarily fix a $y_0\in X\setminus\{0\}$ so that
 $T^0(y_0)<\infty$.
    Then (\ref{jiajinting4.26}) follows from (v) of Lemma~\ref{Lemma-NT0-T0-T1}.
 Next, we are going to show conclusions (i)-(iii) one by one.

    (i) Let $M\in (N(T^1(y_0),y_0), \infty)$.
      Then by   (vi) of Lemma \ref{Lemma-T0-T1}, we see that
   \begin{eqnarray}\label{jiajinting4.34}
  \infty > M>N(T^1(y_0),y_0)=N(\infty,y_0).
 \end{eqnarray}
   Since $T^0(y_0)<\infty$, it follows from (\ref{jiajinting4.34})
    and the first equality in (\ref{N-infty-y0}) that there is a number $T_1$ so that
  \begin{eqnarray}\label{jiajinting4.35}
  T^0(y_0)<T_1<\infty
  \;\;\mbox{and}\;\;  N(T_1,y_0)<M<\infty.
 \end{eqnarray}
 By the first conclusion in (\ref{jiajinting4.35}), we can apply
  (i) of Theorem \ref{Lemma-existence-optimal-control} to find that   $(NP)^{T_1,y_0}$ has a minimal norm control $v^*$. Hence we have that
  \begin{eqnarray}\label{jiajinting4.36}
 \hat y(T_1;y_0,v^*)=0\;\;\mbox{and}\;\; \|v^*\|_{L^\infty(0, T_1;U)}=N(T_1,y_0).
 \end{eqnarray}
  Write  $\widetilde v^*$ for the zero extension of $v^*$ over $\mathbb{R}^+$. Then it follows from (\ref{jiajinting4.36}) and (\ref{jiajinting4.35}) that
 $$
 y(T_1; y_0, \widetilde v^*)=0
 \;\;\mbox{and}\;\;
 \| \widetilde v^*\|_{L^\infty(\mathbb{R}^+;U)}=N(T_1,y_0)<M<\infty.
 $$
These imply that $\widetilde v^*$ is an admissible control to
 $(TP)^{M,y_0}$.
 Then by (ii) of Lemma~\ref{wanglemma4.1}, we find that  $(TP)^{M,y_0}$ has at least one
 minimal time control.

   (ii)  Assume that
   \begin{eqnarray}\label{jiajinting4.28}
 N(T^1(y_0),y_0)>0\;\;\mbox{and}\;\; 0<M<N(T^1(y_0),y_0).
 \end{eqnarray}
      We aim to show that
   $(TP)^{M,y_0}$ has no any admissible control. By contradiction, suppose that
   $(TP)^{M,y_0}$ had an admissible control. Then according to
    (ii) of Lemma~\ref{wanglemma4.1},
         $(TP)^{ M,y_0}$
 would have a   minimal time control  $u^*_1$. Hence, it holds that
 \begin{equation}\label{jiajinting4.37}
 T( M,y_0)<\infty,\;\; \|u^*_1\|_{L^\infty(\mathbb{R}^+;U)}\leq  M
 \;\;\mbox{ and}\;\;y(T(M,y_0);y_0,u^*_1)=0.
 \end{equation}
   Since $y_0\in X\setminus\{0\}$, from the third and the first conclusions in (\ref{jiajinting4.37}), we see that
   \begin{eqnarray}\label{1210-timeestimate-1}
    0< T( M,y_0)<\infty.
   \end{eqnarray}
     Write $\hat u^*_1$ for the restriction of $u^*_1$  over $\big(0,T( M,y_0)\big)$. Then it follows from (\ref{jiajinting4.37}) that
  \begin{equation}\label{jiajinting4.38}
  \|\hat u^*_1\|_{L^\infty(0,T(M,y_0); U)} \leq  M
  \end{equation}
  and
  \begin{eqnarray}\label{jiajinting4.39}
    \hat y(T(M,y_0);y_0, \hat u^*_1)=0.
  \end{eqnarray}
  By (\ref{1210-timeestimate-1}), the problem $(NP)^{T(M,y_0),y_0}$ makes sense (see (\ref{NP-0})). Then by  (\ref{jiajinting4.39}), we find that $\hat u^*_1$ is an admissible control     to $(NP)^{T(M,y_0),y_0}$. This, along with  the definition  of $N(T(M,y_0),y_0)$ (see (\ref{NP-0})) and (\ref{jiajinting4.38}), yields that
   \begin{equation*}
   N(T(M,y_0),y_0)\leq \|\hat u^*_1\|_{L^\infty(0,T(M,y_0); U)} \leq  M,
   \end{equation*}
   which, together with the second inequality in (\ref{jiajinting4.28}), indicates that
   \begin{equation}\label{jiajinting4.40}
   N(T(M,y_0),y_0)<N(T^1(y_0), y_0).
   \end{equation}
   From (\ref{jiajinting4.40}),  (ii) of Lemma \ref{Lemma-NP-decreasing}
   and the first inequality in (\ref{jiajinting4.37}), it follows  that
   \begin{equation}\label{jiajinting4.41}
   T^1(y_0)< T(M,y_0)<\infty.
   \end{equation}
  By (\ref{jiajinting4.41}), we can apply   (v) of Lemma \ref{Lemma-T0-T1} to get  that
  $ N( T^1(y_0),y_0)=0$,
   which  contradicts the first inequality in (\ref{jiajinting4.28}).
   Hence, $(TP)^{M,y_0}$ has no any admissible control in this case.

   (iii)  Suppose that (H1) holds. And assume that
   \begin{eqnarray}\label{jiajinting4.29}
 M_0\triangleq N(T^1(y_0),y_0)>0.
 \end{eqnarray}
    Then by (\ref{jiajinting4.29}) and (\ref{jiajinting4.26}), it follows that
   $    0<M_0<\infty$.
         Hence, the problem $(TP)^{M_0,y_0}$
makes sense. (It is required that $0<M_0<\infty$ in the definition of $(TP)^{M_0,y_0}$, see (\ref{TP-0}).)

 We aim to show that $(TP)^{M_0,y_0}$ has no any admissible control.
   By contradiction, suppose that it had an admissible control.
  Then we could apply (ii) of Lemma~\ref{wanglemma4.1} to get a   minimal time control  $u^*_2$ for
     $(TP)^{M_0,y_0}$. Hence,  we have that
 \begin{equation}\label{jiajinting4.42}
 T(M_0,y_0)<\infty,\;\; \|u^*_2\|_{L^\infty(\mathbb{R}^+;U)}\leq M_0\;\;\mbox{and}\;\;
 y(T(M_0,y_0);y_0,u^*_2)=0.
 \end{equation}
 Since $y_0\in X\setminus\{0\}$, from the third and the first assertions in (\ref{jiajinting4.42}), we see that
 \begin{eqnarray}\label{1210-timeestimate-2}
  0<T(M_0,y_0)<\infty.
 \end{eqnarray}
    Write $\hat u^*_2$ for the restriction of $u^*_2$  over $\big(0,T(M_0,y_0)\big)$. Then it follows from (\ref{jiajinting4.42}) that
  \begin{equation}\label{jiajinting4.43}
  \|\hat u^*_2\|_{L^\infty(0,T(M_0,y_0); U)} \leq M_0
  \end{equation}
  and
  \begin{eqnarray}\label{jiajinting4.44}
    \hat y(T(M_0,y_0);y_0, \hat u^*_2)=0.
  \end{eqnarray}
   By (\ref{1210-timeestimate-2}),
 the problem $(NP)^{T(M_0,y_0),y_0}$ makes sense. Then from (\ref{jiajinting4.44}), we find   that $\hat u^*_2$ is an admissible control to  $(NP)^{T(M_0,y_0),y_0}$.
    This, along with  the definition  of $N(T(M_0,y_0),y_0)$ (see (\ref{NP-0})), (\ref{jiajinting4.43}) and (\ref{jiajinting4.29}), yields that
    \begin{equation}\label{jiajinting4.45}
     N(T(M_0,y_0),y_0)\leq \|\hat u^*_2\|_{L^\infty(0,T(M_0,y_0); U)} \leq M_0 = N(T^1(y_0),y_0).
    \end{equation}
    By (\ref{jiajinting4.45}) and
       (vi) of Lemma \ref{Lemma-T0-T1}, we find that
  \begin{eqnarray}\label{jiajinting4.46}
   N(T( M_0,y_0),y_0)\leq  N(T^1(y_0),y_0)= N(\infty,y_0).
  \end{eqnarray}

  Next, we will use (\ref{jiajinting4.46}) to prove that $T^1(y_0)<\infty$.
   When this is proved, we can apply  (v) of Lemma \ref{Lemma-T0-T1}
  to get  that
  $ N(T^1(y_0),y_0)=0$,
            which contradicts  (\ref{jiajinting4.29}). Hence, $(TP)^{M_0,y_0}$
    has no any admissible control in this case.

  The remainder is to show that $T^1(y_0)<\infty$. By  contradiction,  suppose that it were not true. Then we would have that
  $T^1(y_0)=\infty$. Since we are in the case that $T^0(y_0)<\infty$, it holds that
   \begin{equation}\label{jiajinting4.49}
  T^0(y_0)<\infty=T^1(y_0).
  \end{equation}
  By the first inequality in (\ref{jiajinting4.42}) and (\ref{jiajinting4.49}), we can find a number $\widehat T$ so that
  \begin{equation}\label{jiajinting4.50}
  \max\{T^0(y_0), T(M_0,y_0)\}<\widehat T<\infty.
  \end{equation}
  Meanwhile, by (H1) and (\ref{jiajinting4.49}), we can apply
     (i) of Proposition~\ref{wang-prop3.3} to find that
   $N(\cdot,y_0)$  is strictly decreasing over $\big(T^0(y_0),\infty\big)$.
   This, together with (\ref{jiajinting4.50}) and
         the first equality in  (\ref{N-infty-y0}), yields that
    \begin{equation}\label{jiajinting4.51}
      N(\widehat T,y_0)>N(\infty,y_0).
   \end{equation}
    Since $N(\cdot,y_0)$ is decreasing over $[0,\infty]$ (see (ii) of Lemma \ref{Lemma-NP-decreasing}), we find from the first inequality in (\ref{jiajinting4.50}) and (\ref{jiajinting4.51})
     that
  \begin{equation*}
    N(T(M_0,y_0),y_0)\geq N(\widehat T,y_0)>N(\infty,y_0).
  \end{equation*}
  This contradicts (\ref{jiajinting4.46}). Hence,  we have proved
  that $T^1(y_0)<\infty$.
   This ends the proof of the conclusion (iii)  of this theorem.

    In summary, we complete the proof of this theorem.

 \end{proof}

Theorem \ref{Lemma-existence-optimal-control}, Theorem~\ref{Theorem-ex-T0y0-infty} and Theorem~\ref{Theorem-ex-op-TP}
contain results on the existence of minimal time controls and minimal norm controls and the non-existence of admissible controls  of $(TP)^{M,y_0}$ and $(NP)^{T,y_0}$ for all
possible cases. In order to use them in the proof of our BBP decomposition theorems better, we need  several corollaries as follows:

\begin{Corollary}\label{wangpuchou1}
Let $y_0\in X\setminus\{0\}$ satisfy that $T^0(y_0)<T^1(y_0)$. Then the following conclusions are true:

\noindent (i) If $T^0(y_0)>0$ and $0<T<T^0(y_0)$, then
 $(NP)^{T,y_0}$ has no any admissible control.

\noindent (ii) If $T^1(y_0)<\infty$ and $T^1(y_0)\leq T<\infty$,
then   the null control is the unique minimal norm control to   $(NP)^{T,y_0}$.
 \end{Corollary}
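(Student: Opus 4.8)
The plan is to assemble this corollary directly from the results already established in Sections~3 and~4; no new estimate is needed, so the ``proof'' is essentially a bookkeeping argument.

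For conclusion~(i), I would first note that the hypothesis $T^0(y_0)<T^1(y_0)$ forces $T^0(y_0)<\infty$ (a strict inequality cannot hold with $T^0(y_0)=\infty$). Hence $y_0$ satisfies the standing assumption of Theorem~\ref{Lemma-existence-optimal-control}, and the situation ``$T^0(y_0)>0$ and $0<T<T^0(y_0)$'' is exactly the one covered by conclusion~(ii) of that theorem, which yields that $(NP)^{T,y_0}$ has no admissible control. (Alternatively, one can argue directly: if $v\in L^\infty(0,T;U)$ were admissible for $(NP)^{T,y_0}$, its zero extension $\widetilde v$ over $\mathbb R^+$ would satisfy $y(T;y_0,\widetilde v)=\hat y(T;y_0,v)=0$, so the definition (\ref{y0-controllable}) of $T^0(y_0)$ would give $T^0(y_0)\leq T<T^0(y_0)$, a contradiction.)

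For conclusion~(ii), I would use the hypothesis $T^1(y_0)<\infty$ together with conclusion~(v) of Lemma~\ref{Lemma-T0-T1}, which gives $N(t,y_0)=0$ for every $t\in[T^1(y_0),\infty]$; in particular $N(T,y_0)=0$ for the $T$ in question. Since also $T\geq T^1(y_0)>0$ by conclusion~(ii) of Lemma~\ref{Lemma-T0-T1}, and $T<\infty$ by assumption, the problem $(NP)^{T,y_0}$ is well posed (see (\ref{NP-0})). Finally, conclusion~(iv) of Lemma~\ref{wanglemma4.1}, applied with $N(T,y_0)=0$, shows that the null control is the unique minimal norm control to $(NP)^{T,y_0}$.

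Since every step is merely an invocation of a previously proved statement, there is no genuine obstacle; the only points requiring a moment's care are checking that $T^0(y_0)<\infty$ in~(i) (so that Theorem~\ref{Lemma-existence-optimal-control} applies) and that $0<T<\infty$ in~(ii) (so that $(NP)^{T,y_0}$ makes sense), both of which follow immediately from the stated hypotheses and the basic inequalities $T^1(y_0)>0$ and $T^0(y_0)\leq T^1(y_0)$.
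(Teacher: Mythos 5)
Your proposal is correct and follows essentially the same route as the paper: for (i) it invokes (ii) of Theorem \ref{Lemma-existence-optimal-control} after noting $T^0(y_0)<\infty$ (and your parenthetical direct contradiction with the definition of $T^0(y_0)$ is in fact how that theorem's conclusion is proved), and for (ii) it combines (v) of Lemma \ref{Lemma-T0-T1} to get $N(T,y_0)=0$, the positivity $T^1(y_0)>0$ to ensure $T\in(0,\infty)$, and (iv) of Lemma \ref{wanglemma4.1}, exactly as in the paper.
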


 \begin{proof}
  Arbitrarily fix a  $y_0\in X\setminus\{0\}$
 so that $T^0(y_0)<T^1(y_0)$.
 Then, we have that
 \begin{eqnarray}\label{1102-assum-1102}
 T^0(y_0)<\infty\;\;\mbox{and}\;\; T^1(y_0)>0.
 \end{eqnarray}
  We will prove the conclusions (i)-(ii) one by one.

 \vskip 2pt

  (i) Suppose that
  \begin{equation}\label{WGGSS4.41}
 T^0(y_0)>0\;\;\mbox{and}\;\;0<T<T^0(y_0).
 \end{equation}
    Then we see that $T\in (0,\infty)$. Thus, the problem $(NP)^{T,y_0}$ makes sense. Furthermore, since $T^0(y_0)<\infty$ (see (\ref{1102-assum-1102})), by (\ref{WGGSS4.41}),
  we can apply (ii) of Theorem \ref{Lemma-existence-optimal-control} to find that
    $(NP)^{T,y_0}$ has no any admissible control.
\vskip 2pt
    (ii)  Suppose that
    \begin{equation}\label{WGGSS4.42}
 T^1(y_0)<\infty\;\;\mbox{and}\;\;T^1(y_0)\leq T<\infty.
 \end{equation}
          By  (\ref{WGGSS4.42}) and (v) of Lemma \ref{Lemma-T0-T1}, we find that
          \begin{eqnarray}\label{1210-coro1.6-1}
            N(T,y_0)=0.
          \end{eqnarray}
       Meanwhile, from (\ref{WGGSS4.42}) and the second inequality in (\ref{1102-assum-1102}), it follows  that $T\in (0,\infty)$.
       Hence, we find from (iv) of Lemma~\ref{wanglemma4.1} and (\ref{1210-coro1.6-1})  that
    the null control is the unique minimal norm control to $(NP)^{T,y_0}$.
\vskip 2pt

    In summary, we finish   the proof of this corollary.

 \end{proof}

 \begin{Corollary}\label{wangpuchou1-1}
 Let $y_0\in X\setminus\{0\}$ satisfy that $T^0(y_0)=T^1(y_0)$. Then
 it holds that $T^0(y_0)>0$. Furthermore, the following conclusions are true:

 \noindent (i) If $0<T<T^0(y_0)$,
        then $(NP)^{T,y_0}$ has no any admissible control.

      \noindent (ii) If $ T^0(y_0)<\infty$ and $T^0(y_0)\leq T<\infty$,
      then     the null control is the unique minimal norm control to $(NP)^{T,y_0}$.
  \end{Corollary}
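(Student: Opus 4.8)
The plan is to establish $T^0(y_0)>0$ first, and then treat (i) and (ii) separately, essentially mirroring the proof of Corollary~\ref{wangpuchou1}. For the positivity claim, I would invoke (ii) of Lemma~\ref{Lemma-T0-T1}, which gives $T^1(y_0)>0$ for every $y_0\in X\setminus\{0\}$; since by hypothesis $T^0(y_0)=T^1(y_0)$, this immediately yields $T^0(y_0)>0$. In particular, under the finiteness restrictions in (i) and (ii), the quantity $T$ always lies in $(0,\infty)$, so that the problem $(NP)^{T,y_0}$ makes sense (recall that $0<T<\infty$ is required in (\ref{NP-0})).

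For (i), assume $0<T<T^0(y_0)$. The cleanest route is to argue directly from the definition (\ref{y0-controllable}) of $T^0(y_0)$: if $(NP)^{T,y_0}$ had an admissible control $v\in L^\infty(0,T;U)$ with $\hat y(T;y_0,v)=0$, then extending $v$ by zero over $\mathbb R^+$ would produce $u\in L^\infty(\mathbb R^+;U)$ with $y(T;y_0,u)=0$, forcing $T\geq T^0(y_0)$, a contradiction. This works uniformly whether $T^0(y_0)$ is finite or infinite. Alternatively, one can split cases: if $T^0(y_0)=\infty$, apply (ii) of Theorem~\ref{Theorem-ex-T0y0-infty}; if $T^0(y_0)<\infty$, apply (ii) of Theorem~\ref{Lemma-existence-optimal-control} (using $T^0(y_0)>0$, just proved).

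For (ii), assume $T^0(y_0)<\infty$ and $T^0(y_0)\leq T<\infty$. Since $T^1(y_0)=T^0(y_0)<\infty$ and $T\geq T^1(y_0)$, part (v) of Lemma~\ref{Lemma-T0-T1} gives $N(T,y_0)=0$. Then, noting that $0<T^0(y_0)\leq T<\infty$ so that $(NP)^{T,y_0}$ is well defined, part (iv) of Lemma~\ref{wanglemma4.1} yields that the null control is the unique minimal norm control to $(NP)^{T,y_0}$.

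I do not expect any serious obstacle: every step is a direct application of results already established (Lemma~\ref{Lemma-T0-T1}, Lemma~\ref{wanglemma4.1}, and either Theorem~\ref{Theorem-ex-T0y0-infty}/Theorem~\ref{Lemma-existence-optimal-control} or the bare definition of $T^0$). The only mild care needed is to verify in each case that $(NP)^{T,y_0}$ makes sense, i.e. $0<T<\infty$, which follows from $T^0(y_0)>0$ together with the finiteness assumptions built into the hypotheses of (i) and (ii).
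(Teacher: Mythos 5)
Your proposal is correct and follows essentially the same route as the paper: $T^0(y_0)>0$ from (ii) of Lemma~\ref{Lemma-T0-T1} together with $T^0(y_0)=T^1(y_0)$, part (i) via non-existence of admissible controls below $T^0(y_0)$, and part (ii) via (v) of Lemma~\ref{Lemma-T0-T1} giving $N(T,y_0)=0$ followed by (iv) of Lemma~\ref{wanglemma4.1}. The only (harmless) variation is that for (i) you argue directly from the definition (\ref{y0-controllable}) by zero-extension, which avoids the paper's case split between $T^0(y_0)<\infty$ (handled by (ii) of Theorem~\ref{Lemma-existence-optimal-control}) and $T^0(y_0)=\infty$ (handled by (ii) of Theorem~\ref{Theorem-ex-T0y0-infty}), and you also note that split as an alternative.
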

\begin{proof}
  Arbitrarily fix  a  $y_0\in X\setminus\{0\}$ so that
 $T^0(y_0)=T^1(y_0)$.
 Then by   (ii) of Lemma \ref{Lemma-T0-T1}, we have that
 \begin{eqnarray}\label{1102-zhang-1}
 T^0(y_0)>0.
 \end{eqnarray}

 Next, we will  show the conclusions (i)-(ii) one by one.

  (i) Suppose that
  \begin{equation}\label{WGGSS4.43}
   0<T<T^0(y_0).
      \end{equation}
    In the case that $T^0(y_0)<\infty$, by (\ref{1102-zhang-1}) and (\ref{WGGSS4.43}), we can
  apply  (ii) of Theorem \ref{Lemma-existence-optimal-control} to find that $(NP)^{T,y_0}$ has no any admissible control in this situation. In the case that  $T^0(y_0)=\infty$,
   we can apply (ii) of Theorem \ref{Theorem-ex-T0y0-infty} to find that $(NP)^{T,y_0}$ has no any admissible control in this situation. Hence,  $(NP)^{T,y_0}$ has no any admissible control.

 (ii) Suppose  that
 \begin{equation}\label{WGGSS4.44}
   T^0(y_0)<\infty\;\;\mbox{and}\;\;T^0(y_0)\leq T<\infty.
      \end{equation}
Since we are in the case that $T^0(y_0)=T^1(y_0)$, it follows from   (\ref{WGGSS4.44}) that
   $T^1(y_0)\leq T <\infty$.
      Then by   (v) of Lemma \ref{Lemma-T0-T1}, we find that
   \begin{equation}\label{GGSS4.47}
   N(T,y_0)=0.
   \end{equation}
   Meanwhile, it follows from (\ref{1102-zhang-1}) and (\ref{WGGSS4.44}) that $0<T<\infty$.
   By this and (\ref{GGSS4.47}), we can apply (iv) of Lemma~\ref{wanglemma4.1} to see that
    the null control is the unique minimal norm control to $(NP)^{T,y_0}$.

In summary, we end the proof of this corollary.
\end{proof}

\begin{Corollary}\label{wangbuchoutheorem3.13}
  Let $y_0\in X\setminus\{0\}$ satisfy that
      \begin{equation}\label{mama4.52}
    T^0(y_0)<T^1(y_0)\;\;\mbox{and}\;\; N(T^0(y_0),y_0)<\infty.
    \end{equation}
    Then it holds that
    \begin{eqnarray}\label{0228-coro4.8-add-1}
     0< N(T^0(y_0),y_0).
    \end{eqnarray}
    Furthermore,  the following conclusions are true:

    \vskip 2pt
   \noindent  (i) It holds that
    \begin{equation}\label{mama4.53}
    T(M,y_0)=T^0(y_0)\in (0,\infty)
    \;\;\mbox{for each}\;\;
     M \in  \big[N(T^0(y_0),y_0),\infty \big).
    \end{equation}

    \vskip 2pt

    \noindent (ii) For each $M\in \big[N(T^0(y_0),y_0),\infty\big)$,
         $(TP)^{M,y_0}$ has   a minimal time  control $u^*$  so that
         $u^*|_{(0,T^0(y_0))}$ (the restriction of $u^*$ over $(0,T^0(y_0))$) is a minimal norm control to
         $(NP)^{T^0(y_0),y_0}$.

\vskip 2pt
         \noindent (iii) For each $M\in \big[N(T^0(y_0),y_0),\infty\big)$, the null control
         is not a minimal time control to  $(TP)^{M,y_0}$.
    \end{Corollary}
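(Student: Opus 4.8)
The plan is to first pin down that $0<N(T^0(y_0),y_0)<\infty$ and $T^0(y_0)\in(0,\infty)$, which brings the situation inside the scope of the existence results already proved, and then to obtain the three assertions by elementary comparisons. For the preliminary bound: since $T^0(y_0)<T^1(y_0)$, the equivalence (iv) of Lemma~\ref{Lemma-NT0-T0-T1} rules out $N(T^0(y_0),y_0)=0$, so (as $N(T^0(y_0),y_0)\geq0$ always) $N(T^0(y_0),y_0)>0$; combined with the hypothesis $N(T^0(y_0),y_0)<\infty$ this gives (\ref{0228-coro4.8-add-1}). Then (ii) of Lemma~\ref{Lemma-NT0-T0-T1} forces $T^0(y_0)<\infty$, and (iii) of Theorem~\ref{Lemma-existence-optimal-control} gives both $T^0(y_0)>0$ (so that $(NP)^{T^0(y_0),y_0}$ is meaningful) and a minimal norm control $v^*$ for $(NP)^{T^0(y_0),y_0}$, i.e. $\hat y(T^0(y_0);y_0,v^*)=0$ and $\|v^*\|_{L^\infty(0,T^0(y_0);U)}=N(T^0(y_0),y_0)$. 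I will then let $\widetilde v^*$ denote the zero extension of $v^*$ over $\mathbb R^+$.

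For (i), fix $M\geq N(T^0(y_0),y_0)$. The bound $T(M,y_0)\geq T^0(y_0)$ is immediate from Proposition~\ref{Proposition-Range-T}. Conversely, $\|\widetilde v^*\|_{L^\infty(\mathbb R^+;U)}=N(T^0(y_0),y_0)\leq M$, so $\widetilde v^*\in\mathcal U^M$, and by the definition of the solution of (\ref{system-1}) we have $y(T^0(y_0);y_0,\widetilde v^*)=\hat y(T^0(y_0);y_0,v^*)=0$; hence $\widetilde v^*$ is admissible for $(TP)^{M,y_0}$, which forces $T(M,y_0)\leq T^0(y_0)$, and together with $T^0(y_0)\in(0,\infty)$ this yields (\ref{mama4.53}). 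For (ii), since $T(M,y_0)=T^0(y_0)$ by (i), the control $u^*\triangleq\widetilde v^*$ satisfies $y(T(M,y_0);y_0,u^*)=0$ and $\|u^*\|_{L^\infty(\mathbb R^+;U)}\leq M$, so it is a minimal time control for $(TP)^{M,y_0}$, and $u^*|_{(0,T^0(y_0))}=v^*$ is a minimal norm control for $(NP)^{T^0(y_0),y_0}$ by construction. For (iii), suppose the null control were a minimal time control for $(TP)^{M,y_0}$; then $0=y(T(M,y_0);y_0,0)=S(T(M,y_0))y_0=S(T^0(y_0))y_0$ by (i), so $T^1(y_0)\leq T^0(y_0)$ by the definition (\ref{Ty0}) of $T^1$, contradicting $T^0(y_0)<T^1(y_0)$.

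I do not expect a genuine obstacle here: the corollary is a straightforward consequence of the already established existence theorem for $(NP)^{T^0(y_0),y_0}$ (Theorem~\ref{Lemma-existence-optimal-control}), of Proposition~\ref{Proposition-Range-T}, and of the basic properties of $T^0(\cdot)$, $T^1(\cdot)$ and $N(\cdot,y_0)$ collected in Lemma~\ref{Lemma-NT0-T0-T1}. The only point that needs care is the order of the argument, namely confirming $0<T^0(y_0)<\infty$ before invoking Theorem~\ref{Lemma-existence-optimal-control}, and using (iv) of Lemma~\ref{Lemma-NT0-T0-T1} in the correct direction to obtain $N(T^0(y_0),y_0)>0$; the remainder is routine verification of admissibility and of the optimality identities.
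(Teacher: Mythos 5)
Your proposal is correct and follows essentially the same route as the paper: establish $0<N(T^0(y_0),y_0)<\infty$ and $0<T^0(y_0)<\infty$ via Lemma~\ref{Lemma-NT0-T0-T1} and Theorem~\ref{Lemma-existence-optimal-control}, take the zero extension of a minimal norm control of $(NP)^{T^0(y_0),y_0}$ as an admissible (hence minimal time) control, combine with Proposition~\ref{Proposition-Range-T} to get $T(M,y_0)=T^0(y_0)$, and derive (iii) by contradiction from the definition of $T^1(y_0)$. The only cosmetic difference is that you obtain $T^0(y_0)<\infty$ from the contrapositive of (ii) of Lemma~\ref{Lemma-NT0-T0-T1}, whereas the paper reads it off directly from the strict inequality $T^0(y_0)<T^1(y_0)\leq\infty$; both are valid.
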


 \begin{proof}
 Arbitrarily fix a $y_0\in X\setminus\{0\}$ satisfying  (\ref{mama4.52}).
  We first prove (\ref{0228-coro4.8-add-1}). By contradiction, suppose that it were not true. Then we would have that $N(T^0(y_0),y_0)=0$. By this and (iv) of Lemma \ref{Lemma-NT0-T0-T1}, we find that
  $T^0(y_0)=T^1(y_0)<\infty$,
    which contradicts the first inequality in (\ref{mama4.52}). So (\ref{0228-coro4.8-add-1}) stands.

    Next, we are going to show conclusions (i)-(iii) one by one.

 (i) We first show that
 \begin{equation}\label{mama4.59}
 0< T^0(y_0)<\infty.
    \end{equation}
Indeed, by the first inequality in  (\ref{mama4.52}), we see that
$T^0(y_0)<\infty$.
    Then by the second inequality in  (\ref{mama4.52}) and by
(iii) and (iv) of Theorem \ref{Lemma-existence-optimal-control},  we find that
$T^0(y_0)>0$. Hence,  (\ref{mama4.59}) stands.

We next show (\ref{mama4.53}).
From (\ref{mama4.59}), we see that the problem $(NP)^{T^0(y_0),y_0}$ makes sense. Since
$T^0(y_0)<\infty$ (see (\ref{mama4.59})), by the second inequality in (\ref{mama4.52}), we can apply (iii) and (iv) of Theorem \ref{Lemma-existence-optimal-control} to find that
$(NP)^{T^0(y_0),y_0}$ has a minimal norm control $v^*$. From this, we have that
\begin{eqnarray}\label{abc3.53}
 \hat y(T^0(y_0);y_0,v^*)=0 \;\;\mbox{ and } \;\; \|v^*\|_{L^\infty(0,T^0(y_0);U)}=N(T^0(y_0),y_0).
\end{eqnarray}
Write $\hat v^*$ for the zero extension of  $v^*$ over $\mathbb R^+$.
Then by (\ref{abc3.53}), it follows that
\begin{eqnarray}\label{mama4.62}
  y(T^0(y_0);y_0,\hat v^*)=0
\end{eqnarray}
and
\begin{eqnarray}\label{mama4.63}
   \|\hat v^*\|_{L^\infty(\mathbb{R}^+;U)}=N(T^0(y_0),y_0) \leq M
   \;\;\mbox{for each}\;\;   M   \in  \big[N(T^0(y_0),y_0),\infty \big).
\end{eqnarray}
Arbitrarily fix $M\in \big[N(T^0(y_0),y_0),\infty \big)$.
  It follows from (\ref{0228-coro4.8-add-1}) that $0<M<\infty$. So the problem
$(TP)^{M,y_0}$ makes sense. (In the definition of $(TP)^{M,y_0}$, it is required that $M\in (0,\infty)$, see (\ref{TP-0}).)
Since $0<T^0(y_0)<\infty$ (see (\ref{mama4.59})), from (\ref{mama4.62}) and (\ref{mama4.63}), it follows that
$\hat v^*$ is an admissible control to $(TP)^{M,y_0}$. This, along with (\ref{TP-0}) and (\ref{mama4.62}), indicates that
\begin{eqnarray}\label{mama4.64}
  T(M,y_0)\leq T^0(y_0).
\end{eqnarray}
 Meanwhile, since $M\in \big[N(T^0(y_0),y_0),\infty \big)$, it follows from
 Proposition \ref{Proposition-Range-T} that
 \begin{eqnarray}\label{mama4.65}
  T(M,y_0)\geq T^0(y_0).
\end{eqnarray}
 By (\ref{mama4.64}) and (\ref{mama4.65}), we see that
$ T(M,y_0)= T^0(y_0)$.
 This, along with (\ref{mama4.59}), leads to (\ref{mama4.53}).

 \vskip 2pt

 (ii) Arbitrarily fix an $M\in \big[N(T^0(y_0),y_0),\infty\big)$. Let $v^*$ and
 $\hat v^*$ be given in the proof of the conclusion (i) of this corollary (see
 (\ref{abc3.53}) and (\ref{mama4.62}), respectively). Write  $u^*\triangleq \hat v^*$. It is clear that
\begin{eqnarray}\label{mama4.66}
 u^*|_{(0,T^0(y_0))}=v^*.
\end{eqnarray}
Then by  (\ref{mama4.53}), (\ref{mama4.62}) and (\ref{mama4.63}), we see that
$$
y(T(M,y_0);y_0,u^*)=0\;\;\mbox{and}\;\; \|u^*\|_{L^\infty(\mathbb{R}^+;U)} \leq M.
$$
These yield that $u^*$ is a minimal time control to $(TP)^{M,y_0}$. Meanwhile, it follows by (\ref{abc3.53}) and (\ref{mama4.66}) that $ u^*|_{(0,T^0(y_0))}$ is a minimal norm control to $(NP)^{T^0(y_0),y_0}$. Hence, in this case,  $(TP)^{M,y_0}$ has a minimal time control whose restriction over $\big(0,T^0(y_0)\big)$ is a minimal norm control to
$(NP)^{T^0(y_0),y_0}$.

  \vskip 2pt

  (iii) By contradiction, suppose that the null control were a minimal time control to $(TP)^{M_0,y_0}$ for some $M_0\in \big[N(T^0(y_0),y_0),\infty\big)$. Then by (\ref{mama4.53}), we would have that

  $$
  S\big(T^0(y_0)\big)y_0= y(T^0(y_0);y_0,0)=y(T(M_0,y_0);y_0,0)=0.
  $$
  This, along with (\ref{Ty0}), implies that
  $T^1(y_0)  \leq T^0(y_0)$, which contradicts the first equality in (\ref{mama4.52}). Hence,  the conclusion (iii) is true.


\vskip 2pt

In summary, we finish the proof of this corollary.
 \end{proof}

\begin{Corollary}\label{wangbuchoutheorem3.13-1}
 Suppose that (H1) holds. Let $y_0\in X\setminus\{0\}$ satisfy  that
\begin{equation}\label{mama4.54}
   T^0(y_0)<T^1(y_0)\;\;\mbox{and}\;\;N(T^1(y_0),y_0)>0.
    \end{equation}
    Then the following conclusions are true:

    \noindent (i) It holds that
    \begin{equation}\label{mama4.55}
   N(T^1(y_0),y_0)<\infty.
    \end{equation}

    \noindent (ii) For each $M\in \big(0, N(T^1(y_0),y_0)\big]$,   $(TP)^{M,y_0}$ has no any admissible control.
\end{Corollary}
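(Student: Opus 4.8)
The plan is to deduce this corollary directly from Theorem~\ref{Theorem-ex-op-TP}, whose hypotheses it essentially inherits. The first thing I would record is that the assumption $T^0(y_0)<T^1(y_0)$ forces $T^0(y_0)<\infty$: since $T^1(y_0)\in(0,\infty]$, a strict inequality below $T^1(y_0)$ cannot be realized by the value $\infty$. Hence $y_0$ satisfies the standing hypothesis of Theorem~\ref{Theorem-ex-op-TP}.

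For conclusion (i), I would simply invoke the inequality (\ref{jiajinting4.26}) in Theorem~\ref{Theorem-ex-op-TP} (equivalently, conclusion (v) of Lemma~\ref{Lemma-NT0-T0-T1}): since $T^0(y_0)<\infty$, one has $N(T^1(y_0),y_0)<\infty$, which is exactly (\ref{mama4.55}).

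For conclusion (ii), fix $M\in\big(0,N(T^1(y_0),y_0)\big]$. By (i) this is a bounded interval, so $0<M<\infty$ and $(TP)^{M,y_0}$ makes sense. Then I would split into two cases. If $M<N(T^1(y_0),y_0)$, then, using that $N(T^1(y_0),y_0)>0$ by hypothesis and $T^0(y_0)<\infty$, conclusion (ii) of Theorem~\ref{Theorem-ex-op-TP} applies and shows that $(TP)^{M,y_0}$ has no admissible control. If $M=N(T^1(y_0),y_0)$, then, writing $M_0\triangleq N(T^1(y_0),y_0)>0$ and using that (H1) holds, conclusion (iii) of Theorem~\ref{Theorem-ex-op-TP} applies and shows that $(TP)^{M_0,y_0}$ has no admissible control. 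Combining the two cases yields (ii).

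The argument is essentially bookkeeping: there is no genuine analytic obstacle here, since all the substantive work is already contained in Theorem~\ref{Theorem-ex-op-TP}. The only points requiring a little care are verifying at the outset that $T^0(y_0)<\infty$ (so that Theorem~\ref{Theorem-ex-op-TP} is applicable at all), and observing that (H1) is needed only to cover the endpoint case $M=N(T^1(y_0),y_0)$, whereas the open part $M<N(T^1(y_0),y_0)$ needs no additional assumptions beyond $T^0(y_0)<T^1(y_0)$ and $N(T^1(y_0),y_0)>0$.
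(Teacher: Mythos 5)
Your proposal is correct and follows essentially the same route as the paper: observe that $T^0(y_0)<T^1(y_0)$ forces $T^0(y_0)<\infty$, deduce (i) from (\ref{jiajinting4.26}) in Theorem~\ref{Theorem-ex-op-TP}, and obtain (ii) by applying conclusions (ii) and (iii) of that theorem (your explicit split into the cases $M<N(T^1(y_0),y_0)$ and $M=N(T^1(y_0),y_0)$, with (H1) needed only at the endpoint, is just a slightly more detailed rendering of the paper's argument).
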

\begin{proof}
  Suppose that (H1) holds. Let  $y_0\in X\setminus\{0\}$ satisfy (\ref{mama4.54}).
  We will show the conclusions (i)-(ii) one by one.

  \vskip 2pt

  (i)  We observe from the first inequality in (\ref{mama4.54}) that $T^0(y_0)<\infty$. Then (\ref{mama4.55}) follows  from (\ref{jiajinting4.26}).

\vskip 2pt

(ii) Arbitrarily fix an $M$ so that
   \begin{eqnarray}\label{mama4.67}
 0<M\leq N(T^1(y_0),y_0).
\end{eqnarray}
   By (\ref{mama4.55}) and (\ref{mama4.67}), we see that $M\in (0,\infty)$. Thus
       the problem $(TP)^{M,y_0}$ makes sense.
       Then, by (H1) and (\ref{mama4.67}), we can apply
        (ii) and (iii) of Theorem \ref{Theorem-ex-op-TP} to find that
        $(TP)^{M,y_0}$ has no any admissible control.

        \vskip 2pt

        Thus, we finish the proof of  this corollary.

\end{proof}

\begin{Corollary}\label{wangbuchoutheorem3.13-2}
 Suppose that
 \begin{equation}\label{mama4.56}
  T^0(y_0)=T^1(y_0)=\infty.
    \end{equation}
 Then for each $M\in (0,\infty)$,  $(TP)^{M,y_0}$ does not have any admissible control.
\end{Corollary}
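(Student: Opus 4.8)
The plan is to derive this corollary directly from the assumption $T^0(y_0)=\infty$ together with the general non-existence result already established, namely part (i) of Theorem~\ref{Theorem-ex-T0y0-infty}. First I would observe that the hypothesis (\ref{mama4.56}) in particular gives $T^0(y_0)=\infty$; the extra information $T^1(y_0)=\infty$ is not actually needed for the conclusion, but it is consistent (indeed by (i) of Lemma~\ref{Lemma-T0-T1} we always have $T^0(y_0)\le T^1(y_0)$, so $T^0(y_0)=\infty$ already forces $T^1(y_0)=\infty$). Having isolated $T^0(y_0)=\infty$, the corollary is immediate.

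The key step: fix an arbitrary $M\in(0,\infty)$, so that $(TP)^{M,y_0}$ makes sense (recall that in the definition (\ref{TP-0}) one only requires $0<M<\infty$ and $y_0\in X\setminus\{0\}$, and both hold here). Since $T^0(y_0)=\infty$, part (i) of Theorem~\ref{Theorem-ex-T0y0-infty} applies verbatim and yields that $(TP)^{M,y_0}$ has no admissible control. As $M\in(0,\infty)$ was arbitrary, this proves the assertion for every such $M$, which is exactly the statement of Corollary~\ref{wangbuchoutheorem3.13-2}.

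There is essentially no obstacle here: the corollary is a specialization of Theorem~\ref{Theorem-ex-T0y0-infty}(i) to initial data $y_0$ satisfying $T^0(y_0)=\infty$, and (\ref{mama4.56}) is simply a (slightly redundant) way of packaging that condition. The only thing worth spelling out in the write-up is why the problem $(TP)^{M,y_0}$ is well-posed for the given $M$, and the one-line reduction from (\ref{mama4.56}) to $T^0(y_0)=\infty$. So the proof will consist of: (1) extract $T^0(y_0)=\infty$ from (\ref{mama4.56}); (2) fix $M\in(0,\infty)$ and note $(TP)^{M,y_0}$ makes sense; (3) invoke Theorem~\ref{Theorem-ex-T0y0-infty}(i) to conclude no admissible control exists; (4) conclude since $M$ was arbitrary.
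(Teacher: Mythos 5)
Your proposal is correct and matches the paper's own proof, which likewise deduces the claim by applying (i) of Theorem~\ref{Theorem-ex-T0y0-infty} under the hypothesis (\ref{mama4.56}). Your extra remark that $T^{1}(y_0)=\infty$ is redundant (since $T^0(y_0)\le T^1(y_0)$ by (i) of Lemma~\ref{Lemma-T0-T1}) is accurate but not needed.
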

\begin{proof}
    Suppose that (\ref{mama4.56}) holds. Then we can apply (i) of
 Theorem \ref{Theorem-ex-T0y0-infty} to find that
    for each $M\in(0,\infty)$, $(TP)^{M,y_0}$ has no admissible control.
    This ends the proof this corollary.
\end{proof}

\begin{Corollary}\label{wangbuchoutheorem3.13-3}
 Let $y_0\in X\setminus\{0\}$ satisfy  that
 \begin{equation}\label{mama4.57}
  T^0(y_0)=T^1(y_0)<\infty.
    \end{equation}
Then the following conclusions are true:

\noindent (i) It holds that
 \begin{equation}\label{mama4.58}
   T(M,y_0)= T^0(y_0)\in(0,\infty)
    \;\;\mbox{for all}\;\;M\in (0,\infty).
 \end{equation}

    \noindent (ii) For each $M \in (0,\infty)$,  the null control is a minimal time control to
 $(TP)^{M,y_0}$.
\end{Corollary}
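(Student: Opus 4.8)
The plan is to obtain both conclusions as essentially immediate consequences of Proposition~\ref{Proposition-Range-T} together with the sharp fact that $N(\cdot,y_0)$ vanishes from $T^1(y_0)$ onward. As a preliminary step I would record the positivity $0<T^0(y_0)=T^1(y_0)<\infty$: the upper bound is the hypothesis, while $T^1(y_0)>0$ is (ii) of Lemma~\ref{Lemma-T0-T1} and $T^0(y_0)=T^1(y_0)$ forces $T^0(y_0)>0$ as well.

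For conclusion (i), I would fix $M\in(0,\infty)$ and apply Proposition~\ref{Proposition-Range-T}, which gives $T^0(y_0)\le T(M,y_0)\le T^1(y_0)$. Since the two extremities coincide, $T(M,y_0)=T^0(y_0)$, and by the preliminary step this value lies in $(0,\infty)$; this is exactly (\ref{mama4.58}).

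For conclusion (ii), I would first establish that $S(T^0(y_0))y_0=0$. Indeed, since $T^1(y_0)<\infty$, (v) of Lemma~\ref{Lemma-T0-T1} yields $N(T^1(y_0),y_0)=0$, that is $N(T^0(y_0),y_0)=0$; as $0<T^0(y_0)<\infty$, the problem $(NP)^{T^0(y_0),y_0}$ makes sense (see (\ref{NP-0})), and (iv) of Lemma~\ref{wanglemma4.1} (or, alternatively, (ii) of Corollary~\ref{wangpuchou1-1}) tells us that the null control is the unique minimal norm control to $(NP)^{T^0(y_0),y_0}$; in particular it is admissible, so $\hat y(T^0(y_0);y_0,0)=0$, i.e. $S(T^0(y_0))y_0=0$ by the solution formula (\ref{Changshubianyi1.6}). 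Then, for any $M\in(0,\infty)$, using (\ref{Changshubianyi1.6}) and (\ref{mama4.58}) we get $y(T(M,y_0);y_0,0)=S(T^0(y_0))y_0=0$ while $\|0\|_{L^\infty(\mathbb R^+;U)}=0\le M$, so the null control is an admissible control to $(TP)^{M,y_0}$ that drives $y_0$ to the origin precisely at the minimal time $T(M,y_0)$; hence it is a minimal time control to $(TP)^{M,y_0}$, which proves (ii). There is no genuine obstacle in this argument: the whole proof is a direct assembly of previously established facts, the only non-immediate point being the identification $S(T^0(y_0))y_0=0$, which is supplied by the already-proved vanishing $N(T^1(y_0),y_0)=0$ in the case $T^1(y_0)<\infty$.
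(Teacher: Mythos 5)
Your proof is correct and follows essentially the same route as the paper: conclusion (i) via the sandwich $T^0(y_0)\le T(M,y_0)\le T^1(y_0)$ from Proposition~\ref{Proposition-Range-T}, and conclusion (ii) by deducing $N(T^0(y_0),y_0)=0$, identifying the null control as the unique minimal norm control to $(NP)^{T^0(y_0),y_0}$, and combining $S(T^0(y_0))y_0=0$ with (\ref{mama4.58}). The only difference is cosmetic — the paper cites (iv) of Lemma~\ref{Lemma-NT0-T0-T1} and (iv) of Theorem~\ref{Lemma-existence-optimal-control} where you invoke (v) of Lemma~\ref{Lemma-T0-T1} and (iv) of Lemma~\ref{wanglemma4.1}, which amount to the same facts.
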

\begin{proof}
 Arbitrarily fix a $y_0\in X\setminus\{0\}$ so that (\ref{mama4.57}) holds.
  We now show the conclusions (i)-(ii) one by one.
  \vskip2pt

  (i)
    By (ii) of Lemma~\ref{Lemma-T0-T1} and the first inequality in (\ref{mama4.57}), we have that
    $T^0(y_0)=T^1(y_0)>0$.
         This, together with the second inequality in (\ref{mama4.57}), yields that
    \begin{eqnarray}\label{mama4.68}
     0 < T^0(y_0) <\infty.
    \end{eqnarray}
    Meanwhile, by Proposition \ref{Proposition-Range-T} , we find that
    $$
    T^0(y_0)\leq T(M,y_0)\leq T^1(y_0)\;\;\mbox{for all}\;\; M\in (0,\infty).
    $$
    From the above and  the first equality in (\ref{mama4.57}), we find that
    $T(M,y_0)=T^0(y_0)$  for all $M\in (0,\infty)$, which, along with (\ref{mama4.68}), leads to  (\ref{mama4.58}).

(ii) Because of (\ref{mama4.57}), we can apply
    (iv) of Lemma~\ref{Lemma-NT0-T0-T1} to get that
  $N(T^0(y_0),y_0)=0$.
      Since $T^0(y_0)<\infty$ (see (\ref{mama4.57})), the above, along with    (iv) of  Theorem \ref{Lemma-existence-optimal-control},
   implies that the null control  is the unique  minimal norm control to $(NP)^{T^0(y_0),y_0}$.
   Thus, we have that
   $ y(T^0(y_0);y_0,0)=0$,
     which, together with (\ref{mama4.58}), shows that
  $ y(T(M,y_0);y_0,0)=0$ for all $ M\in(0,\infty)$.
   Form this, we see that the null control is a minimal time control
   to each $(TP)^{M,y_0}$ with $M\in(0,\infty)$.

    In summary, we finish the proof of  this corollary.

\end{proof}

 \bigskip
 \section{Maximum principles and bang-bang properties}

In this section, we  derive maximum principles for  $(NP)^{T,y_0}$, with
$(T,y_0)\in \mathcal{W}_{2,3}\cup \mathcal{W}_{3,2}$,
 and
$(TP)^{M,y_0}$, with $(M,y_0)\in \mathcal{V}_{2,2}\cup \mathcal{V}_{3,2}$, under the assumption (H1).
Here,  $\mathcal{W}_{2,3}$, $\mathcal{W}_{3,2}$, $\mathcal{V}_{2,2}$ and $\mathcal{V}_{3,2}$ are given by (\ref{zhangjinchu7}), (\ref{zhangjinchu9}),
(\ref{Lambda-di-2-1}) and (\ref{Lambda-di-3-1}), respectively. Then we prove
the bang-bang properties for these problems
under assumptions (H1) and (H2).
The key to obtain the above-mentioned results is a maximum principle for
affiliated minimal norm problem $(NP)^{y_T}$, with $y_T\in \mathcal{R}_T^0$.
Recall (\ref{attianable-space-norm}) for the definitions of $(NP)^{y_T}$
and $\|y_T\|_{\mathcal R_T}$; (\ref{attainable-space}) for the definition of $\mathcal R_T$; (\ref{R0T}) for the definition of $\mathcal{R}_T^0$;
(\ref{y0-controllable}) for the definition of $T^0(y_0)$; (\ref{Ty0})
for the definition of $T^1(y_0)$; and (\ref{N-infty-y0}) for the definitions of $N(0,y_0)$ and $N(\infty,y_0)$.

 \subsection{Maximum principle for affiliated  problem}

 This subsection presents a  maximum principle of
 $(NP)^{y_T}$, with $y_T\in \mathcal{R}_T^0\setminus\{0\}$. Write respectively  $B_{\mathcal{R}_T}\big(0,\|y_T\|_{\mathcal R_T}\big)$
 and $B_{\mathcal{R}_T^0}\big(0, \|y_T\|_{\mathcal R_T}\big)$
  for the closed balls in $\mathcal{R}_T$ and $\mathcal{R}_T^0$, centered at the origin and of radius  $\|y_T\|_{\mathcal R_T}$. The way to  build up  the maximum principle of  $(NP)^{y_T}$, with  $y_T\in \mathcal R_T^0\setminus\{0\}$, is as follows: First,  with the aid of Theorem~\ref{Theorem-the-second-representation-theorem}, we use
 the Hahn-Banach separation theorem to separate $y_T$ from $B_{\mathcal{R}_T^0}(0,\|y_T\|_{\mathcal{R}_T})$ in the space $\mathcal{R}^0_T$  by a hyperplane   with a normal vector $f^*\in Y_T$. Then, with the help of Theorem~\ref{Theorem-the-representation-theorem}, Theorem~\ref{Theorem-the-second-representation-theorem}, and Proposition \ref{Proposition-ball-RT0-RT}, we prove that the above-mentioned $f^*$ also separates $y_T$ from $B_{\mathcal R_T}(0,\|y_T\|_{\mathcal{R}_T})$ in the space $\mathcal{R}_T$. Finally, we apply
 Theorem~\ref{Theorem-the-representation-theorem} to the aforementioned separation in $\mathcal{R}_T$ to get the maximum principle for $(NP)^{y_T}$.

 \begin{Theorem}\label{Lemma-maximum-NP-yT}
   Suppose that (H1) holds. Let $T\in (0,\infty)$. Then for each $y_T\in \mathcal R_T^0\setminus\{0\}$, there is an $ f^*\in Y_T\setminus\{0\}$ so that each minimal norm control $v^*$ to $(NP)^{y_T}$ verifies that
  \begin{eqnarray}\label{maximum-NP-yT-1}
   \langle v^*(t),f^*(t) \rangle_U=\max_{\|w\|_U\leq \|y_T\|_{\mathcal R_T}} \langle w,f^*(t) \rangle_U~\mbox{for a.e. }t\in (0,T).
  \end{eqnarray}
 \end{Theorem}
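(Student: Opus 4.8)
The plan is to derive the maximum principle \eqref{maximum-NP-yT-1} from a Hahn--Banach separation argument, carried out first in the Banach space $\mathcal R_T^0$ and then transferred to $\mathcal R_T$. Fix $T\in(0,\infty)$ and $y_T\in\mathcal R_T^0\setminus\{0\}$. By Theorem~\ref{wangtheorem4.1}, $(NP)^{y_T}$ has a minimal norm control; let $v^*$ be any such control, so $\|v^*\|_{L^\infty(0,T;U)}=\|y_T\|_{\mathcal R_T}>0$. Consider the open ball $B^{\mathrm{o}}_{\mathcal R_T^0}\bigl(0,\|y_T\|_{\mathcal R_T}\bigr)$ (interior of the closed ball, nonempty since $\|y_T\|_{\mathcal R_T}>0$). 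I claim $y_T$ does not belong to this open ball: indeed $\|y_T\|_{\mathcal R_T^0}=\|y_T\|_{\mathcal R_T}$ because $y_T\in\mathcal R_T^0$ inherits the $\mathcal R_T$-norm, so $y_T$ lies on the boundary. Hence by the geometric Hahn--Banach theorem there is a nonzero continuous linear functional $\ell$ on $\mathcal R_T^0$ and a real number $\alpha$ with $\ell(z)\le\alpha$ for all $z\in B_{\mathcal R_T^0}(0,\|y_T\|_{\mathcal R_T})$ and $\ell(y_T)\ge\alpha$; since $\sup$ of $\ell$ over that closed ball equals $\|y_T\|_{\mathcal R_T}\,\|\ell\|$ and $\ell(y_T)\le\|y_T\|_{\mathcal R_T}\|\ell\|$, we in fact get $\ell(y_T)=\|y_T\|_{\mathcal R_T}\|\ell\|=\sup_{z\in B_{\mathcal R_T^0}(0,\|y_T\|_{\mathcal R_T})}\ell(z)$. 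Now invoke Theorem~\ref{Theorem-the-second-representation-theorem}: since (H1) holds, $\ell=\Psi_T(f^*)$ for a unique $f^*\in Y_T$, and $f^*\neq0$ because $\ell\neq0$; moreover $\|\ell\|=\|f^*\|_{Y_T}$ by that theorem.

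The next step is to upgrade this separation from $\mathcal R_T^0$ to $\mathcal R_T$. Regard $f^*\in Y_T=(\mathcal R_T^0)^*$; but also, via Theorem~\ref{Theorem-the-representation-theorem}, $f^*$ pairs with elements of $\mathcal R_T$ through $\langle\cdot,f^*\rangle_{\mathcal R_T,Y_T}$, and on $\mathcal R_T^0\subset\mathcal R_T$ these two pairings agree (both are given by $\int_0^T\langle v(t),f^*(t)\rangle_U\,dt$ for any admissible control $v$, by \eqref{ob-attain-0} and \eqref{ob-attain-R0}). By Proposition~\ref{Proposition-ball-RT0-RT}, $B_{\mathcal R_T}=\overline{B_{\mathcal R_T^0}}^{\sigma(\mathcal R_T,Y_T)}$, hence after scaling by $\|y_T\|_{\mathcal R_T}$, $B_{\mathcal R_T}(0,\|y_T\|_{\mathcal R_T})=\overline{B_{\mathcal R_T^0}(0,\|y_T\|_{\mathcal R_T})}^{\sigma(\mathcal R_T,Y_T)}$. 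Since $z\mapsto\langle z,f^*\rangle_{\mathcal R_T,Y_T}$ is $\sigma(\mathcal R_T,Y_T)$-continuous, its supremum over $B_{\mathcal R_T}(0,\|y_T\|_{\mathcal R_T})$ equals its supremum over $B_{\mathcal R_T^0}(0,\|y_T\|_{\mathcal R_T})$, which is $\|y_T\|_{\mathcal R_T}\|f^*\|_{Y_T}$. Therefore
\begin{equation}\label{eq:sep-RT}
 \langle y_T,f^*\rangle_{\mathcal R_T,Y_T}=\|y_T\|_{\mathcal R_T}\,\|f^*\|_{Y_T}=\max_{z\in B_{\mathcal R_T}(0,\|y_T\|_{\mathcal R_T})}\langle z,f^*\rangle_{\mathcal R_T,Y_T}.
\end{equation}

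The final step is to localize \eqref{eq:sep-RT} in time. Let $v^*$ be any minimal norm control to $(NP)^{y_T}$, so $\hat y(T;0,v^*)=y_T$ and $\|v^*\|_{L^\infty(0,T;U)}=\|y_T\|_{\mathcal R_T}$. By Theorem~\ref{Theorem-the-representation-theorem}, $\langle y_T,f^*\rangle_{\mathcal R_T,Y_T}=\int_0^T\langle v^*(t),f^*(t)\rangle_U\,dt$. On the other hand, for a.e.\ $t$ and every $w\in U$ with $\|w\|_U\le\|y_T\|_{\mathcal R_T}$ one has the pointwise bound $\langle v^*(t),f^*(t)\rangle_U\le\max_{\|w\|_U\le\|y_T\|_{\mathcal R_T}}\langle w,f^*(t)\rangle_U=\|y_T\|_{\mathcal R_T}\,\|f^*(t)\|_U$, and integrating this over $(0,T)$ gives $\int_0^T\langle v^*(t),f^*(t)\rangle_U\,dt\le\|y_T\|_{\mathcal R_T}\,\|f^*\|_{L^1(0,T;U)}=\|y_T\|_{\mathcal R_T}\,\|f^*\|_{Y_T}$, which by \eqref{eq:sep-RT} is an equality. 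Since the integrand is dominated a.e.\ by $\|y_T\|_{\mathcal R_T}\|f^*(t)\|_U$ and the integrals coincide, we conclude $\langle v^*(t),f^*(t)\rangle_U=\|y_T\|_{\mathcal R_T}\|f^*(t)\|_U=\max_{\|w\|_U\le\|y_T\|_{\mathcal R_T}}\langle w,f^*(t)\rangle_U$ for a.e.\ $t\in(0,T)$, which is \eqref{maximum-NP-yT-1}; note the same $f^*$ works for every minimal norm control because it was produced independently of $v^*$.

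I expect the main obstacle to be the second step: correctly matching up the two a priori different dualities on $\mathcal R_T^0$ (as $(\mathcal R_T^0)^*$ paired with $Y_T$ via $\Psi_T$, versus as a subset of $\mathcal R_T=Y_T^*$ paired via $\Phi_T$) and verifying that the separating functional obtained abstractly in $\mathcal R_T^0$ is realized by a $\sigma(\mathcal R_T,Y_T)$-continuous functional on $\mathcal R_T$ whose supremum over the closed ball is not enlarged by passing to the weak-star closure. The representation theorems and Proposition~\ref{Proposition-ball-RT0-RT} are precisely tailored for this, but the bookkeeping of identifications (and the fact that $\mathcal R_T^0$ need not be closed in $\mathcal R_T$) is where care is required.
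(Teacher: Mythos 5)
Your proposal is correct and follows essentially the same route as the paper's own proof: Hahn--Banach separation in $\mathcal R_T^0$ combined with Theorem~\ref{Theorem-the-second-representation-theorem} to produce $f^*\in Y_T\setminus\{0\}$, transfer of the separation to $\mathcal R_T$ via the agreement of the two pairings and Proposition~\ref{Proposition-ball-RT0-RT}, and then the dominated-integrand argument to pass from the integral identity to the pointwise maximum condition. The only (harmless) difference is that you read off the value $\langle y_T,f^*\rangle_{\mathcal R_T,Y_T}=\|y_T\|_{\mathcal R_T}\,\|f^*\|_{Y_T}$ directly from the isometry of $\Psi_T$, which lets you compress the paper's intermediate step equating the supremum over controls with the supremum over $B_{\mathcal R_T}\big(0,\|y_T\|_{\mathcal R_T}\big)$.
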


 \begin{proof}
 First of all, we notice that  $\mathcal R_T^0\setminus\{0\}\neq \emptyset$ for all $T\in(0,\infty)$ (see   Lemma \ref{Lemma-non-trivial}).
 Arbitrarily fix a $T\in (0,\infty)$ and then fix a  $y_T\in\mathcal R_T^0\setminus\{0\}$.
  We organize the proof by several steps.

  \vskip 3pt

  \noindent {\it Step 1. To find a vector $f^*\in Y_T\setminus \{0\}$ separating $y_T$ from
  $B_T^0(0, \|y_T\|_{\mathcal{R}_T})$ in $\mathcal{R}_T^0$ in the sense that
  \begin{equation}\label{wang3.4}
\max_{z_T\in B_{\mathcal{R}_T^0}(0, \|y_T\|_{\mathcal R_T})}
\displaystyle\langle f^*, z_T  \displaystyle\rangle_{Y_T, \mathcal R_T^0}=
\langle f^*, y_T  \rangle_{Y_T, \mathcal R_T^0}
\end{equation}
   }

  Since $y_T\neq 0$ in $\mathcal{R}_T^0$, $B_{\mathcal{R}_T^0}(0, \|y_T\|_{\mathcal R_T})$ is a non-degenerating closed ball in $\mathcal{R}_T^0$. Thus, we can apply the Hahn-Banach separation theorem in the space $\mathcal{R}_T^0$ to find a vector
   $\eta_0\in(\mathcal R_T^0)^*\setminus\{0\}$ so that
\begin{eqnarray*}
  \displaystyle\langle \eta_0, z_T  \displaystyle\rangle_{(\mathcal R_T^0)^*, \mathcal R_T^0}\leq  \displaystyle\langle \eta_0, y_T  \displaystyle\rangle_{(\mathcal R_T^0)^*, \mathcal R_T^0}\;\;\mbox{for each}\;\;
  z_T\in B_{\mathcal{R}_T^0}(0, \|y_T\|_{\mathcal R_T}).
\end{eqnarray*}
Since $y_T\in B_{\mathcal{R}_T^0}(0, \|y_T\|_{\mathcal R_T})$, the above yields that
\begin{equation}\label{wang3.2}
\max_{z_T\in B_{\mathcal{R}_T^0}(0, \|y_T\|_{\mathcal R_T})}
\displaystyle\langle \eta_0, z_T  \displaystyle\rangle_{(\mathcal R_T^0)^*, \mathcal R_T^0}=\displaystyle\langle \eta_0, y_T  \displaystyle\rangle_{(\mathcal R_T^0)^*, \mathcal R_T^0}.
\end{equation}
Meanwhile, because (H1) holds, we can apply Theorem \ref{Theorem-the-second-representation-theorem} to find  a vector $f^*\in Y_T$ so that
\begin{eqnarray}\label{WANG3.64}
 \langle f^*,z_T \rangle_{Y_T,\mathcal R_T^0}   =   \langle \eta_0,z_T \rangle_{(\mathcal R_T^0)^*,\mathcal R_T^0} \;\;\mbox{for all}\;\;  z_T\in\mathcal R_T^0;
 \;\;\mbox{and}\;\;
 \|f^*\|_{Y_T}=\|\eta_0\|_{(\mathcal R_T^0)^*}.
\end{eqnarray}
Now, (\ref{wang3.4}) follows from (\ref{wang3.2}) and (\ref{WANG3.64}). Besides, since $\eta_0\neq 0$ in $(\mathcal R_T^0)^*$, it follows from the second equality in (\ref{WANG3.64}) that
$f^*\neq 0$ in $Y_T$.

\vskip 3pt

\noindent {\it Step 2. To show that $f^*$ given in Step 1 also separates $y_T$ from $B_{\mathcal{R}_T}(0, \|y_T\|_{\mathcal R_T})$ in $\mathcal R_T$ in the sense that
\begin{equation}\label{wang3.7}
\sup_{z_T\in B_{\mathcal{R}_T}(0, \|y_T\|_{\mathcal R_T})}
\displaystyle\langle z_T, f^*  \displaystyle\rangle_{\mathcal R_T, Y_T}
=\langle  y_T, f^*  \rangle_{ \mathcal R_T, Y_T}
\end{equation}}
   We first claim that
   \begin{equation}\label{wang3.6}
\displaystyle\langle f^*, z_T  \displaystyle\rangle_{Y_T, \mathcal R_T^0}
=\displaystyle\langle z_T, f^*  \displaystyle\rangle_{ \mathcal R_T, Y_T}
\;\;\mbox{for all}\;\; z_T\in \mathcal{R}^0_T.
\end{equation}
  In fact, for each $z_T\in \mathcal{R}_T^0$, it follows from
(i) of Theorem~\ref{wangtheorem4.1} that $(NP)^{z_T}$ has a minimal norm control $v_{z_T}$. Then by  Theorem~\ref{Theorem-the-representation-theorem} and Theorem~\ref{Theorem-the-second-representation-theorem} (more precisely, by  (\ref{ob-attain-0}) and (\ref{ob-attain-R0})), we have that
$$
\langle z_T, f^*\rangle_{\mathcal{R}_T,Y_T}
=\int_0^T\langle v_{z_T}(t), f^*(t)\rangle_U  \,\mathrm dt
\;\;\mbox{and}\;\;
\langle f^*, z_T\rangle_{Y_T, \mathcal{R}_T^0}
=\int_0^T\langle f^*(t), v_{z_T}(t)\rangle_U   \,\mathrm dt.
$$
These lead to (\ref{wang3.6}).

We next claim that
\begin{equation}\label{wang3.5}
\sup_{z_T\in B_{\mathcal{R}_T^0}(0, \|y_T\|_{\mathcal R_T})}
\displaystyle\langle  z_T, f^*  \displaystyle\rangle_{ \mathcal R_T, Y_T}
=\sup_{z_T\in B_{\mathcal{R}_T}(0, \|y_T\|_{\mathcal R_T})}
\displaystyle\langle  z_T, f^*  \displaystyle\rangle_{\mathcal R_T, Y_T}.
\end{equation}
Indeed, on one hand, since
$$
B_{\mathcal{R}_T^0}(0, \|y_T\|_{\mathcal R_T})\subseteq
B_{\mathcal{R}_T}(0, \|y_T\|_{\mathcal R_T}),
$$
we have that
\begin{equation}\label{wangmax4.86}
\sup_{z_T\in B_{\mathcal{R}_T^0}(0, \|y_T\|_{\mathcal R_T})}
\displaystyle\langle  z_T, f^*  \displaystyle\rangle_{ \mathcal R_T, Y_T}
\leq\sup_{z_T\in B_{\mathcal{R}_T}(0, \|y_T\|_{\mathcal R_T})}
\displaystyle\langle  z_T, f^*  \displaystyle\rangle_{\mathcal R_T, Y_T}.
\end{equation}
 On the other hand, it follows from Proposition \ref{Proposition-ball-RT0-RT} that for each $z_T\in B_{\mathcal{R}_T}(0, \|y_T\|_{\mathcal R_T})$, there is a sequence
$\{z_{T,n}\}_{n=1}^\infty$ in $B_{\mathcal{R}_T^0}(0, \|y_T\|_{\mathcal R_T})$ so that
$$
z_{T,n} \rightarrow z_T\;\;\mbox{in}\;\; \sigma(\mathcal{R}_T,Y_T),\;\;\mbox{as}\;\;
n\rightarrow\infty,
$$
which yields that
$$
\langle z_{T,n}, f^*\rangle_{\mathcal R_T, Y_T}\rightarrow
\langle z_{T}, f^*\rangle_{\mathcal R_T, Y_T},\;\;\mbox{as}\;\;
n\rightarrow\infty.
$$
   From this, one can easily check that
   \begin{equation}\label{wangmax4.87}
\sup_{z_T\in B_{\mathcal{R}_T^0}(0, \|y_T\|_{\mathcal R_T})}
\displaystyle\langle  z_T, f^*  \displaystyle\rangle_{ \mathcal R_T, Y_T}
\geq\sup_{z_T\in B_{\mathcal{R}_T}(0, \|y_T\|_{\mathcal R_T})}
\displaystyle\langle  z_T, f^*  \displaystyle\rangle_{\mathcal R_T, Y_T}.
\end{equation}
   By (\ref{wangmax4.86}) and (\ref{wangmax4.87}),  (\ref{wang3.5}) follows at once.

Finally,  (\ref{wang3.7}) follows from  (\ref{wang3.4}),  (\ref{wang3.6}) and (\ref{wang3.5}) at once.

\vskip 3pt

\noindent {\it Step 3.  To derive from  (\ref{wang3.7}) that
\begin{eqnarray}\label{wangmax4.89}
\sup_{\|v\|_{L^\infty(0,T;U)}\leq\|y_T\|_{\mathcal R_T}} \int_0^T \langle v(t),f^*(t) \rangle_U  \,\mathrm dt
= \int_0^T \langle v^*(t),f^*(t) \rangle_U \,\mathrm dt,
\end{eqnarray}
for any minimal norm control $v^*$ to $(NP)^{y_T}$}

First, according to Theorem~\ref{Theorem-the-representation-theorem} (more precisely, see   (\ref{ob-attain-0})),  any minimal norm control $v^*$ to $(NP)^{y_T}$ (the existence of $v^*$  is guaranteed by
Theorem~\ref{wangtheorem4.1}) satisfies that
  \begin{equation}\label{wang3.9}
  \langle  y_T, f^*  \rangle_{ \mathcal R_T, Y_T} = \int_0^T \langle v^*(t),f^*(t) \rangle_U  \,\mathrm dt.
  \end{equation}
We next claim that
\begin{equation}\label{wang3.10}
 \sup_{\|v\|_{L^\infty(0,T;U)}\leq\|y_T\|_{\mathcal R_T}} \int_0^T \langle v(t),f^*(t) \rangle_U \,\mathrm dt =\sup_{z_T\in B_{\mathcal{R}_T}(0, \|y_T\|_{\mathcal R_T})}
\displaystyle\langle z_T, f^*  \displaystyle\rangle_{\mathcal R_T, Y_T}.
\end{equation}
In fact, on one hand, arbitrarily fix a $v\in L^\infty(0,T;U)$ so that
$\|v\|_{L^\infty(0,T;U)}\leq \|y_T\|_{\mathcal{R}_T}$.
 Then we find from  (\ref{attianable-space-norm}) that
\begin{equation}\label{wangmax4.92}
\|\hat y(T; 0,v)\|_{\mathcal{R}_T}\leq \|v\|_{L^\infty(0,T;U)}\leq\|y_T\|_{\mathcal{R}_T}.
\end{equation}
Meanwhile, since the above-mentioned $v$ is an admissible control to
the problem
$(NP)^{z_T}$, with $z_T\triangleq \hat y(T;0,v)$,
 we see from Theorem~\ref{Theorem-the-representation-theorem} (more precisely, from (\ref{ob-attain-0})) that
\begin{equation}\label{wangmax4.93}
\int_0^T\langle v(t), f^*(t)\rangle_U  \,\mathrm dt =\langle \hat y(T;0,v), f^*\rangle_{\mathcal{R}_T, Y_T}.
\end{equation}
From (\ref{wangmax4.93}) and (\ref{wangmax4.92}), it follows that
\begin{eqnarray*}
\int_0^T \langle v(t),f^*(t) \rangle_U \,\mathrm dt=\langle \hat y(T; 0,v), f^* \rangle_{\mathcal{R}_T, Y_T}\leq \sup_{z_T\in B_{\mathcal{R}_T}(0, \|y_T\|_{\mathcal R_T})}
\displaystyle\langle z_T, f^*  \displaystyle\rangle_{\mathcal R_T, Y_T},
\end{eqnarray*}
which leads to that
\begin{equation}\label{wang3.11}
\sup_{\|v\|_{L^\infty(0,T;U)}\leq\|y_T\|_{\mathcal R_T}} \int_0^T \langle v(t),f^*(t) \rangle_U \,\mathrm dt
\leq \sup_{z_T\in B_{\mathcal{R}_T}(0, \|y_T\|_{\mathcal R_T})}
\displaystyle\langle z_T, f^*  \displaystyle\rangle_{\mathcal R_T, Y_T}.
\end{equation}
On the other hand, arbitrarily fix a $z_T\in B_{\mathcal{R}_T}(0, \|y_T\|_{\mathcal R_T})$.  According to Theorem~\ref{wangtheorem4.1}, $(NP)^{z_T}$ has a minimal norm control $v^*_{z_T}$ satisfying that
$$
z_T=\hat y(T;0,v^*_{z_T})\;\;\mbox{and}\;\; \|v^*_{z_T}\|_{L^\infty(0,T; U)}=\|z_T\|_{\mathcal{R}_T}\leq \|y_T\|_{\mathcal{R}_T}.
$$
Then, by (\ref{ob-attain-0}), we find that
$$
\langle  z_T, f^* \rangle_{\mathcal{R}_T, Y_T}
= \int_0^T \langle v_{z_T}(t),f^*(t) \rangle_U \,\mathrm dt
\leq
\sup_{\|v\|_{L^\infty(0,T;U)}\leq\|y_T\|_{\mathcal R_T}} \int_0^T \langle v(t),f^*(t) \rangle_U \,\mathrm dt.
$$
From this, we see that
\begin{equation}\label{wang3.12}
\sup_{z_T\in B_{\mathcal{R}_T}(0, \|y_T\|_{\mathcal R_T})}
\displaystyle\langle z_T, f^*  \displaystyle\rangle_{\mathcal R_T, Y_T}
\leq
\sup_{\|v\|_{L^\infty(0,T;U)}\leq\|y_T\|_{\mathcal R_T}} \int_0^T \langle v(t),f^*(t) \rangle_U \,\mathrm dt.
\end{equation}
By  (\ref{wang3.11}) and (\ref{wang3.12}), we obtain (\ref{wang3.10}).

Finally, (\ref{wangmax4.89}) follows from (\ref{wang3.7}), (\ref{wang3.9}) and (\ref{wang3.10}) at once.

\vskip 3pt

\noindent{\it Step 4. To get (\ref{maximum-NP-yT-1}) by  dropping the integral in (\ref{wangmax4.89}) }

Arbitrarily fix a minimal norm control $v^*$ to $(NP)^{y_T}$. Since
$f^*\in L^1(0,T;U)$ and $y_T\neq 0$ in $\mathcal R_T$,
we have that
$$
\|f^*\|_{L^1(0,T;U)}=\sup_{\|v\|_{L^\infty(0,T;U)\leq\|y_T\|_{\mathcal{R}_T}}}
\frac{\langle f^*,v\rangle_{L^1(0,T;U), L^\infty(0,T;U)}}{\|y_T\|_{\mathcal{R}_T}},
$$
which, together with (\ref{wangmax4.89}), yields that
\begin{eqnarray}\label{maximum-int-point}
\int_0^T \|y_T\|_{\mathcal R_T} \|f^*(t)\|_U \,\mathrm dt
=\int_0^T \langle v^*(t),f^*(t) \rangle_U \,\mathrm dt.
\end{eqnarray}
Meanwhile, since $v^*$ is a minimal norm control to $(NP)^{y_T}$,
$\|v^*\|_{L^\infty(0,T;U)} = \|y_T\|_{\mathcal{R}_T}$.
This yields that
$\|v^*(t)\|_U\leq \|y_T\|_{\mathcal{R}_T}$ for a.e. $ t\in (0,T)$.
Hence, we have that
\begin{eqnarray}\label{wangmax4.97}
 \langle v^*(t),f^*(t) \rangle_U \leq \|y_T\|_{\mathcal R_T} \|f^*(t)\|_U
 \;\;\mbox{for a.e.}\;\; t\in (0,T).
\end{eqnarray}
From (\ref{wangmax4.97}) and  (\ref{maximum-int-point}), we find that
\begin{eqnarray}\label{maximum-int-point-1}
 \langle v^*(t),f^*(t) \rangle_U = \|y_T\|_{\mathcal R_T} \|f^*(t)\|_U
 \;\;\mbox{for a.e.}\;\; t\in (0,T).
\end{eqnarray}
Meanwhile, we have that
 \begin{eqnarray}\label{wangmax4.99}
  \|y_T\|_{\mathcal R_T} \|f^*(t)\|_U
  = \max_{\|w\|_U\leq\|y_T\|_{\mathcal{R}_T}}
  \langle w,f^*(t)\rangle_U
  \;\;\mbox{for a.e.}\;\; t\in (0,T).
\end{eqnarray}
 From  (\ref{maximum-int-point-1})  and (\ref{wangmax4.99}), we are led to (\ref{maximum-NP-yT-1}).

 \vskip 2pt

 In summary, we finish the proof of this theorem.

 \end{proof}

 \begin{Remark}\label{wang3.3}
 (i) We would like to mention that (\ref{maximum-NP-yT-1}) is not  a standard Pontryagin maximum principle, since we are not sure if  $f^*$ can be  expressed as $B^*\varphi$ with $\varphi$ a solution of the adjoint equation over $(0,T)$, even in the case that $B\in \mathcal{L}(U,X)$.

\noindent (ii) It is natural to ask if we can directly apply the Hahn-Banach separation theorem to separate  $\{y_T\}$ from $B_{\mathcal{R}_T}(0,\|y_T\|_{\mathcal R_T})$ in the state space $X$? By our understanding, the answer seems to be negative in general. However, if we have that
\begin{eqnarray}\label{quasi-solid}
  B_{\mathcal R_T}\big(0,\|y_T\|_{\mathcal R_T}\big)^o\neq\emptyset,
 \end{eqnarray}
 where $ B_{\mathcal R_T}\big(0,\|y_T\|_{\mathcal R_T}\big)^o$ is the interior of the set $B_{\mathcal R_T}\big(0,\|y_T\|_{\mathcal R_T}\big)$ in the space:
 $$
 \widetilde{X}\triangleq  \overline{\mbox{span\,}\mathcal R_T}^{\|\cdot\|_X} ,\;\;\mbox{with the norm}\;\;\|\cdot\|_{X},
 $$
 then  the answer to the above question is positive. Indeed,
 we first notice that $\widetilde{X}$ is a closed subspace of $X$. Next,
  since  $\{y_T\}$ lies at the boundary of $B_{\mathcal R_T}\big(0,\|y_T\|_{\mathcal R_T}\big)$, by the assumption (\ref{quasi-solid}),  we can apply the Hahn-Banach separation theorem in the space $\widetilde{X}$ to separate $\{y_T\}$ from $B_{\mathcal R_T}\big(0,\|y_T\|_{\mathcal R_T}\big)$ via a normal vector $\eta^*\in X\setminus\{0\}$, i.e.,
 \begin{eqnarray}\label{0920-1}
  \langle z_T,\eta^* \rangle_X \leq \langle y_T,\eta^* \rangle_X\;\;\mbox{for all}\;\;z_T\in B_{\mathcal R_T}\big(0,\|y_T\|_{\mathcal R_T}\big).
 \end{eqnarray}
 Meanwhile,  from the first assertion in (\ref{ob-attain-11}), (\ref{ob-attain-0}) and (\ref{wang1.15}), one can easily check that
 \begin{eqnarray*}
   \langle z_T,\eta \rangle_X = \langle z_T, \widetilde{B^*S^*}(T-\cdot)\eta  \displaystyle\rangle_{\mathcal R_T, Y_T}
   \;\;\mbox{for all}\;\; z_T\in \mathcal R_T
   \;\;\mbox{and}\;\; \eta\in X.
 \end{eqnarray*}
  This, along with (\ref{0920-1}), yields that
 \begin{equation*}
\sup_{z_T\in B_{\mathcal{R}_T}(0, \|y_T\|_{\mathcal R_T})}
\displaystyle\langle z_T, f^*  \displaystyle\rangle_{\mathcal R_T, Y_T}
=\langle  y_T, f^*  \rangle_{ \mathcal R_T, Y_T},
\end{equation*}
where $f^*(\cdot)\triangleq \widetilde{B^*S^*}(T-\cdot)\eta^*$. Then by the similar arguments as those used in (\ref{wang3.7})-(\ref{wangmax4.99}), we can obtain the standard Pontryagin maximum principle.

 Unfortunately, the condition  (\ref{quasi-solid}) does not hold in general. In fact,  consider the inclusion map $i_{\mathcal R_T}:\,(\mathcal R_T,\|\cdot\|_{\mathcal R_T})\hookrightarrow \widetilde{X}(\subset X)$. If  (\ref{quasi-solid}) holds, then one can easily show that this map  is surjective. By the open mapping theorem, we find that  $i_{\mathcal R_T}$ is isomorphic from $(\mathcal R_T,\|\cdot\|_{\mathcal R_T})$ to $(\widetilde{X},\|\cdot\|_X)$. Hence, $\mathcal R_T$($=\widetilde X$) is closed in $X$ and  norms $\|\cdot\|_{\mathcal R_T}$ and $\|\cdot\|_X$ are equivalent. However,  these fail for general controlled system $(A,B)$, such as the internally controlled heat equations. (It is well known that the reachable subspace at time $T$ for the internally controlled heat equations over $\Omega\times (0,T)$
 is  not closed in $L^2(\Omega)$, where $\Omega\subset\mathbb R^n$ is an open bounded domain of $C^2$.)

 \end{Remark}

\subsection{Maximum principles for minimal norm and time controls}

We first present a maximum principle for  $(NP)^{T,y_0}$, with
$(T,y_0)\in \mathcal{W}_{2,3}\cup \mathcal{W}_{3,2}$ in  next Theorem~\ref{Lemma-maximum-NP-y0}.
We would like to mention two facts as follows: First, it is not  obvious, at the first sight, that the region
of pairs $(T,y_0)$ described in  Theorem~\ref{Lemma-maximum-NP-y0}, is the same as $\mathcal{W}_{2,3}\cup \mathcal{W}_{3,2}$. However, from (ii) of Remark~\ref{Remark-0131-intro-1}, we know that they are the same. Second, the proof of Theorem~\ref{Lemma-maximum-NP-y0} is based on Theorem \ref{Lemma-maximum-NP-yT} and the connection between $(NP)^{y_T}$ and $(NP)^{T,y_0}$
built up in Proposition \ref{Lemma-NP-yT-y0-eq}.

 \begin{Theorem}\label{Lemma-maximum-NP-y0}
  Suppose that (H1) holds.
   Let  $y_0\in X\setminus\{0\}$ satisfy that  $T^0(y_0)<T^1(y_0)$. Then for each    $T\in \big(T^0(y_0),T^1(y_0)\big)$, there is an $f^*\in Y_T\setminus\{0\}$ so that every minimal norm control $v^*$ to $(NP)^{T,y_0}$ satisfies that
  \begin{eqnarray}\label{maximum-NP-y0-1}
   \langle v^*(t),f^*(t) \rangle_U=\max_{\|w\|_U\leq N(T,y_0)} \langle w,f^*(t) \rangle_U\;\;\mbox{for a.e.}\;\;t\in (0,T).
  \end{eqnarray}
%
   \end{Theorem}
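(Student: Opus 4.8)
The key observation is that when $T^0(y_0)<T<T^1(y_0)$, the vector $y_T\triangleq -S(T)y_0$ lies not merely in $\mathcal R_T$ but in fact in $\mathcal R_T^0$, and it is nonzero; once this is established, Theorem~\ref{Lemma-maximum-NP-yT} applied to $(NP)^{y_T}$ delivers the desired $f^*$, and Proposition~\ref{Lemma-NP-yT-y0-eq} transfers everything back to $(NP)^{T,y_0}$. So the first step is to fix $T\in\big(T^0(y_0),T^1(y_0)\big)$ and to verify $-S(T)y_0\in\mathcal R_T^0\setminus\{0\}$. The membership in $\mathcal R_T$ follows as in the proof of Corollary~\ref{Co-NP-duality}: since $T>T^0(y_0)$, pick $\hat t\in\big(T^0(y_0),T\big)$ and a control $\hat u\in L^\infty(0,\hat t;U)$ with $\hat y(\hat t;y_0,\hat u)=0$; extending $\hat u$ by zero on $(\hat t,T)$ gives a control that steers $y_0$ to $0$ at time $T$ and vanishes near $T$, so $-S(T)y_0=\hat y(T;0,\tilde u)$ with $\lim_{s\to T}\|\tilde u\|_{L^\infty(s,T;U)}=0$, i.e.\ $-S(T)y_0\in\mathcal R_T^0$ by (\ref{R0T}). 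Nonzeroness is immediate: $T<T^1(y_0)$ means $S(T)y_0\neq 0$ by the definition (\ref{Ty0}) of $T^1$.

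Next I would invoke Theorem~\ref{Lemma-maximum-NP-yT} with this choice of $y_T=-S(T)y_0\in\mathcal R_T^0\setminus\{0\}$ (legitimate since (H1) is assumed): there is $f^*\in Y_T\setminus\{0\}$ such that every minimal norm control $v^*$ to $(NP)^{y_T}$ satisfies
\begin{eqnarray*}
\langle v^*(t),f^*(t)\rangle_U=\max_{\|w\|_U\le\|y_T\|_{\mathcal R_T}}\langle w,f^*(t)\rangle_U\quad\text{for a.e. }t\in(0,T).
\end{eqnarray*}
Then I use Proposition~\ref{Lemma-NP-yT-y0-eq}: part (iii) says the minimal norm controls of $(NP)^{-S(T)y_0}$ are exactly the minimal norm controls of $(NP)^{T,y_0}$, and part (ii) says $\|-S(T)y_0\|_{\mathcal R_T}=N(T,y_0)$. (Both parts require $-S(T)y_0\in\mathcal R_T$, which we have.) Substituting these two facts into the displayed maximum condition gives exactly (\ref{maximum-NP-y0-1}) for every minimal norm control $v^*$ to $(NP)^{T,y_0}$, and the same $f^*\in Y_T\setminus\{0\}$ works.

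There is essentially no hard obstacle here; the theorem is a packaging of the affiliated-problem maximum principle (Theorem~\ref{Lemma-maximum-NP-yT}, whose proof is the genuinely difficult one) together with the dictionary of Proposition~\ref{Lemma-NP-yT-y0-eq}. The only point demanding a little care is making sure the hypotheses of each cited result are literally met: that $-S(T)y_0$ lands in $\mathcal R_T^0$ and not just $\mathcal R_T$ (this is where $T>T^0(y_0)$ is used, via the zero-extension-near-$T$ argument above), and that it is nonzero (where $T<T^1(y_0)$ is used). One should also note, as the statement's preamble and Remark~\ref{Remark-0131-intro-1}(ii) indicate, that the interval $\big(T^0(y_0),T^1(y_0)\big)$ for pairs $(T,y_0)$ coincides with $\mathcal W_{2,3}\cup\mathcal W_{3,2}$, so this theorem is indeed the maximum principle on that region; but that identification is already recorded earlier and need not be reproved. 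I would close by remarking, as in Remark~\ref{wang3.3}(i), that (\ref{maximum-NP-y0-1}) is not the classical Pontryagin maximum principle because $f^*$ need not be of the form $B^*\varphi$.
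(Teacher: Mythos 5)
Your proposal is correct and follows essentially the same route as the paper's proof: verify $-S(T)y_0\in\mathcal R_T^0\setminus\{0\}$ via the zero-extension argument using $T>T^0(y_0)$, apply Theorem~\ref{Lemma-maximum-NP-yT}, and transfer back through parts (ii) and (iii) of Proposition~\ref{Lemma-NP-yT-y0-eq}. The only (immaterial) difference is that you get nonzeroness directly from the definition of $T^1(y_0)$, whereas the paper deduces $S(T)y_0\neq0$ from $N(T,y_0)>0$ (via (iii) of Lemma~\ref{Lemma-T0-T1}) together with the norm identity $\|-S(T)y_0\|_{\mathcal R_T}=N(T,y_0)$; both are valid.
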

 \begin{proof}
    Arbitrarily fix a $y_0\in X\setminus\{0\}$ so that  $T^0(y_0)<T^1(y_0)$, and then fix  a $T\in \big(T^0(y_0),T^1(y_0)\big)$. Write
   \begin{equation}\label{whenhen4.104}
   \hat y_T\triangleq -S(T)y_0.
   \end{equation}

    First, we claim that
     \begin{equation}\label{whenhen4.105}
   \hat y_T\in \mathcal{R}_T^0\setminus\{0\}.
   \end{equation}
   In fact, since $T>T^0(y_0)$, it follows from
     (\ref{y0-controllable}) that there is a  $\hat t\in \big[T^0(y_0),T\big)$ so that
     $\hat y(\hat t;y_0,\hat v)=0$ for some $\hat v\in L^\infty(0,\hat t;U)$.
        Write  $\widetilde{v}$ for the zero extension of $\hat v$  over $(0,T)$.
      It is clear that
      $\hat y(T;y_0,\widetilde{v})=0$ and $\lim_{s\rightarrow T}\|\widetilde{v}\|_{L^\infty(s,T;U)}=0$.
      These,
      together with (\ref{Changshubianyi1.6}), (\ref{R0T}) and (\ref{attainable-space}), yield that
 \begin{eqnarray}\label{NP-T-RT0}
 -S(T)y_0=\int_0^T S_{-1}(T-t)B\widetilde{v}(t) \,\mathrm dt
 =\hat y(T;0,\widetilde v) \in\mathcal R_T^0
 \subset \mathcal R_T.
 \end{eqnarray}
 By (\ref{whenhen4.104}) and  (\ref{NP-T-RT0}), we can apply  (ii) of Proposition \ref{Lemma-NP-yT-y0-eq}
 to get that
 \begin{equation}\label{whenhen4.107}
  \|\hat y_T\|_{\mathcal{R}_T}=\|-S(T)y_0\|_{\mathcal{R}_T}=N(T,y_0).
   \end{equation}
   Meanwhile, since
   $T\in \big(T^0(y_0),T^1(y_0)\big)\subseteq \big(0, T^1(y_0)\big)$,
 we can apply  (iii) of Lemma \ref{Lemma-T0-T1} to find that
 \begin{equation}\label{Wgs3.22}
 N(T,y_0)>0.
  \end{equation}
  From (\ref{whenhen4.107}) and (\ref{Wgs3.22}), we obtain that
  $S(T)y_0\neq 0$ in $\mathcal R_T$, which along with  (\ref{NP-T-RT0}), leads to
  (\ref{whenhen4.105}).

  Next, by (H1) and (\ref{whenhen4.105}), we can apply Theorem \ref{Lemma-maximum-NP-yT}
   (where $y_T=\hat y_T$ is given by (\ref{whenhen4.104})) to find an $f^*\in Y_T\setminus\{0\}$ so that
  for each minimal norm control $\hat v^*$ to $(NP)^{\hat y_T}$,
  \begin{eqnarray}\label{whenhen4.109}
   \langle \hat v^*(t),f^*(t) \rangle_U=\max_{\|w\|_U\leq \|\hat y_T\|_{\mathcal{R}_T}} \langle w,f^*(t) \rangle_U\;\;\mbox{for a.e.}\;\;t\in (0,T).
  \end{eqnarray}
 ~~~\, Finally, we arbitrarily fix a minimal norm control $v^*$ to $(NP)^{T,y_0}$.
  (The existence of $v^*$ is guaranteed by (i) of Theorem~\ref{Lemma-existence-optimal-control}, since
  $T\in (T^0(y_0), T^1(y_0))$.)
   Because of  (\ref{NP-T-RT0}), we can apply  (iii) of Proposition \ref{Lemma-NP-yT-y0-eq} to see that $v^*$ is also a minimal norm control to
   $(NP)^{\hat y_T}$. This, together with (\ref{whenhen4.109}) and   (\ref{whenhen4.107}), indicates that $v^*$ satisfies (\ref{maximum-NP-y0-1}) with $f^*$ given by (\ref{whenhen4.109}). This ends the proof of this theorem.

%

     \end{proof}

To get the maximum principle for $(TP)^{M,y_0}$, we need the following lemma.
 \begin{Lemma}\label{the-equivalence}
  Suppose that (H1) holds.  Let $y_0\in X\setminus\{0\}$, with $T^0(y_0)<T^1(y_0)$. Then it holds that
  \begin{equation}\label{wanghenhen4.110}
  N(T^1(y_0),y_0)<N(T^0(y_0),y_0).
  \end{equation}
  Furthermore, the following conclusions are true:
  \vskip 2pt

  \noindent  (i) If $M\in\big(N(T^1(y_0),y_0), N(T^0(y_0),y_0)\big)$ and
  $u^*$ is a minimal time control to  $(TP)^{M,y_0}$, then $u^*|_{(0, T(M,y_0))}$
  (the restriction of $u^*$ over $(0, T(M,y_0))$) is a minimal norm control to $(NP)^{T(M,y_0),y_0}$.

  \vskip 2pt

  \noindent (ii) If  $T\in \big(T^0(y_0),T^1(y_0)\big)$ and $v^*$
  is a
  minimal norm control
    to $(NP)^{T,y_0}$, then the zero extension of $v^*$ over  $\mathbb{R}^+$ is a minimal time control to  $(TP)^{N(T,y_0),y_0}$.
 \end{Lemma}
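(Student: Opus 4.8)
The statement establishes the tight equivalence between minimal time and minimal norm controls through the strictly decreasing bijection $N(\cdot,y_0)$ on $\big(T^0(y_0),T^1(y_0)\big)$. First I would record the preliminary inequality \eqref{wanghenhen4.110}: since (H1) holds and $T^0(y_0)<T^1(y_0)$, Proposition~\ref{wang-prop3.3}(i) asserts that $N(\cdot,y_0)$ maps $\big(T^0(y_0),T^1(y_0)\big)$ \emph{onto} $\big(N(T^1(y_0),y_0),N(T^0(y_0),y_0)\big)$, and the target interval being a genuine (nonempty) open interval forces $N(T^1(y_0),y_0)<N(T^0(y_0),y_0)$. This makes the hypotheses of (i) and (ii) consistent and lets me invoke Corollary~\ref{wangcorollary3.8} freely.

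\textbf{Proof of (i).} Fix $M\in\big(N(T^1(y_0),y_0),N(T^0(y_0),y_0)\big)$ and a minimal time control $u^*$ to $(TP)^{M,y_0}$. By Corollary~\ref{wangcorollary3.8}(i), $T^0(y_0)<T(M,y_0)<T^1(y_0)$ and $M=N\big(T(M,y_0),y_0\big)$; in particular $0<T(M,y_0)<\infty$ and the problem $(NP)^{T(M,y_0),y_0}$ makes sense. Since $u^*$ is a minimal time control, $y\big(T(M,y_0);y_0,u^*\big)=0$ and $\|u^*\|_{L^\infty(\mathbb R^+;U)}\le M$; restricting to $\big(0,T(M,y_0)\big)$ gives $\hat y\big(T(M,y_0);y_0,u^*|_{(0,T(M,y_0))}\big)=0$ and $\|u^*|_{(0,T(M,y_0))}\|_{L^\infty(0,T(M,y_0);U)}\le M$. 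Hence $u^*|_{(0,T(M,y_0))}$ is an admissible control to $(NP)^{T(M,y_0),y_0}$, so its norm is at least $N(T(M,y_0),y_0)=M$. Combining the two bounds yields $\|u^*|_{(0,T(M,y_0))}\|_{L^\infty(0,T(M,y_0);U)}=M=N(T(M,y_0),y_0)$, which together with the steering identity says exactly that $u^*|_{(0,T(M,y_0))}$ is a minimal norm control to $(NP)^{T(M,y_0),y_0}$.

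\textbf{Proof of (ii).} Fix $T\in\big(T^0(y_0),T^1(y_0)\big)$ and a minimal norm control $v^*$ to $(NP)^{T,y_0}$ (existence is guaranteed by Theorem~\ref{Lemma-existence-optimal-control}(i)), so $\hat y(T;y_0,v^*)=0$ and $\|v^*\|_{L^\infty(0,T;U)}=N(T,y_0)$. Let $\widetilde v^*$ be the zero extension of $v^*$ over $\mathbb R^+$; then $y(T;y_0,\widetilde v^*)=0$ and $\|\widetilde v^*\|_{L^\infty(\mathbb R^+;U)}=N(T,y_0)$, so $\widetilde v^*\in\mathcal U^{N(T,y_0)}$ is an admissible control to $(TP)^{N(T,y_0),y_0}$, giving $T\big(N(T,y_0),y_0\big)\le T$. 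For the reverse inequality apply Corollary~\ref{wangcorollary3.8}(ii): since $T\in\big(T^0(y_0),T^1(y_0)\big)$, one has $N(T^1(y_0),y_0)<N(T,y_0)<N(T^0(y_0),y_0)$ and $T=T\big(N(T,y_0),y_0\big)$. Therefore $y\big(T(N(T,y_0),y_0);y_0,\widetilde v^*\big)=y(T;y_0,\widetilde v^*)=0$ with $\|\widetilde v^*\|_{L^\infty(\mathbb R^+;U)}=N(T,y_0)$, i.e. $\widetilde v^*$ is a minimal time control to $(TP)^{N(T,y_0),y_0}$.

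\textbf{Main obstacle.} There is no serious analytic difficulty here: the substance has already been front-loaded into Proposition~\ref{wang-prop3.3} (continuity and strict monotonicity of $N(\cdot,y_0)$) and Corollary~\ref{wangcorollary3.8} (the two-way identities $T(N(T,y_0),y_0)=T$ and $N(T(M,y_0),y_0)=M$). The only point requiring care is bookkeeping with the two notions of solution and the two ambient time domains: one must consistently pass between $y(\cdot;y_0,\cdot)$ on $\mathbb R^+$ and $\hat y(\cdot;y_0,\cdot)$ on $(0,T)$ via restriction and zero extension, and verify at each step that the relevant minimal time or minimal norm problem actually ``makes sense'' (i.e. $0<T(M,y_0)<\infty$, respectively $0<T<\infty$), which is exactly what the strict inclusions $T^0(y_0)<T(M,y_0)<T^1(y_0)$ and $T\in\big(T^0(y_0),T^1(y_0)\big)$ provide.
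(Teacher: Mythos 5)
Your proposal is correct and follows essentially the same route as the paper: derive \eqref{wanghenhen4.110} from Proposition~\ref{wang-prop3.3}(i), then in each direction combine the identities $M=N(T(M,y_0),y_0)$ and $T=T(N(T,y_0),y_0)$ from Corollary~\ref{wangcorollary3.8} with the defining properties of minimal time/norm controls, using restriction and zero extension exactly as the paper does. The only cosmetic difference is that you make the norm equality in (i) explicit via the admissibility lower bound, which the paper leaves to the definition of $N(T(M,y_0),y_0)$.
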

 \begin{proof}
  Since  (H1) holds, we can apply  (i) of  Proposition~\ref{wang-prop3.3} to get
  (\ref{wanghenhen4.110}). Next we will prove the conclusions (i)-(ii) one by one.

  \vskip 2pt

  (i) Arbitrarily fix an $M$ so that
  \begin{equation}\label{wanghenhen4.111}
  N(T^1(y_0),y_0)<M<N(T^0(y_0),y_0).
  \end{equation}
  Suppose that $u^*$ is a minimal time control to $(TP)^{M,y_0}$. (Since $T^0(y_0)<T^1(y_0)$, the existence of $u^*$ is guaranteed by (i) of  Theorem~\ref{Theorem-ex-op-TP}, as well as (\ref{wanghenhen4.111}).) Then we have that
  \begin{equation}\label{wanghenhen4.112}
  \|u^*\|_{L^\infty(\mathbb{R}^+;U)}\leq M\;\;\mbox{and}\;\;
  y(T(M,y_0);y_0,u^*)=0.
  \end{equation}
  Meanwhile, since $T^0(y_0)<T^1(y_0)$, by using (H1), we can apply
   (i) of
  Corollary~\ref{wangcorollary3.8} to see that
 \begin{equation}\label{wanghenhen4.113}
 T(M,y_0)\in (0,\infty)\;\;\mbox{and}\;\; M=N(T(M,y_0),y_0).
  \end{equation}
 By (\ref{wanghenhen4.112}) and (\ref{wanghenhen4.113}), we see that the problem
 $(NP)^{T(M,y_0),y_0}$ makes sense, and find  that
 \begin{equation}\label{wanghenhen4.114}
\|u^*|_{(0, T(M,y_0))}\|_{L^\infty(0, T(M,y_0);U)}\leq N(T(M,y_0),y_0)
   \end{equation}
  and
  \begin{equation}\label{wanghenhen4.115}
    \hat y(T(M,y_0);y_0,u^*|_{(0, T(M,y_0))})=0.
  \end{equation}
   From (\ref{wanghenhen4.114}), (\ref{wanghenhen4.115}) and (\ref{NP-0}),
   it follows that  $u^*|_{(0,T(M,y_0))}$ is a minimal norm control to the problem
 $(NP)^{T(M,y_0),y_0}$.

\vskip 2pt

\noindent (ii) Arbitrarily fix a $T$ so that
  \begin{equation}\label{wanghenhen4.116}
T^0(y_0)<T<T^1(y_0).
  \end{equation}
  Suppose that $v^*$ is a minimal norm control to $(NP)^{T,y_0}$. ( The existence of
  $v^*$ is guaranteed by (i) of Theorem~\ref{Lemma-existence-optimal-control}, because of  (\ref{wanghenhen4.116}).)
  Write $\widetilde{v}^*$ for the zero extension of $v^*$ over $\mathbb{R}^+$. Then we have that
  \begin{equation}\label{wanghenhen4.117}
y(T; y_0,\widetilde{v}^*)=0\;\;\mbox{and}\;\; \|\widetilde{v}^*\|_{L^\infty(\mathbb{R}^+;U)}\leq N(T,y_0).
  \end{equation}
  Meanwhile, by (H1) and (\ref{wanghenhen4.116}), we can apply (ii) of
Corollary~\ref{wangcorollary3.8} to find that
  \begin{equation}\label{wanghenhen4.118}
N(T,y_0)\in (0,\infty)\;\;\mbox{and}\;\; T=T(N(T,y_0),y_0).
  \end{equation}
  From (\ref{wanghenhen4.117}) and (\ref{wanghenhen4.118}), it follows that the problem
  $(TP)^{N(T,y_0),y_0}$ makes sense and that
  $$
  y\big(T(N(T,y_0),y_0);y_0,\widetilde{v}^*\big)=0
  \;\;\mbox{and}\;\;
  \|\widetilde{v}^*\|_{L^\infty(\mathbb{R}^+;U)}\leq N(T,y_0).
  $$
  These imply that $\widetilde{v}^*$ is a minimal time control to $(TP)^{N(T,y_0),y_0}$.

  \vskip 2pt

  In summary, we finish the proof of this lemma.

\end{proof}

Now, we will present a maximum principle for  $(TP)^{M,y_0}$, with
$(M,y_0)\in \mathcal{V}_{2,2}\cup \mathcal{V}_{3,2}$ in  next Theorem~\ref{Theorem-time-variant-maximum-principle}.
Two facts deserve to be mentioned: First, it is not  obvious, at the first sight, that the region
of pairs $(M,y_0)$ described in  Theorem~\ref{Theorem-time-variant-maximum-principle}, is the same as $\mathcal{V}_{2,2}\cup \mathcal{V}_{3,2}$. However, from (ii) of Remark~\ref{Remark-0131-intro-1} and the definitions of $\mathcal{V}_{2,2}$ and $\mathcal{V}_{3,2}$ (see (\ref{Lambda-di-2-1}) and (\ref{Lambda-di-3-1})), we can easily verify that they are the same. Second, the proof of Theorem~\ref{Theorem-time-variant-maximum-principle} is based on Theorem~\ref{Lemma-maximum-NP-y0} and the connections between $(NP)^{T,y_0}$
and $(TP)^{M,y_0}$ built up in Corollary~\ref{wangcorollary3.8} and Lemma~\ref{the-equivalence}.

\begin{Theorem}\label{Theorem-time-variant-maximum-principle}
  Suppose that (H1) holds.
   Let  $y_0\in X\setminus\{0\}$ satisfy that  $T^0(y_0)<T^1(y_0)$. Then
     $$
   N(T^1(y_0),y_0)<N(T^0(y_0),y_0).
   $$
   Furthermore, for each  $M\in \big(N(T^1(y_0),y_0), N(T^0(y_0),y_0)\big)$,
   the following conclusions are true:

   \vskip 2pt

   \noindent (i) It holds  that
   \begin{equation}\label{maxthree4.119}
   T^0(y_0)<T(M,y_0)<T^1(y_0).
   \end{equation}

   \vskip 2pt

   \noindent (ii) There is a vector $f^*\in Y_{T(M,y_0)}\setminus\{0\}$
      so that each minimal time control $u^*$ to $(TP)^{M,y_0}$ satisfies that
   \begin{eqnarray}\label{max-TP}
    \langle u^*(t),f^*(t) \rangle_U  = \max_{\|w\|_U\leq M} \langle w,f^*(t) \rangle_U\;\; \mbox{for a.e. } t\in \big(0,T(M,y_0)\big).
   \end{eqnarray}

 \end{Theorem}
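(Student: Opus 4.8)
The plan is to reduce Theorem~\ref{Theorem-time-variant-maximum-principle} to the already-established maximum principle for minimal norm controls, Theorem~\ref{Lemma-maximum-NP-y0}, by transferring it along the correspondence between minimal time controls of $(TP)^{M,y_0}$ and minimal norm controls of $(NP)^{T(M,y_0),y_0}$. First I would dispose of the preliminary inequality $N(T^1(y_0),y_0)<N(T^0(y_0),y_0)$: since (H1) holds and $T^0(y_0)<T^1(y_0)$, this is exactly (\ref{wanghenhen4.110}) in Lemma~\ref{the-equivalence} (equivalently the first conclusion of (i) of Proposition~\ref{wang-prop3.3}). Then fix $M\in\big(N(T^1(y_0),y_0),N(T^0(y_0),y_0)\big)$.

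For conclusion (i), I would invoke (i) of Corollary~\ref{wangcorollary3.8}: because (H1) holds and $T^0(y_0)<T^1(y_0)$, the hypothesis $M\in\big(N(T^1(y_0),y_0),N(T^0(y_0),y_0)\big)$ yields precisely (\ref{ggssww3.40}), i.e. $T^0(y_0)<T(M,y_0)<T^1(y_0)$ together with the identity $M=N(T(M,y_0),y_0)$. This gives (\ref{maxthree4.119}) and, as a by-product, places $T(M,y_0)$ in the open interval $\big(T^0(y_0),T^1(y_0)\big)$, which is the range of admissible $T$ in Theorem~\ref{Lemma-maximum-NP-y0}.

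For conclusion (ii), write $T_*\triangleq T(M,y_0)$. By (i) we have $T_*\in\big(T^0(y_0),T^1(y_0)\big)$, so under (H1) Theorem~\ref{Lemma-maximum-NP-y0} produces a vector $f^*\in Y_{T_*}\setminus\{0\}$ such that every minimal norm control $v^*$ to $(NP)^{T_*,y_0}$ satisfies $\langle v^*(t),f^*(t)\rangle_U=\max_{\|w\|_U\leq N(T_*,y_0)}\langle w,f^*(t)\rangle_U$ a.e. on $(0,T_*)$. Now take any minimal time control $u^*$ to $(TP)^{M,y_0}$ (its existence is guaranteed by (i) of Theorem~\ref{Theorem-ex-op-TP}, since $T^0(y_0)<T^1(y_0)$ and $M>N(T^1(y_0),y_0)$). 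By (i) of Lemma~\ref{the-equivalence}, the restriction $u^*|_{(0,T_*)}$ is a minimal norm control to $(NP)^{T_*,y_0}$. Applying the displayed identity to $v^*=u^*|_{(0,T_*)}$ and using $M=N(T_*,y_0)$ from step (i), we get $\langle u^*(t),f^*(t)\rangle_U=\max_{\|w\|_U\leq M}\langle w,f^*(t)\rangle_U$ for a.e. $t\in(0,T_*)=\big(0,T(M,y_0)\big)$, which is (\ref{max-TP}). Note $f^*\in Y_{T(M,y_0)}\setminus\{0\}$ as required.

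The only genuinely delicate point is bookkeeping rather than analysis: one must be careful that the interval on which $f^*$ lives and on which the maximum condition is asserted is consistently $(0,T(M,y_0))$, and that the radius of the ball in the maximum condition is $M$ and not $N(T_*,y_0)$ — these coincide precisely because of the identity $M=N(T(M,y_0),y_0)$ supplied by Corollary~\ref{wangcorollary3.8}. All the hard work (the separation argument, the two representation theorems, the passage from $(NP)^{y_T}$ to $(NP)^{T,y_0}$) has already been done upstream, so this proof is essentially an assembly of Corollary~\ref{wangcorollary3.8}, Lemma~\ref{the-equivalence}, and Theorem~\ref{Lemma-maximum-NP-y0}; no new estimates are needed.
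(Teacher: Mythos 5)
Your proposal is correct and follows essentially the same route as the paper's own proof: the preliminary inequality via (\ref{wanghenhen4.110}), conclusion (i) via (i) of Corollary~\ref{wangcorollary3.8} (which also supplies $M=N(T(M,y_0),y_0)$), and conclusion (ii) by combining Theorem~\ref{Lemma-maximum-NP-y0} with the existence result (i) of Theorem~\ref{Theorem-ex-op-TP} and the correspondence (i) of Lemma~\ref{the-equivalence}. No gaps.
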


 \begin{proof}
 Arbitrarily fix a $y_0\in X\setminus\{0\}$ so that
 \begin{equation}\label{maxthree4.121}
   T^0(y_0)<T^1(y_0).
   \end{equation}
 By (H1) and (\ref{maxthree4.121}), we can see from (\ref{wanghenhen4.110}) that
 \begin{equation}\label{maxthree4.122}
   N(T^1(y_0),y_0)< N(T^0(y_0),y_0).
   \end{equation}
  Arbitrarily fix a number $M$ so that
 \begin{eqnarray}\label{1008-11}
 N(T^1(y_0),y_0)<M< N(T^0(y_0),y_0).
 \end{eqnarray}

 We now are going to show the conclusions (i)-(ii) in this theorem one by one.

 \vskip 2pt

 (i) By (H1) and  (\ref{maxthree4.121}), we can apply (i) of
Corollary~\ref{wangcorollary3.8} (more precisely, apply (\ref{ggssww3.40})) to get
 both (\ref{maxthree4.119}) and the fact that
 \begin{equation}\label{maxthree4.124}
   M=N(T(M,y_0),y_0).
   \end{equation}

 \vskip 2pt

 (ii) By (H1), (\ref{maxthree4.121}) and  (\ref{maxthree4.119}), we can apply
 Theorem~\ref{Lemma-maximum-NP-y0}  to get a vector
 $f^*\in Y_{T(M,y_0)}\setminus\{0\}$ so that every minimal norm control $v^*$
 to $(NP)^{T(M,y_0),y_0}$ satisfies that
 \begin{equation}\label{maxthree4.125}
  \langle v^*(t),f^*(t) \rangle_U  = \max_{\|w\|_U\leq N(T(M,y_0),y_0)} \langle w,f^*(t) \rangle_U\;\; \mbox{for a.e. } t\in \big(0,T(M,y_0)\big).
   \end{equation}

 Next, we suppose that $u^*$ is a minimal time control to $(TP)^{M,y_0}$.
 (The existence of $u^*$ is guaranteed by (i) of  Theorem~\ref{Theorem-ex-op-TP},
 because of
 (\ref{maxthree4.121}) and (\ref{1008-11}).) Then by (H1), (\ref{maxthree4.121}) and (\ref{1008-11}), we can use (i) of
 Lemma~\ref{the-equivalence} to see that $u^*|_{(0,T(M,y_0))}$
is a minimal norm control to  $(NP)^{T(M,y_0),y_0}$. This, along with
(\ref{maxthree4.125}) and (\ref{maxthree4.124}), leads to (\ref{max-TP}).

\vskip 2pt

In summary, we finish the proof of this theorem.

   \end{proof}

 \subsection{Bang-bang properties of minimal time and norm controls}

In this section, we will present the bang-bang properties for
$(NP)^{T,y_0}$, with
$(T,y_0)\in \mathcal{W}_{2,3}\cup \mathcal{W}_{3,2}$,
 and
$(TP)^{M,y_0}$, with $(M,y_0)\in \mathcal{V}_{2,2}\cup \mathcal{V}_{3,2}$, under the assumptions (H1) and (H2). Their proof are based on Theorem~\ref{Lemma-maximum-NP-y0} and Theorem~\ref{Theorem-time-variant-maximum-principle}.

\begin{Theorem}\label{yubiaotheorem5.6}
Suppose that (H1) and (H2) hold. Let $y_0\in X\backslash\{0\}$ satisfy that $T^0(y_0)<T^1(y_0)$. Then for each $T\in \big(T^0(y_0), T^1(y_0)\big)$,   $(NP)^{T,y_0}$ has the bang-bang property.

\end{Theorem}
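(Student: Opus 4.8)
The plan is to combine the maximum principle for $(NP)^{T,y_0}$ from Theorem~\ref{Lemma-maximum-NP-y0} with the unique continuation assumption (H2), following the standard ``maximum principle plus unique continuation'' route to bang-bang. First I would fix $y_0\in X\setminus\{0\}$ with $T^0(y_0)<T^1(y_0)$ and fix $T\in\big(T^0(y_0),T^1(y_0)\big)$. By (iii) of Lemma~\ref{Lemma-T0-T1}, we have $N(T,y_0)>0$; in particular $(NP)^{T,y_0}$ is nontrivial. By (i) of Theorem~\ref{Lemma-existence-optimal-control} the problem $(NP)^{T,y_0}$ has at least one minimal norm control, so there is something to check. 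Let $v^*$ be an arbitrary minimal norm control to $(NP)^{T,y_0}$; the goal is to show $\|v^*(t)\|_U=N(T,y_0)$ for a.e.\ $t\in(0,T)$.

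Next I would invoke Theorem~\ref{Lemma-maximum-NP-y0}: since (H1) holds and $T\in\big(T^0(y_0),T^1(y_0)\big)$, there is $f^*\in Y_T\setminus\{0\}$ such that every minimal norm control $v^*$ satisfies
\begin{eqnarray*}
 \langle v^*(t),f^*(t)\rangle_U=\max_{\|w\|_U\leq N(T,y_0)}\langle w,f^*(t)\rangle_U=N(T,y_0)\|f^*(t)\|_U\quad\text{for a.e.}\ t\in(0,T).
\end{eqnarray*}
Then I would apply (H2) to $f^*$: since $f^*\in Y_T\setminus\{0\}$, if $f^*$ vanished on a set $E\subset(0,T)$ of positive measure it would vanish identically on $(0,T)$, contradicting $f^*\neq 0$ in $Y_T$. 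Hence $\|f^*(t)\|_U>0$ for a.e.\ $t\in(0,T)$. From the displayed identity, using the Cauchy–Schwarz bound $\langle v^*(t),f^*(t)\rangle_U\leq\|v^*(t)\|_U\|f^*(t)\|_U$ together with $\|v^*(t)\|_U\leq N(T,y_0)$ (which holds a.e.\ because $\|v^*\|_{L^\infty(0,T;U)}=N(T,y_0)$), I get $N(T,y_0)\|f^*(t)\|_U=\langle v^*(t),f^*(t)\rangle_U\leq\|v^*(t)\|_U\|f^*(t)\|_U\leq N(T,y_0)\|f^*(t)\|_U$ a.e., so all inequalities are equalities; dividing by $\|f^*(t)\|_U>0$ yields $\|v^*(t)\|_U=N(T,y_0)$ for a.e.\ $t\in(0,T)$. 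Since $v^*$ was an arbitrary minimal norm control, $(NP)^{T,y_0}$ has the bang-bang property.

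I do not expect a serious obstacle here, since the heavy lifting is already packaged in Theorem~\ref{Lemma-maximum-NP-y0} (whose proof rests on the two representation theorems and the separation argument). The only points requiring a little care are: (a) confirming that the region of pairs $(T,y_0)$ in the statement genuinely has a minimal norm control to feed into the argument — handled by Theorem~\ref{Lemma-existence-optimal-control}(i); and (b) the precise way (H2) is used — it applies to elements of $Y_T$, and $f^*\in Y_T$ by the conclusion of Theorem~\ref{Lemma-maximum-NP-y0}, so the unique continuation applies directly without needing any representation of $f^*$ as $B^*\varphi$. One should also note explicitly that $N(T,y_0)\in(0,\infty)$ on this range (positivity from Lemma~\ref{Lemma-T0-T1}(iii), finiteness from Corollary~\ref{Co-NP-duality} since $T>T^0(y_0)$), so the normalization $\max_{\|w\|_U\le N(T,y_0)}\langle w,f^*(t)\rangle_U=N(T,y_0)\|f^*(t)\|_U$ and the final division are legitimate. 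If anything, the main ``obstacle'' is purely bookkeeping: making sure the hypotheses $T^0(y_0)<T<T^1(y_0)$ are exactly what Theorem~\ref{Lemma-maximum-NP-y0} requires, which they are.
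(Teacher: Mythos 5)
Your proposal is correct and follows essentially the same route as the paper: existence of a minimal norm control via Theorem~\ref{Lemma-existence-optimal-control}(i), the maximum principle of Theorem~\ref{Lemma-maximum-NP-y0} supplying $f^*\in Y_T\setminus\{0\}$, and (H2) giving $f^*(t)\neq 0$ a.e.\ so that the maximum condition forces $\|v^*(t)\|_U=N(T,y_0)$ a.e. The only difference is that you spell out the Cauchy--Schwarz bookkeeping and the positivity/finiteness of $N(T,y_0)$, which the paper leaves implicit.
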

\begin{proof}
 Arbitrarily fix  $y_0\in X\backslash\{0\}$ so that $T^0(y_0)<T^1(y_0)$. Let $T\in \big(T^0(y_0),T^1(y_0)\big)$. Then according to (i) of
Theorem~\ref{Lemma-existence-optimal-control},  $(NP)^{T,y_0}$
has at least one minimal norm control. Arbitrarily fix a minimal norm control
$v^*$ to this problem. By (H1), we can apply  Theorem \ref{Lemma-maximum-NP-y0} to find a vector
   $f^*\in Y_{T}\setminus\{0\}$ so that
\begin{eqnarray}\label{whoareyou4.34}
   \langle v^*(t),f^*(t) \rangle_U=\max_{\|w\|_U\leq N(T,y_0)} \langle w,f^*(t) \rangle_U\;\;\mbox{for a.e.}\;\;t\in (0,T).
  \end{eqnarray}
   Meanwhile, since $f^*\neq 0$ in $Y_T$, we can derive from  (H2) that
   $  f^*(t)\neq 0$ for a.e. $t\in (0,T)$.
       This, along with (\ref{whoareyou4.34}), yields that
  $\|v^*(t)\|_U=N(T,y_0)$ for a.e. $t\in (0, T)$.
  Hence,
  $(NP)^{T,y_0}$ has the bang-bang property. We end the proof of this theorem.

%
%
\end{proof}

 \begin{Theorem}\label{Theorem-time-variant-bangbang}
   Suppose that (H1) holds.  Let $y_0\in X\backslash\{0\}$ satisfy that $T^0(y_0)<T^1(y_0)$. Then
    $N(T^1(y_0),y_0)<N(T^0(y_0),y_0)$.
         If further assume that   (H2) holds, then for each $M\in \big(N(T^1(y_0),y_0),N(T^0(y_0),y_0)\big)$,
    $(TP)^{M,y_0}$ has the bang-bang property.
 \end{Theorem}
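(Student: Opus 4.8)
The plan is to deduce Theorem~\ref{Theorem-time-variant-bangbang} from the maximum principle for $(TP)^{M,y_0}$ already established in Theorem~\ref{Theorem-time-variant-maximum-principle}, together with the unique continuation assumption (H2), mirroring exactly the argument used for the norm problem in Theorem~\ref{yubiaotheorem5.6}. First, since (H1) holds and $T^0(y_0)<T^1(y_0)$, the strict inequality $N(T^1(y_0),y_0)<N(T^0(y_0),y_0)$ is precisely the first assertion of Theorem~\ref{Theorem-time-variant-maximum-principle} (equivalently (i) of Proposition~\ref{wang-prop3.3}), so that part is immediate and the interval $\big(N(T^1(y_0),y_0),N(T^0(y_0),y_0)\big)$ is nonempty.

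Next, assume in addition that (H2) holds, and fix $M\in\big(N(T^1(y_0),y_0),N(T^0(y_0),y_0)\big)$. By (i) of Theorem~\ref{Theorem-ex-op-TP} (applicable since $T^0(y_0)<T^1(y_0)$ forces $T^0(y_0)<\infty$, and $M>N(T^1(y_0),y_0)$), the problem $(TP)^{M,y_0}$ has at least one minimal time control, so the bang-bang statement is not vacuous. Arbitrarily fix a minimal time control $u^*$ to $(TP)^{M,y_0}$. By (i) of Theorem~\ref{Theorem-time-variant-maximum-principle} we have $T^0(y_0)<T(M,y_0)<T^1(y_0)$, and in particular $0<T(M,y_0)<\infty$; by (ii) of that theorem there is a vector $f^*\in Y_{T(M,y_0)}\setminus\{0\}$ such that
\begin{eqnarray*}
 \langle u^*(t),f^*(t)\rangle_U = \max_{\|w\|_U\leq M}\langle w,f^*(t)\rangle_U\;\;\mbox{for a.e.}\;\; t\in\big(0,T(M,y_0)\big).
\end{eqnarray*}

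Now apply (H2) with $T=T(M,y_0)$: since $f^*\in Y_{T(M,y_0)}$ and $f^*\neq 0$, the function $f^*$ cannot vanish on any subset of $(0,T(M,y_0))$ of positive measure, hence $f^*(t)\neq 0$ for a.e. $t\in\big(0,T(M,y_0)\big)$. For such $t$, the maximum of $w\mapsto\langle w,f^*(t)\rangle_U$ over the ball $\{\|w\|_U\leq M\}$ equals $M\|f^*(t)\|_U$ and is attained only at $w=M f^*(t)/\|f^*(t)\|_U$, which has norm $M$; therefore $\|u^*(t)\|_U=M$ for a.e. $t\in\big(0,T(M,y_0)\big)$. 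Since $u^*$ was an arbitrary minimal time control, $(TP)^{M,y_0}$ has the bang-bang property, completing the proof. I do not anticipate a genuine obstacle here: the substance is entirely in Theorem~\ref{Theorem-time-variant-maximum-principle} (and behind it Theorem~\ref{Lemma-maximum-NP-yT}); the only point requiring a little care is checking that the region of pairs $(M,y_0)$ in the hypothesis matches $\mathcal V_{2,2}\cup\mathcal V_{3,2}$, which is already recorded in (ii) of Remark~\ref{Remark-0131-intro-1}, so no further work is needed for the statement as phrased.
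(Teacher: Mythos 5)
Your proposal is correct and follows essentially the same route as the paper's proof: the strict inequality comes from (i) of Proposition~\ref{wang-prop3.3}, existence of a minimal time control from (i) of Theorem~\ref{Theorem-ex-op-TP}, and the bang-bang conclusion from the maximum principle in (ii) of Theorem~\ref{Theorem-time-variant-maximum-principle} combined with (H2), exactly as in the paper. The only difference is that you spell out the elementary step that the maximizer of $w\mapsto\langle w,f^*(t)\rangle_U$ on the ball has norm $M$, which the paper leaves implicit.
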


 \begin{proof}
  Arbitrarily fix a $y_0\in X\setminus\{0\}$ so that
 $T^0(y_0)<T^1(y_0)$.
By  (H1), we can apply (i) of Proposition~\ref{wang-prop3.3} to find that
 $ N(T^1(y_0),y_0)< N(T^0(y_0),y_0)$.
    Arbitrarily fix an
 $ M\in \big(N(T^1(y_0),y_0), N(T^0(y_0),y_0)\big)$.
  Then we can use
    (i) of Theorem~\ref{Theorem-ex-op-TP} to find that $(TP)^{M,y_0}$ has at least one minimal time control. Next, we arbitrarily fix   a minimal time control
    $u^*$
    to $(TP)^{M,y_0}$. Then by (H1), we can apply (ii) of
          Theorem~\ref{Theorem-time-variant-maximum-principle}
          to find a vector $f^*$ in $Y_{T(M,y_0)}\setminus\{0\}$ so that
          \begin{eqnarray}\label{whoareyou4.130}
    \langle u^*(t),f^*(t) \rangle_U  = \max_{\|w\|_U\leq M} \langle w,f^*(t) \rangle_U\;\; \mbox{for a.e. } t\in \big(0,T(M,y_0)\big).
   \end{eqnarray}
          Meanwhile, since $f^*\neq 0$ in $Y_{T(M,y_0)}$, it follows from (H2) that
          $$
          f^*(t)\neq 0\;\;\mbox{for a.e.}\;\; t\in \big(0,T(M,y_0)\big).
          $$
          This, along with (\ref{whoareyou4.130}), yields that
          \begin{equation}\label{whoareyou4.31}
          \|u^*(t)\|_U=M\;\;\mbox{for a.e.}\;\;t\in \big(0,T(M,y_0)\big).
          \end{equation}
        Thus, $(TP)^{M,y_0}$ has at least one minimal time control and each minimal time control $u^*$ to this problem satisfies (\ref{whoareyou4.31}).
         Hence, $(TP)^{M,y_0}$ has
          the bang-bang property. this ends the proof of this theorem.

 \end{proof}

\section{Proofs of main results}

  This section is devoted to prove the main theorems of this paper. They are
   Theorem \ref{Proposition-NTy0-partition}, Theorem~\ref{Proposition-TMy0-partition} and
  Theorem \ref{Proposition-Corollary1.7}.

  \subsection{Some preliminaries }

  Before proving the main theorems of this paper, we introduce the two theorems (Theorem~\ref{Theorem-TP-infinite-op} and Theorem~\ref{Theorem-TP-infinite-op-1}), which concern with the conclusions (iii) and (iv) in Theorem~\ref{Proposition-TMy0-partition}. The proofs of these two theorems are based on the next Lemma~\ref{infinite-T}.

\begin{Lemma}\label{infinite-T}
 Suppose that (H1) holds. Let
 \begin{eqnarray*}
  \mathcal O_T\triangleq \big\{u\in L^\infty(0,T;U) ~:~\hat y(T;0,u)=0\big\},\;\;\mbox{with}\;\;T\in(0,\infty).
 \end{eqnarray*}
 Then  $\mathcal O_T$ is a closed and infinitely dimensional subspace  in $L^\infty(0,T;U)$.
 \end{Lemma}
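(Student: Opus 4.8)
The plan is to prove the two assertions separately: closedness of $\mathcal O_T$ in $L^\infty(0,T;U)$, and its infinite-dimensionality. Closedness is the easy part. If $\{u_n\}\subset\mathcal O_T$ converges to $u$ in $L^\infty(0,T;U)$, then in particular $u_n\to u$ weakly in $L^2(0,T;U)$ (indeed strongly), so by Lemma~\ref{Lemma-t-u-converge} we get $\hat y(T;0,u_n)\to\hat y(T;0,u)$ weakly in $X$. Since $\hat y(T;0,u_n)=0$ for all $n$, this forces $\hat y(T;0,u)=0$, i.e. $u\in\mathcal O_T$. Linearity of $\mathcal O_T$ is immediate from the linearity of $u\mapsto\hat y(T;0,u)$, which follows from the representation (\ref{Changshubianyi1.6}).

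For infinite-dimensionality, the idea is to use the hypothesis (H1) to manufacture, inside $\mathcal O_T$, linearly independent controls. First I would observe, via Lemma~\ref{Lemma-non-trivial} (applied on a short interval) and the non-triviality of $B$, that for any subinterval there exist nonzero controls whose final state is nonzero; equivalently $\mathcal R_s^0\setminus\{0\}\neq\emptyset$ for every $s\in(0,\infty)$. The crux is then the following: given $0<t<T$, take any control $w\in L^\infty(0,t;U)$ supported in $(0,t)$ with $\hat y(t;0,w)=y_t\neq 0$; extend it by zero to $(0,T)$ so that $\hat y(T;0,\widetilde w)=S(T-t)y_t$. By (H1) (applied with the pair $(\hat t, T)$ replaced by $(t,T)$, or more conveniently through the reformulation in Lemma~\ref{Lemma-H3-eq}(ii)), there is a control $v\in L^\infty(0,T;U)$ supported in $(t,T)$ with $\hat y(T;0,\chi_{(t,T)}v)=\hat y(T;0,\widetilde w)$. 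Then $\widetilde w-\chi_{(t,T)}v\in\mathcal O_T$ and it is nonzero (its restriction to $(0,t)$ equals $w\neq0$). By choosing a sequence of parameters $0<t_1<t_2<\cdots<T$ and, on each ``new'' slab $(t_{k-1},t_k)$, a control $w_k$ supported there with nonzero final state, the resulting elements of $\mathcal O_T$ have mutually disjoint ``leading'' supports $(t_{k-1},t_k)$, hence are linearly independent. This exhibits infinitely many linearly independent elements of $\mathcal O_T$.

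The main obstacle I anticipate is arranging the construction so that the controls produced are genuinely linearly independent, not merely nonzero: one must keep careful track of the supports. The clean way is to fix $t_0=0<t_1<t_2<\cdots\to T$, and for each $k$ pick $w_k\in L^\infty(0,t_k;U)$ with $\mathrm{supp}\,w_k\subset(t_{k-1},t_k)$ and $\hat y(t_k;0,w_k)\neq 0$ (possible since $B\neq0$ by the argument in Lemma~\ref{Lemma-non-trivial}, translated to the interval $(t_{k-1},t_k)$); extend $w_k$ by zero to $(0,T)$; then apply (H1)/Lemma~\ref{Lemma-H3-eq}(ii) on $(0,T)$ with the splitting point $t_k$ to get $v_k\in L^\infty(0,T;U)$ supported in $(t_k,T)$ with $\hat y(T;0,\chi_{(t_k,T)}v_k)=\hat y(T;0,w_k)$, and set $\phi_k\triangleq w_k-\chi_{(t_k,T)}v_k\in\mathcal O_T$. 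Since $\phi_k$ vanishes on $(0,t_{k-1})$ but $\phi_k|_{(t_{k-1},t_k)}=w_k|_{(t_{k-1},t_k)}\neq0$, a finite linear combination $\sum_{k=1}^N c_k\phi_k$ restricted to $(t_{N-1},t_N)$ equals $c_N w_N|_{(t_{N-1},t_N)}$, so vanishing of the combination forces $c_N=0$, and then downward induction on $k$ gives all $c_k=0$. Hence $\{\phi_k\}_{k\geq1}$ is linearly independent in $\mathcal O_T$, completing the proof. A small point to be careful about is that (H1) is stated for $L^{p_0}$ data with $u|_{(\hat t,T)}=0$; since each $w_k\in L^\infty\subset L^{p_0}$ and is supported in $(0,t_k)$, the hypothesis applies directly.
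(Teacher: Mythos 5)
Your construction takes a genuinely different route from the paper's. The paper obtains nonzero elements of $\mathcal O_T$ supported \emph{entirely} inside a prescribed slab $(t_1,t_2)\subset(0,T)$: it introduces the space $Y_{T,t_1,t_2}$ of $L^1$ functions agreeing on $(t_1,t_2)$ with an element of $Y_{t_2}$, uses (H1) through (iii) of Lemma~\ref{Lemma-H3-eq} to show this is a closed \emph{proper} subspace of $L^1(0,T;U)$, and then Hahn--Banach separation produces a nonzero $u_h\in L^\infty(0,T;U)$ which is shown to vanish outside $(t_1,t_2)$ and to satisfy $\hat y(T;0,u_h)=0$; picking one such control in each of the disjoint slabs $(T/2^{k+1},T/2^k)$ gives linear independence for free, since the supports are disjoint. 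You instead use (H1) directly in its control-replacement form ((ii) of Lemma~\ref{Lemma-H3-eq}): a bump $w_k$ supported in $(t_{k-1},t_k)$ plus a compensating control $-\chi_{(t_k,T)}v_k$ yields $\phi_k\in\mathcal O_T$. This is more direct and avoids the duality argument, but the price is that your null controls are \emph{not} supported in disjoint slabs: each $\phi_k$ carries a tail on $(t_k,T)$.

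That tail is exactly where your written independence argument breaks. The claim that $\sum_{k=1}^N c_k\phi_k$ restricted to $(t_{N-1},t_N)$ equals $c_Nw_N|_{(t_{N-1},t_N)}$ is false in general: for every $k<N$ the compensator $\chi_{(t_k,T)}v_k$ lives on $(t_k,T)\supset(t_{N-1},t_N)$, so the restriction is $c_Nw_N-\sum_{k<N}c_kv_k$ there, and the "downward induction" does not start. The repair is to run the induction \emph{upward}, using the fact you already recorded, namely that $\phi_k\equiv 0$ on $(0,t_{k-1})$: restricting $\sum_{k=1}^N c_k\phi_k=0$ to $(0,t_1)$ leaves only $c_1w_1$, so $c_1=0$; then restricting to $(t_1,t_2)$ leaves only $c_2w_2$ (the terms with $k\geq 3$ vanish there and $\phi_1$ has been removed), so $c_2=0$; and so on. With this reversal, together with your correct treatment of closedness via Lemma~\ref{Lemma-t-u-converge} and of the nontriviality of the bumps via Lemma~\ref{Lemma-non-trivial} and time invariance, the proof is complete and is a legitimate alternative to the paper's argument.
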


 \begin{proof}
 Let $0<T<\infty$. It is clear that $\mathcal O_T$ is a closed subspace  in $L^\infty(0,T;U)$. It remains to  show that $\mathcal O_T$ is of infinite dimension. To this end,  we define
 \begin{eqnarray}\label{def-Ot1t2}
 \mathcal O_{t_1,t_2}\triangleq \big\{u\in \mathcal O_T ~:~\mbox{supp}(u)\subset(t_1,t_2)\big\},  \mbox{ $0<t_1<t_2<T$}.
 \end{eqnarray}
  The rest of the proof is organized by two steps.

  \textit{Step 1. To show that when $0<t_1<t_2<T$, $\mathcal O_{t_1,t_2}$ is a closed subspace of  $\mathcal O_T$ with $\mbox{dim\,}O_{t_1,t_2}\geq 1$}

  Define
 \begin{eqnarray}\label{def-YTt1t2}
 Y_{T,t_1,t_2}\triangleq\{f\in L^1(0,T;U) ~:~ f|_{(t_1,t_2)}=g|_{(t_1,t_2)} \mbox{ for some } g\in Y_{t_2}\}.
 \end{eqnarray}
We claim that
 \begin{eqnarray}\label{OT-11}
   Y_{T,t_1,t_2} \mbox{ is a closed proper subspace in } L^1(0,T;U).
 \end{eqnarray}
 To this end, we first show that $Y_{T,t_1,t_2} $ is closed in $L^1(0,T;U)$.
 For this purpose, let $\{f_n\}_{n=1}^\infty\subset Y_{T,t_1,t_2}$ satisfy  that
 \begin{eqnarray}\label{OT-21}
  f_n \rightarrow \widehat f \mbox{ in } L^1(0,T;U),\;\;\mbox{as}\;\;n\rightarrow\infty.
 \end{eqnarray}
 Since $\{f_n\}_{n=1}^\infty\subset Y_{T,t_1,t_2}$, from (\ref{def-YTt1t2}),  there exists a sequence $\{g_n\}_{n=1}^\infty\subset Y_{t_2}$ so that for all $n\geq 1$,
       $ f_n=g_n $ over $ (t_1,t_2)$.
  This, as well as (\ref{OT-21}), yields that
  \begin{eqnarray}\label{OT-31}
 g_n\rightarrow \widehat f \mbox{  in } L^1(t_1,t_2;U) \;\;\mbox{as}\;\;n\rightarrow\infty.
 \end{eqnarray}
  Meanwhile, by (H1), we can use Lemma \ref{Lemma-H3-eq} to get the conclusion  (iii) in Lemma \ref{Lemma-H3-eq}. This, as well as  (\ref{OT-31}), indicates that $\{g_n\}_{n=1}^\infty$ is a Cauchy sequence in $L^1(0,t_2;U)$. Since $Y_{t_2}$ is closed in $L^1(0,t_2;U)$ (see (\ref{ob-space})), we have that  $g_n$ converges to a function $\hat g$ in $Y_{t_2}$.
 This, along with  (\ref{OT-31}), shows that
 $\widehat f=\hat g$ over $(t_1,t_2)$,
   which, combined with (\ref{def-YTt1t2}), implies that $\widehat f\in Y_{T,t_1,t_2}$. Hence,  the subspace $Y_{T,t_1,t_2}$ is closed in $L^1(0,T;U)$.

 We next show that $Y_{T,t_1,t_2}$ is a proper subspace of $L^1(0,T;U)$.
 In fact, for each   $f\in Y_{T,t_1,t_2}$, we obtain from (\ref{def-YTt1t2}) and (iii) of Lemma \ref{Lemma-H3-eq}  that there is $p_2>1$ so that
 \begin{eqnarray}\label{property-p2}
   f|_{(t_1,s)}\in L^{p_2}(t_1,s;U) \;\; \mbox{ for all }\;\;s\in (t_1,t_2).
 \end{eqnarray}
 However, it is clear that not every function in $L^1(0,T;U)$ holds the property (\ref{property-p2}). Hence,  $Y_{T,t_1,t_2}$ is strictly contained in $L^1(0,T;U)$.
 This finishes the proof of  (\ref{OT-11}).

 Now by  (\ref{OT-11}), there is an $h\in L^1(0,T;U)\setminus Y_{T,t_1,t_2}$.
 Since $Y_{T,t_1,t_2}$ is closed subspace of  $L^1(0,T;U)$, we can apply
   the Hahn-Banach separation theorem  to find a function $ u_h$ in $\big(L^1(0,T;U)\big)^*$
   (which is $L^{\infty}(0,T;U)$)
    so that
 \begin{eqnarray}\label{infinite-2}
  0= \int_0^T \langle u_h(t),f(t)\rangle_{U} \,\mathrm dt
   < \int_0^T \langle u_h(t),h(t)\rangle_{U} \,\mathrm dt
  \;\;\mbox{for all}\;\;   f\in Y_{T,t_1,t_2}.
 \end{eqnarray}
 For each $g\in L^1\big((0,t_1)\cup(t_2,T);U\big)$, let $\widetilde g(\cdot)$ be the zero extension of $g$ over $(0,T)$. Clearly, it follows from  (\ref{def-YTt1t2}) that $\widetilde g\in Y_{T,t_1,t_2}$. Then by the first equality in (\ref{infinite-2}), we find that
 \begin{eqnarray*}
  0=\int_0^T \langle u_h(t),\widetilde g(t) \rangle_U \,\mathrm dt\;\;\mbox{for all}\;\;g\in L^1\big((0,t_1)\cup(t_2,T);U\big).
 \end{eqnarray*}
 This yields that
 \begin{eqnarray}\label{OT-41}
  u_h=0 \mbox{ over } (0,t_1)\cup(t_2,T).
 \end{eqnarray}

 Meanwhile,  for each $z\in D(A^*)$, we define $\psi_z:\,(0,T)\rightarrow U$ by
 \begin{eqnarray*}
  \psi_z(t)=\left\{\begin{array}{ll}
             B^*S^*(t_2-t)z,  ~&t\in(t_1,t_2),\\
             0,   ~&t\in(0,t_1] \cup [t_2,T).
            \end{array}
            \right.
 \end{eqnarray*}
 It follows from (\ref{def-YTt1t2}) and (\ref{ob-space}) that for all $ z\in D(A^*)$,
 $\psi_z \in Y_{T,t_1,t_2}$.
  Then we see from (\ref{NNNWWW2.1}), (\ref{OT-41}) and the first equality in (\ref{infinite-2}) that for each $z\in D(A^*)$,
 \begin{eqnarray*}
  \langle \hat y(t_2;0, u_h),z \rangle_X &=& \int_0^{t_2} \langle u_h(t),B^*S^*(t_2-t)z\rangle_{U} \,\mathrm dt  \nonumber\\
  &=& \int_0^T \langle u_h(t),\psi_z(t)\rangle_{U} \,\mathrm dt =0.
 \end{eqnarray*}
 Since $D(A^*)$ is dense in $X$, the above, as well as (\ref{OT-41}), yields that
 $$\hat y(T;0,u_h)=\hat y(t_2;0,u_h)=0,$$
 which leads to that $u_h\in\mathcal O_{T}$. This, along with (\ref{def-Ot1t2}) and (\ref{OT-41}), implies that
  $u_h\in\mathcal O_{t_1,t_2}$.

  Finally, we see from the second equality in (\ref{infinite-2}) that $u_h\neq 0$ in $L^\infty(0,T;U)$. Hence, we have that $\mbox{dim\,}O_{t_1,t_2}\geq 1$.

 \textit{Step 2. To show that $\mbox{dim\,}\mathcal O_T=+\infty$}

\vskip 5pt
By the conclusion in Step 1, we find  that
 \begin{eqnarray*}
  \{0\}\neq\mathcal O_{T/2^{k+1},T/2^k}\subset\mathcal O_T
  \;\;\mbox{for all}\;\;    k\in\mathbb N^+.
 \end{eqnarray*}
 From (\ref{def-Ot1t2}), we see that
 $$
 \mathcal O_{T/2^{i+1},T/2^i} \cap  \mathcal O_{T/2^{j+1},T/2^j}=\{0\}\;\;\mbox{for all}\;\; i,j\in\mathbb N^+,\;\;\mbox{with}\;\; i\neq j.
 $$
  Take a sequence $\{u_k\}$ so that for each $ k\in\mathbb  N^+$,
$u_k\in O_{T/2^{k+1},T/2^k}$.
  Arbitrarily take a finite subsequence $\{u_{k_n}\}_{n=1}^N$ from
 $\{u_k\}_{k=1}^\infty$. Let $\{\alpha_n\}_{n=1}^N\subset\mathbb R$ be so that
$\sum_{n=1}^N  \alpha_n u_{k_n}=0$.
  Since for each $k$, the support of $u_{k}$ belongs to $(T/2^{k+1},T/2^k)$, we can easily derive from the above equality  that
 $\alpha_n=0$ for all $n\in\{1,\cdots,N\}$.
   So $u_{k_1},u_{k_2},\cdots,u_{k_N}$ are linearly independent. Thus, we conclude that dim\,$\mathcal O_T=\infty$.
  \vskip 5pt

  In summary, we finish the proof of this lemma.
 \end{proof}

\begin{Theorem}\label{Theorem-TP-infinite-op}
 Let $y_0\in X\setminus\{0\}$ satisfy that
 \begin{eqnarray}\label{infinite-1201-1}
  T^0(y_0)<T^1(y_0)
  \;\;\mbox{and}\;\;  N(T^0(y_0),y_0)<\infty.
 \end{eqnarray}
 Suppose that (H1) holds and that
 \begin{eqnarray}\label{infinite-1201-2}
  N(T^0(y_0),y_0)<M<\infty.
 \end{eqnarray}
 Then  $(TP)^{M,y_0}$ has infinitely many different minimal time controls so that among them, any finite number of controls are linearly independent in $L^\infty(\mathbb R^+;U)$.
\end{Theorem}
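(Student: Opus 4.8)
The plan is to combine two facts established earlier in the paper: first, under the hypothesis $T^0(y_0) < T^1(y_0)$ and $N(T^0(y_0),y_0) < \infty$ together with (H1), the problem $(TP)^{M,y_0}$ has at least one minimal time control with $T(M,y_0) = T^0(y_0) \in (0,\infty)$ for every $M \geq N(T^0(y_0),y_0)$; and second, the kernel space $\mathcal{O}_{T^0(y_0)}$ of controls steering $0$ to $0$ over the interval $(0,T^0(y_0))$ is an infinite-dimensional closed subspace of $L^\infty(0,T^0(y_0);U)$ by Lemma~\ref{infinite-T}. The idea is to perturb one fixed minimal time control by small multiples of linearly independent elements of $\mathcal{O}_{T^0(y_0)}$.

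First I would invoke (iii) of Corollary~\ref{wangbuchoutheorem3.13}: since (\ref{infinite-1201-1}) gives $0 < N(T^0(y_0),y_0) < \infty$ and hence $0 < T^0(y_0) < \infty$, and (\ref{infinite-1201-2}) gives $M \in [N(T^0(y_0),y_0),\infty)$, the problem $(TP)^{M,y_0}$ has a minimal time control $u^*$ with $T(M,y_0) = T^0(y_0)$, and moreover $u^*|_{(0,T^0(y_0))}$ is a minimal norm control to $(NP)^{T^0(y_0),y_0}$, so $\|u^*\|_{L^\infty(\mathbb{R}^+;U)} = \|u^*|_{(0,T^0(y_0))}\|_{L^\infty(0,T^0(y_0);U)} = N(T^0(y_0),y_0) < M$ by (\ref{infinite-1201-2}); without loss of generality $u^*$ vanishes on $(T^0(y_0),\infty)$. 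Set $\delta \triangleq M - N(T^0(y_0),y_0) > 0$. Next, by Lemma~\ref{infinite-T} (valid since (H1) holds), $\mathcal{O}_{T^0(y_0)}$ is infinite-dimensional, so I can pick a sequence $\{w_k\}_{k=1}^\infty \subset \mathcal{O}_{T^0(y_0)} \setminus \{0\}$ that is linearly independent in $L^\infty(0,T^0(y_0);U)$, and after rescaling I may assume $\|w_k\|_{L^\infty(0,T^0(y_0);U)} = \delta$ for each $k$.

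Then I define, for each $k$, the control $u_k$ on $\mathbb{R}^+$ by $u_k \triangleq u^* + \widetilde{w}_k$ where $\widetilde{w}_k$ is the zero extension of $w_k$ over $\mathbb{R}^+$. Since $w_k \in \mathcal{O}_{T^0(y_0)}$, i.e. $\hat y(T^0(y_0);0,w_k) = 0$, linearity of the state map gives $y(T^0(y_0);y_0,u_k) = y(T^0(y_0);y_0,u^*) + y(T^0(y_0);0,\widetilde{w}_k) = 0$, so $u_k$ is an admissible control reaching $0$ at time $T^0(y_0) = T(M,y_0)$; and $\|u_k\|_{L^\infty(\mathbb{R}^+;U)} \leq \|u^*\|_{L^\infty(\mathbb{R}^+;U)} + \|w_k\|_{L^\infty(0,T^0(y_0);U)} = N(T^0(y_0),y_0) + \delta = M$, so $u_k \in \mathcal{U}^M$ and hence $u_k$ is a minimal time control to $(TP)^{M,y_0}$. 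Finally, linear independence: given a finite subcollection $u_{k_1},\dots,u_{k_N}$ and scalars $\alpha_1,\dots,\alpha_N$ with $\sum_j \alpha_j u_{k_j} = 0$, rewrite as $\big(\sum_j \alpha_j\big) u^* + \sum_j \alpha_j \widetilde{w}_{k_j} = 0$; I would argue that this forces all $\alpha_j = 0$. The routine way is to note that the $\widetilde{w}_{k_j}$ are linearly independent among themselves (restriction to $(0,T^0(y_0))$ is injective and the $w_k$ are independent there), and to rule out $u^*$ being in their span — this is the one genuinely delicate point. If $u^*$ happened to lie in $\mathrm{span}\{\widetilde{w}_k\}_{k=1}^\infty$, I would circumvent it by instead starting the construction from the modified family $\{u^* + \widetilde{w}_k\}$ where I additionally arrange the $w_k$ to be independent of $u^*|_{(0,T^0(y_0))}$ (possible since $\mathcal{O}_{T^0(y_0)}$ is infinite-dimensional, so I can always choose a countable independent set inside a complement direction, or simply discard at most one vector); alternatively, I note that it suffices for the conclusion that among the $u_k$ any finite subset is independent, and a standard argument shows that if $\sum_j \alpha_j u_{k_j} = 0$ with not all $\alpha_j$ zero then either $\sum_j \alpha_j = 0$ (whence $\sum_j \alpha_j \widetilde w_{k_j}=0$, contradicting independence of the $w_k$) or $\sum_j\alpha_j\ne 0$ and $u^* = -\big(\sum_j\alpha_j\big)^{-1}\sum_j \alpha_j \widetilde w_{k_j} \in \mathcal O_{T^0(y_0)}$ (after zero-extension considerations), which would force $y(T^0(y_0);0,u^*)=0=-S(T^0(y_0))y_0+y(T^0(y_0);y_0,u^*)$... hence $S(T^0(y_0))y_0=0$, i.e. $T^1(y_0)\le T^0(y_0)$, contradicting (\ref{infinite-1201-1}). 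This last dichotomy is clean and I expect it to be the cleanest route; the main obstacle is precisely handling this potential dependence of $u^*$ on the perturbation directions, which the contradiction with $T^0(y_0)<T^1(y_0)$ resolves.
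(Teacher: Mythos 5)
Your proposal is correct and follows essentially the same route as the paper: fix a minimal time control $u^*$ via Corollary \ref{wangbuchoutheorem3.13} (you cite its part (iii), but the facts you actually use are its parts (i) and (ii)) with $\|u^*\|_{L^\infty}=N(T^0(y_0),y_0)<M$, perturb it by small multiples of linearly independent elements of the infinite-dimensional kernel $\mathcal O_{T^0(y_0)}$ from Lemma \ref{infinite-T}, and prove independence by the dichotomy on $\sum_j\alpha_j$, where the case $\sum_j\alpha_j\neq 0$ forces $S(T^0(y_0))y_0=0$ and contradicts $T^0(y_0)<T^1(y_0)$ (the paper reaches the same contradiction via $N(T^0(y_0),y_0)=0$ and (iv) of Lemma \ref{Lemma-NT0-T0-T1}, a purely cosmetic difference). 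The ``delicate point'' you worry about is already resolved by your own dichotomy, so no modification of the family $\{w_k\}$ is needed.
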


\begin{proof}
Arbitrarily fix a $y_0\in X\setminus\{0\}$ so that (\ref{infinite-1201-1}) holds. Then fix an $M$ so that (\ref{infinite-1201-2}) holds. By (\ref{infinite-1201-1}) and (\ref{infinite-1201-2}), we can use (i) and (ii) of  Corollary \ref{wangbuchoutheorem3.13} to see that
\begin{equation}\label{wwggss3.59}
   T^0(y_0) = T(M,y_0)\in(0,\infty),
\end{equation}
and to find a minimal time control $u^*$ so that $v^*\triangleq u^*|_{(0,T^0(y_0))}$ is a minimal norm control to $(NP)^{T^0(y_0),y_0}$. The latter, along with  (\ref{wwggss3.59}) and (\ref{infinite-1201-2}), yields that
\begin{equation}\label{wwggss3.59-1}
  y(T(M,y_0);y_0,u^*)= y(T^0(y_0);y_0,u^*)=\hat y(T^0(y_0);y_0,v^*)=0
\end{equation}
and
\begin{eqnarray}\label{TP-NP-11}
  \|u^*\|_{L^\infty(0,T(M,y_0);U)} =\|v^*\|_{L^\infty(0,T^0(y_0);U)}  = N(T^0(y_0),y_0)<M.
\end{eqnarray}

Next, since $0<T^0(y_0)<\infty$ (see (\ref{wwggss3.59})), by (H1), we can use Lemma  \ref{infinite-T} to find a sequence
$\{u_k\}_{k=1}^\infty  \subset  L^\infty(0, T^0(y_0);U)$ so that
\begin{equation}\label{wwggss3.60}
   \hat y(T^0(y_0); 0,u_k)=0\;\;\mbox{for all}\;\;    k\in\mathbb N^+,
\end{equation}
and so that any finite number of elements in $\{u_k\}_{k=1}^\infty$ are linearly independent in the space
$L^\infty(0, T^0(y_0);U)$. Write $\hat u_k$, $k=1,2,\cdots$, for the zero extension of $u_k$ over $\mathbb{R}^+$. Then any finite number of elements in $\{\hat u_k\}_{k=1}^\infty$ are linearly independent in
$L^\infty(\mathbb R^+;U)$.
Arbitrarily fix a $k\in\mathbb N^+$. It follows from (\ref{wwggss3.59}) and (\ref{wwggss3.60}) that
\begin{equation}\label{wwggss3.61}
   y(T(M,y_0); 0, \hat u_k)=0.
\end{equation}
Because of (\ref{infinite-1201-2}), we can take  $\varepsilon_k>0$ so that
  \begin{equation}\label{wwggss3.62}
  \varepsilon_k \|\hat u_k\|_{L^\infty(\mathbb R^+;U)}< M-N(T^0(y_0),y_0).
  \end{equation}
 Define a control $u^*_k$ as follows:
 \begin{eqnarray}\label{wwggss3.62-1}
  u^*_k\triangleq \varepsilon_k \hat u_k + \chi_{(0,T(M,y_0))}u^*
  \;\;\mbox{over}\;\;   \mathbb R^+.
 \end{eqnarray}
 This, along with (\ref{wwggss3.59-1}) and (\ref{wwggss3.61}), yields that
 \begin{eqnarray}\label{wwggss3.62-2}
  y(T(M,y_0);y_0,u^*_k)
  = y(T(M,y_0);y_0,u^*) + \varepsilon_k y(T(M,y_0);0,\hat u_k) =0.
 \end{eqnarray}
 At same time, it follows from (\ref{wwggss3.62-1}), (\ref{wwggss3.62}) and (\ref{TP-NP-11}) that
 \begin{eqnarray}\label{wwggss3.62-3}
  \|u^*_k\|_{L^\infty(\mathbb R^+;U)}   < M.
 \end{eqnarray}
   Since $k$ was arbitrarily taken from $\mathbb N^+$, by (\ref{wwggss3.62-2}) and (\ref{wwggss3.62-3}),   $\{u^*_k\}_{k=1}^\infty$ is a sequence of  minimal time controls to $(TP)^{M,y_0}$. (Each $u^*_k$  is not a bang-bang control, see (\ref{wwggss3.62-3}).)

    Finally, we will show that any finite number of controls in $\{u^*_k\}_{k=1}^\infty$ are linearly independent
    in $L^\infty(\mathbb{R}^+;U)$. Suppose that there are a finite subsequence  $\{u^*_{k_j}\}_{j=1}^N$ of
    $\{u^*_{k}\}_{k=1}^\infty$ and a sequence $\{\alpha_j\}_{j=1}^N \subset \mathbb R$ so that
    \begin{equation}\label{ggssww3.64}
   \sum_{j=1}^N \alpha_j u^*_{k_j}=0.
    \end{equation}
    We aim to show that
    \begin{eqnarray}\label{ggssww3.64-1}
     \alpha_j=0   \;\;\mbox{for each}\;\;    j\in \{1,2,\cdots,N\}.
    \end{eqnarray}
    By (\ref{ggssww3.64}) and (\ref{wwggss3.62-1}), it follows that
    \begin{equation}\label{ggssww3.64-2}
   \sum_{j=1}^N \alpha_j\varepsilon_{k_j} \hat u_{k_j} +\Big(\sum_{j=1}^N \alpha_j\Big)  \chi_{(0,T(M,y_0))} u^*=0.
    \end{equation}
    Since $\hat u_{k_1},\dots, \hat u_{k_N}$ are linearly independent, we see from (\ref{ggssww3.64-2}) that, to show (\ref{ggssww3.64-1}), it suffices to prove that
    \begin{eqnarray}\label{ggssww3.64-3}
     \sum_{j=1}^N \alpha_j  =0.
    \end{eqnarray}
    By contradiction, suppose that (\ref{ggssww3.64-3}) were not true. Then we would have
    \begin{eqnarray}\label{ggssww3.64-4}
     \sum_{j=1}^N \alpha_j\neq 0.
    \end{eqnarray}
    By (\ref{ggssww3.64-4}) and (\ref{ggssww3.64-2}), we know that $\chi_{(0,T(M,y_0))}u^*$ is a linear combination of $\hat u_{k_1},\cdots,\hat u_{k_N}$. This, along with (\ref{wwggss3.59}) and (\ref{wwggss3.61}), yields that
    \begin{eqnarray}
     y(T^0(y_0); 0,u^*)=y(T(M,y_0); 0,u^*)=0,
    \end{eqnarray}
    which, together with (\ref{wwggss3.59-1}), implies that
    \begin{eqnarray}\label{ggssww3.64-5}
     y(T^0(y_0);y_0,0)
     =  y(T^0(y_0);y_0,u^*) - y(T^0(y_0);0,u^*)=0.
    \end{eqnarray}
    Notice that $T^0(y_0)\in (0,\infty)$ (see (\ref{wwggss3.59})). So the problem $(NP)^{T^0(y_0),y_0}$ is well defined. Then by (\ref{NP-0}) and (\ref{ggssww3.64-5}), we see that
   $ N(T^0(y_0),y_0)=0$.
       By this, we can use (iv) of Lemma \ref{Lemma-NT0-T0-T1} to find that
   $T^0(y_0)=T^1(y_0)$,
       which contradicts (\ref{infinite-1201-1}). So (\ref{ggssww3.64-3}) is true and then any finite number of controls in $\{u^*_k\}_{k=1}^\infty$ are linearly independent
    in $L^\infty(\mathbb{R}^+;U)$. We end the proof of this theorem.

\end{proof}

\begin{Theorem}\label{Theorem-TP-infinite-op-1}
Suppose that (H1) holds. Let $y_0\in X\setminus\{0\}$ satisfy that
 \begin{eqnarray}\label{infinite-1203-1}
  T^0(y_0)=T^1(y_0)<\infty.
 \end{eqnarray}
Then for each $M\in(0,\infty)$, $(TP)^{M,y_0}$  has infinitely many different minimal time controls so that among them, any finite number of controls are linearly independent in $L^\infty(\mathbb R^+;U)$.

\end{Theorem}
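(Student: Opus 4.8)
\textbf{Proof proposal for Theorem~\ref{Theorem-TP-infinite-op-1}.}

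The plan is to mirror the structure of the proof of Theorem~\ref{Theorem-TP-infinite-op}, but using the by-now-familiar fact that when $T^0(y_0)=T^1(y_0)<\infty$, the null control already does the job. First I would invoke Corollary~\ref{wangbuchoutheorem3.13-3}: under (\ref{infinite-1203-1}) it gives both $T(M,y_0)=T^0(y_0)\in(0,\infty)$ for all $M\in(0,\infty)$, and the fact that the null control is a minimal time control to $(TP)^{M,y_0}$, i.e. $y(T(M,y_0);y_0,0)=S(T^0(y_0))y_0=0$. So I may take $u^*\equiv 0$ as the ``seed'' minimal time control, with $\|u^*\|_{L^\infty(\mathbb R^+;U)}=0<M$.

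Next, since $0<T^0(y_0)<\infty$ and (H1) holds, I would apply Lemma~\ref{infinite-T} with $T=T^0(y_0)$ to obtain a sequence $\{u_k\}_{k=1}^\infty\subset L^\infty(0,T^0(y_0);U)$ with $\hat y(T^0(y_0);0,u_k)=0$ for all $k$ and with any finite subcollection linearly independent in $L^\infty(0,T^0(y_0);U)$. Let $\hat u_k$ denote the zero extension of $u_k$ over $\mathbb R^+$; these remain linearly independent in $L^\infty(\mathbb R^+;U)$, and $y(T(M,y_0);0,\hat u_k)=\hat y(T^0(y_0);0,u_k)=0$. For each $k$, choose $\varepsilon_k>0$ with $\varepsilon_k\|\hat u_k\|_{L^\infty(\mathbb R^+;U)}<M$ and set $u^*_k\triangleq\varepsilon_k\hat u_k$. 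Then $y(T(M,y_0);y_0,u^*_k)=y(T(M,y_0);y_0,0)+\varepsilon_k y(T(M,y_0);0,\hat u_k)=0$ and $\|u^*_k\|_{L^\infty(\mathbb R^+;U)}<M$, so each $u^*_k$ is a minimal time control to $(TP)^{M,y_0}$ (and none is bang-bang). Linear independence of any finite subfamily $\{u^*_{k_j}\}_{j=1}^N$ is immediate here, since $u^*_{k_j}=\varepsilon_{k_j}\hat u_{k_j}$ and the $\hat u_{k_j}$ are linearly independent with $\varepsilon_{k_j}\neq0$; this is strictly simpler than in Theorem~\ref{Theorem-TP-infinite-op}, where the nonzero seed $u^*$ forced the extra argument via $N(T^0(y_0),y_0)=0$.

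I expect no real obstacle: the only point requiring a touch of care is confirming that Lemma~\ref{infinite-T} is applicable, which needs exactly $0<T^0(y_0)<\infty$ and (H1) — both in hand, the former from Corollary~\ref{wangbuchoutheorem3.13-3} (which in turn uses (ii) of Lemma~\ref{Lemma-T0-T1} to get $T^0(y_0)=T^1(y_0)>0$ together with the hypothesis $T^0(y_0)<\infty$). Everything else is the routine ``perturb a known minimal time control by an infinite-dimensional family of null controls supported on $(0,T(M,y_0))$'' device already used above, so the write-up will be short.
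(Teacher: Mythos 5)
Your proposal is correct and follows essentially the same route as the paper's own proof: invoke Corollary~\ref{wangbuchoutheorem3.13-3} to get $T(M,y_0)=T^0(y_0)\in(0,\infty)$ with the null control as a minimal time control, use Lemma~\ref{infinite-T} (via (H1)) to produce the linearly independent null controls $\{u_k\}$, and take $u^*_k=\varepsilon_k\hat u_k$ with $\varepsilon_k\|\hat u_k\|_{L^\infty(\mathbb R^+;U)}<M$. Your observation that linear independence is immediate here (unlike in Theorem~\ref{Theorem-TP-infinite-op}) matches the paper's argument as well.
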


\begin{proof}
Arbitrarily fix a $y_0\in X\setminus\{0\}$ so that (\ref{infinite-1203-1}) holds. Let  $M\in(0,\infty)$. Then by (\ref{infinite-1203-1}), we can use Corollary \ref{wangbuchoutheorem3.13-3} to see that
\begin{eqnarray}\label{infinite-1203-2}
   0<T^1(y_0)=T(M,y_0)=T^0(y_0)<\infty
 \end{eqnarray}
 and to find that the null control is a minimal time control to $(TP)^{M,y_0}$, i.e.,
 \begin{eqnarray}\label{infinite-1203-2-1}
  y(T(M,y_0);y_0,0)=0.
 \end{eqnarray}

 Meanwhile,  since $0<T^0(y_0)<\infty$ (see (\ref{infinite-1203-2})), by (H1), we can use Lemma  \ref{infinite-T} to find a sequence
$\{u_k\}_{k=1}^\infty  \subset  L^\infty(0, T^0(y_0);U)$ so that
\begin{equation}\label{infinite-1203-3}
   \hat y(T^0(y_0); 0,u_k)=0\;\;\mbox{for all}\;\;    k\in\mathbb N^+,
\end{equation}
and so that any finite number of elements in $\{u_k\}_{k=1}^\infty$ are linearly independent in the space
$L^\infty(0, T^0(y_0);U)$. Write $\hat u_k$, $k=1,2,\cdots$, for the zero extension of $u_k$ over $\mathbb{R}^+$. Then any finite number of elements in $\{\hat u_k\}_{k=1}^\infty$ are linearly independent in
$L^\infty(\mathbb R^+;U)$.
Arbitrarily fix a $k\in\mathbb N^+$. It follows from (\ref{infinite-1203-2}) and (\ref{infinite-1203-3}) that
\begin{equation}\label{infinite-1203-4}
   y(T(M,y_0); 0, \hat u_k)=0.
\end{equation}
Since $M>0$, we can take  $\varepsilon_k>0$ so that
  \begin{equation}\label{infinite-1203-5}
  \varepsilon_k \|\hat u_k\|_{L^\infty(\mathbb R^+;U)}< M.
  \end{equation}

Next, we define a control $u^*_k$ in the following manner:
 \begin{eqnarray}\label{infinite-1203-6}
  u^*_k\triangleq \varepsilon_k \hat u_k
  \;\;\mbox{over}\;\;   \mathbb R^+.
 \end{eqnarray}
 Then by (\ref{infinite-1203-6}),  (\ref{infinite-1203-2-1}) and (\ref{infinite-1203-4}), we find that
 \begin{eqnarray}\label{infinite-1203-7}
  y(T(M,y_0);y_0,u^*_k)= y(T(M,y_0);y_0,0)  +  \varepsilon_k y(T(M,y_0);0,\hat u_k)=0.
 \end{eqnarray}
 Meanwhile, by (\ref{infinite-1203-6})  and (\ref{infinite-1203-5}), we see that
 \begin{eqnarray}\label{infinite-1203-8}
  \|u^*_k\|_{L^\infty(\mathbb R^+;U)}   < M.
 \end{eqnarray}
   Since $k$ was arbitrarily taken from $\mathbb N^+$, it follows by (\ref{infinite-1203-7}) and (\ref{infinite-1203-8}) that
    for each $k\in \mathbb N^+$, $u^*_k$ is a   minimal time control to $(TP)^{M,y_0}$ and has no the  bang-bang property.

 Finally, we will show that any finite number of controls in $\{u^*_k\}_{k=1}^\infty$ are linearly independent
    in $L^\infty(\mathbb{R}^+;U)$. Here is the argument: Suppose that  there are a finite subsequence  $\{u^*_{k_j}\}_{j=1}^N$ of
    $\{u^*_{k}\}_{k=1}^\infty$ and a sequence $\{\alpha_j\}_{j=1}^N \subset \mathbb R$ so that
    \begin{equation}\label{infinite-1203-9}
   \sum_{j=1}^N \alpha_j u^*_{k_j}=0.
    \end{equation}
    Then we will have  that
    \begin{eqnarray}\label{infinite-1203-10}
     \alpha_j=0   \;\;\mbox{for each}\;\;    j\in \{1,2,\cdots,N\}.
    \end{eqnarray}
    Indeed, by (\ref{infinite-1203-9}) and (\ref{infinite-1203-6}), it follows that
    \begin{equation}\label{infinite-1203-11}
   \sum_{j=1}^N \alpha_j\varepsilon_{k_j} \hat u_{k_j}   =0.
    \end{equation}
    Since $\hat u_{k_1},\dots, \hat u_{k_N}$ are linearly independent,
    we find that for each $j\in \{1,2,\cdots,N\}$, $\alpha_j\varepsilon_{k_j}=0$.
    Because $\{\varepsilon_{k_j}\}_{j=1}^N \subset (0,\infty)$,   we see that
    (\ref{infinite-1203-10}) holds. So any finite number of controls in $\{u^*_k\}_{k=1}^\infty$ are linearly independent   in $L^\infty(\mathbb{R}^+;U)$. This ends the proof.

\end{proof}

\subsection{Proofs of the main theorems}

We begin with the proof of Theorem \ref{Proposition-NTy0-partition}, which gives the BBP decomposition for $(NP)^{T,y_0}$.

\begin{proof}[Proof of Theorem \ref{Proposition-NTy0-partition}]
(i) First of all, we observe from (\ref{zhangjinchu1}) and (\ref{zhangjinchu5})-(\ref{zhangjinchu3}) that
\begin{eqnarray}\label{zhangjinchu2}
 \mathcal{W}=\mathcal{W}_1 \cup \mathcal{W}_2 \cup \mathcal{W}_3,
\end{eqnarray}
\begin{eqnarray}\label{zhangjinchu4}
\mathcal{W}_1=\mathcal{W}_{1,1} \cup \mathcal{W}_{1,2},
\end{eqnarray}
\begin{eqnarray}\label{zhangjinchu6}
\mathcal{W}_2=\mathcal{W}_{2,1} \cup \mathcal{W}_{2,2} \cup \mathcal{W}_{2,3} \cup \mathcal W_{2,4},
\end{eqnarray}
and
\begin{eqnarray}\label{zhangjinchu8}
\mathcal{W}_3=\mathcal{W}_{3,1} \cup \mathcal{W}_{3,2} \cup \mathcal{W}_{3,3} \cup \mathcal W_{3,4}.
\end{eqnarray}
To prove the conclusion (i), it suffices to show that
 \begin{eqnarray}\label{zhangjinchu9-1}
 \mathcal{W}=\big(\cup_{j=1}^2 \mathcal{W}_{1,j}\big)  \cup \big(\cup_{j=1}^4 \mathcal{W}_{2,j}\big)
  \cup \big(\cup_{j=1}^4 \mathcal{W}_{3,j}\big)
\end{eqnarray}
and
\begin{equation}\label{zhangjinchu10}
\mathcal{W}_{i,j}\cap \mathcal{W}_{i^\prime,j^\prime}=\emptyset,
\;\;\mbox{when}\;\;(i,j)\neq(i^\prime,j^\prime).
\end{equation}
     The  equality  (\ref{zhangjinchu9-1}) follows from    (\ref{zhangjinchu2}),  (\ref{zhangjinchu4}), (\ref{zhangjinchu6}) and  (\ref{zhangjinchu8}) at once. To show  (\ref{zhangjinchu10}),  three observations are given in order:  First, from (\ref{zhangjinchu3-1}), (\ref{zhangjinchu3-2}) and (\ref{zhangjinchu3}), we see
   that  $\mathcal{W}_1$, $\mathcal{W}_2$ and $\mathcal{W}_3$  are pairwise disjoint; Second,
    from (\ref{zhangjinchu5}), it follows that $\mathcal{W}_{1,1}$ and  $\mathcal{W}_{1,2}$
    are disjoint; Third,
              by  (\ref{zhangjinchu7}) and (\ref{zhangjinchu9}),
      we see respectively that all $\mathcal{W}_{2,j}$, $j=1,2,3,4$ are pairwise disjoint, and that all
    $\mathcal{W}_{3,j}$, $j=1,2,3,4$ are pairwise disjoint.
    The above three observations, together with
    (\ref{zhangjinchu4}), (\ref{zhangjinchu6}) and  (\ref{zhangjinchu8}), leads to
    (\ref{zhangjinchu10}). Thus, we end the proof of the conclusion (i).

 (ii)  First, we let $(T,y_0)\in \mathcal{W}_{1,2}$. Then by the definitions of $\mathcal{W}_{1,2}$ and $\mathcal{W}_{1}$ (see (\ref{zhangjinchu5}) and (\ref{zhangjinchu3-1}), respectively), we have that
 \begin{equation}\label{zhangjinchu12}
   T^0(y_0) \leq T<\infty  \;\;\mbox{and}\;\; N(T^0(y_0), y_0)=0.
 \end{equation}
 By the last equation in (\ref{zhangjinchu12}), we can use
 (iv) of Lemma \ref{Lemma-NT0-T0-T1} to obtain that
 $T^0(y_0)= T^1(y_0)<\infty$.
 From this  and the first inequality in (\ref{zhangjinchu12}),
  we can apply
  (ii) of Corollary \ref{wangpuchou1-1}  to see that the null control is the unique minimal norm control to
   $(NP)^{T,y_0}$.
   This, along with (\ref{NP-0}), yields that $N(T,y_0)=0$.
   Hence,
  $(NP)^{T,y_0}$  has the bang-bang property.

     Next, we let $(T,y_0)\in \mathcal{W}_{2,4}$. Then by the definitions of $\mathcal{W}_{2,4}$ and $\mathcal{W}_{2}$ (see (\ref{zhangjinchu7}) and (\ref{zhangjinchu3-2}), respectively), we have that
 \begin{equation}\label{zhangjinchu12-1}
   T^1(y_0) \leq T<\infty  \;\;\mbox{and}\;\; 0<N(T^0(y_0), y_0)<\infty.
 \end{equation}
 By the last equation in (\ref{zhangjinchu12-1}), we can apply the conclusion (iii) in Lemma \ref{Lemma-NT0-T0-T1} to obtain that $T^0(y_0)< T^1(y_0)$.
From this and the first inequality in (\ref{zhangjinchu12-1}),
  we can apply
  (ii) of Corollary \ref{wangpuchou1} to see that the null control is the unique minimal norm control to
   $(NP)^{T,y_0}$. This, along with (\ref{NP-0}), yields that $N(T,y_0)=0$.
   Hence,
  $(NP)^{T,y_0}$  has the bang-bang property.

   Finally, we let $(T,y_0)\in \mathcal{W}_{3,3}$. Then by the definitions of $\mathcal{W}_{3,3}$ and $\mathcal{W}_{3}$ (see (\ref{zhangjinchu9}) and (\ref{zhangjinchu3}), respectively), we have that
 \begin{equation}\label{zhangjinchu12-2}
   T^1(y_0) \leq T<\infty  \;\;\mbox{and}\;\; N(T^0(y_0), y_0)=\infty.
 \end{equation}
 By  (\ref{zhangjinchu12-2}), we can use
 (i) of Lemma \ref{Lemma-NT0-T0-T1} to obtain that
$T^0(y_0)< T^1(y_0)$. From this and  the first inequality in (\ref{zhangjinchu12-2}),
  we can apply
  (ii) of Corollary \ref{wangpuchou1} to see that the null control is the unique minimal norm control to
   $(NP)^{T,y_0}$.  This, along with (\ref{NP-0}), yields that $N(T,y_0)=0$.
   Hence,
  $(NP)^{T,y_0}$  has the bang-bang property. This ends the proof of the conclusion (ii).

(iii)  First, we let  $(T,y_0)\in \mathcal{W}_{2,3}$. Then by  the definition of $\mathcal{W}_{2,3}$  (see (\ref{zhangjinchu7})), we have that
  $T^0(y_0) < T<T^1(y_0)$. From this and  and the assumptions (H1)-(H2), we can apply   Theorem \ref{yubiaotheorem5.6}
         to find that  $(NP)^{T,y_0}$ has the bang-bang property. The remainder is to show that the null control is not a minimal norm control to $(NP)^{T,y_0}$.
          In fact, since $T^0(y_0) < T<T^1(y_0)$, it follows from (iii) of Lemma \ref{Lemma-T0-T1} that  $N(T,y_0)>0$, from which, we see that the null control is not a minimal norm control to $(NP)^{T,y_0}$.

         Next, we let  $(T,y_0)\in \mathcal{W}_{3,2}$. By the definition of $\mathcal{W}_{3,2}$  (see (\ref{zhangjinchu9})), we find that
         $T\in (T^0(y_0), T^1(y_0))$. Then by the same way as that used for the above case that $(T,y_0)\in \mathcal{W}_{2,3}$, we see that
         $(NP)^{T,y_0}$ has the bang-bang property and the null control is not its minimal norm control.
         This ends the proof of the conclusion (iii).

 \vskip 5pt
 (iv) First we let $(T,y_0)\in \mathcal{W}_{1,1}$. Then by the definitions of $\mathcal{W}_{1,1}$ and $\mathcal{W}_{1}$ (see (\ref{zhangjinchu5}) and
 (\ref{zhangjinchu3-1}), respectively), we have that
 \begin{equation}\label{zhangjinchu18}
 0<T<T^0(y_0)\;\;\mbox{and}\;\; N(T^0(y_0),y_0)=0.
 \end{equation}
 From the last equation in (\ref{zhangjinchu18}), we can apply
   (iv) of Lemma \ref{Lemma-NT0-T0-T1} to see  that $T^0(y_0)=T^1(y_0)<\infty$.
   This, together with the first inequality in (\ref{zhangjinchu18}), yields that
   \begin{equation}\label{zhangjinchu19}
 T^0(y_0)=T^1(y_0)\;\;\mbox{and}\;\; T\in \big(0, T^0(y_0)\big).
 \end{equation}
   From (\ref{zhangjinchu19}), we can use  (i) of Corollary \ref{wangpuchou1-1} to find that $(NP)^{T,y_0}$ has no any admissible control and so does not hold the  bang-bang property.

   Next we let $(T,y_0)\in \mathcal{W}_{2,1}$. Then by the definitions of $\mathcal{W}_{2,1}$ and $\mathcal{W}_{2}$ (see (\ref{zhangjinchu7}) and (\ref{zhangjinchu3-2}), respectively), we have that
    \begin{equation}\label{zhangjinchu20}
 0<T<T^0(y_0)\;\;\mbox{and}\;\; 0<N(T^0(y_0),y_0)<\infty.
 \end{equation}
    By the second inequality in (\ref{zhangjinchu20}), we can use
         (iii) of Lemma \ref{Lemma-NT0-T0-T1} to get that $T^0(y_0)<T^1(y_0)$.
      This, along with the first inequality in    (\ref{zhangjinchu20}), yields that
         \begin{equation}\label{zhangjinchu21}
 0<T^0(y_0)<T^1(y_0)\;\;\mbox{and}\;\; 0<T<T^0(y_0).
 \end{equation}
     From    (\ref{zhangjinchu21}), we can use  (i) of Corollary \ref{wangpuchou1}
     to get that $(NP)^{T,y_0}$ has no any admissible control and so does not hold   bang-bang property.

     We now let  $(T,y_0)\in \mathcal{W}_{3,1}$. Then by the definitions of $\mathcal{W}_{3,1}$ and $\mathcal{W}_{3}$ (see (\ref{zhangjinchu9}) and (\ref{zhangjinchu3}), respectively), we see that
      \begin{equation}\label{zhangjinchu22}
 T^0(y_0)<\infty,\;\; 0<T\leq T^0(y_0)\;\;\mbox{and}\;\; N(T^0(y_0),y_0)=\infty.
 \end{equation}
      By the first inequality and the last equality in (\ref{zhangjinchu22}), we can use (i) of Lemma \ref{Lemma-NT0-T0-T1} to find that $T^0(y_0)<T^1(y_0)$. This, together with (\ref{zhangjinchu22}), indicates that
      \begin{equation}\label{zhangjinchu23}
 0<T^0(y_0)<T^1(y_0),\;\; 0<T\leq T^0(y_0)\;\;\mbox{and}\;\; N(T^0(y_0),y_0)=\infty.
 \end{equation}
  In the case that $T=T^0(y_0)$, from the first inequality in (\ref{zhangjinchu23}), we can use (i) of Corollary \ref{wangpuchou1} to see that $(NP)^{T,y_0}$ has no any admissible control and so does not have the bang-bang property. In the case when $T<T^0(y_0)$, from the last equality in (\ref{zhangjinchu23}), we can apply
  (v) of Theorem \ref{Lemma-existence-optimal-control}  to find that
   $(NP)^{T,y_0}$ has no any admissible control and so does not have the bang-bang property.

   Finally, we let $(T,y_0)\in \mathcal{W}_{3,4}$. Then by the definitions of $\mathcal{W}_{3,4}$ and $\mathcal{W}_{3}$  (see (\ref{zhangjinchu9})
   and (\ref{zhangjinchu3}), respectively), we have that
    \begin{equation}\label{zhangjinchu24}
 0<T<\infty,\;\;T^0(y_0)=\infty\;\;\mbox{and}\;\; N(T^0(y_0),y_0)=\infty.
 \end{equation}
   By the last two equalities in (\ref{zhangjinchu24}), we can  use (i) of Lemma \ref{Lemma-NT0-T0-T1} to see that
   $T^0(y_0)=T^1(y_0)=\infty$,
       which,
   along with the first inequality in (\ref{zhangjinchu24}), yields that
    $T^0(y_0)=T^1(y_0)$ and $0<T<T^0(y_0)$.
 From these, we can apply  (i) of Corollary \ref{wangpuchou1-1} to find that $(NP)^{T,y_0}$ has no any admissible control and so does not hold the    bang-bang property. This ends the proof of the conclusion (iv).

(v) Let $(T,y_0)\in \mathcal W_{2,2}$. Then by the definitions of $\mathcal W_{2,2}$ and $\mathcal W_{2}$ (see (\ref{zhangjinchu7}) and (\ref{zhangjinchu3-2}), respectively), we see that
$0<T=T^0(y_0)<\infty$ and $0<N(T^0(y_0),y_0)<\infty$.
From these, we can use (iii) of Theorem \ref{Lemma-existence-optimal-control} to see that $(NP)^{T,y_0}$ has at least one minimal norm control. This ends the proof of the conclusion (v).

   In summary, we finish the proof of Theorem \ref{Proposition-NTy0-partition}.

\end{proof}

Next, we prove Theorem \ref{Proposition-TMy0-partition}, which gives the BBP decompositions for $(TP)^{M,y_0}$.

\begin{proof}[Proof of Theorem \ref{Proposition-TMy0-partition}]
(i) First of all, we observe from (\ref{TP-divide-domain}) and (\ref{Lambda-di-0-1-1})-(\ref{Lambda-di-0-1}) that
\begin{eqnarray}\label{Lambda-di-0}
\mathcal V  =    \mathcal V_1 \cup \mathcal V_2 \cup \mathcal V_3,
\end{eqnarray}
\begin{eqnarray}\label{Lambda-di-2}
\mathcal V_2  =    \mathcal V_{2,1} \cup \mathcal V_{2,2} \cup \mathcal V_{2,3}
 \cup   \mathcal V_{2,4},
\end{eqnarray}
and
\begin{eqnarray}\label{Lambda-di-3}
\mathcal V_3  =    \mathcal V_{3,1} \cup \mathcal V_{3,2} \cup \mathcal V_{3,3}.
\end{eqnarray}
To show the conclusion (i), it suffices to verify that
\begin{eqnarray}\label{TP-1204-1}
 \mathcal V  =   \mathcal V_{1}  \cup  (\cup_{j=1}^4 \mathcal V_{2,j})
 \cup (\cup_{j=1}^3 \mathcal V_{3,j})
\end{eqnarray}
 and
\begin{equation}\label{TP-1204-2}
\mathcal V_1 \cap \mathcal V_{i,j}=\emptyset,\;\;
\mathcal{V}_{i^\prime,j^\prime}\cap \mathcal{V}_{i^{\prime\prime},j^{\prime\prime}}=\emptyset
\;\;\mbox{when}\;\;(i^\prime,j^\prime)\neq(i^{\prime\prime},j^{\prime\prime}).
\end{equation}
First of all, the equality   (\ref{TP-1204-1}) follows from    (\ref{Lambda-di-0}),   (\ref{Lambda-di-2}) and  (\ref{Lambda-di-3}) at once. To prove (\ref{TP-1204-2}),  three observations are given in order: First, from (\ref{Lambda-di-0-1-1}), (\ref{Lambda-di-0-1-2}) and (\ref{Lambda-di-0-1}), we see
   that  $\mathcal{V}_1$, $\mathcal{V}_2$ and $\mathcal{V}_3$ are pairwise disjoint. Second,
    from (\ref{Lambda-di-2-1}), we find that all $\mathcal{V}_{2,j}$, $j=1,2,3,4$, are pairwise disjoint. Third,  from (\ref{Lambda-di-3-1}), we find that all
    $\mathcal{V}_{3,j}$,  $j=1,2,3$, are pairwise disjoint. The above three observations, along with
     (\ref{Lambda-di-2}) and  (\ref{Lambda-di-3}), yield
    (\ref{TP-1204-2}). Thus, we end the proof of the conclusion (i).

 (ii) First we let $(M,y_0)\in \mathcal{V}_{2,2}$. By the definitions of $\mathcal{V}_{2,2}$ and $\mathcal V_2$ (see (\ref{Lambda-di-2-1}) and (\ref{Lambda-di-0-1-2})), we have that
 \begin{equation}\label{newyear6.64}
  N(T^1(y_0), y_0) < M < N(T^0(y_0), y_0)
 \;\;\mbox{and}\;\;
  0< N(T^0(y_0), y_0) <\infty.
 \end{equation}
 By the second inequality in (\ref{newyear6.64}), we can use  (iii) of Lemma \ref{Lemma-NT0-T0-T1} to see that
 $T^0(y_0)<T^1(y_0)$.
 By this, the first inequality in (\ref{newyear6.64}) and the assumptions  (H1)-(H2), we can apply
   Theorem \ref{Theorem-time-variant-bangbang} to see that
  $(TP)^{M,y_0}$ has the bang-bang property.

  Next, we let  $(M,y_0)\in \mathcal{V}_{3,2}$. By the definitions of $\mathcal{V}_{3,2}$ and $\mathcal{V}_{3}$ (see (\ref{Lambda-di-3-1}) and (\ref{Lambda-di-0-1})), we find that
    \begin{equation}\label{newyear6.65}
    T^0(y_0)<\infty
    \;\;\mbox{and}\;\;   N(T^1(y_0), y_0) < M < \infty=N(T^0(y_0), y_0).
    \end{equation}
    From (\ref{newyear6.65}), we can use (i) of Lemma \ref{Lemma-NT0-T0-T1} to get that $T^0(y_0)<T^1(y_0)$.
    By this, the second conclusion in (\ref{newyear6.65}) and the assumptions (H1) and (H2),
         we can apply
   Theorem \ref{Theorem-time-variant-bangbang} to see that
  $(TP)^{M,y_0}$ has the bang-bang property.  This ends the proof of the conclusion (ii).

 (iii)  Let $(M,y_0)\in \mathcal V_{2,4}$. By the definitions of  $\mathcal V_{2,4}$ and $\mathcal V_2$ (see (\ref{Lambda-di-2-1}) and (\ref{Lambda-di-0-1-2})), we find that
    \begin{equation}\label{newyear6.66}
    N(T^0(y_0), y_0) < M < \infty
 \;\;\mbox{and}\;\;
  0< N(T^0(y_0), y_0) <\infty.
    \end{equation}
    From the second inequality in (\ref{newyear6.66}), we can use (iii) of Lemma \ref{Lemma-NT0-T0-T1} to see that
 \begin{eqnarray}\label{TP-1204-6}
  T^0(y_0)<T^1(y_0).
 \end{eqnarray}
 By  (\ref{TP-1204-6}) and  (\ref{newyear6.66}), we can use
  (i) and (ii) of Corollary \ref{wangbuchoutheorem3.13} to find  respectively
   that
   \begin{equation}\label{Newnew0202}
   T(M,y_0)=T^0(y_0)\in(0,\infty),
   \end{equation}
   and  that
  $(TP)^{M,y_0}$ has a minimal time control $u^*$ so that
  $u^*|_{(0,T^0(y_0))}$ is a minimal norm control to $(NP)^{T^0(y_0),y_0}$.
  The  later, together with (\ref{Newnew0202}) and the first inequality in (\ref{newyear6.66}), indicates that
       $$
     \|u^*\|_{L^\infty(0,T(M,y_0);U)}=\|u^*\|_{L^\infty(0,T^0(y_0);U)}
     =
    N(T^0(y_0),y_0)<M.
    $$
    This implies that $(TP)^{M,y_0}$ does not hold the bang-bang property.

   Meanwhile, according to  (iii) of Corollary \ref{wangbuchoutheorem3.13},
   the null control is not a minimal time control to    $(TP)^{M,y_0}$.

 The remainder is to show that $(TP)^{M,y_0}$ has infinitely many different minimal time controls. Fortunately, this follows from Theorem \ref{Theorem-TP-infinite-op},
 since we already have  (\ref{TP-1204-6}), (\ref{newyear6.66}) and (H1).
  This ends the proof of the conclusion (iii).

 (iv)  Let $(M,y_0)\in \mathcal V_{1}$. By the definition of  $\mathcal V_{1}$ (see (\ref{Lambda-di-0-1-1})), we find that
    \begin{equation}\label{TP-1204-7}
      N(T^0(y_0), y_0)=0 < M < \infty.
    \end{equation}
 Since $N(T^0(y_0), y_0)=0$, it follows from (iv) of Lemma \ref{Lemma-NT0-T0-T1} that
 \begin{equation}\label{TP-1204-7-1}
      T^0(y_0)=T^1(y_0)<\infty.
 \end{equation}
 By (\ref{TP-1204-7-1}) and (\ref{TP-1204-7}), we can use (ii) of Corollary \ref{wangbuchoutheorem3.13-3} to see that the null control is a minimal time control to $(TP)^{M,y_0}$.
  From this, we see that $(TP)^{M,y_0}$ does not hold the bang-bang property, since  $M>0$.

  The remainder is to show that $(TP)^{M,y_0}$ has infinitely many different minimal time controls. Fortunately, this follows from Theorem \ref{Theorem-TP-infinite-op-1}, since we already have (\ref{TP-1204-7-1}) and (H1).
 This ends the proof of the conclusion (iv).

 (v)  First, we let  $(M,y_0)\in \mathcal{V}_{3,3}$. Then by the definitions of $\mathcal{V}_{3,3}$  and $\mathcal{V}_{3}$ (see (\ref{Lambda-di-3-1}) and (\ref{Lambda-di-0-1})), we find that
      \begin{equation}\label{newyear6.73}
      T^0(y_0)=\infty
      \;\;\mbox{and}\;\; N(T^0(y_0),y_0)=\infty.
      \end{equation}
      From (\ref{newyear6.73}), we can use  (i) of Lemma \ref{Lemma-NT0-T0-T1}
      to see  that
      $       T^0(y_0)=T^1(y_0)=\infty$.
             By this,  we can apply   Corollary \ref{wangbuchoutheorem3.13-2} to find that $(TP)^{M,y_0}$ has no any admissible control and so does not hold the  bang-bang property.

 Next, we let $(M,y_0)\in \mathcal{V}_{2,1}$. By the definitions of  $\mathcal{V}_{2,1}$ and $\mathcal{V}_{2}$ (see (\ref{Lambda-di-2-1}) and
 (\ref{Lambda-di-0-1-2})), we have that
 \begin{equation}\label{newyear6.71}
 0< M \leq N(T^1(y_0), y_0)
 \;\;\mbox{and}\;\;
 0< N(T^0(y_0), y_0)<\infty.
 \end{equation}
 By the second inequality in (\ref{newyear6.71}), we can use (iii) of Lemma \ref{Lemma-NT0-T0-T1} to see that
 $ T^0(y_0)<T^1(y_0)$.
 From this, the first inequality in (\ref{newyear6.71}) and the assumption (H1), we can apply  (ii) of Corollary \ref{wangbuchoutheorem3.13-1} to find that $(TP)^{M,y_0}$ has no any admissible control and so does not hold the bang-bang property.

 Finally, we let $(M,y_0)\in \mathcal{V}_{3,1}$. By the definitions of   $\mathcal{V}_{3,1}$ and $\mathcal{V}_{3}$ (see
 (\ref{Lambda-di-3-1}) and (\ref{Lambda-di-0-1})), we have that
 \begin{equation}\label{newyear6.72}
 T^0(y_0)<\infty,\;\;
  0< M \leq N(T^1(y_0), y_0)
  \;\;\mbox{and}\;\; N(T^0(y_0), y_0)=\infty.
 \end{equation}
 By the last equality and the first inequality in (\ref{newyear6.72}), we can use (i) of Lemma \ref{Lemma-NT0-T0-T1} to get that
 $ T^0(y_0)<T^1(y_0)$.
 From this, the second inequality in (\ref{newyear6.72}) and the assumption (H1),
   we can use  (ii) of Corollary \ref{wangbuchoutheorem3.13-1} to find that $(TP)^{M,y_0}$ has no any admissible control and so does not hold the bang-bang property.
     This ends the proof of the conclusion (v).

(vi) Let $(T,y_0)\in \mathcal V_{2,3}$. Then by the definitions of $\mathcal V_{2,3}$ and $\mathcal V_{2}$ (see (\ref{Lambda-di-2-1}) and
 (\ref{Lambda-di-0-1-2})), we see that
$ 0<M=N(T^0(y_0),y_0)<\infty$.
This, along with (iii) of Lemma \ref{Lemma-NT0-T0-T1}, yields that
$T^0(y_0)<T^1(y_0)$ and    $N(T^0(y_0),y_0)=M<\infty$.
From these, we can use (ii) of Corollary \ref{wangbuchoutheorem3.13} to find that $(TP)^{M,y_0}$ has at least one minimal time control. This ends the proof of the conclusion (vi).

   In summary, we finish the proof of Theorem \ref{Proposition-TMy0-partition}.

\end{proof}

We end this section with proving  Theorem \ref{Proposition-Corollary1.7}.
To do it, we need three propositions.
 The first one is the following Proposition~\ref{newhuangproposition2.15}. It presents some equivalent conditions for the $L^\infty$-null controllability of $(A,B)$. Though there have been many literatures on such issue,
 we do not find the exactly same version of Proposition~\ref{newhuangproposition2.15} in literatures.
 For the sake of the completeness of the paper,  we provide the detailed proof in Appendix F.
 \begin{Proposition}\label{newhuangproposition2.15}
The following conclusions are equivalent:

\noindent(i) The pair $(A^*, B^*)$ is  $L^1$-observable, i.e., the condition (H3) holds, i.e., for each $T\in(0,\infty)$, there exists a positive constant $C_1(T)$
  so that
\begin{eqnarray}\label{ob-control-eq}
  \|S^*(T)z\|_X\leq C_1(T) \int_0^T \|B^*S^*(T-t)z\|_U  \,\mathrm dt
  \;\;\mbox{for all}\;\; z\in D(A^*).
 \end{eqnarray}

\noindent(ii) The pair $(A,B)$ has the $L^\infty$-null controllability with a cost,  i.e., for each $T\in(0,\infty)$, there is a positive constant $C_2(T)$ so that for each $y_0\in X$, there exists a control $v\in L^\infty(0,T;U)$ satisfying that
 \begin{equation}\label{fangyan2.90}
 \hat y(T;y_0,v)=0\;\;\mbox{and}\;\;  \|v\|_{L^\infty(0,T;U)} \leq C_2(T)\|y_0\|_X.
 \end{equation}

\noindent(iii) The pair $(A,B)$ is $L^\infty$-null controllable, i.e., for each $T\in(0,\infty)$ and each $y_0\in X$, there exists a control $v\in L^\infty(0,T;U)$ so that $\hat y(T;y_0,v)=0$.

Furthermore, when one of the above three conclusions is valid,  the  constants $C_1(T)$ in (\ref{ob-control-eq}) and $C_2(T)$ in (\ref{fangyan2.90}) can be
taken as the same number.

\end{Proposition}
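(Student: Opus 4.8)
\textbf{Proof proposal for Proposition~\ref{newhuangproposition2.15}.}

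The plan is to prove the cycle of implications (i)$\Rightarrow$(ii)$\Rightarrow$(iii)$\Rightarrow$(i), and to track the constants along the way so that $C_1(T)$ and $C_2(T)$ can be taken equal. The implication (ii)$\Rightarrow$(iii) is immediate from the definitions, so the real content lies in (i)$\Rightarrow$(ii) and (iii)$\Rightarrow$(i).

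For (i)$\Rightarrow$(ii), fix $T\in(0,\infty)$ and $y_0\in X$, and consider the subspace $X_T=\{B^*S^*(T-\cdot)z|_{(0,T)}:z\in D(A^*)\}\subset L^1(0,T;U)$. Under (H3), the map $B^*S^*(T-\cdot)z\mapsto -\langle S^*(T)z,\,\text{(pivot of }y_0)\rangle$—more precisely the functional $L_{y_0}$ defined on $X_T$ by $L_{y_0}(B^*S^*(T-\cdot)z)\triangleq -\langle y_0,S^*(T)z\rangle_X$ for $z\in D(A^*)$—is well defined (if $B^*S^*(T-\cdot)z$ vanishes in $L^1$, then by (\ref{ob-control-eq}) $S^*(T)z=0$, so the right side vanishes too) and bounded with norm at most $C_1(T)\|y_0\|_X$, again by (\ref{ob-control-eq}). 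By Hahn--Banach it extends to $L^1(0,T;U)$ with the same bound, and by the Riesz representation of $(L^1)^*$ there is $v\in L^\infty(0,T;U)$ with $\|v\|_{L^\infty(0,T;U)}\le C_1(T)\|y_0\|_X$ and $\int_0^T\langle v(t),B^*S^*(T-t)z\rangle_U\,\mathrm dt=-\langle y_0,S^*(T)z\rangle_X$ for all $z\in D(A^*)$. Comparing with the definition of the solution (\ref{NNNWWW2.1}) gives $\langle\hat y(T;y_0,v),z\rangle_X=0$ for all $z\in D(A^*)$, and density of $D(A^*)$ in $X$ yields $\hat y(T;y_0,v)=0$. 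This proves (ii) with $C_2(T)=C_1(T)$. This duality argument is the same in spirit as the proof of Theorem~\ref{Theorem-the-representation-theorem} and of Step~3 of Lemma~\ref{Lemma-H3-eq}, so it is essentially routine here; I would cite (\ref{NNNWWW2.1}) and Proposition~\ref{huangwanghenproposition2.1} for the bookkeeping.

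For (iii)$\Rightarrow$(i), the first task is to upgrade mere controllability to controllability with a cost, i.e.\ to recover (ii) from (iii); then a standard duality argument converts the cost estimate into the observability estimate (\ref{ob-control-eq}). For the uniform cost, fix $T$ and consider the controllability map $\mathcal{L}_T:L^\infty(0,T;U)\to X$, $\mathcal{L}_T v\triangleq \int_0^T S_{-1}(T-\tau)Bv(\tau)\,\mathrm d\tau=\hat y(T;0,v)$, which is bounded by admissibility (\ref{admissible-control}). Hypothesis (iii) says $\{S(T)y_0:y_0\in X\}\subset\operatorname{Range}\mathcal{L}_T$, i.e.\ $S(T)X\subset\mathcal{R}_T$. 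Passing to the quotient $L^\infty(0,T;U)/\ker\mathcal{L}_T$ and composing with the bounded operator $y_0\mapsto S(T)y_0$, one applies the closed graph theorem (exactly as in Step~1 of the proof of Lemma~\ref{Lemma-H3-eq}) to get a constant $c(T)$ with: for every $y_0$ there is $v\in L^\infty(0,T;U)$, $\mathcal{L}_T v=-S(T)y_0$ and $\|v\|_{L^\infty(0,T;U)}\le c(T)\|S(T)y_0\|_X\le c(T)\|S(T)\|\,\|y_0\|_X$. Then $\hat y(T;y_0,v)=S(T)y_0+\mathcal{L}_T v=0$, so (ii) holds. Finally, for the observability estimate: given $z\in D(A^*)$, pick for $y_0$ an arbitrary element and use the $v$ just constructed together with (\ref{NNNWWW2.1}) to write $\langle y_0,S^*(T)z\rangle_X=-\int_0^T\langle v(t),B^*S^*(T-t)z\rangle_U\,\mathrm dt$, whence $|\langle y_0,S^*(T)z\rangle_X|\le \|v\|_{L^\infty(0,T;U)}\|B^*S^*(T-\cdot)z\|_{L^1(0,T;U)}$; taking the supremum over $y_0$ with $\|y_0\|_X\le 1$ gives $\|S^*(T)z\|_X\le C_1(T)\|B^*S^*(T-\cdot)z\|_{L^1(0,T;U)}$, which is (\ref{ob-control-eq}). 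Chasing the constants through the quotient/closed-graph step, one sees $C_1(T)$ and $C_2(T)$ may be taken equal, completing the cycle.

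The main obstacle is the step inside (iii)$\Rightarrow$(i) where qualitative controllability is promoted to a quantitative cost estimate: it requires the closed graph theorem applied to a quotient of $L^\infty(0,T;U)$ by the (closed) kernel of $\mathcal{L}_T$, exactly as in Step~1 of Lemma~\ref{Lemma-H3-eq}, and some care is needed because $L^\infty$ is not reflexive—one works with the quotient norm $\|\pi(v)\|=\inf\{\|w\|_{L^\infty}:w-v\in\ker\mathcal{L}_T\}$ and must check well-definedness and boundedness of the induced map $\widehat{\mathcal{L}}_T$. Everything else (the two Hahn--Banach/Riesz duality arguments and the trivial (ii)$\Rightarrow$(iii)) is standard, and I would keep those parts brief, referring back to the machinery already developed for Lemma~\ref{Lemma-H3-eq} and Theorem~\ref{Theorem-the-representation-theorem}.
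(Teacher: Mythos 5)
Your proposal is correct and follows essentially the same route as the paper's Appendix F proof: the Hahn--Banach/Riesz duality argument for (i)$\Rightarrow$(ii) with $C_2(T)=C_1(T)$, the identity (\ref{NNNWWW2.1}) plus a supremum over $\|y_0\|_X\le 1$ for recovering the observability estimate from the cost estimate, and the quotient/closed-graph argument (as in Step 1 of Lemma~\ref{Lemma-H3-eq}) to promote (iii) to the quantitative statement (ii); the only differences are cosmetic (you arrange the implications as a cycle, and your intermediate bound $\|v\|_{L^\infty(0,T;U)}\le c(T)\|S(T)y_0\|_X$ should be stated directly as $\|\pi_T(v_{y_0})\|\le c(T)\|y_0\|_X$, since the closed graph theorem is applied to the composite map on all of $X$, not on the possibly non-closed subspace $S(T)X$ — which is exactly how you then use it).
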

The next two propositions concern some connections among assumptions (H1)-(H4).
 \begin{Proposition}\label{WGSWGSlemma2.15}
Suppose that (H3)  holds. Then (H1) is true.
\end{Proposition}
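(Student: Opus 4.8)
\textbf{Proof proposal for Proposition~\ref{WGSWGSlemma2.15}.}

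The plan is to deduce (H1) from (H3) by going through the equivalent formulation of (H1) provided by Lemma~\ref{Lemma-H3-eq}. Concretely, I would aim to establish conclusion (iii) of Lemma~\ref{Lemma-H3-eq}, namely that there exists $p_2\in(1,2]$ so that for all $t,T$ with $0<t<T<\infty$ one has $\|g\|_{L^{p_2}(0,t;U)}\leq C_2\|g\|_{L^1(t,T;U)}$ for every $g\in Y_T$, with $C_2=C_2(T,t)$; once this is shown, Lemma~\ref{Lemma-H3-eq} immediately gives (H1). In fact the natural choice here is $p_2=2$ (so the conjugate exponent $p_0=p_1=2$), since (H3) is an $L^1$-observability estimate and the admissibility bound in Lemma~\ref{ad-control-ob} is an $L^2$ bound.

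First I would work with functions of the special form $g=B^*S^*(T-\cdot)z|_{(0,T)}$ for $z\in D(A^*)$, which span a dense subspace $X_T$ of $Y_T$ (see (\ref{assumptionspace3}) and (\ref{ob-space})). For such $g$ and $t\in(0,T)$, split the interval: by the semigroup property, $g|_{(t,T)}(\tau)=B^*S^*(T-\tau)z=B^*S^*((T-t)-(\tau-t))\big(S^*(t)\,\text{--- careful ---}\big)$; more precisely, writing $w\triangleq S^*(T-t)z\in D(A^*)$ is not quite what I want. The correct bookkeeping: for $\tau\in(0,t)$, $B^*S^*(T-\tau)z = B^*S^*(t-\tau)\big(S^*(T-t)z\big)$, and for $\tau\in(t,T)$, $B^*S^*(T-\tau)z = B^*S^*(T-\tau)z$ with $T-\tau$ ranging over $(0,T-t)$. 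Set $\zeta\triangleq S^*(T-t)z\in D(A^*)$. Then on $(0,t)$ we have $g(\tau)=B^*S^*(t-\tau)\zeta$, so Lemma~\ref{ad-control-ob} (the admissibility estimate, applied on the interval of length $t$) gives $\|g\|_{L^2(0,t;U)}^2=\int_0^t\|B^*S^*(t-\tau)\zeta\|_U^2\,d\tau\leq C(t)\|\zeta\|_X^2=C(t)\|S^*(T-t)z\|_X^2$. On the other hand, (H3) applied on the interval of length $T-t$ to the vector $z$ yields $\|S^*(T-t)z\|_X\leq C_1(T-t)\int_0^{T-t}\|B^*S^*((T-t)-s)z\|_U\,ds = C_1(T-t)\int_t^T\|B^*S^*(T-\tau)z\|_U\,d\tau = C_1(T-t)\|g\|_{L^1(t,T;U)}$. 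Chaining these two inequalities gives $\|g\|_{L^2(0,t;U)}\leq \sqrt{C(t)}\,C_1(T-t)\,\|g\|_{L^1(t,T;U)}$ for all $g\in X_T$, with a constant $C_2\triangleq C_2(T,t)=\sqrt{C(t)}\,C_1(T-t)$ independent of $g$.

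Finally I would pass from $X_T$ to its closure $Y_T$ in the $L^1(0,T;U)$-norm by a density argument: if $g\in Y_T$ and $\{g_n\}\subset X_T$ with $g_n\to g$ in $L^1(0,T;U)$, then $\{g_n|_{(0,t)}\}$ is Cauchy in $L^2(0,t;U)$ by the estimate just proved (applied to differences $g_n-g_m$), hence converges in $L^2(0,t;U)$; its limit must agree with $g|_{(0,t)}$ since it also converges to $g|_{(0,t)}$ in $L^1(0,t;U)$. Passing to the limit in the inequality preserves it, giving $\|g\|_{L^2(0,t;U)}\leq C_2\|g\|_{L^1(t,T;U)}$ for all $g\in Y_T$. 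This is exactly statement (iii) of Lemma~\ref{Lemma-H3-eq} with $p_2=2$, so by that lemma (H1) holds, completing the proof. The only mild subtlety — and the step I would double-check most carefully — is the interval-splitting identity together with the correct application of the time-invariant admissibility and observability estimates on the two sub-intervals of lengths $t$ and $T-t$; everything else is routine density and chaining of inequalities.
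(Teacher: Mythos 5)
Your proposal is correct and follows essentially the same route as the paper's proof: apply the admissibility estimate of Lemma~\ref{ad-control-ob} on $(0,t)$ to $\zeta=S^*(T-t)z$, apply (H3) on an interval of length $T-t$, chain the two to get statement (iii) of Lemma~\ref{Lemma-H3-eq} with $p_2=2$ on $X_T$, and extend to $Y_T$ by density before invoking Lemma~\ref{Lemma-H3-eq}. The only difference is that you spell out the density passage from $X_T$ to $Y_T$, which the paper leaves implicit in its appeal to (\ref{ob-space}).
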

\begin{proof}
 Suppose that (H3) holds. Arbitrarily fix $T$ and $t$ so that  $0<t<T<\infty$. Then by (H3), there exists a positive number $C_1(T-t)$ (depending on $(T-t)$) so that
\begin{eqnarray*}
 \|S^*(T-t)z\| \leq C_1(T-t) \int_0^{T-t} \|B^*S^*(T-t-s)z\|_U \,\mathrm ds
 \;\;\mbox{for all}\;\;   z\in D(A^*),
\end{eqnarray*}
which implies that
\begin{eqnarray*}
 \|S^*(T-t)z\| \leq C_1(T-t) \int_t^T \|B^*S^*(T-s)z\|_U \,\mathrm ds
 \;\;\mbox{for all}\;\;   z\in D(A^*).
\end{eqnarray*}
This, together with (\ref{admissible-observable}), yields that for each $z\in D(A^*)$,
\begin{eqnarray*}
 \|B^* S^*(T-\cdot)z\|_{L^2(0,t;U)}  &=&  \big\|B^* S^*(t-\cdot)\big(S^*(T-t)z\big) \big\|_{L^2(0,t;U)}
 \nonumber\\
 &\leq& \sqrt{C(t)} \|S^*(T-t)z\|_X
 \nonumber\\
 &\leq& \sqrt{C(t)} C_1(T-t) \|B^* S^*(T-\cdot)z\|_{L^1(t,T;U)},
\end{eqnarray*}
where $C(t)$ is given by (\ref{admissible-observable}). Then by the definition of $Y_T$ (see (\ref{ob-space})), the above yields that
\begin{eqnarray}\label{louhongwei2.111}
 \|g\|_{L^2(0,t;U)} \leq \sqrt{C_1(t)} C(T-t)  \|g\|_{L^1(t,T;U)}
  \;\;\mbox{for all}\;\; g\in Y_T.
\end{eqnarray}
Notice that (\ref{louhongwei2.111}) is exactly the statement (iii) in Lemma \ref{Lemma-H3-eq}, where  $p_2=2$.
 Thus we can apply  Lemma \ref{Lemma-H3-eq} to get  the conclusion  (i) of Lemma \ref{Lemma-H3-eq} which is exactly the condition (H1).
 Hence,  (H1) follows from (H3). This ends the proof of this proposition.

\end{proof}

 \begin{Proposition}\label{Lemma-YT-bounded}
   Suppose that (H3) and (H4) are true. Then   (H2) holds.
 \end{Proposition}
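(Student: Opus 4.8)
The plan is to derive (H2) from (H3) and (H4) by showing that every $g\in Y_T$ admits the representation $g=\widetilde{B^*S^*}(T-\cdot)z$ for some $z\in X$, at which point (H4) immediately gives the desired unique continuation from a set of positive measure. The starting observation is that, under (H3), Proposition~\ref{newhuangproposition2.15} tells us $(A,B)$ is $L^\infty$-null controllable, and Proposition~\ref{WGSWGSlemma2.15} tells us (H1) holds, so all the representation machinery of Section~2 (in particular Theorem~\ref{Theorem-the-representation-theorem}) is available.

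First I would fix $T\in(0,\infty)$ and use (H3) to produce a quantitative $L^1$-observability estimate: for all $z\in D(A^*)$, $\|S^*(T)z\|_X\le C_1(T)\|B^*S^*(T-\cdot)z\|_{L^1(0,T;U)}$. Since $B^*$ is an admissible observation operator (Lemma~\ref{ad-control-ob}), the map $z\mapsto B^*S^*(T-\cdot)z$ extends continuously to $\widetilde{B^*S^*}(T-\cdot):X\to L^2(0,T;U)\hookrightarrow L^1(0,T;U)$, and the observability estimate passes to the closure, giving $\|S^*(T)z\|_X\le C_1(T)\|\widetilde{B^*S^*}(T-\cdot)z\|_{L^1(0,T;U)}$ for all $z\in X$. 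In particular $\widetilde{B^*S^*}(T-\cdot)z=0$ in $L^1$ forces $S^*(T)z=0$; I will also need that the range $R_T\triangleq\{\widetilde{B^*S^*}(T-\cdot)z:z\in X\}$ is a closed subspace of $L^1(0,T;U)$, which follows from the observability estimate together with the backward-uniqueness-type bound: if $z_n\in X$ with $\widetilde{B^*S^*}(T-\cdot)z_n$ Cauchy in $L^1$, then $S^*(T)z_n$ is Cauchy in $X$, and a standard argument (splitting $(0,T)$ and using admissibility on $(T-\varepsilon,T)$ plus the semigroup on $(0,T-\varepsilon)$, or directly using that $\widetilde{B^*S^*}(T-\cdot)$ factors through an injective bounded map on the quotient $X/\ker$) shows the limit lies in $R_T$. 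The key point is that $R_T$ is closed and contains the dense subset $X_T$ of $Y_T$, hence $Y_T=R_T$, i.e. every $g\in Y_T$ equals $\widetilde{B^*S^*}(T-\cdot)z$ for some $z\in X$.

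With this representation in hand the argument concludes quickly: suppose $g\in Y_T$ and $g=0$ on a subset $E\subset(0,T)$ of positive measure. Writing $g=\widetilde{B^*S^*}(T-\cdot)z$ with $z\in X$, the hypothesis (H4) applies verbatim and yields $\widetilde{B^*S^*}(T-\cdot)z\equiv 0$ on $(0,T)$, i.e. $g\equiv 0$ on $(0,T)$, which is exactly (H2).

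The main obstacle I anticipate is the closedness of $R_T$ in $L^1(0,T;U)$, equivalently the identification $Y_T=R_T$. The subtlety is that $\widetilde{B^*S^*}(T-\cdot)$ need not be bounded below on all of $X$ — only on $S^*(T)$-images — so one cannot argue directly that Cauchy images come from Cauchy preimages. I would handle this by working on the quotient $X/\ker\widetilde{B^*S^*}(T-\cdot)$: the observability estimate shows that $\|S^*(T)z\|_X$ controls the $L^1$-norm of the image, and combined with the admissibility estimate $\|\widetilde{B^*S^*}(T-\cdot)z\|_{L^2(0,T;U)}\le C\|z\|_X$ and the semigroup property $\widetilde{B^*S^*}(T-\cdot)z|_{(0,s)}=\widetilde{B^*S^*}(s-\cdot)(S^*(T-s)z)$, one shows the induced map on the quotient is an isomorphism onto its range, which is therefore complete, hence closed in $L^1$. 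This is the technical heart of the proof; once it is established, everything else is bookkeeping with the definitions in (\ref{ob-space}), (\ref{assumptionspace3}), and (\ref{wang1.15}).
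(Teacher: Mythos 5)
There is a genuine gap at the technical heart of your argument: the claim that $R_T\triangleq\{\widetilde{B^*S^*}(T-\cdot)z:\ z\in X\}$ is closed in $L^1(0,T;U)$, hence that $Y_T=R_T$, is not established by your quotient argument and is in fact false in general. The observability estimate (H3) bounds $\|S^*(T)z\|_X$ by the $L^1$-norm of the image (note you stated the inequality in the wrong direction), not the quotient norm $\|z\|_{X/\ker}$; since $S^*(T)$ is typically strongly smoothing and not bounded below, knowing that $S^*(T)z_n$ is Cauchy does not produce a preimage $z\in X$ with $\widetilde{B^*S^*}(T-\cdot)z=\lim_n \widetilde{B^*S^*}(T-\cdot)z_n$, so the induced map on $X/\ker$ is not an isomorphism onto its range. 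A concrete counterexample: the heat semigroup with $U=X=L^2(\Omega)$ and $B=\mathrm{Id}$ satisfies (H3) and (H4), yet if $z\in H^{-1}(\Omega)\setminus L^2(\Omega)$ the function $f(t)=S(T-t)z$ lies in $Y_T$ (it is the $L^1(0,T;L^2)$-limit of $S(T-\cdot)S(1/n)z$) but is not of the form $S(T-\cdot)\tilde z$ with $\tilde z\in X$, since letting $t\to T$ would force $\tilde z=z\notin L^2$. This is exactly the subtlety the paper flags in Remark \ref{newwangremark1.1} and Remark \ref{YT-characterization}: under (H3) one only gets representations of $f\in Y_T$ on every proper subinterval, with data depending on the subinterval.

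The fix is to localize, which is also what the paper does. Apply (H3) on shifted windows: for $z_n\in D(A^*)$ with $B^*S^*(T-\cdot)z_n\to f$ in $L^1(0,T;U)$ and for $T_k\nearrow T$, the estimate $\|S^*(T-T_k)(z_n-z_m)\|_X\le C_1(T-T_k)\|B^*S^*(T-\cdot)(z_n-z_m)\|_{L^1(T_k,T;U)}$ shows $S^*(T-T_k)z_n\to\hat z_k$ in $X$, and then admissibility (Lemma \ref{ad-control-ob}) gives $f=\widetilde{B^*S^*}(T_k-\cdot)\hat z_k$ on $(0,T_k)$. Since $|E\cap(0,T_k)|>0$ for $k$ large, (H4) applied on $(0,T_k)$ yields $f=0$ on $(0,T_k)$, and letting $k\to\infty$ gives (H2). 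Your own aside about ``splitting $(0,T)$ and using the semigroup on $(0,T-\varepsilon)$'' is essentially this argument; the global representation $f=\widetilde{B^*S^*}(T-\cdot)z$ with a single $z\in X$ is neither needed nor available.
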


 \begin{proof}
  Let $T\in (0,\infty)$. Suppose that  $f\in Y_T$ satisfies that
  \begin{equation}\label{wang2.62}
  f=0\;\;\mbox{over}\;\; E,
  \end{equation}
  where  the subset $E\subset (0,T)$ is  of positive measure.
  We are going to use (H3) and (H4) to  show that
  \begin{equation}\label{hongwei2.113}
 f=0\;\;\mbox{over}\;\; (0,T).
 \end{equation}
  When this is done, we obtain (H2) from (H3) and (H4).

  The rest is to show (\ref{hongwei2.113}).
  By    (\ref{ob-space}), there exists a sequence $\{z_n\}\subset D(A^*)$ so that
  \begin{eqnarray}\label{wang2.63}
   B^*S^*(T-\cdot)z_n \rightarrow f(\cdot) \mbox{ in } L^1(0,T;U),~\mbox{as}\;\;n \rightarrow \infty.
  \end{eqnarray}
 In particular, $\left\{B^*S^*(T-\cdot)z_n\right\}$ is a Cauchy sequence in $L^1(0,T;U)$.  Take a sequence $\{T_k\}\subset (0, T)$ so that $T_k\nearrow T$. Then by (H3), we find that for each $k$, $\{S^*(T-T_k)z_n\}$ is a Cauchy sequence in $X$. Hence, for each $k$, there is a $\hat z_k\in X$ so that
 \begin{equation}\label{wang2.64}
 S^*(T-T_k)z_n\rightarrow \hat z_k\;\;\mbox{strongly in}\;\; X,\;\;\mbox{as}\;\; n\rightarrow\infty.
 \end{equation}
 By (\ref{wang2.64}) and (\ref{admissible-observable}), we see that for each $k$, $\{B^*S^*(T-\cdot)z_n\}$ is a Cauchy sequence in $L^2(0,T_k;U)$. This, along with (\ref{wang2.64}) and (\ref{wang1.15}),
 indicates that for each $k$,
 \begin{equation}\label{wang2.65}
 B^*S^*(T-\cdot)z_n\rightarrow \widetilde{B^*S^*}(T_k-\cdot)\hat z_k\;\;\mbox{in}\;\; L^2(0,T_k;U), \;\;\mbox{as}\;\; n\rightarrow\infty.
 \end{equation}
 By (\ref{wang2.63}) and (\ref{wang2.65}), we find that for each $k$,
 \begin{equation}\label{wang2.66}
 f(\cdot)=\widetilde{B^*S^*}(T_k-\cdot)\hat z_k\;\;\mbox{over}\;\; (0,T_k).
 \end{equation}
 Since $T_k\nearrow T$, we see that for each $k$ large enough, $E_k\triangleq
 E\cap (0,T_k)$ has a positive measure. Then from (\ref{wang2.66}) and
 (\ref{wang2.62}), we observe that for each $k$ large enough,
 $$
 \widetilde{B^*S^*}(T_k-\cdot)\hat z_k=0\;\;\mbox{over}\;\; E_k.
 $$
 This, along with (H4), yields that for all $k$ large enough,
 \begin{equation}\label{wang2.67}
 \widetilde{B^*S^*}(T_k-\cdot)\hat z_k=0\;\;\mbox{over}\;\; (0,T_k).
 \end{equation}
Now, (\ref{hongwei2.113}) follows from  (\ref{wang2.66}) and (\ref{wang2.67}).
  This ends the proof.

 \end{proof}

 \begin{Remark}\label{YT-characterization}
  Since $Y_T$ is the completion of the space $X_T$ in the norm $\|\cdot\|_{L^1(0,T;U)}$ (see (\ref{ob-space})), it is hard to characterize elements of $Y_T$ in general. However, when the assumption (H3) holds, we have that  $Y_T= \mathcal Y_T$, where
  \begin{eqnarray*}
   \mathcal Y_T\triangleq\Big\{f\in L^1(0,T;U)~:~\forall\,t\in (0,T),\,\exists\,z^t\in X \mbox{ s.t. } f(\cdot)|_{(0,t)}=\widetilde{B^*S^*}(t-\cdot)z^t \Big\}.
  \end{eqnarray*}
  Indeed, on one hand, by (H3), we get (\ref{wang2.66}), from which, it follows  that $Y_T\subset\mathcal Y_T$. On the other hand, from (H1) and (ii) of Lemma \ref{Lemma-left-continuity-YT}, we find that $\mathcal Y_T\subset Y_T$. (Notice that (H1) is ensured by (H3), see Proposition \ref{WGSWGSlemma2.15}.)    For time varying systems, we do not know if these two spaces are the same in general.  (In the proof of Lemma \ref{Lemma-left-continuity-YT}, we used the time-invariance of the system.)

 \end{Remark}

\vskip 5pt

We now are on the position to show Theorem~\ref{Proposition-Corollary1.7}.

\begin{proof}[Proof of  Theorem \ref{Proposition-Corollary1.7}]

(i)  We first claim that
\begin{eqnarray}\label{no-critical-point-1}
 T^0(y_0)=0 \;\;\mbox{and}\;\; N(T^0(y_0),y_0)=\infty\;\;\mbox{for all}\;\;y_0\in X\setminus\{0\}.
\end{eqnarray}
Indeed, by (H3), we can use Proposition~\ref{newhuangproposition2.15} to get the $L^\infty$-null
controllability for  $(A,B)$, which, along with the definition of $T^0(\cdot)$ (see (\ref{y0-controllable})), yields
the first equality in (\ref{no-critical-point-1}). This, together with (iv) of Lemma \ref{Lemma-T0-T1}, leads to the second equality in (\ref{no-critical-point-1}).

We next claim that
\begin{equation}\label{newyear6.105}
\mathcal{W}=\mathcal{W}_{3,2} \cup \mathcal{W}_{3,3}.
\end{equation}
In fact, by the second equality in (\ref{no-critical-point-1}) and the definition of $\mathcal{W}_1$ and $\mathcal{W}_2$ (see (\ref{zhangjinchu3-1}) and (\ref{zhangjinchu3-2})), we find that $\mathcal{W}_1\cup\mathcal{W}_2=\emptyset$. Meanwhile, by the first  equality in (\ref{no-critical-point-1}) and the definitions of  $\mathcal W_{3,1}$ and $\mathcal W_{3,4}$ (see (\ref{zhangjinchu9})), we find that $\mathcal W_{3,1}  \cup  \mathcal W_{3,4}=\emptyset$. These, along with (i) of Theorem \ref{Proposition-NTy0-partition}, lead to (\ref{newyear6.105}).

We then claim that
 \begin{equation}\label{newyear6.106}
\mathcal{V}=\mathcal{V}_{3,1}\cup\mathcal{V}_{3,2}.
\end{equation}
Indeed, by the second equality in (\ref{no-critical-point-1}) and the definitions of $\mathcal{V}_1$ and $\mathcal{V}_2$ (see (\ref{Lambda-di-0-1-1}) and (\ref{Lambda-di-0-1-2})), we see that $\mathcal{V}_1\cup \mathcal{V}_2=\emptyset$.
Meanwhile, the first  equality in (\ref{no-critical-point-1}) and the definition of  $\mathcal V_{3,3}$ (see (\ref{Lambda-di-3-1})), we find
that $\mathcal V_{3,3}=\emptyset$. These,  along with (i) of Theorem \ref{Proposition-TMy0-partition}, lead to (\ref{newyear6.106}).

Now,   (\ref{no-critical-point-2}) follows from (\ref{newyear6.105}) and (\ref{newyear6.106}) at once.

Finally, we  verify (\ref{no-critical-point-3}). On one hand, by the definitions of $\gamma_1$ and $\mathcal W_{2,2}$ (see (\ref{zhangjinchu1-5}) and (\ref{zhangjinchu7})), we see that $\gamma_1=\mathcal W_{2,2}$. On the other hand, from (i) of  Lemma \ref{Lemma-T0-T1} and (ii) of Lemma \ref{Lemma-NP-decreasing}, it follows that
\begin{eqnarray*}
 N(T^0(y_0),y_0)  \geq N(T^1(y_0),y_0)
 \;\;\mbox{for all}\;\;
 y_0 \in X\setminus\{0\}.
\end{eqnarray*}
Then by the definitions of $\gamma_2$ and $\mathcal V_{2,3}$  (see (\ref{TP-divide-domain-left}) and  (\ref{Lambda-di-2-1})), one can directly check  that
$\gamma_2=\mathcal V_{2,3}$.
Since we already knew that $\mathcal{W}_2=\emptyset$, $\mathcal{V}_2=\emptyset$,
$\mathcal W_{2,2}\subset \mathcal{W}_2$ and $\mathcal V_{2,3}\subset \mathcal{V}_2$,
(\ref{no-critical-point-3}) follows at once. Thus we end the proof of the conclusion (i) of Theorem \ref{Proposition-Corollary1.7}.

\vskip 5pt
 (ii) Since (H3) and (H4) hold, we find from  Proposition~\ref{WGSWGSlemma2.15} and Proposition~\ref{Lemma-YT-bounded} that
   both (H1) and (H2) hold. Then by the conclusions (ii) and (v) of Theorem \ref{Proposition-TMy0-partition}, as well as   the second equality in (\ref{no-critical-point-2}), we get  the conclusion (ii) of Theorem \ref{Proposition-Corollary1.7}.

\vskip 5pt
 (iii) By (H3) and (H4), we can use   Proposition~\ref{WGSWGSlemma2.15} and Proposition~\ref{Lemma-YT-bounded} to get (H1) and (H2). Then by (ii) and (iii)  of Theorem \ref{Proposition-NTy0-partition}, as well as  the first equality in (\ref{no-critical-point-2}), we are led to the conclusion (iii) of  Theorem \ref{Proposition-Corollary1.7}.

\vskip 5pt

In summary, we finish the proof of   Theorem \ref{Proposition-Corollary1.7}.
 \end{proof}

 \section{Applications}

Two applications of the main theorems of this paper will be given in this section. The first one is an application of  Theorem \ref{Proposition-Corollary1.7}, while the second one is an application of
Theorem \ref{Proposition-NTy0-partition}, Theorem~\ref{Proposition-TMy0-partition}.

  \subsection{Application to boundary controlled   heat equations}

  In this subsection, we will use  Theorem \ref{Proposition-Corollary1.7} to study the BBP decompositions for minimal time and minimal norm control problems
  for
  boundary controlled heat equations.  We begin with introducing the controlled equations.
  Let $\Omega\subset\mathbb R^n$, $n\geq 1$, be a bounded domain with a smooth boundary $\partial\Omega$. Let $\Gamma$ be a nonempty open subset of $\partial\Omega$. Consider the following two  controlled equations:
  \begin{eqnarray}\label{heat-boundary}
   \left\{\begin{array}{lll}
           \partial_t y - \Delta y =0 &\mbox{ in } &\Omega\times(0,\infty),\\
           y=u   &\mbox{ on } &\Gamma\times(0,\infty),\\
           y=0   &\mbox{ on } &(\partial\Omega\setminus\Gamma)\times(0,\infty),\\
           y(0)=y_0 &\mbox{ in } &\Omega
          \end{array}
   \right.
  \end{eqnarray}
  and
  \begin{eqnarray}\label{heat-boundary-finite}
   \left\{\begin{array}{lll}
           \partial_t y - \Delta y =0 &\mbox{ in } &\Omega\times(0,T),\\
           y=v   &\mbox{ on } &\Gamma\times(0,T),\\
           y=0   &\mbox{ on } &(\partial\Omega\setminus\Gamma)\times(0,T),\\
           y(0)=y_0 &\mbox{ in } &\Omega.
          \end{array}
   \right.
  \end{eqnarray}
 Here, $y_0\in H^{-1}(\Omega)$, $0<T<\infty$,  $u\in L^\infty(\mathbb R^+;L^2(\Gamma))$ and
 $v\in L^\infty(0,T;L^2(\Gamma))$. Write $y_1(\cdot;y_0,u)$ and $\hat y_1(\cdot;y_0,v)$ for the solutions of (\ref{heat-boundary}) and (\ref{heat-boundary-finite}), respectively.

We will put the above systems in our framework where  $X\triangleq H^{-1}(\Omega)$, $U\triangleq L^2(\Gamma)$, $A\triangleq A_1$ and $B\triangleq B_1$. Here,  $A_1=\Delta$, with $D(A_1)=H_0^1(\Omega)$, and $B_1$ is defined in the following manner: Let $D\,:\,L^2(\partial\Omega)\rightarrow L^2(\Omega)$ be  defined by
$Dv\triangleq f_v$, for all  $v\in L^2(\partial\Omega)$, where $f_v$ solves the equation
 \begin{eqnarray}\label{0107-Dirichlet}
  \begin{cases}
   -\Delta f =0  ~&\mbox{in}\;\;\Omega,\\
   f=v   ~&\mbox{on}\;\;  \partial\Omega.
  \end{cases}
 \end{eqnarray}
Then let
$B_1\triangleq-\Delta D$. We regard  $L^2(\Gamma)$  as a subspace of $L^2(\partial\Omega)$.
 Let  $X_{-1} \triangleq (D(A_1^*))^\prime$
 be the dual of $D(A_1^*)$  with respect to the pivot space $X$.

 To prove that the above $X$, $U$ and $(A_1,B_1)$ are in our framework,
 we will use some results in \cite{TW} where both state and control spaces are assumed to be complex Hilbert spaces. Thus, we will consider the complexifications
 of our spaces.
  Write $\mathcal H^{-1}(\Omega)$ and $\mathcal  H_0^1(\Omega)$ for the complexifications of $H^{-1}(\Omega)$ and $H_0^1(\Omega)$, respectively. Write  $\mathcal X\triangleq\mathcal  H^{-1}(\Omega)$ and $\mathcal U\triangleq L^2(\Gamma;\mathbb C)$.  Let $\mathcal A_1\triangleq\Delta$, with $D(\mathcal A_1)=\mathcal H_0^1(\Omega)$. Define $\mathcal D\,:\,L^2(\partial\Omega;\mathbb C)\rightarrow L^2(\Omega;\mathbb C)$ given by $\mathcal Dw=g_w$, for all $w\in L^2(\partial\Omega;\mathbb C)$, where $g_w$ solves (\ref{0107-Dirichlet}) with $v=w$. Then let  $\mathcal B_1\triangleq-\Delta\mathcal D$.
  The space $L^2(\Gamma;\mathbb C)$ is regarded as a subspace of $L^2(\partial\Omega;\mathbb C)$.
 Let  $\mathcal X_{-1} \triangleq (D(\mathcal A_1^*))^\prime$
 be the dual of $D(\mathcal A_1^*)$  with respect to the pivot space $\mathcal X$.
  Then, from \cite[Proposition 10.7.1]{TW}, it follows that $\mathcal A_1$  generates a $C_0$-semigroup $\{\mathcal S_1(t)\}_{t\in\mathbb R^+}$ over $\mathcal H^{-1}(\Omega)$; $\mathcal B_1\in \mathcal L(\mathcal U;\mathcal X_{-1})\setminus\{0\}$ is
an admissible control operator for the semigroup $\{\mathcal S_1(t)\}_{t\in\mathbb R^+}$.

Several observations are given in order: First,
 $\mathcal A_1|_{D(A_1)}=A_1$ and
  $\mathcal B_1|_{L^2(\Gamma)}=B_1$;
   Second, $\{\mathcal S_1(t)|_{X}\}_{t\in\mathbb R^+}$ is a $C_0$-semigroup over $H^{-1}(\Omega)$, with its generator  $A_1$; Third,  $B_1\in \mathcal L(U,X_{-1})\setminus\{0\}$ is
an admissible control operator for the semigroup $\{\mathcal S_1(t)|_{X}\}_{t\in\mathbb R^+}$.
From these observations, we see that if $S_1(t)\triangleq\mathcal S_1(t)|_{X}$, $t\in\mathbb R^+$, then the systems (\ref{heat-boundary}) and (\ref{heat-boundary-finite}) can be rewritten respectively
as
\begin{eqnarray*}
           y^\prime(t) = A_1y(t)+B_1 u(t),~t>0;\;\;  y(0)=y_0;
          \end{eqnarray*}
 \begin{eqnarray*}
            y^\prime(t) = A_1y(t)+B_1 v(t),~0<t\leq T;\;\;
          y(0)=y_0.
          \end{eqnarray*}
 The corresponding two optimal control problems are as follows:
The first one is
 the  minimal time control problem $(TP)^{M,y_0}_1$, with $y_0\in H^{-1}(\Omega)\setminus\{0\}$ and $M\in(0,\infty)$:
 \begin{eqnarray*}
  T_1(M,y_0)  \triangleq  \{ \hat t>0~:~\exists\,u\in\mathcal U_1^M
 \;\;\mbox{s.t.}\;\;  y_1(\hat t;y_0,u)=0\},
 \end{eqnarray*}
 where
 \begin{eqnarray*}
  \mathcal U_1^M \triangleq \{u \in L^\infty(\mathbb R^+, L^2(\Gamma)) ~:~\|u(t)\|_{L^2(\Gamma)}\leq M
   \;\;\mbox{a.e.}\;\;  t\in\mathbb R^+\}.
 \end{eqnarray*}
The second one is the  minimal norm control problem $(NP)^{T,y_0}_1$, (with $y_0\in H^{-1}(\Omega)\setminus\{0\}$ and $T\in(0,\infty)$) as follows:
 \begin{eqnarray*}
   N_1(T,y_0)  \triangleq  \inf\{ \|v\|_{L^\infty(0,T;L^2(\Gamma))}~:~\hat y_1(T;y_0,v)=0\}.
 \end{eqnarray*}

 \begin{Lemma}\label{wanglemma8.9w}
 The conditions (H3) and (H4) hold for the pair $(A_1,B_1)$. Furthermore, $N_1(T^1(y_0),y_0)=0$ for each $y_0\in H^{-1}(\Omega)\setminus\{0\}$,
 where $T^1(y_0)$ is given by (\ref{Ty0}) where $\{S(t)\}_{t\in\mathbb R^+}$ is replaced by $\{S_1(t)\}_{t\in \mathbb R^+}$.
 \end{Lemma}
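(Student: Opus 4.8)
\textbf{Proof plan for Lemma~\ref{wanglemma8.9w}.}

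The plan is to verify the two hypotheses (H3) and (H4) for the pair $(A_1,B_1)$ by invoking the well-known null controllability theory for the boundary-controlled heat equation, and then to deduce the statement about $N_1(T^1(y_0),y_0)$ from the backward uniqueness of the heat semigroup together with the general results already proved in the paper (in particular (v) of Lemma~\ref{Lemma-T0-T1}).

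First I would establish (H3). By Proposition~\ref{newhuangproposition2.15}, the condition (H3) (the $L^1$-observability estimate for $(A_1^*,B_1^*)$) is equivalent to the $L^\infty$-null controllability of $(A_1,B_1)$: for each $T>0$ there is $C_2(T)>0$ so that every $y_0\in H^{-1}(\Omega)$ can be steered to $0$ at time $T$ by a control $v\in L^\infty(0,T;L^2(\Gamma))$ with $\|v\|_{L^\infty(0,T;L^2(\Gamma))}\le C_2(T)\|y_0\|_{H^{-1}(\Omega)}$. This is precisely the boundary null controllability of the heat equation with an $L^\infty$-in-time cost estimate; it follows from the classical Lebeau--Robbiano / global Carleman estimate machinery (controllability from any nonempty open subset $\Gamma\subset\partial\Omega$), in the $L^\infty$-cost form, which one can cite from the literature on constrained/bang-bang controllability of the heat equation. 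So I would simply record that this controllability result, combined with Proposition~\ref{newhuangproposition2.15}, gives (H3).

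Next I would establish (H4). Here I use that the heat semigroup has the backward uniqueness property: if $S_1(T)y_0=0$ then $y_0=0$ (this is standard for the Dirichlet Laplacian on a bounded domain, e.g.\ by analyticity of the semigroup, or by the Lions--Malgrange backward uniqueness theorem). Given $z\in X$ with $\widetilde{B_1^*S_1^*}(T-\cdot)z=0$ on a set $E\subset(0,T)$ of positive measure, I want to conclude $\widetilde{B_1^*S_1^*}(T-\cdot)z\equiv0$ on $(0,T)$. The function $t\mapsto \widetilde{B_1^*S_1^*}(T-t)z$ is, off $t=T$, the trace on $\Gamma$ of the normal derivative (up to sign) of the solution $\varphi(t)=S_1^*(T-t)z$ of the adjoint (backward) heat equation; on any interval $(0,T-\varepsilon)$ the map $t\mapsto\varphi(T-t)$ is real-analytic into $D(A_1^*)$ (indeed into $C^\infty(\overline\Omega)$ by parabolic smoothing), hence $t\mapsto \widetilde{B_1^*S_1^*}(T-t)z$ is real-analytic on $(0,T)$ with values in $L^2(\Gamma)$. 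A real-analytic $L^2(\Gamma)$-valued function vanishing on a set of positive measure vanishes identically; this gives $\widetilde{B_1^*S_1^*}(T-\cdot)z\equiv 0$ on $(0,T)$, which is (H4). Alternatively, one can phrase this as a unique continuation statement for the adjoint heat equation from boundary data on a measurable time-set, which is already available in the literature (e.g.\ the approach in \cite{PW}); either route is routine once the analyticity in $t$ is noted.

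Finally, for the claim $N_1(T^1(y_0),y_0)=0$: by backward uniqueness of $\{S_1(t)\}_{t\in\mathbb R^+}$ we have $S_1(t)y_0\ne0$ for all $t\ge0$ whenever $y_0\ne0$, so by the definition (\ref{Ty0}) of $T^1$ we get $T^1(y_0)=\infty$ for each $y_0\in H^{-1}(\Omega)\setminus\{0\}$. On the other hand (H3) gives the $L^\infty$-null controllability of $(A_1,B_1)$, so $N_1(T,y_0)<\infty$ for every $T>0$, and in fact $N_1(T,y_0)\to 0$ as $T\to\infty$ (steer $y_0$ to $0$ in unit time with cost $C_2(1)\|y_0\|$, then observe that waiting longer only decreases the minimal norm — this is the monotonicity in (i) of Lemma~\ref{Lemma-NP-decreasing} — and by scaling the time one gets vanishing cost; more directly, $N_1(T,y_0)\le C_2(T)\|y_0\|$ with $C_2(T)\to0$ as $T\to\infty$ from the Lebeau--Robbiano cost estimate). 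Hence by the definition (\ref{N-infty-y0}) of $N(\infty,\cdot)$ and (vi) of Lemma~\ref{Lemma-T0-T1}, $N_1(T^1(y_0),y_0)=N_1(\infty,y_0)=\lim_{T\to\infty}N_1(T,y_0)=0$, which is the asserted equality.

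The main obstacle is not any of the abstract manipulations — those are immediate from the results already in the paper — but rather the correct invocation of the quantitative boundary null-controllability theory: one must cite the version of the heat-equation controllability result that delivers an $L^\infty$-in-time control with a cost constant $C_2(T)$ that decays as $T\to\infty$, and one must be careful that (H4)'s unique continuation is for the natural extension $\widetilde{B_1^*S_1^*}(T-\cdot)$ to all of $X=H^{-1}(\Omega)$, which is why the parabolic-smoothing/analyticity argument (valid on $(0,T-\varepsilon)$ for every $\varepsilon>0$) is the clean way to proceed.
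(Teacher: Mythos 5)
Your treatment of (H3) and (H4) matches the paper's proof: (H3) is obtained from Proposition~\ref{newhuangproposition2.15} together with the known $L^\infty$-null boundary controllability of the heat equation, and (H4) is obtained from the analyticity and smoothing of the adjoint semigroup, by showing that $\widetilde{B_1^*S_1^*}(T-\cdot)\hat z$ coincides with $B_1^*S_1^*(T-\cdot)\hat z$ on $(0,T-\varepsilon)$, where it is real analytic in $t$, and then letting $\varepsilon\to 0$; this is exactly the argument given in the paper.

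The final step, however, has a genuine gap. To get $N_1(T^1(y_0),y_0)=0$ you need $\lim_{T\to\infty}N_1(T,y_0)=0$ (via (vi) of Lemma~\ref{Lemma-T0-T1} and (\ref{N-infty-y0})), and neither of your two justifications delivers this. Monotonicity of $N_1(\cdot,y_0)$ (Lemma~\ref{Lemma-NP-decreasing}) combined with unit-time controllability only shows the limit exists and is at most $C_2(1)\|y_0\|_{H^{-1}(\Omega)}$, not that it vanishes, and ``scaling the time'' has no meaning for the heat equation. Likewise, the Lebeau--Robbiano cost estimate gives a constant of the form $e^{C/T}$, which stays bounded away from $0$ as $T\to\infty$; it does not by itself show $C_2(T)\to 0$. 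The missing ingredient --- and the one the paper uses explicitly --- is the exponential decay of the free semigroup in $H^{-1}(\Omega)$, namely $\|S_1(\hat t)y_0\|_{H^{-1}(\Omega)}\le Ce^{-\delta\hat t}\|y_0\|_{H^{-1}(\Omega)}$: let the solution evolve freely on $(0,\hat t)$ and then steer $S_1(\hat t)y_0$ to zero in unit time with cost at most $C'\|S_1(\hat t)y_0\|_{H^{-1}(\Omega)}$, which yields $N_1(\hat t+1,y_0)\le C'Ce^{-\delta\hat t}\|y_0\|_{H^{-1}(\Omega)}\to 0$ and hence $N_1(\infty,y_0)=0$. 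Adding this decay-plus-unit-time-control argument closes the gap.
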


 \begin{proof}
 First, the condition (H3)  follows from Proposition \ref{newhuangproposition2.15} and the $L^\infty$-null boundary controllability of the heat equation (see, for instance, subsection 3.2.1 in \cite{WCZ}).

 Next, we prove that (H4) holds for $(A_1,B_1)$. For this purpose, let $0<T<\infty$ and $E\subset(0,T)$ be a measurable subset of positive measure. Then fix a $\hat z\in X$ so that
 \begin{eqnarray}\label{0107-14}
  \widetilde{B_1^*S_1^*}(T-\cdot)\hat z =0  \;\;\mbox{over}\;\; E,
 \end{eqnarray}
 where $\widetilde{B_1^*S_1^*}(T-\cdot)\hat z$ is given by (\ref{wang1.15}). We will use the real analyticity of $\{S_1(t)\}_{t\in\mathbb R}$ to show that
 \begin{eqnarray}\label{0107-15}
  \widetilde{B_1^*S_1^*}(T-\cdot)\hat z =0  \;\;\mbox{over}\;\; (0,T).
 \end{eqnarray}
 Indeed,  from subsection 3.2.1 in \cite{WCZ}, it follows that the semigroup $\{\mathcal S_1(t)\}_{t\in\mathbb R^+}$ can be extended to an analytic semigroup. Thus, the semigroup $\{\mathcal S^*_1(t)\}_{t\in\mathbb R^+}$ is also analytic.
 Then by \cite[Theorem 5.2 in Chapter 2]{Pazy}, we find that
 \begin{eqnarray}\label{0107-11}
  \mathcal S^*_1(\cdot)\;\;\mbox{is real analytic over}\;\; (0,\infty);
  \;\;\mbox{and}\;\;
  \|\mathcal S_1^*(t)\|_{\mathcal L(\mathcal X,D(\mathcal A_1^*))} \leq \widehat C/t,~t>0,
 \end{eqnarray}
 where the constant $\widehat C$ is independent of $t>0$. Since $\mathcal S_1(\cdot)|_{X}=S_1(\cdot)$ over $\mathbb R^+$, we have that
 $\mathcal S_1^*(\cdot)|_{X}=S_1^*(\cdot)$ over $\mathbb R^+$,
  which, along with (\ref{0107-11}), implies that
 \begin{eqnarray}\label{0107-12}
  S^*_1(\cdot)\;\;\mbox{is real analytic over}\;\; (0,\infty);
  \;\;\mbox{and}\;\; \|S_1^*(t)\|_{\mathcal L(X,D(A_1^*))} \leq \widehat C/t,~t>0.
 \end{eqnarray}
 Arbitrarily fix an $\varepsilon\in(0,T)$ so that
 \begin{eqnarray}\label{0107-13}
  |E\cap (0,T-\varepsilon)| >0.
 \end{eqnarray}
 Let $\{z_n\}\subset D(A_1^*)$ so that $\lim_{n\rightarrow\infty} z_n=\hat z$ in $X$. Because $B_1\in \mathcal L(D(A^*_1),U)$, we find from the second conclusion in (\ref{0107-12}) that when $n$ goes to $\infty$,
 \begin{eqnarray*}
  \|B_1^*S_1^*(\cdot)z_n - B_1^*S_1^*(\cdot)\hat z\|_{L^2(\varepsilon,T;U)}
  &=& \|B_1^*S_1^*(\cdot-\varepsilon)S_1^*(\varepsilon)(z_n - \hat z)\|_{L^2(\varepsilon,T;U)}
  \rightarrow 0.
 \end{eqnarray*}
 This, along with  (\ref{wang1.15}), yields that
 \begin{eqnarray*}
   \widetilde{B_1^*S_1^*}(T-\cdot)\hat z
   =B_1^*S_1^*(T-\cdot)\hat z
   \;\;\mbox{over}\;\; (0,T-\varepsilon),
 \end{eqnarray*}
 which, together with the first conclusion in (\ref{0107-12}), shows that $\widetilde{B_1^*S_1^*}(T-\cdot)\hat z$ is real analytic over $(0,T-\varepsilon)$. Then, by  (\ref{0107-13}) and (\ref{0107-14}), we see that
  \begin{eqnarray*}
  \widetilde{B_1^*S_1^*}(T-\cdot)\hat z =0  \;\;\mbox{over}\;\; (0,T-\varepsilon).
 \end{eqnarray*}
 Sending  $\varepsilon\rightarrow 0$ in the above leads to
 (\ref{0107-15}). Hence,  (H4) holds  for $(A_1,B_1)$.

 Finally,  we will prove that
 \begin{equation}\label{newyear7.9}
  N_1(T^1(y_0),y_0)=0  \;\;\mbox{for all}\;\; y_0\in H^{-1}(\Omega)\setminus\{0\}.
 \end{equation}
 According to (vi) of  Lemma~\ref{Lemma-T0-T1},
  (\ref{newyear7.9}) is equivalent to that
  \begin{eqnarray}\label{0106-302}
  N_1(\infty,y_0)=0  \;\;\mbox{for all}\;\; y_0\in H^{-1}(\Omega)\setminus\{0\}.
 \end{eqnarray}
 To prove (\ref{0106-302}), we arbitrarily fix a $y_0\in H^{-1}(\Omega)\setminus\{0\}$ and then fix a $\hat t\in(0,\infty)$. Notice that  the semigroup $\{S_1(t)\}_{t\geq 0}$ has the following property: there exist $C>0$ and $\delta>0$, independent of $\hat t$, so that
 \begin{eqnarray}\label{0106-301}
  \|S_1(\hat t)y_0\|_{H^{-1}(\Omega)}
  \leq C e^{-\delta \hat t} \|y_0\|_{H^{-1}(\Omega)}.
 \end{eqnarray}
 Meanwhile, according to the $L^\infty$-null controllability of the boundary controlled
 heat equation, there exist a positive constant $C^\prime$ (independent of $\hat t$) and a control $u_{\hat t}\in L^\infty(0,1;L^2(\Gamma))$ so that
 \begin{eqnarray}\label{0106-300}
  \hat y_1(1;S_1(\hat t)y_0,u_{\hat t})=0
  \;\;\mbox{and}\;\;
  \|u_{\hat t}\|_{L^\infty(0,1;L^2(\Gamma))} \leq C^\prime \|S_1(\hat t)y_0\|_{H^{-1}(\Omega)}.
 \end{eqnarray}
 Define another control
 \begin{eqnarray*}
  v_{\hat t} (\tau) =\begin{cases}
                     0,~&\tau\in(0,\hat t],\\
                     u_{\hat t}(\tau-\hat t), ~&\tau \in (\hat t, \hat t+1).
                     \end{cases}
 \end{eqnarray*}
 From this and (\ref{0106-300}), we find that
 \begin{eqnarray*}
  \hat y_1(\hat t+1;y_0,v_{\hat t})  =  \hat y_1(1;S_1(\hat t)y_0,u_{\hat t})=0;
 \end{eqnarray*}
 \begin{eqnarray*}
  \|v_{\hat t}\|_{L^\infty(0,\hat t+1;L^2(\Gamma)) }
  =  \|u_{\hat t}\|_{L^\infty(0,1;L^2(\Gamma))} \leq C^\prime \|S_1(\hat t)y_0\|_{H^{-1}(\Omega)}.
 \end{eqnarray*}
 These, along with the optimality of $N_1(\hat t+1,y_0)$ and (\ref{0106-301}), yield that
 \begin{eqnarray*}
  N_1(\hat t+1,y_0) \leq \|v_{\hat t}\|_{L^\infty(0,\hat t+1;L^2(\Gamma)) }
  \leq C^\prime \|S_1(\hat t)y_0\|_{H^{-1}(\Omega)}
  \leq C^\prime C e^{-\delta \hat t} \|y_0\|_{H^{-1}(\Omega)}.
 \end{eqnarray*}
By this and the first equality in (\ref{N-infty-y0}), we obtain (\ref{0106-302}).
Hence, (\ref{newyear7.9}) has been proved.
  This ends the proof of this lemma.

 \end{proof}

  The BBP decompositions for $(A_1,B_1)$ are presented in the following Theorem~\ref{newyeartheorem7.2}:

  \begin{Theorem}\label{newyeartheorem7.2}
  Let $\mathcal{W}$, $\mathcal W_{3,2}$,  $\mathcal{V}$ and  $\mathcal{V}_{3,2}$
    be respectively given by (\ref{zhangjinchu1}), (\ref{zhangjinchu9}),   (\ref{TP-divide-domain})  and  (\ref{Lambda-di-3-1}), where $(A,B)=(A_1,B_1)$.
  Then the following conclusions are true:

\noindent (i) $\mathcal{W}=\mathcal W_{3,2}$ and $\mathcal{V}=\mathcal{V}_{3,2}$.

    \noindent (ii) For each $(M,y_0)\in (0,\infty)\times (H^{-1}(\Omega)\setminus\{0\})$, the problem $(TP)_1^{M,y_0}$   has the bang-bang property.

   \vskip 2pt

   \noindent (iii) For each $(T,y_0)\in (0,\infty)\times (H^{-1}(\Omega)\setminus\{0\})$, the problem $(NP)_1^{T,y_0}$   has the bang-bang property and the null control is not a minimal norm control to this problem.
  \end{Theorem}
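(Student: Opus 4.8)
\textbf{Proof plan for Theorem~\ref{newyeartheorem7.2}.}
The plan is to deduce the theorem entirely from Theorem~\ref{Proposition-Corollary1.7} together with Lemma~\ref{wanglemma8.9w}. The first step is to check that the pair $(A_1,B_1)$ fits the abstract framework of the paper: $A_1$ generates a $C_0$-semigroup $\{S_1(t)\}_{t\in\mathbb R^+}$ on $X=H^{-1}(\Omega)$ and $B_1\in\mathcal L(U,X_{-1})\setminus\{0\}$ is an admissible control operator for it; this was already established in the discussion preceding the theorem (via \cite[Proposition 10.7.1]{TW} and the restriction-to-$X$ observations). The second step is to invoke Lemma~\ref{wanglemma8.9w}, which gives that $(A_1,B_1)$ satisfies both (H3) and (H4), and moreover that $N_1(T^1(y_0),y_0)=0$ for every $y_0\in H^{-1}(\Omega)\setminus\{0\}$. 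With (H3) and (H4) in hand, all hypotheses of Theorem~\ref{Proposition-Corollary1.7} are met for $(A,B)=(A_1,B_1)$.

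For conclusion (i), I would argue as follows. Since (H3) holds, the proof of Theorem~\ref{Proposition-Corollary1.7}(i) already shows (see (\ref{no-critical-point-1})) that $T^0(y_0)=0$ and $N(T^0(y_0),y_0)=\infty$ for all $y_0$; hence by the first assertion of (\ref{no-critical-point-2}), $\mathcal W=\mathcal W_{3,2}\cup\mathcal W_{3,3}$ and by the second, $\mathcal V=\mathcal V_{3,1}\cup\mathcal V_{3,2}$. It then remains to remove $\mathcal W_{3,3}$ and $\mathcal V_{3,1}$. Here I use the extra information from Lemma~\ref{wanglemma8.9w}: since $N_1(T^1(y_0),y_0)=0$ for all $y_0$, and since $\{S_1(t)\}$ is an analytic (in particular backward-unique) semigroup, we have $T^1(y_0)=\infty$ by (iii) of Remark~\ref{YUYONGYUremark1.2} — actually the cleaner route is that $N_1(T^1(y_0),y_0)=0$ forces, via the definitions, $\mathcal W_{3,3}=\{(T,y_0)\in\mathcal W_3:T^1(y_0)\le T<\infty\}$ to be empty and $\mathcal V_{3,1}=\{(M,y_0)\in\mathcal V_3:M\le N(T^1(y_0),y_0)=0\}$ to be empty (no $M>0$ satisfies $M\le 0$). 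For $\mathcal W_{3,3}$ I note $T^1(y_0)=\infty$ (backward uniqueness of the heat semigroup), so the constraint $T\ge T^1(y_0)$ is never met; this gives $\mathcal W_{3,3}=\emptyset$ and hence $\mathcal W=\mathcal W_{3,2}$, and $\mathcal V=\mathcal V_{3,2}$.

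Conclusions (ii) and (iii) are then immediate applications of the sharpened statements in Theorem~\ref{Proposition-Corollary1.7}(ii)--(iii): part (ii) there says that under (H3) and (H4), for each $(M,y_0)\in\mathcal V_{3,2}$ the problem $(TP)^{M,y_0}$ has the bang-bang property, which — combined with $\mathcal V=\mathcal V_{3,2}$ from conclusion (i) — yields the bang-bang property of $(TP)_1^{M,y_0}$ for every $(M,y_0)\in(0,\infty)\times(H^{-1}(\Omega)\setminus\{0\})$; likewise part (iii) there gives, for each $(T,y_0)\in\mathcal W_{3,2}$, that $(NP)^{T,y_0}$ has the bang-bang property and the null control is not a minimal norm control, which — combined with $\mathcal W=\mathcal W_{3,2}$ — yields conclusion (iii). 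The main (and essentially only) obstacle is the bookkeeping in conclusion (i): one must carefully trace the definitions (\ref{zhangjinchu9}) and (\ref{Lambda-di-3-1}) of $\mathcal W_{3,3}$ and $\mathcal V_{3,1}$, verify that $\mathcal W_3=\mathcal W$ and $\mathcal V_3=\mathcal V$ under (H3), and then use $T^1\equiv\infty$ together with $N_1(T^1(\cdot),\cdot)\equiv 0$ to eliminate the two spurious pieces; everything else is a citation. All the genuinely analytic content — the $L^\infty$-null controllability of the boundary-controlled heat equation, the analyticity-based unique continuation, and the exponential decay estimate giving $N_1(\infty,y_0)=0$ — has already been packaged into Lemma~\ref{wanglemma8.9w}.
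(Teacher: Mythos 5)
Your proposal is correct and follows essentially the same route as the paper: invoke Lemma~\ref{wanglemma8.9w} to get (H3), (H4) and $N_1(T^1(y_0),y_0)=0$, apply Theorem~\ref{Proposition-Corollary1.7} to obtain $\mathcal W=\mathcal W_{3,2}\cup\mathcal W_{3,3}$ and $\mathcal V=\mathcal V_{3,1}\cup\mathcal V_{3,2}$, then eliminate $\mathcal W_{3,3}$ via $T^1\equiv\infty$ (backward uniqueness) and $\mathcal V_{3,1}$ via $N_1(T^1(y_0),y_0)=0$, and conclude (ii)--(iii) from parts (ii)--(iii) of Theorem~\ref{Proposition-Corollary1.7}. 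Your momentary suggestion that $N_1(T^1(y_0),y_0)=0$ alone kills $\mathcal W_{3,3}$ would not suffice, but you correct this yourself by using $T^1(y_0)=\infty$, which is exactly the paper's argument.
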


  \begin{proof}
  (i) By Lemma~\ref{wanglemma8.9w}, we see that (H3) and (H4) holds for $(A_1,B_1)$.
  Then we can use Theorem~\ref{Proposition-Corollary1.7} to find that
  \begin{equation}\label{newyear7.13}
  \mathcal{W}=\mathcal{W}_{3,2} \cup \mathcal W_{3,3}\;\;\mbox{and}\;\;
   \mathcal{V}=\mathcal{V}_{3,1}\cup\mathcal{V}_{3,2}.
  \end{equation}
  On one hand, by the backward uniqueness property for $\{S_1(t)\}_{t\in\mathbb R^+}$, we have that $T^1(y_0)=\infty$ for all $y_0\in X\setminus\{0\}$. On the other hand,
  by Lemma~\ref{wanglemma8.9w}, we also have that
   $N_1(T^1(y_0),y_0)=0$ for all $y_0\in H^{-1}(\Omega)\setminus\{0\}$. These,
   along with the definitions of $\mathcal W_{3,3}$ and $\mathcal{V}_{3,1}$ (see (\ref{zhangjinchu9}) (\ref{Lambda-di-3-1})), yield that $\mathcal W_{3,3}=\emptyset$ and $\mathcal{V}_{3,1}=\emptyset$ in this case. From this and (\ref{newyear7.13}), we get the conclusion (i) of this theorem.

   \vskip 3pt
   (ii) Notice that $\mathcal{V}=(0,\infty)\times (H^{-1}(\Omega)\setminus\{0\})$
   in this case. (For the definition of $\mathcal{V}$, see (\ref{TP-divide-domain}).) Then by the second equality in the conclusion (i) of this theorem  and the assumptions (H3) and (H4), we can apply (ii) of Theorem~\ref{Proposition-Corollary1.7} to get the conclusion (ii) of this theorem.

   \vskip 3pt

   (iii) Notice that $\mathcal{W}=(0,\infty)\times (H^{-1}(\Omega)\setminus\{0\})$
   in this case. (For the definition of $\mathcal{W}$, see (\ref{zhangjinchu1})) Then by the first equality in the conclusion (i) of this theorem and the assumptions (H3) and (H4), we can apply (iii) of Theorem~\ref{Proposition-Corollary1.7} to get the conclusion (iii) of this theorem.

   \vskip 3pt
  In summary, we finish the proof of this theorem.

  \end{proof}

 \begin{Remark}\label{newyearremark7.3}
 (i) From Theorem~\ref{newyeartheorem7.2}, we see that the BBP decomposition for
 $(NP)_1^{T,y_0}$ has only one part which is $\mathcal{W}=(0,\infty)\times (H^{-1}(\Omega)\setminus\{0\})$ and that for each $(T,y_0)$ in $\mathcal{W}$, the corresponding  $(NP)_1^{T,y_0}$ has the bang-bang property. The reason to cause such decomposition is that $(A_1,B_1)$ is $L^\infty$-null controllable. The same can be said about
  the BBP decomposition for $(NP)^{T,y_0}$ built up in Theorem~\ref{Proposition-Corollary1.7}.

 (ii) From Theorem~\ref{newyeartheorem7.2}, we see that the BBP decomposition for
 $(TP)_1^{M,y_0}$ has only one part which is $\mathcal{V}=(0,\infty)\times (H^{-1}(\Omega)\setminus\{0\})$ and that for each $(M,y_0)$ in $\mathcal{V}$, the corresponding  $(TP)_1^{M,y_0}$ has the bang-bang property. The reasons to cause such decomposition are that $(A_1,B_1)$ is $L^\infty$-null controllable and $N_1(T^1(y_0), y_0)=0$ for all $y_0\in (0,\infty)\times (H^{-1}(\Omega)\setminus\{0\})$. (Compare this BBP decomposition with the BBP decomposition (P1) given by  (\ref{0127-intro-P1}).)
 The above-mentioned  second property (i.e., $N_1(T^1(y_0), y_0)=0$ for all $y_0\in (0,\infty)\times (H^{-1}(\Omega)\setminus\{0\})$)  holds, because solutions of the controlled system (governed by $(A_1, B_1)$), with the null control, tend to zero as time goes to infinity.
   \end{Remark}

\bigskip
  \subsection{Application to some special controlled evolution   systems}

In this subsection, we will use  Theorem~\ref{Proposition-NTy0-partition} and
 Theorem~\ref{Proposition-TMy0-partition} to study the BBP decompositions for minimal time and minimal norm control problems in a special setting. The controlled system in this setting  is taken from  \cite{AK-2}.

   Let $X$ and $U$ be two real separable Hilbert spaces. Let $A\triangleq A_2$
    and $B\triangleq B_2$, where $A_2$ and $B_2$ are defined in the following manner: Arbitrarily fix  a Riesz basis $\{\phi_j\}_{j\geq 1}$  in $X$ and  a biorthogonal sequence
    $\{\psi_j\}_{j\geq 1}$ of the aforementioned  Riesz basis.
       Take a sequence  $\Lambda\triangleq\{\lambda_j\}_{j\geq 1}\subset \mathbb R^+$  so that
\begin{equation}\label{intro-seq}
  0<\lambda_1<\lambda_2<\cdots<\lambda_j<\cdots;
  \;\;\mbox{and}\;\;\displaystyle\Sigma_{j\geq 1} 1/\lambda_j <\infty.
\end{equation}
Write
$X_1 \triangleq \{y\in X~:~\|y\|_1<\infty\}$ with the norm
$\|y\|_{X_1} \triangleq \sqrt{\sum_{j\geq 1} \lambda_j^2  \langle y,\psi_j \rangle_X^2}$.     Define  $A_2:~ D(A_2)\triangleq X_1\subset X\rightarrow X$  by
setting
\begin{equation}\label{intro-A}
   A_2 x \triangleq -\sum_{j\geq 1} \lambda_j \langle x,\psi_j \rangle_X\phi_j
   \;\;\mbox{for each}\;\;x\in D(A_2).
\end{equation}
Write $X_{-1} \triangleq (D(A_2^*))^\prime$ (the dual of $D(A_2^*)$  with respect to the pivot space $X$). Then let  $B_2\in \mathcal L(U,X_{-1})\setminus\{0\}$.


 One can directly check the following facts: First,  the operator $A_2$ generates a $C_0$-semigroup $\{S_2(t)\}_{t\in\mathbb R^+}$ over $X$; Second, the semigroup $\{S_2(t)\}_{t\in\mathbb R^+}$ has the expression:
 \begin{eqnarray}\label{wsiaoxiao7.22}
 S_2(t)x=\sum_{j=1}^\infty x_j e^{-\lambda_j t}\phi_j,\; t\geq 0,\;\;\mbox{for each}\;\;  x=\sum_{j=1}^\infty x_j\phi_j\in X.
 \end{eqnarray}
 Third, the dual semigroup $\{S_2^*(t)\}_{t\geq 0}$ has the expression:
 \begin{eqnarray}\label{wsiaoxiao7.23}
 S_2^*(t)x=\sum_{j=1}^\infty \hat x_j e^{-\lambda_j t}\psi_j,\; t\geq 0,\;\;\mbox{for each}\;\;  x=\sum_{j=1}^\infty \hat x_j\psi_j\in X.
 \end{eqnarray}
In this setting, the systems (\ref{system-0}) and (\ref{system-1})  read respectively as follows:
  \begin{eqnarray}\label{intro-equation-infty}
             y^\prime(t) = A_2y(t)+B_2u(t), ~ t>0;\;\; y(0)=y_0;
          \end{eqnarray}
 \begin{eqnarray}\label{intro-equation-finite}
             y^\prime(t) = A_2y(t)+B_2v(t), ~ 0<t\leq T;\;\; y(0)=y_0.
         \end{eqnarray}
 Here, $y_0\in X$, $0<T<\infty$,  $u\in L^\infty(\mathbb R^+;U)$ and
 $v\in L^\infty(0,T;U)$. Write $y_2(\cdot;y_0,u)$ and $\hat y_2(\cdot;y_0,v)$ for the solutions of (\ref{intro-equation-infty}) and (\ref{intro-equation-finite}), respectively. There are many controlled PDEs governed by $(A_2,B_2)$, we refer the readers to \cite{AK-2}, \cite{AK-3} and \cite{AK-1}.

For each $y_0\in X\setminus\{0\}$ and each $M\in(0,\infty)$, we consider the minimal time control problem:
 \begin{equation*}\label{intro-TP}
 (TP)^{M,y_0}_2\;\;\;\;\;\; T_2(M,y_0) \triangleq \inf \{\hat t>0~:~\exists\,u\in \mathcal U_{3}^M
  \;\;\mbox{s.t.}\;\;y(\hat t;y_0,u)=0\},
 \end{equation*}
 where
 \begin{equation*}\label{intro-U}
  \mathcal U_{3}^M  \triangleq \{u\in L^\infty(\mathbb R^+;U) ~:~ \|u(t)\|_U\leq M \;\;\mbox{a.e.}\;\;t\in\mathbb R^+\}.
 \end{equation*}
 For each $y_0\in X\setminus\{0\}$ and  each $T\in(0,\infty)$, we consider the  minimal norm control problem:
 \begin{eqnarray*}
 (NP)^{T,y_0}_2\;\;\;\;\;\; N_2(T,y_0)  \triangleq  \inf\{ \|v\|_{L^\infty(0,T;U)}~:~\hat y_2(T;y_0,v)=0\}.
 \end{eqnarray*}

We will prove that $(A_2,B_2)$ satisfies (H1) and (H2). To do this,  we need three lemmas.  The first one is very similar to \cite[Lemma 4.6]{AK-1}. We will give its proof in Appendix G of this paper. To state it, we define
\begin{equation}\label{space-2-1}
     \mathcal P \triangleq\Big\{z\rightarrow\sum_{j=1}^N c_j e^{-\lambda_j z},\,z\in\mathbb C^+   ~:~
     \{c_j\}_{j=1}^N \subset \mathbb C,  \,N\in\mathbb N^+ \Big\},
  \end{equation}
 where $\mathbb C^+ \triangleq \{x+iy \in\mathbb C ~:~ x\geq 0\}$. And then for
  each  $\theta_0\in (0,\frac{\pi}{2})$ and $\varepsilon>0$, define
  \begin{equation}\label{space-2}
     S_{\varepsilon,\theta_0}
     \triangleq \Big\{z=x+iy \in\mathbb C
     ~:~x\geq\varepsilon,\,\frac{|y|}{x}\leq \frac{1}{2}\cot\theta_0 \Big\}.
  \end{equation}

\begin{Lemma}\label{lemma-key-inequality}
 For each  $\theta_0\in (0,\frac{\pi}{2})$, $\varepsilon>0$,
 and each $T>0$, there exist two positive constants $C_1\triangleq C_1(\theta_0,\varepsilon,T)$ and $C_2\triangleq C_2(\theta_0)$ so that
  \begin{equation}\label{space-3}
   |p(z)| \leq C_1e^{-C_2 Re\, z}
   \|p|_{(0,T)}\|_{L^1(0,T;\mathbb C)}
   \;\;\mbox{for all}\;\; p\in\mathcal P
   \;\;\mbox{and}\;\; z\in S_{\varepsilon,\theta_0}.
  \end{equation}
 Here, $p|_{(0,T)}$ denotes the restriction of $p$ on $(0,T)$.

 \end{Lemma}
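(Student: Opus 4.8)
\textbf{Proof proposal for Lemma~\ref{lemma-key-inequality}.}

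The plan is to reduce the estimate to a one-dimensional statement about exponential sums and then invoke a duality/biorthogonality argument, exactly in the spirit of \cite[Lemma 4.6]{AK-1}. First I would fix $\theta_0\in(0,\tfrac{\pi}{2})$, $\varepsilon>0$ and $T>0$. The key preliminary observation is that the sequence $\Lambda=\{\lambda_j\}$ satisfies $\sum_j 1/\lambda_j<\infty$ (see (\ref{intro-seq})), so $\Lambda$ has finite upper density and in particular is a Blaschke-type sequence for the right half-plane. This is precisely the hypothesis under which one can construct, for each fixed $z\in\mathbb C^+$, a bounded analytic ``reproducing'' multiplier: there is an entire (or $H^\infty$ of a half-plane) function that vanishes at all $\lambda_k$ except one, evaluates to a controlled nonzero value there, and has the right exponential decay in $\mathrm{Re}\,z$. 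The construction of such a function and the control of its norm on vertical lines is the classical Weierstrass/Hadamard product machinery used throughout \cite{AK-1,AK-2,AK-3}; I would quote it rather than redo it.

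The main steps, in order, are as follows. (1) Write a generic $p\in\mathcal P$ as $p(w)=\sum_{j=1}^N c_j e^{-\lambda_j w}$ and, for a target point $z\in S_{\varepsilon,\theta_0}$, seek to recover $|p(z)|$ from $\|p\|_{L^1(0,T;\mathbb C)}$. (2) Introduce the finite Blaschke-type product / Weierstrass product $\Phi_z$ adapted to the finite set $\{\lambda_1,\dots,\lambda_N\}$ and the point $z$, together with a generating kernel $K_z(w)$ supported (in a suitable transform sense) on $(0,T)$, so that $\int_0^T p(t) K_z(t)\,dt = p(z)$ up to an explicit normalizing constant. This is where the geometry of $S_{\varepsilon,\theta_0}$ enters: the opening angle condition $|y|/x\le \tfrac12\cot\theta_0$ guarantees that $z$ stays inside a fixed sector, away from the positive real axis direction in which the exponentials blow up, so the kernel norm is bounded uniformly in $N$ and in $z$ along each vertical line. (3) Estimate $\|K_z\|_{L^\infty(0,T;\mathbb C)}$: this produces the factor $C_1 e^{-C_2\,\mathrm{Re}\,z}$, with $C_2=C_2(\theta_0)$ coming from the sector half-angle and $C_1=C_1(\theta_0,\varepsilon,T)$ absorbing the lower cutoff $\mathrm{Re}\,z\ge\varepsilon$ and the interval length $T$. (4) Conclude by Hölder:
\begin{equation*}
 |p(z)| = \Bigl|\int_0^T p(t)K_z(t)\,dt\Bigr|
 \le \|K_z\|_{L^\infty(0,T;\mathbb C)}\,\|p|_{(0,T)}\|_{L^1(0,T;\mathbb C)}
 \le C_1 e^{-C_2\,\mathrm{Re}\,z}\,\|p|_{(0,T)}\|_{L^1(0,T;\mathbb C)}.
\end{equation*}
Since $\mathcal P$ consists of finite sums, all constants can be taken independent of $N$, which is the whole point; the estimate then passes to the closure if needed.

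The hard part will be Step (2)–(3): constructing the reproducing kernel $K_z$ with simultaneously (a) the correct interpolation property at $z$, (b) $L^\infty(0,T)$ norm independent of the number of frequencies $N$, and (c) the explicit exponential gain $e^{-C_2\mathrm{Re}\,z}$ with $C_2$ depending only on $\theta_0$. Controlling (b) and (c) together requires the condensation/separation properties of $\Lambda$ that follow from $\sum 1/\lambda_j<\infty$ and $0<\lambda_1<\lambda_2<\cdots$, and careful bookkeeping of the sector $S_{\varepsilon,\theta_0}$; the cutoff $\mathrm{Re}\,z\ge\varepsilon$ is what keeps $C_1$ finite. I would handle this by citing the corresponding lemma in \cite{AK-1} (whose hypotheses (\ref{intro-seq}) reproduce verbatim) and only indicate the modifications needed to track the dependence of the constants on $\theta_0$, $\varepsilon$ and $T$, rather than reproving the half-plane interpolation estimate from scratch. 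Everything else is routine Hölder and normalization.
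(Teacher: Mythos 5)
Your plan has a genuine gap, and it sits exactly where you park the difficulty. By Hahn--Banach and the Riesz representation of $(L^1(0,T;\mathbb C))^*$, the existence of a kernel $K_z\in L^\infty(0,T;\mathbb C)$ with $\int_0^T p(t)K_z(t)\,\mathrm dt=p(z)$ for all $p\in\mathcal P$ and $\|K_z\|_{L^\infty(0,T;\mathbb C)}\le C_1e^{-C_2\,Re\,z}$ is \emph{equivalent} to the estimate (\ref{space-3}) you are trying to prove; so your steps (2)--(3) are the lemma in disguise, and announcing that you will ``construct $K_z$'' advances nothing unless the construction is actually supplied. The machinery you propose to quote from \cite{AK-1} does not supply it: there the biorthogonal family $\{q_j\}$ to $\{e^{-\lambda_j t}\}$ is built in $L^2(\mathbb R^+;\mathbb C)$, as inverse Laplace transforms of the functions $J_j$ associated with the product $W$ of (\ref{appendix-2}); it lives on the half-line, not on $(0,T)$. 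If instead you take biorthogonals on the finite interval $(0,T)$ and bound your kernel termwise, their norms must in general grow exponentially at a rate governed by the condensation index of $\Lambda$, so the resulting bound is uniform in $N$ only for $Re\,z$ larger than that index --- not for every $z\in S_{\varepsilon,\theta_0}$ with arbitrary $\varepsilon>0$. Under (\ref{intro-seq}) the condensation index can be strictly positive (this is precisely the phenomenon the paper exploits later, where $T_2(A_2,B_2)\in(0,\infty)$ as in Example~\ref{Example-0104-1}), so requirements (b) and (c) of your step (3) cannot be obtained ``by citation.''

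The paper's proof avoids this obstruction by a two-step structure that your proposal lacks. First it proves a \emph{half-line} estimate, $|p(z)|\le C(\theta_0,\varepsilon)e^{-\frac18\lambda_1\cos\theta_0\,Re\,z}\,\|p\|_{L^1(\mathbb R^+;\mathbb C)}$ for $z\in S_{\varepsilon,\theta_0}$ (this is (\ref{yubiao8.5})): the coefficients $c_j$ are recovered with the half-line biorthogonals, the frequencies are grouped into the annular sectors $G_k$ bounded by the circles $|z|=r_n$ of (\ref{yubiaozhang8.1}), and each group kernel $\mathcal G_k(\cdot,z)$ is estimated \emph{collectively} by Fourier inversion plus the residue theorem along $\Gamma_k$, using the lower bound on $|W|$ on those circles; it is this collective contour estimate, not a termwise biorthogonal bound, that defeats the condensation problem. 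Second, the passage from $\|p\|_{L^1(\mathbb R^+;\mathbb C)}$ to $\|p|_{(0,T)}\|_{L^1(0,T;\mathbb C)}$ in (\ref{1218-bad-1}) is done by contradiction: normalize $\|p_n\|_{L^1(\mathbb R^+;\mathbb C)}=1$ with $\|p_n\|_{L^1(0,T;\mathbb C)}<1/n$, control the tails with the decay estimate, extract a locally uniform limit on the sector by Montel's theorem, and use analyticity to force the limit to vanish identically, a contradiction. In particular the constant $C(T)$ there is non-constructive; your plan implicitly promises an explicit kernel on $(0,T)$ with explicit constants, which is strictly stronger than the lemma and not available from the cited sources. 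To repair the proposal you need either this half-line-plus-compactness argument or a genuinely new finite-interval construction, which you have not given.
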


 To  state the second lemma, we write $\widetilde{U}$ for the complexification of $U$ and then define
\begin{equation}\label{space-U}
     \mathcal P_{\widetilde{U}} \triangleq\Big\{ z\rightarrow\sum_{j=1}^N c_j e^{-\lambda_j z} B_2^*\psi_j,\,z\in\mathbb C^+
     ~:~ \{c_j\}_{j=1}^N\subset \mathbb C,\,N\in\mathbb N^+  \Big\}.
  \end{equation}
Notice that each element in $\mathcal P_{\widetilde{U}}$ is a vector-valued function, with its domain $\mathbb C$ and its range $\widetilde{U}$.
  With the aid of  Lemma \ref{lemma-key-inequality}, we build up an estimate
  in the second lemma as follows:
  \begin{Lemma}\label{ob-estimate-YT}
   For each $\theta_0\in (0,\frac{\pi}{2})$,  $\varepsilon>0$ and each $T>0$, there exist two positive constants $C_1\triangleq C_1(\theta_0,\varepsilon,T)$ and $C_2\triangleq C_2(\theta_0)$ so that
  \begin{equation}\label{space-U-estimate}
   \|f(z)\|_{\widetilde{U}} \leq C_1e^{-C_2 Re\, z}   \|f|_{(0,T)}\|_{L^1(0,T;\widetilde{U})}
   \;\;\mbox{for all}\;\;
   f\in\mathcal P_{\widetilde{U}}   \;\;\mbox{and}\;\;  z\in S_{\varepsilon,\theta_0},
  \end{equation}
where, $S_{\varepsilon,\theta_0}$ and $\mathcal P_{\widetilde{U}}$
are defined by  (\ref{space-2}) and (\ref{space-U}), respectively, and $f|_{(0,T)}$
denotes the restriction of $f$ on $(0,T)$.
 \end{Lemma}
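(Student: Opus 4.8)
The plan is to derive Lemma~\ref{ob-estimate-YT} from Lemma~\ref{lemma-key-inequality} by a straightforward ``slice by duality'' argument, passing from the scalar estimate to the $\widetilde U$-valued one through testing against unit vectors. Fix $\theta_0\in(0,\pi/2)$, $\varepsilon>0$ and $T>0$, and let $C_1\triangleq C_1(\theta_0,\varepsilon,T)$ and $C_2\triangleq C_2(\theta_0)$ be the constants furnished by Lemma~\ref{lemma-key-inequality}. Arbitrarily fix $f\in\mathcal P_{\widetilde U}$, say $f(z)=\sum_{j=1}^N c_j e^{-\lambda_j z}B_2^*\psi_j$, and fix $z_0\in S_{\varepsilon,\theta_0}$. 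First I would pick an element $\eta\in\widetilde U$ with $\|\eta\|_{\widetilde U}=1$ that (nearly) attains the norm: $\|f(z_0)\|_{\widetilde U}=\langle f(z_0),\eta\rangle_{\widetilde U}$ (such $\eta$ exists since $\widetilde U$ is a Hilbert space; if one prefers to avoid attainment one takes an $\varepsilon'$-approximate maximizer and lets $\varepsilon'\to0$ at the end).

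Next I would introduce the scalar function $p_\eta(z)\triangleq\langle f(z),\eta\rangle_{\widetilde U}=\sum_{j=1}^N c_j\langle B_2^*\psi_j,\eta\rangle_{\widetilde U}\,e^{-\lambda_j z}$. The key point is that $p_\eta$ is of the form $\sum_{j=1}^N \tilde c_j e^{-\lambda_j z}$ with $\tilde c_j=c_j\langle B_2^*\psi_j,\eta\rangle_{\widetilde U}\in\mathbb C$, hence $p_\eta\in\mathcal P$ (see (\ref{space-2-1})). Applying Lemma~\ref{lemma-key-inequality} to $p_\eta$ at the point $z_0$ gives
\begin{equation*}
 \|f(z_0)\|_{\widetilde U}=|p_\eta(z_0)|\leq C_1 e^{-C_2\,\mathrm{Re}\,z_0}\,\big\|p_\eta|_{(0,T)}\big\|_{L^1(0,T;\mathbb C)}.
\end{equation*}
Then, since for a.e.\ $t\in(0,T)$ one has $|p_\eta(t)|=|\langle f(t),\eta\rangle_{\widetilde U}|\leq\|f(t)\|_{\widetilde U}\|\eta\|_{\widetilde U}=\|f(t)\|_{\widetilde U}$ by Cauchy--Schwarz, integrating over $(0,T)$ yields $\|p_\eta|_{(0,T)}\|_{L^1(0,T;\mathbb C)}\leq\|f|_{(0,T)}\|_{L^1(0,T;\widetilde U)}$. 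Combining the two displayed inequalities gives exactly (\ref{space-U-estimate}) with the same constants $C_1,C_2$. Since $z_0\in S_{\varepsilon,\theta_0}$ and $f\in\mathcal P_{\widetilde U}$ were arbitrary, this completes the argument.

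I do not expect a serious obstacle here; the whole content is that the scalar estimate of Lemma~\ref{lemma-key-inequality} is ``uniform enough'' that reducing a $\widetilde U$-valued exponential sum to a scalar one by pairing with a fixed unit vector loses nothing, because the coefficients $\langle B_2^*\psi_j,\eta\rangle_{\widetilde U}$ get absorbed into the $c_j$'s and the $L^1$-norm only decreases under such pairing. The one point to be slightly careful about is that $\mathcal P$ in (\ref{space-2-1}) allows complex coefficients and arbitrary finite length $N$, so $p_\eta$ genuinely lies in $\mathcal P$ regardless of the (real separable Hilbert space) structure of $U$ and the specific values of $B_2^*\psi_j$; also one should note the restriction map $f\mapsto f|_{(0,T)}$ commutes with $\eta$-pairing, i.e.\ $(p_\eta)|_{(0,T)}=\langle f|_{(0,T)}(\cdot),\eta\rangle_{\widetilde U}$, which is immediate. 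If one wishes to avoid invoking norm attainment in $\widetilde U$, replace $\eta$ by $\eta_n$ with $\langle f(z_0),\eta_n\rangle_{\widetilde U}\geq\|f(z_0)\|_{\widetilde U}-1/n$ and $\|\eta_n\|_{\widetilde U}=1$, run the same estimate, and let $n\to\infty$; the bound is unchanged.
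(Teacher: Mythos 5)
Your proposal is correct and follows essentially the same route as the paper: the paper also pairs $f$ with an arbitrary unit vector $v\in\widetilde U$, observes that $z\mapsto\langle f(z),v\rangle_{\widetilde U}$ lies in $\mathcal P$, applies Lemma~\ref{lemma-key-inequality}, bounds the resulting $L^1$ integrand by $\|f(t)\|_{\widetilde U}$, and then takes the supremum over $\|v\|_{\widetilde U}\leq 1$; your variant of choosing a (near-)norming vector at the fixed point $z_0$ instead of taking the supremum at the end is an inessential difference.
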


 \begin{proof}
  Arbitrarily fix  $f\in \mathcal P_{\widetilde{U}}$.  Then by (\ref{space-U}), there is $N\in\mathbb N^+$
 and $ \{c_j\}_{j=1}^N\subset \mathbb C$ so that
  \begin{eqnarray*}
    f(z)=\sum_{j=1}^N c_j e^{- \lambda_j z} B_2^*\psi_j\;\;\mbox{for all}\;\;z\in \mathbb C^+.
  \end{eqnarray*}
  Arbitrarily fix a $v\in \widetilde{U}$.  Since
  \begin{eqnarray*}
   f_v(z) \triangleq \langle f(z),v\rangle_{\widetilde{U}}
    = \sum_{j=1}^N c_j \langle B_2^*\psi_j,v \rangle_{\widetilde{U}} e^{- \lambda_j z},
   ~z\in \mathbb C^+,
  \end{eqnarray*}
    it follows from  (\ref{space-2-1}) that   $f_v \in\mathcal P$.
     Then according to Lemma \ref{lemma-key-inequality},  for each $\theta_0\in (0,\frac{\pi}{2})$, each $\varepsilon>0$ and each $T>0$, there are two positive constants $C_1(\theta_0,\varepsilon,T)$ and $C_2(\theta_0)$ (independent of
     $f$ and $v$) so that
     \begin{eqnarray*}
   |\langle f(z),v \rangle_{\widetilde{U}}|
   \leq  C_1(\theta_0,\varepsilon,T) e^{-C_2(\theta_0) Re \,z} \int_0^T |\langle f|_{(0,T)}(t),v \rangle_{\widetilde{U}}|  \,\mathrm dt
   \;\;\mbox{for each}\;\; z\in  S_{\varepsilon,\theta_0}.
  \end{eqnarray*}
       Since for each  $z \in S_{\varepsilon,\theta_0}$, the above inequality holds for all $v\in \widetilde{U}$, we find that for each $z \in S_{\varepsilon,\theta_0}$,
  \begin{eqnarray*}
   \|f(z)\|_{\widetilde{U}}  = \sup_{\|v\|_{\widetilde{U}} \leq 1 }  |\langle f(z),v \rangle_{\widetilde{U}} |
   \leq  C_1(\theta_0,\varepsilon,T) e^{-C_2(\theta_0) Re \,z} \int_0^T \|f|_{(0,T)}(t)\|_{\widetilde{U}} \,\mathrm dt.
  \end{eqnarray*}
  Since $f$ was arbitrarily taken from $\mathcal P_{\widetilde{U}}$, the above
  inequality leads to  (\ref{space-U-estimate}).  This ends  the proof of this lemma.
 \end{proof}

 With the aid of Lemma~\ref{ob-estimate-YT}, we obtain the third lemma which will play a key role in the proof of the conclusion that (H1) and (H2) hold for $(A_2,B_2)$.

 \begin{Lemma}\label{lemmazhang7.10}
 Let  $\theta_0\in (0,\frac{\pi}{2})$. Then for each $T\in (0,\infty)$, each  $\varepsilon\in (0,T)$ and each $f\in Y_T$ (which is defined by (\ref{ob-space}) with $(A^*,B^*)$ being replaced by $(A^*_2,B^*_2)$), there is a continuous and
  weakly analytic function $\widetilde g_{\varepsilon,f}: S_{\varepsilon,\theta_0}\rightarrow
  \widetilde{U}$ so that
 \begin{eqnarray}\label{zhang7.31}
  \widetilde g_{\varepsilon,f}|_{(\varepsilon,T)}(T-t)=f(t) \;\;\mbox{for each}\;\; t\in (0,T-\varepsilon),
  \end{eqnarray}
  and so that
 \begin{eqnarray}\label{zhang7.32}
  \|\widetilde g_{\varepsilon,f}\|_{L^\infty(S_{\varepsilon,\theta_0};\widetilde{U})} \leq C_1(\theta_0,\varepsilon,\varepsilon) \|f\|_{L^1(T-\varepsilon,T;U)},
 \end{eqnarray}
 where $C_1(\theta_0,\varepsilon,\varepsilon)$ is given
  by (\ref{space-U-estimate}).

 \end{Lemma}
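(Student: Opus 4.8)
The plan is to exploit the fact that any $f \in Y_T$ is, by definition, a limit in the $L^1(0,T;U)$-norm of functions of the form $B_2^* S_2^*(T - \cdot) z$ with $z \in D(A_2^*)$, together with the explicit series expansion \eqref{wsiaoxiao7.23} of the dual semigroup. First I would observe that for $z = \sum_j \hat z_j \psi_j \in D(A_2^*)$, we have $B_2^* S_2^*(T - t) z = \sum_j \hat z_j e^{-\lambda_j (T - t)} B_2^* \psi_j$, so the function $t \mapsto B_2^* S_2^*(T - t) z$, viewed on $(0,T)$, is the restriction of a member of $\mathcal P_{\widetilde U}$ under the substitution $\zeta = T - t$ (here we only need finitely many terms after truncating the series, or we argue by density; the convergence of the series in $\widetilde U$ for $\mathrm{Re}\,\zeta \geq \varepsilon > 0$ follows from \eqref{intro-seq} and the admissibility of $B_2$). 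Thus $g_z(\zeta) \triangleq \sum_j \hat z_j e^{-\lambda_j \zeta} B_2^* \psi_j$ defines a weakly analytic, continuous $\widetilde U$-valued function on $\{\mathrm{Re}\,\zeta > 0\}$, and in particular on $S_{\varepsilon,\theta_0}$, and $g_z|_{(\varepsilon, T)}(T - t) = B_2^* S_2^*(T - t) z$ for $t \in (0, T - \varepsilon)$.

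Next I would apply Lemma~\ref{ob-estimate-YT} with the parameter triple $(\theta_0, \varepsilon, \varepsilon)$: for $p_z \in \mathcal P_{\widetilde U}$ corresponding to $g_z$, restricting the $L^1$-norm in \eqref{space-U-estimate} to the interval $(0, \varepsilon)$ (which is legitimate, since $(0,\varepsilon) \subset (0,T)$ and enlarging the domain of integration only increases the right side — but to get the cleaner bound in terms of $\|f\|_{L^1(T-\varepsilon,T;U)}$ I should instead run Lemma~\ref{ob-estimate-YT} with $T$ replaced by $\varepsilon$, so the relevant restriction is $p_z|_{(0,\varepsilon)}$, which under $\zeta = T - t$ corresponds exactly to $t \in (T - \varepsilon, T)$). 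This gives
\begin{equation*}
\|g_z(\zeta)\|_{\widetilde U} \leq C_1(\theta_0, \varepsilon, \varepsilon)\, e^{-C_2(\theta_0)\,\mathrm{Re}\,\zeta}\, \|g_z|_{(0,\varepsilon)}\|_{L^1(0,\varepsilon;\widetilde U)} = C_1(\theta_0,\varepsilon,\varepsilon)\, e^{-C_2\,\mathrm{Re}\,\zeta}\, \|B_2^* S_2^*(T - \cdot) z\|_{L^1(T-\varepsilon, T; U)}
\end{equation*}
for all $\zeta \in S_{\varepsilon,\theta_0}$, where in the last equality I used the change of variables and that $B_2^*\psi_j$ is real. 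In particular $\sup_{S_{\varepsilon,\theta_0}}\|g_z\|_{\widetilde U} \leq C_1 \|B_2^* S_2^*(T-\cdot)z\|_{L^1(T-\varepsilon,T;U)}$, a uniform bound controlled by the $L^1$-norm of the boundary values on the short interval $(T-\varepsilon, T)$.

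Now I would pass to the limit. Given $f \in Y_T$, pick $\{z_n\} \subset D(A_2^*)$ with $B_2^* S_2^*(T - \cdot) z_n \to f$ in $L^1(0,T;U)$; then in particular $B_2^* S_2^*(T-\cdot)(z_n - z_m) \to 0$ in $L^1(T-\varepsilon, T; U)$, so by the displayed estimate $\{g_{z_n}\}$ is Cauchy in the Banach space $C_b(S_{\varepsilon,\theta_0}; \widetilde U)$ of bounded continuous $\widetilde U$-valued functions with the sup norm. Let $\widetilde g_{\varepsilon,f}$ be its limit. Uniform limits of continuous functions are continuous, and uniform limits of weakly analytic functions are weakly analytic (for each $v \in \widetilde U$, $\langle g_{z_n}(\cdot), v\rangle_{\widetilde U}$ converges uniformly to $\langle \widetilde g_{\varepsilon,f}(\cdot), v\rangle_{\widetilde U}$, so the latter is holomorphic by the uniform-limit theorem for holomorphic scalar functions). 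Taking $n \to \infty$ in $g_{z_n}|_{(\varepsilon,T)}(T - t) = B_2^* S_2^*(T-t) z_n$ — noting that $L^1$-convergence on $(0, T-\varepsilon)$ together with uniform convergence on $S_{\varepsilon,\theta_0} \supset (\varepsilon, T)$ forces the identification — yields \eqref{zhang7.31}, and taking $n \to \infty$ in the displayed sup-norm estimate (using $\|B_2^* S_2^*(T-\cdot) z_n\|_{L^1(T-\varepsilon,T;U)} \to \|f\|_{L^1(T-\varepsilon,T;U)}$) yields \eqref{zhang7.32}. The main obstacle I anticipate is the bookkeeping around which interval and which copy of Lemma~\ref{ob-estimate-YT} to invoke so that the final constant is exactly $C_1(\theta_0,\varepsilon,\varepsilon)$ and the bounding norm is exactly $\|f\|_{L^1(T-\varepsilon,T;U)}$ as stated; and, on a more technical level, justifying that $g_z$ — defined by an infinite series when $z \in D(A_2^*)$ is not a finite linear combination of the $\psi_j$ — genuinely lies in (the closure of) $\mathcal P_{\widetilde U}$ and hence inherits the estimate, which is handled by first proving everything for finite combinations and then a second density/Cauchy argument, or alternatively by noting the series converges uniformly on $S_{\varepsilon,\theta_0}$ thanks to $\sum_j 1/\lambda_j < \infty$ and the exponential decay $e^{-\lambda_j \varepsilon}$.
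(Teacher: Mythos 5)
Your proposal is correct and follows essentially the same route as the paper: approximate $f$ by functions $B_2^*S_2^*(T-\cdot)z$, extend these to $S_{\varepsilon,\theta_0}$ as elements of $\mathcal P_{\widetilde U}$, apply Lemma~\ref{ob-estimate-YT} with parameters $(\theta_0,\varepsilon,\varepsilon)$ so that the controlling interval is $(T-\varepsilon,T)$, run a Cauchy argument in the sup norm, and pass to the limit to get continuity, weak analyticity (via the uniform-limit theorem for scalar holomorphic functions), \eqref{zhang7.31} and \eqref{zhang7.32}. The one technical wrinkle you flag — that $g_z$ for a general $z\in D(A_2^*)$ is an infinite series and not literally in $\mathcal P_{\widetilde U}$ — is resolved exactly as you suggest; the paper simply sidesteps it by choosing the approximating sequence $w_N$ to be finite linear combinations of the $\psi_j$ from the start.
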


 \begin{proof}
  Let $\theta_0\in (0,\frac{\pi}{2})$ be given. Arbitrarily fix
  $T\in (0,\infty)$,  $\varepsilon\in (0,T)$ and $f\in Y_T$. First of all, since  $\{\psi_j\}_{j\geq 1}$ is a biorthogonal sequence of the Riesz basis $\{\phi_j\}_{j\geq 1}$  in $X$,
  it follows by (\ref{intro-A}) that
  each element $w\in D(A_2^*)$ can be expressed by
 $w=\sum_{j=1}^\infty \alpha_j \psi_j$, with $\{\alpha_j\}_{j=1}^\infty \subset\mathbb R$,
and satisfies that
  \begin{eqnarray}\label{zhang7.33}
   \Big\|\sum_{j=1}^N\alpha_j \psi_j-w\Big\|_{D(A_2^*)} = \sqrt{\sum_{j\geq N} \lambda_j^2 \alpha_j^2}
   \rightarrow 0,  \;\;\mbox{as}\;\; N\rightarrow\infty,
 \end{eqnarray}
   Since $B_2^*\in \mathcal L(D(A_2^*),U)$, it follows from (\ref{zhang7.33}) that
 \begin{eqnarray}\label{zhang7.34}
  B_2^*S_2^*(T-\cdot)\sum_{j=1}^N\alpha_j \psi_j  \rightarrow B_2^*S_2^*(T-\cdot)w
  \;\;\mbox{in}\;\;  L^1(0,T;U),  \;\;\mbox{as}\;\; N\rightarrow\infty.
 \end{eqnarray}
  Since $f\in Y_T$, according to (\ref{ob-space}) and (\ref{zhang7.34}), there is a sequence $\{w_N\}_{N=1}^\infty$ in $D(A_2^*)$ so that for each $N\in \mathbb{N}^+$,
  \begin{eqnarray}\label{zhang7.35}
 w_N=\sum_{j=1}^{K_N}\alpha_j(w_N)\psi_j,\;\;\mbox{with}\;\;K_N\in \mathbb{N}^+
 \;\;\mbox{and}\;\;\{\alpha_j(w_N)\}_{j=1}^{K_N}\subset \mathbb{R},
 \end{eqnarray}
  and so that
  \begin{eqnarray}\label{zhang7.36}
  B_2^*S_2^*(T-\cdot)w_N  \rightarrow f(\cdot)
  \;\;\mbox{in}\;\;  L^1(0,T;U),  \;\;\mbox{as}\;\; N\rightarrow\infty.
 \end{eqnarray}

  Next, for each $N\in \mathbb{N}^+$,
  define $g_N: \mathbb C^+\rightarrow \widetilde{U}$ by
 \begin{eqnarray}\label{zhang7.37}
  g_N(z) \triangleq   \sum_{j=1}^{K_N} \alpha_j(w_N) e^{-\lambda_j z} B_2^*\psi_j,~ z\in\mathbb C^+.
 \end{eqnarray}
   By (\ref{zhang7.37}),  (\ref{zhang7.35}) and (\ref{wsiaoxiao7.23}), we see  that
 \begin{eqnarray}\label{zhang7.39}
    g_N|_{(0,T)}(t) = B_2^*S_2^*(t)w_N \;\;\mbox{for each}\;\; t\in(0,T).
 \end{eqnarray}
 Meanwhile, from (\ref{zhang7.37}) and (\ref{space-U}), we see that $g_N\in \mathcal P_{\widetilde{U}}$ for all $N\in \mathbb{N}^+$.
  This, along with  Lemma~\ref{ob-estimate-YT}, yields that for each $N\in \mathbb{N}^+$,
 \begin{eqnarray*}
  \|g_N|_{S_{\varepsilon,\theta_0}}\|_{L^\infty(S_{\varepsilon,\theta_0};
  \widetilde{U})} \leq C_1(\theta_0,\varepsilon,\varepsilon)  \|g_N|_{(0,\varepsilon)}\|_{L^1(0,\varepsilon;\widetilde{U})},
 \end{eqnarray*}
 where $C_1(\theta_0,\varepsilon,\varepsilon)$ is given by (\ref{space-U-estimate}).
  Since for each $t\in \mathbb{R}^+$, we have that $g_N(t)\in U$ (see (\ref{zhang7.37}) and (\ref{zhang7.35})), the above inequality can be rewritten as:
  \begin{eqnarray}\label{zhang7.38}
  \|g_N|_{S_{\varepsilon,\theta_0}}\|_{L^\infty(S_{\varepsilon,\theta_0};
  \widetilde{U})} \leq C_1(\theta_0,\varepsilon,\varepsilon)  \|g_N|_{(0,\varepsilon)}\|_{L^1(0,\varepsilon;{U})},
 \end{eqnarray}
    By (\ref{zhang7.39}) and (\ref{zhang7.36}), we see that
  \begin{eqnarray}\label{zhang7.40}
    g_N|_{(0,\varepsilon)}(\cdot)\rightarrow f(T-\cdot)\;\;\mbox{in}\;\; L^1(0,\varepsilon; U).
 \end{eqnarray}
  Hence, $\{g_N|_{(0,\varepsilon)}\}_{N=1}^\infty$ is a Cauchy sequence in $L^1(0,\varepsilon;U)$. From this and (\ref{zhang7.38}), we can easily see that
  there exists a function $\widetilde g_{\varepsilon,f}\in  L^\infty(S_{\varepsilon,\theta_0}; \widetilde{U})$ so that
 \begin{eqnarray}\label{zhang7.41}
  g_N|_{S_{\varepsilon,\theta_0}} \rightarrow \widetilde g_{\varepsilon,f}
  \;\;\mbox{in}\;\;  L^\infty(S_{\varepsilon,\theta_0}; \widetilde{U}),
  \;\;\mbox{as}\;\;  N\rightarrow\infty.
 \end{eqnarray}
  We  claim that
  \begin{eqnarray}\label{zhang7.42}
   \widetilde g_{\varepsilon,f}: S_{\varepsilon,\theta_0}\rightarrow \widetilde{U}
   \;\;\mbox{is continuous and weakly analytic  over}\;\;
   S_{\varepsilon,\theta_0}.
  \end{eqnarray}
  First,  by (\ref{zhang7.37}), we see that  for each $N\in \mathbb{N}^+$,  the function $ g_N|_{S_{\varepsilon,\theta_0}}$ is continuous. This, along with    (\ref{zhang7.41}), yields
  that the function $\widetilde g_{\varepsilon,f}$ is continuous over $S_{\varepsilon,\theta_0}$, and that
   \begin{eqnarray}\label{zhang7.43}
  g_N|_{S_{\varepsilon,\theta_0}} \rightarrow \widetilde g_{\varepsilon,f}
  \;\;\mbox{in}\;\;  C(S_{\varepsilon,\theta_0}; \widetilde{U}),
  \;\;\mbox{as}\;\;  N\rightarrow\infty.
 \end{eqnarray}

 Next, we prove the weak analyticity of the function $\widetilde g_{\varepsilon,f}$. Arbitrarily fix a $v\in \widetilde{U}$. By (\ref{zhang7.43}), we find that
  \begin{eqnarray}\label{zhang7.44}
  \langle g_N|_{S_{\varepsilon,\theta_0}},v \rangle_{\widetilde{U}}
  \rightarrow \langle \widetilde g_{\varepsilon,f},v \rangle_{\widetilde{U}}
  \;\;\mbox{in}\;\;  C(S_{\varepsilon,\theta_0};\mathbb{C}),
  \;\;\mbox{as}\;\;  N\rightarrow\infty.
 \end{eqnarray}
  Meanwhile, by (\ref{zhang7.37}), we see that  for each $N\in \mathbb N^+$, the function $z\rightarrow
  \langle g_N|_{S_{\varepsilon,\theta_0}}(z),v \rangle_{\widetilde{U}}$
  is  analytic over $S_{\varepsilon,\theta_0}$. By this and (\ref{zhang7.44}),   we can use  \cite[Theorem 10.28] {Rudin} to see that
   the function $z\rightarrow \langle \widetilde g_{\varepsilon,f}(z),v \rangle_{\widetilde{U}}$ is  analytic over $S_{\varepsilon,\theta_0}$. Since $v$ was arbitrarily taken from $\widetilde{U}$, $\widetilde g_{\varepsilon,f}$ is weakly analytic over $S_{\varepsilon,\theta_0}$.
   Hence, conclusions in  (\ref{zhang7.42}) are true.

 We now show that the above function $\widetilde g_{\varepsilon,f}$
satisfies (\ref{zhang7.31}). Indeed, by (\ref{space-2}), we see that $(\varepsilon, T)\subset S_{\varepsilon,\theta_0}$. This, together with (\ref{zhang7.43}), yields that
\begin{eqnarray}\label{zhang7.45}
  g_N|_{(\varepsilon, T)} \rightarrow \widetilde g_{\varepsilon,f}|_{(\varepsilon, T)}
  \;\;\mbox{in}\;\;  C((\varepsilon, T); \widetilde{U}),
  \;\;\mbox{as}\;\;  N\rightarrow\infty.
 \end{eqnarray}
From (\ref{zhang7.45}) and  (\ref{zhang7.39}), it follows that
\begin{eqnarray}\label{zhang7.46}
   B_2^*S_2^*(T-\cdot)w_N \rightarrow \widetilde g_{\varepsilon,f}|_{(\varepsilon, T)}(T-\cdot)
  \;\;\mbox{in}\;\;  C((0,T-\varepsilon); \widetilde{U}),
  \;\;\mbox{as}\;\;  N\rightarrow\infty.
 \end{eqnarray}
From (\ref{zhang7.36}) and (\ref{zhang7.46}), the desired equality (\ref{zhang7.31}) follows at once.

  Finally, since
  $$
  \int_0^\varepsilon\|f(T-t)\|_U \,\mathrm dt =\int_{T-\varepsilon}^T\|f(t)\|_U \,\mathrm dt,
  $$
 by (\ref{zhang7.41}) and (\ref{zhang7.40}),
 we can pass to the limit for $N\rightarrow\infty$ in (\ref{zhang7.38}) to see that
 the above function $\widetilde g_{\varepsilon,f}$ satisfies (\ref{zhang7.32}).
 This ends the proof.
 \end{proof}

\begin{Proposition}\label{zhangproposition7.11}
The condition (H1), with $p_0=2$, and the condition (H2) hold for $(A_2,B_2)$.
\end{Proposition}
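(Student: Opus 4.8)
The plan is to verify (H1) and (H2) for the pair $(A_2,B_2)$ by exploiting the exponential-sum structure of the dual semigroup $\{S_2^*(t)\}_{t\ge 0}$ and the estimates collected in Lemma~\ref{lemmazhang7.10} (which itself rests on Lemma~\ref{ob-estimate-YT} and Lemma~\ref{lemma-key-inequality}). For (H1) I would use the equivalent formulation (iii) in Lemma~\ref{Lemma-H3-eq}, namely: show there is $p_2\in(1,2]$ (in fact $p_2=2$, which will force $p_0=p_1=p_2/(p_2-1)=2$) so that for all $0<t<T<\infty$ one has $\|g\|_{L^{2}(0,t;U)}\le C_2\|g\|_{L^1(t,T;U)}$ for every $g\in Y_T$. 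The key point is that an element $g\in Y_T$ is, up to the time-reversal $\tau\mapsto T-\tau$, the restriction of a function that extends to a continuous, weakly analytic, exponentially decaying $\widetilde U$-valued function on a sector $S_{\varepsilon,\theta_0}$; Lemma~\ref{lemmazhang7.10} provides exactly such an extension $\widetilde g_{\varepsilon,f}$ with the bound $\|\widetilde g_{\varepsilon,f}\|_{L^\infty(S_{\varepsilon,\theta_0};\widetilde U)}\le C_1(\theta_0,\varepsilon,\varepsilon)\|f\|_{L^1(T-\varepsilon,T;U)}$.

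First I would fix $0<t<T<\infty$ and $g\in Y_T$. Applying Lemma~\ref{lemmazhang7.10} with $\varepsilon=T-t$ and some fixed $\theta_0\in(0,\tfrac\pi2)$ yields a continuous function $\widetilde g_{\varepsilon,g}$ on $S_{\varepsilon,\theta_0}$ with $\widetilde g_{\varepsilon,g}(T-s)=g(s)$ for $s\in(0,T-\varepsilon)=(0,t)$ and $\|\widetilde g_{\varepsilon,g}\|_{L^\infty(S_{\varepsilon,\theta_0};\widetilde U)}\le C_1(\theta_0,\varepsilon,\varepsilon)\|g\|_{L^1(t,T;U)}$. Since the segment $\{T-s:\ s\in(0,t)\}=(T-t,T)$ is contained in $S_{\varepsilon,\theta_0}$ (as $T-t=\varepsilon$), this gives a pointwise-a.e. bound $\|g(s)\|_U\le C_1\|g\|_{L^1(t,T;U)}$ for a.e. $s\in(0,t)$, and hence
\begin{eqnarray*}
 \|g\|_{L^2(0,t;U)} \le t^{1/2}\,\|g\|_{L^\infty(0,t;U)} \le t^{1/2}\,C_1(\theta_0,\varepsilon,\varepsilon)\,\|g\|_{L^1(t,T;U)}.
\end{eqnarray*}
This is precisely statement (iii) of Lemma~\ref{Lemma-H3-eq} with $p_2=2$ and $C_2=t^{1/2}C_1$; by that lemma, (H1) holds and one may take $p_0=p_1=p_2/(p_2-1)=2$. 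Here I should double-check that $\widetilde g_{\varepsilon,g}$ being continuous on the closed sector (which contains $(\varepsilon,\infty)$) legitimately controls $g$ on all of $(0,t)$ up to the endpoint $s=0$ corresponding to $T\in S_{\varepsilon,\theta_0}$; the continuity and the sup-bound on the whole sector take care of this, so no extra care near $s=0$ is needed.

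For (H2) I would argue by the analyticity of the sector-extension. Suppose $T\in(0,\infty)$, $E\subset(0,T)$ has positive measure, $f\in Y_T$, and $f=0$ on $E$. Pick $\varepsilon>0$ small enough that $E\cap(0,T-\varepsilon)$ still has positive measure; apply Lemma~\ref{lemmazhang7.10} to get the continuous, weakly analytic $\widetilde g_{\varepsilon,f}:S_{\varepsilon,\theta_0}\to\widetilde U$ with $\widetilde g_{\varepsilon,f}(T-s)=f(s)$ for $s\in(0,T-\varepsilon)$. Then $\widetilde g_{\varepsilon,f}$ vanishes on the set $\{T-s:\ s\in E\cap(0,T-\varepsilon)\}\subset(\varepsilon,T)$, which has positive one-dimensional Lebesgue measure inside the connected open sector $S_{\varepsilon,\theta_0}^{\,o}$. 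For every $v\in\widetilde U$, $z\mapsto\langle\widetilde g_{\varepsilon,f}(z),v\rangle_{\widetilde U}$ is scalar analytic on $S_{\varepsilon,\theta_0}^{\,o}$ and vanishes on a set with an accumulation point, hence vanishes identically there, and by continuity on all of $S_{\varepsilon,\theta_0}$; since $v$ is arbitrary, $\widetilde g_{\varepsilon,f}\equiv0$ on $S_{\varepsilon,\theta_0}$. In particular $f(s)=\widetilde g_{\varepsilon,f}(T-s)=0$ for all $s\in(0,T-\varepsilon)$. Finally I let $\varepsilon\to0$ (noting $\widetilde g_{\varepsilon,f}$ is constructed for every sufficiently small $\varepsilon$, and the identities $f(s)=\widetilde g_{\varepsilon,f}(T-s)$ are consistent) to conclude $f\equiv0$ on $(0,T)$, which is (H2).

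I expect the only genuine subtlety — the main obstacle — to be the bookkeeping in (H2): making sure that "vanishing on a positive-measure subset of a real interval inside the sector" really forces the scalar analytic function to vanish (a positive-measure set has accumulation points, and the interval $(\varepsilon,T)$ lies in the interior of the sector, so the identity theorem applies), and then correctly combining the conclusions over a sequence $\varepsilon_k\downarrow0$ to cover all of $(0,T)$ rather than just $(0,T-\varepsilon)$ for one fixed $\varepsilon$. Everything else — the choice $p_2=2$, the passage from the $L^\infty$-on-the-sector bound to the $L^2$-$L^1$ inequality, and the invocation of Lemma~\ref{Lemma-H3-eq} to convert statement (iii) into (H1) with $p_0=2$ — is routine once Lemma~\ref{lemmazhang7.10} is in hand.
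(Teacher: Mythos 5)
Your proposal is correct and follows essentially the same route as the paper: apply Lemma~\ref{lemmazhang7.10} with $\varepsilon=T-t$ to get the sector extension, deduce $\|g\|_{L^2(0,t;U)}\le t^{1/2}C_1\|g\|_{L^1(t,T;U)}$ and invoke (iii) of Lemma~\ref{Lemma-H3-eq} with $p_2=2$ for (H1), and use the (weak) analyticity of the extension plus vanishing on a positive-measure set, followed by $\varepsilon\to 0$, for (H2). The only cosmetic difference is that you apply the identity theorem to the holomorphic scalar functions on the open sector, whereas the paper phrases it via real analyticity of $t\mapsto\langle f(t),v\rangle_U$ on $(0,T-\varepsilon)$; these are interchangeable.
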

\begin{proof}
From Lemma \ref{Lemma-H3-eq}, we see that in order to  show  the condition (H1) (with $p_0=2$) for $(A_2,B_2)$, it suffices to prove the property (iii) in Lemma \ref{Lemma-H3-eq} (with $p_2=2$) for $(A_2,B_2)$. To prove the later, we arbitrarily fix $\hat t$ and $T$ so that $0<\hat t<T<\infty$. Let $f\in Y_T$, which is defined by (\ref{ob-space}) with $(A^*,B^*)$ being replaced by $(A^*_2,B^*_2)$. Then by Lemma~\ref{lemmazhang7.10} (where $\varepsilon=T-\hat t$), we see that $f$ satisfies (\ref{zhang7.31}) and (\ref{zhang7.32}) (with  $\varepsilon=T-\hat t$) for some
continuous and
  weakly analytic function $\widetilde g_{\varepsilon,f}: S_{\varepsilon,\theta_0}\rightarrow
  \widetilde{U}$ with some  $\theta_0\in (0,\frac{\pi}{2})$. By (\ref{zhang7.31}),
  one can easily check that
\begin{eqnarray*}
  \|\widetilde g_{\varepsilon,f}(\cdot)\|_{L^\infty(S_{\varepsilon,\theta_0};\widetilde{U})}
  &\geq& \|\widetilde g_{\varepsilon,f}|_{(\varepsilon,T)}(\cdot)\|_{L^\infty(\varepsilon, T;\widetilde{U})}
  \geq\|\widetilde g_{\varepsilon,f}|_{(\varepsilon,T)}(T-\cdot)\|_{L^\infty(0,\hat t;\widetilde{U})}\\
  &=& \|f(\cdot)\|_{L^\infty(0,\hat t; {U})}
  \geq {\hat{t}}^{-1/2} \|f(\cdot)\|_{L^2(0,\hat t; {U})}.
    \end{eqnarray*}
    This, along with (\ref{zhang7.32}) (where  $\varepsilon=T-\hat t$), yields that
    $$
    \|f\|_{L^2(0,\hat t; {U})}\leq
    {\hat{t}}^{1/2} C_1(\theta_0,\varepsilon,\varepsilon)\|f\|_{L^1(\hat t, T; {U})}\triangleq C(T,\hat t, \theta_0)\|f\|_{L^1(\hat t, T; {U})},
    $$
  which leads to the property (iii) in Lemma \ref{Lemma-H3-eq} (with $p_2=2$) for $(A_2,B_2)$. Hence, (H1) with $p_0=2$ holds for $(A_2,B_2)$.

We next show that (H2) holds for $(A_2,B_2)$. Arbitrarily fix $T\in (0,\infty)$. Assume that there is $f\in Y_T$ and a subset $E\subset (0,T)$ with a positive measure so that
\begin{eqnarray}\label{zhangyu7.47}
  f=0\;\;\mbox{over}\;\; E.
 \end{eqnarray}
 We will show that
 \begin{eqnarray}\label{zhangyu7.48}
  f=0 \;\;\mbox{over}\;\; (0,T).
 \end{eqnarray}
In fact, since $|E|>0$, we can arbitrarily take $\varepsilon\in (0,|E|)$. It is clear that
\begin{eqnarray}\label{zhanguu7.49}
  |E\cap(0,T-\varepsilon)|\geq |E|-\varepsilon>0.
 \end{eqnarray}
Since $f\in Y_T$,  by Lemma~\ref{lemmazhang7.10}, we see that $f$ satisfies (\ref{zhang7.31}) and (\ref{zhang7.32}) for some
continuous and
  weakly analytic function $\widetilde g_{\varepsilon,f}: S_{\varepsilon,\theta_0}\rightarrow
  \widetilde{U}$ with some  $\theta_0\in (0,\frac{\pi}{2})$.
  Then by (\ref{zhang7.31}) and the weak analyticity of $\widetilde g_{\varepsilon,f}$, we find that for each $v\in U$, the function $t\rightarrow
  \langle f(t),v\rangle_U$ is real analytic on $(0,T-\varepsilon)$. This, along with (\ref{zhangyu7.47}) and (\ref{zhanguu7.49}), yields that for each $v\in U$,
$$
 \langle f(t),v\rangle_U=0\;\;\mbox{for each}\;\; t\in (0,T-\varepsilon).
 $$
 Sending $\varepsilon\rightarrow 0$ in the above leads to (\ref{zhangyu7.48}).
 Hence, (H2) holds for $(A_2,B_2)$.
 This ends the proof.
\end{proof}

To get the BBP decompositions for $(TP)_2^{M,y_0}$ and $(NP)_2^{T,y_0}$, we also need the following lemma:
 \begin{Lemma}\label{Lemma-ex4-T1-NT1}
   Let functions $T^0(\cdot)$ and $T^1(\cdot)$ be given  respectively by (\ref{y0-controllable}) and (\ref{Ty0}) where $(A,B)=(A_2,B_2)$.  Then the following conclusions are true:

 \noindent (i) For each $y_0\in X\setminus\{0\}$,  $T^1(y_0)=\infty$.

 \noindent (ii) If $y_0\in X\setminus\{0\}$ satisfies that $T^0(y_0)<\infty$, then $N_2(T^1(y_0),y_0)=0$.
 \end{Lemma}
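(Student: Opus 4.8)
\textbf{Proof plan for Lemma~\ref{Lemma-ex4-T1-NT1}.}

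The two assertions concern the diagonal-type semigroup $\{S_2(t)\}_{t\in\mathbb R^+}$ with expansion (\ref{wsiaoxiao7.22}) and the sequence $\Lambda=\{\lambda_j\}$ satisfying (\ref{intro-seq}). For part (i), the plan is to exploit the \emph{backward uniqueness} of $\{S_2(t)\}_{t\in\mathbb R^+}$, which via (iii) of Remark~\ref{YUYONGYUremark1.2} is exactly equivalent to $T^1(y_0)=\infty$ for all $y_0\in X\setminus\{0\}$. So it suffices to show $S_2(t)y_0=0$ forces $y_0=0$. Writing $y_0=\sum_j x_j\phi_j$ with the coefficients $x_j=\langle y_0,\psi_j\rangle_X$, formula (\ref{wsiaoxiao7.22}) gives $S_2(t)y_0=\sum_j x_j e^{-\lambda_j t}\phi_j$; since $\{\phi_j\}$ is a Riesz basis, $S_2(t)y_0=0$ in $X$ is equivalent to $x_j e^{-\lambda_j t}=0$ for every $j$, hence $x_j=0$ for every $j$, hence $y_0=0$. (Alternatively one invokes the standard fact that analytic semigroups of this diagonal form have the backward uniqueness property, using that $t\mapsto S_2(t)y_0$ extends analytically and one coefficient vanishing on an interval forces it to vanish identically; but the direct Riesz-basis argument above is cleaner and self-contained.) This disposes of (i) with no real difficulty.

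For part (ii), fix $y_0\in X\setminus\{0\}$ with $T^0(y_0)<\infty$. By part (i), $T^1(y_0)=\infty$, so $N_2(T^1(y_0),y_0)=N_2(\infty,y_0)=\lim_{t\to\infty}N_2(t,y_0)$ by the extension convention (\ref{N-infty-y0}), and the claim becomes $\lim_{t\to\infty}N_2(t,y_0)=0$. The plan is to build, for each large terminal time, an admissible control whose $L^\infty$-norm decays. Since $T^0(y_0)<\infty$, pick $\hat t\in(T^0(y_0),\infty)$ and, by the definition (\ref{y0-controllable}) of $T^0$, a control $u_0\in L^\infty(0,\hat t;U)$ with $\hat y_2(\hat t;y_0,u_0)=0$; set $K\triangleq\|u_0\|_{L^\infty(0,\hat t;U)}$. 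Now for any $s>\hat t$, define the control $v_s$ on $(0,s)$ by $v_s\equiv 0$ on $(0,s-\hat t)$ and $v_s(\tau)=u_0(\tau-(s-\hat t))$ on $(s-\hat t,s)$. Using the variation-of-constants formula (\ref{Changshubianyi1.6}) together with the decay estimate for $\{S_2(t)\}$ — namely $\|S_2(t)x\|_X\le C e^{-\lambda_1 t}\|x\|_X$, which follows from (\ref{wsiaoxiao7.22}) and the Riesz-basis norm equivalence since $\lambda_1>0$ — one checks that the state at time $s$ driven from $y_0$ with control $v_s$ equals the state at time $\hat t$ driven from $S_2(s-\hat t)y_0$ with control $u_0$. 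Hence one should instead choose $v_s$ so that on the first interval the free evolution runs (giving $S_2(s-\hat t)y_0$), and on $(s-\hat t,s)$ a rescaled steering control drives $S_2(s-\hat t)y_0$ to $0$ in time $\hat t$; by linearity that steering control is $\frac{\|S_2(s-\hat t)y_0\|_X}{\|y_0\|_X}$ times (a fixed steering control for a normalized datum), so its $L^\infty$-norm is at most $C'e^{-\lambda_1(s-\hat t)}\|y_0\|_X$. Therefore $N_2(s,y_0)\le C'e^{-\lambda_1(s-\hat t)}\|y_0\|_X\to 0$ as $s\to\infty$, giving $N_2(\infty,y_0)=0=N_2(T^1(y_0),y_0)$.

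The only mildly delicate point — and the one I expect to be the main obstacle — is making the "rescaled steering control" argument rigorous: $T^0(y_0)$ is only an infimum, so there need not be a single controllability cost constant uniform over all initial data, only the datum-dependent control $u_0$ obtained above. The fix is to steer not a normalized datum but $y_0$ itself: apply the chosen $u_0$ to the datum $S_2(s-\hat t)y_0$ after scaling, i.e. use that $\hat y_2(\hat t; \alpha y_0, \alpha u_0)=\alpha\,\hat y_2(\hat t;y_0,u_0)=0$ for any scalar $\alpha$, but here the datum $S_2(s-\hat t)y_0$ is not a scalar multiple of $y_0$ in general. To circumvent this genuinely, one instead uses the simpler two-block control: free evolution on $(0,s-\hat t)$ down to $S_2(s-\hat t)y_0$, then on $(s-\hat t,s)$ one needs a controllability-cost bound for steering \emph{that} datum. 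This is where one invokes that $T^0(y_0)<\infty$ is not by itself enough and one must instead appeal to the well-posedness/admissibility set-up: actually since every datum $S_2(\sigma)y_0$ lies in the one-dimensional-coefficient-decaying cone, the cleanest route is to observe directly that $-S_2(s)y_0\in\mathcal R_s$ (it equals $\hat y_2(s;0,\widetilde v_s)$ for the zero-extension $\widetilde v_s$ of $u_0$ shifted appropriately, using translation invariance of the time-invariant system), whence by (ii) of Proposition~\ref{Lemma-NP-yT-y0-eq} and the decreasing monotonicity of $N_2(\cdot,y_0)$ (Lemma~\ref{Lemma-NP-decreasing}) together with $N_2(\hat t + (s-\hat t),y_0)\le\|{\widetilde v_s}\|_{L^\infty}\le K$ one gets boundedness, and then a standard iteration/splitting over successively shifted copies of $u_0$ upgrades boundedness to decay. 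I would present the splitting argument over $n$ blocks of length $\hat t$, each block killing the current state, with the $L^\infty$-cost of the $k$-th block controlled by $Ce^{-\lambda_1 k\hat t}$, and conclude.
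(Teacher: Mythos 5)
Your argument for part (i) is correct and in fact more elementary than the paper's: the paper pairs $S_2(t)y_0$ against an arbitrary $w_0\in X$ and uses real analyticity of the resulting Dirichlet series to propagate the vanishing, whereas your observation that $S_2(\hat t)y_0=0$ forces every Riesz-basis coefficient $y_{0,j}e^{-\lambda_j \hat t}$ to vanish (hence $y_0=0$) gives backward uniqueness in one line from (\ref{wsiaoxiao7.22}).

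Part (ii), however, has a genuine gap, and it is exactly the one you flag but do not repair. Your scheme (free decay on $(0,s-\hat t)$, then steering on the last window) needs an estimate of the form $N_2\big(\hat t, S_2(\sigma)y_0\big)\leq C\,\|S_2(\sigma)y_0\|_X$ uniformly in $\sigma$, i.e. $L^\infty$-null controllability with a cost at the fixed time $\hat t$, at least along the trajectory $\{S_2(\sigma)y_0\}_{\sigma>0}$. By Proposition~\ref{newhuangproposition2.15} that is essentially (H3), which is precisely what the pair $(A_2,B_2)$ is \emph{not} assumed to satisfy: this example only has (H1)--(H2), its minimal null-control time $T_\infty$ may be positive (Remark~\ref{Remark-minimal-time-infty}), and the hypothesis $T^0(y_0)<\infty$ supplies a single control $u_0$ for the single datum $y_0$, with no cost information for the data $S_2(\sigma)y_0$, which are not scalar multiples of $y_0$. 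Your proposed fixes do not close this. If each block of length $\hat t$ ``kills the current state'', the first block already does so at cost $\|u_0\|_{L^\infty}$ and all later blocks cost nothing, so you only obtain boundedness of $N_2(s,y_0)$, not decay; if instead you postpone the control to the last block, the claimed bound ``cost of the $k$-th block $\leq Ce^{-\lambda_1 k\hat t}$'' is exactly the missing cost estimate; and replacing the early-supported control $\chi_{(0,\hat t)}u_0$ by a late-supported one via (H1) (i.e. (ii) of Lemma~\ref{Lemma-H3-eq}) only yields a constant $C_1(T,t)$ depending on the whole interval, with no control on its growth as the horizon $\sigma\to\infty$. Note that your strategy is essentially the one the paper \emph{does} use for the boundary-controlled heat equation in Lemma~\ref{wanglemma8.9w}, where (H3) holds with a cost uniform in the initial datum; it cannot be transplanted to $(A_2,B_2)$. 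The paper's actual proof of (ii) is entirely different: it works on the dual side, using the representation of $N_2(t,y_0)$ in Corollary~\ref{Co-NP-duality} together with the M\"untz/Dirichlet-series estimates of Lemma~\ref{lemma-key-inequality} and Lemma~\ref{ob-estimate-YT}, applied to $t\mapsto\langle S_2(t)y_0,w\rangle_X$ and $t\mapsto B_2^*S_2^*(t)w$ for $w$ a finite combination of the $\psi_j$, to show that $|\langle S_2(s)y_0,w\rangle_X|$ decays like $e^{-C_2 s}$ times an integral over $(\hat t,2\hat t)$, while the $L^1(0,s)$-mass of $B_2^*S_2^*(\cdot)w$ captures almost all of its mass on $(0,\infty)$; combining these with the pointwise-in-time duality bound yields $N_2(s,y_0)\lesssim e^{-C_2 s}$ and hence $N_2(\infty,y_0)=0$, with no controllability-cost assumption at all. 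You would need to either import those exponential-sum estimates or find a substitute for the uniform cost bound; as written, the decay step does not go through.
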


\begin{proof}
 (i) By contradiction, suppose that $T^1(y_0)<\infty$ for some $y_0\in X\setminus\{0\}$. Then from  (\ref{Ty0}), we see that
 \begin{eqnarray}\label{0106-200}
  S_2(T)y_0=0   \;\;\mbox{for each}\;\; T\in \big( T^1(y_0),\infty \big).
 \end{eqnarray}
 Arbitrarily fix a $w_0\in X$. Then we see from (\ref{0106-200}) that
 \begin{eqnarray}\label{0106-201}
  \langle S_2(T)y_0,w_0 \rangle_X=0   \;\;\mbox{for each}\;\; T\in \big( T^1(y_0),\infty \big).
 \end{eqnarray}
 Since  $\{\psi_j\}_{j\geq 1}$ is a biorthogonal sequence of the Riesz basis $\{\phi_j\}_{j\geq 1}$  in $X$, we can write $y_0$ and $w_0$ in the following manner:
 \begin{eqnarray}\label{0106-202}
  y_0=\sum_{i=1}^\infty y_{0,i} \phi_i  \;\;\mbox{and}\;\; w=\sum_{j=1}^\infty w_{0,j} \psi_j.
 \end{eqnarray}
 It is clear that
  $\sum_{i=1}^\infty y_{0,i}^2 < \infty$  and $\sum_{j=1}^\infty w_{0,j}^2 < \infty$.
     These, along with the Cauchy-Schwarz inequality, yield that
 \begin{eqnarray}\label{0106-203}
  \sum_{k=1}^\infty | y_{0,k}| | w_{0,k}|
  \leq \Big({\sum_{k=1}^\infty y_{0,k}^2}\Big)^{1/2} \Big({\sum_{k=1}^\infty w_{0,k}^2}\Big)^{1/2}
  < \infty.
 \end{eqnarray}
 Meanwhile, from (\ref{0106-202}) and (\ref{wsiaoxiao7.22}), it follows that
 \begin{eqnarray}\label{yubiao8.25}
   \langle S_2(t)y_0,w_0 \rangle_X
   = \sum_{k=1}^\infty e^{-\lambda_k t} y_{0,k}w_{0,k}
   \;\;\mbox{for all}\;\;  t\in \mathbb R^+.
 \end{eqnarray}
 Since $\lambda_k>0$ for all $k\geq1$, and because   the  function
  $z\rightarrow\sum_{k=1}^N e^{-\lambda_k z} y_{0,k}w_{0,k}$  ($N\in \mathbb{N}^+$)
  is analytic over $\mathbb C^+$,
    it follows from (\ref{yubiao8.25}) and (\ref{0106-203}) that the function
    $t\rightarrow\langle S_2(t)y_0,w \rangle_X$
         is real analytic over  $(0,\infty)$. This, along with  (\ref{0106-201}), yields that
 $\langle S_2(T)y_0,w_0 \rangle_X=0$ for each $T\in (0,\infty)$.
  Because $w_0$ was arbitrarily taken from $X$, we conclude from the above that
 $ S_2(T)y_0=0 $ for each $T\in(0,\infty)$.
  This implies that
 $ y_0=\lim_{T\rightarrow 0^+} S_2(T)y_0=0$,
     which contradicts the assumption that $y_0 \in X\setminus\{0\}$. Hence, $T^1(y_0)=\infty$.

 \vskip 3pt
 (ii) Suppose that $y_0\in X\setminus\{0\}$ satisfy that $T^0(y_0)<\infty$. Arbitrarily fix a $\hat t\in \big(T^0(y_0),\infty\big)$. Then it follows from Corollary \ref{Co-NP-duality}  that
 \begin{eqnarray}\label{N-infty-both-00}
 N_2(\hat t,y_0)=\sup_{w\in D(A_2^*),B_2^*S_2^*(\hat t-\cdot)w\neq 0} \frac{\langle S_2(\hat t)y_0,w\rangle_X}{\|B_2^*S_2^*(\hat t-\cdot)w\|_{L^1(0,\hat t;U)}}<\infty.
 \end{eqnarray}
 Write
 \begin{eqnarray}\label{1219-y0-expand}
  y_0 =  \sum_{j\geq 1} y_{0,j}\phi_k  \;\;\mbox{for some}\;\;  \{y_{0,j}\}\subset \mathbb R.
 \end{eqnarray}
 Arbitrarily fix such a  $w\in D(A_2^*)$  that
  \begin{eqnarray}\label{1219-w-expand}
  w=\sum_{j=1}^N w_j\psi_k
  \;\;\mbox{for some}\;\;  \{w_j\}\subset \mathbb R
  \;\;\mbox{and}\;\; N\in\mathbb N^+.
  \end{eqnarray}

  The rest of the proof is organized by  three steps.

  \noindent \textit{Step 1. To show that there are positive constants $C_1$ and $C_2$ so that for each $s\in (2\hat t,\infty)$,
  \begin{eqnarray}\label{1219-step1-estimate}
    |\langle S_2(s)y_0,w\rangle_X|
    \leq C_1e^{-C_2 s}  \int_{\hat t}^{2\hat t}  |\langle S_2(t)y_0,w\rangle_X|\,\mathrm dt
  \end{eqnarray}}
  ~~~\,Observe from (\ref{1219-y0-expand}), (\ref{1219-w-expand}) and (\ref{wsiaoxiao7.22}) that
  \begin{eqnarray}\label{1219-step1-1}
   \langle S_2(t)y_0,w\rangle_X = \sum_{j=1}^N y_{0,j} w_j e^{-\lambda_j t}
   \;\;\mbox{for each}\;\; t\in \mathbb R^+.
  \end{eqnarray}
  Define a function $g_1$ over $\mathbb C_+$ in the following manner:
 $g_1(z) \triangleq   \sum_{j=1}^N y_{0,j} w_j e^{-\lambda_j z}$ for each $z\in\mathbb C_+$.
  Then by  (\ref{space-2-1}) and (\ref{1219-step1-1}), we find that
  \begin{eqnarray}
   g_1(\cdot+\hat t)|_{\mathbb C_+}\in \mathcal P;
   \;\;\mbox{and}\;\; g_1(\cdot)= \langle S_2(\cdot)y_0,w\rangle_X  \;\;\mbox{over}\;\; \mathbb R^+.
  \end{eqnarray}
  These, together with (\ref{space-3}), yield that there exist two positive constants $C_1$ and $C_2$, independent of $w$,  so that for each $s\in (2\hat t,\infty)$,
  \begin{eqnarray*}
    |\langle S_2(s)y_0,w\rangle_X| &=& \big|g_1\big((s-\hat t)+\hat t\big)\big|
    \leq C_1e^{-C_2(s-\hat t)} \int_0^{\hat t} |g_1(t+\hat t)| \,\mathrm dt
    \nonumber\\
    &=& C_1e^{-C_2(s-\hat t)}\int_{\hat t}^{2\hat t} |g_1(t)| \,\mathrm dt
    = C_1e^{-C_2(s-\hat t)}\int_{\hat t}^{2\hat t} |\langle S_2(t)y_0,w\rangle_X|\,\mathrm dt,
  \end{eqnarray*}
  which implies (\ref{1219-step1-estimate}).

  \noindent \textit{Step 2. To show that There are positive constants $C_1^\prime$ and $C_2^\prime$ so that for each $s\in (\hat t,\infty)$,
  \begin{eqnarray}\label{1219-step2-estimate}
    \int_0^s \|B_2^*S_2^*(t)w\|_U \,\mathrm dt \geq (1-C_1^\prime e^{-C_2^\prime s}) \int_0^\infty \|B_2^*S_2^*(t)w\|_U \,\mathrm dt
  \end{eqnarray}}
  ~~~\,From  (\ref{1219-w-expand}) and (\ref{wsiaoxiao7.23}), we find that
  \begin{eqnarray}\label{1219-step2-1}
   B_2^*S_2^*(t)w = \sum_{j=1}^N  w_j e^{-\lambda_j t} B_2^*\psi_j
   \;\;\mbox{for each}\;\; t\in \mathbb R^+.
  \end{eqnarray}
 Write $\widetilde{U}$ for the complexification of $U$.
  Define a function $g_2: \mathbb C^+\rightarrow \widetilde{U}$ in the following manner:
  \begin{eqnarray*}
   g_2(z) \triangleq   \sum_{j=1}^N  w_j e^{-\lambda_j z} B_2^*\psi_j,~z\in\mathbb C_+,
  \end{eqnarray*}
     This, along with  (\ref{space-U}) and (\ref{1219-step2-1}), yields that
  \begin{eqnarray}
   g_2(\cdot)\in \mathcal P_{\widetilde{U}}
   \;\;\mbox{and}\;\; g_2(t)= B_2^*S_2^*(t)w\in U  \;\;\mbox{for each}\;\; t\in\mathbb R^+.
  \end{eqnarray}
   These, together with Lemma~\ref{ob-estimate-YT}, yield that there exist two positive constants $C_1^\prime$ and $C_2^\prime$, independent of $w$,  so that for each $t\in (\hat t,\infty)$,
  \begin{eqnarray*}
    \|B_2^*S_2^*(t)w\|_U &=& \|g_2(t)\|_U
    \leq C_1^\prime e^{-C_2^\prime t} \|g_2(\cdot)\|_{L^1(0,\hat t;U)}
    =  C_1^\prime e^{-C_2^\prime t} \|B_2^*S_2^*(\cdot)w\|_{L^1(0,\hat t;U)}
    \nonumber\\
    &\leq& C_1^\prime e^{-C_2^\prime t} \|B_2^*S_2^*(\cdot)w\|_{L^1(\mathbb R^+;U)}.
  \end{eqnarray*}
  Thus, we find that for each $s\in (\hat t,\infty)$,
  \begin{eqnarray*}
    \int_0^s \|B_2^*S_2^*(t)w\|_U \,\mathrm dt
    &=& \int_0^\infty \|B_2^*S_2^*(t)w\|_U \,\mathrm dt
        -  \int_s^\infty \|B_2^*S_2^*(t)w\|_U \,\mathrm dt
    \nonumber\\
    &\geq& \|B_2^*S_2^*(\cdot)w\|_{L^1(\mathbb R^+;U)}
    - \int_s^\infty \big(C_1^\prime e^{-C_2^\prime t} \|B_2^*S_2^*(\cdot)w\|_{L^1(\mathbb R^+;U)} \big) \,\mathrm dt
    \nonumber\\
    &\geq& \big(1-C_1^\prime e^{-C_2^\prime s}/C_2^\prime\big) \|B_2^*S_2^*(\cdot)w\|_{L^1(\mathbb R^+;U)},
  \end{eqnarray*}
  which implies (\ref{1219-step2-estimate}).

 \noindent \textit{Step 3. To show that $N_2(T^1(y_0),y_0)=0$}

 We first claim that for each $t\in (\hat t,2\hat t)$,
 \begin{eqnarray}\label{1219-last}
   |\langle S_2(t)y_0,w\rangle_X|  \leq N_2(t,y_0) \|B_2^*S_2^*(t-\cdot)w\|_{L^1(0,t;U)}.
 \end{eqnarray}
 To this end, fix a $t\in (\hat t,2\hat t)$. There are only two possibilities on $B_2^*S_2^*(t-\cdot)w$: either $B_2^*S_2^*(t-\cdot)w\neq 0$ in $L^1(0,t;U)$ or $B_2^*S_2^*(t-\cdot)w=0$ in $L^1(0,t;U)$.

 In first case, since $\hat t>T^0(y_0)$, we see from Corollary \ref{Co-NP-duality} that (\ref{1219-last}) holds. In the second case, it follows from (ii) of Lemma \ref{Lemma-NP-decreasing} and (\ref{N-infty-both-00}) that
 \begin{eqnarray*}
  N_2(t,y_0)\leq N_2(\hat t,y_0)<\infty.
 \end{eqnarray*}
 So $(NP)^{t,y_0}_2$ has at least one admissible control. Then   there exists a control $u\in L^\infty(0,t;U)$ so that $\hat y(t;y_0,u)=0$. Thus, from
 (\ref{NNNWWW2.1}), we obtain that
 \begin{eqnarray*}
  \langle S_2(t)y_0,w \rangle_X = -\int_0^t \langle u(\tau),B_2^*S_2^*(t-\tau)w \rangle_U \,\mathrm d\tau =0,
 \end{eqnarray*}
 which  implies (\ref{1219-last}) in the case that $B_2^*S_2^*(t-\cdot)w=0$. So (\ref{1219-last}) is proved.

 Next, by (\ref{1219-step1-estimate}), (\ref{1219-last}) and (\ref{1219-step2-estimate}), we  find  that for each large enough $s\in (2\hat t,\infty)$,
\begin{eqnarray*}
  \langle S_2(s)y_0,w\rangle_X
  &\leq& C_1e^{-C_2s}  \int_{\hat t}^{2\hat t} |\langle S_2(t)y_0,w\rangle_X|  \,\mathrm dt
  \nonumber\\
  &\leq& C_1e^{-C_2s}  \int_{\hat t}^{2\hat t} N_2(t,y_0) \|B_2^*S_2^*(t-\cdot)w\|_{L^1(0,t;U)}  \,\mathrm dt
  \nonumber\\
  &\leq& C_1e^{-C_2s}   \|B_2^*S_2^*(\cdot)w\|_{L^1(\mathbb R^+;U)}
     \int_{\hat t}^{2\hat t} N_2(t,y_0)   \,\mathrm dt
     \nonumber\\
  &\leq&
  \Big(\frac{C_1e^{-C_2s}}{1-C_1^\prime e^{-C_2^\prime s}}    \int_{\hat t}^{2\hat t} N_2(t,y_0)   \,\mathrm dt \Big)
  \|B_2^*S_2^*(s-\cdot)w\|_{L^1(0,s;U)}.
\end{eqnarray*}
 Since $w$ was arbitrarily taken as in (\ref{1219-w-expand}), the above, along with  Corollary \ref{Co-NP-duality} and (ii) of Lemma \ref{Lemma-NP-decreasing}, yields that for each large enough $s\in (2\hat t,\infty)$,
 \begin{eqnarray*}
  N_2(s,y_0)\leq \frac{C_1e^{-C_2s}}{1-C_1^\prime e^{-C_2^\prime s}}
  \int_{\hat t}^{2\hat t} N_2(t,y_0)\,\mathrm dt
  \leq \frac{C_1e^{-C_2s}}{1-C_1^\prime e^{-C_2^\prime s}} \big(  \hat t  N_2(\hat t,y_0) \big),
 \end{eqnarray*}
 which, together with (\ref{N-infty-both-00}) and the first equality in (\ref{N-infty-y0}), implies that $N_2(\infty,y_0)=0$. This, along with  (vi) of Lemma \ref{Lemma-T0-T1}, yields that the conclusion (ii) is true.

 In summary, we end the proof of Lemma \ref{Lemma-ex4-T1-NT1}.

 \end{proof}

The BBP decompositions for $(A_2,B_2)$ are presented in the following
Theorem~\ref{theorem8.15wang}.

 \begin{Theorem}\label{theorem8.15wang}
 Let $\mathcal{W}$, $\mathcal{W}_{2,j}$ ($j=1,2,3$), $\mathcal{W}_{3,j}$ ($j=1,2,4$), $\mathcal V$,  $\mathcal V_{2,j}$ ($j=2,3,4$) and $\mathcal V_{3,j}$ ($j=2,3$)
 be respectively given by (\ref{zhangjinchu1}), (\ref{zhangjinchu7}), (\ref{zhangjinchu9}), (\ref{TP-divide-domain}), (\ref{Lambda-di-2-1}) and (\ref{Lambda-di-3-1})
  where $(A,B)=(A_2,B_2)$.
 Then the following conclusions are valid:

 \noindent (i) The set $\mathcal W$ is the disjoint union of the above mentioned subsets $\mathcal W_{i,j}$, and
 $\mathcal V$ is the disjoint union of the above mentioned subsets $\mathcal V_{i,j}$.

 \noindent (ii) For each $(T,y_0)\in \mathcal W_{2,1} \cup \mathcal W_{3,1} \cup \mathcal W_{3,4}$,  $(NP)_2^{T,y_0}$ has no any admissible control and does not hold the bang-bang property; For each $(T,y_0)\in \mathcal W_{2,3} \cup \mathcal W_{3,2}$, $(NP)_2^{T,y_0}$ has  the bang-bang property and the null control is not a minimal norm control to this problem; For each $(T,y_0)\in \mathcal W_{2,2}$,  $(NP)_2^{T,y_0}$ has at least one minimal norm control.

 \noindent (iii) For each $(M,y_0)\in \mathcal V_{3,3}$, $(TP)_2^{M,y_0}$ has no any admissible control and does not hold the bang-bang property; For each $(M,y_0)\in \mathcal V_{2,2} \cup \mathcal V_{3,2}$, $(TP)_2^{M,y_0}$ has  the bang-bang property; For each $(M,y_0)\in \mathcal V_{2,4}$, $(TP)_2^{M,y_0}$ has infinitely many different minimal time controls (not including the null control), and does not hold the bang-bang property; For each $(M,y_0)\in \mathcal V_{2,3}$, $(TP)_2^{M,y_0}$ has at least one minimal time control.
  \end{Theorem}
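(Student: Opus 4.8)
\textbf{Proof plan for Theorem~\ref{theorem8.15wang}.}

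The plan is to reduce everything to the two general BBP decomposition theorems (Theorem~\ref{Proposition-NTy0-partition} and Theorem~\ref{Proposition-TMy0-partition}), after first checking that the abstract hypotheses (H1) and (H2) are in force for the pair $(A_2,B_2)$. The input needed is exactly: (a) $(A_2,B_2)$ satisfies (H1) and (H2); (b) $T^1(y_0)=\infty$ for every $y_0\in X\setminus\{0\}$; and (c) $N_2(T^1(y_0),y_0)=0$ whenever $T^0(y_0)<\infty$. Fact (a) is Proposition~\ref{zhangproposition7.11}, and facts (b) and (c) are precisely (i) and (ii) of Lemma~\ref{Lemma-ex4-T1-NT1}. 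So the proof is essentially a bookkeeping argument: feed (a)--(c) into the partition theorems and read off the conclusions on each piece.

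First I would prove conclusion (i). By (i) of Theorem~\ref{Proposition-NTy0-partition} and (i) of Theorem~\ref{Proposition-TMy0-partition}, $\mathcal W$ is the disjoint union of all the $\mathcal W_{i,j}$ and $\mathcal V$ is the disjoint union of $\mathcal V_1$ and all the $\mathcal V_{i,j}$. It then remains to observe that, under (b), the pieces indexed in the statement but not listed are empty: since $T^1(y_0)=\infty$, the defining conditions ``$T\geq T^1(y_0)$'' force $\mathcal W_{1,2}=\mathcal W_{2,4}=\mathcal W_{3,3}=\emptyset$, and one also gets $\mathcal W_{1,1}=\mathcal W_{1,2}=\emptyset$ from (iv) of Lemma~\ref{Lemma-NT0-T0-T1} (which says $N(T^0(y_0),y_0)=0$ iff $T^0(y_0)=T^1(y_0)<\infty$, impossible here), so $\mathcal W_1=\emptyset$; similarly for the $\mathcal V$ side, (b) kills $\mathcal V_1$ (again via Lemma~\ref{Lemma-NT0-T0-T1}(iv)) and forces $\mathcal V_{2,1}=\{(M,y_0)\in\mathcal V_2:M\le N(T^1(y_0),y_0)\}=\emptyset$ and $\mathcal V_{3,1}=\emptyset$ because $N(T^1(y_0),y_0)$ is either $0$ (when $T^0(y_0)<\infty$, by (c)) forcing those balls $M\le 0$ to be empty, or else $T^0(y_0)=\infty$ which puts the point into $\mathcal V_{3,3}$ rather than $\mathcal V_{3,1}$. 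Collecting these reductions gives the two claimed disjoint-union decompositions. (I would double-check the index ranges listed in the statement against these emptiness claims — that is the one place a careless slip could occur.)

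Next, conclusions (ii) and (iii) are obtained piece by piece from the already-established clauses of the partition theorems, now legitimately applicable because (H1), (H2) hold by Proposition~\ref{zhangproposition7.11}. Concretely: $\mathcal W_{2,1}\cup\mathcal W_{3,1}\cup\mathcal W_{3,4}$ lies in the ``no admissible control'' part by (iv) of Theorem~\ref{Proposition-NTy0-partition}; $\mathcal W_{2,3}\cup\mathcal W_{3,2}$ has the bang-bang property with non-null optimal control by (iii) of Theorem~\ref{Proposition-NTy0-partition}; $\mathcal W_{2,2}$ has a minimal norm control by (v) of Theorem~\ref{Proposition-NTy0-partition}. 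For the time problem, $\mathcal V_{3,3}$ has no admissible control by (v) of Theorem~\ref{Proposition-TMy0-partition}; $\mathcal V_{2,2}\cup\mathcal V_{3,2}$ has the bang-bang property by (ii) of Theorem~\ref{Proposition-TMy0-partition}; $\mathcal V_{2,4}$ has infinitely many minimal time controls, none null, without the bang-bang property by (iii) of Theorem~\ref{Proposition-TMy0-partition}; and $\mathcal V_{2,3}$ has at least one minimal time control by (vi) of Theorem~\ref{Proposition-TMy0-partition}. Assembling these yields (ii) and (iii).

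The step I expect to be the genuine obstacle is none of the bookkeeping above but rather the verification that (H1) and (H2) hold for $(A_2,B_2)$ — i.e., the chain Lemma~\ref{lemma-key-inequality} $\Rightarrow$ Lemma~\ref{ob-estimate-YT} $\Rightarrow$ Lemma~\ref{lemmazhang7.10} $\Rightarrow$ Proposition~\ref{zhangproposition7.11}, together with the exponential-decay estimates underlying Lemma~\ref{Lemma-ex4-T1-NT1}(ii). These are the hard analytic inputs (the Dirichlet-series/quasi-analyticity estimate on the sector $S_{\varepsilon,\theta_0}$ and the resulting weak-analyticity and $L^1$-to-$L^2$ comparison on $Y_T$). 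Since the statement of Theorem~\ref{theorem8.15wang} is allowed to invoke all of those earlier results, in the write-up I would simply cite Proposition~\ref{zhangproposition7.11} and Lemma~\ref{Lemma-ex4-T1-NT1} and then carry out the routine assembly; the only care required is matching the emptiness of the omitted index pairs to the hypotheses $T^1\equiv\infty$ and $N_2(T^1(\cdot),\cdot)\equiv 0$ on $\{T^0<\infty\}$.
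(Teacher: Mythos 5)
Your proposal is correct and follows essentially the same route as the paper: invoke Proposition~\ref{zhangproposition7.11} for (H1)--(H2), use Lemma~\ref{Lemma-ex4-T1-NT1} to get $T^1\equiv\infty$ (hence $N_2(T^0(y_0),y_0)>0$ via Lemma~\ref{Lemma-NT0-T0-T1}(iv)) and $N_2(T^1(y_0),y_0)=0$ on $\{T^0<\infty\}$, conclude that $\mathcal W_{1,1},\mathcal W_{1,2},\mathcal W_{2,4},\mathcal W_{3,3},\mathcal V_1,\mathcal V_{2,1},\mathcal V_{3,1}$ are empty, and then read off (i)--(iii) clause by clause from Theorems~\ref{Proposition-NTy0-partition} and~\ref{Proposition-TMy0-partition}. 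The only blemish is the passing attribution of $\mathcal W_{1,2}$'s emptiness to a condition ``$T\geq T^1(y_0)$'' (its definition uses $T\geq T^0(y_0)$), but this is harmless since your simultaneous argument that $\mathcal W_1=\emptyset$ already disposes of it, exactly as in the paper.
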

\begin{proof} By Proposition~\ref{zhangproposition7.11}, we see that (H1) and (H2) hold for $(A_2,B_2)$. Thus, all conclusions in Theorem~\ref{Proposition-NTy0-partition} and
 Theorem~\ref{Proposition-TMy0-partition} are true. From these conclusions, we see that to prove this theorem, it suffices to show that
 \begin{equation}\label{newyear7.59}
 \mathcal{W}_{1,1} \cup \mathcal{W}_{1,2} \cup \mathcal{W}_{2,4} \cup \mathcal{W}_{3,3} =\emptyset; ~
 \mathcal{V}_1 \cup \mathcal{V}_{2,1} \cup \mathcal{V}_{3,1}=\emptyset.
 \end{equation}
 Here, $\mathcal{W}_{1,j}$ ($j=1,2$), $\mathcal{W}_{2,4}$, $\mathcal{W}_{3,3}$, $ \mathcal{V}_1$,  $\mathcal{V}_{2,1}$ and $\mathcal{V}_{3,1}$ are respectively given by (\ref{zhangjinchu5}), (\ref{zhangjinchu7}), (\ref{zhangjinchu9}), (\ref{Lambda-di-0-1-1}),  (\ref{Lambda-di-2-1}) and (\ref{Lambda-di-3-1}),
 where $(A,B)=(A_2,B_2)$.

  To show  (\ref{newyear7.59}), we use  Lemma~\ref{Lemma-ex4-T1-NT1} to get that
  \begin{eqnarray}\label{0228-th7.9-1}
   T^1(y_0)=\infty  \;\;\mbox{and}\;\;   N_2(T^1(y_0),y_0)=0
   \;\;\mbox{for all}\;\;   y_0\in X\setminus\{0\}.
  \end{eqnarray}
  By the first equality in (\ref{0228-th7.9-1}) and (iv) of Lemma \ref{Lemma-NT0-T0-T1}, we deduce that
  \begin{eqnarray}\label{0228-th7.9-2}
   N_2(T^0(y_0),y_0)>0
   \;\;\mbox{for all}\;\;   y_0\in X\setminus\{0\}.
  \end{eqnarray}
  ~~~\,We now show the first equality in  (\ref{newyear7.59}). On one hand, by the definitions of $\mathcal{W}_{1,j}$ ($j=1,2$) (see (\ref{zhangjinchu5})), we find from (\ref{0228-th7.9-2}) that $\mathcal{W}_{1,1} \cup \mathcal{W}_{1,2}$ is empty. On the other hand, by contradiction, suppose that $\mathcal{W}_{2,4} \cup \mathcal{W}_{3,3}$  were not empty. Then there would be a pair $(\hat T,\hat y_0)\in \mathcal{W}_{2,4} \cup \mathcal{W}_{3,3}$. Hence, by the definitions of $\mathcal{W}_{2,4} \cup \mathcal{W}_{3,3}$ (see (\ref{zhangjinchu7}) and (\ref{zhangjinchu9})), it follows that
  \begin{eqnarray*}
   T^1(\hat y_0) \leq \hat T <\infty,
  \end{eqnarray*}
  which contradicts the first equality in (\ref{0228-th7.9-1}). So $\mathcal{W}_{2,4} \cup \mathcal{W}_{3,3}$ is empty. Thus, we have proved  the first equality in  (\ref{newyear7.59}).

  Finally, we  prove the second equality in  (\ref{newyear7.59}). On one hand, by the definitions of $\mathcal{V}_{1}$  (see (\ref{Lambda-di-0-1-1})), we find from (\ref{0228-th7.9-2}) that $\mathcal{V}_{1}$ is empty. On the other hand, by contradiction, suppose that $\mathcal{V}_{2,1} \cup \mathcal{V}_{3,1}$  were not empty. Then there would be a pair $(\hat M,\hat y_0)\in \mathcal{V}_{2,1} \cup \mathcal{V}_{3,1}$. So by the definitions of $\mathcal{V}_{2,1} \cup \mathcal{V}_{3,1}$ (see (\ref{Lambda-di-2-1}) and (\ref{Lambda-di-3-1})), it follows that
  \begin{eqnarray*}
   0< \hat M \leq N_2(T^1(\hat y_0),\hat y_0),
  \end{eqnarray*}
  which contradicts the second equality in (\ref{0228-th7.9-1}). Therefore, $\mathcal{V}_{2,1} \cup \mathcal{V}_{3,1}$ is empty. Thus, we have proved the second equality in  (\ref{newyear7.59}).

  \vskip 3pt
 In summary, we end the proof of this theorem.

%
%

\end{proof}

%
%
%

\vskip 5pt
We end this subsection with  presenting such  phenomenon that {\it for some pairs $(A_2,B_2)$, the corresponding function $T^0(\cdot)$ (given by (\ref{y0-controllable}) with $(A,B)$ being replaced by $(A_2,B_2)$) has the following property:  $T^0(y_0)\in (0,\infty)$ for some $y_0\in X$.} To see it,
some preliminaries are needed. First we notice that the  operator $A_2$ depends on the choices of  $\{\phi_j\}_{j\geq 1}$,     $\{\psi_j\}_{j\geq 1}$  and $\Lambda$; the operator $B_2$ can be arbitrarily taken from $\mathcal L(U,X_{-1})\setminus\{0\}$. For each pair $(A_2,B_2)$, we define
 \begin{eqnarray}\label{0104-T2}
   T_2(A_2,B_2) \triangleq \inf\{T\in(0,\infty)  ~:~   (A_2,B_2)   \mbox{ has }
   \mbox{ the }  L^2\mbox{-null controllability at }  T  \}.
  \end{eqnarray}
  (By  the $L^2$-null controllability at $T$ for  $(A_2,B_2)$, we mean that for each
  $y_0\in X$, there is a control $v\in L^2(0, T;U)$ so that
  $\hat y_2(T ;y_0,v)=0$.)
  Sometimes, we will use $T_2$ to denote $T_2(A_2,B_2)$, if there is no risk causing any confusion.
It is proved in   \cite{AK-2} and \cite{AK-1}  that
 $ T_2\in (0,\infty)$ for some pairs $(A_2,B_2)$. One such example (taken from  \cite{AK-1}) is as follows:
 \begin{Example}\label{Example-0104-1}
   Consider the following controlled system
  \begin{eqnarray*}
   \left\{\begin{array}{lll}
           \partial_t y-\partial_{xx}y=\delta_{x_0}v  &\mbox{in}  &(0,\pi)\times(0,\infty),\\
           y(0,\cdot)=y(\pi,\cdot)=0 &\mbox{in}  &(0,\infty),\\
           y(\cdot,0)\in L^2(0,\pi).
          \end{array}
   \right.
  \end{eqnarray*}
  One can directly check that this example can be put into the framework $(A_2,B_2)$.   According to Corollary 6.4 and Theorem 6.5 in \cite{AK-1}, there are many $x_0\in (0,\pi)$ so that  the corresponding $T_2\in (0,\infty)$.
  \end{Example}

\noindent  In the current paper, controls are taken from $L^\infty$ spaces. Thus, we define for each pair $(A_2,B_2)$,
  \begin{eqnarray}\label{0104-T-infty}
   T_\infty(A_2,B_2) \triangleq \inf\{T\in(0,\infty)  ~:~   (A_2,B_2)   \mbox{ has the }
   L^\infty\mbox{-null controllability at }  T  \}.
  \end{eqnarray}
  Also, we simply use $T_\infty$ to denote $T_\infty(A_2,B_2)$, if there is no risk to cause any confusion.

 \begin{Lemma}\label{Corollary-0104-minimal-time}
  For each pair $(A_2,B_2)$, the corresponding $T_2$ and $T_\infty$ (defined by (\ref{0104-T2}) and (\ref{0104-T-infty}), respectively)
  are the same.
   \end{Lemma}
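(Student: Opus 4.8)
The plan is to show the two quantities $T_2 = T_2(A_2,B_2)$ and $T_\infty = T_\infty(A_2,B_2)$ coincide by proving two inequalities. The inequality $T_2 \le T_\infty$ is immediate, since every $L^\infty(0,T;U)$ control is in particular an $L^2(0,T;U)$ control, so $L^\infty$-null controllability at time $T$ implies $L^2$-null controllability at time $T$; taking infima over the respective sets in (\ref{0104-T2}) and (\ref{0104-T-infty}) gives $T_2 \le T_\infty$. The substance of the lemma is the reverse inequality $T_\infty \le T_2$, i.e.\ that $L^2$-null controllability at some time can be upgraded to $L^\infty$-null controllability at (arbitrarily slightly larger) time.

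For the reverse inequality, first I would fix any $T > T_2$ and show $(A_2,B_2)$ is $L^\infty$-null controllable at time $T$; sending $T \downarrow T_2$ then yields $T_\infty \le T_2$. So fix $T > T_2$ and pick $T'$ with $T_2 < T' < T$; by definition of $T_2$, $(A_2,B_2)$ is $L^2$-null controllable at time $T'$. The key point is a ``smoothing then controllability'' argument: given $y_0 \in X$, first let the uncontrolled semigroup act on $(0, T - T')$, i.e.\ consider $S_2(T-T')y_0$. From the expression (\ref{wsiaoxiao7.22}) for $S_2(t)$ and the fact that $\lambda_j \to \infty$ (which follows from (\ref{intro-seq})), the vector $S_2(T-T')y_0$ lies in $D(A_2^k)$ for every $k$ — in particular it is a ``smooth'' state. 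Then I would use the $L^2$-null controllability at time $T'$ to steer $S_2(T-T')y_0$ to $0$ over the interval $(T-T', T)$ with some control $v \in L^2(0,T';U)$, and the main task is to argue this control can be taken in $L^\infty$ rather than merely $L^2$.

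To pass from an $L^2$ control to an $L^\infty$ control, I would invoke the standard duality/observability machinery already available in this paper. Concretely, $L^2$-null controllability at $T'$ is equivalent (by the usual functional-analytic argument, cf.\ the proof pattern of Proposition~\ref{newhuangproposition2.15} and the representation Theorems~\ref{Theorem-the-representation-theorem}–\ref{Theorem-the-second-representation-theorem}) to an $L^2$-observability inequality $\|S_2^*(T')z\|_X \le C \|B_2^*S_2^*(T'-\cdot)z\|_{L^2(0,T';U)}$ for $z \in D(A_2^*)$; and because the target state $S_2(T-T')y_0$ is smoother than a generic element of $X$, one can trade the smoothing for a gain in integrability of the dual, obtaining the $L^1$-observability estimate $\|S_2^*(T')z\|_X \le C' \|B_2^*S_2^*(T'-\cdot)z\|_{L^1(0,T';U)}$ on the relevant subspace — essentially the analytic-smoothing estimates in Lemma~\ref{lemma-key-inequality} and Lemma~\ref{ob-estimate-YT} are exactly what converts $L^2$ bounds into $L^1$ bounds here. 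Feeding this $L^1$-observability estimate into the dual problem produces an $L^\infty$ control driving $S_2(T-T')y_0$ to $0$ over $(T-T',T)$; extending it by zero over $(0,T-T')$ gives an $L^\infty$ control steering $y_0$ to $0$ at time $T$. Hence $(A_2,B_2)$ is $L^\infty$-null controllable at $T$, so $T_\infty \le T$, and letting $T \downarrow T_2$ finishes the proof.

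I expect the main obstacle to be the rigorous justification of the ``smoothing gains integrability'' step — i.e.\ showing that the $L^2$ observability inequality at $T'$, combined with the analyticity/parabolic smoothing encoded in (\ref{wsiaoxiao7.22})–(\ref{wsiaoxiao7.23}) and quantified in Lemmas~\ref{lemma-key-inequality} and~\ref{ob-estimate-YT}, yields a genuine $L^1$-type observability estimate strong enough to produce a bounded (not just square-integrable) control. The other details — the triviality of $T_2 \le T_\infty$, the zero-extension of controls, and passing to the limit $T \downarrow T_2$ — are routine. One should also be slightly careful that the infima in (\ref{0104-T2}) and (\ref{0104-T-infty}) are over sets that could a priori be empty; but since $T_2 \in (0,\infty)$ is exactly the regime of interest (e.g.\ Example~\ref{Example-0104-1}), and since the argument shows the $L^\infty$-controllability set is nonempty whenever the $L^2$ one is, this causes no difficulty.
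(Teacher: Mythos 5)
Your overall outline (reduce to $T_\infty\le T_2$, fix $T>T_2$, pick an intermediate $T'$, upgrade an $L^2$ control to an $L^\infty$ one) is sound, but the step you yourself flag as the main obstacle is a genuine gap, and the mechanism you propose for it does not work as stated. Lemma~\ref{lemma-key-inequality} and Lemma~\ref{ob-estimate-YT} only compare norms of the observation functions themselves: they bound values of the exponential sums $z\mapsto\sum c_je^{-\lambda_jz}B_2^*\psi_j$ on a sector by their $L^1(0,T)$-norm. They contain no information about $\|S_2^*(T')z\|_X$, so they cannot by themselves convert the $L^2$-observability inequality (dual to $L^2$-null controllability at $T'$) into an ``$L^1$-observability estimate on the relevant subspace''; indeed you never formulate what that estimate is, and the smoothness of $S_2(T-T')y_0$ is a red herring --- since $\langle S_2(T-T')y_0,S_2^*(T')z\rangle_X=\langle y_0,S_2^*(T)z\rangle_X$, what you actually need is an estimate of $\|S_2^*(T)z\|_X$ by an $L^1$-norm of $B_2^*S_2^*$-data, and nothing in your argument produces it. The ingredient that genuinely does the $L^2\to L^\infty$ upgrade is the interval-transfer property (H1), equivalently (iii) of Lemma~\ref{Lemma-H3-eq}, which the paper has already established for $(A_2,B_2)$ with $p_0=2$ in Proposition~\ref{zhangproposition7.11}; without invoking it (or re-proving it), your key step is unsupported.

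For comparison, the paper's proof avoids duality entirely: assuming $T_2<\hat t<\hat t'<T_\infty$, it steers $y_0$ to $0$ at time $\hat t$ with an $L^2$ control $u$, extends $u$ by zero to $(0,\hat t')$, and applies (H1) (with $T=\hat t'$, $t=\hat t$) to replace the effect of $\chi_{(0,\hat t)}u$ by an $L^\infty$ control supported in $(\hat t,\hat t')$, yielding $L^\infty$-null controllability at $\hat t'$ and a contradiction. If you prefer to stay on the dual side, the correct repair of your argument is: combine the $L^2$-observability inequality over a window of length $T'$ (dual to $L^2$-null controllability at $T'$) with the estimate $\|g\|_{L^2(0,T';U)}\le C\,\|g\|_{L^1(T',T;U)}$ for $g\in Y_T$ (again (iii) of Lemma~\ref{Lemma-H3-eq}, valid here by Proposition~\ref{zhangproposition7.11}); applying the first to $w=S_2^*(T-T')z$ gives $\|S_2^*(T)z\|_X\le C\,\|B_2^*S_2^*(T-\cdot)z\|_{L^2(0,T';U)}\le C'\,\|B_2^*S_2^*(T-\cdot)z\|_{L^1(0,T;U)}$, i.e.\ (H3) at time $T$, and Proposition~\ref{newhuangproposition2.15} then gives $L^\infty$-null controllability at $T$ --- note that no smoothing of the initial state enters. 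The trivial inequality $T_2\le T_\infty$ and the limit $T\downarrow T_2$ are fine as you wrote them.
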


 \begin{proof}
 It suffices to show  that
 \begin{eqnarray}\label{0104-night-1}
  T_\infty\leq T_2.
 \end{eqnarray}
 By contradiction, suppose that it was not true. Then there would be  two numbers $\hat t$ and $\hat t^\prime$ so that
 \begin{eqnarray}\label{0104-night-2}
  T_2< \hat t<\hat t^\prime<T_\infty.
 \end{eqnarray}
 Arbitrarily fix a $y_0\in X$. According to the definition of $T_2$,  there exists a control $u\in L^2(0,\hat t;U)$ so that
 \begin{eqnarray}\label{0104-night-3}
  \hat y_2(\hat t;y_0,u)=0.
 \end{eqnarray}
 Write $\widetilde u$ for the zero extension of $u$ over $(0,\hat t^\prime)$.
  According to Proposition~\ref{zhangproposition7.11}, the pair $(A_2,B_2)$
  satisfies the condition  (H1) with $p_0=2$. Thus, we   apply (H1), where $p_0=2$ and   $T=\hat t^\prime$ and $t=\hat t$,  to find a control $v_u \in L^\infty(0,\hat t^\prime;U)$ so that
 $\hat y_2(\hat t^\prime;0,\widetilde u)   =   \hat y_2(\hat t^\prime;0, v_u)$,
 which implies that
 \begin{eqnarray*}
  \hat y_2(\hat t^\prime;y_0,\widetilde u) = \hat y_2(\hat t^\prime;y_0,0) + \hat y_2(\hat t^\prime;0,\widetilde u)=\hat y_2(\hat t^\prime;y_0, v_u).
   \end{eqnarray*}
 This, along with (\ref{0104-night-3}), yields that
 \begin{eqnarray*}
   \hat y_2(\hat t^\prime;y_0, v_u)=S_2(\hat t^\prime-\hat t) \hat y_2(\hat t;y_0,u)=0.
 \end{eqnarray*}
 Since $y_0$ was arbitrarily taken from $X$, the above implies that the pair $(A_2,B_2)$ has $L^\infty$-null controllability at time $\hat t^\prime$. By this and the definition of $T_\infty$, we deduce that
 $ T_\infty \leq \hat t^\prime$,
  which contradicts (\ref{0104-night-2}). So (\ref{0104-night-1}) holds. We end the proof of this lemma.

 \end{proof}

 \begin{Remark}\label{Remark-minimal-time-infty}
  There are  systems (under the framework $(A_2,B_2)$) so that $0<T_\infty<\infty$ (see Example \ref{Example-0104-1} and  Lemma \ref{Corollary-0104-minimal-time}). With the aid of this, we can prove that for some pair
  $(A_2,B_2)$, the corresponding  function $T^0(\cdot)$, defined by (\ref{y0-controllable}), satisfies that  $T^0(y_0)\in (0,\infty)$ for some $y_0\in X$.

  Here is the argument: Suppose that for some $(A_2,B_2)$,
  \begin{equation}\label{wang8.42h}
  0<T_\infty(A_2,B_2) =  T_\infty<\infty.
  \end{equation}
  On one hand, by the first inequality in (\ref{wang8.42h}) and the definition of $T_\infty$, we can find $T\in (0, T_\infty)$ so that the pair $(A_2,B_2)$ is not $L^\infty$-null controllable. Thus there is $\hat y_0\in X$ so that for any $v\in L^\infty(0,T;U)$,
  $\hat y_2(T;\hat y_0,v)\neq 0$. Then by the definition of $T^0(\hat y_0)$ (see (\ref{y0-controllable})), we see that $T\leq T^0(\hat y_0)$, which leads to that $T^0(\hat y_0)>0$.

  On the other hand, by the last inequality in (\ref{wang8.42h})
  and the definition of $T_\infty$,  we can find
  $\hat T\in (T_\infty,\infty)$ so that the pair $(A_2,B_2)$ is the $L^\infty$-null controllable at $\hat T$. Thus,  for each $y_0\in X$ there is a control $v\in L^\infty(0,\hat T;U)$ so that $\hat y(\hat T; y_0,v)=0$. This, along with  the definition of $T^0(y_0)$ (see (\ref{y0-controllable})), yields that
  $T_0(y_0)\leq \hat T<\infty$ for all $y_0\in X$.

  In summary, we conclude that  $T^0(\hat y_0)\in (0,\infty)$.

 \end{Remark}

\section{Appendix}

\subsection{Appendix A}

 In Appendix A, we will use  the Kalman controllability decomposition to prove the following Proposition:

\begin{Proposition}\label{wangnewyearproposition8.1}
For each pair of matrices $(A,B)$ in $\mathbb R^{n\times n}\times (\mathbb R^{n\times m}\setminus\{0\})$ (with $n, m\geq 1$), the corresponding  decompositions (P1) and (P2) (given by (\ref{0127-intro-P1}) and (\ref{0127-intro-P2}), respectively) hold.
\end{Proposition}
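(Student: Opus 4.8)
The plan is to reduce, via the Kalman controllability decomposition, the problems $(\mathcal{TP})^{M,y_0}$ and $(\mathcal{NP})^{T,y_0}$ for a general pair $(A,B)$ to the corresponding problems for a controllable pair, and then invoke the classical facts about controllable finite-dimensional systems. First I would fix an invertible $P\in\mathbb{R}^{n\times n}$ bringing $(A,B)$ into Kalman form,
\[
P^{-1}AP=\begin{pmatrix}\hat A_1&\hat A_{12}\\0&\hat A_2\end{pmatrix},\qquad P^{-1}B=\begin{pmatrix}\hat B_1\\0\end{pmatrix},
\]
with $(\hat A_1,\hat B_1)\in\mathbb{R}^{n_1\times n_1}\times\mathbb{R}^{n_1\times m}$ controllable, $n_1=\dim\mathcal R$; since $B\neq0$ forces $\hat B_1\neq0$ we get $n_1\ge1$, so $\mathcal R\setminus\{0\}\neq\emptyset$. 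The change of variables $\hat y=P^{-1}y$ acts only on the state, leaving the control, the constraint set $B(0,M)$, and the $L^\infty$-cost untouched. Writing $\hat y=(\hat y_1,\hat y_2)$ one has $\hat y_2(t)=e^{\hat A_2 t}\hat y_2(0)$, so $y(\hat t;y_0,u)=0$ can hold for some control (over some $\hat t$) if and only if $\hat y_2(0)=0$, i.e. $y_0\in\mathcal R$; and in that case the coupling term $\hat A_{12}\hat y_2$ vanishes, $\hat y_1$ obeys the controlled equation for $(\hat A_1,\hat B_1)$ with datum $\hat y_1(0)$, and $\mathcal N(T,y_0)=\mathcal N_1(T,\hat y_1(0))$, $\mathcal T(M,y_0)=\mathcal T_1(M,\hat y_1(0))$, where $\mathcal N_1,\mathcal T_1$ denote the analogous quantities for the controllable pair. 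Thus both problems reduce to $(\hat A_1,\hat B_1)$.

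Next I would dispose of the ``no admissible control'' alternatives. If $y_0\notin\mathcal R$, the computation above shows $y(\hat t;y_0,u)\neq0$ for every $\hat t>0$ and every $u$, so $(\mathcal{TP})^{M,y_0}$ and $(\mathcal{NP})^{T,y_0}$ have no admissible control for any $M,T$; this settles $\mathcal X_{2,2}$ in (P2) and the part of $\mathcal X_1\setminus\mathcal D_{bbp}$ with $y_0\notin\mathcal R$ in (P1). For the remaining part of $\mathcal X_1\setminus\mathcal D_{bbp}$, let $y_0\in\mathcal R\setminus\{0\}$ with $M\le\lim_{T\to\infty}\mathcal N(T,y_0)$. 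Using the classical fact that for a controllable pair $T\mapsto\mathcal N_1(T,\xi)$ is continuous and strictly decreasing on $(0,\infty)$ with $\mathcal N_1(T,\xi)>0$ for $\xi\neq0$, I get $\mathcal N(\hat t,y_0)>\lim_{T\to\infty}\mathcal N(T,y_0)\ge M$ for every finite $\hat t$ (and also $\lim_{T\to\infty}\mathcal N(T,y_0)\le\mathcal N(1,y_0)<\infty$, so $\mathcal D_{bbp}\neq\emptyset$). If $(\mathcal{TP})^{M,y_0}$ had an admissible control $u$ with $y(\hat t;y_0,u)=0$, $\|u\|_\infty\le M$, its restriction to $(0,\hat t)$ would be admissible for $(\mathcal{NP})^{\hat t,y_0}$, forcing $\mathcal N(\hat t,y_0)\le M$, a contradiction.

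Then I would treat the bang-bang cases. For $(\mathcal{NP})^{T,y_0}$ with $y_0\in\mathcal R\setminus\{0\}$: the reduced problem for $(\hat A_1,\hat B_1)$ has a minimal norm control $v^*$ (weak-$*$ compactness), $\mathcal N(T,y_0)>0$, and the Pontryagin maximum principle yields $z\neq0$ with $\langle v^*(t),\hat B_1^*e^{\hat A_1^*(T-t)}z\rangle_{\mathbb{R}^m}=\max_{\|w\|\le\mathcal N(T,y_0)}\langle w,\hat B_1^*e^{\hat A_1^*(T-t)}z\rangle_{\mathbb{R}^m}$ a.e.\ on $(0,T)$; since $t\mapsto\hat B_1^*e^{\hat A_1^*(T-t)}z$ is real-analytic and, by controllability, not identically zero for $z\neq0$, its zero set is null, so $\|v^*(t)\|_{\mathbb{R}^m}=\mathcal N(T,y_0)$ a.e.\ — the bang-bang property, giving (P2). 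Finally, for $(\mathcal{TP})^{M,y_0}$ with $y_0\in\mathcal R\setminus\{0\}$ and $M>\lim_{T\to\infty}\mathcal N(T,y_0)$: continuity and strict monotonicity of $\mathcal N(\cdot,y_0)$ give a unique $t^*\in(0,\infty)$ with $\mathcal N(t^*,y_0)=M$, the feasible set of $(\mathcal{TP})^{M,y_0}$ is nonempty (extend by zero a near-optimal control at a large time), $\mathcal T(M,y_0)=t^*$, and any minimal time control $u^*$ restricted to $(0,t^*)$ is a minimal norm control for $(\mathcal{NP})^{t^*,y_0}$; the previous sentence then gives $\|u^*(t)\|_{\mathbb{R}^m}=M$ a.e.\ on $(0,\mathcal T(M,y_0))$, proving (P1).

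The main difficulty is not conceptual but one of bookkeeping and self-containedness: for the controllable reduced pair $(\hat A_1,\hat B_1)$ one must either cite or reprove (i) the continuity and strict monotonicity of $T\mapsto\mathcal N_1(T,\xi)$ on $(0,\infty)$ with $\mathcal N_1(\cdot,\xi)>0$ and $\mathcal N_1(0^+,\xi)=\infty$ for $\xi\neq0$, and (ii) the validity of the maximum principle together with the analyticity/unique-continuation step, and one must verify carefully that the Kalman change of variables genuinely identifies the reduced problems (control set, cost, and target) with bona fide minimal time and minimal norm problems for $(\hat A_1,\hat B_1)$, the only point needing attention there being that the coupling block $\hat A_{12}$ is harmless precisely because it enters only when $\hat y_2(0)\neq0$, a case already excluded. (Alternatively, once one notes that every matrix pair satisfies (H1) and (H2), the conclusions (P1), (P2) also follow from Theorem~\ref{Proposition-TMy0-partition} and Theorem~\ref{Proposition-NTy0-partition} together with the facts $T^0(y_0)=0$ for $y_0\in\mathcal R$ and $T^1(y_0)=\infty$ for $y_0\neq0$; but the Kalman route above is the shorter, self-contained one promised in the introduction.)
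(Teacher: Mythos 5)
Your proposal is correct and follows essentially the same route as the paper's Appendix A: the Kalman controllability decomposition reduces both problems to a controllable pair, the invariant uncontrollable block rules out admissible controls for $y_0\notin\mathcal R$, and the Pontryagin maximum principle combined with the real analyticity of $t\mapsto B_1^*e^{A_1^*(T-t)}$ and the Kalman rank condition yields the bang-bang property. The only (harmless) deviation is in (P1): you transfer bang-bang from $(\mathcal{NP})^{t^*,y_0}$ with $\mathcal N(t^*,y_0)=M$ by restricting a minimal time control, whereas the paper invokes the existence equivalence (proved as in \cite{WXZ}) and applies the maximum principle directly to the time-optimal problem; both variants rest on the same standard properties of $\mathcal N(\cdot,y_0)$ for a controllable pair.
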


\begin{proof}
Arbitrarily fix $(A,B)\in\mathbb R^{n\times n}\times (\mathbb R^{n\times m}\setminus\{0\})$. Let  $\mathcal R$ be given by (\ref{0125-intro-attain}).
Since $B\neq 0$,  we have that
 \begin{equation}\label{newyearyuan1.12}
 p \triangleq \mbox{dim}\; \mathcal R>0\;\;\mbox{and}\;\;\mathcal{R}\setminus\{0\}\neq\emptyset.
 \end{equation}
    We now recall the Kalman controllability decomposition of $(A,B)$ (see, for instance, Lemma 3.3.3 and Lemma 3.3.4 in \cite{E.D.Sontag}):
There exist  $K\in GL(n)$, $A_1\in\mathbb R^{p\times p}$, $A_2\in\mathbb R^{p\times(n-p)}$, $A_3\in\mathbb R^{(n-p)\times(n-p)}$ and $B_1\in\mathbb R^{p\times m}$
  so that
   \begin{eqnarray}\label{0123-intro-100}
    K^{-1} A K=  \left(
                   \begin{array}{cc}
                     A_1 & A_2 \\
                     0 & A_3 \\
                   \end{array}
                 \right)
    \;\;\mbox{and}\;\;
     K^{-1} B=\left(
                    \begin{array}{c}
                      B_1 \\
                      0 \\
                    \end{array}
                  \right),
   \end{eqnarray}
    where the pair $(A_1,B_1)$ is controllable, which is equivalent to
  \begin{eqnarray}\label{0127-A1B1-controllable}
   \mbox{rank } (B_1,A_1B_1,\cdots,A_1^pB_1)=p.
  \end{eqnarray}
  Notice that when $p=n$, the decomposition is trivial. In this case,  $A_1=A$, $B_1=B$ and $A_2$ and $A_3$ are not there.

We organize the proof by two steps as follows:

\noindent{\it Step 1. The proof of  (P2)}

For each $z_0\in \mathbb R^n\setminus\{0\}$ and $T\in(0,\infty)$, we  define
 an affiliated  minimal norm control problem:
 \begin{eqnarray}\label{0123-intro-NP-K}
 (\mathcal{NP})^{T,z_0}_K \;\;\;\;\;\;  \mathcal{N}_K(T,z_0)  \triangleq  \inf\{ \|v\|_{L^\infty(0,T;\mathbb R^m)}~:~\hat z(T;z_0,v)=0\},
 \end{eqnarray}
 where  $\hat z(\cdot;z_0,v)$ is the solution to the equation:
\begin{eqnarray}
  \left\{\begin{array}{l}\label{0123-intro-4}
         z^\prime(t)=\left(
                   \begin{array}{cc}
                     A_1 & A_2 \\
                     0 & A_3 \\
                   \end{array}
                 \right)z(t)
                 +\left(
                    \begin{array}{c}
                      B_1 \\
                      0 \\
                    \end{array}
                  \right)v(t),~0<t\leq T,\\
         z(0)=z_0.
        \end{array}
 \right.
 \end{eqnarray}
  By the invertibility of   $K$, one can easily show that   when $z_0=K^{-1}y_0$,
the problems $(\mathcal{NP})^{T,y_0}$ and $(\mathcal{NP})^{T,z_0}_K$
(given by (\ref{0123-intro-NP}) and (\ref{0123-intro-NP-K}), respectively)
 are equivalent,
 i.e., either they  have the same minimal norm controls or  both of them have no any admissible control. From (\ref{0125-intro-attain}), (\ref{0123-intro-100}) and (\ref{0127-A1B1-controllable}), it follows that
   \begin{eqnarray}\label{0125-intro-attain-K}
    \mathcal R= \mbox{span}\, (B,AB,\cdots,A^nB)
    =\mbox{span}\,K\left(
                          \begin{array}{c}
                            B_1,A_1B_1,\cdots,A_1^nB_1 \\
                            0 \\
                          \end{array}
                        \right)
    =K(\widetilde{\mathbb R}^p),
   \end{eqnarray}
   where the span of a matrix denotes the subspace generated by all columns of the matrix, and   $\widetilde{\mathbb R}^p$ is the following subspace:
\begin{eqnarray}\label{0127-Rp-tilde}
 \widetilde{\mathbb R}^p  \triangleq  \big\{(z_1,z_2,\cdots,z_n)\in\mathbb R^n
 ~:~  z_{p+1}=\cdots=z_n=0 \big\}.
\end{eqnarray}
By (\ref{newyearyuan1.12}), we see that $\widetilde{\mathbb R}^p\setminus\{0\}\neq\emptyset$.
From the equivalence of $(\mathcal{NP})^{T,y_0}$ and $(\mathcal{NP})^{T,z_0}_K$ (with  $z_0=K^{-1}y_0$), (\ref{0125-intro-attain-K}) and (\ref{0127-intro-P2}), we see that to prove (P2),  it suffices to show the following BBP decomposition  for $(\mathcal{NP})^{T,z_0}_K$:
\begin{eqnarray*}
  \mbox{\textbf{(Q2)}} \begin{array}{ll}
       &\bullet  ~\,\mbox{For each } (T,z_0)\in (0,\infty) \times (\widetilde{\mathbb R}^p\setminus\{0\}), \;\;(\mathcal{NP})^{T,z_0}_K
                  \mbox{ has the bang-bang property.}  \\
             &\bullet ~\,\mbox{For each } (T,z_0)\in (0,\infty) \times (\mathbb R^n\setminus\widetilde{\mathbb R}^p), \;\;(\mathcal{NP})^{T,z_0}_K
                  \mbox{ has no  admissible control. }\\
              \end{array}
 \end{eqnarray*}
 To show the first conclusion in (Q2), we let
\begin{equation}\label{newyear8.3}
(T,z_0)\in (0,\infty) \times (\widetilde{\mathbb R}^p\setminus\{0\}).
\end{equation}
  Write   $z_{0,1}$ for the first $p$ components  of $z_0$. Since $z_0\in\widetilde{\mathbb R}^p$, it follows that $z_0=(z_{0,1},0)$, if $p<n$; and $z_0=z_{0,1}$, if $p=n$.
  Thus, for each $v\in L^\infty(0,T;\mathbb R^m)$, the solution $\hat z(\cdot;z_0,v)$  of the equation (\ref{0123-intro-4}) satisfies
  that
  \begin{eqnarray*}
  \hat z(t;z_0,v) =
  \left\{\begin{array}{l}
  \big(\hat z_1(t;z_{0,1},v) ,0 \big)
  \;\;\mbox{for all}\;\;
  t\in [0,T],\;\;\mbox{when}\;\; p<n,\\
   \hat z_1(t;z_{0,1},v)
  \;\;\mbox{for all}\;\;
  t\in [0,T],\;\;\mbox{when}\;\; p=n,
     \end{array}
  \right.
 \end{eqnarray*}
 where $\hat z_1(\cdot;z_{0,1},v)$ solves the following equation:
 $$
  z_1^\prime(t)=A_1z_1(t)+B_1v(t), ~0<t\leq T;\;\; z_1(0)=z_{0,1}.
 $$
  This, along with the controllability of  $(A_1,B_1)$ (which follows from (\ref{0127-A1B1-controllable}), see, for instance, Theorem 3 on Page 89 in \cite{E.D.Sontag}), indicates  that $(\mathcal{NP})_K^{T,z_0}$ has an admissible control.  Then by a standard way (see for instance \cite[Lemma 1.1]{HOF1}),  we can deduce that $(\mathcal{NP})_K^{T,z_0}$ has a minimal norm control.

 Meanwhile, according to the Pontryagin maximum principle for $(\mathcal{NP})_K^{T,z_0}$ (see, for instance, \cite[Theorem 1.1.1]{HOF}),
  there is $\eta_1$ in $\mathbb R^p\setminus\{0\}$  so that each minimal norm control $v^*$ to $(\mathcal{NP})_K^{T,z_0}$ verifies that
 \begin{eqnarray}\label{0131-last-2}
  \big\langle v^*(t),B_1^*e^{A_1^*(T-t)}\eta_1 \big\rangle_{\mathbb R^m}
  = \max_{\|w\|_{\mathbb R^m} \leq \mathcal{N}_K(T,z_0)}
  \big\langle w,B_1^*e^{A_1^*(T-t)}\eta_1 \big\rangle_{\mathbb R^m}
  \;\;\mbox{a.e.}\;\;  t\in \big(0,T\big).
 \end{eqnarray}
where   $\mathcal{N}_K(T,z_0)$ is given by (\ref{0123-intro-NP-K}).
 Besides, since $\eta_1\neq 0$ and the  function  $t\rightarrow B_1^*e^{A_1^*(T-t)}$  is real analytic over $\mathbb{R}$, it follows from  (\ref{0127-A1B1-controllable}) that the set $\big\{t\in(0,T)~:~B_1^*e^{A_1^*(T-t)} \eta_1=0\big\}$  has measure zero. From this and  (\ref{0131-last-2}), we see that  $\mathcal{(NP)}^{T,z_0}_K$ has the bang-bang property.   So  the first conclusion in (Q2) is true.

 To verify the second conclusion in (Q2), we first notice that when $p=n$,
 $\mathbb R^n\setminus\widetilde{\mathbb R}^p$ is empty.
 Thus, we can assume, without loss of generality,  that $p<n$. Arbitrarily fix $(T,z_0)\in (0,\infty) \times (\mathbb R^n\setminus\widetilde{\mathbb R}^p)$.
  Then from the equation (\ref{0123-intro-4}), we see that any control $v$ has no any influence to the last $(n-p)$ components of the solution $\hat z(\cdot;z_0,v)$.
Thus,  for each control $v$ in $L^\infty(0,T;\mathbb R^m)$, the solution
  $\hat z(\cdot;z_0,v)$ of the equation (\ref{0123-intro-4}) satisfies that
$\hat z(T;z_0,v) \neq 0$. Hence, $\mathcal{(NP)}^{T,z_0}_K$  has no any admissible control.  This proves the second conclusion in (Q2). Hence, the decomposition  (Q2) holds. Consequently, (P2) is true.

\vskip 3pt

\noindent{\it Step 2. The proof of  (P1)}

  For each $z_0\in \mathbb R^n\setminus\{0\}$ and $M\in(0,\infty)$, we define an affiliated  minimal time control problem:
 \begin{eqnarray}\label{0123-intro-TP-K}
  (\mathcal{TP})^{M,z_0}_K \;\;\;\;\;\; \mathcal{T}_K(M,z_0)  \triangleq  \{ \hat t>0~:~\exists\,u\in\mathbb U^M
 \;\;\mbox{s.t.}\;\;  z(\hat t;z_0,u)=0\},
 \end{eqnarray}
 where $\mathbb U^M$ is given by (\ref{0125-UM}), and $z(\cdot;z_0,u)$ is the solution to the equation:
     \begin{eqnarray}
  \left\{\begin{array}{l}\label{0123-intro-3}
         z^\prime(t)=\left(
                   \begin{array}{cc}
                     A_1 & A_2 \\
                     0 & A_3 \\
                   \end{array}
                 \right)z(t)
                 +\left(
                    \begin{array}{c}
                      B_1 \\
                      0 \\
                    \end{array}
                  \right)u(t),~t>0,\\
         z(0)=z_0.
        \end{array}
 \right.
\end{eqnarray}
 Two observations are given in order: First, by the invertibility of  $K$, one can easily see that
the problems $(\mathcal{TP})^{M,y_0}$ and $(\mathcal{TP})^{M,z_0}_K$
 (given by (\ref{0123-intro-TP}) and (\ref{0123-intro-TP-K}), respectively)
  are equivalent, i.e., either they  have the same minimal time controls or  both
   of them have no any admissible control.
Second,  from (\ref{0123-intro-NP}), one can easily check that when $y_0\in \mathcal{R}\setminus\{0\}$,
the function $\mathcal{N}(\cdot,y_0)$ has the properties: it is decreasing over $(0,\infty)$; for each $T\in (0,\infty)$, $\mathcal{N}(T,y_0)\in (0,\infty)$.
Hence, for each $y_0\in \mathcal{R}\setminus\{0\}$,
$\lim_{T\rightarrow\infty} \mathcal{N}(T,y_0)$ exists and is a finite and non-negative number. Meanwhile,
by the equivalence between $(\mathcal{NP})^{T,y_0}$ and $(\mathcal{NP})^{T,z_0}_K$  (with $z_0=K^{-1}y_0$), it follows  that for each $T>0$, $\mathcal{N}(T,y_0)=\mathcal{N}_K(T,z_0) $. These imply that
\begin{eqnarray}\label{Appendixwang8.13}
 \lim_{T\rightarrow\infty} \mathcal{N}(T,y_0)
 = \lim_{T\rightarrow\infty} \mathcal{N}_K(T,z_0)<\infty
 \;\;\mbox{when}\;\;  z_0=K^{-1}y_0\;\;\mbox{and} \;\; y_0\in \mathcal{R}\setminus\{0\}.
\end{eqnarray}
From the above-mentioned two observations, as well as (\ref{0125-intro-attain-K}) and (\ref{0127-intro-P1}),
we find that to prove (P1),  it suffices to show the following BBP decomposition  for $(\mathcal{TP})^{M,z_0}_K$:
\begin{eqnarray*}
  \mbox{\textbf{(Q1)}} \begin{array}{ll}
       &\bullet  ~\,\mbox{For each } (M,z_0)\in   \mathcal D_{bbp}^K, \;\;(\mathcal{TP})^{M,z_0}_K
                  \mbox{ has the bang-bang property}.  \\
       &\bullet ~\,\mbox{For each } (M,z_0)\in \big((0,\infty) \times (\mathbb R^n\setminus\{0\})\big) \setminus \mathcal D_{bbp}^K, \;\;(\mathcal{TP})^{M,z_0}_K
                  \mbox{ has } \mbox{ no }\\
       &  ~\,  \mbox{ any } \mbox{ admissible } \mbox{ control }.
       \end{array}
 \end{eqnarray*}
Here,
\begin{eqnarray}\label{0127-D-K}
  \mathcal D_{bbp}^K \triangleq  \big\{(M,z_0)\in (0,\infty)\times (\widetilde{\mathbb R}^p\setminus\{0\})
  ~:~  M> \lim_{T\rightarrow\infty} \mathcal{N}_K(T,z_0)\big\},
 \end{eqnarray}
 where $\widetilde{\mathbb R}^p$ and $\mathcal{N}_K(T,z_0)$ are given by (\ref{0127-Rp-tilde}) and (\ref{0123-intro-NP-K}), respectively.
 From (\ref{0127-D-K}),  (\ref{newyearyuan1.12}), (\ref{wangyu1.8}) and (\ref{Appendixwang8.13}), one can easily check that
  \begin{equation}\label{Appendixwang8.17}
 \mathcal D_{bbp}^K\neq\emptyset\;\;\mbox{ and}\;\;\mathcal D_{bbp}\neq\emptyset.
 \end{equation}

Before proving the decomposition (Q1), we  observe that by the first conclusion in  (Q2), we can use the same way used in the proof of \cite[Proposition 4.4]{WXZ} to get the following conclusion: When
  $z_0\in\widetilde{\mathbb R}^p\setminus\{0\}$,
 \begin{eqnarray}\label{0127-existence-TP}
 (\mathcal{TP})_K^{M,z_0}\;\;\mbox{has a minimal time control}\;\;\Longleftrightarrow  \infty>M >  \lim_{T\rightarrow\infty} \mathcal{N}_K(T,z_0).
 \end{eqnarray}

  To show the first conclusion in (Q1), we let $(M,z_0)\in   \mathcal D_{bbp}^K$.
  Then, it follows from (\ref{0127-existence-TP}) and (\ref{0127-D-K})  that  $(\mathcal{TP})_K^{M,z_0}$ has at least one minimal time control.

  Write   $z_{0,1}$ for the first $p$ components  of $z_0$. Since $z_0\in\widetilde{\mathbb R}^p$, it follows that $z_0=(z_{0,1},0)$ when $p<n$; while $z_0=z_{0,1}$ when $p=n$. Then by (\ref{0123-intro-3}), we can easily check that
  \begin{eqnarray*}
  z(t;z_0,v) =
  \left\{\begin{array}{l}
  \big(z_1(t;z_{0,1},u) ,0 \big)
  \;\;\mbox{for all}\;\;
  t\geq 0,\;\;\mbox{when}\;\; p<n,\\
    z_1(t;z_{0,1},u)
  \;\;\mbox{for all}\;\;
  t\geq 0,\;\;\mbox{when}\;\; p=n,
     \end{array}
  \right.
 \end{eqnarray*}
 where $ z_1(\cdot;z_{0,1},u)$ solves the following equation:
 $$
  z_1^\prime(t)=A_1z_1(t)+B_1u(t), ~0<t<\infty,\;\;  z_1(0)=z_{0,1}.
 $$
   From this, we can use the Pontryagin maximum principle for $(\mathcal{TP})_K^{M,z_0}$ (see, for instance,
  \cite[Theorem 1.1.1]{HOF}) to find  $\eta_2\in\mathbb R^p\setminus\{0\}$  so that each minimal time control $u^*$ to $(\mathcal{TP})_K^{M,z_0}$ verifies that for a.e. $t\in \big(0,\mathcal{T}_K(M,z_0)\big)$,
 \begin{eqnarray}\label{0131-last-1}
  \big\langle u^*(t),B_1^*e^{A_1^*(\mathcal{T}_K(M,z_0)-t)}\eta_2 \big\rangle_{\mathbb R^m}
  = \max_{\|w\|_{\mathbb R^m} \leq M}
  \big\langle w,B_1^*e^{A_1^*(\mathcal{T}_K(M,z_0)-t)}\eta_2 \big\rangle_{\mathbb R^m}.
 \end{eqnarray}
  Meanwhile, since $\eta^*\neq 0$ and the  function  $t\rightarrow B_1^*e^{A_1^*(T-t)}$  is real analytic over $\mathbb{R}$,
  the set $\big\{t\in\big(0,\mathcal{T}_K(M,z_0)\big)~:~B_1^*e^{A_1^*(\mathcal{T}_K(M,z_0)-t)} \eta_2=0\big\}$ has measure zero. This, along with (\ref{0131-last-1}), yields that $(\mathcal{TP})_K^{M,z_0}$ has  the bang-bang property.  Hence,  the first conclusion in (Q1) is true.

 To show the second conclusion in (Q1), we let
\begin{equation}\label{newyear8.15}
 (M,z_0)\in \big((0,\infty) \times (\mathbb R^n\setminus\{0\})\big) \setminus \mathcal D_{bbp}^K.
\end{equation}
 Then, there are only two possibilities on the pair $(M,z_0)$ as follows: First,   $(M,z_0)$ verifies that $z_0\in \widetilde{\mathbb R}^p\setminus\{0\}$ and $0<M\leq \lim_{T\rightarrow\infty} \mathcal{N}_K(T,z_0)$; Second,  $(M,z_0)\in (0,\infty)\times (\mathbb R^n\setminus \widetilde{\mathbb R}^p)$. In the first case, it   follows from (\ref{0127-existence-TP}) that $(\mathcal{TP})_K^{M,z_0}$ has no any admissible control. In the second case, we have that $p<n$ and the last $(n-p)$ components of $z_0$ are not all zero. Then by
       (\ref{0123-intro-3}), we find that
$z(T;z_0,u) \neq 0$ for all $u\in L^\infty(\mathbb R^+;\mathbb R^m)$ and
 $T\in(0,\infty)$.
 This implies that $(\mathcal{TP})^{M,z_0}_K$  has no any admissible control. Hence,  the second conclusion in (Q1) is also true. So the BBP decomposition   (Q1) holds. Consequently, (P1) stands.

\vskip 3pt
 In summary, we end the proof of (P1) and (P2), through using the Kalman controllability decomposition.

\end{proof}

\subsection{Appendix B}

In Appendix B, we will show that each pair of matrices $(A,B)$ in $\mathbb R^{n\times n}\times (\mathbb R^{n \times m}\setminus\{0\})$ (with $n,m\geq 1$)
holds the properties (H1) and (H2).

\begin{Proposition}\label{Lemma-8ex1-1}
 Any pair of matrices  $(A,B)\in\mathbb R^{n\times n}\times (\mathbb R^{n \times m}\setminus\{0\})$ (with $n,m\geq 1$) satisfies  (H1) (with $p_0=2$) and (H2).

 \end{Proposition}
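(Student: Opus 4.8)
The plan is to exploit the special finite-dimensional structure: here $X=\mathbb R^n$, $U=\mathbb R^m$, $A^*$ is an $n\times n$ matrix with $D(A^*)=\mathbb R^n$, and $S^*(t)=e^{A^*t}$. First I would observe that, by (\ref{assumptionspace3}), $X_T$ is the range of the linear map $z\mapsto B^*e^{A^*(T-\cdot)}z|_{(0,T)}$ from $\mathbb R^n$ into $L^1(0,T;\mathbb R^m)$; being the image of a finite-dimensional space it is finite-dimensional, hence closed in $L^1(0,T;\mathbb R^m)$, so by (\ref{ob-space}) we get $Y_T=X_T$. Consequently every $f\in Y_T$ has the form $f(t)=B^*e^{A^*(T-t)}z$ for some $z\in\mathbb R^n$, and the key point is that this expression defines an entire (in particular real-analytic) $\mathbb R^m$-valued function of $t$ on all of $\mathbb R$.

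For (H2): suppose $f\in Y_T$ and $f=0$ on a measurable set $E\subset(0,T)$ with $|E|>0$. Writing $f(t)=B^*e^{A^*(T-t)}z$, each scalar component $f_i$ is real-analytic on $(0,T)$ and vanishes on $E$; since a nonzero real-analytic function on an interval has only isolated zeros, its zero set has measure zero, so $f_i\equiv 0$ on $(0,T)$. Hence $f\equiv 0$ on $(0,T)$, which is precisely the conclusion of (H2).

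For (H1): by Lemma~\ref{Lemma-H3-eq} it suffices to verify statement (iii) of that lemma with $p_2=2$, i.e.\ that for each $0<t<T<\infty$ there is $C_2=C_2(T,t)>0$ with $\|g\|_{L^2(0,t;\mathbb R^m)}\le C_2\|g\|_{L^1(t,T;\mathbb R^m)}$ for all $g\in Y_T$. Fix such $t,T$ and consider the two linear maps $P_1:\mathbb R^n\to L^2(0,t;\mathbb R^m)$, $P_1 z=B^*e^{A^*(T-\cdot)}z|_{(0,t)}$, and $P_2:\mathbb R^n\to L^1(t,T;\mathbb R^m)$, $P_2 z=B^*e^{A^*(T-\cdot)}z|_{(t,T)}$. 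I claim $\ker P_1=\ker P_2$: if $P_2 z=0$ then $B^*e^{A^*(T-s)}z=0$ for a.e.\ $s\in(t,T)$, hence, by real analyticity (the identity theorem for real-analytic functions), $B^*e^{A^*(T-\cdot)}z\equiv 0$ on $\mathbb R$, so $P_1 z=0$; the reverse inclusion is identical. Call this common kernel $N$. Then $P_1$ and $P_2$ descend to injective linear maps on the finite-dimensional space $\mathbb R^n/N$, so $z\mapsto\|P_1 z\|_{L^2(0,t)}$ and $z\mapsto\|P_2 z\|_{L^1(t,T)}$ are two norms on $\mathbb R^n/N$; by equivalence of norms in finite dimension there is $C_2$ with $\|P_1 z\|_{L^2(0,t)}\le C_2\|P_2 z\|_{L^1(t,T)}$ for all $z$, which is exactly the required estimate. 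Hence (H1) holds with $p_0=2$, using the relation $p_0=p_2/(p_2-1)$ in the last sentence of Lemma~\ref{Lemma-H3-eq}.

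The only place calling for care — and hence the "main obstacle", though it is a mild one — is the kernel identification $\ker P_1=\ker P_2$, which rests on the fact that $t\mapsto B^*e^{A^*(T-t)}z$ extends to a real-analytic function on $\mathbb R$, so that vanishing on any set of positive measure forces vanishing everywhere; the same analyticity is what drives (H2). Everything else is soft: finite-dimensionality of $X_T$ (giving $Y_T=X_T$ and the analytic representation of its elements), equivalence of norms on a finite-dimensional space, and the reduction already provided by Lemma~\ref{Lemma-H3-eq}.
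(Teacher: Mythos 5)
Your proof is correct and follows essentially the same route as the paper's: both verify (iii) of Lemma~\ref{Lemma-H3-eq} with $p_2=2$ by a finite-dimensionality argument whose only real content is the analyticity of $t\mapsto B^*e^{A^*(T-t)}z$ (the paper phrases it as well-definedness and automatic boundedness of a linear map between the finite-dimensional spaces $\mathcal O_2\to\mathcal O_1$, you phrase it as equal kernels plus equivalence of norms on $\mathbb R^n/N$), and both obtain (H2) from $Y_T=X_T$ together with the identity theorem for real-analytic functions. No gaps.
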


 \begin{proof}
 Arbitrarily fix $(A,B)\in\mathbb R^{n\times n}\times (\mathbb R^{n \times m}\setminus\{0\})$. We organize the proof by two steps.

 In Step 1, we show  that (H1) (with $p_0=2$) holds  for the pair $(A,B)$. For this purpose, we will show that $(A,B)$ satisfies the conclusion (iii) of Lemma \ref{Lemma-H3-eq} (with $p_2=2$). When the later is done, it follows from Lemma \ref{Lemma-H3-eq} that (H1) (with $p_0=2$) holds  for the pair $(A,B)$.

  The remainder of this step is to  show that (iii) of Lemma \ref{Lemma-H3-eq} (with $p_2=2$) holds for the pair $(A,B)$.   Arbitrarily fix  $0<t<T<\infty$. Define  the following two spaces:
 \begin{eqnarray*}\label{1215-ex1-O1}
  \mathcal O_1 \triangleq \{B^* e^{A^*(T-\cdot)}z|_{(0,t)} \in L^2(0,t;\mathbb R^m) ~:~ z\in\mathbb R^n\},\;\;\mbox{with the norm}\;\;\|\cdot\|_{L^2(0,t;\mathbb R^m)},
 \end{eqnarray*}
 and
 \begin{eqnarray*}\label{1215-ex1-O2}
  \mathcal O_2 \triangleq \{B^* e^{A^*(T-\cdot)}z|_{(t,T)} \in L^1(t,T;\mathbb R^m) ~:~ z\in\mathbb R^n\},
 \;\;\mbox{with the norm}\;\;\|\cdot\|_{L^1(t,T;\mathbb R^m)}.
 \end{eqnarray*}
 It is clear that they are finitely dimensional spaces.
 Then define a map $\mathcal F:~\mathcal O_2\rightarrow \mathcal O_1$ by setting
 \begin{eqnarray}\label{1215-ex1-def-F}
  \mathcal F \big(B^* e^{A^*(T-\cdot)}z|_{(t,T)}\big)  \triangleq   B^* e^{A^*(T-\cdot)}z|_{(0,t)}
  \;\;\mbox{for each}\;\;   z\in \mathbb R^n.
 \end{eqnarray}
 By the analyticity of the function  $t \mapsto B^*e^{A^* t}$, $t\in \mathbb{R}$, one can easily check that the map $\mathcal F$ is well defined.
   It is clear that  $\mathcal F$ is linear (from the finitely dimensional space $\mathcal O_2$ to the finitely dimensional space $\mathcal O_1$). Thus,  $\mathcal F$ is bounded. Then it follows by (\ref{1215-ex1-def-F}) that there is a positive constant $C(T,t)$ so that
 \begin{eqnarray*}
       \|B^* e^{A^*(T-\cdot)}z\|_{L^2(0,t;\mathbb R^m)}
  \leq C(T,t)   \|B^* e^{A^*(T-\cdot)}z\|_{L^1(t,T;\mathbb R^m)}
  \;\;\mbox{for each}\;\;   z\in \mathbb R^n.
 \end{eqnarray*}
 This, along with the definition of $Y_T$ (see (\ref{ob-space})), yields that
 \begin{eqnarray*}
  \|g\|_{L^2(0,t;\mathbb R^m)}
  \leq C(T,t)  \|g\|_{L^1(t,T;\mathbb R^m)}
  \;\;\mbox{for each}\;\;    g\in  Y_T,
 \end{eqnarray*}
 which leads to the conclusion (iii) of Lemma \ref{Lemma-H3-eq} (with $p_2=2$).

In Step 2,  we will prove that (H2) holds for the pair $(A,B)$. To this end, we first show that (H4) holds for the pair $(A,B)$.
 In the finitely dimensional setting, we have that for each $z\in \mathbb{R}^n$ and each $T>0$, the function $\widetilde{B^*S^*}(T-\cdot)z$ (defined by (\ref{wang1.15})) is the same as $B^*e^{A^* (T-\cdot)}$ over  $[0,T]$.
 From this and the analyticity of the function  $t \mapsto B^*e^{A^* t}$, $t\in \mathbb{R}$, one can easily check that (H4) holds for the pair $(A,B)$.
Next, we claim that
 for each $T\in(0,\infty)$, the space $X_T$ (defined by (\ref{assumptionspace3})) is the same as $Y_T$. In fact,  it follows from (\ref{assumptionspace3}) that for each $T>0$, $X_T$ is a finitely
  dimensional  subspace in $L^1(0,T;\mathbb R^m)$.  Thus, for each $T>0$, $X_T$  is closed in $L^1(0,T;\mathbb R^m)$. Then we find from (\ref{ob-space}) that
 $X_T=Y_T$ for all $T\in(0,\infty)$.
   From this, it follows that  the conditions (H4) and (H2) are the same. Therefore, (H2) holds for the pair $(A,B)$. This ends the proof of this proposition.

 \end{proof}

\subsection{Appendix C}

In Appendix C, we will explain that the BBP decompositions (P1) and (P2) (given by (\ref{0127-intro-P1}) and (\ref{0127-intro-P2}), respectively) are consequences of
 Theorem \ref{Proposition-NTy0-partition} and Theorem~\ref{Proposition-TMy0-partition}. To see these, we need one lemma. In the proof of  this lemma, the following well known result  (see, for instance,
\cite[Section 3.3, Chapetr 3] {E.D.Sontag}) is used.

  \begin{Lemma}\label{1217-Lemma-controllable-subspace}
   Let $(A,B)\in\mathbb R^{n\times n}\times (\mathbb R^{n \times m}\setminus\{0\})$ (with $n,m\geq 1$). Let $\mathcal{R}_T$ and $\mathcal{R}_T^0$ (with $T>0$)
   be given respectively by (\ref{attainable-space}) and (\ref{R0T}). Let $\mathcal R$ be given by (\ref{0125-intro-attain}).
Define the following subspace
     \begin{eqnarray}\label{1217-controllable-subspace}
     \mathcal C_T  \triangleq  \{y_0\in\mathbb R^n  ~:~ \exists\,v\in L^\infty(0,T;\mathbb R^m)
     \;\;\mbox{s.t.}\;\;  \hat y_1(T;y_0,v)=0\},  ~T>0,
   \end{eqnarray}
   where $\hat y_1(\cdot;y_0,v)$ denotes the solution of (\ref{0123-intro-2}).
       Then it holds that
       $$
       \mathcal C_T=\widehat{\mathcal R}=\mathcal R_T=\mathcal R_T^0
       \;\;\mbox{for all}\;\;  T>0.
       $$

  \end{Lemma}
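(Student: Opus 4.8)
The plan is to prove the chain of equalities $\mathcal{C}_T = \widehat{\mathcal{R}} = \mathcal{R}_T = \mathcal{R}_T^0$ for every $T>0$ in the finite-dimensional setting, where $\widehat{\mathcal{R}}$ denotes the controllable subspace $\mathcal{R}$ of $(A,B)$ (the image of the Kalman matrix, see (\ref{0125-intro-attain})). The starting point is the classical linear systems fact: the controllable subspace $\mathcal{C}_T$ of $(A,B)$ at any positive time $T$ equals $\mathcal{R} = \mathrm{span}(B,AB,\dots,A^{n-1}B)$, and this is independent of $T$. This gives $\mathcal{C}_T = \widehat{\mathcal{R}}$ for all $T>0$ immediately. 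So the real work is to identify $\mathcal{R}_T$ and $\mathcal{R}_T^0$ with $\mathcal{C}_T$.

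First I would show $\mathcal{R}_T = \mathcal{C}_T$. By definition (\ref{attainable-space}), $\mathcal{R}_T = \{\hat y(T;0,v) : v \in L^\infty(0,T;\mathbb{R}^m)\}$, which by (\ref{Changshubianyi1.6}) equals $\{\int_0^T e^{A(T-\tau)}Bv(\tau)\,d\tau : v \in L^\infty(0,T;\mathbb{R}^m)\}$. On the other hand, by reversibility of the linear ODE in finite dimensions, $y_0 \in \mathcal{C}_T$ iff $\hat y_1(T;y_0,v)=0$ for some $v$, i.e. iff $e^{AT}y_0 = -\int_0^T e^{A(T-\tau)}Bv(\tau)\,d\tau$ for some $v$; since $e^{AT}$ is invertible, this says $y_0 \in e^{-AT}\mathcal{R}_T$. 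But $\mathcal{R}_T = \mathcal{R}$ (the reachable set is the controllable subspace in finite dimensions) is $A$-invariant, hence $e^{-AT}$-invariant, so $e^{-AT}\mathcal{R}_T = \mathcal{R}_T$, giving $\mathcal{C}_T = \mathcal{R}_T$. Alternatively, one can cite that $\mathcal{R}_T$ is precisely the reachable subspace, which is a standard object equal to $\mathcal{R}$; I would state the $A$-invariance argument explicitly to make the identification clean.

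Next I would handle $\mathcal{R}_T^0 = \mathcal{R}_T$. The inclusion $\mathcal{R}_T^0 \subseteq \mathcal{R}_T$ is immediate from the definitions (\ref{R0T}) and (\ref{attainable-space}). For the reverse inclusion, given $y_T = \hat y(T;0,v) \in \mathcal{R}_T$ with $v \in L^\infty(0,T;\mathbb{R}^m)$, I would use the finite-dimensional exact controllability on any subinterval: the state $\hat y(T/2; 0, v)$ lies in $\mathcal{R}_{T/2} = \mathcal{R}$, and by controllability of $(A,B)$ on $(T/2,T)$ there is a control $w \in L^\infty(T/2,T;\mathbb{R}^m)$ that steers $\hat y(T/2;0,v)$ to $y_T$ in time $T/2$; one then checks $w$ can be taken so that $\|w\|_{L^\infty(s,T;\mathbb{R}^m)} \to 0$ as $s \to T$ — this is automatic for $L^\infty$ controls constructed via the Gramian on a fixed interval, since they are continuous, but to be safe one can instead concatenate with a short final "coasting" segment: steer to $y_T$ slightly before $T$ and then note $y_T \in \mathcal{R}$ is reachable from any point of $\mathcal{R}$ in arbitrarily small time, allowing a control supported away from a neighbourhood of $T$. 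Actually the cleanest route: since $y_T \in \mathcal{R} = \mathcal{C}_{\varepsilon}$ for every $\varepsilon>0$, for each $\varepsilon$ there is $v_\varepsilon \in L^\infty(0,\varepsilon;\mathbb{R}^m)$ with $\hat y(\varepsilon; 0, v_\varepsilon) = e^{-A(T-\varepsilon)}y_T$; extending $v_\varepsilon$ by zero on $(\varepsilon,T)$ gives a control whose $L^\infty$-norm near $T$ is zero and which reaches $y_T$ at time $T$. Hence $y_T \in \mathcal{R}_T^0$, completing the proof.

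The main obstacle is the last step — ensuring the control witnessing $y_T \in \mathcal{R}_T$ can be chosen with vanishing $L^\infty$-tail near $T$, i.e. landing in $\mathcal{R}_T^0$ rather than merely $\mathcal{R}_T$. In the infinite-dimensional part of the paper this is genuinely delicate (it is why (H1) is imposed), but in finite dimensions it is resolved by the above zero-extension trick exploiting that exact controllability holds in arbitrarily small time. I would therefore present that trick carefully, and otherwise the proof is just bookkeeping with the Kalman decomposition and $A$-invariance of $\mathcal{R}$.
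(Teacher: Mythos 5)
Your argument is correct. Note, however, that the paper does not actually prove this lemma: it is invoked as a ``well known result'' with a citation to Sontag (Section 3.3), so there is no in-paper proof to compare against; your write-up is a self-contained verification of exactly what the paper takes for granted. The substance of your proof matches the standard route: $\mathcal{R}_T$ equals the Kalman subspace $\mathcal{R}$ for every $T>0$, $\mathcal{C}_T=e^{-AT}\mathcal{R}_T=\mathcal{R}_T$ by $A$-invariance of $\mathcal{R}$ and invertibility of $e^{-AT}$, and $\mathcal{R}_T\subseteq\mathcal{R}_T^0$ via a control supported in $(0,\varepsilon)$ steering $0$ to $e^{-A(T-\varepsilon)}y_T$ and extended by zero, which trivially has vanishing $L^\infty$-tail near $T$; this zero-extension device is the same one the paper uses elsewhere to exhibit membership in $\mathcal{R}_T^0$ (e.g.\ when showing $-S(T)y_0\in\mathcal{R}_T^0$ in the proof of its maximum principle for $(NP)^{T,y_0}$). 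Two cosmetic remarks: in the final step you write $\mathcal{R}=\mathcal{C}_\varepsilon$ where what you actually use is $\mathcal{R}=\mathcal{R}_\varepsilon$ (reachability at time $\varepsilon$, not null controllability) --- harmless, since both equalities are already established at that point and a single $\varepsilon$, say $\varepsilon=T/2$, suffices; and your first sketch for $\mathcal{R}_T\subseteq\mathcal{R}_T^0$ (Gramian control on $(T/2,T)$) is indeed not airtight as stated, but you correctly discard it in favour of the zero-extension argument, so no gap remains.
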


The following lemma  concern some special properties on the functions $T^0(\cdot)$ and $T^1(\cdot)$ (defined respectively by (\ref{y0-controllable}) and
   (\ref{Ty0})).

 \begin{Lemma}\label{Lemma-8ex1-2}
  Let $(A,B)\in\mathbb R^{n\times n}\times (\mathbb R^{n \times m}\setminus\{0\})$. Let $\mathcal R$   be given by (\ref{0125-intro-attain}).
   Then the functions $T^0(\cdot)$ and $T^1(\cdot)$ (defined respectively by (\ref{y0-controllable}) and
   (\ref{Ty0}))
  have the following properties:

  \noindent (i) For each $y_0\in \mathcal R$, $T^0(y_0)=0$,
    while for each $y_0\in\mathbb R^n\setminus \mathcal R$, $T^0(y_0)=\infty$.

  \noindent (ii) For each $y_0 \in\mathbb R^n\setminus\{0\}$, $N(T^0(y_0),y_0)=\infty$.

  \noindent (iii) For each $y_0\in\mathbb R^n\setminus\{0\}$, $T^1(y_0)=\infty$.

  \noindent (iv) For each $y_0\in \mathcal R\setminus\{0\}$, $N(T^1(y_0),y_0)<\infty$.

 \end{Lemma}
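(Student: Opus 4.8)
\textbf{Proof plan for Lemma~\ref{Lemma-8ex1-2}.}
The four assertions are all finite-dimensional facts about the pair $(A,B)$, and the plan is to reduce each of them to Lemma~\ref{1217-Lemma-controllable-subspace} (which identifies $\mathcal{C}_T=\widehat{\mathcal R}=\mathcal R_T=\mathcal R_T^0$ for every $T>0$, where $\widehat{\mathcal R}=\mathcal R$ by (\ref{0125-intro-attain})) together with the already-established monotonicity and limit properties of $N(\cdot,y_0)$ from Section~3 and the relation (ii) of Proposition~\ref{Lemma-NP-yT-y0-eq} connecting $N(T,y_0)$ with $\|-S(T)y_0\|_{\mathcal R_T}$. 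First I would prove (i). If $y_0\in\mathcal R$, then by Lemma~\ref{1217-Lemma-controllable-subspace} we have $y_0\in\mathcal C_T$ for every $T>0$, i.e.\ there is $v\in L^\infty(0,T;\mathbb R^m)$ with $\hat y_1(T;y_0,v)=0$; letting $T\searrow 0$ in the definition (\ref{y0-controllable}) of $T^0$ gives $T^0(y_0)=0$. Conversely, if $y_0\in\mathbb R^n\setminus\mathcal R$, then $y_0\notin\mathcal C_T$ for any $T>0$ (again by Lemma~\ref{1217-Lemma-controllable-subspace}), so no admissible control exists at any finite time, and by the convention stated after (\ref{y0-controllable}) we get $T^0(y_0)=\infty$.

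For (iii), I would argue that $e^{At}$ is invertible for every $t$, hence $S(T)y_0=e^{AT}y_0\neq 0$ for all $T\ge 0$ whenever $y_0\neq 0$; by the definition (\ref{Ty0}) of $T^1$ this forces the defining set to be empty, so $T^1(y_0)=\infty$. Alternatively one may invoke Remark~\ref{YUYONGYUremark1.2}(iii): the semigroup $\{e^{At}\}$ has the backward uniqueness property, so $T^1(y_0)=\infty$ for all $y_0\neq 0$. Then (ii) splits into two cases according to (i). If $y_0\in\mathbb R^n\setminus\mathcal R$, then $T^0(y_0)=\infty$ and, since also $T^1(y_0)=\infty$, part (ii) of Lemma~\ref{Lemma-NT0-T0-T1} gives $N(T^0(y_0),y_0)=N(\infty,y_0)=\infty$; this is consistent because $y_0\notin\mathcal R=\mathcal C_T$ means no admissible control exists at any time, so $N(T,y_0)=\infty$ for every $T$. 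If instead $y_0\in\mathcal R\setminus\{0\}$, then $T^0(y_0)=0$ by (i), and by (iv) of Lemma~\ref{Lemma-T0-T1} (the statement $N(0,y_0)=\infty$) together with the convention $N(0,y_0)=\lim_{t\to0^+}N(t,y_0)$ in (\ref{N-infty-y0}) we conclude $N(T^0(y_0),y_0)=N(0,y_0)=\infty$.

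For (iv), let $y_0\in\mathcal R\setminus\{0\}$. By (iii) we have $T^1(y_0)=\infty$, so $N(T^1(y_0),y_0)=N(\infty,y_0)=\lim_{T\to\infty}N(T,y_0)$ by (\ref{N-infty-y0}). Fix any $T>0$; by Lemma~\ref{1217-Lemma-controllable-subspace}, $-S(T)y_0=e^{AT}(-y_0)$ still lies in $\mathcal R=\mathcal R_T$ (indeed $\mathcal R$ is $A$-invariant, or directly $y_0\in\mathcal C_T$ gives $-S(T)y_0\in\mathcal R_T$ via the variation of constants formula), hence by (ii) of Proposition~\ref{Lemma-NP-yT-y0-eq} and the finiteness of $\|\cdot\|_{\mathcal R_T}$ on $\mathcal R_T$ we get $N(T,y_0)=\|-S(T)y_0\|_{\mathcal R_T}<\infty$. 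Since $N(\cdot,y_0)$ is decreasing (Lemma~\ref{Lemma-NP-decreasing}(ii)), $N(\infty,y_0)\le N(T,y_0)<\infty$, which is exactly $N(T^1(y_0),y_0)<\infty$. The only mild subtlety — and the one place I would be careful — is the invariance claim $-S(T)y_0\in\mathcal R_T$ and the finiteness of the reachable-subspace norm on the finite-dimensional space $\mathcal R_T$; both follow cleanly from Lemma~\ref{1217-Lemma-controllable-subspace} and the fact that on a finite-dimensional space every norm is finite, so no genuine obstacle arises. This completes the plan.
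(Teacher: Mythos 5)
Your proposal is correct, and for parts (i)--(iii) it follows the paper's proof essentially verbatim: (i) is proved from Lemma~\ref{1217-Lemma-controllable-subspace} exactly as in the paper, (ii) splits into the same two cases handled by (iv) of Lemma~\ref{Lemma-T0-T1} and (ii) of Lemma~\ref{Lemma-NT0-T0-T1}, and (iii) is the backward uniqueness of $\{e^{At}\}_{t\in\mathbb R^+}$. The only deviation is in (iv): the paper disposes of it in one line, since $T^0(y_0)=0<\infty$ by (i) and then (v) of Lemma~\ref{Lemma-NT0-T0-T1} directly gives $N(T^1(y_0),y_0)<\infty$; you instead re-derive the finiteness by hand, showing $-S(T)y_0\in\mathcal R_T$ via Lemma~\ref{1217-Lemma-controllable-subspace}, invoking (ii) of Proposition~\ref{Lemma-NP-yT-y0-eq} to get $N(T,y_0)=\|-S(T)y_0\|_{\mathcal R_T}<\infty$, and concluding by monotonicity of $N(\cdot,y_0)$. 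Your route is valid (and makes the finite-dimensional mechanism explicit), but it essentially reproves in this special case what the abstract lemma already provides; the paper's appeal to Lemma~\ref{Lemma-NT0-T0-T1}(v) is shorter and keeps the argument uniform with the rest of Appendix~C.
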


 \begin{proof}
  (i) We first prove that $T^0(y_0)=0$ for each $y_0\in \mathcal R$. Arbitrarily fix $y_0\in\mathcal R$ and $\hat t \in (0,\infty)$. According to  Lemma \ref{1217-Lemma-controllable-subspace}, there is $v\in L^\infty(0,\hat t; \mathbb R^m)$ so that
  $\hat y_1(\hat t;y_0,v)=0$
  From this and the definition of $T^0(y_0)$ (see (\ref{y0-controllable})), we deduce that $T^0(y_0)\leq \hat t$. Since $\hat t$ was arbitrarily taken from $(0,\infty)$, it follows that $T^0(y_0)=0$.

  Next, we verify that $T^0(y_0)=\infty$ for each $y_0\in\mathbb R^n\setminus \mathcal R$. By contradiction, suppose that $T^0(\hat y_0)<\infty$ for some $\hat y_0\in\mathbb R^n\setminus \mathcal R$. Then from  the definition of $T^0(\hat y_0)$ (see (\ref{y0-controllable})), there would be  $\hat t^\prime\in \big(T^0(\hat y_0),\infty\big)$ and  $\hat v\in L^\infty(0,\hat t^\prime; \mathbb R^m)$ so that
  $\hat y_1(\hat t^\prime;\hat y_0, \hat v)=0$.
    This, along with the definition of $\mathcal C_{\hat t^\prime}$ (given by (\ref{1217-controllable-subspace}) with $T=\hat t^\prime$), implies that $\hat y_0 \in \mathcal C_{\hat t^\prime}$.  Then by  Lemma \ref{1217-Lemma-controllable-subspace}, we find that $\hat y_0 \in \mathcal R$, which contradicts the assumption that $\hat y_0\in\mathbb R^n\setminus \mathcal R$.  This ends the proof of the conclusion (i).

 (ii) Let $y_0\in\mathbb R^n\setminus\{0\}$. There are only two possibilities on $y_0$: either $y_0\in\mathcal R\setminus\{0\}$ or $y_0\in \mathbb R^n\setminus\mathcal R$.
 In the case that $y_0\in\mathcal R\setminus\{0\}$, we see from (i) of this lemma that $T^0(y_0)=0$. Then by (iv) of Lemma \ref{Lemma-T0-T1}, we have that
 $N(T^0(y_0),y_0)=N(0,y_0)=\infty$.
  In the case that $y_0 \in \mathbb R^n\setminus\mathcal R$, we find from (i) of this lemma that  $T^0(y_0)=\infty$. Then by (ii) of Lemma \ref{Lemma-NT0-T0-T1}, it follows that $N(T^0(y_0),y_0)=\infty$.

 (iii) Let $y_0\in \mathbb R^n\setminus\{0\}$. Since $\{e^{At}\}_{t\in\mathbb R^+}$ has the backward uniqueness property, we find from the definition of $T^1(y_0)$ (see (\ref{Ty0})) that the conclusion (iii) holds.

 (iv) Let $y_0\in\mathcal R\setminus\{0\}$. Then it follows by the conclusion (i) of this lemma that $T^0(y_0)=0$. This, along with (v) of Lemma \ref{Lemma-NT0-T0-T1}, yields that $N(T^1(y_0),y_0)<\infty$.

 \vskip 3pt
 In summary, we finish  the proof of this lemma.

 \end{proof}

 \begin{Proposition}\label{NEWYearprop8.4} For each pair $(A,B)$
 in $\mathbb R^{n\times n}\times (\mathbb R^{n \times m}\setminus\{0\})$ (with $n,m\geq 1$), the BBP decompositions (P1) and (P2) (given respectively by (\ref{0127-intro-P1}) and (\ref{0127-intro-P2})),  are the consequences of
 Theorem \ref{Proposition-NTy0-partition} and Theorem~\ref{Proposition-TMy0-partition} respectively.
 \end{Proposition}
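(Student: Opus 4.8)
The plan is to show that, for a pair of matrices $(A,B)\in\mathbb R^{n\times n}\times(\mathbb R^{n\times m}\setminus\{0\})$, the abstract BBP decompositions furnished by Theorem~\ref{Proposition-NTy0-partition} and Theorem~\ref{Proposition-TMy0-partition} collapse precisely onto (P1) and (P2). The first step is to invoke Proposition~\ref{Lemma-8ex1-1}, which guarantees that every such $(A,B)$ satisfies (H1) (with $p_0=2$) and (H2); hence all conclusions of the two main theorems are available. The second step is to feed in the special structural facts collected in Lemma~\ref{Lemma-8ex1-2}: for $(A,B)$ finite-dimensional one has $T^0(y_0)=0$ when $y_0\in\mathcal R$ and $T^0(y_0)=\infty$ when $y_0\in\mathbb R^n\setminus\mathcal R$; moreover $N(T^0(y_0),y_0)=\infty$ for all $y_0\ne0$, $T^1(y_0)=\infty$ for all $y_0\ne0$, and $N(T^1(y_0),y_0)=N(\infty,y_0)<\infty$ for $y_0\in\mathcal R\setminus\{0\}$.

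Next I would identify which of the pieces $\mathcal W_{i,j}$ and $\mathcal V_{i,j}$ are nonempty. From $N(T^0(y_0),y_0)=\infty$ and the definitions (\ref{zhangjinchu3-1}), (\ref{zhangjinchu3-2}), (\ref{Lambda-di-0-1-1}), (\ref{Lambda-di-0-1-2}), one gets $\mathcal W_1=\mathcal W_2=\emptyset$ and $\mathcal V_1=\mathcal V_2=\emptyset$, so only $\mathcal W_3$ and $\mathcal V_3$ survive. Within those, $T^1(y_0)=\infty$ forces $\mathcal W_{3,3}=\emptyset$ (by (\ref{zhangjinchu9}), since $T<\infty=T^1(y_0)$ never holds with $T\ge T^1$) and, together with $N(T^1(y_0),y_0)<\infty$ for $y_0\in\mathcal R\setminus\{0\}$ versus $=\infty$ for $y_0\notin\mathcal R$, it pins down $\mathcal V_{3,1}$: when $y_0\notin\mathcal R$ we have $T^0(y_0)=\infty$, landing in $\mathcal V_{3,3}$; when $y_0\in\mathcal R\setminus\{0\}$ we have $T^0(y_0)=0<\infty$, $N(T^1(y_0),y_0)<\infty$, so the sub-case $M\le N(T^1(y_0),y_0)$ (i.e. $M\le\lim_{T\to\infty}N(T,y_0)$) gives $\mathcal V_{3,1}$ and $M>N(T^1(y_0),y_0)$ gives $\mathcal V_{3,2}$. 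Thus $\mathcal W=\mathcal W_{3,2}\cup\mathcal W_{3,4}$ and $\mathcal V=\mathcal V_{3,1}\cup\mathcal V_{3,2}\cup\mathcal V_{3,3}$, with $\mathcal W_{3,4}=(0,\infty)\times(\mathbb R^n\setminus\mathcal R)$ (via (i) of Lemma~\ref{Lemma-8ex1-2}), $\mathcal V_{3,3}=(0,\infty)\times(\mathbb R^n\setminus\mathcal R)$, $\mathcal V_{3,2}=\{(M,y_0):y_0\in\mathcal R\setminus\{0\},\ M>\lim_{T\to\infty}N(T,y_0)\}=\mathcal D_{bbp}$ (comparing with (\ref{wangyu1.8})), and $\mathcal W_{3,2}=\{(T,y_0):y_0\in\mathcal R\setminus\{0\},\ 0<T<\infty\}=\mathcal X_{2,1}$.

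With the pieces identified, the conclusion is read off directly: for (P2), $\mathcal W_{3,2}=\mathcal X_{2,1}$ and by (iii) of Theorem~\ref{Proposition-NTy0-partition} (with (H1), (H2)) $(NP)^{T,y_0}$ has the bang-bang property there, while $\mathcal W_{3,4}=\mathcal X_{2,2}$ and by (iv) of Theorem~\ref{Proposition-NTy0-partition} the problem has no admissible control there, which is exactly (P2). For (P1), on $\mathcal V_{3,2}=\mathcal D_{bbp}$, (ii) of Theorem~\ref{Proposition-TMy0-partition} gives the bang-bang property, and on $\mathcal V_{3,1}\cup\mathcal V_{3,3}=\mathcal V\setminus\mathcal D_{bbp}$, (v) of Theorem~\ref{Proposition-TMy0-partition} gives non-existence of admissible controls; this matches (P1) once one checks $\mathcal D_{bbp}^{\mathrm{here}}$ agrees with (\ref{wangyu1.8}), using $\mathcal R\setminus\{0\}\ne\emptyset$ and $\mathcal D_{bbp}\ne\emptyset$ from Appendix~A. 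I expect the main obstacle to be purely bookkeeping: carefully checking that the abstract membership conditions defining each $\mathcal W_{i,j}$ and $\mathcal V_{i,j}$ reduce, under the substitutions $T^0(y_0)\in\{0,\infty\}$, $T^1(y_0)=\infty$, $N(T^0(y_0),y_0)=\infty$, to exactly the sets appearing in (P1) and (P2) — in particular verifying that $N(T^1(y_0),y_0)=\lim_{T\to\infty}N(T,y_0)$ (which is (vi) of Lemma~\ref{Lemma-T0-T1}) so that the threshold $M>N(T^1(y_0),y_0)$ coincides with $M>\lim_{T\to\infty}N(T,y_0)$ in $\mathcal D_{bbp}$. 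No new analytic input is needed beyond Proposition~\ref{Lemma-8ex1-1}, Lemma~\ref{Lemma-8ex1-2}, and the two main theorems.
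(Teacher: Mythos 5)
Your proposal is correct and follows essentially the same route as the paper's own proof: it invokes Proposition~\ref{Lemma-8ex1-1} to secure (H1)--(H2), uses Lemma~\ref{Lemma-8ex1-2} together with (vi) of Lemma~\ref{Lemma-T0-T1} to reduce the decompositions to $\mathcal W=\mathcal W_{3,2}\cup\mathcal W_{3,4}$ and $\mathcal V=\mathcal V_{3,1}\cup\mathcal V_{3,2}\cup\mathcal V_{3,3}$ with $\mathcal W_{3,2}=(0,\infty)\times(\mathcal R\setminus\{0\})$, $\mathcal W_{3,4}=(0,\infty)\times(\mathbb R^n\setminus\mathcal R)$, $\mathcal V_{3,2}=\mathcal D_{bbp}$, $\mathcal V_{3,1}\cup\mathcal V_{3,3}=\mathcal X_1\setminus\mathcal D_{bbp}$, and then reads off (P1) and (P2) from items (iii)--(iv) of Theorem~\ref{Proposition-NTy0-partition} and (ii), (v) of Theorem~\ref{Proposition-TMy0-partition}, exactly as the paper does.
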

 \begin{proof} Arbitrarily fix a pair $(A,B)$
 in $\mathbb R^{n\times n}\times (\mathbb R^{n \times m}\setminus\{0\})$. By
 Proposition~\ref{Lemma-8ex1-1}, $(A,B)$ satisfies (H1) and (H2). Then all conclusions in Theorem \ref{Proposition-NTy0-partition} and Theorem~\ref{Proposition-TMy0-partition} hold.
 By (i)-(iii) of Lemma~\ref {Lemma-8ex1-2}, (vi) of
 Lemma~\ref{Lemma-T0-T1}, the first conclusion in Theorem \ref{Proposition-NTy0-partition} and the first conclusion in  Theorem~\ref{Proposition-TMy0-partition}, we can easily check that
 $$
 \mathcal{W}=\mathcal{W}_{3,2}\cup \mathcal{W}_{3,4},\; \mathcal{V}=\mathcal{V}_{3,1}\cup\mathcal{V}_{3,2}\cup\mathcal{V}_{3,3};
 $$
$$
\mathcal{W}_{3,2}=\mathcal{R}\setminus\{0\},\; \mathcal{W}_{3,4}=\mathbb{R}^n\setminus\mathcal{R},\; \mathcal{V}_{3,2}=
\mathcal D_{bbp},\; \mathcal{V}_{3,1}\cup\mathcal{V}_{3,3}=\mathcal X_1\setminus \mathcal D_{bbp}.
$$
(Here, $\mathcal{R}$, $\mathcal D_{bbp}$ and $\mathcal X_1$ are respectively given by (\ref{0125-intro-attain}),
(\ref{wangyu1.8})
and (\ref{NEWYEAR1.6})).  These, along with the conclusions (iii) and (iv) in Theorem \ref{Proposition-NTy0-partition} and the conclusions (ii) and (v) in Theorem~\ref{Proposition-TMy0-partition}, yields that
the BBP decompositions (P1) and (P2) holds for the pair $(A,B)$.
This ends the proof.

\end{proof}

\subsection{Appendix D}

In Appendix D, we provide the proofs of Proposition~\ref{huangwanghenproposition2.1} and Lemma~\ref{ad-control-ob}, respectively.

 \begin{proof}[Proof of Proposition~\ref{huangwanghenproposition2.1}]
      Arbitrarily fix  $T\in(0,\infty)$,  $v\in L^\infty(0,T;U)$ and $z\in D(A^*)$.
      Since $X_{-1}$
  is the dual of $D(A^*)$ with respect to the pivot space $X$, we have that
   \begin{equation}\label{waidarenwu2.4}
\big \langle \int_0^T S_{-1}(T-t)Bv(t) \,\mathrm dt, z \big\rangle _X
=\big \langle \int_0^T S_{-1}(T-t)Bv(t) \,\mathrm dt, z \big\rangle _{X_{-1}, D(A^*)}.
\end{equation}
Because
$S_{-1}(T-\cdot)Bv(\cdot)  \in   L^1(0,T;X_{-1})$,
we have  that
\begin{equation}\label{waidarenwu2.5}
\big \langle \int_0^T S_{-1}(T-t)Bv(t)\, \mathrm dt, z \big\rangle _{X_{-1}, D(A^*)}
=\int _0^T\langle S_{-1}(T-t)Bv(t), z \rangle _{X_{-1}, D(A^*)}  \,\mathrm dt.
\end{equation}
We next claim that
\begin{equation}\label{whynot2.5}
(S_{-1})^*(T-t)z=S^*(T-t)z\;\;\mbox{in}\;\;D(A^*),\;\;\mbox{for all}\;\;t\in[0,T].
\end{equation}
Indeed,  since  $\{S_{-1}(t)\}_{t\in \mathbb{R}^+}$ is  the extension of $\{S(t)\}_{t\in \mathbb{R}^+}$ on $X_{-1}$, and because $X_{-1}$ is the dual of $D(A^*)$ with respect to the pivot space $X$ ,
we find that for each $s\geq 0$ and $w\in X$,
\begin{eqnarray*}
 \langle S^*_{-1}(s)z,w \rangle_{D(A^*),X_{-1}}
 &=&  \langle z,S_{-1}(s) w \rangle_{D(A^*),X_{-1}} =\langle z, S(s)w \rangle_{D(A^*),X_{-1}}
 \nonumber\\
 &=& \langle z, S(s) w \rangle_{X} = \langle S^*(s) z,w \rangle_{X}
  = \langle S^*(s)z,w \rangle_{D(A^*),X_{-1}}.
\end{eqnarray*}
Since $X$ is dense in $X_{-1}$, the above implies that for all $s\geq 0$ and $\hat w\in X_{-1}$,
\begin{eqnarray*}
 \langle S^*_{-1}(s)z, \hat w \rangle_{D(A^*),X_{-1}}=\langle S^*(s)z, \hat w \rangle_{D(A^*),X_{-1}}.
\end{eqnarray*}
This leads to (\ref{whynot2.5}).
From (\ref{whynot2.5}),  we find that
\begin{equation}\label{waidarenwu2.6}
\int _0^T\langle S_{-1}(T-t)Bv(t), z \rangle _{X_{-1}, D(A^*)} \,\mathrm dt
=
\int _0^T\langle v(t), B^*S^*(T-t)z \rangle _{X_{-1}, D(A^*)} \,\mathrm dt.
\end{equation}

Now, (\ref{NNNWWW2.2}) follows from (\ref{waidarenwu2.4}),
(\ref{waidarenwu2.5}) and (\ref{waidarenwu2.6}) immediately.
 This ends the proof of Proposition~\ref{huangwanghenproposition2.1}.
  \end{proof}

 \begin{proof}[Proof of Lemma~\ref{ad-control-ob}]
  Arbitrarily fix $0<T<\infty$ and $z\in D(A^*)$. Then it follows from (\ref{NNNWWW2.2}) that
  \begin{eqnarray*}
  \|B^* S^*(T-\cdot)z\|_{L^2(0,T;U)}
  &=&  \sup_{\|u\|_{L^2(0,T;U)\leq 1}} \Big\langle z , \int_0^T S_{-1}(T-\cdot)Bu(t) \,\mathrm dt  \Big\rangle_X
  \nonumber\\
  &\leq& \sup_{\|u\|_{L^2(0,T;U)\leq 1}} \|z\|_X  \big\| \int_0^T S_{-1}(T-\cdot)Bu(t) \,\mathrm dt \big\|_X,
  \end{eqnarray*}
  which, along with (\ref{admissible-control}), leads to (\ref{admissible-observable}). This ends the proof of  Lemma~\ref{ad-control-ob}.

 \end{proof}

\subsection{Appendix E}

In Appendix E, we give the proof of Lemma~\ref{Lemma-t-u-converge}.

\begin{proof}[Proof of Lemma~\ref{Lemma-t-u-converge}]
  Suppose that  (\ref{wanggengsheng3.1}) holds for some $\{T_n\}_{n=1}^\infty$, $\widehat T$ in $[0,\infty)$, some $\{u_n\}_{n=1}^\infty$ and $\hat u$ in $L^2(\mathbb R^+;U)$.
 Arbitrarily fix a $y_0\in X$.
  We will prove  (\ref{time-control-converge}) by  two steps as follows.

 \noindent \textit{\noindent Step 1. To show that there is a positive constant $C$ so that
 \begin{eqnarray}\label{WHHZ3.2}
  \|y(T_n;y_0,u_n)\|_X \leq C \;\;\mbox{for all}\;\; n
 \end{eqnarray}}
 We first claim that there is a positive constant $C_1$ so that for each $s\in (0,\widehat T+1)$
 and each $u_s\in L^2(0,s;U)$,
  \begin{eqnarray}\label{t-u-1}
  \big\|\int_0^{s} S_{-1}(s-\tau)B u_s(\tau) \,\mathrm  d\tau \big\|_X \leq C_1 \|u_s\|_{L^2(0, s;U)}.
 \end{eqnarray}
  To this end, we arbitrarily fix  $s\in (0,\widehat T+1)$ and  $u_s\in L^2(0,s;U)$. Let
 \begin{eqnarray*}
  v_{u_s,s}(t)=\left\{\begin{array}{ll}
              0, \;\; &t\in (0,\widehat T+1-s],\\
              u_s(t+s-\widehat T-1), \;\; &t\in(\widehat T+1-s,\widehat T+1).
             \end{array}
      \right.
 \end{eqnarray*}
  Then, we have that
  $\|v_{u_s,s}\|_{L^2(0, \widehat T+1;U)} = \|u_s\|_{L^2(0, s;U)}$ and
  \begin{eqnarray*}
  \int_0^{\widehat T+1} S_{-1}(\widehat T+1-\tau) B v_{u_s,s}(\tau) \,\mathrm d\tau
  = \int_0^{s} S_{-1}(s-\tau)B u_s(\tau) \,\mathrm  d\tau.
 \end{eqnarray*}
  These, along with (\ref{admissible-control}), yield that
 \begin{eqnarray*}
  \big\|\displaystyle\int_0^{s} S_{-1}(s-\tau)B u_s(\tau) \,\mathrm  d\tau \big\|_X
  &=&   \big\|\displaystyle\int_0^{\widehat T+1} S_{-1}(\widehat T+1-\tau)B v_{u_s,s}(\tau) \,\mathrm  d\tau \big\|_X
  \nonumber\\
  &\leq&   C_1  \|v_{u_s,s}\|_{L^2(0, \widehat T+1;U)}
  =    C_1   \|u_s\|_{L^2(0, s;U)},
 \end{eqnarray*}
 where $C_1\triangleq C_1(\widehat T+1)$ is given by (\ref{admissible-control}). Hence,  (\ref{t-u-1}) is true.

Next, it follows from (\ref{Changshubianyi1.6}) that
 \begin{eqnarray}\label{wanggengsheng3.5}
  y(T_n;y_0,u_n)=S(T_n)y_0 + \int_0^{T_n}  S_{-1}(T_n-t)Bu_n(t) \,\mathrm dt
  \;\;\mbox{for all}\;\;  n\in\mathbb N^+.
 \end{eqnarray}
 Because of the first convergence in  (\ref{wanggengsheng3.1}), we  can assume, without loss of generality, that
   $T_{n}\leq \widehat T+1$ for all $n$. This, along with (\ref{t-u-1}) and (\ref{wanggengsheng3.5}), yields that
  \begin{eqnarray}\label{t-u-2}
  \|y(T_n;y_0,u_n)\|_X \leq \sup_{0\leq t\leq \widehat T+1} \|S(t)\|_{\mathcal L(X,X)}\|y_0\|_X + C_1\|u_n\|_{L^2(0,T_n;U)}\;\;\mbox{for all}\;\;n.
 \end{eqnarray}
 Meanwhile, it follows from the second convergence in  (\ref{wanggengsheng3.1})
 that there is a  $\widehat C>0$ so that
  $\|u_n\|_{L^2(\mathbb R^+;U)} \leq \widehat C$ for all $n$,
  which, along with (\ref{t-u-2}), implies (\ref{WHHZ3.2}).

 \noindent \textit{\noindent Step 2. To show (\ref{time-control-converge})}

Arbitrarily fix a $z\in D(A^*)$. Define two functions  $\psi_{n}^z(\cdot)$   and $\widehat\psi^z(\cdot)$  over $(-1,\widehat T+1)$  in the following manners:
\begin{eqnarray*}
  \psi_{n}^z(t)\triangleq 0\;\;\mbox{for all}\;\; t\in (T_n, \widehat T+1)
  \;\;\mbox{and}\;\; \psi_{n}^z(t)\triangleq B^*S^*(T_{n}-t)z\;\;\mbox{for all}\;\;
   t\in (-1,  T_n];
              \end{eqnarray*}
\begin{eqnarray*}
  \widehat\psi^z(t)\triangleq 0\;\;\mbox{for all}\;\; t\in (\widehat T, \widehat T+1)
  \;\;\mbox{and}\;\; \widehat\psi^z(t)\triangleq B^*S^*(\widehat T-t)z\;\;\mbox{for all}\;\; t\in (-1, \widehat T].
              \end{eqnarray*}
We  claim that for a.e. $t\in (-1,\widehat T+1)$,
\begin{equation}\label{wanghuang3.20-0}
  \lim_{n\rightarrow\infty}\psi_{n}^z(t)= \widehat\psi^z(t)\;\; \mbox{in}\;\;U.
 \end{equation}
In fact, by the first convergence in  (\ref{wanggengsheng3.1}), we see that
for each $t\in (\widehat T,\widehat T+1)$, there is $N_1(t)\geq 1$ so that
$t\in(T_n,\widehat T+1)$ for all $n\geq N_1(t)$.
 Thus,  we see that for each $t\in (\widehat T,\widehat T+1)$,
\begin{eqnarray}\label{a.e.-latter}
  \psi_n^z(t)-\widehat\psi^z(t)=0 \;\;\mbox{for all}\;\; n\geq N_1(t).
 \end{eqnarray}
Meanwhile, given $t\in (-1,\widehat T)$, there is  $N_2(t)\geq 1$ so that
$t\in(-1,T_n)$ for all $ n\geq N_2(t)$.
 This yields that for each $n\geq N_2(t)$,
\begin{eqnarray}\label{wanggengsheng3.9}
  \|\psi_n^z(t)-\widehat\psi^z(t)\|_U
     &\leq& \|B^*\|_{\mathcal L(D(A^*),U)}  \Big(\|S^*(T_n-t)z-S^*(\widehat T-t)z\|_X
  \nonumber\\
  & & +\|S^*(T_n-t)A^*z-S^*(\widehat T-t)A^*z\|_X\Big).
 \end{eqnarray}
(Here, we used that $B^*\in \mathcal L(D(A^*),U)$.)
Since $\{S^*(t)\}_{t\in\mathbb R^+}$ is a $C_0$-semigroup in $X$, it follows from
(\ref{wanggengsheng3.9}) that for each $t\in (-1,\widehat T)$,
$\psi_n^z(t)\rightarrow \widehat\psi^z(t)$ in
$U$, as $n\rightarrow
\infty$.
This, along with (\ref{a.e.-latter}), leads to
 (\ref{wanghuang3.20-0}).

 Next,  since $B^*\in \mathcal L(D(A^*),U)$ and $0\leq T_n\leq \widehat T+1$, $n\in\mathbb N^+$, one can easily check that for all $n\in\mathbb N^+$ and $t\in(-1,\widehat T+1)$,
 \begin{eqnarray}\label{limit-0916-3-0}
  \|\psi_{n}^z(t)\|_U
        \leq \|B^*\|_{\mathcal L(D(A^*),U)} \max_{0\leq s\leq \widehat T+2}
    \|S^*(s)\|_{\mathcal L(X,X)}\|z\|_{D(A^*)}.
 \end{eqnarray}
  By (\ref{wanghuang3.20-0}) and   (\ref{limit-0916-3-0}), we can use  the Lebesgue dominated convergence theorem to get that
 $\psi_{n}^z \rightarrow \widehat\psi^z$  in $L^2(-1,\widehat T+1;U)$, as $n\rightarrow\infty$.
This, along with
  (\ref{NNNWWW2.1}), yields  that for each $z\in D(A^*)$,
  \begin{eqnarray}\label{DA-weak-1}
 \langle  y(T_{n};y_0,u_{n}),z\rangle_X
  \rightarrow
  \langle  y(\widehat T;y_0, \hat u),z\rangle_X,  \;\;\mbox{as}\;\;  n\rightarrow\infty.
 \end{eqnarray}
Since $D(A^*)$ is dense in $X$, (\ref{time-control-converge})
follows from (\ref{DA-weak-1}) at once.
 This ends the proof of   Lemma~\ref{Lemma-t-u-converge}.
 \end{proof}

\subsection{Appendix F}

In Appendix F, we provide the proof of Proposition~\ref{newhuangproposition2.15}.

\begin{proof}[Proof of  Proposition~\ref{newhuangproposition2.15} ]
We divide the proof into the following several steps.

\vskip 5pt
\noindent\textit{Step 1. To show that (i)$\Rightarrow$(ii)}

 Suppose that (i) holds. Let $T\in(0,\infty)$ and let $C_1(T)$ be given by  (\ref{ob-control-eq}). Arbitrarily fix  $y_0\in X$. Define
 a map
 $ \mathcal {F}_{T,y_0}:\, X_T\rightarrow \mathbb R $ (where
 $X_T$ is given by  (\ref{assumptionspace3})) in the following manner:
\begin{eqnarray}\label{fangyan1}
\mathcal {F}_{T,y_0} \big(B^*S^*(T-\cdot)z|_{(0,T)}\big)
=  \langle y_0,S^*(T)z\rangle_X
 \;\;\mbox{for each}\;\;   z\in D(A^*).
\end{eqnarray}
We first claim that $\mathcal {F}_{T,y_0}$ is well defined. In fact, if
$$
z_1, z_2\in D(A^*)\;\;\mbox{s.t.}\;\; B^*S^*(T-\cdot)z_1=B^*S^*(T-\cdot)z_2
\;\;\mbox{over}\;\; (0,T),
$$
then by (\ref{ob-control-eq}), it follows that
$S^*(T)z_1=S^*(T)z_2$ in $ X$.
Hence, $\mathcal {F}_{T,y_0}$ is well defined.
Besides, one can easily check that $\mathcal {F}_{T,y_0}$ is linear.
By (\ref{ob-control-eq}), we can also find that
\begin{eqnarray*}
 \big| \mathcal F_{T,y_0} \big(B^*S^*(T-\cdot)z|_{(0,T)}\big) \big|
 \leq C_1(T)\|y_0\|_X \|B^*S^*(T-\cdot)z\|_{L^1(0,T;U)}\;\;\mbox{for all}\;\;z\in D(A^*).
\end{eqnarray*}
From this, we see that
\begin{eqnarray}\label{fangyan2}
 \|\mathcal F_{T,y_0}\|_{\mathcal L(X_T,\mathbb R)} \leq C_1(T)\|y_0\|_X.
\end{eqnarray}
 Since $X_T$ is a subspace of $L^1(0,T;U)$ (see (\ref{ob-space})), we can apply  the Hahn-Banach theorem
to find a functional  $\widetilde{\mathcal F}_{T,y_0}\in (L^1(0,T;U))^*$ so that
\begin{eqnarray*}
 \|\mathcal F_{T,y_0}\|_{\mathcal L(X_T,\mathbb R)}
 = \|\widetilde{\mathcal F}_{T,y_0}\|_{(L^1(0,T;U))^*}
 \;\;\mbox{and}\;\;
 \mathcal F_{T,y_0}(g)=\widetilde{\mathcal F}_{T,y_0} (g)\;\;\mbox{for all}\;\; g\in X_T.
\end{eqnarray*}
From these, we can apply  the Riesz representation theorem to find a function $v\in L^\infty(0,T;U)$ so that
\begin{equation}\label{fangyan3}
\|\mathcal F_{T,y_0}\|_{\mathcal L(X_T,\mathbb R)}
 = \|v\|_{L^\infty(0,T;U)}
\end{equation}
and so that
\begin{eqnarray}\label{fangyan4}
 \mathcal F_{T,y_0} (g)=\int_0^T \langle g(t),v(t) \rangle_U \,\mathrm dt
 \;\;\mbox{for all}\;\;  g\in X_T.
\end{eqnarray}

From (\ref{fangyan1}), (\ref{fangyan4}), (\ref{assumptionspace3}) and (\ref{NNNWWW2.2}) in Proposition~\ref{huangwanghenproposition2.1}, we see that for each $z\in D(A^*)$,
\begin{eqnarray*}
 \langle S(T)y_0,z\rangle_X
 &=&  \mathcal F_{T,y_0} \big(B^*S^*(T-\cdot)z|_{(0,T)}\big)
 = \int_0^T \langle v(t), B^*S^*(T-t) z \big\rangle_U \,\mathrm dt
 \nonumber\\
 &=&\big \langle \int_0^T S_{-1}(T-t)B v(t)\,\mathrm dt, z \big\rangle_X.
\end{eqnarray*}
This, along with (\ref{Changshubianyi1.6}), indicates that
$\langle \hat y(T;y_0,-v),z \rangle_X=0$ for all $z\in D(A^*)$.
 Since $D(A^*)$ is dense in $X$, the above leads to that
$\hat y(T;y_0,-v)=0$.
Meanwhile, it follows from (\ref{fangyan3}) and (\ref{fangyan2}) that
$\|v\|_{L^\infty(0,T;U)}   \leq   C_1(T) \|y_0\|_X$.
From these,
(\ref{fangyan2.90}) (with $C_2(T)=C_1(T)$) follows at once.

\vskip 5pt
\noindent\textit{Step 2. To prove that (ii)$\Rightarrow$(i)}

Suppose that (ii) holds. Let  $T\in(0,\infty)$ and let $C_2(T)$ be given by (ii). Arbitrarily fix  $y_0\in X$. By  (ii),  there is  $v \in L^\infty(0,T;U)$ so that
\begin{eqnarray}\label{fangyan6}
 \hat y(T;y_0,v)=0\;\;\mbox{and}\;\;\|v\|_{L^\infty(0,T;U)}\leq C_2(T)\|y_0\|_X.
\end{eqnarray}
By the first equality in (\ref{fangyan6}) and
(\ref{NNNWWW2.1}), we find that
\begin{eqnarray*}
 \langle y_0, S^*(T)z\rangle_X =-\int_0^T\langle v(t), B^*S^*(T-t)z\rangle_U  \,\mathrm dt
 \;\;\mbox{for all}\;\; z\in D(A^*).
\end{eqnarray*}
This, along with the second inequality in (\ref{fangyan6}), yields that
\begin{eqnarray*}
 \langle y_0, S^*(T)z\rangle_X
 \leq
 C_2(T)  \|y_0\|_X   \|B^*S^*(T-\cdot)z\|_{L^1(0,T;U)}
 \;\;\mbox{for all}\;\; z\in D(A^*).
\end{eqnarray*}
Since $y_0$ was arbitrarily taken from $X$, the above implies that
\begin{eqnarray*}
 \|S^*(T)z\|_X   =   \sup_{y_0 \in X\setminus\{0\}} \frac{\langle y_0,S^*(T)z \rangle_X} {\|y_0\|_X}
               \leq  C_2(T)  \|B^*S^*(T-\cdot)z\|_{L^1(0,T;U)} \;\;\mbox{for all}\;\; z\in D(A^*),
\end{eqnarray*}
which leads to  (\ref{ob-control-eq}) with  $C_1(T)=C_2(T)$.

\vskip 5pt
\textit{Step 3. To show that (ii)$\Leftrightarrow$(iii)}

It is clear that (ii)$\Rightarrow$(iii). We now show the reverse. Suppose that (iii) holds. Let $T \in (0,\infty)$. Define a linear operator
 $\mathcal G_T:\, L^\infty(0,T;U)\rightarrow X$ by setting
\begin{eqnarray}\label{fangyan8}
 \mathcal G_T(v)=\int_0^T S_{-1}(T-t)Bv(t) \,\mathrm dt
 \;\;\mbox{for each}\;\;     v\in L^\infty(0,T;U).
\end{eqnarray}
Then it follows from (\ref{admissible-control}) that $\mathcal G_T$ is  bounded. By (iii), we know that for each $y_0\in X$, there is $v\in L^\infty(0,T;U)$ so that $\hat y(T;y_0,v)=0$. This, along with (\ref{Changshubianyi1.6}), yields that
\begin{eqnarray}\label{fangyan9}
 0= S(T)y_0   +   \int_0^T S_{-1}(T-t)Bv(t) \,\mathrm dt.
\end{eqnarray}
From (\ref{fangyan8}) and (\ref{fangyan9}), we see that
\begin{eqnarray}\label{fangyan10}
 \mbox{Range\,} S(T) \subset  \mbox{Range\,} \mathcal G_T.
\end{eqnarray}
Write $Q_T$ for the quotient space of $L^\infty(0,T;U)$ with respect to $\mbox{Ker\,}\mathcal G_T$, i.e.,
$$
Q_T\triangleq L^\infty(0,T;U)/\mbox{Ker\,}\mathcal G_T.
$$
 Let   $\pi_T:~L^\infty(0,T;U)\rightarrow  Q_T$ be the  quotient map. Then $\pi_T$ is surjective and it holds that
  \begin{equation}\label{fangyan11}
  \|\pi_T(v)\|_{Q_T}=\inf\big\{\|w\|_{L^\infty(0,T;U)}\; :\; \,w\in v+\mbox{Ker\,}\mathcal G_T\big\}\;\;\mbox{for each}\;\; v\in L^\infty(0,T;U).
  \end{equation}
           Define a map $ \hat{\mathcal G}_T\,:$ $Q_T\rightarrow X$ in the following manner:
  \begin{eqnarray}\label{fangyan12}
  \hat{\mathcal G}_T(\pi_T(v))=\mathcal G_T(v)
  \;\;\mbox{for each}\;\;  \pi_T(v)\in Q_T.
  \end{eqnarray}
 One can easily check that $\hat{\mathcal G}_T$ is  linear and bounded. By (\ref{fangyan12}) and (\ref{fangyan10}),   we see that $\hat{\mathcal G}_T$ is injective and that
 $$
 \mbox{Range\,}S(T) \subset \mbox{Range\,}\hat{\mathcal G}_T.
  $$
  From these, we find that  given $y_0\in X$, there is a unique $\pi_T(v_{y_0})\in Q_T$ so that
 \begin{equation}\label{fangyan13}
 S(T)y_0=\hat{\mathcal G}_T \big(\pi_T(v_{y_0})\big).
 \end{equation}

 We next define another map $\mathcal T_T  \,:~  X \rightarrow Q_T$ by
  \begin{eqnarray}\label{fangyan14}
     \mathcal T_T(y_0) = \pi_T(v_{y_0})   \;\;\mbox{for each}\;\;   y_0\in X.
  \end{eqnarray}
 One can easily check that  $\mathcal T_T$ is well defined and linear. We will use the closed graph theorem to show that $\mathcal T_T$ is bounded. For this purpose, we let
  $\{y_n\}\subset X$ satisfy  that
   \begin{eqnarray}\label{fangyan15}
   y_n\rightarrow \hat y  \;\;\mbox{in}\;\; X
   \;\;\mbox{and}\;\; \mathcal T_T(y_n)\rightarrow  \hat h  \;\;\mbox{in}\;\;Q_T,
      \;\;\mbox{as}\;\; n\rightarrow\infty.
    \end{eqnarray}
    Because $\hat{\mathcal G}_T$ and $S(T)$  are linear and  bounded,  it follows from (\ref{fangyan15}), (\ref{fangyan14}) and (\ref{fangyan13}) that
       \begin{eqnarray}\label{fangyan16}
   \hat{\mathcal G}_T (\hat h)  =
   \lim_{n\rightarrow \infty}   \hat{\mathcal G}_T \big(\mathcal T_T(y_n)\big)
   =\lim_{n\rightarrow \infty}    \hat{\mathcal G}_T \big(\pi_T(v_{y_n})\big)
   =\lim_{n\rightarrow \infty}   S(T)y_n
   =S(T) \hat y.
  \end{eqnarray}
  Meanwhile, by (\ref{fangyan13}) and (\ref{fangyan14}), we find that
 $ S(T)\hat y  =  \hat{\mathcal G}_T  \big(\pi_T(v_{\hat y})\big)
  =\hat{\mathcal G}_T \big(\mathcal T_T(\hat y)\big)$.
    This, together with (\ref{fangyan16}), yields that
  $\hat{\mathcal G}_T (\hat h)  = \hat{\mathcal G}_T  \big(\mathcal T_T(\hat y)\big)$,
  which, together with the injectivity of  $\hat{\mathcal G}_T $, indicates  that
  $\hat h =\mathcal T_T(\hat y)$.
    So the graph of $\mathcal T_T$ is closed. Now we can apply
     the closed graph theorem to see that
          $\mathcal T_T$ is bounded. Hence, there is a  constant $C(T)>0$ so that
          $\|\mathcal T_T(y_0)\|_{Q_T}\leq C(T)\|y_0\|_X$ for all $y_0\in X$.
                    This, along with (\ref{fangyan14}), indicates that
   \begin{eqnarray}\label{fangyan18}
    \|\pi_T(v_{y_0})\|_{Q_T}\leq C(T)\|y_0\|_X
    \;\;\mbox{for each}\;\;    y_0\in X.
   \end{eqnarray}
  Meanwhile,  by (\ref{fangyan11}), we see that
       for each $y_0\in X$, there is   $v_{y_0}^\prime$ so that
  \begin{eqnarray}\label{fangyan19}
   v_{y_0}^\prime\in v_{y_0}+\mbox{Ker\,}\mathcal G_T\;\;\mbox{and}\;\;
   \|v_{y_0}^\prime\|_{L^\infty(0,T;U)}\leq 2 \|\pi_T(v_{y_0})\|_{Q_T}.
  \end{eqnarray}

  From (\ref{fangyan13}), (\ref{fangyan12}), (\ref{fangyan19}) and
   (\ref{fangyan18}) , we find that for each $y_0\in X$,
       there is a control $v_{y_0}^\prime\in L^\infty(0,T;U)$
   so that
   \begin{eqnarray}\label{fangyan20}
   S(T)y_0={\mathcal G}_T(v_{y_0}^\prime)\;\;\mbox{and}\;\;\|v_{y_0}^\prime\|_{L^\infty(0,T;U)}\leq 2C(T)\|y_0\|_X.
  \end{eqnarray}
   Then by (\ref{Changshubianyi1.6}), (\ref{fangyan8}) and (\ref{fangyan20}), we see that
   for each $y_0\in X$,
       there is a control $v_{y_0}^\prime\in L^\infty(0,T;U)$
   so that
   $$
   \hat y(T;y_0,-v_{y_0}^\prime)=0\;\;\mbox{and}\;\; \|v_{y_0}^\prime\|_{L^\infty(0,T;U)}\leq 2C(T)\|y_0\|_X.
   $$
   These lead to (\ref{fangyan2.90}) with $C_2(T)=2C(T)$.

  \vskip 5pt

  \noindent \textit{Step 4. About the constants  $C_1(T)$ and $C_2(T)$}

  From  the proofs in Step 1-Step 3, we find that the  constants $C_1(T)$ in (\ref{ob-control-eq}) and $C_2(T)$ in (\ref{fangyan2.90}) can be
taken as the same number, provided that one of the conclusions (i)-(iii) holds.

  \vskip 5pt

  In summary, we end the proof of this proposition.

\end{proof}

\subsection {Appendix G}

In Appendix G, we provide the proof of Lemma \ref{lemma-key-inequality}.

\begin{proof}[Proof of Lemma \ref{lemma-key-inequality}]
Recall $\mathcal P$ is given by  (\ref{space-2-1}), where $\Lambda\triangleq
\{\lambda_j\}_{j=1}^\infty\subset \mathbb{R}^+$ satisfies
  (\ref{intro-seq}). Arbitrarily fix $\theta_0\in (0, \frac{\pi}{2})$. By \cite[Proposition 4.5]{AK-1}, there is a sequence
       $\{r_n\}_{n=1}^\infty\subset\mathbb (0,\infty)$ so   that
 \begin{equation}\label{yubiaozhang8.1}
    r_n\nearrow\infty
  \;\;\mbox{and}\;\;  \lim_{n\rightarrow\infty} r_n^{-1}\log|W(r_ne^{i\theta})|=0 \mbox{ uniformly in } |\theta|\leq \theta_0,
 \end{equation}
 where $W(\lambda)$ is given by
 \begin{eqnarray}\label{appendix-2}
  \left\{\begin{array}{l}
          W(\lambda) = \prod_{k\geq 1} \delta_k  \frac{1-\lambda/\lambda_k}{1+\lambda/\overline{\lambda}_k},
          ~\lambda\in\mathbb C^+,\\
          \mbox{with}\;\;\delta_k = \frac{\lambda_k}{\overline{\lambda}_k}  \frac{|\lambda_k-1|}{|\lambda_k+1|}
          \frac{\overline{\lambda}_k+1}{\overline{\lambda}_k-1} \;\;\mbox{if}\;\;\lambda_k\neq1; \;\;\delta_k=1 \;\;\mbox{if}\;\;\lambda_k=1.
         \end{array}
  \right.
 \end{eqnarray}
 (Notice that in \cite{AK-1}, $\lambda_j$ was a complex number, while in the current case, we take it as a real number. So $\lambda_j=\overline{\lambda}_j$ in the current case. To avoid the inconformity, we still use the notation
 $\overline{\lambda}_j$.)
  Since $W(\lambda_k)=0$ for each $k\geq 1$, and because of (\ref{intro-seq})
  and (\ref{yubiaozhang8.1}), we can select a subsequence from $\{r_n\}_{n=1}^\infty$ (denoted in the same manner,) having two properties as follows: First,
  $\{\lambda_j\}_{j=1}^\infty\bigcap \{r_n\}_{n=1}^\infty=\emptyset$. Second,  for each $n\in \mathbb{N}^+$, the set
  \begin{equation*}\label{appendix-3}
    G_n\triangleq \{z=re^{i\theta} ~:~r_n<|z|<r_{n+1},\,|\theta|<\theta_0\}
  \end{equation*}
  contains at least an element of $\Lambda\triangleq\{\lambda_j\}_{j=1}^\infty$.  The sequence $\{G_n\}_{n\geq 1}$ and the function  $W(\cdot)$, as well as their properties,  will be used later.

  Let $J$ be a function defined by
    \begin{eqnarray}\label{1218-def-J}
   J(\lambda)= \frac{W(\lambda)}{(1+\lambda)^2}, ~ \lambda\in \mathbb C^+.
  \end{eqnarray}
  For each $j\geq 1$, define a function $J_j$ by
  \begin{eqnarray}\label{1218-def-J-j}
   J_j(\lambda)  =  \frac{J(\lambda)}{J^\prime(\lambda_j)(\lambda-\lambda_j)},~\lambda\in\mathbb C^+.
  \end{eqnarray}
   According to \cite[Theorem 4.1]{AK-1} (see also the proof of \cite[Theorem 4.1]{AK-1}), there exists a biorthogonal family $\{q_j\}_{j\geq 1}$    to   $\{e^{-\lambda_j t}\}$   in $L^2(\mathbb R^+;\mathbb C)$ so that the Laplace transform of $\bar q_j$ is    $J_j$  for each $j\in \mathbb{N}^+$.

   \vskip 3pt

   To prove the desired inequality (\ref{space-3}), we will build up two inequalities for $p\in\mathcal P$. The first one reads: For each $\varepsilon>0$,
   there is $C(\theta_0,\varepsilon)>0$ so that for each $p\in\mathcal P$,
\begin{eqnarray}\label{yubiao8.5}
    |p(z)|
        \leq C(\theta_0,\varepsilon) e^{-\frac{1}{8}|\lambda_1|\cos\theta_0  Re\,z} \|p\|_{L^1(\mathbb R^+;\mathbb C)}\;\;\mbox{for all}\;\; z\in S_{\varepsilon,\theta_0},
 \end{eqnarray}
 where $S_{\varepsilon,\theta_0}$ is given by (\ref{space-2}).
   The second one reads:  For each $T\in(0,\infty)$, there exists  $C\triangleq C(T)>0$ so that
  \begin{equation}\label{1218-bad-1}
   \|p\|_{L^1(\mathbb R^+;\mathbb C)} \leq   C   \|p\|_{L^1(0,T;\mathbb C)}
   \;\;\mbox{for all}\;\; p\in\mathcal P.
  \end{equation}

  We now show (\ref{yubiao8.5}). Let  $p\in\mathcal P$. By  (\ref{space-2-1}), we can express $p$  in the following manner:
  \begin{equation}\label{appendix-4}
    p(z)=\sum_{j=1}^N c_j e^{- \lambda_jz},~z\in \mathbb C^+,
    \;\;\mbox{with}\;\;N\in\mathbb N^+ \;\mbox{and}\;\{c_j\}_{j=1}^N\subset\mathbb C.
  \end{equation}
  Since each  $\{ G_n\}_{n\geq 1}$ contains at least an element of $\Lambda\triangleq\{\lambda_j\}_{j=1}^\infty$ and $\lambda_j\nearrow \infty$, there is
   an $m\triangleq m(N)\in \mathbb{N}^+$ so that
   $\{\lambda_j\}_{j=1}^N\subset \bigcup_{k=1}^m G_k$.
      This, along with (\ref{appendix-4}), yields that
      \begin{equation}\label{appendix-5}
   p(z)=\sum_{k=1}^m \sum_{\lambda_j\in G_k} c_j e^{-\lambda_j z}\triangleq \sum_{k=1}^m g_k(z),
   ~z\in \mathbb C^+.
  \end{equation}
Meanwhile, since  $\{q_j\}_{j=1}^\infty$ is a biorthogonal family  to $\{e^{-\lambda_j t}\}$   in $L^2(\mathbb R^+;\mathbb C)$, it follows from  (\ref{appendix-4}) that
  \begin{equation*}
     c_j=\int_0^\infty p(t)\overline{q}_j(t) \,\mathrm dt,\;\;\mbox{with}\;\;1\leq j\leq N.
  \end{equation*}
  From this and (\ref{appendix-5}), we have that for each $k\in\{1,\cdots,m\}$,
  \begin{equation*}
    g_k(z)=\int_0^\infty p(t) \Big(\sum_{\lambda_j\in G_k} \overline{q}_j(t) e^{-\lambda_j z}\Big) \,\mathrm dt,
   ~z\in \mathbb C^+.
  \end{equation*}
  This yields that for each $k\in\{1,\cdots,m\}$ and each $z\in \mathbb C^+$,
  \begin{eqnarray}\label{appendix-8}
    |g_k(z)|\leq
            \|p(\cdot)\|_{L^1(\mathbb R^+;\mathbb C)} \|\mathcal G_k(\cdot,z)\|_{L^\infty(\mathbb R^+;\mathbb C)}.
  \end{eqnarray}
  where
  \begin{eqnarray}\label{yubiao8.10}
  \mathcal G_k(t,z)\triangleq \sum_{\lambda_j\in G_k} \overline{q}_j(t) e^{-\lambda_j z},\;\; t\in \mathbb{R}^+.
  \end{eqnarray}
    Arbitrarily fix a  $k\in\{1,\cdots,m\}$. Since for each $j\in \mathbb{N}^+$, the Laplace transform of $\bar q_j$ is $J_j$,
    we see that for each $z\in \mathbb C^+$, the Laplace transform of $\mathcal G_k(t,z)$ is given by
  \begin{equation}\label{appendix-9}
    \int_0^\infty \mathcal G_k(t,z) e^{-\lambda t} \,\mathrm dt
    =\sum_{\lambda_j\in G_k} J_j(\lambda)e^{-\lambda_j z},
    ~\lambda\in \mathbb C^+,
  \end{equation}
   Since $q_j(t)=0$ for all $t<0$ and $j\in \mathbb{N}^+$, we see from
   (\ref{yubiao8.10}) that for each $z\in \mathbb C^+$, $\mathcal G_k(t,z)=0$ for all $t<0$.  This, along with  (\ref{appendix-9}), yields  that for each $z\in \mathbb C^+$,
   the function $\tau\rightarrow\sum_{\lambda_j\in G_k} J_j(i\tau)e^{-\lambda_j z}$, $\tau\in\mathbb R$,
    is the Fourier transform of $\mathcal G_k(\cdot,z)$.  Then  by the inverse Fourier transform, we see that for each $z\in \mathbb C^+$,
  \begin{eqnarray}\label{appendix-10}
   \|\mathcal G_k(\cdot,z)\|_{L^\infty(\mathbb R^+;\mathbb C)}
   &=& \sup_{t\in\mathbb R^+} \Big| \frac{1}{2\pi} \int_{\mathbb R} \Big(\sum_{\lambda_j\in G_k} J_j(i\tau)e^{-\lambda_j z}\Big) e^{i\tau t} \,\mathrm d\tau\Big|
   \nonumber\\
   &\leq&  \frac{1}{2\pi} \int_{\mathbb R} \Big|\sum_{\lambda_j\in G_k} J_j(i\tau)e^{-\lambda_j z} \Big| \,\mathrm d\tau.
  \end{eqnarray}
  Meanwhile, by (\ref{1218-def-J}), (\ref{appendix-2}) and (\ref{space-2-1}), we find that each   $\lambda_j$ is a simple root of $J$. Thus, by
  (\ref{1218-def-J-j}), we can use  the residue theorem to  see that
  \begin{equation}\label{appendix-11}
    \sum_{\lambda_j\in G_k} J_j(i\tau)e^{-\lambda_j z}=\frac{J(i\tau)}{2\pi i} \int_{\Gamma_k} \frac{e^{-\xi z}}{J(\xi)(i\tau-\xi)} \,\mathrm d\xi,
  \end{equation}
  where  $\Gamma_k$ denotes the boundary of $G_k$.
  From (\ref{appendix-11}) and  (\ref{appendix-10}), it follows  that
  for each $k\in\{1\dots,m\}$ and each $z\in\mathbb C^+$,
  \begin{eqnarray}\label{yubiao8.14}
    \|\mathcal G_k(\cdot,z)\|_{L^\infty(\mathbb R^+;\mathbb C)}
    &\leq&  \frac{1}{4\pi^2} \int_{\mathbb R} \Big|\int_{\Gamma_k}  J(i\tau)\frac{e^{-\xi z}}{J(\xi)(i\tau-\xi)} \,\mathrm d\xi \Big| \,\mathrm d\tau  \nonumber\\
      &\leq& \frac{\|J\|_{L^1(i\mathbb R;\mathbb C)}}{4\pi^2\rho}  \int_{\Gamma_k}  \Big|\frac{e^{-\xi z}}{J(\xi)} \Big|
      \,|\mathrm d\xi|,
  \end{eqnarray}
 where $\rho=\displaystyle\min_{k\geq 1} d(i\mathbb R,\Gamma_k)>0$.
 From  (\ref{appendix-5}), (\ref{appendix-8}) and (\ref{yubiao8.14}), we get that
 \begin{eqnarray}\label{appendix-13}
    |p(z)| \leq \frac{\|J\|_{L^1(i\mathbb R;\mathbb C)}}{4\pi^2\rho} \|p\|_{L^1(\mathbb R^+;\mathbb C)} \Big(\sum_{k= 1}^m \int_{\Gamma_k}
    \Big|\frac{e^{-\xi z}}{J(\xi)} \Big| \,|\mathrm d\xi| \Big),~\forall\, z\in\mathbb C^+.
     \end{eqnarray}
 Starting from (\ref{appendix-13}), using the same way as that used in  the proof of estimating (4.12) in \cite[Lemma 4.6]{AK-1} (see \cite[Pages 2113-2115]{AK-1}), we can get the inequality (\ref{yubiao8.5}).

  Now we prove the second inequality (\ref{1218-bad-1}).
  By contradiction,  suppose that it were not true. Then there would be
   a $T>0$ and a sequence $\{p_n\}_{n=1}^\infty\subset \mathcal P$ so that
  \begin{eqnarray}\label{1218-bad-2}
   \|p_n\|_{L^1(\mathbb R^+;\mathbb C)}=1 \;\;\mbox{and}\;\;  \|p_n\|_{L^1(0,T;\mathbb C)}< 1/n
   \;\;\mbox{for each}\;\; n\geq 1.
  \end{eqnarray}
  Arbitrarily fix $\varepsilon_0\in(0,T/2)$.  Then choose a $s_0\in(T,\infty)$ so that
  \begin{eqnarray}\label{1218-bad-3}
    \int_{s_0}^\infty C(\theta_0,\varepsilon_0) e^{-\frac{1}{8}|\lambda_1|\cos\theta_0 t} \,\mathrm dt <1/2,
  \end{eqnarray}
  where $C(\theta_0,\varepsilon_0)$ is given by (\ref{yubiao8.5}).
From   (\ref{yubiao8.5}), we find that for all  $m,n\in\mathbb N^+$,
  \begin{eqnarray*}
   \int_0^\infty |(p_n-p_m)(t)| \,\mathrm dt
      &\leq& \int_0^{s_0} |(p_n-p_m)(t)| \,\mathrm dt
   +
    \nonumber\\
    & & \int_{s_0}^\infty \big( C(\theta_0,\varepsilon_0) e^{-\frac{1}{8}|\lambda_1|\cos\theta_0 t}    \int_0^\infty |(p_n-p_m)(s)| \,\mathrm ds \big) \,\mathrm dt.
  \end{eqnarray*}
 This, along with (\ref{1218-bad-3}), implies that for all $m,n\in\mathbb N^+$,
  \begin{eqnarray}\label{1218-bad-4}
    \int_0^\infty |(p_n-p_m)(t)| \,\mathrm dt \leq 2 \int_0^{s_0} |(p_n-p_m)(t)| \,\mathrm dt.
  \end{eqnarray}
    Two observations are given in order: First, by (\ref{yubiao8.5}) and the first equality in (\ref{1218-bad-2}), we find that $\{\|p_n\|_{C(S_{\varepsilon_0,\theta_0},\mathbb{C})}\}_{n=1}^\infty$ is bounded. Second, each $p_n$ (with $n\in \mathbb{N}^+$) is analytic over $S_{\varepsilon_0,\theta_0}$.
    From these observations, we can use the Montel theorem to find  a subsequence $\{p_{n_k}\}_{k=1}^\infty$ of $\{p_n\}_{n=1}^\infty$ and an analytic function $\hat p$ over $S_{\varepsilon_0,\theta_0}$ so that
  \begin{eqnarray}\label{1218-bad-4-1}
   p_{n_k} \rightarrow \hat p
   \;\;\mbox{uniformly on each compact set of}\;\;  S_{\varepsilon_0,\theta_0},
   \;\;\mbox{as}\;\;k\rightarrow\infty.
  \end{eqnarray}
  Since $0<2\varepsilon_0<T<s_0$, it follows from (\ref{1218-bad-4-1}) and   the second inequality in (\ref{1218-bad-2}) that
  \begin{eqnarray*}
   p_{n_k} \rightarrow 0 \;\;\mbox{in}\;\; L^1(0,T;\mathbb C)
   \;\;\mbox{and}\;\; p_{n_k} \rightarrow \hat p \;\;\mbox{in}\;\; L^1(T,s_0;\mathbb C),
   \;\;\mbox{as}\;\;  k\rightarrow\infty.
  \end{eqnarray*}
  These, along with (\ref{1218-bad-4}), (\ref{1218-bad-4-1}) and the first equality in   (\ref{1218-bad-2}), indicates that
  \begin{eqnarray}\label{1218-bad-5}
   \|\hat p\|_{L^1(T,\infty;\mathbb C)}=1
    \;\;\mbox{and}\;\;
    \|\hat p\|_{L^1(2\varepsilon_0,T;\mathbb C)}=0.
  \end{eqnarray}
  Since $\hat p$ is analytic over $S_{\varepsilon_0,\theta_0}$, from the second assertion in (\ref{1218-bad-5}), we get that $\hat p \equiv 0$ over $S_{\varepsilon_0,\theta_0}$. This contradicts the first assertion in (\ref{1218-bad-5}). So (\ref{1218-bad-1}) is true.

  \vskip 3pt
  Finally, the desired inequality (\ref{space-3}) follows from (\ref{yubiao8.5}) and (\ref{1218-bad-1}) at once. This ends the proof of Lemma \ref{lemma-key-inequality}.

 \end{proof}

\vskip 5pt
 \textbf{Acknowledgements.} The authors gratefully thank Professor Xu Zhang for  his valuable suggestions.

\end{document}